\documentclass[10pt]{amsart}

\usepackage{graphicx}
\usepackage{times}
\usepackage[colorlinks=true,linkcolor=blue,citecolor=blue]{hyperref}%

\usepackage{xcolor}
\usepackage{stmaryrd}
\usepackage{pdfrender,xcolor}

\newtheorem{theorem}{Theorem}[section]
\newtheorem{lemma}[theorem]{Lemma}
\newtheorem{corollary}[theorem]{Corollary}
\newtheorem{prop}[theorem]{Proposition}

\newtheorem{ass}[theorem]{Assumption}

\newtheorem{pprop}[theorem]{Heuristic}
\newtheorem{plemma}[theorem]{Heuristic}
\theoremstyle{definition}
\newtheorem{definition}[theorem]{Definition}

\theoremstyle{remark}
\newtheorem{remark}[theorem]{Remark}
\numberwithin{equation}{section}

\usepackage[top=0.75in, bottom=0.75in, left=0.75in, right=0.75in]{geometry}

\usepackage[scr]{rsfso}
\usepackage[english]{babel}
\usepackage{fancyhdr}
\usepackage{amsmath}
\usepackage{amsthm}
\usepackage{amssymb}
\usepackage{eucal}
\usepackage{comment}
\usepackage{enumitem}
\setlist{leftmargin=*}
\usepackage[integrals]{wasysym}

\newcommand\nc{\newcommand}
\nc{\on}{\operatorname}
\nc{\E}{\mathbf{E}}
\nc{\R}{\mathbb R}
\nc{\C}{\mathbb C}
\nc{\Q}{\mathbb Q}
\nc{\Z}{\mathbb Z}
\nc{\N}{\mathbb N}
\nc{\F}{\mathbb F}
\nc{\wt}{\widetilde}
\nc{\ol}{\overline}
\nc{\short}[3]{0 \longrightarrow #1 \longrightarrow #2 \longrightarrow #3 \longrightarrow 0}
\nc{\pd}[2]{\frac{\partial #1}{\partial #2}}
\nc{\rnc}{\renewcommand}
\nc{\e}{\varepsilon}
\nc{\DMO}{\DeclareMathOperator}
\nc{\grad}{\nabla}
\nc{\Exp}{\mathbf{Exp}}
\nc{\fsp}{\fontdimen2\font=2.17pt}
%\nc{\fsp}{}

\rnc{\leq}{\leqslant}
\rnc{\geq}{\geqslant}
%\rnc{\int}{\mathlarger{\varint}}
\rnc{\d}{\mathrm{d}}
\rnc{\O}{\mathrm{O}}
\rnc{\exp}{\mathbf{Exp}}
\newenvironment{nouppercase}{%
  \renewcommand{\uppercasenonmath}[1]{}}{}
\pagestyle{plain}

\title{\fsp\Large {Fluctuations for some non-stationary interacting particle systems via Boltzmann-Gibbs Principle}}

\author{ \large Kevin Yang}

\usepackage{setspace}
\begin{document}
%\pdfrender{StrokeColor=gray,TextRenderingMode=2,LineWidth=0.01pt}
\setstretch{0.99}
\fsp
\raggedbottom
\begin{nouppercase}
\maketitle
\end{nouppercase}
\begin{center}
\today
\end{center}

\begin{abstract}
{\fontdimen2\font=1.8pt Conjecture II.3.6 of Spohn in \cite{FSPDE1} and Lecture 7 of Jensen-Yau in \cite{FSPDE2} ask for a general derivation of universal fluctuations of hydrodynamic limits in large-scale stochastic interacting particle systems. However, the past few decades have witnessed only minimal progress according to \cite{FSPDE3}. In this paper, we develop a general method for deriving the so-called Boltzmann-Gibbs principle for a general family of non-integrable and non-stationary interacting particle systems, thereby responding to Spohn and Jensen-Yau. Most importantly, our method depends mostly on \emph{local} and \emph{dynamical}, and thus more general/universal, features of the model. This contrasts with previous work \cite{BR,CYau,GJ15,JM}, all of which rely on global and non-universal assumptions on invariant measures or initial measures of the model. As a concrete application of the method, we derive the KPZ equation as a large-scale limit of the height functions for a family of non-stationary and non-integrable exclusion processes with an environment-dependent asymmetry. This establishes a first result to Big Picture Question 1.6 in \cite{KPZAIM} for non-stationary and non-integrable ``speed-change" models that have also been of interest beyond KPZ \cite{DPSW,F,FHU,K}.} 
\end{abstract}

{\hypersetup{linkcolor=blue}
\setcounter{tocdepth}{1}
\tableofcontents}

%%%
\section{Introduction}\label{section:intro}
%%%
Deriving rigorously continuum equations of classical fluid mechanics as large-scale descriptions of locally conserved quantities in Newtonian particle systems is a famous open problem in mathematical physics. However, it has seen little progress \cite{BLRB}. Morrey \cite{Mor} gave a formal derivation based on \emph{local equilibrium} and \emph{local Gibbs states}, but rigorous proof of necessary local ergodicity of Hamiltonian systems has remained elusive. Considering instead statistical mechanical systems, which may be viewed as Hamiltonian systems with additional randomness,  largely resolves this difficulty. Indeed, there has been remarkable progress on deriving many continuum fluid equations, known as \emph{hydrodynamic limits}, from stochastic interacting particle systems, largely based on the works of Guo-Papanicolaou-Varadhan \cite{GPV}, Varadhan \cite{V}, and Yau \cite{YauRE} that make precise the two notions of local equilibrium and local Gibbs states for stochastic systems; see \cite{DLPP,KL,FSPDE1} for thorough reviews in this direction. 

However, a complete picture of hydrodynamic equations via statistical mechanics requires understanding \emph{conjecturally universal fluctuations} of locally conserved quantities in the stochastic model about hydrodynamic limits. {(By ``universal", we mean a scaling limit for fluctuations that does not depend on the precise microscopic structures of the system at hand but only the choice of the scaling and a few numbers or moments.)} To this end, much less is known. We discuss the history of this universality problem shortly. To highlight its significance, a general derivation of universal local fluctuations was asked for by Spohn \cite{FSPDE1}, in the form of Conjecture II.3.6, and by Jensen-Yau \cite{FSPDE2}, in the form of an open problem in Lecture 7; almost no progress has been made in the past few decades according to \cite{FSPDE3}. {Let us expand on this more precisely.
%%%
\begin{itemize}
\item Conjecture II.3.6 in \cite{FSPDE1} asks the question of how to use \emph{local} statistics to derive scaling limits for fluctuations of hydrodynamic limits in non-stationary interacting particle systems. In particular, the physical reasoning given therein supposes that the non-stationary particle system is sufficiently close to stationary at local scales. (This is the ``extended local equilibrium hypothesis" therein.) Using this information, one can then deduce formally what the scaling limit for fluctuations should be. The question, which is what Conjecture II.3.6 asks, is how to prove (any of) this rigorously. (Technically, Conjecture II.3.6 in \cite{FSPDE1} asks about non-stationary particle systems whose hydrodynamic limits are also non-constant. We do not address this case simply because the scaling for the models that we study in this paper does not seem to allow for it. We clarify this later in the introduction. In any case, the heart of Conjecture II.3.6 in \cite{FSPDE1} is a method of using local statistics and local stationarity to derive scaling limits for fluctuations. As smooth is approximately constant on local scales, a thorough investigation for non-stationary systems with constant hydrodynamic limits should, in principle, shed light on the case of non-constant but smooth hydrodynamic limits.)
\item Problem 3 in Section 7 of \cite{FSPDE2} asks the same as Conjecture II.3.6 in \cite{FSPDE1} with the same interest in non-stationary models. (\cite{FSPDE2} also emphasizes interest in ``non-equilibrium" case, which includes the case of non-constant hydrodynamic limit discussed in the previous bullet point.) \cite{FSPDE2}, however, notes that for non-stationary and/or non-equilibrium models, only one result due to Chang-Yau \cite{CYau} had been available at the time. In particular, \cite{FSPDE2} asks for progress beyond this work, which already addressed a large class of models. We further discuss \cite{CYau} shortly. (\cite{FSPDE2} also asks for scaling limits for fluctuations in non-stationary and non-equilibrium systems in dimension $\d\geq2$. We do not address this case in this paper, and we leave it for future work.)
\end{itemize}
%%%
}Additionally, since Spohn \cite{FSPDE1} and Jensen-Yau \cite{FSPDE2}, there has been a surge of activity and interest in \emph{nonlinear KPZ statistics} (where ``KPZ" means Kardar-Parisi-Zhang) as large-scale limits of fluctuations \cite{KPZAIM}. To this end, even less is known.

{\bf We respond to these conjectures and open problems with a general derivation of the so-called \emph{Boltzmann-Gibbs principle}} based on local dynamic properties of the stochastic model as asked for by Spohn \cite{FSPDE1}. It fuses well enough with stochastic analytic methods to rigorously derive KPZ fluctuations from a large class of stochastic particle systems that are beyond perturbations of stochastically reversible models and therefore in some version of non-equilibrium, in spirit of Problem 3 in Section 7 of \cite{FSPDE2}. To start, we now discuss relevant prior work and questions of Spohn \cite{FSPDE1} and Jensen-Yau \cite{FSPDE2}; see also Chapter 11 of \cite{KL}.
%%%
\begin{itemize}
\item The Boltzmann-Gibbs principle was originally developed by Brox-Rost \cite{BR} to derive hydrodynamic limit fluctuations. Their method succeeds only for statistically stationary/equilibrium systems. It has since been streamlined \cite{KL,KLO} and derived for many equilibrium models \cite{C,DPSW,DG,GJ15,GJS15,GP,LV,S1,S2,S3,Z}. However, assuming, or even explicitly knowing, statistical equilibrium is certainly {a restrictive global constraint}. For example, interactions with stochastic reservoirs, or so-called ``open boundaries", immediately breaks any understanding of invariant measures \cite{CK} except for in special situations. Moreover, it is not even believed that the equilibrium method should succeed in a general non-equilibrium setting; see \cite{CYau}.
\item To avoid a need for understanding global invariant measures explicitly, Jara-Menezes \cite{JM} adapted the relative entropy method of Yau \cite{YauRE}, which was originally introduced for deriving hydrodynamic limits, to rigorously implement the strategy of local equilibrium/Gibbs states due to Morrey \cite{Mor}. However, as we work at the delicate fluctuation scale, in \cite{JM} the authors require a strong initial closeness to local Gibbs states in a global sense; initially, the model is close to a Gibbs state at local scales, but this must be true everywhere in order to solve a global many-body eigenvalue problem. So, this method {also} depends on strong global assumptions. In any case, by this method, Jara-Menezes derive fluctuations for a smoothly inhomogeneous exclusion process \cite{JM} whose variants were studied in \cite{CTIn,CR,JM}. In \cite{JL}, Jara-Landim do this for a class of exclusion processes with additional Glauber-type disturbances/perturbations. 
\item In a groundbreaking work of Chang-Yau \cite{CYau}, hydrodynamic limit fluctuations were rigorously derived, with continuum limit given by a linear Gaussian stochastic PDE, basically without any conditions on the initial data beyond being reasonable initial data for the limit stochastic PDE. Chang-Yau \cite{CYau} specialize to a system of diffusions; their work is similarly based upon solving a many-body eigenvalue problem by means of large-deviations estimates and close-to-optimal log-Sobolev inequalities for the global invariant measure. Therefore, although the results of Chang-Yau \cite{CYau} are for non-equilibrium systems, analysis of global Gibbs states/invariant measures is essential. Moreover, it is unclear if the work of \cite{CYau} can be used to access KPZ fluctuations in non-equilibrium models. This is because the KPZ equation requires analytic considerations to solve when outside the invariant measure, and the analysis in \cite{CYau} seems difficult to upgrade at the level of appropriate norms; see Remark \ref{remark:BGI2}.
\end{itemize}
%%%
In this paper, the Boltzmann-Gibbs principle is derived with \emph{local}, and thus more general, considerations involving only system dynamics, not by directly exploiting global invariant measures.  Modulo details, the ingredients for our method are listed below; for a more detailed illustration of the method, see Section \ref{subsection:s}, which we have set up in a fairly general fashion.
%%%
\begin{itemize}
\item On \emph{local} mesoscopic scales, the dynamics admit an almost-optimal and ``elliptic" log-Sobolev inequality; this implies strong local relaxation of dynamics as assumed by Morrey \cite{Mor}. This assumption is very different than global assumptions in \cite{CYau}. {For example, in many models containing interactions with reservoirs at localized ``boundary" sites, the invariant measure is poorly understood except for a small set of special cases \cite{CK}. For this class of ``open boundary models", the invariant measures are by no means perturbations of their ``boundary-free" versions, even on macroscopic scales; see \cite{CK}. This obstructs the method of \cite{CYau}, which is based on calculations of local marginals of an explicit global invariant measure. However, except in $\mathrm{O}(1)$-many small sets near the reservoirs, the \emph{local} dynamics, and thus \emph{their} invariant measures, are unaffected. Also, near any reservoir, the system looks like a half-space model, which have better understood invariant measures. Thus, the method in this paper has potential applications to open boundary models, which are also of a non-equilibrium flavor and, again, behave quite differently than models without boundary and also garnered a high amount of interest recently; see \cite{CK,CS,FSPDE3,Y20}. See \cite{Y20}, in particular, for an application of the first steps of the method developed herein to some open boundary models (whose invariant measures are unknown and never used in \cite{Y20}); we discuss further in Section \ref{subsection:context}. 

In a similar spirit, the models in \cite{JM} do not admit explicit invariant measures because of the smooth inhomogeneity; this is one motivation for \cite{JM}. But locally, smooth is basically constant, so the inhomogeneity does not obstruct our method (modulo a few perturbations).} By the same token, our method also holds for fluctuations of smooth non-constant hydrodynamic limits as in \cite{CYau,JM} after some perturbative adjustments. {However, for the scaling that gives nonlinear KPZ statistics, making sense of non-constant hydrodynamic limits itself seems to pose an issue. Indeed, these should formally be ``infinite-time" (or ``infinite-speed") viscosity solutions to hyperbolic Hamilton-Jacobi equations, whose meaning is only clear for constant solutions. For this reason only, we do not discuss fluctuations about non-constant hydrodynamic limits.}
\item On local mesoscopic scales, regularity of fluctuations is roughly that of a white noise, which is what we expect for their SPDE limits; this is not an assumption and usually falls out of the analysis, and it controls which local Gibbs states are relevant.
\item We emphasize these ingredients concern only \emph{local dynamics} of the model. Properties of the global invariant measure may be helpful at a technical level, but they should not be essential to {deriving SPDEs from fluctuations}. {For example, the methods in this paper use an explicit product measure that happens to be an invariant measure for the entire process (as opposed to just the dynamics in a local set). However, we do not necessarily require that this measure is invariant. All we need are entropy production bounds (see Lemma \ref{lemma:le3}). (Intuitively, these bounds are a convenient quantification of \emph{local} equilibration; see the first bullet point in this list. The aforementioned invariance just makes this calculation much shorter than those in \cite{Y20,YauRE}; see Lemma 7.4 in \cite{Y20}, for instance.)} 

As for initial data, we only require that it can be made sense of by the macroscopic SPDE. This is, again, a basic requirement.
\end{itemize}
%%%

{In the case where large-scale limits of fluctuations are given by the KPZ equation, one of the main benefits of our local strategy is a Boltzmann-Gibbs principle which holds in a much stronger topology than in \cite{CYau}. Beyond being possibly of interest in its own right, this seems to be important for deriving KPZ equation fluctuations, whose solution theory currently requires either a strong stationary assumption \cite{GJ15} that we aim to avoid or analysis in relatively strong norms \cite{BG,Hai13}.}

Instead of developing a general theory of deriving Boltzmann-Gibbs principles, we specialize to KPZ fluctuations in a class of non-integrable and non-stationary interacting particle systems. The main result of this paper, namely Theorem \ref{theorem:KPZ}, is convergence of height function (or ``current") fluctuations to the KPZ equation for a class of exclusion processes with environment-dependent dynamics. These are of high interest both in KPZ \cite{GJ15} and beyond KPZ \cite{DPSW,F,FHU,K,LV}. This adds to the almost empty set of non-integrable, non-stationary interacting particle systems for which universality of KPZ equation fluctuations is proven. 

Let us introduce the KPZ equation more precisely below, in which $\xi$ is Gaussian space-time white noise on $\R_{\geq0}\times\mathbb{T}^{1}$ with $\mathbb{T}^{1}=\R/\Z$ the torus, in which $\bar{\mathfrak{d}}$ is constant, and in which $\infty$ is meant to suggest \eqref{eq:KPZ} as a scaling limit:
\begin{align}
\partial_{T}\mathbf{h}^{\infty} \ = \ 2^{-1}\Delta\mathbf{h}^{\infty} - \bar{\mathfrak{d}}\grad\mathbf{h}^{\infty} - 2^{-1}|\grad\mathbf{h}^{\infty}|^{2} + \xi. \label{eq:KPZ}
\end{align}
The KPZ equation \eqref{eq:KPZ} was originally derived in \cite{KPZ} to be a universal model for dynamical interface fluctuations describing the statistics of propagating fires, bacterial colonies, epidemic spread, tumor growth, and crack formations. However, it was already apparent in \cite{KPZ} the important observation that $\mathbf{u}^{\infty}=\grad\mathbf{h}^{\infty}$ describes hydrodynamic fluctuations. As for a brief history, in \cite{BG}, Bertini-Giacomin show that height function fluctuations in the asymmetric simple exclusion process (ASEP) converge to KPZ with $\bar{\mathfrak{d}}=0$. In \cite{BG}, the integrability of ASEP is leveraged crucially. Related works \cite{CGST,CST,CT} employed the same integrability method to show convergence to KPZ for height function fluctuations in a limited number of special systems. For non-integrable models, there has only been a successful general approach for \emph{stationary} systems \cite{GJ15,GP}. Progress for non-integrable, non-stationary particle systems is minimal beyond a few works that we discuss after presenting Theorem \ref{theorem:KPZ}. Environment-dependent speed-change dynamics are of particular interest for KPZ (see Big Picture Question 1.6 of \cite{KPZAIM}), which is why we study it here.

In a nutshell, the difficulty in universality of \eqref{eq:KPZ} and the Boltzmann-Gibbs principle is as follows. Suppose we let $\mathbf{h}'$ denote the solution to \eqref{eq:KPZ}, but instead of $|\grad\mathbf{h}'|^{2}$ we have $\mathbf{F}(\grad\mathbf{h}')$ for a general $\mathbf{F}$. In \cite{KPZ}, a formal Taylor series implies $\mathbf{h}'$ converges to \eqref{eq:KPZ} under a ``critical scaling" with an explicit $\mathbf{F}$-dependent coefficient in front of the quadratic and explicit $\bar{\mathfrak{d}}=\bar{\mathfrak{d}}(\mathbf{F})$; see \cite{HQ}. Such coefficients are \emph{wrong}, however, unless $\mathbf{F}$ is a degree-two polynomial, in which case the calculation is trivial because one already starts with KPZ. The picture for particle systems is similar. General environment-dependence roughly corresponds to general nonlinearities $\mathbf{F}$ whose effective limits we must compute. Moreover, the integrable ASEP model that was studied in \cite{BG} is associated to degree-2 $\mathbf{F}$ for which homogenization is formally trivial. Making precise the asymptotics for general $\mathbf{F}$ is the heart of proving universality, and it is one of our main motivations.

One explanation for why the Taylor series heuristic in \cite{KPZ} is incorrect is that KPZ is a \emph{singular} SPDE; the roughness of the $\xi$-noise makes the equation classically ill-posed. A way of solving \eqref{eq:KPZ} (see \cite{BG}) is to instead define $\mathbf{h}^{\infty}=-\log\mathbf{Z}^{\infty}$ where $\mathbf{Z}^{\infty}$ solves the \emph{stochastic heat equation} (SHE) below, which can be solved with Ito-Walsh calculus; this is the \emph{Cole-Hopf transform}:
\begin{align}
\partial_{T}\mathbf{Z}^{\infty} \ = \ 2^{-1}\Delta\mathbf{Z}^{\infty} - \bar{\mathfrak{d}}\grad\mathbf{Z}^{\infty}- \mathbf{Z}^{\infty}\xi. \label{eq:SHE}
\end{align}
We conclude by tying the Boltzmann-Gibbs principle and KPZ into the same story: the Boltzmann-Gibbs principle is the mechanism by which the correct coefficients in the limit $\mathbf{h}'\to\mathbf{h}^{\infty}$, which we discussed in a previous paragraph, are computed.
%%%
%%%
\subsection{The Model}
%%%
We start by introducing the interacting particle systems of interest  as Markov processes on a finite state space. In words, the process below is the ASEP system in \cite{BG} with additional environment-dependent asymmetry of speed $N$ that affects the nonlinearity in the dynamics of its height function in non-integrable fashion; the height function is constructed in Definition \ref{definition:height}. The parameter $N\in\Z_{>0}$ is a scaling parameter we take infinitely large to recover limit SPDEs; this is the ``large-$N$ limit".
%%%
\begin{itemize}
\item Define $\mathbb{T}_{N}=\Z/N\Z$ to be the microscopic $N$-point torus that we embed into the one-dimensional lattice $\Z$ upon identifying every element in $\mathbb{T}_{N}$ by an integer between 0 and $N-1$. Arithmetic on the torus $\mathbb{T}_{N}$ is taken mod $N$ unless said otherwise.
\item Provided any set $\mathbb{K}_{N}\subseteq\mathbb{T}_{N}$, define the corresponding state space $\Omega_{\mathbb{K}_{N}}=\{\pm1\}^{\mathbb{K}_{N}}$. For convenience, we define $\Omega=\Omega_{\mathbb{T}_{N}}$. Elements of $\Omega_{\mathbb{K}_{N}}$ sets are denoted by $\eta=(\eta_{x})_{x\in\mathbb{K}_{N}}$. The interpretation of $\eta$-variables in terms of particles is the following. Adopting spin notation of \cite{DT}, if $\eta_{x}=1$, there is a particle at $x\in\mathbb{T}_{N}$. Otherwise, if $\eta_{x}=-1$, there is no particle there.
\item Consider a functional $\mathfrak{d}:\Omega\to\R$ and define $\mathfrak{d}_{x}(\eta)=\mathfrak{d}(\tau_{x}\eta)$, in which $\tau_{x}\eta$ shifts the $\eta$-configuration by ${x}\in\mathbb{T}_{N}$ to recenter at $x$, so that {$(\tau_{x}\eta)_{z}=\eta_{z+x}$} for all $z\in\mathbb{T}_{N}$. {We now let $\mathsf{L}_{x}$ denote the infinitesimal generator for a symmetric simple exclusion process with speed 1 on $\{x,x+1\}\subseteq\mathbb{T}_{N}$. To specify it, for any $\eta\in\Omega$, let $\eta^{z,w}\in\Omega$ be the configuration defined by $\eta^{z,w}_{x}=\eta_{x}$ if $x\neq z,w$ and $\eta^{z,w}_{z}=\eta_{w}$ and $\eta^{z,w}_{w}=\eta_{z}$. (In words, $\eta^{z,w}$ swaps occupation numbers at $z,w$.) For any $\mathfrak{f}:\Omega\to\R$, we define
\begin{align}
\mathsf{L}_{x}\mathfrak{f}(\eta) \ = \ \mathfrak{f}(\eta^{x,x+1})-\mathfrak{f}(\eta). \nonumber
\end{align}
}We now define the generator of the Markov process of interest here via $\mathsf{L}_{N}=\mathsf{L}_{N,\mathrm{S}}+\mathsf{L}_{N,\mathrm{A}}$:
\begin{align}
\mathsf{L}_{N,\mathrm{S}} \ = \ 2^{-1}N^{2}{\sum}_{x\in\mathbb{T}_{N}}\mathsf{L}_{x} \quad \mathrm{and} \quad \mathsf{L}_{N,\mathrm{A}} \ = \ 2^{-1}N^{\frac32}{\sum}_{x\in\mathbb{T}_{N}}\left(\mathbf{1}_{\substack{\eta_{x}=-1\\ \eta_{x+1}=1}}-\mathbf{1}_{\substack{\eta_{x}=1\\ \eta_{x+1}=-1}}\right)\left(1+N^{-\frac12}\mathfrak{d}_{x}\right)\mathsf{L}_{x}. \label{eq:gen}
\end{align}
\item Denote by $\eta_{\mathrm{t}}$ the particle configuration at time $\mathrm{t}\geq0$ under the Markov process with generator $\mathsf{L}_{N}$. More generally, given any $\mathrm{t}\geq0$, any $x\in\mathbb{T}_{N}$, and any functional $\mathfrak{f}:\Omega\to\R^{d}$, we define $\mathfrak{f}_{\mathrm{t},x}=\mathfrak{f}(\tau_{x}\eta_{\mathrm{t}})$; recall the spatial shift $\tau_{x}$ from above.
\end{itemize}
%%%
%The main objects of interest, from which we will observe the limit KPZ equation and SHE, are the following height function and microscopic version of the Cole-Hopf transform, an idea originally due to Gartner \cite{Gar}, thus referred to as the Gartner transform.
%%%
\begin{definition}\label{definition:height}
\fsp Define the following \emph{height function}, in which $\mathbf{h}^{N}_{T,0}$ is equal to $2N^{-1/2}$ times the net flux of particles crossing $0$, with the convention that leftward traveling particles count as positive flux; this is the same height function as in \cite{BG} but now on the torus $\mathbb{T}_{N}$. {Also, we assume $\mathbf{h}^{N}_{0,0}=0$.}

We now define the \emph{Gartner transform}, for which we introduce the renormalization term $\mathrm{R}=\mathrm{R}_{1}+\mathrm{R}_{2}$ with $\mathrm{R}_{1}=2^{-1}N-24^{-1}$ and $\mathrm{R}_{2}=N^{1/2}\mathrm{R}_{2,1}+\mathrm{R}_{2,2}+\mathrm{R}_{2,3}=N^{1/2}\mathrm{R}_{2,1}{+2^{-1}\bar{\mathfrak{d}}}+\mathrm{R}_{2,3}$, in which $\bar{\mathfrak{d}}$ is from Definition \ref{definition:mSHE+1}. The constant $\bar{\mathfrak{d}}$ is the same constant appearing in the $\mathrm{SHE}(\bar{\mathfrak{d}})$ scaling limit in our main result of Theorem \ref{theorem:KPZ}. We define the remaining two terms $\mathrm{R}_{2,1}$ and $\mathrm{R}_{2,3}$ shortly; roughly, they come from the \emph{hydrodynamic flux} of the $\mathfrak{d}$-asymmetry. First, we have
\begin{align}
\mathbf{h}_{T,x}^{N} \ = \ \mathbf{h}_{T,0}^{N} + N^{-\frac12}{{\sum}_{y=1}^{x}}\eta_{T,y} \quad \mathrm{and} \quad \mathbf{Z}_{T,x}^{N} \ = \ \exp\left(-\mathbf{h}_{T,x}^{N}+\mathrm{R}T\right) \quad \mathrm{on} \quad \R_{\geq0}\times\mathbb{T}_{N}.
\end{align}
To define the renormalization counterterm $\mathrm{R}_{2,1}$, define {$\E_{0}$} as the expectation with respect to the product Bernoulli measure on $\Omega$ whose one-dimensional marginals have expectation equal to the hydrodynamic limit 0. Define $\mathrm{R}_{2,1}={-}2^{-1}{\E_{0}}(\mathfrak{d}-\mathfrak{d}\eta_{0}\eta_{1})$ as the hydrodynamic limit of the flux of the environment-dependent asymmetry. In particular, in the exponential defining $\mathbf{Z}^{N}$, we look at height function \emph{fluctuations} after subtracting the leading-order hydrodynamic limit/flux. Indeed, hydrodynamic limits tell us the normalized height function (not its fluctuations in $\mathbf{h}^{N}$) is roughly $\mathrm{R}_{2,1}$ in expectation when close to a constant density profile, and when multiplying by $N^{1/2}$ to study fluctuations, we must renormalize by $N^{1/2}\mathrm{R}_{2,1}$. This provides an interpretation, from interacting particle systems and hydrodynamic limits, of renormalizations needed in \cite{Hai13} to make sense of KPZ.

To wrap up this construction, let us define the order 1 counter-term $\mathrm{R}_{2,3}={\E_{0}}\wt{\mathfrak{s}}$ where $\wt{\mathfrak{s}}$ is a functional defined in Definition \ref{definition:mSHE+1}. Roughly speaking, it captures, at a level of hydrodynamic limits, a transport-induced-growth of the height function coming from the $\mathfrak{d}$ asymmetry; this is the parallel, for the $\mathfrak{d}$ asymmetry, of the $24^{-1}$-term in the renormalization constant $\mathrm{R}_{1}$ which comes from the leading-order asymmetry and that was also present in \cite{BG}; see Remark \ref{remark:mSHE+2} for more detailed explanation of $\mathrm{R}_{2,3}$.
\end{definition}
%%%
%%%
\begin{remark}\label{remark:height}
\fsp We linearly interpolate values of functions on $\mathbb{T}_{N}$ for all times to get a piecewise linear function on $N\mathbb{T}^{1}=\R/N\Z$.
\end{remark}
%%%
%%%
\subsection{The Theorem}
%%%
Our main result is showing that $\mathbf{Z}^{N}\to\mathrm{SHE}(\bar{\mathfrak{d}})$ for a particular value of $\bar{\mathfrak{d}}\in\R$ that is determined by a few statistics of the $\mathfrak{d}$-asymmetry; we shortly specify this value. First we require a structural assumption for the $\mathfrak{d}$-asymmetry, which is also necessary in the approach to universality of KPZ by what are known as \emph{energy solutions} in \cite{GJ15,GJS15}, for example. Such an assumption is often called a \emph{gradient condition}. It implies (see \cite{GJ15,GJS15}) a family of \emph{explicit} product invariant measures.
%%%
\begin{ass}\label{ass:grad}
\fsp The support of $\mathfrak{d}:\Omega\to\R$, defined as the smallest subset of $\mathbb{T}_{N}$ such that $\mathfrak{d}$ depends only on $\eta$-variables in $\mathbb{T}_{N}$, is contained in a neighborhood of $0\in\mathbb{T}_{N}$ with radius at most the uniformly bounded constant $\mathfrak{l}_{\mathfrak{d}}\in\Z_{>0}$. There is a uniformly bounded functional $\mathfrak{w}$ whose support is contained in the same neighborhood so that $\mathfrak{d}\grad_{1}^{\mathbf{X}}\eta=\mathfrak{d}(\eta_{1}-\eta_{0})=\grad_{1}^{\mathbf{X}}\mathfrak{w}=\tau_{1}\mathfrak{w}-\mathfrak{w}$.
\end{ass}
%%%
%%%
\begin{remark}
\fsp We can perturb Assumption \ref{ass:grad} to make invariant measures globally intractable. Little would change if perturbations are not too large so log-Sobolev inequalities on mesoscopic scales drastically change. For example, perturbations that affect the system on global time-scales but take too long for mesoscopic dynamics to detect are certainly allowable.
\end{remark}
%%%
We turn to scaling limits. This starts with the following rescaling operators that give ``macroscopic" coordinates.
%%%
\begin{definition}\label{definition:RG}
\fsp Given $\psi:\R_{\geq0}\times\mathbb{T}_{N}\to\R$, define $\Gamma^{N,\mathbf{X}}\psi:\mathbb{T}^{1}\to\R$ via linearly interpolating values of $\Gamma^{N,\mathbf{X}}\psi_{x}=\psi_{0,Nx}$ for $x\in N^{-1}\mathbb{T}_{N}\subseteq\mathbb{T}^{1}$. Define $\Gamma^{N}\psi:\R_{\geq0}\times\mathbb{T}^{1}\to\R$ by interpolating values of $\Gamma^{N}\psi_{\mathrm{t},x}=\psi_{\mathrm{t},Nx}$ for $x\in N^{-1}\mathbb{T}_{N}\subseteq\mathbb{T}^{1}$.
\end{definition}
%%%
We now present a class of initial conditions of the particle system/the height function for which the KPZ equation limit will be established. We are almost \emph{forced} to take some assumption of the following kind, because the limit SPDEs themselves need reasonable initial data to be well-defined.
%%%
\begin{definition}\label{definition:id}
\fsp {A probability measure on $\Omega$ is \emph{stable} if the following conditions are satisfied. First, with probability 1 under said measure, the total number of particles on $\mathbb{T}_{N}$ is $N/2$. Equivalently, the sum of $\eta_{x}$ over $x\in\mathbb{T}_{N}$ under said measure is zero. Next,} provided any $p\geq1$ and any $\mathfrak{l}\in\Z$ and any $\mathfrak{u}\in[0,2^{-1})$, we have the following estimate uniformly over $\mathbb{T}_{N}$, in which $\grad_{\mathfrak{l}}^{\mathbf{X}}$ is a spatial gradient $\grad_{\mathfrak{l}}^{\mathbf{X}}\phi_{x}=\phi_{x+\mathfrak{l}}-\phi_{x}$ for any $\phi:\mathbb{T}_{N}\to\R$:
\begin{align}
\E|\mathbf{h}_{0,x}^{N}|^{2p} \ \lesssim_{p} \ 1 \quad \mathrm{and} \quad \E|\grad_{\mathfrak{l}}^{\mathbf{X}}\mathbf{h}_{0,x}^{N}|^{2p} \ \lesssim_{p,\mathfrak{u}} \  N^{-2p\mathfrak{u}}|\mathfrak{l}|^{2p\mathfrak{u}}.
\end{align}
Also, $\Gamma^{N,\mathbf{X}}\mathbf{h}^{N}$ converges in law as $N\to\infty$ with respect to the uniform norm on the space $\mathscr{C}(\mathbb{T}^{1})$ of continuous functions.
\end{definition}
%%%
%%%
\begin{remark}\label{remark:id}
\fsp {We make a few clarifications about Definition \ref{definition:id}. The assumption that $\eta_{x}$ sums to zero with probability 1 guarantees that $\eta_{T,x}$ sums (over $x\in\mathbb{T}_{N}$) to zero with probability 1 for all $T\geq0$. Indeed, the total particle number is conserved. From this, we deduce the gradient relation $\eta_{T,x}=N^{1/2}(\mathbf{h}_{T,x}^{N}-\mathbf{h}_{T,x-1}^{N})$.

Let us give examples of stable initial data. Take product (mean-zero) Bernoulli measure on $\{\pm1\}^{\mathbb{T}_{N}}$. Condition on the subset of $\eta\in\{\pm1\}^{\mathbb{T}_{N}}$ where $\eta_{x}$ sums to zero over $x\in\mathbb{T}_{N}$. A Brownian bridge version of Donsker's invariance principle implies that this is stable initial data, and the limit of $\Gamma^{N,\mathbf{X}}\mathbf{h}^{N}$ is Brownian bridge on $\mathbb{T}^{1}$. This stable initial data gives the stationary measure for the $\eta$ process. An example of deterministic (and thus non-stationary) stable measure is given by the flat data $\eta_{x}=(-1)^{x}$. In this case, the limit of $\Gamma^{N,\mathbf{X}}\mathbf{h}^{N}$ is the zero function on $\mathbb{T}^{1}$. In general, given any continuous function $\mathsf{F}$ on $\mathbb{T}^{1}$, one can construct stable initial data such that $\Gamma^{N,\mathbf{X}}\mathbf{h}^{N}$ has limit $\mathsf{F}$. (This is a random walk bridge version of the fact that Brownian bridge has dense support in $\mathscr{C}(\mathbb{T}^{1})$.)}

Finally, if $\mathbf{Z}^{N}$ is uniformly bounded above and below, then Definition \ref{definition:id} is basically equivalent to the same but $\mathbf{h}^{N}$ replaced by $\mathbf{Z}^{N}$. Actually, we can take $\mathbf{Z}^{N}$ singular as SHE is smoothing; this would only change the topology in which we study $\mathbf{Z}^{N}$.
\end{remark}
%%%
Let $\mathscr{D}_{1}$ be the Skorokhod space until time 1 with values in $\mathscr{C}(\mathbb{T}^{1})$; see \cite{Bil} for the Skorokhod topology. The final time 1 is not important and can be replaced by any fixed time independent of $N$. We will not explicitly give here the transport coefficient $\bar{\mathfrak{d}}$, which appears in the limit $\mathrm{SHE}(\bar{\mathfrak{d}})$, until Definition \ref{definition:mSHE+1}, because it requires nontrivial set-up. The key feature is that it agrees with the equilibrium linear transport coefficient in \cite{GJ15} given by a correction to a hydrodynamic limit contribution of the $\mathfrak{d}$-asymmetry.
%%%
\begin{theorem}\label{theorem:KPZ}
\fsp Suppose we take the sequence of Gartner transforms $\mathbf{Z}^{N}$ with stable initial data for $\mathbf{h}^{N}$. If we adopt \emph{Assumption \ref{ass:grad}}, the renormalized sequence $\Gamma^{N}\mathbf{Z}^{N}$ converges to $\mathrm{SHE}(\bar{\mathfrak{d}})$ with respect to the Skorokhod topology on $\mathscr{D}_{1}$ with the initial data $\lim_{N\to\infty}\exp(-\Gamma^{N,\mathbf{X}}\mathbf{h}^{N})$. The transport coefficient $\bar{\mathfrak{d}}\in\R$ that determines the limit $\mathrm{SHE}(\bar{\mathfrak{d}})$ is a derivative of an equilibrium expectation of the flux corresponding to the $\mathfrak{d}$-asymmetry; see \emph{Definition \ref{definition:mSHE+1}}.
\end{theorem}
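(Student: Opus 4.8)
The plan is to prove Theorem \ref{theorem:KPZ} via the Cole-Hopf/martingale approach: derive a discrete stochastic equation for $\mathbf{Z}^N$, identify a martingale part whose bracket converges, and show the remaining ``error'' terms vanish, so that any subsequential limit solves a martingale problem characterizing $\mathrm{SHE}(\bar{\mathfrak d})$. First I would apply the generator $\mathsf{L}_N$ together with Ito's formula (Dynkin) to $\mathbf{Z}^N_{T,x} = \exp(-\mathbf{h}^N_{T,x} + \mathrm{R}T)$. Because the height function increments are $N^{-1/2}\eta$-sums, exponentiating produces a discrete Laplacian term $2^{-1}\Delta$, a discrete gradient term at order $N^{3/2}$ from $\mathsf{L}_{N,\mathrm{A}}$, plus a collection of ``extra'' terms: (i) a term proportional to $\mathbf{Z}^N$ times a local average of $\eta_x\eta_{x+1}$-type functionals (the quadratic/nonlinear contribution carrying the $|\grad\mathbf h|^2$ in the limit), (ii) terms involving the $\mathfrak{d}$-asymmetry functionals $\mathfrak d_x$, $\mathfrak w$, and (iii) genuinely higher-order small terms that vanish by power counting. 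The martingale $\mathbf{M}^N_{T,x}$ is $\mathbf{Z}^N_{T,x}$ minus the time-integral of $\mathsf{L}_N$ applied to it, and its predictable bracket is a sum over $x$ of squares of local jump rates, which to leading order is $\propto (\mathbf{Z}^N)^2$ times $N$ times the rate — this is exactly the discretization of the SHE multiplicative noise $\mathbf Z^\infty\xi$ with the correct variance (here the factor $2^{-1}$ in $\mathsf{L}_{N,\mathrm S}$ and the speed $N^2$ conspire to give the right white-noise normalization).

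The heart of the proof — and the main obstacle — is handling the non-gradient/non-integrable ``extra'' terms in (i) and (ii) via the Boltzmann-Gibbs principle, which is the local-dynamics machinery advertised in the introduction (to be developed in the body of the paper, invoked here via Section \ref{subsection:s}). Concretely, after writing $\mathbf{Z}^N_{T,x} = \exp(-\mathbf{h}^N_{T,x}+\mathrm{R}T)$ one encounters space-time averages of the form $\int_0^T \sum_x \mathbf{Z}^N_{S,x}\,\mathfrak{g}_{S,x}\,\varphi(x)\,\d S$ where $\mathfrak g$ is a local functional of mean zero under the relevant local Gibbs state (after subtracting its ``hydrodynamic'' expectation, which is precisely what the counterterms $\mathrm{R}_{2,1}$, $\mathrm{R}_{2,3}$ and the constant $24^{-1}$ absorb); the Boltzmann-Gibbs principle replaces $\mathfrak g$ by its conditional expectation given the local particle density, at the cost of an error that is $o(1)$ after the $N^{3/2}$ (or $N$) prefactor. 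This replacement is where one needs: the almost-optimal ``elliptic'' log-Sobolev inequality on mesoscopic blocks (local relaxation à la Morrey), the entropy production bound of Lemma \ref{lemma:le3} controlling the Radon-Nikodym density of the law of $\eta_T$ against the product measure $\E_0$, and the a priori white-noise-type regularity of the fluctuation field (so that multiplying by $\mathbf{Z}^N$, which varies on macroscopic scales, does not spoil the local averaging — a ``multiscale'' or ``flow-of-scales'' argument iterating the one-block/two-block estimates from microscopic up to mesoscopic scale). Crucially, the gradient condition in Assumption \ref{ass:grad}, $\mathfrak d(\eta_1-\eta_0) = \grad_1^{\mathbf X}\mathfrak w$, lets us rewrite the $\mathfrak d$-flux as a discrete gradient of a local functional, so that after summation by parts its fluctuating part is second-order small and its surviving part contributes only the linear transport term $-\bar{\mathfrak d}\grad\mathbf Z^\infty$ with $\bar{\mathfrak d}$ the derivative-of-equilibrium-flux constant of Definition \ref{definition:mSHE+1}; matching this constant to the energy-solution transport coefficient of \cite{GJ15} is a separate but essentially algebraic computation.

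With the Boltzmann-Gibbs reductions in hand, the remaining steps are comparatively standard. I would establish tightness of $\Gamma^N\mathbf{Z}^N$ in $\mathscr D_1$ using the Kolmogorov-Centsov criterion: moment bounds on $\mathbf Z^N_{T,x}$ (uniform in $N$, $T$, $x$) and on its space-time increments, which follow from the discrete equation together with the duality/positivity of the underlying exclusion dynamics (so $\mathbf Z^N$ is genuinely positive, as needed for the Cole-Hopf interpretation) and from the stable-initial-data bounds $\E|\mathbf h^N_{0,x}|^{2p}\lesssim_p 1$, $\E|\grad^{\mathbf X}_{\mathfrak l}\mathbf h^N_{0,x}|^{2p}\lesssim N^{-2p\mathfrak u}|\mathfrak l|^{2p\mathfrak u}$ of Definition \ref{definition:id}. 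I would also need a positivity/``no blow-down'' lower bound ensuring subsequential limits are not identically zero, which again comes from stable initial data and the convergence of $\Gamma^{N,\mathbf X}\mathbf h^N$ assumed there. Then any subsequential limit $\mathbf Z^\infty$ is continuous, adapted, and — by passing the discrete equation, the vanishing of the error terms, and the convergence of the martingale bracket to the limit — solves the martingale problem for $\mathrm{SHE}(\bar{\mathfrak d})$ with initial data $\lim_N \exp(-\Gamma^{N,\mathbf X}\mathbf h^N)$; uniqueness for the SHE martingale problem (Ito-Walsh well-posedness, as in \cite{BG}) upgrades subsequential convergence to full convergence, completing the proof. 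The only genuinely hard input is the Boltzmann-Gibbs principle via local dynamics; everything else is bookkeeping around a by-now-classical Cole-Hopf scheme.
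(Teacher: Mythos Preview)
Your high-level scheme (derive a microscopic Cole--Hopf SDE, kill the non-integrable terms by a Boltzmann--Gibbs principle, then tightness plus martingale problem) is exactly the paper's skeleton. The genuine gap is in the middle: you propose to get tightness of $\Gamma^N\mathbf{Z}^N$ from direct moment bounds on $\mathbf{Z}^N$ via its discrete equation, but that equation (Proposition \ref{prop:mSHE+}) carries the term $N^{1/2}\bar{\mathfrak{q}}\,\mathbf{Z}^N$ with $\bar{\mathfrak{q}}=\mathrm{O}(1)$ deterministically, so a Gronwall argument gives nothing uniform in $N$. Controlling this term \emph{is} the Boltzmann--Gibbs principle; but the paper's BG estimate (Theorem \ref{theorem:BGI}) is proved for $N^{1/2}\bar{\mathfrak{q}}\,\mathbf{Y}^N$ with $\mathbf{Y}^N=\mathbf{Z}^N\mathbf{1}_{T\leq\mathfrak{t}_{\mathrm{st}}}$, and the built-in a priori upper/lower bounds and space-time regularity of $\mathbf{Y}^N$ (Definition \ref{definition:KPZ1}) are used throughout the proof --- to pin down which local densities matter (Lemma \ref{lemma:BGI31}), and to pay for replacing local functionals by mesoscopic space-time averages (Lemma \ref{lemma:xr}, Lemmas \ref{lemma:mtr1}--\ref{lemma:mtr2}). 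In short, in the non-stationary setting the ``a priori white-noise-type regularity'' you invoke is not a priori at all: BG needs regularity of $\mathbf{Z}^N$, and regularity of $\mathbf{Z}^N$ needs BG. Your proposal does not say how to close this loop.

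The paper closes it by a stopping-time/continuity device that you are missing entirely. It introduces auxiliary processes: $\mathbf{Q}^N$ solves the $\mathbf{Z}^N$-equation with the $N^{1/2}\bar{\mathfrak{q}}$-term deleted (so tightness and moment bounds hold for $\mathbf{Q}^N$ by standard arguments, Lemma \ref{lemma:KPZ13}); $\mathbf{U}^N$ solves the $\mathbf{Z}^N$-equation with $\mathbf{Y}^N$ substituted into the $\bar{\mathfrak{q}}$-term; the BG principle in sup-norm then gives $\|\mathbf{U}^N-\mathbf{Q}^N\|_{1;\mathbb{T}_N}\to0$ (Proposition \ref{prop:KPZ11}); finally a self-propagation argument (Proposition \ref{prop:KPZ10}) shows the regularity bounds defining $\mathfrak{t}_{\mathrm{st}}$ hold to time $1$ with high probability, whence $\mathbf{Z}^N=\mathbf{U}^N\approx\mathbf{Q}^N$. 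This architecture, together with the sup-norm (rather than weak/space-time-averaged) formulation of the BG principle, is the main contribution, and it is what makes the martingale-problem step legitimate. Two smaller corrections: the gradient condition (Assumption \ref{ass:grad}) is used primarily to guarantee explicit product invariant measures for the local-equilibrium machinery, not to integrate the $\mathfrak{d}$-flux by parts; the transport term $\bar{\mathfrak{d}}\grad$ instead arises from subtracting the linear-in-$\eta$ part of $\wt{\mathfrak{q}}$ (Definition \ref{definition:mSHE+1}) and the identity $\eta\,\mathbf{Z}^N\approx N^{1/2}\grad_{-1}^{\mathbf X}\mathbf{Z}^N$. Also, the order-$1$ term $\mathfrak{s}\,\mathbf{Z}^N$ is dispatched by the simpler hydrodynamic-limit Lemma \ref{lemma:KPZ16}; the full BG principle is reserved for the $N^{1/2}\bar{\mathfrak{q}}$ term.
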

%%%
%%%
\begin{remark}
Observe the $\mathfrak{d}$-asymmetry is biased to the left. Moreover, any jump $x+1\to x$ in the system increases the value of $\mathbf{h}^{N}$ at $x$. Thus, the average growth speed of $\mathbf{h}^{N}$ increases as $\mathfrak{d}$ increases; this is why the leading-order $N^{1/2}\mathrm{R}_{2,1}$-renormalization from the $\mathfrak{d}$-asymmetry is proportional to $\mathfrak{d}$. This implies the nonlinearity in the hydrodynamic limit, which resembles the role of general $\mathbf{F}$ in our Taylor expansion discussion prior to \eqref{eq:SHE}, is proportional to and therefore ``positive" in $\mathfrak{d}$. Said Taylor expansion heuristic ultimately deduces from this that the KPZ/SHE limits for fluctuations have $+\bar{\mathfrak{d}}\grad$ instead of $-\bar{\mathfrak{d}}\grad$; although the exact coefficients predicted by Taylor expansion are possibly incorrect if not done carefully, its qualitative prediction for direction of transport/growth is correct, as substantiated by Theorem \ref{theorem:KPZ}.
\end{remark}
%%%
To the author's knowledge, Theorem \ref{theorem:KPZ} is a \emph{first} result on non-integrable and non-stationary interacting particle systems in which a homogenized linear transport term $\bar{\mathfrak{d}}\grad$ in $\mathrm{SHE}(\bar{\mathfrak{d}})$ is obtained in the KPZ limit. The proof estimates what-will-be-error-terms quantitatively, so we can let the speed of the $\mathfrak{d}$-asymmetry to be a slightly larger power of $N$ to obtain $\mathrm{SHE}(``\infty")$, where $``\infty"$ means follow a constant diverging-speed characteristic. Also, as in \cite{CYau}, the Boltzmann-Gibbs principle is sometimes applied to linearize environment-dependence of symmetric dynamics, recovering a Laplacian in the limit from a nonlinear second-order operator. The method herein can do this after few refinements. In a nutshell, this is homogenization in the top-order differential; Theorem \ref{theorem:KPZ} is homogenization of lower-order terms, while perturbations in the top-order are more delicate. To give a complete discussion of our method, we discuss the refinements in relation to \cite{CYau}; see Remarks \ref{remark:regremark1} and \ref{remark:regremark2}. But we defer details to future work; they are not complicated once we give Remarks \ref{remark:regremark1} and \ref{remark:regremark2} and apply calculations already in \cite{CYau} that combine naturally and generally with the ideas in this paper. These details are also separate from singular KPZ fluctuations of interest here.
%%%
\subsection{Additional Context}\label{subsection:context}
%%%
Theorem \ref{theorem:KPZ} can be interpreted in the following fashion. We establish in this paper a general method of deriving the Boltzmann-Gibbs principle for interacting particle systems, and to illustrate its utility and ``strength", we obtain nonlinear and singular KPZ fluctuations for a general set of particle system height functions. By ``strength", we refer to the fact that earlier work, including but certainly not limited to \cite{BR,CYau,GJ15,JM}, establishes Boltzmann-Gibbs principles that only hold in a sense that is too weak to address the singular behavior of KPZ. Indeed, the KPZ equation and SHE are PDEs that must be interpreted in a sufficiently strong topology to establish convergence in the context of particle systems \cite{ACQ,BG,DT}; while previous Boltzmann-Gibbs principles do not play well with the analytic procedure needed to solve SHE, our method gives a Boltzmann-Gibbs principle that \emph{does}, allowing us to rigorously derive singular KPZ fluctuations. By ``strength", we also refer to local nature of our Boltzmann-Gibbs principle and its derivation.

Towards universality of KPZ, Theorem \ref{theorem:KPZ} contributes to an almost empty set of non-stationary and non-integrable interacting particle systems for which convergence to KPZ is rigorously shown. In \cite{DT}, non-simple exclusion processes of maximal-particle-jump-length at most three are studied successfully. These are basically integrable if one is able to analyze hydrodynamic limits \cite{DT}. In \cite{Y}, the jump-length condition in \cite{DT} was removed; the necessary ingredient was a very weak form of the Boltzmann-Gibbs principle to show effective vanishing for a one-dimensional subspace of ``fluctuating observables" or ``pseudo-gradients" as defined in \cite{Y}. The key technical development here is to upgrade the weak principle of \cite{Y} to a full Boltzmann-Gibbs principle that not only applies to fluctuating observables but computes generally nonzero effective limits of general local observables. {To this end, we develop a non-stationary version of the multiscale renormalization of \cite{GJ15,SX}}. This necessary multiscale step is part of what makes the full Boltzmann-Gibbs principle more difficult compared to \cite{Y}; we use only the more robust one-block step of \cite{GPV} to analyze locally fluctuating terms, but to compute the macroscopic effects of general local observables, we require the more difficult two-blocks step of \cite{GPV}.  Finally, with some rather technical multiscale refinements, \cite{Y} extends to KPZ fluctuations in open boundary systems \cite{Y20}, for which little is known, by locality of its method; again, the same holds for our Boltzmann-Gibbs principle, letting us add to the universality of the so-called open KPZ equation \cite{CS}, for which minimal progress has been made. Extensions of earlier work on hydrodynamic fluctuations of linear Gaussian type, such as \cite{CYau}, to open boundary versions are also possible by using the method herein and similar technical refinements.  These are all currently being carried out by the author.

In \cite{Hai13,Hai14,HQ,HX}, regularity structures were used to study both the KPZ equation and its universality for generalizations of KPZ for non-quadratic nonlinearities that we discussed earlier. However, regularity structures depend on strong assumptions on the $\xi$-noise. To the author's knowledge, it is not known how to apply regularity structures to tackle either universality of KPZ or Boltzmann-Gibbs principles for interacting particle systems. It would certainly be interesting to see these developments. 
%%%
\subsection{The infinite volume case}
%%%
{This paper treats particle systems on a discrete torus (with limit SPDE on a compact torus). The use of the torus is purely for technical convenience as it gives a priori spatial compactness. (It is a frequent assumption in studies of large-scale asymptotics of interacting particle systems; see \cite{CYau}, for instance.) However, all our methods are spatially local, and the limit SPDE \eqref{eq:KPZ} is well-posed on the full line $\R$, so the infinite volume case for systems on $\Z$ should be doable, for example via the method in \cite{Y}.}
%%%
\subsection{Outline}
%%%
As this paper has many technically involved moving pieces, we make an effort to explain many points.
%%%
\begin{itemize}
\item In Section \ref{section:mSHE}, we derive an \emph{approximate} microscopic version of $\mathrm{SHE}(\bar{\mathfrak{d}})$ for $\mathbf{Z}^{N}$. This is standard for proving KPZ fluctuations.
\item Section \ref{section:proofKPZ} proves Theorem \ref{theorem:KPZ} \emph{assuming} three key ingredients, the last of which we show in Section \ref{section:proofKPZ} and the first two of which we outline. In particular, we introduce and discuss the Boltzmann-Gibbs principle. We then outline the rest of the paper.
\end{itemize}
%%%
%%%
\subsection{Conventions}
%%%
We write here a list of conventions, including notation, that are used frequently in the paper.
%%%
\begin{itemize}
\item {We use Landau big-Oh notation. Also, $\mathrm{a}\lesssim\mathrm{b}$ is equivalent to $\mathrm{a}=\mathrm{O}(\mathrm{b})$, and $\mathrm{a}\gtrsim\mathrm{b}$ is equivalent to $\mathrm{b}\lesssim\mathrm{a}$.}
\item The notation $\mathrm{SHE}(\bar{\mathfrak{d}})$ stands for the solution of SHE \eqref{eq:SHE} with linear transport coefficient $\bar{\mathfrak{d}}$.
\item We let $\mathscr{D}_{1}$/$\mathscr{C}_{1}$ be the Skorokhod space of cadlag paths/space of continuous paths until time 1 valued in $\mathscr{C}(\mathbb{T}^{1})$.
\item The microscopic length-scale is order 1. The macroscopic length-scale is order $N$. Mesoscopic length-scales are in between. The microscopic time-scale is order $N^{-2}$. The macroscopic time-scale is order $1$. Mesoscopic time-scales are in between.
\item Set $\mathbb{T}_{N}=\Z/N\Z$ and $\mathbb{T}^{1}=\R/\Z$. Recall we chose an embedding $\mathbb{T}_{N}\subseteq\Z$. For $\alpha>0$, define $\alpha\mathbb{T}^{1}=\R/\alpha\Z$.
\item Whenever we say $\mathbb{I}\subseteq\mathbb{T}_{N}$ for some subset $\mathbb{I}\subseteq\Z$, we mean its image under the mod-$|\mathbb{T}_{N}|$ map $\Z\to\Z/N\Z=\mathbb{T}_{N}\subseteq\Z$.
\item Provided any $z\in\mathbb{T}_{N}$, we let $|z|$ denote the absolute value after the embedding $\mathbb{T}_{N}\subseteq\Z$.
\item For stable initial data, see Definition \ref{definition:id} and Remark \ref{remark:id}. For rescaling operators $\Gamma^{N,\mathbf{X}}$ and $\Gamma^{N}$, see Definition \ref{definition:RG}. 
\item For any $\eta\in\Omega$ and $x\in\mathbb{T}_{N}$, define $\tau_{x}\eta$ to be the configuration defined by $(\tau_{x}\eta)_{z}=\eta_{-x+z}$ for all $z\in\mathbb{T}_{N}$ For any functional $\mathfrak{f}:\Omega\to\R$ and $x\in\mathbb{T}_{N}$ and $S\geq0$, we define $\tau_{x}\mathfrak{f}=\mathfrak{f}\circ\tau_{x}:\Omega\to\R$ to recenter $\mathfrak{f}$ at $x$ and $\mathfrak{f}_{S,y}=\tau_{y}\mathfrak{f}(\eta_{S})$.
\item Given any functional $\mathfrak{f}:\Omega\to\R$, we define its \emph{support} to be the smallest subset of $\mathbb{T}_{N}$ for which $\mathfrak{f}$ depends only on $\eta$-variables in that subset. For example, if $\mathfrak{f}(\eta)=\eta_{0}$, then the support of $\mathfrak{f}$ is the single point $\{0\}\subseteq\mathbb{T}_{N}$.
\item For the $\mathfrak{l}_{\mathfrak{d}}$ length-scale and the support of $\mathfrak{d}:\Omega\to\R$, see Assumption \ref{ass:grad}.
\item For $\mathrm{t}_{\mathrm{st}}$ or $\e_{\mathrm{ap}}$ and $\e_{\mathrm{RN}}$, see Definition \ref{definition:KPZ1}. For $\e_{1}$ and $\e_{\mathrm{RN},1}$, see Propositions \ref{prop:BGI1} and \ref{prop:BGI2}. For $\mathbf{Y}^{N}$, see Definition \ref{definition:KPZ5}. 
\item Provided any finite, not necessarily uniformly bounded, set $\mathrm{I}$, define the averaged summation $\wt{\sum}_{x\in\mathrm{I}}=|\mathrm{I}|^{-1}{\sum}_{x\in\mathrm{I}}$.
\item For any $\phi:\mathbb{T}_{N}\to\R$ and $\mathfrak{l}\in\Z$, define the spatial gradient on length-scale $\mathfrak{l}$ by $\grad_{\mathfrak{l}}^{\mathbf{X}}\phi_{x}=\phi_{x+\mathfrak{l}}-\phi_{x}$. We also define the discrete Laplacian via the composition {$\Delta=-\grad_{1}^{\mathbf{X}}\grad_{-1}^{\mathbf{X}}$}. Lastly define $\Delta^{!!}=N^{2}\Delta$ and $\grad_{\mathfrak{l}}^{!}=N\grad_{\mathfrak{l}}^{\mathbf{X}}$.
\item For any $\psi:[0,1]\to\R$ and $\mathrm{t}\in\R$, define the time-gradient on time-scale $\mathrm{t}$ by $\grad_{\mathrm{t}}^{\mathbf{T}}\psi_{\mathrm{s}}=\psi_{\mathrm{s}+\mathrm{t}}-\psi_{\mathrm{s}}$; if $\mathrm{s}+\mathrm{t}\not\in[0,1]$, then replace $\mathrm{s}+\mathrm{t}$ in the definition of $\grad^{\mathbf{T}}_{\mathrm{t}}\psi_{\mathrm{s}}$ by the boundary point $\{0,1\}$ closest to $\mathrm{s}+\mathrm{t}$.
\item For any $a,b\in\R$, we define the discretized interval $\llbracket a,b \rrbracket = [a,b]\cap\Z$.
\item For any $p\geq1$, we let $\|\|_{\omega;p}$ denote the $p$-norm with respect to \emph{all} the randomness in the particle system. Provided any $\mathrm{t}\geq0$ and spatial set $\mathbb{K}$ and function $\phi$, we define $\|\phi\|_{\mathrm{t};\mathbb{K}}=\sup_{(s,y)\in[0,\mathrm{t}]\times\mathbb{K}}|\phi_{s,y}|$.
\item For any $S,T\geq0$, we define $\mathbf{O}_{S,T}=|T-S|$ usually as an on-diagonal heat kernel factor; see Proposition \ref{prop:heat}.
\end{itemize}
%%%
%%%
\subsection{Acknowledgements}
%%%
The author thanks Amir Dembo for useful discussion, and support from the ARCS Foundation. The author would also like to thank the editor(s) and anonymous referees for detailed feedback, which greatly improved this paper.
%%%
%
%
%
%\newpage
%%%
\section{Approximate Microscopic SHE}\label{section:mSHE}
%%%
%The stochastic equation of the microscopic Cole-Hopf transform $\mathbf{Z}^{N}$ is an \emph{approximation} to the microscopic SHE with errors treated either by elementary means or by more delicate probabilistic means. The key error term is of order $N^{1/2}$ and will require Boltzmann-Gibbs principle/homogenization principle to get from it \emph{linear} first-order/transport derivative of $\mathbf{Z}^{N}$.
%Let us first introduce important notation for the stochastic equation for $\mathbf{Z}^{N}$-dynamics.
%%%
\begin{definition}
\fsp For $\sigma \in \R$, let $\E_{\sigma}$ be expectation with respect to product Bernoulli measure on $\Omega$ with $\E_{\sigma}\eta_{x} = \sigma$ for $x\in\mathbb{T}_{N}$.
\end{definition}
%%%
%%%
\begin{definition}\label{definition:mSHE+1}
\fsp Define $\mathfrak{q} \overset{}= \frac12\mathfrak{d}-\frac12\mathfrak{d}\cdot\eta_{0}\eta_{1}$. Its support is contained in $\llbracket-\mathfrak{l}_{\mathfrak{d}},\mathfrak{l}_{\mathfrak{d}}\rrbracket\subseteq\mathbb{T}_{N}$ for $\mathfrak{l}_{\mathfrak{d}}\in\Z_{\geq1}$ uniformly bounded.
%%%
\begin{itemize}[leftmargin=*]
\item Define $\wt{\mathfrak{q}}\overset{}=\tau_{-2\mathfrak{l}_{\mathfrak{d}}}\mathfrak{q}$ to shift the support of $\mathfrak{q}$ strictly to the left of $0\in\mathbb{T}_{N}$. Define $\bar{\mathfrak{q}}=\wt{\mathfrak{q}}-\E_{0}\wt{\mathfrak{q}}-\bar{\mathfrak{d}}\eta_{0}$ with $\bar{\mathfrak{d}}\overset{}=\partial_{\sigma}\E_{\sigma}\wt{\mathfrak{q}}|_{\sigma=0}$.
\item Define {$\wt{\mathfrak{s}}(\eta)\overset{}=-\wt{\mathfrak{q}}(\eta)\cdot\sum_{y=0}^{2\mathfrak{l}_{\mathfrak{d}}-1}\eta_{-y}$} and the $\E_{0}$-fluctuation $\mathfrak{s}\overset{}=\wt{\mathfrak{s}}-\E_{0}\wt{\mathfrak{s}}$.
\end{itemize}
%%%
\end{definition}
%%%
%%%
\begin{remark}\label{remark:mSHE+2}
\fsp Recall $\E_{0}\wt{\mathfrak{s}}$ is a part of the renormalization constant in the exponential $\mathbf{Z}^{N}$. To understand this renormalization, since $\wt{\mathfrak{q}}$ is local, we can write it as a polynomial in $\eta_{x}$-variables for $x$ in a fixed neighborhood of the origin. When we multiply its degree $\neq1$ monomials by a linear term to get $\wt{\mathfrak{s}}$, we get a polynomial with no constant term and therefore zero $\E_{0}$-expectation. Thus, degree $\neq1$ terms in $\wt{\mathfrak{q}}$, and therefore of $\mathfrak{q}$, do not produce constants that need to be renormalized. However, a linear term in $\mathfrak{q}$ can be cancelled into a constant after multiplication by a linear statistic since $\eta_{x}^{2}=1$, and non-zero constants have non-zero $\E_{0}$-expectation, so these terms yield constants that then need to be part of the renormalization of the height function and $\mathbf{Z}^{N}$. {On the other hand, if $\mathfrak{q}$ replaced by the linear functional $\eta\mapsto\eta_{0}$, then $\eta_{T,x}\mathbf{Z}_{T,x}^{N}\approx c_{1}N^{1/2}\grad_{-1}^{\mathbf{X}}\mathbf{Z}^{N}_{T,x}+c_{2}\mathbf{Z}^{N}_{T,x}$ with constants $c_{i}=c_{i}$ ultimately follows by Taylor expansion as in Section 2 of \cite{DT}. One can readily check that $c_{2}$ is obtained by replacing $\wt{\mathfrak{q}}(\eta)$ by $\eta\mapsto\eta_{0}$ in $\wt{\mathfrak{s}}$ and then taking $\E_{0}$.} Therefore, the renormalization $\E_{0}\wt{\mathfrak{s}}$ in $\mathbf{Z}^{N}$ can be equivalently computed by first linearizing the $\wt{\mathfrak{q}}$-environment-dependence to get ASEP without environment dependence as in \cite{BG} and then computing the renormalization for this homogenized/linearized ASEP by following the classical calculation in \cite{BG} of Bertini-Giacomin.
\end{remark}
%%%
%%%
\begin{prop}\label{prop:mSHE+}
\fsp We have the following with notation defined afterwards, in which $|\mathfrak{b}_{i;}|\lesssim1$ are possibly random:
\begin{align}
\d\mathbf{Z}_{T,x}^{N} \ = \ \mathscr{L}_{N}\mathbf{Z}_{T,x}^{N}\d T + \mathbf{Z}_{T,x}^{N}\d\xi_{T,x}^{N} - N^{\frac12}\bar{\mathfrak{q}}_{T,x}\mathbf{Z}_{T,x}^{N}\d T - \mathfrak{s}_{T,x}\mathbf{Z}_{T,x}^{N}\d T + N^{-\frac12}\mathfrak{b}_{1;T,x}\mathbf{Z}_{T,x}^{N}\d T + N^{-\frac12}\grad_{\star}^{!}\left(\mathfrak{b}_{2;T,x}\mathbf{Z}_{T,x}^{N}\right)\d T. \nonumber
\end{align}
%
%%%
\begin{itemize}[leftmargin=*]
\item Let us first define the discrete first-order gradient $\grad_{\mathfrak{l}}^{\mathbf{X}}\varphi_{x}=\varphi_{x+\mathfrak{l}}-\varphi_{x}$ provided any $\mathfrak{l} \in \Z$ and $\varphi:\mathbb{T}_{N}\to\R$. We proceed to define $\Delta^{!!} = N^{2}\grad_{1}^{\mathbf{X}}\grad_{-1}^{\mathbf{X}}$ and $\grad_{\mathfrak{l}}^{!} = N\grad_{\mathfrak{l}}^{\mathbf{X}}$. The first term in the equation above is defined by $\mathscr{L}_{N} \overset{}= 2^{-1}\Delta^{!!} {+ \bar{\mathfrak{d}}\grad_{-1}^{!}}$.
\item The $\d\xi_{\bullet,x}^{N}$-term is a martingale differential/compensated Poisson process corresponding to jumps over $\{x,x+1\}\subseteq\mathbb{T}_{N}$. {Put precisely, it is the following measure in $T$ (given any $x$) that describes the change in $\mathbf{Z}_{T,x}^{N}$ according to clocks in the $\eta$ process:
\begin{align*}
\d\xi_{T,x}^{N} \ = \ &(\mathrm{e}^{2N^{-\frac12}}-1)\mathbf{1}_{\eta_{T,x}=1}\mathbf{1}_{\eta_{T,x+1}=-1}[\d\mathcal{Q}_{T,x}^{N,\mathrm{S},\to}-\tfrac12N^{2}\d T]\ + \ (\mathrm{e}^{-2N^{-\frac12}}-1)\mathbf{1}_{\eta_{T,x}=-1}\mathbf{1}_{\eta_{T,x+1}=1}[\d\mathcal{Q}_{T,x}^{N,\mathrm{S},\leftarrow}-\tfrac12N^{2}\d T] \\
- \ &(\mathrm{e}^{2N^{-\frac12}}-1)\mathbf{1}_{\eta_{T,x}=1}\mathbf{1}_{\eta_{T,x+1}=-1}[\d\mathcal{Q}_{T,x}^{N,\mathrm{A},\to}-(\tfrac12N^{\frac32}+\tfrac12N\mathfrak{d}_{x}(\eta_{T}))\d T]\\
+ \ &(\mathrm{e}^{-2N^{-\frac12}}-1)\mathbf{1}_{\eta_{T,x}=-1}\mathbf{1}_{\eta_{T,x+1}=1}[\d\mathcal{Q}_{T,x}^{N,\mathrm{A},\leftarrow}-(\tfrac12N^{\frac32}+\tfrac12N\mathfrak{d}_{x}(\eta_{T}))\d T].
\end{align*}
The clocks $\mathcal{Q}^{N,\mathrm{S},\to}$ and $\mathcal{Q}^{N,\mathrm{S},\leftarrow}$ are Poisson processes in $T$ of speed $2^{-1}N^{2}$. The clocks $\mathcal{Q}^{N,\mathrm{A},\to}$ and $\mathcal{Q}^{N,\mathrm{A},\leftarrow}$ are Poisson processes in $T$ of speed $2^{-1}N^{3/2}+2^{-1}N\mathfrak{d}_{x}(\eta_{T})$, which is positive for sufficiently large $N$ as $|\mathfrak{d}|\lesssim1$. Lastly, the predictable quadratic covariation between any two distinct (compensated) Poisson clocks is zero.}
\item When we write $\grad_{\star}^{!}$, we sum over the choices $\star=1,-2\mathfrak{l}_{\mathfrak{d}}$ with $\mathfrak{b}_{2;}$ depending possibly on $\star$ but still uniformly bounded.
\end{itemize}
%%%
\end{prop}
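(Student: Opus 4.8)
The plan is to derive the stochastic differential equation for $\mathbf{Z}^N_{T,x} = \exp(-\mathbf{h}^N_{T,x} + \mathrm{R}T)$ by applying Itô's formula for jump processes (equivalently, by direct computation of the generator action on the exponential, combined with the martingale decomposition of the height function). First I would write $\d\mathbf{h}^N_{T,x}$ as a sum over the Poisson clocks $\mathcal{Q}^{N,\mathrm{S},\to}, \mathcal{Q}^{N,\mathrm{S},\leftarrow}, \mathcal{Q}^{N,\mathrm{A},\to}, \mathcal{Q}^{N,\mathrm{A},\leftarrow}$: each firing of a clock over the bond $\{x,x+1\}$ changes $\mathbf{h}^N_{T,\cdot}$ by $\pm 2N^{-1/2}$ at the relevant site (via the definition $\mathbf{h}^N_{T,x} = \mathbf{h}^N_{T,0} + N^{-1/2}\sum_{y=1}^x \eta_{T,y}$ and the fact that a jump across $\{x,x+1\}$ flips $\eta_x,\eta_{x+1}$). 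Since $\mathbf{Z}$ is an exponential of $-\mathbf{h}$, a jump multiplies $\mathbf{Z}^N_{T,x}$ by $\mathrm{e}^{\pm 2N^{-1/2}}$, which is exactly the origin of the factors $(\mathrm{e}^{2N^{-1/2}}-1)$ and $(\mathrm{e}^{-2N^{-1/2}}-1)$ appearing in the stated $\d\xi^N_{T,x}$. Compensating each Poisson clock by its rate gives the martingale part $\mathbf{Z}^N_{T,x}\d\xi^N_{T,x}$ plus a finite-variation ``drift'' coming from the compensators, together with the deterministic $\mathrm{R}T$ contribution $\mathrm{R}\,\mathbf{Z}^N_{T,x}\d T$.

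Next I would expand that finite-variation drift. Collecting the symmetric-clock compensators produces a discrete Laplacian: Taylor-expanding $\mathrm{e}^{\pm 2N^{-1/2}}-1 = \pm 2N^{-1/2} + 2N^{-1} \pm \tfrac{4}{3}N^{-3/2} + \tfrac{2}{3}N^{-2} + \O(N^{-5/2})$ and using $\tfrac12 N^2$ for the symmetric rates, the terms linear in $\mathbf{h}$-increments assemble (after using the occupation indicators $\mathbf{1}_{\eta_x=1,\eta_{x+1}=-1}$ etc.\ and the identity $\eta_x^2 = 1$) into $\tfrac12\Delta^{!!}\mathbf{Z}^N_{T,x}$, with the higher-order Taylor terms contributing a constant that is absorbed into $\mathrm{R}_1 = \tfrac12 N - \tfrac{1}{24}$ (this is precisely the Bertini--Giacomin computation, cf.\ \cite{BG,DT}) plus lower-order $N^{-1/2}$ errors of the form $N^{-1/2}\mathfrak{b}_{1}\mathbf{Z}$ and $N^{-1/2}\grad^!_\star(\mathfrak{b}_2\mathbf{Z})$. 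The asymmetric-clock compensators, carrying the rate $\tfrac12 N^{3/2} + \tfrac12 N\mathfrak{d}_x(\eta_T)$, produce the genuinely new contributions: the leading $N^{3/2}$ piece times the $2N^{-1/2}$ Taylor coefficient yields an order-$N$ gradient-type term which, upon using the gradient condition of Assumption \ref{ass:grad} (writing $\mathfrak{d}\grad^{\mathbf{X}}_1\eta = \grad^{\mathbf{X}}_1\mathfrak{w}$) and the shift to $\wt{\mathfrak{q}}$, gets reorganized via summation by parts into the transport term $\bar{\mathfrak{d}}\grad^!_{-1}\mathbf{Z}^N_{T,x}$ in $\mathscr{L}_N$, the fluctuating term $-N^{1/2}\bar{\mathfrak{q}}_{T,x}\mathbf{Z}^N_{T,x}$, the order-one correction $-\mathfrak{s}_{T,x}\mathbf{Z}^N_{T,x}$, the renormalization pieces $N^{1/2}\mathrm{R}_{2,1} + \tfrac12\bar{\mathfrak{d}} + \mathrm{R}_{2,3}$ (matching the definitions in Definition \ref{definition:height} and Definition \ref{definition:mSHE+1}, using $\mathrm{R}_{2,1} = -\tfrac12\E_0(\mathfrak{d} - \mathfrak{d}\eta_0\eta_1)$, $\bar{\mathfrak{d}} = \partial_\sigma\E_\sigma\wt{\mathfrak{q}}|_{\sigma=0}$, $\mathrm{R}_{2,3} = \E_0\wt{\mathfrak{s}}$), and the residual $N^{-1/2}$ errors of the two advertised shapes (the $\grad^!_\star$-form errors carrying the two shift choices $\star = 1, -2\mathfrak{l}_{\mathfrak{d}}$ that arise from translating $\mathfrak{q}$ to $\wt{\mathfrak{q}}$). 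The quadratic-variation contribution from Itô's formula only affects the diagonal clock terms (distinct clocks have zero predictable covariation, as stated), and these have already been accounted for since $\mathbf{Z}$ changes multiplicatively at jumps, so no extra bracket term appears beyond what the exponential produces.

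The main obstacle, and the bulk of the work, is the bookkeeping of the asymmetric-clock expansion: one must carefully track which monomials in $\wt{\mathfrak{q}}$ (degree $=1$ versus degree $\neq 1$, as explained in Remark \ref{remark:mSHE+2}) contribute constants to be renormalized versus genuine fluctuating or gradient observables, and one must verify that after the Gartner transform the environment-dependent asymmetry $\mathfrak{d}_x$ --- which multiplies a gradient $\mathsf{L}_x$-type increment --- can be written, modulo $N^{-1/2}$ errors, as the stated combination of the exact transport term $\bar{\mathfrak{d}}\grad^!_{-1}\mathbf{Z}$, the centered fluctuation $\bar{\mathfrak{q}}$, and the subleading $\mathfrak{s}$-term. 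This requires (i) a Taylor expansion of the Gartner-transformed increment to the relevant order in $N^{-1/2}$ (as in Section 2 of \cite{DT}), being careful that $\mathbf{h}$-increments over bonds away from $x$ appear with the correct gradient/Laplacian structure; (ii) the summation-by-parts manipulations converting $\mathfrak{d}_x\grad$-terms into $\grad(\mathfrak{w}$-terms$)$ and then into the $\bar{\mathfrak{d}}\grad^!_{-1}$ and $\grad^!_\star$ forms, which is where the specific shift $\tau_{-2\mathfrak{l}_{\mathfrak{d}}}$ in the definition of $\wt{\mathfrak{q}}$ is used to keep supports disjoint from $0$; and (iii) checking that all discarded terms genuinely have one of the two error shapes with uniformly bounded (possibly random) coefficients $\mathfrak{b}_{i;}$. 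Each individual step is a routine if lengthy computation, so I would state the expansion lemma, carry out the symmetric part quickly by citing \cite{BG}, and devote the detailed argument to the asymmetric part and the identification of the renormalization constants.
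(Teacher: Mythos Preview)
Your approach is essentially the paper's: Dynkin/martingale decomposition of $\mathbf{Z}^N$, cite \cite{BG,DT} for the ``standard'' part yielding $2^{-1}\Delta^{!!}$, then Taylor-expand and reorganize the environment-dependent piece via the shift $\mathfrak{q}\mapsto\wt{\mathfrak{q}}$ and a discrete Leibniz rule to extract $\bar{\mathfrak{d}}\grad_{-1}^{!}$, $N^{1/2}\bar{\mathfrak{q}}$, $\mathfrak{s}$, and the two error shapes. One bookkeeping point to correct before you execute: you attribute the Laplacian to the symmetric compensators alone and the ``new'' terms to the leading $N^{3/2}$ asymmetric piece, but in fact the $N^{3/2}$ asymmetry is part of the \cite{BG,DT} calculation that produces exactly $2^{-1}\Delta^{!!}$ (this is why $\mathrm{R}_1$ has the form it does), and the genuinely new contributions---$\bar{\mathfrak{q}}$, $\mathfrak{s}$, the transport, and the $\mathrm{R}_2$ renormalizations---come entirely from the $\tfrac12 N\mathfrak{d}_x$ piece of the asymmetric rate. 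Relatedly, the gradient condition $\mathfrak{d}\grad_1^{\mathbf{X}}\eta=\grad_1^{\mathbf{X}}\mathfrak{w}$ is used only for a \emph{lower-order} contribution (the paper's $\mathrm{T}^{+,N}$ term, which is $\mathrm{O}(N^{-1})$ rather than $\mathrm{O}(N^{-1/2})$); the main $\bar{\mathfrak{q}}$, $\mathfrak{s}$, and transport terms arise from the $\mathrm{T}^{-,N}$ part via the shift $\wt{\mathfrak{q}}=\tau_{-2\mathfrak{l}_{\mathfrak{d}}}\mathfrak{q}$ and the Leibniz rule for $\grad_{-2\mathfrak{l}_{\mathfrak{d}}}^{\mathbf{X}}$, not from Assumption~\ref{ass:grad}. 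With that reallocation your plan matches the paper's proof.
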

%%%
We provide a proof of Proposition \ref{prop:mSHE+} at the end of the subsection to avoid obstructing important takeaways in this section. Roughly speaking, the particle system at hand is ASEP from \cite{DT} but with only simple jumps and additional asymmetry $N^{-1}\mathfrak{d}$, and the Gartner transform $\mathbf{Z}^{N}$ is also that of \cite{DT} but with additional deterministic $\mathrm{R}_{2}t$ drift in the exponential. In view of these two observations, we follow the derivation of the microscopic SHE for the Gartner transform in Section 2 of \cite{DT}. Roughly, the only difference is the $N^{-1}\mathfrak{d}$ asymmetry. As jumps in $\mathbf{Z}^{N}$ are order {$N^{-1/2}\mathbf{Z}^{N}$}, the effect of $N^{-1}\mathfrak{d}$ asymmetry is order {$N^{1/2}\mathbf{Z}^{N}$} after time-scaling. We linearize the flux $\mathfrak{q}$ of this $N^{-1}\mathfrak{d}$ asymmetry to get $\bar{\mathfrak{q}}$ in Definition \ref{definition:mSHE+1}, and Taylor expansions/summation-by-parts give us the last three terms in the $\mathbf{Z}^{N}$-equation after cancelling with the additional $\mathrm{R}_{2}$-drift in $\mathbf{Z}^{N}$. The last three terms in the $\mathbf{Z}^{N}$ equation ultimately vanish in the large-$N$ limit. Now, to make Proposition \ref{prop:mSHE+} useful, we consider its mild form.
%%%
\begin{definition}\label{definition:heat}
\fsp We let $\mathbf{H}_{S,T,x,y}^{N}$ on $\R_{\geq0}^{2}\times\mathbb{T}_{N}^{2}$ be the heat kernel defined by $\mathbf{H}_{S,S,x,y}^{N}=\mathbf{1}_{x=y}$ and $\partial_{T}\mathbf{H}_{S,T,x,y}^{N}=\mathscr{L}_{N}\mathbf{H}_{S,T,x,y}^{N}$, where $\mathscr{L}_{N}$ acts on the backwards spatial variable $x\in\mathbb{T}_{N}$. Provided any test function $\varphi:\R\times\mathbb{T}_{N}\to\R$, we additionally define a pair of space-time and spatial heat operators, for which we give three ways that each operator may be written in this paper:
\begin{subequations}
\begin{align}
\mathbf{H}_{T,x}^{N}(\varphi) \ &= \ \mathbf{H}_{T,x}^{N}(\varphi_{S,y}) \ = \ \mathbf{H}_{T,x}^{N}(\varphi_{\bullet,\bullet}) \ = \ \int_{0}^{T}{\sum}_{y\in\mathbb{T}_{N}}\mathbf{H}_{S,T,x,y}^{N}\cdot\varphi_{S,y} \ \d S \\
\mathbf{H}_{T,x}^{N,\mathbf{X}}(\varphi) \ &= \ \mathbf{H}_{T,x}^{N,\mathbf{X}}(\varphi_{0,y}) \ = \ \mathbf{H}_{T,x}^{N,\mathbf{X}}(\varphi_{0,\bullet}) \ = \ {\sum}_{y\in\mathbb{T}_{N}}\mathbf{H}_{0,T,x,y}^{N}\cdot\varphi_{0,y}.
\end{align}
\end{subequations}
\end{definition}
%%%
%%%
\begin{corollary}\label{corollary:mSHE+}
\fsp Admit the setting and notation of \emph{Proposition \ref{prop:mSHE+}}. We have the stochastic integral equation
\begin{align}
\mathbf{Z}_{T,x}^{N} \ = \ \mathbf{H}_{T,x}^{N,\mathbf{X}}(\mathbf{Z}_{0,\bullet}^{N}) + \mathbf{H}_{T,x}^{N}(\mathbf{Z}^{N}\d\xi^{N}) - \mathbf{H}_{T,x}^{N}(N^{\frac12}\bar{\mathfrak{q}}\mathbf{Z}^{N}) - \mathbf{H}_{T,x}^{N}(\mathfrak{s}\mathbf{Z}^{N}) + N^{-\frac12}\mathbf{H}_{T,x}^{N}(\mathfrak{b}_{1;}\mathbf{Z}^{N}) + N^{-\frac12}\mathbf{H}_{T,x}^{N}\left(\grad_{\star}^{!}\left(\mathfrak{b}_{2;}\mathbf{Z}^{N}\right)\right). \nonumber
\end{align}
\end{corollary}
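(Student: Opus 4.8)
\textbf{Proof strategy for Corollary \ref{corollary:mSHE+}.} This is the Duhamel (mild) reformulation of the semimartingale identity in Proposition \ref{prop:mSHE+}, so the plan is to run the standard variation-of-constants argument against the heat kernel $\mathbf{H}^{N}$. Fix the terminal point $(T,x)$ and consider the process $S\mapsto\Phi_{S}:={\sum}_{y\in\mathbb{T}_{N}}\mathbf{H}_{S,T,x,y}^{N}\mathbf{Z}_{S,y}^{N}$ for $S\in[0,T]$. Since $\mathbb{T}_{N}$ is finite this is a finite linear combination of genuine semimartingales, and since $S\mapsto\mathbf{H}_{S,T,x,y}^{N}$ is deterministic, continuous and of finite variation, the integration-by-parts formula for semimartingales gives $\d\Phi_{S}={\sum}_{y}(\partial_{S}\mathbf{H}_{S,T,x,y}^{N})\mathbf{Z}_{S,y}^{N}\,\d S+{\sum}_{y}\mathbf{H}_{S,T,x,y}^{N}\,\d\mathbf{Z}_{S,y}^{N}$, with no cross-variation correction despite the jumps carried by $\mathbf{Z}^{N}$ (the kernel is continuous of finite variation in $S$, so its covariation with $\mathbf{Z}^{N}$ vanishes).

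Next I would substitute the equation for $\d\mathbf{Z}_{S,y}^{N}$ from Proposition \ref{prop:mSHE+} and cancel the two transport contributions. By time-homogeneity of the kernel, $\mathbf{H}_{S,T,x,y}^{N}=\mathbf{H}_{0,T-S,x,y}^{N}$, so $\partial_{S}\mathbf{H}_{S,T,x,y}^{N}=-\mathscr{L}_{N}\mathbf{H}_{S,T,x,y}^{N}$ with $\mathscr{L}_{N}=2^{-1}\Delta^{!!}+\bar{\mathfrak{d}}\grad_{-1}^{!}$ acting on the $x$-variable. Because $\Delta^{!!}$ is symmetric and $(\grad_{-1}^{!})^{\top}=\grad_{1}^{!}$ on the torus — exact summation by parts, no boundary term — together with translation invariance of $\mathbf{H}^{N}$ (the coefficients of $\mathscr{L}_{N}$ are constant), one checks the identity $\mathscr{L}_{N}^{x}\mathbf{H}_{S,T,x,y}^{N}=(\mathscr{L}_{N})^{\top}_{y}\mathbf{H}_{S,T,x,y}^{N}$, and hence ${\sum}_{y}(\partial_{S}\mathbf{H}_{S,T,x,y}^{N})\mathbf{Z}_{S,y}^{N}=-{\sum}_{y}\mathbf{H}_{S,T,x,y}^{N}(\mathscr{L}_{N}\mathbf{Z}_{S,y}^{N})$, which exactly kills the $\mathscr{L}_{N}\mathbf{Z}^{N}\,\d S$ term. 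What remains is $\d\Phi_{S}={\sum}_{y}\mathbf{H}_{S,T,x,y}^{N}\big[\mathbf{Z}_{S,y}^{N}\d\xi_{S,y}^{N}-N^{1/2}\bar{\mathfrak{q}}_{S,y}\mathbf{Z}_{S,y}^{N}\d S-\mathfrak{s}_{S,y}\mathbf{Z}_{S,y}^{N}\d S+N^{-1/2}\mathfrak{b}_{1;S,y}\mathbf{Z}_{S,y}^{N}\d S+N^{-1/2}\grad_{\star}^{!}(\mathfrak{b}_{2;S,y}\mathbf{Z}_{S,y}^{N})\d S\big]$.

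Finally I would integrate $S$ over $[0,T]$. The initial condition $\mathbf{H}_{T,T,x,y}^{N}=\mathbf{1}_{x=y}$ gives $\Phi_{T}=\mathbf{Z}_{T,x}^{N}$, while $\Phi_{0}={\sum}_{y}\mathbf{H}_{0,T,x,y}^{N}\mathbf{Z}_{0,y}^{N}=\mathbf{H}_{T,x}^{N,\mathbf{X}}(\mathbf{Z}_{0,\bullet}^{N})$. The $\d S$-integrals over $[0,T]$ are by Definition \ref{definition:heat} precisely $\mathbf{H}_{T,x}^{N}$ applied to each integrand, and the $\d\xi^{N}$-term becomes $\mathbf{H}_{T,x}^{N}(\mathbf{Z}^{N}\d\xi^{N})$; commuting the finite sum over $y\in\mathbb{T}_{N}$ and the deterministic bounded kernel $\mathbf{H}_{S,T,x,y}^{N}$ past the stochastic integral against the martingale measure $\d\xi^{N}$ is immediate by linearity, so no genuine stochastic Fubini is needed. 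Rearranging $\Phi_{T}-\Phi_{0}=\int_{0}^{T}\d\Phi_{S}$ yields the claimed identity. The only step that is not pure bookkeeping is the summation-by-parts/transpose identity for $\mathscr{L}_{N}$ combined with the semigroup property of $\mathbf{H}^{N}$; there is no analytic obstacle, since the spatial domain $\mathbb{T}_{N}$ is finite and all sums are exact.
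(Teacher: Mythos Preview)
Your proposal is correct and is exactly the standard Duhamel/variation-of-constants computation that the paper has in mind; indeed the paper states Corollary~\ref{corollary:mSHE+} without proof, treating it as the routine mild reformulation of Proposition~\ref{prop:mSHE+}. Your handling of the one nontrivial point---that $\partial_{S}\mathbf{H}^{N}_{S,T,x,y}$ combines with the $\mathscr{L}_{N}\mathbf{Z}^{N}$ term to cancel---is fine, and could be phrased even more directly by noting that $\mathbf{H}^{N}$, as the matrix exponential $e^{(T-S)\mathscr{L}_{N}}$, commutes with $\mathscr{L}_{N}$, so $\partial_{S}\mathbf{H}^{N}=-\mathbf{H}^{N}\mathscr{L}_{N}$ with $\mathscr{L}_{N}$ now acting on the forward variable $y$; this avoids invoking translation invariance and the transpose identity separately.
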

%%%
%%%
\begin{proof}[Proof of \emph{Proposition \ref{prop:mSHE+}}]
We follow the derivation of the microscopic SHE in Section 2 of \cite{DT}. Following their first steps at the beginning of Section 2, we derive the following for the time-differential of $\mathbf{Z}^{N}$, which we discuss below:
\begin{align}
\d\mathbf{Z}_{T,x}^{N} \ &= \ N^{2}\Phi_{T,x}^{\mathrm{S}}\mathbf{Z}_{T,x}^{N}\d T + N^{2}\Phi_{T,x}^{\mathrm{A},1}\mathbf{Z}_{T,x}^{N}\d T + N^{2}\Phi_{T,x}^{\mathrm{A},2}\mathbf{Z}_{T,x}^{N}\d T + \mathrm{R}_{1}\mathbf{Z}_{T,x}^{N}\d T + \mathrm{R}_{2}\mathbf{Z}_{T,x}^{N}\d T + \mathbf{Z}_{T,x}^{N}\d\xi_{T,x}^{N}. \label{eq:mSHE+1}
\end{align}
We clarify $\Phi^{\mathrm{S}}$ and $\Phi^{\mathrm{A},1}$ and $\Phi^{\mathrm{A},2}$ coefficients shortly. We briefly note, however, that \eqref{eq:mSHE+1} is a martingale/Dynkin decomposition for $\mathbf{Z}^{N}$, where the martingale is explicitly recorded in terms of Poisson clocks in the particle system. {In particular, to derive \eqref{eq:mSHE+1}, one starts with the following integral equation (that comes from the Dynkin formula), in which the stochastic integral on the LHS should be interpreted as integration of $\mathbf{Z}^{N}_{S,x}$ against the measure $\d\xi_{S,x}^{N}$:
\begin{align}
\int_{0}^{T}\mathbf{Z}_{S,x}^{N}\d\xi_{S,x}^{N} \ = \ \mathbf{Z}_{T,x}^{N}-\mathbf{Z}_{0,x}^{N}-\int_{0}^{T}(\mathrm{R}+\mathsf{L}_{N})\mathbf{Z}_{S,x}^{N}\d S, \nonumber
\end{align}
where $\mathrm{R}$ is the renormalization constant in Definition \ref{definition:height}, and $\mathsf{L}_{N}$ is the generator of the particle system. Indeed, as in Section 2 of \cite{DT}, integrating all of the clock terms in $\d\xi^{N}_{S,x}$ accounts for the total change $\mathbf{Z}_{T,x}^{N}-\mathbf{Z}_{0,x}^{N}$. The drift terms in $\d\xi^{N}_{S,x}$ account for the generator term $\mathsf{L}_{N}\mathbf{Z}^{N}_{S,x}$. The $\mathsf{L}_{N,\mathrm{S}}$ part of $\mathsf{L}_{N}$ yields $N^{2}\Phi^{S}\mathbf{Z}^{N}$ in \eqref{eq:mSHE+1}, and the $\mathsf{L}_{N,\mathrm{A}}$ parts yields $N^{2}\Phi^{\mathrm{A},1}\mathbf{Z}^{N}+N^{2}\Phi^{\mathrm{A},2}\mathbf{Z}^{N}$. (One can set $\Phi^{\mathrm{S}},\Phi^{\mathrm{A},1}, \Phi^{\mathrm{A},2}$ for this to be true; we are then left to compute these terms.) In particular, the claim about vanishing quadratic covariations follows since each (compensated) Poisson clock in the statement of Proposition \ref{prop:mSHE+} comes from a different $\mathsf{L}_{x}$ operator $\mathsf{L}_{N}$. The $\mathrm{R}\mathbf{Z}^{N}$ term comes from the fact that $\mathbf{Z}^{N}_{T,x}$ exponentiates $\mathrm{R}T$. It gives $\mathrm{R}_{1}\mathbf{Z}^{N}+\mathrm{R}_{2}\mathbf{Z}^{N}$ in \eqref{eq:mSHE+1}.}

The exact formulas for $\Phi^{\mathrm{S}}$ and $\Phi^{\mathrm{A},1}$ are not important to this proof as we deal with them via citing the calculations in Section 2 of \cite{DT}; the same applies to $\mathrm{R}_{1}$. The emphasis of this calculation will be computing $\Phi^{\mathrm{A},2}$ and $\mathrm{R}_{2}$, the former of which is equal to the following ``instantaneous" growth/change in $\mathbf{Z}^{N}$ that results from motion of the particle system through the $\mathfrak{d}$-asymmetry:
\begin{align*}
\Phi_{T,x}^{\mathrm{A},2} \ &\overset{\bullet}= \ 2^{-1}N^{-1}\mathfrak{d}_{T,x}\mathbf{1}_{\eta_{T,x}=-1}\mathbf{1}_{\eta_{T,x+1}=1}\left(\Exp(-2N^{-\frac12})-1\right) -  2^{-1}N^{-1}\mathfrak{d}_{T,x}\mathbf{1}_{\eta_{T,x}=1}\mathbf{1}_{\eta_{T,x+1}=-1}\left(\Exp(2N^{-\frac12})-1\right).
\end{align*}
In Section 2 of \cite{DT}, through Taylor expansion and lengthy though elementary calculations, the authors identify the contribution in \eqref{eq:mSHE+1} of $\Phi^{\mathrm{S}}$ and $\Phi^{\mathrm{A},1}$ and $\mathrm{R}_{1}$ with a discrete approximation of the continuum Laplacian. Precisely, they establish the identity
\begin{align}
{N^{2}\Phi_{T,x}^{\mathrm{S}}\mathbf{Z}_{T,x}^{N} + N^{2}\Phi_{T,x}^{\mathrm{A},1}\mathbf{Z}_{T,x}^{N} + \mathrm{R}_{1}\mathbf{Z}_{T,x}^{N} \ = \ 2^{-1}\Delta^{!!}\mathbf{Z}_{T,x}^{N}.} \label{eq:mSHE+2}
\end{align}
Provided \eqref{eq:mSHE+1} and \eqref{eq:mSHE+2}, we are left with computing $\Phi^{\mathrm{A},2}$ and $\mathrm{R}_{2}$ contributions. To this end, it will {be} convenient to first define $\mathrm{E}_{\pm,N}=\Exp(\pm 2N^{-1/2})-1$ along with two ``trigonometric-type" functions $\mathrm{T}^{\pm,N}=\mathrm{E}_{-,N}\pm\mathrm{E}_{+,N}$. Let us also observe the identity $2\mathbf{1}(\eta=\pm1)=1\pm\eta$ for $\eta\in\{\pm1\}$, which can be checked immediately. This allows us to rewrite the indicator functions in $\Phi^{\mathrm{A},2}$ explicitly as local functionals of the particle system and thus lets us compute as follows:
\begin{align}
\Phi^{\mathrm{A},2}_{T,x} \ &= \ 8^{-1}N^{-1}(1-\eta_{T,x})(1+\eta_{T,x+1})\mathfrak{d}_{T,x}\mathrm{E}_{-,N} - 8^{-1}N^{-1}(1+\eta_{T,x})(1-\eta_{T,x+1})\mathfrak{d}_{T,x}\mathrm{E}_{+,N} \\
&= \ 8^{-1}N^{-1}\mathrm{T}^{-,N}\left(\mathfrak{d}_{T,x} - \mathfrak{d}_{T,x}\eta_{T,x}\eta_{T,x+1}\right)+8^{-1}N^{-1}\mathrm{T}^{+,N}\mathfrak{d}_{T,x}\grad_{1}^{\mathbf{X}}\eta_{T,x}. \label{eq:mSHE+3}
\end{align}
As {$\wt{\mathfrak{q}}_{T,x}-\mathfrak{q}_{T,x}=\grad_{-2\mathfrak{l}_{\mathfrak{d}}}^{\mathbf{X}}\mathfrak{q}_{T,x}$, where $\grad_{-2\mathfrak{l}_{\mathfrak{d}}}^{\mathbf{X}}$ acts on $x$} (see Definition \ref{definition:mSHE+1}), for the first term in \eqref{eq:mSHE+3}, we have
\begin{align}
8^{-1}\mathfrak{d}_{T,x}-8^{-1}\mathfrak{d}_{T,x}\eta_{T,x}\eta_{T,x+1} \ &= \ 4^{-1}\wt{\mathfrak{q}}_{T,x} - 4^{-1}\grad^{\mathbf{X}}_{-2\mathfrak{l}_{\mathfrak{d}}}\mathfrak{q}_{T,x}\\
&= \ 4^{-1}\bar{\mathfrak{q}}_{T,x} +  4^{-1}\E_{0}\wt{\mathfrak{q}} + 4^{-1}\bar{\mathfrak{d}}\eta_{T,x} - 4^{-1}\grad^{\mathbf{X}}_{-2\mathfrak{l}_{\mathfrak{d}}}\mathfrak{q}_{T,x}. \label{eq:mSHE+4}
\end{align}
We will now multiply the calculation \eqref{eq:mSHE+3} by $\mathbf{Z}^{N}$, use the identity \eqref{eq:mSHE+4},  and then add the additional drift $N^{-2}\mathrm{R}_{2}\mathbf{Z}^{N}$. We will match the resulting sum and identities to the non-$\Delta$ and non-$\xi^{N}$ terms in the proposed SDE for $\mathbf{Z}^{N}$. For the purposes of clearer organization, we write these calculations in the following bullet points. We address each term in \eqref{eq:mSHE+4} in written order. Let us clarify that throughout the following list, we may change $\mathfrak{b}_{1;}$ from line to line, but it is always a sum of an $N$-independent number of order $N^{-1/2}$ error terms that come from Taylor expansion. Lastly, recall $\mathrm{R}_{2}=N^{1/2}\mathrm{R}_{2,1}+\mathrm{R}_{2,2}+\mathrm{R}_{2,3}$. 
%%%
\begin{itemize}
\item Let us first match $4^{-1}N\mathrm{T}^{-,N}\bar{\mathfrak{q}}$ from \eqref{eq:mSHE+4} plugged into \eqref{eq:mSHE+3} to $-N^{1/2}\bar{\mathfrak{q}}$ in the proposed SDE up to error $\mathrm{O}(N^{-1/2})$. This follows, by definition of $\mathrm{T}^{-,N}$ from immediately before \eqref{eq:mSHE+3}, via $\mathrm{T}^{-,N}\sim-4N^{-1/2}+\mathrm{O}(N^{-3/2})$.
\item Let us now match $4^{-1}N\mathrm{T}^{-,N}\E_{0}\wt{\mathfrak{q}}$, obtained by plugging \eqref{eq:mSHE+4} in \eqref{eq:mSHE+3}, with $-N^{1/2}\mathrm{R}_{2,1}$ so that these terms cancel each other in the $\mathbf{Z}^{N}$ SDE, again up to $\mathrm{O}(N^{-1/2})$ that adds to $\mathfrak{b}_{1;}$. By definition $\mathrm{R}_{2,1}=-2^{-1}\E_{0}(\mathfrak{d}-\mathfrak{d}\cdot\eta_{0}\eta_{1})=-\E_{0}\mathfrak{q}=-\E_{0}\wt{\mathfrak{q}}$ since product Bernoulli measure in $\E_{0}$ is invariant under spatial shifts. It now suffices to again use $\mathrm{T}^{-,N}\sim-4N^{-1/2}+\mathrm{O}(N^{-3/2})$. 
\item We match $\mathrm{R}_{2,2}+4^{-1}N\mathrm{T}^{-,N}\bar{\mathfrak{d}}\eta$ again obtained by plugging \eqref{eq:mSHE+4} in \eqref{eq:mSHE+3} to the first-order operator $-\bar{\mathfrak{d}}\grad_{-1}^{!}=-N\bar{\mathfrak{d}}\grad_{-1}^{\mathbf{X}}$ in $\mathscr{L}_{N}$ up to $\mathrm{O}(N^{-1/2})$ to be absorbed into $\mathfrak{b}_{1;}$:
\begin{align}
4^{-1}N\mathrm{T}^{-,N}\bar{\mathfrak{d}}\eta \mathbf{Z}^{N} + \mathrm{R}_{2,2}\mathbf{Z}^{N} \ = \ -\bar{\mathfrak{d}}\grad_{-1}^{!}\mathbf{Z}^{N} \ = \ -N\bar{\mathfrak{d}}\grad_{-1}^{\mathbf{X}}\mathbf{Z}^{N}. \label{eq:finaledit2.8}
\end{align}
We compute $\grad_{-1}^{\mathbf{X}}\mathbf{Z}^{N}$ with Taylor expansion via its definition (see Section 2 of \cite{DT}):
{
\begin{align}
\grad_{-1}^{\mathbf{X}}\mathbf{Z}^{N}_{T,x} \ &= \ \mathrm{e}^{-\mathbf{h}_{T,x-1}^{N}+\mathrm{R}T}-\mathrm{e}^{-\mathbf{h}_{T,x}^{N}+\mathrm{R}T} \ = \ (\mathrm{e}^{\mathbf{h}_{T,x}^{N}-\mathbf{h}_{T,x-1}^{N}}-1)\mathbf{Z}_{T,x}^{N} \ = \ (N^{-\frac12}\eta_{T,x}+2^{-1}N^{-1}+\mathrm{O}(N^{-\frac32}))\mathbf{Z}_{T,x}^{N}. \nonumber
\end{align}
}Thus $-N\bar{\mathfrak{d}}\grad^{\mathbf{X}}_{-1}\mathbf{Z}^{N}\sim (-N^{1/2}\bar{\mathfrak{d}}\eta-2^{-1}\bar{\mathfrak{d}})\mathbf{Z}^{N}+\mathrm{O}(N^{-1/2})\mathbf{Z}^{N}$. On the other hand, Taylor expansion gives $4^{-1}N\mathrm{T}^{-,N}\bar{\mathfrak{d}}\eta\sim -N^{1/2}\bar{\mathfrak{d}}\eta+\mathrm{O}(N^{-1/2})$ that can again be absorbed by $\mathfrak{b}_{1;}$. Recalling {$\mathrm{R}_{2,2}=2^{-1}\bar{\mathfrak{d}}$}, we get the desired matching \eqref{eq:finaledit2.8}.
\item We move to {$-4^{-1}N\mathrm{T}^{-,N}\grad^{\mathbf{X}}_{-2\mathfrak{l}_{\mathfrak{d}}}\mathfrak{q}_{T,x}\cdot\mathbf{Z}^{N}_{T,x}$} again obtained by plugging \eqref{eq:mSHE+4} in \eqref{eq:mSHE+3}. We compute/match it as follows:
\begin{align}
-4^{-1}N\mathrm{T}^{-,N}{\grad^{\mathbf{X}}_{-2\mathfrak{l}_{\mathfrak{d}}}\mathfrak{q}_{T,x}\cdot\mathbf{Z}^{N}_{T,x}} + \mathrm{R}_{2,3}\mathbf{Z}^{N}_{T,x} \ = \ -\mathfrak{s}_{T,x}\mathbf{Z}^{N}_{T,x}+N^{1/2}\grad^{\mathbf{X}}_{-2\mathfrak{l}_{\mathfrak{d}}}(\mathfrak{b}_{2;T,x}\mathbf{Z}^{N}_{T,x}). \label{eq:mSHE+last}
\end{align}
We clarify $\mathfrak{b}_{2;}$ shortly. We start with calculation below to be explained after; recall $\wt{\mathfrak{q}}=\tau_{-2\mathfrak{l}_{\mathfrak{d}}}\mathfrak{q}$:
\begin{align}
&-4^{-1}N\mathrm{T}^{-,N}{\grad^{\mathbf{X}}_{-2\mathfrak{l}_{\mathfrak{d}}}\mathfrak{q}_{T,x}\cdot\mathbf{Z}^{N}_{T,x}} \ = \ -4^{-1}N\mathrm{T}^{-,N}{\grad^{\mathbf{X}}_{-2\mathfrak{l}_{\mathfrak{d}}}\left(\mathfrak{q}_{T,x}\mathbf{Z}^{N}_{T,x}\right)} + 4^{-1}N\mathrm{T}^{-,N}\wt{\mathfrak{q}}_{T,x}\grad^{\mathbf{X}}_{-2\mathfrak{l}_{\mathfrak{d}}}\mathbf{Z}^{N}_{T,x} \nonumber \\
&= \ -4^{-1}N\mathrm{T}^{-,N}{\grad^{\mathbf{X}}_{-2\mathfrak{l}_{\mathfrak{d}}}\left(\mathfrak{q}_{T,x}\mathbf{Z}^{N}_{T,x}\right)} + 4^{-1}N\mathrm{T}^{-,N}\wt{\mathfrak{q}}_{T,x}N^{-1/2}\left({\sum}_{\mathfrak{j}=1}^{2\mathfrak{l}_{\mathfrak{d}}}\tau_{x-\mathfrak{j}}\eta_{T} + \mathrm{O}(N^{-1})\right)\mathbf{Z}^{N}_{T,x}. \label{eq:mSHE+5}
\end{align}
The first line follows by a discrete version of the Leibniz rule that can be verified by unfolding discrete gradients and cancelling terms. The second line \eqref{eq:mSHE+5} follows by Taylor expanding $\grad^{\mathbf{X}}_{-2\mathfrak{l}_{\mathfrak{d}}}\mathbf{Z}^{N}$ as in Section 2 of \cite{DT}. Because $\mathrm{T}^{-,N}=-4N^{-1/2}+\mathrm{O}(N^{-3/2})$, we can absorb $\mathrm{O}(N^{-1})$ in \eqref{eq:mSHE+5} to $N^{-1/2}\mathfrak{b}_{1;}$ and drop it from \eqref{eq:mSHE+5}. This also implies that the second term in \eqref{eq:mSHE+5} is $-\wt{\mathfrak{s}}\mathbf{Z}^{N}=-\mathfrak{s}\mathbf{Z}^{N}-\E_{0}\wt{\mathfrak{s}}\mathbf{Z}^{N}=-\mathfrak{s}\mathbf{Z}^{N}-\mathrm{R}_{2,3}\mathbf{Z}^{N}$. Lastly, the first term in \eqref{eq:mSHE+5} has the form {$N^{-1/2}\grad_{-2\mathfrak{l}_{\mathfrak{d}}}^{!}(\mathfrak{b}_{2;T,x}\mathbf{Z}^{N}_{T,x})$} for $|\mathfrak{b}_{2;}|\lesssim1$. Combining this paragraph with \eqref{eq:mSHE+5} gives the desired matching \eqref{eq:mSHE+last}.
\item We are left with analyzing the last term in \eqref{eq:mSHE+3}. For this first recall the gradient condition that we have assumed provides the current representation {$\mathfrak{d}_{T,x}\grad^{\mathbf{X}}_{1}\eta_{T,x}=\grad^{\mathbf{X}}_{1}\mathfrak{w}_{T,x}$} where $\mathfrak{w}$ is uniformly bounded. Moreover, we observe that $|\mathrm{T}^{+,N}|\lesssim N^{-1}$, which is a smaller estimate than what we had for $\mathrm{T}^{-,N}$ by a factor of $N^{-1/2}$. Thus, we may employ the exact same argument as the previous bullet point, precisely by replacing $\mathfrak{q}$ with $\mathfrak{w}$ and $-2\mathfrak{l}_{\mathfrak{d}}$ with $1$, to identify the last term in \eqref{eq:mSHE+3} to be of the form {$N^{-1/2}\grad^{!}_{1}(\mathfrak{b}_{2;T,x}\mathbf{Z}^{N}_{T,x})+\mathrm{O}(N^{-1/2})$}. We clarify that here, there is no matching $\wt{\mathfrak{s}}$-terms with $\mathrm{R}_{2}$-terms, because the $N^{-1/2}$ factor we gain from having a coefficient $\mathrm{T}^{+,N}$ instead of $\mathrm{T}^{-,N}$ renders all such terms order $N^{-1/2}$, thus absorbed by $\mathfrak{b}_{1;}$.
\end{itemize}
%%%
This completes the proof.
\end{proof}
%%%
%
%
%
%%%
\section{Proof of Theorem \ref{theorem:KPZ}}\label{section:proofKPZ}
%%%
At a high level, the proof of Theorem \ref{theorem:KPZ} is built on an analysis of the semi-discrete stochastic integral equation from Corollary \ref{corollary:mSHE+}. As with \cite{DT,Y}, our main goal will be to prove that only the first two terms therein contribute in the large-$N$ limit in a ``high probability" sense. The last two terms on the RHS of this equation are easily shown to vanish in the large-$N$ limit by \emph{deterministic} and \emph{analytic} estimates, at least if we assume that the Gartner transform and its space-time supremum are not totally ill-behaved; such assumption will ultimately be justified by virtue of the fact that the Gartner transform is supposed to resemble the solution of SHE on the compact torus, which itself is uniformly continuous in space-time. But the $\mathfrak{s}$-term in Corollary \ref{corollary:mSHE+} does not admit such an elementary analytic estimate, since it does not necessarily have a deterministically small prefactor. The probabilistic approach we take to study the heat operator with the $\mathfrak{s}$-functional is based on the feature that it vanishes at a level of ``hydrodynamic limits" since the global $\eta$-density for our initial data is roughly zero, and by construction the expectation of $\mathfrak{s}$ with respect to the product Bernoulli measure of this $\eta$-density is also zero. Equivalently, in the language of \cite{DT,Y} the $\mathfrak{s}$-term is ``weakly vanishing". We will make this ``hydrodynamic" argument precise in Lemma \ref{lemma:KPZ16}.

We are left with analyzing the order $N^{1/2}$ term in the stochastic equation of Corollary \ref{corollary:mSHE+}. Because $N^{1/2}$ certainly diverges in the large-$N$ limit, neither the previous analytic or hydrodynamic limit arguments will succeed. In fact, if we replace the particle-system-dependent term $\bar{\mathfrak{q}}$ with \emph{any} local $\mathfrak{f}$ that has ``zero hydrodynamic limit" like $\mathfrak{s}$ above, it is likely false that the corresponding heat operator term in Corollary \ref{corollary:mSHE+} acting on $N^{1/2}\mathfrak{f}\mathbf{Z}^{N}$ will vanish in the large-$N$ limit, based on the equilibrium calculations in \cite{GJ15}, for example. Therefore, we must take advantage of $\bar{\mathfrak{q}}$ being the local functional $\wt{\mathfrak{q}}$ after subtracting off its ``leading order" behavior beyond the hydrodynamic limit when averaged in space-time against the heat kernel and $\mathbf{Z}^{N}$. We will do this through a non-stationary first-order Boltzmann-Gibbs principle, which will require a combination of analytic and probabilistic ingredients. The analytic considerations required mainly amount to regularity estimates of $\mathbf{Z}^{N}$, which by calculus implies regularity of $\mathbf{h}^{N}$ and, by definition, controls local invariant measures that are parameterized by $\eta$-density. For this reason, first define the following stopping times, which uniformly control $\mathbf{Z}^{N}$ and its space-time regularity. In the construction below, we will require a strange integer condition that is ultimately unnecessary; it will just make presentation later in the paper clearer and more convenient.
%%%
\begin{definition}\label{definition:KPZ1}
\fsp Consider $\e_{\mathrm{ap}}>0$ arbitrarily small but bounded below uniformly and chosen so that $N^{\e_{\mathrm{ap}}}$ is an integer. We note this may force $\e_{\mathrm{ap}}$ to be $N$-dependent, but this is okay; we only need its uniform positivity and smallness. Define
\begin{align}
\mathfrak{t}_{\mathrm{ap}} \ \overset{\bullet}= \ \inf\left\{\mathrm{t}\in[0,1]: \ \|\mathbf{Z}^{N}\|_{\mathrm{t};\mathbb{T}_{N}}+\|(\mathbf{Z}^{N})^{-1}\|_{\mathrm{t};\mathbb{T}_{N}} \geq N^{\e_{\mathrm{ap}}}\right\}\wedge1,
\end{align}
where $\|\|_{\mathrm{t};\mathbb{K}}$ is the $\mathscr{L}^{\infty}([0,\mathrm{t}]\times\mathbb{K})$-norm. We now introduce space-time scales on which we want a priori regularity estimates:
%%%
\begin{itemize}
\item We first define $\mathbb{I}^{\mathbf{T},1}\overset{\bullet}=\{N^{-2+\mathfrak{j}\e_{\mathrm{ap}}}\}_{\mathfrak{j}\geq0}\cap[0,N^{-1}]$. Observe that $N^{-2+\mathfrak{j}\e_{\mathrm{ap}}}$ are positive integer multiples of $N^{-2}$.
\item We now define $\mathbb{I}^{\mathbf{T}}\overset{\bullet}=\ \{\mathfrak{k}N^{-2+\mathfrak{j}\e_{\mathrm{ap}}}\}$, in which $1\leq\mathfrak{k}\leq N^{\e_{\mathrm{ap}}}$ and $\mathfrak{j}\geq0$ ranges over all indices for which $N^{-2+\mathfrak{j}\e_{\mathrm{ap}}}\leq N^{-1}$.
\end{itemize}
%%%
We also define/assume $\e_{\mathrm{RN}}=999^{-999}\geq999^{999}\e_{\mathrm{ap}}$ and then define the length-scale $\mathfrak{l}_{N}\overset{\bullet}=N^{1/2+\e_{\mathrm{RN}}}$. We now define the two stopping times below in which we recall $\grad^{\mathbf{X}}_{\mathfrak{l}}\phi_{x}=\phi_{x+\mathfrak{l}}-\phi_{x}$ and $\grad^{\mathbf{T}}_{\mathrm{s}}\psi_{t,x}=\psi_{(1\wedge(t+\mathrm{s}))\vee0,x}-\psi_{t,x}$ for $(t,x)\in[0,1]\times\mathbb{T}_{N}$:
\begin{align}
\mathfrak{t}_{\mathrm{RN}}^{\mathbf{T}} \ &\overset{\bullet}= \ \inf\left\{\mathrm{t}\in[0,1]: \ {\sup}_{\mathrm{s}\in\mathbb{I}^{\mathbf{T}}}\left(\mathrm{s}^{-1/4}\|\grad_{-\mathrm{s}}^{\mathbf{T}}\mathbf{Z}^{N}\|_{\mathrm{t};\mathbb{T}_{N}}\right) \geq N^{\e_{\mathrm{ap}}}\left(1+\|\mathbf{Z}^{N}\|_{\mathrm{t};\mathbb{T}_{N}}^{2}\right)\right\}\wedge1 \\
\mathfrak{t}_{\mathrm{RN}}^{\mathbf{X}} \ &\overset{\bullet}= \ \inf\left\{\mathrm{t}\in[0,1]: \ {\sup}_{1\leq|\mathfrak{l}|\leq\mathfrak{l}_{N}}\left(N^{1/2}|\mathfrak{l}|^{-1/2}\|\grad_{\mathfrak{l}}^{\mathbf{X}}\mathbf{Z}^{N}\|_{\mathrm{t};\mathbb{T}_{N}}\right) \geq N^{\e_{\mathrm{ap}}}\left(1+\|\mathbf{Z}^{N}\|_{\mathrm{t};\mathbb{T}_{N}})^{2}\right)\right\}\wedge1.
\end{align}
We conclude by defining the stopping time $\mathfrak{t}_{\mathrm{st}}={\mathfrak{t}_{\mathrm{ap}}}\wedge\mathfrak{t}_{\mathrm{RN}}^{\mathbf{T}}\wedge\mathfrak{t}_{\mathrm{RN}}^{\mathbf{X}}$ that is contained in $[0,1]$ with probability 1 and whose purpose is to supply a priori space-time control on the Gartner transform. Let us clarify that the utility behind the two regularity stopping times $\mathfrak{t}_{\mathrm{RN}}^{\mathbf{T}}$ and $\mathfrak{t}_{\mathrm{RN}}^{\mathbf{X}}$ will be to yield a priori estimates that are necessary to perform a renormalization scheme during the proof of the Boltzmann-Gibbs principle, while the utility behind $\mathfrak{t}_{\mathrm{ap}}$ is to avoid having to simultaneously apply probabilistic and analytic estimates to study particle system data and control $\mathbf{Z}^{N}$, the latter being ignorable if we look before the stopping time $\mathfrak{t}_{\mathrm{ap}}$.
\end{definition}
%%%
%%%
\begin{remark}\label{remark:KPZ2}
\fsp The stopping time $\mathfrak{t}_{\mathrm{ap}}$ also gives a priori lower bounds on $\mathbf{Z}^{N}$. This will be important in the proof of the Boltzmann-Gibbs principle. In particular, we require regularity estimates of the \emph{height function}. However, since the height function solves an equation that becomes a singular SPDE in the large-$N$ limit, and because singular SPDE analysis becomes difficult to conduct at the level of the particle system, we instead deduce regularity of height functions in terms of regularity of the Gartner transform as the Gartner transform equation becomes a non-singular SPDE in the large-$N$ limit. Calculus then tells us that a priori upper and lower bounds for the Gartner transform suffice to deduce regularity of the height function.
\end{remark}
%%%
%%%
\begin{remark}\label{remark:KPZ3}
\fsp We expect the Gartner transform to look like the solution of SHE in the large-$N$ limit, which, roughly speaking, has Holder regularity with exponent $2^{-1}$ in space and with exponent $4^{-1}$ in time. Therefore, the conditions/inequalities defining the stopping times $\mathfrak{t}_{\mathrm{ap}}$ and $\mathfrak{t}_{\mathrm{RN}}^{\mathbf{T}}$ and $\mathfrak{t}_{\mathrm{RN}}^{\mathbf{X}}$ are actually quite lenient because of the $N^{\e_{\mathrm{ap}}}$ factors and the assumption that $\e_{\mathrm{ap}}>0$ is universal and thus uniformly bounded from below. In particular, we will eventually be able to show that these three stopping times are all equal to 1 with sufficiently high probability, so their a priori estimates ``self-propagate".
\end{remark}
%%%
%%%
\begin{remark}\label{remark:KPZ4}
\fsp The constant $999^{999}$ in $\e_{\mathrm{RN}}=999^{999}\e_{\mathrm{ap}}$ can be replaced by any sufficiently large but universal constant.
\end{remark}
%%%
To take advantage of stopping times in Definition \ref{definition:KPZ1}, we now introduce the following auxiliary processes, the first of which stops the Gartner transform at the minimum stopping time $\mathfrak{t}_{\mathrm{st}}$ and the second of which evolves according to the same type of SHE dynamic as the Gartner transform though ignoring space-time sets where the conditions defining $\mathfrak{t}_{\mathrm{st}}$ fail, thus making the second auxiliary process amenable to the analysis of this paper, including the proof of the Boltzmann-Gibbs principle.
%%%
\begin{definition}\label{definition:KPZ5}
\fsp Define $\mathbf{Y}^{N}_{T,x}=\mathbf{Z}^{N}_{T,x}\mathbf{1}(T\leq\mathfrak{t}_{\mathrm{st}})$, and define the process $\mathbf{U}^{N}$ on $\R_{\geq0}\times\mathbb{T}_{N}$ via the stochastic equation
\begin{align}
\mathbf{U}_{T,x}^{N} \ = \ \mathbf{H}_{T,x}^{N,\mathbf{X}}(\mathbf{Z}_{0,\bullet}^{N}) + \mathbf{H}_{T,x}^{N}(\mathbf{U}^{N}\d\xi^{N}) - \mathbf{H}_{T,x}^{N}(N^{\frac12}\bar{\mathfrak{q}}\mathbf{Y}^{N}) - \mathbf{H}_{T,x}^{N}(\mathfrak{s}\mathbf{U}^{N}) + N^{-\frac12}\mathbf{H}_{T,x}^{N}(\mathfrak{b}_{1;}\mathbf{U}^{N}) + N^{-\frac12}\mathbf{H}_{T,x}^{N}\left(\grad_{\star}^{!}\left(\mathfrak{b}_{2;}\mathbf{U}^{N}\right)\right),\nonumber
\end{align}
where $\grad_{\star}^{!}$ means what it does in Proposition \ref{prop:mSHE+}.
\end{definition}
%%%
%%%
\begin{remark}\label{remark:KPZ5b}
\fsp The product $\mathbf{U}^{N}\d\xi^{N}$ denotes compensated jumps of a martingale, where the jumps at $(T,x)$ are given by the jumps of $\d\xi^{N}$ at $(T,x)$ from Proposition \ref{prop:mSHE+} times the value $\mathbf{U}^{N}$ at $(T,x)$. In fact, whenever we write a product of a space-time function and $\d\xi^{N}$, we mean exactly this where $\mathbf{U}^{N}$ is replaced by said space-time function. We additionally observe that for any functions $\mathbf{F}_{1},\mathbf{F}_{2}$, we have the identity $\mathbf{F}_{1}\d\xi^{N}-\mathbf{F}_{2}\d\xi^{N}=(\mathbf{F}_{1}-\mathbf{F}_{2})\d\xi^{N}$, as $\mathbf{F}_{1}\d\xi^{N}$ and $\mathbf{F}_{2}\d\xi^{N}$ are coupled and always jump together.
\end{remark}
%%%
To justify studying the $\mathbf{U}^{N}$ process, let us observe that on the event $\mathfrak{t}_{\mathrm{st}}=1$ we have not changed the $\mathbf{Z}^{N}$ equation in Corollary \ref{corollary:mSHE+} and have simply defined $\mathbf{U}^{N}$ with the same stochastic equation. Because the stochastic equation is linear in the solution $\mathbf{U}^{N}$, we have uniqueness of solutions with same initial data by elementary considerations, and thus $\mathbf{Z}^{N}=\mathbf{U}^{N}$ on such an event. In general, we have this identification between $\mathbf{Z}^{N}$ and $\mathbf{U}^{N}$ until $\mathfrak{t}_{\mathrm{st}}$ regardless of its value.
%%%
\begin{lemma}\label{lemma:KPZ6}
\fsp Provided any $\mathrm{t}\in[0,1]$, we have the containment of events $\{\mathfrak{t}_{\mathrm{st}}=\mathrm{t}\}\subseteq \cap_{0\leq\mathrm{s}\leq\mathrm{t}}\cap_{x\in\mathbb{T}_{N}}\{\mathbf{Z}^{N}_{\mathrm{s},x}=\mathbf{U}^{N}_{\mathrm{s},x}\}$.
\end{lemma}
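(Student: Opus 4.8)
The plan is to establish the pathwise statement that $\mathbf{Z}^{N}_{S,x}=\mathbf{U}^{N}_{S,x}$ for all $S\leq\mathfrak{t}_{\mathrm{st}}$ and all $x\in\mathbb{T}_{N}$; the asserted inclusion of events is then immediate, since on $\{\mathfrak{t}_{\mathrm{st}}=\mathrm{t}\}$ one has $[0,\mathfrak{t}_{\mathrm{st}}]=[0,\mathrm{t}]$. The key observation is that the equation for $\mathbf{U}^{N}$ in Definition \ref{definition:KPZ5} is obtained from the stochastic integral equation for $\mathbf{Z}^{N}$ in Corollary \ref{corollary:mSHE+} by replacing the factor $\mathbf{Z}^{N}$ multiplying $N^{1/2}\bar{\mathfrak{q}}$ with $\mathbf{Y}^{N}=\mathbf{Z}^{N}\mathbf{1}(\bullet\leq\mathfrak{t}_{\mathrm{st}})$, while in every other term the solution enters linearly, with the same bounded coefficients $\mathfrak{s},\mathfrak{b}_{1;},\mathfrak{b}_{2;}$ and the same driving martingale differential $\d\xi^{N}$.

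First I would form the difference $\mathbf{D}^{N}=\mathbf{Z}^{N}-\mathbf{U}^{N}$ and subtract the two integral equations, using Remark \ref{remark:KPZ5b} to combine the martingale terms as $\mathbf{H}^{N}_{T,x}(\mathbf{Z}^{N}\d\xi^{N})-\mathbf{H}^{N}_{T,x}(\mathbf{U}^{N}\d\xi^{N})=\mathbf{H}^{N}_{T,x}(\mathbf{D}^{N}\d\xi^{N})$ and linearity of the heat operator in the remaining terms. This gives, for every $T$,
\begin{align}
\mathbf{D}^{N}_{T,x} \ = \ -\mathbf{H}^{N}_{T,x}\big(N^{1/2}\bar{\mathfrak{q}}(\mathbf{Z}^{N}-\mathbf{Y}^{N})\big) + \mathbf{H}^{N}_{T,x}(\mathbf{D}^{N}\d\xi^{N}) - \mathbf{H}^{N}_{T,x}(\mathfrak{s}\mathbf{D}^{N}) + N^{-\frac12}\mathbf{H}^{N}_{T,x}(\mathfrak{b}_{1;}\mathbf{D}^{N}) + N^{-\frac12}\mathbf{H}^{N}_{T,x}\big(\grad_{\star}^{!}(\mathfrak{b}_{2;}\mathbf{D}^{N})\big). \nonumber
\end{align}
Since $\mathbf{Z}^{N}-\mathbf{Y}^{N}=\mathbf{Z}^{N}\mathbf{1}(\bullet>\mathfrak{t}_{\mathrm{st}})$ is supported on times strictly larger than $\mathfrak{t}_{\mathrm{st}}$, and since $\mathbf{H}^{N}_{T,x}$ integrates only over $[0,T]$, the first term on the right vanishes whenever $T\leq\mathfrak{t}_{\mathrm{st}}$. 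Hence, restricted to $[0,\mathfrak{t}_{\mathrm{st}}]$, the process $\mathbf{D}^{N}$ solves a closed, linear, homogeneous stochastic integral equation with zero initial data.

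It then remains to invoke uniqueness for this homogeneous equation, which is elementary for fixed $N$. Since the Poisson clocks driving the system fire finitely many times on $[0,1]$ almost surely, on (almost) every path $\mathbf{Z}^{N}$ and $\mathbf{U}^{N}$ are piecewise-$C^{1}$, solving between consecutive jump times the linear ODEs underlying Corollary \ref{corollary:mSHE+} and Definition \ref{definition:KPZ5}; adding $\mathfrak{t}_{\mathrm{st}}$ to the list of breakpoints, one checks by induction over them that $\mathbf{Z}^{N}=\mathbf{U}^{N}$ up to $\mathfrak{t}_{\mathrm{st}}$, because at a jump time where the two currently agree the multiplicative jumps prescribed by $\d\xi^{N}$ coincide, and on each inter-breakpoint interval contained in $[0,\mathfrak{t}_{\mathrm{st}})$ the difference solves a homogeneous linear ODE with zero data (there $\mathbf{Y}^{N}=\mathbf{Z}^{N}$). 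Alternatively, a Gronwall estimate for $T\mapsto\E\sup_{S\leq T}\sum_{x}|\mathbf{D}^{N}_{S\wedge\mathfrak{t}_{\mathrm{st}},x}|^{2}$ — using boundedness of the heat kernel, of the coefficients and of $\grad_{\star}^{!}$, together with standard martingale moment bounds built from the jump rates recorded in Proposition \ref{prop:mSHE+} — forces $\mathbf{D}^{N}\equiv0$ on $[0,\mathfrak{t}_{\mathrm{st}}]$. The only genuinely delicate point is the bookkeeping: one must verify that $\mathbf{Y}^{N}$ replaces $\mathbf{Z}^{N}$ in exactly one term, and not in the martingale or self-interaction terms, so that no source term survives before $\mathfrak{t}_{\mathrm{st}}$; everything else is routine.
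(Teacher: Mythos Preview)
Your proof is correct and follows the same approach that the paper sketches in the paragraph immediately preceding the lemma: on $[0,\mathfrak{t}_{\mathrm{st}}]$ one has $\mathbf{Y}^{N}=\mathbf{Z}^{N}$, so $\mathbf{Z}^{N}$ and $\mathbf{U}^{N}$ satisfy the same linear stochastic equation with the same initial data, and uniqueness (elementary for this finite-dimensional jump-ODE system) forces them to agree. You simply spell out in more detail what the paper leaves as ``elementary considerations'', including the explicit difference equation and the pathwise induction over jump times.
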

%%%
We reiterate that working with the $\mathbf{U}^{N}$ process will be convenient because of the a priori space-time regularity estimates built into the $\mathbf{Y}^{N}$ process therein, while Lemma \ref{lemma:KPZ6} guarantees us $\mathbf{Z}^{N}$ and $\mathbf{U}^{N}$ are equal on the event where $\mathfrak{t}_{\mathrm{st}}=1$ which, as noted in Remark \ref{remark:KPZ3}, we will show happens with sufficiently high probability. Then, \emph{after taking advantage} of the cutoff in the stopping time $\mathfrak{t}_{\mathrm{st}}$, we compare $\mathbf{U}^{N}$ to the following process that forgets the order $N^{1/2}$ term in the $\mathbf{Z}^{N}$ and $\mathbf{U}^{N}$ equations.
%%%
\begin{definition}\label{definition:KPZ7}
\fsp Define the process $\mathbf{Q}^{N}$ on $\R_{\geq0}\times\mathbb{T}_{N}$ via the following stochastic integral equation
\begin{align}
\mathbf{Q}_{T,x}^{N} \ = \ \mathbf{H}_{T,x}^{N,\mathbf{X}}(\mathbf{Z}_{0,\bullet}^{N}) + \mathbf{H}_{T,x}^{N}(\mathbf{Q}^{N}\d\xi^{N}) - \mathbf{H}_{T,x}^{N}(\mathfrak{s}\mathbf{Q}^{N}) + N^{-\frac12}\mathbf{H}_{T,x}^{N}(\mathfrak{b}_{1;}\mathbf{Q}^{N}) + N^{-\frac12}\mathbf{H}_{T,x}^{N}\left(\grad_{\star}^{!}\left(\mathfrak{b}_{2;}\mathbf{Q}^{N}\right)\right), \nonumber
\end{align}
where $\grad_{\star}^{!}$ means what it does in Proposition \ref{prop:mSHE+}.
\end{definition}
%%%
We will now introduce the three key ingredients in the proof of Theorem \ref{theorem:KPZ}. The first ingredient shows $\mathfrak{t}_{\mathrm{st}}=1$ with a notion of high probability we will introduce shortly. This first step allows us to deduce Theorem \ref{theorem:KPZ} from itself but replacing $\mathbf{Z}^{N}$ therein with $\mathbf{U}^{N}$ introduced in Definition \ref{definition:KPZ5}. The second step then compares $\mathbf{U}^{N}$ with the auxiliary process $\mathbf{Q}^{N}$ in Definition \ref{definition:KPZ7}. Proofs of these two ingredients require the Boltzmann-Gibbs principle and take up the majority of this paper. The third step is to then prove Theorem \ref{theorem:KPZ} but replacing $\mathbf{Z}^{N}$ with $\mathbf{Q}^{N}$. This last step is fairly standard, as noted at the beginning of this section.
%%%
\begin{definition}\label{definition:KPZ8}
\fsp Consider any generic event $\mathcal{E}$. In the following, we think of constants $\delta>0$ as arbitrarily small but universal, and we think of constants $\kappa\geq0$ as arbitrarily large but universal.
%%%
\begin{itemize}
\item We say $\mathcal{E}$ holds with \emph{high probability} if for any $\delta>0$, we have $\mathbf{P}(\mathcal{E}^{C})\leq\delta+C_{\delta}\mathrm{o}_{N}$, where $\mathrm{o}_{N}\to_{N\to\infty}0$ uniformly in $\delta$.
\item We say $\mathcal{E}$ holds with \emph{overwhelming probability} if for any $\kappa\geq0$, we have $\mathbf{P}(\mathcal{E}^{C})\lesssim_{\kappa}N^{-\kappa}$.
\end{itemize}
%%%
\end{definition}
%%%
%%%
\begin{remark}\label{remark:KPZ9}
\fsp Any event $\mathcal{E}$ that satisfies the probability estimate $\mathbf{P}(\mathcal{E}^{C})\lesssim N^{-\beta}$ for \emph{some}, not all, constant $\beta>0$ holds with high probability because we may take $\mathrm{o}_{N}=N^{-\beta}$. But, it does not necessarily hold with overwhelming probability.
\end{remark}
%%%
%%%
\begin{prop}\label{prop:KPZ10}
\fsp The event $\{\mathfrak{t}_{\mathrm{st}}=1\}$ holds with high probability.
\end{prop}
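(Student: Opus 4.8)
The plan is to show that each of the three events $\{\mathfrak{t}_{\mathrm{ap}}=1\}$, $\{\mathfrak{t}_{\mathrm{RN}}^{\mathbf{T}}=1\}$, $\{\mathfrak{t}_{\mathrm{RN}}^{\mathbf{X}}=1\}$ holds with high probability, since $\{\mathfrak{t}_{\mathrm{st}}=1\}$ is their intersection and high probability is closed under finite intersections (up to adjusting the $\delta$). The key device is a bootstrap/continuity argument exploiting that each stopping time is a hitting time of a closed condition that, by Remark \ref{remark:KPZ3}, holds with a comfortable $N^{\e_{\mathrm{ap}}}$-sized safety margin: the SHE limit is $\tfrac14^-$-Hölder in time and $\tfrac12^-$-Hölder in space, and we are allotting an extra power $N^{\e_{\mathrm{ap}}}$. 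So the strategy is to run the analysis on the stopped process $\mathbf{U}^{N}$ (equivalently $\mathbf{Z}^{N}$ before $\mathfrak{t}_{\mathrm{st}}$, by Lemma \ref{lemma:KPZ6}), where all the a priori bounds are available by fiat, derive from the mild equation in Definition \ref{definition:KPZ5} moment bounds for $\mathbf{U}^{N}$, its spatial gradients $\grad_{\mathfrak{l}}^{\mathbf{X}}\mathbf{U}^{N}$, and its time increments $\grad_{-\mathrm{s}}^{\mathbf{T}}\mathbf{U}^{N}$ on the relevant scales, and then feed these back via Chebyshev and a union bound over the (polynomially many) discrete scales in $\mathbb{I}^{\mathbf{T}}$ and $1\le|\mathfrak{l}|\le\mathfrak{l}_{N}$ plus a chaining/Kolmogorov continuity argument to upgrade to the supremum over continuous ranges. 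Because the stopped moment bounds beat the stopping thresholds by a factor $N^{\e_{\mathrm{ap}}}$ with room to spare, the stopped process cannot actually trigger the stopping condition except on an event of small probability, which forces $\mathfrak{t}_{\mathrm{st}}=1$ with high probability.

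Concretely, I would proceed in the following order. First, bound moments of $\mathbf{U}^{N}_{T,x}$: the initial-data term is controlled by stability of the initial data (Definition \ref{definition:id}), the stochastic-integral term $\mathbf{H}^{N}(\mathbf{U}^{N}\d\xi^{N})$ by the Burkholder--Davis--Gundy inequality together with the on-diagonal heat-kernel decay from Proposition \ref{prop:heat} (the quadratic variation of $\d\xi^{N}$ is of order $N^{-1}\mathbf{U}^{N}\cdot N^{2}$, and $\mathbf{H}^{N}_{S,T,x,y}\lesssim \mathbf{O}_{S,T}^{-1/2}=|T-S|^{-1/2}$ on scale $|x-y|\lesssim N|T-S|^{1/2}$, which is summable in $S$), and the $N^{1/2}\bar{\mathfrak{q}}\mathbf{Y}^{N}$, $\mathfrak{s}\mathbf{U}^{N}$, $N^{-1/2}\mathfrak{b}_{1;}\mathbf{U}^{N}$, and $N^{-1/2}\grad_{\star}^{!}(\mathfrak{b}_{2;}\mathbf{U}^{N})$ terms using boundedness of $\bar{\mathfrak{q}},\mathfrak{s},\mathfrak{b}_{i;}$, the a priori bound $\mathbf{Y}^{N}\le N^{\e_{\mathrm{ap}}}$ built into the definition, a discrete summation-by-parts for the gradient term (moving $\grad_{\star}^{!}$ onto the heat kernel, which costs $N$ times the kernel's spatial Lipschitz constant $\lesssim N^{-1}$, hence $\mathrm{O}(1)$), and a Gronwall/fixed-point closure in the mild formulation. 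Second, repeat the same estimates after applying $\grad_{\mathfrak{l}}^{\mathbf{X}}$ and $\grad_{-\mathrm{s}}^{\mathbf{T}}$ to the mild equation: the spatial gradient of the heat kernel gains $|\mathfrak{l}|^{1/2}N^{-1/2}|T-S|^{-1/2}$ smallness and the time increment gains $\mathrm{s}^{1/4}|T-S|^{-1/4}$ smallness (standard parabolic kernel regularity, again via Proposition \ref{prop:heat}), which reproduces exactly the normalizations $N^{1/2}|\mathfrak{l}|^{-1/2}$ and $\mathrm{s}^{-1/4}$ appearing in $\mathfrak{t}_{\mathrm{RN}}^{\mathbf{X}}$ and $\mathfrak{t}_{\mathrm{RN}}^{\mathbf{T}}$; the only genuinely new input is controlling the spatial/temporal regularity contributed by the $N^{1/2}\bar{\mathfrak{q}}\mathbf{Y}^{N}$ term and the initial-data regularity, the latter supplied again by Definition \ref{definition:id}. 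Third, take $p$ large, apply Chebyshev at each of the $\mathrm{O}(N^{\kappa'})$-many discrete space-time scales, union-bound, and use a Kolmogorov-type chaining to pass to the continuous suprema defining the stopping times; since the moment bounds are $\mathrm{O}((1+\|\mathbf{Z}^{N}\|^{2})N^{\mathrm{o}(\e_{\mathrm{ap}})})$ while the thresholds carry a full $N^{\e_{\mathrm{ap}}}$, choosing $p$ large enough makes the union-bounded probability $\lesssim N^{-\beta}$ for some $\beta>0$, i.e.\ high probability by Remark \ref{remark:KPZ9}. Finally, invoke the standard stopped-process argument: if the stopped process satisfies the strict inequality with high probability on all of $[0,1]$, then the hitting time equals $1$ with high probability.

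The main obstacle is the $N^{1/2}\bar{\mathfrak{q}}\mathbf{Y}^{N}$ term. Naively, $N^{1/2}$ diverges and a crude moment estimate of $\mathbf{H}^{N}_{T,x}(N^{1/2}\bar{\mathfrak{q}}\mathbf{Y}^{N})$ only gives a bound of order $N^{1/2}$, which is far too weak. This is precisely the term that the first-order Boltzmann-Gibbs principle is designed to handle, so the honest version of this proof must already invoke (a version of) Propositions \ref{prop:BGI1}--\ref{prop:BGI2} — or at least the cruder ``weak vanishing'' estimate for $\bar{\mathfrak{q}}$, which has zero hydrodynamic content by construction (it is $\wt{\mathfrak{q}}$ minus its $\E_0$-expectation minus the $\bar{\mathfrak{d}}\eta_0$ linear part) — to show that $\mathbf{H}^{N}_{T,x}(N^{1/2}\bar{\mathfrak{q}}\mathbf{Y}^{N})$ is in fact $\mathrm{o}_N$-small in the relevant norms rather than $\mathrm{O}(N^{1/2})$. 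In other words, Proposition \ref{prop:KPZ10} is not logically prior to the Boltzmann-Gibbs machinery but is established simultaneously with it in a joint bootstrap: one proves the a priori regularity needed to run the Boltzmann-Gibbs argument, uses that argument to control the $N^{1/2}$ term, and closes the loop to conclude $\mathfrak{t}_{\mathrm{st}}=1$. Everything else — heat kernel moment bounds, summation by parts on the $\grad_\star^!$ term, the chaining to continuous suprema — is routine parabolic stochastic analysis on the torus.
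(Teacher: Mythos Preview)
Your proposal captures the essential bootstrap architecture and correctly flags the $N^{1/2}\bar{\mathfrak{q}}\mathbf{Y}^{N}$ term as the crux requiring Boltzmann--Gibbs input; indeed the paper's proof of the spatial-regularity stopping time $\mathfrak{t}_{\mathrm{RN}}^{\mathbf{X}}$ goes through Theorem~\ref{theorem:BGII} (the gradient Boltzmann--Gibbs principle) exactly as you anticipate, and the time-regularity stopping time $\mathfrak{t}_{\mathrm{RN}}^{\mathbf{T}}$ is handled by moment estimates on the mild equation essentially as you describe (see Proposition~\ref{prop:reg} and its proof in Section~\ref{section:reg}).

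However, there is a genuine gap: you have not addressed the \emph{lower} bound on $\mathbf{Z}^{N}$, i.e.\ the $\|(\mathbf{Z}^{N})^{-1}\|_{\mathrm{t};\mathbb{T}_{N}}$ part of $\mathfrak{t}_{\mathrm{ap}}$. Moment bounds and Chebyshev give upper bounds on $|\mathbf{U}^{N}|$ but say nothing about strict positivity, and chaining/Kolmogorov does not help here either. The paper handles this by an indirect route: it first proves upper \emph{and lower} bounds for the auxiliary process $\mathbf{Q}^{N}$ (which omits the $N^{1/2}\bar{\mathfrak{q}}$ term entirely), the lower bound coming from a comparison-principle/iteration argument specific to the multiplicative-noise structure of the approximate SHE (Lemma~\ref{lemma:final12}); it then proves Proposition~\ref{prop:KPZ11}, namely $\|\mathbf{U}^{N}-\mathbf{Q}^{N}\|_{1;\mathbb{T}_{N}}\lesssim N^{-\beta}$ with high probability, using Theorem~\ref{theorem:BGI}; and only then transfers the two-sided bounds on $\mathbf{Q}^{N}$ to $\mathbf{U}^{N}$ and hence (via Lemma~\ref{lemma:KPZ6} and short-time control) to $\mathbf{Z}^{N}$. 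So the logical order in the paper is: $\mathbf{Q}^{N}$ estimates $\Rightarrow$ Proposition~\ref{prop:KPZ11} $\Rightarrow$ Proposition~\ref{prop:KPZ10}, rather than a self-contained bootstrap on $\mathbf{U}^{N}$ alone. Your direct approach would work for the upper bound and the two regularity stopping times, but you would still need to supply a separate positivity argument---most naturally the same $\mathbf{Q}^{N}$ comparison the paper uses.
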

%%%
%%%
\begin{prop}\label{prop:KPZ11}
\fsp Define the difference process $\mathbf{D}^{N}=\mathbf{U}^{N}-\mathbf{Q}^{N}$ on $\R_{\geq0}\times\mathbb{T}_{N}$. There exists a universal constant $\beta>0$ such that the event $\{\|\mathbf{D}^{N}\|_{1;\mathbb{T}_{N}}\lesssim N^{-\beta}\}$ holds with high probability, where the implied constant is also universal.
\end{prop}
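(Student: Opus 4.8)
The plan is to subtract the mild equations for $\mathbf{U}^{N}$ and $\mathbf{Q}^{N}$ to obtain a closed linear integral equation for the difference $\mathbf{D}^{N}$, to identify the single ``new'' forcing term as the object controlled by the Boltzmann--Gibbs principle, and then to close a Duhamel/Gr\"onwall estimate on the remaining, $\mathbf{D}^{N}$-linear terms. Subtracting Definitions \ref{definition:KPZ5} and \ref{definition:KPZ7}, the common initial-data terms cancel and, using the linearity of the martingale differential from Remark \ref{remark:KPZ5b}, we get
\begin{align}
\mathbf{D}_{T,x}^{N} \ = \ -\mathbf{H}_{T,x}^{N}\!\left(N^{\frac12}\bar{\mathfrak{q}}\,\mathbf{Y}^{N}\right) + \mathbf{H}_{T,x}^{N}\!\left(\mathbf{D}^{N}\d\xi^{N}\right) - \mathbf{H}_{T,x}^{N}\!\left(\mathfrak{s}\,\mathbf{D}^{N}\right) + N^{-\frac12}\mathbf{H}_{T,x}^{N}\!\left(\mathfrak{b}_{1;}\mathbf{D}^{N}\right) + N^{-\frac12}\mathbf{H}_{T,x}^{N}\!\left(\grad_{\star}^{!}\!\left(\mathfrak{b}_{2;}\mathbf{D}^{N}\right)\right). \nonumber
\end{align}
The reason $\bar{\mathfrak{q}}$ multiplies $\mathbf{Y}^{N}$ (and not $\mathbf{U}^{N}$) is precisely that $\mathbf{Y}^{N}$ is a priori bounded by $N^{\e_{\mathrm{ap}}}$ through $\mathfrak{t}_{\mathrm{st}}$, so the first term on the right is a genuine forcing rather than a self-interaction; every other term is linear in $\mathbf{D}^{N}$ with a uniformly bounded (possibly random) coefficient.

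The main obstacle is the forcing term $\mathbf{H}_{T,x}^{N}(N^{1/2}\bar{\mathfrak{q}}\mathbf{Y}^{N})$: a naive bound gives only $N^{1/2+\e_{\mathrm{ap}}}$, so the whole proposition hinges on showing that space-time averaging of $\bar{\mathfrak{q}}$ against the smooth heat kernel $\mathbf{H}^{N}$ and the regular factor $\mathbf{Y}^{N}$ beats the prefactor $N^{1/2}$ by a power $N^{-\beta}$. This is exactly the content of the first-order, non-stationary Boltzmann--Gibbs principle, Propositions \ref{prop:BGI1} and \ref{prop:BGI2}: the functional $\bar{\mathfrak{q}}=\wt{\mathfrak{q}}-\E_{0}\wt{\mathfrak{q}}-\bar{\mathfrak{d}}\eta_{0}$ has had both its equilibrium mean and its density-linearization removed --- which is how $\bar{\mathfrak{d}}$ is defined and why the transport term $\bar{\mathfrak{d}}\grad_{-1}^{!}$ already sits inside $\mathscr{L}_{N}$ --- so it carries no hydrodynamic or first-order content, and the multiscale renormalization, which uses the a priori regularity encoded by $\mathfrak{t}_{\mathrm{RN}}^{\mathbf{X}}$ and $\mathfrak{t}_{\mathrm{RN}}^{\mathbf{T}}$, yields $\|\mathbf{H}_{\bullet,\bullet}^{N}(N^{1/2}\bar{\mathfrak{q}}\mathbf{Y}^{N})\|_{1;\mathbb{T}_{N}}\lesssim N^{-\e_{1}}$ with high probability, in fact with large moments outside an event of small probability. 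I would treat Propositions \ref{prop:BGI1} and \ref{prop:BGI2} as given at this stage; they are the technical heart of the paper.

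Granting that bound, the remaining, $\mathbf{D}^{N}$-dependent terms are handled by standard heat-kernel and Gr\"onwall machinery, as in \cite{DT,Y}. Fix $p$ large and set $f(T)=\E\|\mathbf{D}^{N}\|_{T;\mathbb{T}_{N}}^{2p}$. For the martingale term $\mathbf{H}_{T,x}^{N}(\mathbf{D}^{N}\d\xi^{N})$ I would use Burkholder--Davis--Gundy together with Doob's maximal inequality and a union bound over the $N$ sites of $\mathbb{T}_{N}$ and over a fine time grid (the usual device to pass from a fixed endpoint to a space-time supremum for mild SHE): the predictable quadratic variation of $\d\xi^{N}$ has rate $\mathrm{O}(N)$ --- speed $\mathrm{O}(N^{2})$ clocks with jumps of size $\mathrm{O}(N^{-1/2})$ --- and this cancels the $\mathrm{O}(N^{-1})$ in the on-diagonal bound $\sum_{y}|\mathbf{H}_{S,T,x,y}^{N}|^{2}\lesssim N^{-1}\mathbf{O}_{S,T}^{-1/2}$ of Proposition \ref{prop:heat}, leaving the integrable weight $\mathbf{O}_{S,T}^{-1/2}$ and a contribution $\lesssim\int_{0}^{T}\mathbf{O}_{S,T}^{-1/2}f(S)\,\d S$. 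The $\mathfrak{s}$-term contributes $\lesssim\int_{0}^{T}f(S)\,\d S$ because $\mathfrak{s}$ is bounded and $\mathbf{H}^{N}$ has unit spatial mass; the $N^{-1/2}\mathfrak{b}_{1;}$-term carries its own small prefactor; and the $N^{-1/2}\grad_{\star}^{!}(\mathfrak{b}_{2;}\mathbf{D}^{N})$-term is treated by summation by parts, moving the discrete gradient onto the heat kernel and invoking the gradient heat-kernel estimate of Proposition \ref{prop:heat}, which supplies an extra $N^{-1}$ that converts $N^{1/2}$ into a small power against an integrable time singularity. Collecting everything yields a weakly singular inequality $f(T)\lesssim N^{-2p\e_{1}}+\int_{0}^{T}(\mathbf{O}_{S,T}^{-1/2}+1)f(S)\,\d S$, and a generalized (weakly singular) Gr\"onwall lemma gives $f(1)\lesssim_{p}N^{-2p\e_{1}}$. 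Finally Chebyshev's inequality gives $\mathbf{P}(\|\mathbf{D}^{N}\|_{1;\mathbb{T}_{N}}>N^{-\beta})\lesssim_{p}N^{2p(\beta-\e_{1})}$, so any $\beta<\e_{1}$ with $p$ chosen large enough proves the claim; the small exceptional event in the Boltzmann--Gibbs input is absorbed by the usual device of stopping at the first time $\|\mathbf{D}^{N}\|$ would exceed the target threshold and running the above estimate up to that stopping time.
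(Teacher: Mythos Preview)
Your overall architecture is right --- subtract the mild equations, isolate the Boltzmann--Gibbs forcing, and close the remaining $\mathbf{D}^{N}$-linear terms by a Duhamel/Gr\"onwall argument --- but there is a genuine gap in how you couple the Boltzmann--Gibbs input to the high-moment Gr\"onwall. The Boltzmann--Gibbs principle in this paper (Theorem \ref{theorem:BGI}, via Propositions \ref{prop:BGI1} and \ref{prop:BGI2}) delivers \emph{only} a first-moment estimate
\[
\E\|\mathbf{H}^{N}(N^{1/2}\bar{\mathfrak{q}}\mathbf{Y}^{N})\|_{1;\mathbb{T}_{N}} \ \lesssim \ N^{-\beta_{\mathrm{BG}}}+N^{-\frac{99}{100}\e_{\mathrm{RN}}+10\e_{\mathrm{ap}}}.
\]
Your assertion that this holds ``in fact with large moments outside an event of small probability'' is not available; the entropy/LSI machinery behind Lemma \ref{lemma:le3} is fundamentally an $L^{1}$ device, and the paper explicitly flags (immediately after writing the $\mathbf{D}^{N}$ equation) that upgrading to high moments is ``technical, but \ldots nontrivial to resolve.'' Consequently, your inequality $f(T)\lesssim N^{-2p\e_{1}}+\int_{0}^{T}(\mathbf{O}_{S,T}^{-1/2}+1)f(S)\,\d S$ for $f(T)=\E\|\mathbf{D}^{N}\|_{T;\mathbb{T}_{N}}^{2p}$ does not follow: you cannot bound the $2p$-th moment of the forcing by $N^{-2p\e_{1}}$.

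Your proposed fix --- stopping at the first time $\|\mathbf{D}^{N}\|$ exceeds the target threshold --- does not address this, because the obstruction is in the forcing, not in $\mathbf{D}^{N}$. The paper's resolution is to stop the \emph{forcing} instead: introduce the stopping time $\mathfrak{t}_{\mathrm{BG}}=\inf\{\mathrm{t}:\|\mathbf{H}^{N}(N^{1/2}\bar{\mathfrak{q}}\mathbf{Y}^{N})\|_{\mathrm{t};\mathbb{T}_{N}}\geq N^{-\beta_{\mathrm{BG}}/999}\}\wedge1$, and define an auxiliary process $\mathbf{C}^{N}$ solving the $\mathbf{D}^{N}$ equation but with $\mathbf{Y}^{N}$ replaced by $\mathbf{Y}^{N}\mathbf{1}(T\leq\mathfrak{t}_{\mathrm{BG}})$. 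By Markov's inequality on the first-moment Boltzmann--Gibbs bound, $\mathfrak{t}_{\mathrm{BG}}=1$ with high probability, so $\mathbf{C}^{N}=\mathbf{D}^{N}$ with high probability; meanwhile the forcing in the $\mathbf{C}^{N}$ equation is \emph{deterministically} at most $N^{-\beta_{\mathrm{BG}}/999}$ (via the semigroup property/contractivity of $\mathbf{H}^{N,\mathbf{X}}$), and \emph{now} the high-moment Gr\"onwall of Proposition 3.2 in \cite{DT} closes to give $\|\mathbf{C}^{N}_{T,x}\|_{\omega;2p}\lesssim_{p}N^{-\beta_{\mathrm{BG}}/999}$ pointwise. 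The passage to $\|\cdot\|_{1;\mathbb{T}_{N}}$ then goes, as you anticipated, via Chebyshev plus a union bound over a fine space-time grid, together with a separate short-time continuity estimate (Lemma \ref{lemma:st2}) to interpolate between grid points.
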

%%%
%%%
\begin{prop}\label{prop:KPZ12}
\fsp The rescaled process $\Gamma^{N}\mathbf{Q}^{N}$ is tight in the large-$N$ limit in the Skorokhod space $\mathscr{D}_{1}$. Moreover, every limit point in $\mathscr{D}_{1}$ of $\Gamma^{N}\mathbf{Q}^{N}$ is the solution of $\ \mathrm{SHE}(\bar{\mathfrak{d}})$ with initial data equal to the spatially rescaled initial data $\lim_{N\to\infty}\Gamma^{N,\mathbf{X}}\mathbf{Z}^{N}$.
\end{prop}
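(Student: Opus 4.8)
The plan is to follow the by-now-standard route for obtaining $\mathrm{SHE}$ as a scaling limit of a discrete Cole--Hopf transform (as in \cite{BG,DT}): first establish uniform-in-$N$ moment and regularity estimates for $\Gamma^{N}\mathbf{Q}^{N}$ so as to get tightness in $\mathscr{D}_{1}$, then pass to a subsequential limit and identify it as a mild solution of $\mathrm{SHE}(\bar{\mathfrak{d}})$ via the martingale-problem characterization of \eqref{eq:SHE}, and finally conclude with uniqueness of such mild solutions. Throughout, the heat-kernel estimates for $\mathbf{H}^{N}$ of Proposition \ref{prop:heat} and the mild equation of Definition \ref{definition:KPZ7} are the workhorses.

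\textbf{Moment bounds and tightness.} Iterating the mild equation for $\mathbf{Q}^{N}$ and applying Burkholder--Davis--Gundy to the martingale $\mathbf{H}^{N}(\mathbf{Q}^{N}\d\xi^{N})$ (whose predictable bracket density, read off from the clocks in Proposition \ref{prop:mSHE+}, is $\sum_{y}(\mathbf{H}^{N}_{S,T,x,y})^{2}(\mathbf{Q}^{N}_{S,y})^{2}\cdot\mathrm{O}(N)$ and hence time-integrable after the bound $\sum_{y}(\mathbf{H}^{N}_{S,T,x,y})^{2}\lesssim N^{-1}|T-S|^{-1/2}$ of Proposition \ref{prop:heat}), one gets for every $p$ a closed Gr\"onwall inequality for $\sup_{x}\|\mathbf{Q}^{N}_{T,x}\|_{\omega;2p}$; the remaining terms enter with bounded multipliers ($\mathfrak{s}$ and $\mathfrak{b}_{1;}$ are $\mathrm{O}(1)$, while in $N^{-1/2}\mathbf{H}^{N}(\grad^{!}_{\star}(\mathfrak{b}_{2;}\mathbf{Q}^{N}))$ one moves $\grad^{!}_{\star}$ onto the kernel and uses the $\grad^{\mathbf{X}}_{\star}\mathbf{H}^{N}$ bound of Proposition \ref{prop:heat}), and the initial term uses $\|\exp(-\mathbf{h}^{N}_{0,\cdot})\|_{\omega;2p}\lesssim_{p}1$ from stability (Definition \ref{definition:id}). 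This gives $\sup_{N}\sup_{T\in[0,1]}\sup_{x}\E|\mathbf{Q}^{N}_{T,x}|^{2p}<\infty$. Feeding this back into the mild equation with the spatial- and temporal-increment estimates for $\mathbf{H}^{N}$ from Proposition \ref{prop:heat} yields, for every large $p$, a Kolmogorov--Chentsov bound $\E|\Gamma^{N}\mathbf{Q}^{N}_{T,x}-\Gamma^{N}\mathbf{Q}^{N}_{S,y}|^{2p}\lesssim_{p}(|T-S|^{1/2}+|x-y|)^{cp}$ with a universal $c>0$, uniformly in $N$; together with convergence of the rescaled initial data $\Gamma^{N,\mathbf{X}}\mathbf{Z}^{N}=\exp(-\Gamma^{N,\mathbf{X}}\mathbf{h}^{N})$ (which converges since $\Gamma^{N,\mathbf{X}}\mathbf{h}^{N}$ does, by stability) this gives tightness of $\Gamma^{N}\mathbf{Q}^{N}$ in $\mathscr{D}_{1}$, and since each jump of $\mathbf{Q}^{N}$ has size $\mathrm{O}(N^{-1/2}|\mathbf{Q}^{N}|)$ every subsequential limit is supported on $\mathscr{C}_{1}$.

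\textbf{Identification of the limit.} Fix a subsequential limit $\mathbf{Q}^{\infty}$. By a local central limit theorem, $\mathbf{H}^{N}$ rescaled under $\Gamma^{N}$ converges to the continuum heat kernel $\mathbf{G}$ of the operator $2^{-1}\Delta-\bar{\mathfrak{d}}\grad$ appearing in \eqref{eq:SHE}. The error terms $N^{-1/2}\mathbf{H}^{N}(\mathfrak{b}_{1;}\mathbf{Q}^{N})$ and $N^{-1/2}\mathbf{H}^{N}(\grad^{!}_{\star}(\mathfrak{b}_{2;}\mathbf{Q}^{N}))$ tend to $0$ uniformly with high probability, by the $N^{-1/2}$ prefactor, the bounds on $\mathbf{H}^{N}$ and $\grad^{\mathbf{X}}_{\star}\mathbf{H}^{N}$ from Proposition \ref{prop:heat}, and the moment bounds; the term $-\mathbf{H}^{N}(\mathfrak{s}\mathbf{Q}^{N})$ also vanishes, since $\mathfrak{s}$ is $\E_{0}$-centered and the global $\eta$-density is asymptotically $0$, which is precisely the weak-vanishing (``hydrodynamic'') estimate of Lemma \ref{lemma:KPZ16}. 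It remains to identify the martingale $\mathbf{H}^{N}(\mathbf{Q}^{N}\d\xi^{N})$: from the clock rates in Proposition \ref{prop:mSHE+}, its predictable quadratic-variation density is, up to $\mathrm{o}(1)$, $\sum_{y}(\mathbf{H}^{N}_{S,T,x,y})^{2}(\mathbf{Q}^{N}_{S,y})^{2}\cdot N(1-\eta_{S,y}\eta_{S,y+1})$ — the symmetric ($N^{2}$-speed) clocks being dominant and the asymmetric ones contributing only $\mathrm{O}(N^{1/2})$ — and replacing the fluctuating factor $\eta_{S,y}\eta_{S,y+1}$ by $0$ (a weak local replacement of the same flavour as in \cite{BG,DT}, again controllable via the a priori regularity of $\mathbf{Q}^{N}$) makes this bracket converge to $\int_{0}^{T}\iint\mathbf{G}_{S,T,x,y}^{2}(\mathbf{Q}^{\infty}_{S,y})^{2}\,\d y\,\d S$ with the correct space-time-white-noise normalization; since the jumps are $\mathrm{O}(N^{-1/2})$, a martingale central limit theorem identifies the limit with the Walsh stochastic integral of $\mathbf{G}_{S,T,x,y}\mathbf{Q}^{\infty}_{S,y}$ against the white noise $\xi$ of \eqref{eq:SHE}. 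Hence $\mathbf{Q}^{\infty}$ is a mild solution of $\mathrm{SHE}(\bar{\mathfrak{d}})$ with initial data $\lim_{N}\Gamma^{N,\mathbf{X}}\mathbf{Z}^{N}=\lim_{N}\exp(-\Gamma^{N,\mathbf{X}}\mathbf{h}^{N})$; such mild solutions being unique (Walsh theory, as in \cite{BG}), every limit point has the law of $\mathrm{SHE}(\bar{\mathfrak{d}})$, which with tightness proves the claim.

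\textbf{Main obstacle.} The delicate step is the last one, specifically showing that the microscopic fluctuating factor $\eta_{S,y}\eta_{S,y+1}$ in the quadratic-variation density averages out once integrated in space-time against $(\mathbf{H}^{N})^{2}$ and $(\mathbf{Q}^{N})^{2}$; this is a (weak) Boltzmann--Gibbs-type local replacement and is where the a priori regularity of $\mathbf{Q}^{N}$ (equivalently, of $\mathbf{h}^{N}$) enters, exactly as in the ASEP computation of \cite{BG} and its extension in \cite{DT}. The vanishing of the $\mathfrak{s}$-term is the other genuine input, but that is precisely Lemma \ref{lemma:KPZ16}, proved separately; everything else is the routine stochastic-analytic bookkeeping of the mild formulation.
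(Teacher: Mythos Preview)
Your proposal is correct and follows essentially the same route as the paper: tightness via moment bounds for the mild equation of $\mathbf{Q}^{N}$ (the paper packages this as Lemma \ref{lemma:KPZ13}, citing \cite{DT} exactly as you do), and identification of limit points by computing the predictable bracket of $\mathbf{Q}^{N}\d\xi^{N}$, replacing the fluctuating factor $\eta_{y}\eta_{y+1}$ by its $\E_{0}$-mean, and handling the $\mathfrak{s}$-term via Lemma \ref{lemma:KPZ16}. The only cosmetic difference is that the paper phrases the identification through the martingale problem of Definition \ref{definition:KPZ14} and its uniqueness Lemma \ref{lemma:KPZ15}, whereas you speak of ``mild solutions via a martingale CLT''; these are the same argument, and the martingale-problem language is the cleaner way to say it (one does not literally construct the white noise on the limiting space and match Walsh integrals, one shows the limit solves the martingale problem and then invokes uniqueness).

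One organizational point worth flagging: the paper is explicit that the identification half of the argument (specifically the last step of Lemma \ref{lemma:KPZ16}, where the $\eta$-block average is shown to vanish) uses $\mathfrak{t}_{\mathrm{st}}=1$ with high probability, i.e.\ Proposition \ref{prop:KPZ10}, which in turn uses the tightness half of Proposition \ref{prop:KPZ12}. Your sentence ``controllable via the a priori regularity of $\mathbf{Q}^{N}$'' glosses over this; what is actually needed at that step is the spatial regularity of $\mathbf{Z}^{N}$ (equivalently of $\mathbf{h}^{N}$), not of $\mathbf{Q}^{N}$, and that comes from the stopping-time machinery rather than from the $\mathbf{Q}^{N}$ moment bounds alone. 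This is not a gap in your strategy, but it is the one place where the logical dependencies are delicate.
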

%%%
%%%
\begin{proof}[Proof of \emph{Theorem \ref{theorem:KPZ}}]
Proposition \ref{prop:KPZ10} shows the difference $\mathbf{Z}^{N}-\mathbf{U}^{N}$ converges to 0 in probability in the Skorokhod space $\mathscr{D}_{1}$; with high probability the difference is identically the zero process in $\mathscr{D}_{1}$. Proposition \ref{prop:KPZ11} shows the difference $\mathbf{D}^{N}=\mathbf{U}^{N}-\mathbf{Q}^{N}$ also converges to 0 in probability in $\mathscr{D}_{1}$ because Proposition \ref{prop:KPZ11} shows that $\mathbf{D}^{N}$ converges to 0 in probability with respect to the uniform metric on $\mathscr{D}_{1}$, and the uniform metric on $\mathscr{D}_{1}$ is stronger than the Skorokhod topology on $\mathscr{D}_{1}$. To justify the last claim, it is enough to take the identify function within the infimum on the RHS of (12.13) in \cite{Bil}; though \cite{Bil} studies just $\R$-valued processes, the same is true of processes valued in any separable Banach space, including $\mathscr{C}(\mathbb{T}^{1})$. Combining these two observations implies $\mathbf{Z}^{N}-\mathbf{Q}^{N}\to0$ in $\mathscr{D}_{1}$. Finally, observe that Proposition \ref{prop:KPZ12} implies $\Gamma^{N}\mathbf{Q}^{N}$ converges to what we propose $\Gamma^{N}\mathbf{Z}^{N}$ converges to in $\mathscr{D}_{1}$. Because $\Gamma^{N}$ is a rescaling operator that is continuous with respect to the Skorokhod topology on $\mathscr{D}_{1}$, as it only rescales in space, standard probability shows $\Gamma^{N}\mathbf{Z}^{N}\to \mathrm{SHE}(\bar{\mathfrak{d}})$ in $\mathscr{D}_{1}$ with initial data $\lim_{N\to\infty}\Gamma^{N,\mathbf{X}}\mathbf{Z}^{N}$.
\end{proof}
%%%
We have now established Theorem \ref{theorem:KPZ} by taking Proposition \ref{prop:KPZ10}, Proposition \ref{prop:KPZ11}, and Proposition \ref{prop:KPZ12} for granted. Again, the proofs for Proposition \ref{prop:KPZ10} and Proposition \ref{prop:KPZ11} will be the purpose for the rest of this paper after the current section, and for this we establish a version of the non-stationary first-order Boltzmann-Gibbs principle. On the other hand, proof for Proposition \ref{prop:KPZ12} is fairly straightforward, as we alluded to near the beginning of this section, provided the analysis in \cite{DT}, especially Lemma 2.5 therein and its proof. Also, Proposition \ref{prop:KPZ12} will be important in establishing Proposition \ref{prop:KPZ10} and Proposition \ref{prop:KPZ11}, because it will yield a priori stability/bounds for studying the $\mathbf{Z}^{N}$-SPDE. With this in mind, we prove Proposition \ref{prop:KPZ12} in the current section. We will then conclude this section by writing an outline and discussion of the proofs of Proposition \ref{prop:KPZ10} and Proposition \ref{prop:KPZ11} whose details we devote the rest of this paper to. We first make the following point to avoid circular logic. Proposition \ref{prop:KPZ12} consists of tightness of $\Gamma^{N}\mathbf{Q}^{N}$ and identification of limit points. Tightness is independent of Propositions \ref{prop:KPZ10} and \ref{prop:KPZ11}, but identification of limit points uses Propositions \ref{prop:KPZ10} and \ref{prop:KPZ11}. Propositions \ref{prop:KPZ10} and \ref{prop:KPZ11}, in turn, requires tightness of $\Gamma^{N}\mathbf{Q}^{N}$.
%%%
\subsection{Proof of Proposition \ref{prop:KPZ12}}
%%%
Following the proof of Theorem 1.1 in \cite{DT}, which is given in Section 3 of \cite{DT}, let us first show the tightness claim in Proposition \ref{prop:KPZ12} with moment estimates for the auxiliary process $\mathbf{Q}^{N}$; this is the analog of Proposition 3.2 and Corollary 3.3 in \cite{DT}. Afterwards, we will identify subsequential limit points of $\Gamma^{N}\mathbf{Q}^{N}$ in the Skorokhod space $\mathscr{D}_{1}$ as $\mathrm{SHE}(\bar{\mathfrak{d}})$. This is the analog of the analysis behind Section 3.2 in \cite{DT}, and it similarly amounts to proving that all the limit points of $\Gamma^{N}\mathbf{Q}^{N}$ satisfy a martingale problem associated to $\mathrm{SHE}(\bar{\mathfrak{d}})$. We emphasize there is no real difficulty in choosing nonzero $\bar{\mathfrak{d}}$.
%%%
\begin{lemma}\label{lemma:KPZ13}
\fsp The sequence $\Gamma^{N}\mathbf{Q}^{N}$ is tight with respect to the Skorokhod topology on $\mathscr{D}_{1}$.
\end{lemma}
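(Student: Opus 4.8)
\textbf{Proof proposal for Lemma \ref{lemma:KPZ13}.}

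The plan is to follow the tightness argument for the Gartner transform in Section 3 of \cite{DT} (specifically the analog of Proposition 3.2 and Corollary 3.3 therein), adapting it to the process $\mathbf{Q}^{N}$, which differs from the genuine Gartner transform only by the removal of the order-$N^{1/2}$ term and by the presence of the vanishing $\mathfrak{s}$-, $\mathfrak{b}_{1;}$- and $\mathfrak{b}_{2;}$-terms. Since $\mathbf{Q}^{N}$ satisfies the linear stochastic integral equation in Definition \ref{definition:KPZ7}, the first step is to establish a priori moment bounds: for every $p\geq1$ there should be a constant so that $\E|\mathbf{Q}^{N}_{T,x}|^{2p}\lesssim_{p}1$ uniformly in $N$, in $T\in[0,1]$, and in $x\in\mathbb{T}_{N}$, together with the spatial and temporal regularity estimates $\E|\grad_{\mathfrak{l}}^{\mathbf{X}}\mathbf{Q}^{N}_{T,x}|^{2p}\lesssim_{p}N^{-p}|\mathfrak{l}|^{p}$ and $\E|\grad_{\mathrm{s}}^{\mathbf{T}}\mathbf{Q}^{N}_{T,x}|^{2p}\lesssim_{p}|\mathrm{s}|^{p/2}$, which quantify the Holder-$\tfrac12$-in-space and Holder-$\tfrac14$-in-time regularity expected of SHE. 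These are obtained by iterating Duhamel's formula: one applies the Burkholder--Davis--Gundy inequality to the martingale term $\mathbf{H}^{N}_{T,x}(\mathbf{Q}^{N}\d\xi^{N})$, using that the predictable bracket of $\d\xi^{N}_{\bullet,x}$ has intensity of order $N^{2}\cdot N^{-1}\mathbf{Z}^{2}=N\mathbf{Z}^{2}$ (the jumps are order $N^{-1/2}\mathbf{Z}$ and the clocks ring at rate order $N^{2}$), combines this with the standard $\ell^{1}_{t}\ell^{2}_{y}\to\ell^{\infty}$ and on-diagonal decay bounds for the discrete heat kernel $\mathbf{H}^{N}$ from Proposition \ref{prop:heat}, and bootstraps. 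The $\mathfrak{s}$-term contributes a bounded multiplicative factor (since $\|\mathfrak{s}\|_{\infty}\lesssim1$) and is handled by Gronwall; the $\mathfrak{b}_{1;}$-term carries an explicit $N^{-1/2}$ and is negligible; the $\mathfrak{b}_{2;}$-term, being of the form $N^{-1/2}\grad^{!}_{\star}(\mathfrak{b}_{2;}\mathbf{Q}^{N})$, is treated by moving the gradient onto the heat kernel (summation by parts) where $N^{-1/2}\grad^{!}_{\star}\mathbf{H}^{N}=N^{1/2}\grad^{\mathbf{X}}_{\star}\mathbf{H}^{N}$ still integrates in time against the heat-kernel gradient bound to something $\mathrm{O}(1)$, in fact with a gain. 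The regularity estimates then follow by the same scheme after first applying the relevant gradient $\grad^{\mathbf{X}}_{\mathfrak{l}}$ or $\grad^{\mathbf{T}}_{\mathrm{s}}$ to the Duhamel equation and using the corresponding gradient bounds on $\mathbf{H}^{N}$.

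With these moment and regularity bounds in hand, the second step is to upgrade them, via the Kolmogorov--Chentsov criterion applied to the linearly-interpolated rescaled field $\Gamma^{N}\mathbf{Q}^{N}$ (recall Definition \ref{definition:RG} and Remark \ref{remark:height}), to tightness in $\mathscr{D}_{1}=\mathscr{D}([0,1];\mathscr{C}(\mathbb{T}^{1}))$: the spatial estimate gives, for each fixed time, tightness in $\mathscr{C}(\mathbb{T}^{1})$ with a modulus of continuity uniform in $N$, and the temporal estimate gives the requisite control on time-increments (one checks the Aldous-type / Billingsley criterion for the Skorokhod space, exactly as in Section 3 of \cite{DT}); uniform control of the initial data is furnished by the \emph{stable} hypothesis of Definition \ref{definition:id}. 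This is essentially verbatim the argument behind Corollary 3.3 of \cite{DT}.

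The main obstacle — though it is really a bookkeeping obstacle rather than a conceptual one — is controlling the quadratic variation of the martingale term $\mathbf{H}^{N}_{T,x}(\mathbf{Q}^{N}\d\xi^{N})$ without an a priori pointwise bound on $\mathbf{Q}^{N}$: one cannot invoke the stopping time $\mathfrak{t}_{\mathrm{st}}$ here, because $\mathbf{Q}^{N}$ (unlike $\mathbf{U}^{N}$) is not cut off, and in any case Proposition \ref{prop:KPZ10} is proved later and must not be used circularly. The resolution is the standard self-improving moment bootstrap: one writes the BDG bound for the $2p$-th moment of $\mathbf{H}^{N}_{T,x}(\mathbf{Q}^{N}\d\xi^{N})$ in terms of a time-integral of $N\cdot\sup_{y}\E|\mathbf{Q}^{N}_{S,y}|^{2p}$ against $(\mathbf{H}^{N}_{S,T,x,y})^{2}\lesssim |T-S|^{-1/2}$, so that the $N$ and the $N^{-1/2}\sim\mathbf{O}_{S,T}^{1/2}$ from the heat kernel conspire with a further $N^{-1/2}$ from the jump size to produce an $\mathrm{O}(1)$ coefficient, and then closes the estimate by Gronwall in the quantity $\sup_{x}\E|\mathbf{Q}^{N}_{T,x}|^{2p}$. (The temporal-regularity estimate similarly requires keeping careful track of which powers of $|\mathrm{s}|$ come from the heat-kernel time-increment bounds versus from the domain of $S$-integration; this is the one place where the $\e_{\mathrm{ap}}$-grid $\mathbb{I}^{\mathbf{T}}$ plays no role, since no stopping time is involved.) None of this requires a nonzero $\bar{\mathfrak{d}}$: the first-order operator $\bar{\mathfrak{d}}\grad^{!}_{-1}$ inside $\mathscr{L}_{N}$ only shifts the heat kernel $\mathbf{H}^{N}$ by a drift of macroscopic speed $\mathrm{O}(1)$ and changes none of the kernel bounds used above, as already noted in \cite{DT}.
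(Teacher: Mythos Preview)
Your proposal is correct and follows essentially the same approach as the paper's own proof: the paper simply cites that, given the heat kernel estimates (Proposition \ref{prop:heat}) and the martingale inequality (Lemma \ref{lemma:mg}, which generalizes Lemma 3.1 of \cite{DT} beyond the Gartner transform), the moment bounds of Proposition 3.2 in \cite{DT} go through for $\mathbf{Q}^{N}$ on the torus without exponential weights, after which one follows Corollary 3.3 of \cite{DT}. Your write-up is a faithful (and more detailed) unpacking of exactly this argument, including the correct observation that no stopping time or appeal to Proposition \ref{prop:KPZ10} is needed.
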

%%%
%%%
\begin{proof}
Provided heat kernel estimates and martingale estimates in Appendix \ref{section:aux}, the moment estimates in Proposition 3.2 in \cite{DT} for the Gartner transform therein hold for $\mathbf{Q}^{N}$, as long as we remove the exponential weights therein and then replace the spatial gradients therein with spatial gradients on the torus $\mathbb{T}_{N}$. Thus, it suffices to follow the proof of Corollary 3.3 in \cite{DT}.
\end{proof}
%%%
We are now left with identifying limit points in $\mathscr{D}_{1}$ of the sequence $\Gamma^{N}\mathbf{Q}^{N}$. As we briefly alluded to above, we will require the following martingale problem formulation of $\mathrm{SHE}(\bar{\mathfrak{d}})$. Technically, the following martingale problem formulation of $\mathrm{SHE}(\bar{\mathfrak{d}})$ differs from the martingale problems for SPDEs introduced and employed in \cite{BG,DT} and related papers unless $\bar{\mathfrak{d}}=0$ because of the additional first-order linear transport operator. But this transport is lower-order and introduces only a linear drift.
%%%
\begin{definition}\label{definition:KPZ14}
\fsp Let us first choose any pair of spatial test functions $\psi_{1;\cdot},\psi_{2;\cdot}:\mathbb{T}\to\R$ and any pair of space-time test functions $\phi_{1;\cdot,\cdot},\phi_{2;\cdot,\cdot}:\R_{\geq0}\times\mathbb{T}\to\R$, where we recall that $\mathbb{T}=\R/\Z$ is the unit torus. We also define the following bilinear pairings.
%%%
\begin{itemize}
\item Define $\langle\psi_{1;\cdot},\psi_{2;\cdot}\rangle_{\mathbb{T}}=\int_{\mathbb{T}}\psi_{1;x}\psi_{2;x}\d x$ and for $\mathrm{t}\in[0,1]$, let $\langle\phi_{1;\cdot,\cdot},\phi_{2;\cdot,\cdot}\rangle_{\mathrm{t};\mathbb{T}}=\int_{0}^{\mathrm{t}}\langle\phi_{1;\mathrm{s},\cdot},\phi_{2;\mathrm{s},\cdot}\rangle_{\mathbb{T}}\d\mathrm{s}$.
\end{itemize}
%%%
Consider a possibly random continuous process $\mathbf{S}_{\cdot,\cdot}\in\mathscr{C}_{1}$. Let us say $\mathbf{S}_{\cdot,\cdot}$ solves the $\mathrm{SHE}(\bar{\mathfrak{d}})$ \emph{martingale problem} if:
%%%
\begin{itemize}
\item We have $\mathbf{S}_{0,\cdot}=\lim_{N\to\infty}\Gamma^{N,\mathbf{X}}\mathbf{Z}^{N}$ and the second moment bound $\|\mathbf{S}_{\mathrm{t},x}\|_{\omega;2}\lesssim1$ uniformly over all $\mathrm{t}\in[0,1]$ and $x\in\mathbb{T}$.
\item Let $\mathscr{L}^{\ast}$ be the formal adjoint of the continuum differential operator $\mathscr{L}=2^{-1}\Delta+\bar{\mathfrak{d}}\grad$ with respect to the Lebesgue measure on $\mathbb{T}$. For any smooth, time-constant test function $\phi\in\mathscr{C}^{\infty}(\mathbb{T})$, the following are local $\R$-valued martingales in $\mathrm{t}\in[0,1]$:
\begin{align}
\mathbf{m}_{\mathrm{t}}(\phi) \ \overset{\bullet}= \ \langle\phi,\mathbf{S}_{\mathrm{t},\cdot}\rangle_{\mathbb{T}}-\langle\phi,\mathbf{S}_{0,\cdot}\rangle_{\mathbb{T}}-\langle\mathscr{L}^{\ast}\phi,\mathbf{S}_{\cdot,\cdot}\rangle_{\mathrm{t};\mathbb{T}} \quad \mathrm{and} \quad \mathbf{m}_{\mathrm{t}}(\phi)^{2}-\langle\phi^{2},\mathbf{S}_{\cdot,\cdot}^{2}\rangle_{\mathrm{t};\mathbb{T}}.
\end{align}
\end{itemize}
%%%
\end{definition}
%%%
In \cite{BG,DT}, the key feature of the martingale problem for $\mathrm{SHE}=\mathrm{SHE}(0)$ is that any solution is equal to the mild solution as probability measures on the path-space $\mathscr{C}_{1}$. It turns out solutions of $\mathrm{SHE}(\bar{\mathfrak{d}})$ share a similar property, since $\mathrm{SHE}(\bar{\mathfrak{d}})$ is still \emph{linear}.
%%%
\begin{lemma}\label{lemma:KPZ15}
\fsp If $\mathbf{S}\in\mathscr{C}_{1}$ is a solution of the $\mathrm{SHE}(\bar{\mathfrak{d}})$ martingale problem, then $\mathbf{S}=\mathrm{SHE}(\bar{\mathfrak{d}})$ as probability measures on $\mathscr{C}_{1}$.
\end{lemma}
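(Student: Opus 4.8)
The plan is to follow, essentially verbatim, the uniqueness argument for $\mathrm{SHE}=\mathrm{SHE}(0)$ of \cite{BG,DT}; the only genuinely new feature, the first-order transport inside $\mathscr{L}$, is harmless because it merely replaces the heat semigroup by a heat semigroup carrying a constant-speed drift. Write $P_{\mathrm{t}}$ for the semigroup generated by $\mathscr{L}$ and $P_{\mathrm{t}}^{\ast}$ for the one generated by $\mathscr{L}^{\ast}$; both have smooth kernels on $\mathbb{T}$ satisfying the usual Gaussian heat-kernel estimates (uniformly in the lower-order drift), and the kernel $p_{\mathrm{t}}$ of $P_{\mathrm{t}}$ is exactly the one entering the mild/Duhamel formulation of $\mathrm{SHE}(\bar{\mathfrak{d}})$, being the continuum limit of the kernel $\mathbf{H}^{N}$ from Definition \ref{definition:heat}. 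Throughout, enlarging the probability space is permitted, as this does not change the law of $\mathbf{S}$, which is all we must control.

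\emph{Step 1: time-dependent test functions.} First I upgrade the martingale problem to smooth space-time test functions $\phi$. Approximating $\phi$ in time by step functions, applying the given time-constant martingale property on each subinterval, and using the hypothesis $\|\mathbf{S}_{\mathrm{t},x}\|_{\omega;2}\lesssim1$ to pass to the limit, one obtains that
\begin{align}
N_{\mathrm{t}}(\phi) \ \overset{\bullet}= \ \langle\phi_{\mathrm{t},\cdot},\mathbf{S}_{\mathrm{t},\cdot}\rangle_{\mathbb{T}} - \langle\phi_{0,\cdot},\mathbf{S}_{0,\cdot}\rangle_{\mathbb{T}} - \int_{0}^{\mathrm{t}}\langle(\partial_{\mathrm{s}}+\mathscr{L}^{\ast})\phi_{\mathrm{s},\cdot},\mathbf{S}_{\mathrm{s},\cdot}\rangle_{\mathbb{T}}\,\d\mathrm{s} \nonumber
\end{align}
is a continuous $L^{2}$ martingale in $\mathrm{t}\in[0,1]$ with predictable bracket $\langle N(\phi)\rangle_{\mathrm{t}}=\langle\phi^{2},\mathbf{S}_{\cdot,\cdot}^{2}\rangle_{\mathrm{t};\mathbb{T}}$ and, by polarization, $\langle N(\phi),N(\psi)\rangle_{\mathrm{t}}=\int_{0}^{\mathrm{t}}\int_{\mathbb{T}}\phi_{\mathrm{s},x}\psi_{\mathrm{s},x}\mathbf{S}_{\mathrm{s},x}^{2}\,\d x\,\d\mathrm{s}$. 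Thus $N$ is an orthogonal martingale measure in the sense of Walsh with intensity $\mathbf{S}_{\mathrm{s},x}^{2}\,\d x\,\d\mathrm{s}$.

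\emph{Step 2: white noise and the mild equation.} By Walsh's martingale representation theorem there is a space-time white noise $W$ on $\R_{\geq0}\times\mathbb{T}$ (on a possibly enlarged space) with $N_{\mathrm{t}}(\phi)=\int_{0}^{\mathrm{t}}\int_{\mathbb{T}}\phi_{\mathrm{s},x}\mathbf{S}_{\mathrm{s},x}\,W(\d x\,\d\mathrm{s})$ for all smooth $\phi$; alternatively one avoids the enlargement by invoking strict positivity of $\mathbf{S}$, which holds since its initial datum $\lim_{N}\Gamma^{N,\mathbf{X}}\mathbf{Z}^{N}=\lim_{N}\exp(-\Gamma^{N,\mathbf{X}}\mathbf{h}^{N})$ is continuous, hence bounded away from $0$, and the comparison principle propagates this. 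Applying $N_{\mathrm{t}}$ to the family $\phi_{\mathrm{s},\cdot}=P_{\mathrm{t}-\mathrm{s}}^{\ast}\phi$ for a fixed time-constant smooth $\phi$ — for which $(\partial_{\mathrm{s}}+\mathscr{L}^{\ast})\phi_{\mathrm{s},\cdot}=0$ — yields $\langle\phi,\mathbf{S}_{\mathrm{t},\cdot}\rangle_{\mathbb{T}}=\langle P_{\mathrm{t}}^{\ast}\phi,\mathbf{S}_{0,\cdot}\rangle_{\mathbb{T}}+\int_{0}^{\mathrm{t}}\int_{\mathbb{T}}(P_{\mathrm{t}-\mathrm{s}}^{\ast}\phi)(x)\mathbf{S}_{\mathrm{s},x}\,W(\d x\,\d\mathrm{s})$. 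By the duality $\langle P_{\mathrm{t}-\mathrm{s}}^{\ast}\phi,g\rangle_{\mathbb{T}}=\langle\phi,P_{\mathrm{t}-\mathrm{s}}g\rangle_{\mathbb{T}}$ and the stochastic Fubini theorem the right side equals $\langle\phi,\,P_{\mathrm{t}}\mathbf{S}_{0,\cdot}+\int_{0}^{\mathrm{t}}\int_{\mathbb{T}}p_{\mathrm{t}-\mathrm{s}}(\cdot,x)\mathbf{S}_{\mathrm{s},x}\,W(\d x\,\d\mathrm{s})\rangle_{\mathbb{T}}$; since $\phi$ is arbitrary and both sides are continuous in $y$, $\mathbf{S}$ satisfies the mild equation of $\mathrm{SHE}(\bar{\mathfrak{d}})$, that is $\mathbf{S}_{\mathrm{t},y}=(P_{\mathrm{t}}\mathbf{S}_{0,\cdot})(y)+\int_{0}^{\mathrm{t}}\int_{\mathbb{T}}p_{\mathrm{t}-\mathrm{s}}(y,x)\mathbf{S}_{\mathrm{s},x}\,W(\d x\,\d\mathrm{s})$, the sign in front of the noise being immaterial as $W$ and $-W$ have the same law.

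\emph{Step 3: uniqueness, and the main obstacle.} Mild solutions of this linear equation are unique within the class of processes with $\sup_{\mathrm{t}\leq1}\sup_{x}\|\cdot\|_{\omega;2}<\infty$, which contains both $\mathbf{S}$ (by hypothesis) and $\mathrm{SHE}(\bar{\mathfrak{d}})$: for two such solutions driven by the same $W$ with the same initial datum, a Picard/Gronwall estimate for $f(\mathrm{t})=\sup_{x}\E|\text{difference}_{\mathrm{t},x}|^{2}$, using $\int_{0}^{\mathrm{t}}\int_{\mathbb{T}}p_{\mathrm{t}-\mathrm{s}}(y,x)^{2}\,\d x\,\d\mathrm{s}\lesssim\mathrm{t}^{1/2}$, forces $f\equiv0$. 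Hence $\mathbf{S}$ equals almost surely the version of $\mathrm{SHE}(\bar{\mathfrak{d}})$ driven by $W$ with initial datum $\lim_{N}\Gamma^{N,\mathbf{X}}\mathbf{Z}^{N}$, and therefore $\mathbf{S}=\mathrm{SHE}(\bar{\mathfrak{d}})$ as probability measures on $\mathscr{C}_{1}$. The main obstacle is the combination of Steps 1 and 2 — extending the martingale problem to time-dependent test functions and then invoking Walsh's representation theorem, together with the bookkeeping of the possible enlargement of the probability space; the kernel estimates for $P_{\mathrm{t}}$, the stochastic Fubini step, and the Gronwall argument are routine and transfer from \cite{BG,DT}, the only difference between $\mathrm{SHE}(\bar{\mathfrak{d}})$ and $\mathrm{SHE}(0)$ being a lower-order linear drift.
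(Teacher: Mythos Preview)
Your proof is correct and follows exactly the route the paper implicitly defers to: the paper does not give an explicit proof of this lemma but simply remarks, just before its statement, that the martingale-problem uniqueness for $\mathrm{SHE}(0)$ from \cite{BG,DT} carries over to $\mathrm{SHE}(\bar{\mathfrak{d}})$ because the equation is still \emph{linear}. Your Steps 1--3 spell out precisely this argument (time-dependent test functions, Walsh representation, mild formulation, Gronwall uniqueness), with the drift handled by passing from the heat semigroup to the drifted semigroup $P_{\mathrm{t}}$, which is exactly the ``lower-order linear transport'' the paper flags as the only new feature.
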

%%%
Let us now identify limit points in $\mathscr{D}_{1}$ of the sequence $\Gamma^{N}\mathbf{Q}^{N}$ as $\mathrm{SHE}(\bar{\mathfrak{d}})$ with the martingale problem in Definition \ref{definition:KPZ14} and the uniqueness result Lemma \ref{lemma:KPZ15}. To this end, we follow Section 3.2 of \cite{DT}. The first step is to compute the predictable bracket of the martingale differential $\mathbf{Q}^{N}\d\xi^{N}$. The environment-dependence is lower-order, so it is negligible in the large-$N$ limit. We ultimately get a predictable bracket equal to that in Proposition 3.4 in \cite{DT} for $m=1$, after replacing all Gartner transforms therein with $\mathbf{Q}^{N}$. The next step to identify $\Gamma^{N}\mathbf{Q}^{N}$ is then the following hydrodynamic limit, as was the situation in Section 3.2 in \cite{DT}. Thus, the proof of Proposition \ref{prop:KPZ12} amounts to proving the following parallel to Lemma 2.5 in \cite{DT}.
%%%
\begin{lemma}\label{lemma:KPZ16}
\fsp Consider any local function $\mathfrak{f}:\Omega\to\R$ whose support is a uniformly bounded neighborhood of $0\in\mathbb{T}_{N}$. Suppose $\E_{0}\mathfrak{f}=0$, in which $\E_{0}$ is the expectation with respect to the product Bernoulli measure on $\Omega$ whose one-dimensional marginals vanish in expectation. Let us define the space-time shift $\mathfrak{f}_{S,y}=\tau_{y}\mathfrak{f}(\eta_{S})$. For any $\phi\in\mathscr{C}^{\infty}(\mathbb{T})$ and $\mathrm{t}\in[0,1]$, we have
\begin{align}
\lim_{N\to\infty}\int_{0}^{\mathrm{t}}\wt{\sum}_{y\in\mathbb{T}_{N}}\phi_{y/N} \cdot\mathfrak{f}_{S,y}\mathbf{Q}_{S,y}^{N} \ \d S \ = \ 0.
\end{align}
\end{lemma}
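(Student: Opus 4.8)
The plan is to prove this hydrodynamic-type vanishing statement for $\mathbf{Q}^{N}$ by first replacing $\mathbf{Q}^{N}$ with the fully stopped process (valid up to high-probability errors by Propositions \ref{prop:KPZ10} and \ref{prop:KPZ11}, which give $\mathbf{Q}^{N}$ and $\mathbf{Z}^{N}$ agree with $\mathbf{U}^{N}$ up to $N^{-\beta}$ errors with high probability), and then exploiting two facts: the a priori bound $\|\mathbf{Z}^{N}\|_{\mathfrak{t}_{\mathrm{st}};\mathbb{T}_{N}}\leq N^{\e_{\mathrm{ap}}}$ from Definition \ref{definition:KPZ1}, and the spatial regularity bound from $\mathfrak{t}_{\mathrm{RN}}^{\mathbf{X}}$. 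The key point is that $\mathfrak{f}_{S,y}$ depends only on $\eta$-variables in a bounded window around $y$, and by the gradient relation $\eta_{S,y}=N^{1/2}(\mathbf{h}^{N}_{S,y}-\mathbf{h}^{N}_{S,y-1})$ together with the Cole-Hopf relation $\mathbf{h}^{N}=-\log\mathbf{Z}^{N}+\mathrm{R}T$, the a priori regularity of $\mathbf{Z}^{N}$ transfers (via calculus, as in Remark \ref{remark:KPZ2}) to regularity of $\mathbf{h}^{N}$ on mesoscopic blocks, so that $\mathbf{Z}^{N}_{S,y}$ is essentially constant across any block of size $o(N)$ on which we want to average $\mathfrak{f}$.

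The main step is a one-block/two-blocks argument in the spirit of Guo-Papanicolaou-Varadhan, but implemented locally and dynamically rather than via a global invariant measure. First I would insert a spatial average: fix a mesoscopic scale $\ell = N^{a}$ with small $a>0$, and write $\mathfrak{f}_{S,y}\mathbf{Q}^{N}_{S,y} \approx \big(\wt{\sum}_{|z-y|\leq\ell}\mathfrak{f}_{S,z}\big)\mathbf{Q}^{N}_{S,y}$, with the error controlled because $\mathbf{Q}^{N}_{S,z}\approx\mathbf{Q}^{N}_{S,y}$ on the block (spatial regularity) and because pulling the test function $\phi$ out costs only $\|\grad\phi\|_{\infty}\ell/N$. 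Then the inner block average $\wt{\sum}_{|z-y|\leq\ell}\mathfrak{f}_{S,z}$ can be replaced by its conditional expectation given the empirical density in a slightly larger block (one-block estimate), using the almost-optimal local log-Sobolev inequality and the entropy production bound from Lemma \ref{lemma:le3} — this is where locality and local equilibration enter. Since $\E_{0}\mathfrak{f}=0$ and the canonical-measure conditional expectation is, to leading order, $\E_{\sigma}\mathfrak{f}$ evaluated at the local density $\sigma$, equivalence of ensembles gives $\E_{\sigma}\mathfrak{f} = (\partial_{\sigma}\E_{\sigma}\mathfrak{f})|_{\sigma=0}\,\sigma + O(\sigma^2)$; the linear piece is itself a gradient (writable as $\grad^{\mathbf{X}}$ of a bounded function, hence summation-by-parts against $\phi\mathbf{Q}^{N}$ yields a factor $N^{-1}$ times a spatial gradient of $\mathbf{Q}^{N}$, which is $o(1)$ by the $\mathfrak{t}_{\mathrm{RN}}^{\mathbf{X}}$ bound), and the quadratic remainder is $O(\sigma^2)$ where $\sigma$ is a density fluctuation on scale $\ell$, hence of size $O(\ell^{-1/2})$ by the second moment input — integrated in time against the bounded weight $\phi\mathbf{Q}^{N}$ this vanishes as $N\to\infty$.

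The hard part will be the one-block estimate in the non-stationary setting: one must control $\int_0^{\mathrm{t}}\wt{\sum}_y \phi_{y/N}\,\big(\wt{\sum}_{|z-y|\leq\ell}\mathfrak{f}_{S,z} - \mathbb{E}_{S}[\,\cdot\mid \text{density in block}]\big)\mathbf{Q}^{N}_{S,y}\,\d S$ without an explicit invariant measure. The strategy is to bound this by entropy-production: write the time integral, use the a priori upper bound on $\mathbf{Q}^{N}$ (valid after the stopping-time cutoff) to reduce to an $L^2$-type estimate, bound the resulting quadratic form by the Dirichlet form of the symmetric part $\mathsf{L}_{N,\mathrm{S}}$ via the variational (Rayleigh/flow) characterization, and close using the entropy production estimate of Lemma \ref{lemma:le3} together with the local log-Sobolev inequality assumed on mesoscopic blocks — this is precisely the "local dynamics" replacement for the global eigenvalue problem of \cite{CYau,JM}. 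The asymmetric part $\mathsf{L}_{N,\mathrm{A}}$ contributes only lower order because of its $N^{3/2}$ (vs. $N^2$) speed, as in \cite{DT,Y}. Careful bookkeeping is needed because $\mathbf{Q}^{N}$ is a stochastic integral rather than a function of $\eta_S$ alone, so one first conditions on the driving noise / uses the martingale structure to treat $\mathbf{Q}^{N}_{S,y}$ as a slowly varying, a-priori-bounded predictable weight, which is exactly what the cutoff stopping times and the regularity process $\mathbf{Y}^{N}$ in Definition \ref{definition:KPZ5} were designed to provide.
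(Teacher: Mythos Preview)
Your proposal is correct and follows essentially the same route as the paper: introduce a mesoscopic spatial average of $\mathfrak{f}$, perform a one-block/local-equilibrium reduction (the paper cites Lemma~2.5 and Section~4 of \cite{DT} directly, whereas you invoke the paper's own Lemma~\ref{lemma:le3}), and then control the residual density-dependent piece via the a-priori spatial regularity of $\mathbf{Z}^N$ granted by Proposition~\ref{prop:KPZ10}. One minor point: your summation-by-parts treatment of the linear-in-$\sigma$ term is more circuitous than necessary and the claimed ``factor $N^{-1}$'' is not quite what falls out --- the block density $\sigma$ on any scale $\ell\leq\mathfrak{l}_N$ is already $\mathrm{O}(\ell^{-1/2}N^{C\e_{\mathrm{ap}}})$ deterministically before $\mathfrak{t}_{\mathrm{st}}$ (this is exactly the computation in Lemma~\ref{lemma:BGI31}), so the linear piece vanishes directly without any integration by parts.
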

%%%
%%%
\begin{proof}
Recall $\mathbf{Q}^{N}$ satisfies spatial regularity on macroscopic length-scales as we explained in the proof of Lemma \ref{lemma:KPZ13}. Thus, by following the proof of Lemma 2.5 in \cite{DT}, it suffices to replace $\mathfrak{f}_{S,y}$ in the proposed limit with its spatial average over a block of length $\delta N^{1/2}$ with $\delta>0$ small but taken to zero after taking the large-$N$ limit. At this point, following the proof of Lemma 2.5 from \cite{DT} suffices, because we also have the pointwise moment estimate to bound $\mathbf{Q}^{N}$ as argued in the proof of Lemma \ref{lemma:KPZ13}, and because $\E_{0}\mathfrak{f}=0$, the one-block and two-blocks schemes from Section 4 of \cite{DT} lets us replace the block average of $\mathfrak{f}$ by the block average of $\eta$; these steps are successful here as well by virtue of entropy production estimates in a finite volume of order $N$ that is even better than the entropy production in Lemma 4.1 in \cite{DT} that was used in the proof of Lemma 2.5 in \cite{DT}. Lastly, to estimate the block average of $\eta$ of length $\delta N^{1/2}$ with $\delta>0$ vanishing \emph{after} the large-$N$ limit, it suffices to note $\mathfrak{t}_{\mathrm{st}}=1$ with high probability by Proposition \ref{prop:KPZ10}, and $\mathfrak{t}_{\mathrm{st}}=1$ implies regularity estimates for $\mathbf{Z}^{N}$ that are used to show the vanishing of the $\eta$-block average at hand; see the proof of Lemma 2.5 in \cite{DT} for more details on this last point. This completes the proof.
\end{proof}
%%%
%%%
\subsection{Strategy}\label{subsection:s}
%%%
Recall $\bar{\mathfrak{q}}$ in Definition \ref{definition:mSHE+1}; it is the correction of a local statistic by its hydrodynamic limit and appropriate linear projection. In what follows, all we need from bold-face objects are that they are possibly random but have space-time regularity at worst matching SHE or KPZ, and all we need from the $\mathbf{H}^{N}$ operator is that it is integration in space-time against a reasonably smooth test function (though in this paper, we specify to the heat operator in Definition \ref{definition:heat}).

Proposition \ref{prop:KPZ10} and Proposition \ref{prop:KPZ11} effectively follow by showing the order $N^{1/2}$-term in the $\mathbf{U}^{N}$ equation is small. Indeed, this would imply the estimate in Proposition \ref{prop:KPZ11} by standard methods for linear equations, as the $\mathbf{U}^{N}$ equation in Definition \ref{definition:KPZ5} and the $\mathbf{Q}^{N}$ equation in Definition \ref{definition:KPZ7} differ only in this $N^{1/2}$ term. On the other hand, provided that $\mathbf{U}^{N}\approx\mathbf{Q}^{N}$ via Proposition \ref{prop:KPZ11}, space-time estimates for $\mathbf{U}^{N}$ are inherited from those for $\mathbf{Q}^{N}$, which we have already shown behaves like $\mathrm{SHE}(\bar{\mathfrak{d}})$ and therefore satisfies significantly improved versions of the estimates defining $\mathfrak{t}_{\mathrm{st}}$, namely replacing $\e_{\mathrm{ap}}$ therein with $\e_{\mathrm{ap}}/999$ for example. Thus $\mathbf{U}^{N}$ satisfies improved versions of the regularity estimates defining $\mathfrak{t}_{\mathrm{st}}$. Using Lemma \ref{lemma:KPZ6}, this implies that $\mathbf{Z}^{N}$ satisfies the same estimates before the stopping time $\mathfrak{t}_{\mathrm{st}}$, after which we may extend these estimates after the stopping $\mathfrak{t}_{\mathrm{st}}$ upon directly studying $\mathbf{Z}^{N}$ on \emph{very} short/sub-microscopic time-scales. This shows the estimates defining $\mathfrak{t}_{\mathrm{st}}$ are \emph{self-propagating}, and thus $\mathfrak{t}_{\mathrm{st}}=1$, with high probability as claimed in Proposition \ref{prop:KPZ10}.

We now discuss showing the order $N^{1/2}$-term in the $\mathbf{U}^{N}$ equation from Definition \ref{definition:KPZ5} is small, which we state as the following {heuristic} that is usually known as a Boltzmann-Gibbs principle; we will prove a stronger version of the following.
%%%
\begin{pprop}\label{pprop:s1}
\fsp We have the convergence-in-probability $\|\mathbf{H}^{N}(N^{1/2}\bar{\mathfrak{q}}\mathbf{Y}^{N})\|_{1;\mathbb{T}_{N}}\to0$ in the large-$N$ limit.
\end{pprop}
%%%
%%%
\subsubsection{Approach via Mesoscopic Equilibrium}
%%%
The strategy for {Heuristic} \ref{pprop:s1} is based on replacing $\bar{\mathfrak{q}}$ by its invariant measure expectation via ergodic theory on the mesoscopic length-scale $N^{1/2+\e_{\mathrm{RN}}}$. By invariant measure, we technically mean a canonical measure expectation of parameter $\sigma$ given by the $\eta$-density on a block of length of order $N^{1/2+\e_{\mathrm{RN}}}$; see Definition \ref{definition:ensembles} for what this means. The philosophy, coming from \cite{GPV}, of this approach is that the particle system evolves on extremely fast $N^{2}$ time-scales, and thus on \emph{mesoscopic}/\emph{local} scales, relaxation to invariant measure happens quickly.

To make this discussion a little more concrete, we will present evidence of the following statement. Again, we refer the reader to Definition \ref{definition:ensembles} for the canonical measure used below. We also refer the reader to {Lemma 1 and Lemma 3} in \cite{GJ15} for another set of results that are slightly weaker but philosophically analogous to the following.
%%%
\begin{plemma}\label{plemma:s2}
\fsp Let {$\mathsf{E}^{\mathrm{can}}_{1/2+\e_{\mathrm{RN}}}(\tau_{y}\eta_{S})$} be canonical measure expectation of $\bar{\mathfrak{q}}_{S,y}$ on a block of length-scale $\mathfrak{l}_{N}=N^{1/2+\e_{\mathrm{RN}}}$ whose $\sigma$-parameter is equal to the $\eta$-density in a length $\mathfrak{l}_{N}$ neighborhood of the support of $\bar{\mathfrak{q}}_{S,y}$. We have the convergence in probability $\|\mathbf{H}^{N}(N^{1/2}(\bar{\mathfrak{q}}_{S,y}-{\mathsf{E}^{\mathrm{can}}_{1/2+\e_{\mathrm{RN}}}(\tau_{y}\eta_{S})})\|_{1;\mathbb{T}_{N}}\to0$ in the large-$N$ limit.
\end{plemma}
%%%
To explain the benefit of {Heuristic} \ref{plemma:s2}, {Proposition 8} in \cite{GJ15} basically shows $|{\mathsf{E}^{\mathrm{can}}_{1/2+\e_{\mathrm{RN}}}(\tau_{y}\eta_{S})}|\lesssim\mathfrak{l}_{N}^{-1}=N^{-1/2-\e_{\mathrm{RN}}}$. This beats $N^{1/2}$, so it remains to prove {Heuristic} \ref{plemma:s2}. We clarify that this bound only holds if the $\eta$-density at scale $\mathfrak{l}_{N}$ is controlled by $\mathfrak{l}_{N}^{-1/2}$, which is basically equivalent to $\mathbf{h}^{N}$ and $\mathbf{Z}^{N}$ satisfying regularity estimates defining $\mathfrak{t}_{\mathrm{RN}}^{\mathbf{X}}$ in Definition \ref{definition:KPZ1}.
%%%
\subsubsection{Evidence for \emph{{Heuristic} \ref{plemma:s2}}}
%%%
The replacement that we proposed in {Heuristic}\ref{plemma:s2} will not be performed in one step. We need to replace $\bar{\mathfrak{q}}$ by its canonical measure expectations on progressively larger length-scales until we hit $\mathfrak{l}_{N}=N^{1/2+\e_{\mathrm{RN}}}$, with every length-scale being a small but universal power of $N$ larger than the previous scale. This ``renormalization" is similar to that of {Lemma 2} in \cite{GJ15}; {see also proofs of Theorems 1.1 and 1.2 in \cite{SX}}. {In what follows, $\log_{N}(\mathrm{a})=\frac{\log\mathrm{a}}{\log N}$ is the base $N$ logarithm.}
%%%
\begin{plemma}\label{plemma:s3}
\fsp For $\mathfrak{l}\in\Z_{\geq0}$ larger than the length of the support of $\bar{\mathfrak{q}}$, we let ${\mathsf{E}^{\mathrm{can}}_{\log_{N}\mathfrak{l}}(\tau_{y}\eta_{S})}$ be canonical measure expectation of $\bar{\mathfrak{q}}_{S,y}$ on a neighborhood of the support of $\bar{\mathfrak{q}}_{S,y}$ with length-scale $\mathfrak{l}$. Take $\e_{\mathrm{RN},1}>0$ sufficiently small but universal. We have the following uniformly, in probability, in all $\mathfrak{l}$-indices larger than the length of the support of $\bar{\mathfrak{q}}_{S,y}$ that also satisfy $N^{\e_{\mathrm{RN},1}}\mathfrak{l}\leq\mathfrak{l}_{N}$:
\begin{align}
\|\mathbf{H}^{N}(N^{\frac12}{\mathsf{R}_{\log_{N}\mathfrak{l}}(\tau_{y}\eta_{S})}\mathbf{Y}^{N}_{S,y})\|_{1;\mathbb{T}_{N}} \ \to_{N\to\infty} \ 0 \quad \mathrm{where} \quad {\mathsf{R}_{\log_{N}\mathfrak{l}}(\tau_{y}\eta_{S})=\mathsf{E}^{\mathrm{can}}_{\log_{N}\mathfrak{l}}(\tau_{y}\eta_{S})-\mathsf{E}^{\mathrm{can}}_{\e_{\mathrm{RN},1}+\log_{N}\mathfrak{l}}(\tau_{y}\eta_{S})}.
\end{align}
Let $\mathfrak{l}_{0}\in\Z_{\geq0}$ be any uniformly bounded length-scale that is larger than the length of the support of $\bar{\mathfrak{q}}$. We additionally have the claimed convergence in probability in \emph{{Heuristic} \ref{plemma:s2}} if we replace {$\mathsf{E}^{\mathrm{can}}_{1/2+\e_{\mathrm{RN}}}(\tau_{y}\eta_{S})$} therein by {$\mathsf{E}^{\mathrm{can}}_{\log_{N}\mathfrak{l}_{0}}(\tau_{y}\eta_{S})$}.
\end{plemma}
%%%
Let us focus on the proposed bound for {$\mathsf{R}$} terms in {Heuristic} \ref{plemma:s3} and discuss necessary adjustments for the difference between $\bar{\mathfrak{q}}$ and {$\mathsf{E}^{\mathrm{can}}_{\log_{N}\mathfrak{l}_{0}}$} afterwards; analyses of both will be similar to each other except for an important technical obstruction faced by the latter difference. The first key observation for the {$\mathsf{R}$} terms is their \emph{fluctuation property}; with respect to any invariant canonical measure on the support of {$\mathsf{R}$}, the function {$\mathsf{R}$} vanishes in expectation. Given that the support of {$\mathsf{R}$} is mesoscopic in scale and given the fastness to invariant measures on mesoscopic length-scales, the functional {$\mathsf{R}$} is rapidly fluctuating on mesoscopic time-scales. To take advantage of these fluctuations, we \emph{average} {$\mathsf{R}$} on \emph{mesoscopic} space-time scales, in contrast to macroscopic scales that are used in \cite{CYau}. Assuming we can replace {$\mathsf{R}$} by such time-average for now and additionally assuming that the law of the particle system around the support of {$\mathsf{R}$} is an invariant canonical measure, we would be able to control this mesoscopic space-time average of {$\mathsf{R}$} as if it were the space-time average of a noise by the Kipnis-Varadhan inequality; see Appendix 1.6 in \cite{KL}. To be precise, if {$\mathfrak{l}(\mathsf{R}_{\log_{N}\mathfrak{l}})$} is twice the support length of {$\mathsf{R}_{\log_{N}\mathfrak{l}}$},
\begin{align}
|\mathfrak{t}_{\mathrm{av}}^{-1}\int_{0}^{\mathfrak{t}_{\mathrm{av}}}\mathfrak{l}_{\mathrm{av}}^{-1}{\sum}_{|w|\leq\mathfrak{l}_{\mathrm{av}}}\tau_{w\cdot\mathfrak{l}({\mathsf{R}_{\log_{N}\mathfrak{l}}})}{\mathsf{R}_{\log_{N}\mathfrak{l}}(\tau_{y}\eta_{\mathfrak{r}})}\d\mathfrak{r}| \ \lesssim \ N^{-1}\mathfrak{t}_{\mathrm{av}}^{-\frac12}\mathfrak{l}_{\mathrm{av}}^{-\frac12}{\mathfrak{l}(\mathsf{R}_{\log_{N}\mathfrak{l}})|\mathsf{R}_{\log_{N}\mathfrak{l}}|}. \label{eq:s1}
\end{align}
Thus, the LHS exhibits Brownian behavior in space-time. The factor $N^{-1}$ on the RHS, which makes \eqref{eq:s1} extremely useful for obtaining {Heuristic} \ref{plemma:s3}, comes from the fact that the system evolves at a speed $N^{2}$. Therefore, convergence to invariant measure happens on time-scales of order $N^{-2}$, creating more fluctuation before time $\mathfrak{t}_{\mathrm{av}}$. In a similar spirit, the factor $\mathfrak{l}({\mathsf{R}_{\log_{N}\mathfrak{l}}})$ on the RHS of \eqref{eq:s1}, which actually makes \eqref{eq:s1} worse as we increase the length-scale of the support of ${\mathsf{R}_{\log_{N}\mathfrak{l}}}$ in {Heuristic} \ref{plemma:s3}, comes from the fact that we require the particle system to converge to invariant measure in a neighborhood of the support of ${\mathsf{R}_{\log_{N}\mathfrak{l}}}$ in order to exploit its fluctuations; this happens more slowly as the support of ${\mathsf{R}_{\log_{N}\mathfrak{l}}}$ increases. As for $|{\mathsf{R}_{\log_{N}\mathfrak{l}}}|$:
%%%
\begin{itemize}
\item {Heuristic} \ref{plemma:s2} replaces $\bar{\mathfrak{q}}$ by the {$\mathsf{E}^{\mathrm{can}}_{1/2+\e_{\mathrm{RN}}}$}-term that has deterministic bounds. This feature of {$\mathsf{E}^{\mathrm{can}}_{1/2+\e_{\mathrm{RN}}}$} is not exclusive to the length-scale $\mathfrak{l}_{N}$. Precisely, as length-scales of {$\mathsf{E}^{\mathrm{can}}$} terms in {Heuristic} \ref{plemma:s3} defining {$\mathsf{R}$} terms increases, such {$\mathsf{E}^{\mathrm{can}}$} functionals decrease in magnitude as noted after {Heuristic} \ref{plemma:s2}, and therefore so do {$\mathsf{R}$} functions. It turns out that this competition between $\mathfrak{l}({\mathsf{R}_{\log_{N}\mathfrak{l}}})$ and $|{\mathsf{R}_{\log_{N}\mathfrak{l}}}|$ on the RHS of \eqref{eq:s1} almost perfectly cancel, because {$\mathsf{E}^{\mathrm{can}}_{\log_{N}\mathfrak{l}}$} and ${\mathsf{R}_{\log_{N}\mathfrak{l}}}$ are controlled by the inverse of the length-scale. This is why the multiscale replacement in {Heuristic} \ref{plemma:s3} is feasible.
\end{itemize}
%%%
Provided the previous bullet point, we will want to take $\mathfrak{t}_{\mathrm{av}}\sim N^{-1}$ and $\mathfrak{l}_{\mathrm{av}}\gg1$. Actually, for our applications of estimates of the form \eqref{eq:s1} in this paper, we will take $\mathfrak{t}_{\mathrm{av}}$ slightly smaller than $N^{-1}$ and $\mathfrak{l}_{\mathrm{av}}$ a mesoscopic length-scale noticeably larger than simply $\mathfrak{l}_{\mathrm{av}}\gg1$, but this is entirely for technical reasons.
%%%
\subsubsection{Replacement by Space-Time Averages}
%%%
In the discussion of {Heuristic} \ref{plemma:s3} given after its statement, we omitted an important issue of replacing ${\mathsf{R}_{\log_{N}\mathfrak{l}}}$ terms with space-time averages. We first explain why we can replace by spatial averages.
%%%
\begin{itemize}
\item Recall in the paragraph following the above single bullet point that we want to replace ${\mathsf{R}_{\log_{N}\mathfrak{l}}}$ by its spatial average on length-scale $\mathfrak{l}_{\mathrm{av}}\gg1$. If we replace ${\mathsf{R}_{\log_{N}\mathfrak{l}}}$ in the heat operator in {Heuristic} \ref{plemma:s3} with its spatial average on length-scale $\mathfrak{l}_{\mathrm{av}}$, the error is controlled by the difference between ${\mathsf{R}_{\log_{N}\mathfrak{l}}}$ and its spatial translations, where the length-scale for the translations are at most $\mathfrak{l}_{\mathrm{av}}\mathfrak{l}({\mathsf{R}_{\log_{N}\mathfrak{l}}})$; see the LHS of \eqref{eq:s1}. These differences are spatial \emph{gradients} of ${\mathsf{R}_{\log_{N}\mathfrak{l}}}$; to bound these when multiplied by $\mathbf{Y}^{N}$ and plugged into the heat operator, we apply summation-by-parts. This lets us transfer the spatial gradients from ${\mathsf{R}_{\log_{N}\mathfrak{l}}}$ to $\mathbf{Y}^{N}$ and the $\mathbf{H}^{N}$ heat kernel, so it suffices to estimate these spatial gradients of smoother objects. The heat kernel is macroscopically smooth, so its spatial gradients carry a factor of $N^{-1}$. The $\mathbf{Y}^{N}$ term is constructed with a priori spatial regularity bounds; by Definition \ref{definition:KPZ1} and Definition \ref{definition:KPZ5}, such $\mathbf{Y}^{N}$ factor is basically \emph{macroscopically} spatially Holder-$\frac12$ continuous. Adding these two estimates for the spatial regularity of the $\mathbf{H}^{N}$ heat kernel and of $\mathbf{Y}^{N}$, the error in introducing the spatial average of ${\mathsf{R}_{\log_{N}\mathfrak{l}}}$ in {Heuristic} \ref{plemma:s3} is basically at most the following; recall from Definition \ref{definition:KPZ5} that $\mathbf{Y}^{N}$ is basically bounded and recall from earlier in this paragraph that the max length-scale of spatial gradients here is $\mathfrak{l}_{\mathrm{av}}\mathfrak{l}({\mathsf{R}_{\log_{N}\mathfrak{l}}})$:
\begin{align}
N^{-1}\mathfrak{l}_{\mathrm{av}}\mathfrak{l}({\mathsf{R}_{\log_{N}\mathfrak{l}}})N^{\frac12}|{\mathsf{R}_{\log_{N}\mathfrak{l}}}| + N^{\frac12}|{\mathsf{R}_{\log_{N}\mathfrak{l}}}|N^{-\frac12}\mathfrak{l}_{\mathrm{av}}^{\frac12}\mathfrak{l}({\mathsf{R}_{\log_{N}\mathfrak{l}}})^{\frac12}. \label{eq:s2}
\end{align}
As $\mathfrak{l}({\mathsf{R}_{\log_{N}\mathfrak{l}}})|{\mathsf{R}_{\log_{N}\mathfrak{l}}}|\lesssim1$ like in the proof-idea of {Heuristic} \ref{plemma:s3}, if $|{\mathsf{R}_{\log_{N}\mathfrak{l}}}|\gg1$, then $\mathfrak{l}_{\mathrm{av}}\gg_{\mathfrak{l}({\mathsf{R}_{\log_{N}\mathfrak{l}}})}1$ makes \eqref{eq:s2} small. 
\item If $|{\mathsf{R}_{\log_{N}\mathfrak{l}}}|\not\gg1$, the first term in \eqref{eq:s2} still vanishes in the large-$N$ limit if we pick $\mathfrak{l}_{\mathrm{av}}\ll N^{1/2}$, which we certainly will in this paper. As for the second term in \eqref{eq:s2}, we recall that term comes from blindly controlling the spatial gradients of $\mathbf{Y}^{N}$ via its spatial Holder regularity. However, we also know that $\mathbf{Y}^{N}$, whenever it is nonzero and equal to $\mathbf{Z}^{N}$, is explicit in terms of the particle system by definition. Therefore its spatial gradient on the length-scale $w\mathfrak{l}({\mathsf{R}_{\log_{N}\mathfrak{l}}})$ for $|w|\leq\mathfrak{l}_{\mathrm{av}}$, if nonzero, is $\mathbf{Y}^{N}$ itself times an explicit functional of the particle system whose support, it turns out after explicit calculation, is contained outside the support of ${\mathsf{R}_{\log_{N}\mathfrak{l}}}$ and of length at most $\mathfrak{l}_{\mathrm{av}}\mathfrak{l}({\mathsf{R}_{\log_{N}\mathfrak{l}}})\lesssim\mathfrak{l}_{\mathrm{av}}$ and thus not too large. We emphasize that this disjoint-support-condition we just mentioned is a consequence of the shifting $\mathfrak{q}$ in Definition \ref{definition:mSHE+1}, which is actually the exact purpose of that shift. Ultimately, the product between this functional and ${\mathsf{R}_{\log_{N}\mathfrak{l}}}$, which is lower-order because gradients of the explicit formula for $\mathbf{Y}^{N}$ introduce factors of $N^{-1/2}$, admits an inverse-length-scale estimate and satisfies a similar fluctuation property as ${\mathsf{R}_{\log_{N}\mathfrak{l}}}$ itself because of the disjoint support condition, so we can apply for it a simpler version of this analysis.
\item Again, we actually pick $\mathfrak{l}_{\mathrm{av}}\gg1$ more precisely. For ${\mathsf{R}_{\log_{N}\mathfrak{l}}}$ terms whose support lengths are asymptotically large but still below a certain $N$-dependent threshold, we take $\mathfrak{l}_{\mathrm{av}}\approx\mathfrak{l}({\mathsf{R}_{\log_{N}\mathfrak{l}}})$. For ${\mathsf{R}_{\log_{N}\mathfrak{l}}}$ whose supports have lengths above this threshold, we will be less strict with $\mathfrak{l}_{\mathrm{av}}$ and take advantage of the consequentially small $|{\mathsf{R}_{\log_{N}\mathfrak{l}}}|$ in \eqref{eq:s2}, letting it do the work.
\end{itemize}
%%%
We now discuss the problem of introducing a time-average of ${\mathsf{R}_{\log_{N}\mathfrak{l}}}$ \emph{after} introducing a spatial-average.
%%%
\begin{itemize}
\item Similar to the replacement by spatial-average for ${\mathsf{R}_{\log_{N}\mathfrak{l}}}$ terms from {Heuristic} \ref{plemma:s3}, replacements by time-averages for ${\mathsf{R}_{\log_{N}\mathfrak{l}}}$ terms therein contributes errors that are controlled by time-gradients of the $\mathbf{H}^{N}$ heat kernel and of $\mathbf{Y}^{N}$. The $\mathbf{H}^{N}$ heat kernel is smooth in time and the latter has time-regularity of Holder-$\frac14$, basically. Thus, similar to \eqref{eq:s2}, we deduce that the error in said replacement by time-average on time-scale $\mathfrak{t}_{\mathrm{av}}$ is controlled by the following, in which the $\mathfrak{l}_{\mathrm{av}}$-based factor comes from the fact that we have already spatially averaged ${\mathsf{R}_{\log_{N}\mathfrak{l}}}$ on length-scale $\mathfrak{l}_{\mathrm{av}}\gg1$ and thus gained an a priori estimate for ${\mathsf{R}_{\log_{N}\mathfrak{l}}}$/its scale-$\mathfrak{l}_{\mathrm{av}}$ spatial average because of its fluctuating behavior as in \eqref{eq:s1}: 
\begin{align}
\mathfrak{t}_{\mathrm{av}}N^{\frac12}\mathfrak{l}_{\mathrm{av}}^{-\frac12}|{\mathsf{R}_{\log_{N}\mathfrak{l}}}| + N^{\frac12}\mathfrak{l}_{\mathrm{av}}^{-\frac12}|{\mathsf{R}_{\log_{N}\mathfrak{l}}}|\mathfrak{t}_{\mathrm{av}}^{\frac14}. \label{eq:s3}
\end{align}
Recall we want $\mathfrak{t}_{\mathrm{av}}=N^{-1}$; the first term in \eqref{eq:s3} vanishes in the large-$N$ limit. The second term, however, clearly blows up. Instead, we take $\mathfrak{t}_{\mathrm{av}}=\mathfrak{t}_{\mathrm{av},1}=N^{-2}\mathfrak{l}_{\mathrm{av}}$ and pretend $\mathfrak{l}_{\mathrm{av}}=N^{\e}$ for $\e>0$ small but universal, which will ultimately be the case later in this paper. This choice of $\mathfrak{t}_{\mathrm{av}}$ makes it so both terms in \eqref{eq:s3} vanish in the large-$N$ limit.
\item We replaced the spatial average of ${\mathsf{R}_{\log_{N}\mathfrak{l}}}$ with its time-average on scale $\mathfrak{t}_{\mathrm{av},1}=N^{-2}\mathfrak{l}_{\mathrm{av}}$. Let us now replace \emph{this} time-average with its own time-average on a time-scale $\mathfrak{t}_{\mathrm{av},2}=N^{\rho}\mathfrak{t}_{\mathrm{av},1}$, where $\rho>0$ is small but universal. Similar to \eqref{eq:s3}, we establish the following rough estimate for the error in this time-average-replacement, but with a key distinction we explain below:
\begin{align}
\mathfrak{t}_{\mathrm{av},2}N^{\frac12}N^{-1}\mathfrak{t}_{\mathrm{av},1}^{-\frac12}\mathfrak{l}_{\mathrm{av}}^{-\frac12}|{\mathsf{R}_{\log_{N}\mathfrak{l}}}| + N^{\frac12}N^{-1}\mathfrak{t}_{\mathrm{av},1}^{-\frac12}\mathfrak{l}_{\mathrm{av}}^{-\frac12}|{\mathsf{R}_{\log_{N}\mathfrak{l}}}|\mathfrak{t}_{\mathrm{av},2}^{\frac14}. \label{eq:s4}
\end{align}
Besides replacing time-scales, the difference between \eqref{eq:s3} and \eqref{eq:s4} is $\mathfrak{t}_{\mathrm{av},1}$-based factors. These come from the fact that we have already time-averaged the spatial average of ${\mathsf{R}_{\log_{N}\mathfrak{l}}}$ on time-scale $\mathfrak{t}_{\mathrm{av},1}$, so we get an improved a priori estimate similar to \eqref{eq:s1}. If we choose $\mathfrak{t}_{\mathrm{av},2}\leq N^{-1}$, the first term in \eqref{eq:s4} certainly vanishes in the large-$N$ limit, as $\mathfrak{t}_{\mathrm{av},1}\gg N^{-2}$. On the other hand, a simple calculation implies that the second term in \eqref{eq:s4} vanishes in the large-$N$ limit if we choose $\rho=\e/999$, where we recall $\e$ is defined via $\mathfrak{l}_{\mathrm{av}}=N^{\e}$. Thus, we have succesfully replaced the time-average on time-scale $\mathfrak{t}_{\mathrm{av},1}$ of the spatial average of ${\mathsf{R}_{\log_{N}\mathfrak{l}}}$ with its time-average on the time-scale $\mathfrak{t}_{\mathrm{av},2}\gg\mathfrak{t}_{\mathrm{av},1}$. As the double time-average is basically an average on the larger time-scale, in this step we have basically replaced the time-scale $\mathfrak{t}_{\mathrm{av},1}$ by $\mathfrak{t}_{\mathrm{av},2}\gg\mathfrak{t}_{\mathrm{av},1}$.
\item We then iteratively boost the time-scale by $N^{\rho}$ until we hit the maximal time-scale. This strongly resembles the renormalization procedure discussed after \eqref{eq:s1} and in {Lemma 2} in \cite{GJ15} but in the time-direction and not the spatial-direction. In particular, it is key that each replacement of time-scale increases a priori estimates for ${\mathsf{R}_{\log_{N}\mathfrak{l}}}$.
\end{itemize}
%%%
%%%
\subsubsection{Non-Equilibrium Calculations}
%%%
The previous heuristics are justifiable if the model is at an invariant measure. In general, we will reduce estimates to invariant measure calculations by virtue of the local equilibrium method in \cite{GPV}, namely the one-block and two-blocks estimates, which basically suggest that statistics for our large-scale system are very close to some invariant measure at mesoscopic scales. In particular, we employ the following strategy that will later be made quantitatively precise.
%%%
\begin{itemize}
\item The local equilibrium method in \cite{GPV} is based on entropy-Dirichlet form duality and therefore highly robust under perturbations \cite{YauRE}, unlike the approach of Chang-Yau \cite{CYau} via the global invariant measure/eigenvalue problem. It implies the Dirichlet form of the system is very small on mesoscopic blocks. By the log-Sobolev inequality of \cite{Yau}, the same is true for relative entropy.
\item By the relative entropy inequality, we may try to reduce calculations to those at invariant measures. However, relative entropy estimates from the previous bullet point will not be good enough to perform any direct comparison to invariant measures for the purposes of proving {Heuristic} \ref{plemma:s3}; this is because local equilibrium reduction by the entropy inequality on larger scales needs sharper large-deviations bounds for terms we are trying to reduce to equilibrium. Thus, we see a competition between deterioration in local equilibrium reduction in replacing ${\mathsf{E}_{\log_{N}\mathfrak{l}}}$ in {Heuristic} \ref{plemma:s3} by itself on progressively larger scales, versus improving bounds for ${\mathsf{E}_{\log_{N}\mathfrak{l}}}$ on progressively larger scales. As before, this competition sufficiently cancels.
\end{itemize}
%%%
We conclude with the following outline for the paper in view of this strategy discussion/this entire section.
%%%
\begin{itemize}
\item In Section \ref{section:BGI}, we present the main technical ingredient of this paper, the non-stationary first-order Boltzmann-Gibbs principle. This is a quantitative version of {Heuristic} \ref{pprop:s1}. We state three ingredients for its proof, the first two of which give a quantitative version of {Heuristic} \ref{plemma:s3} and the last of which is the step used to prove {Heuristic} \ref{pprop:s1} \emph{assuming} {Heuristic} \ref{plemma:s2}, namely the inverse length-scale bound for ${\mathsf{E}_{\log_{N}\mathfrak{l}}}$ terms. We give only the relatively short proof of the last of these three ingredients in the next section; we defer technically involved proofs of the first two ingredients to the last part of the paper before the appendix.
\item In Section \ref{section:BGII}, we state and prove a second weaker version of the Boltzmann-Gibbs principle that controls gradients of the heat operator in {Heuristic} \ref{pprop:s1}. This will be used in order to prove the space-time regularity estimates defining the stopping time $\mathfrak{t}_{\mathrm{st}}$ in Definition \ref{definition:KPZ1} are \emph{self-propagating}. This is the goal of Section \ref{section:reg}, which we carry out by estimating space-time regularity of each term in the $\mathbf{U}^{N}$ equation individually using a moment calculation exactly like in the proof of Proposition 3.2 in \cite{DT}, for example, except we will require one application of the aforementioned second/weaker Boltzmann-Gibbs principle to control the gradient of the order $N^{1/2}$ heat operator term in Corollary \ref{corollary:mSHE+}.
\item In Section \ref{section:KPZ1011}, we combine the estimates in Sections \ref{section:BGI}, \ref{section:BGII}, and \ref{section:reg} with a priori $\mathbf{Q}^{N}$ estimates, which are standard to prove, to show Proposition \ref{prop:KPZ10} and Proposition \ref{prop:KPZ11}. 
\item {For the sake of clarity, we shortly reintroduce $\mathsf{E}^{\mathrm{can}}$ and $\mathsf{R}$ notation from this subsection (as well as a few additional and related constructions) more systematically.}
\end{itemize}
%%%
%
%
%
%%%
\section{Boltzmann-Gibbs Principle I -- Statement} \label{section:BGI}
%%%
The main result of this section is the Boltzmann-Gibbs principle. This allows us to access corrections to the $\mathfrak{q}$-term in Definition \ref{definition:mSHE+1}, or equivalently after spatial translation, the $\wt{\mathfrak{q}}$ functional therein, beyond its hydrodynamic limit.
%%%
\begin{theorem}\label{theorem:BGI}
\fsp In what follows, let $\E$ be expectation with respect to the law of the $\mathbf{h}_{T,\cdot}^{N}$ and $\eta_{t,\cdot}$ processes with stable initial data. There exists a universal constant $\beta_{\mathrm{BG}}>0$ independent of $\e_{\mathrm{RN}}>0$ so that with universal implied constant,
\begin{align}
\E\|\mathbf{H}_{T,x}^{N}(N^{1/2}\bar{\mathfrak{q}}\mathbf{Y}^{N})\|_{1;\mathbb{T}_{N}} \ \lesssim \ N^{-\beta_{\mathrm{BG}}}+N^{-\frac{99}{100}\e_{\mathrm{RN}}+10\e_{\mathrm{ap}}}. \label{eq:BGI}
\end{align}
\end{theorem}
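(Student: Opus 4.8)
The plan is to establish \eqref{eq:BGI} by a telescoping multiscale decomposition of $\bar{\mathfrak{q}}$ along mesoscopic length-scales, estimating the heat-operator contribution of each piece with the three ingredients stated in this section. Fix a uniformly bounded $\mathfrak{l}_{0}$ exceeding the support length of $\bar{\mathfrak{q}}$, set $\mathfrak{l}_{j}=\mathfrak{l}_{0}N^{j\e_{\mathrm{RN},1}}$, and let $J$ be the last index with $\mathfrak{l}_{J}\leq\mathfrak{l}_{N}=N^{1/2+\e_{\mathrm{RN}}}$; crucially $J$ is a \emph{universal constant}, since $\mathfrak{l}_{N}/\mathfrak{l}_{0}$ is of order $N^{1/2}$ and $\e_{\mathrm{RN},1}$ is a fixed constant. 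Writing $\mathsf{E}^{\mathrm{can}}_{\log_{N}\mathfrak{l}}(\tau_{y}\eta_{S})$ for the scale-$\mathfrak{l}$ canonical-ensemble expectation of $\bar{\mathfrak{q}}_{S,y}$ and $\mathsf{R}_{\log_{N}\mathfrak{l}_{j}}=\mathsf{E}^{\mathrm{can}}_{\log_{N}\mathfrak{l}_{j}}-\mathsf{E}^{\mathrm{can}}_{\log_{N}\mathfrak{l}_{j+1}}$ as in the previous section, one has the telescoping identity
\begin{align}
\bar{\mathfrak{q}}_{S,y} \ = \ \big(\bar{\mathfrak{q}}_{S,y}-\mathsf{E}^{\mathrm{can}}_{\log_{N}\mathfrak{l}_{0}}(\tau_{y}\eta_{S})\big)+{\sum}_{j=0}^{J-1}\mathsf{R}_{\log_{N}\mathfrak{l}_{j}}(\tau_{y}\eta_{S})+\mathsf{E}^{\mathrm{can}}_{\log_{N}\mathfrak{l}_{J}}(\tau_{y}\eta_{S}), \nonumber
\end{align}
where, after harmless rounding of scales, $\mathfrak{l}_{J}$ may be taken exactly equal to $\mathfrak{l}_{N}$. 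By linearity of $\varphi\mapsto\mathbf{H}^{N}_{T,x}(\varphi\mathbf{Y}^{N})$, the left side of \eqref{eq:BGI} is at most the sum of the contributions of the $J+1=\O(1)$ pieces on the right.

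The first $J$ pieces are handled by the two ``renormalization'' ingredients of this section: Proposition \ref{prop:BGI1} controls $\E\|\mathbf{H}^{N}_{T,x}(N^{1/2}(\bar{\mathfrak{q}}-\mathsf{E}^{\mathrm{can}}_{\log_{N}\mathfrak{l}_{0}})\mathbf{Y}^{N})\|_{1;\mathbb{T}_{N}}$, the bounded-scale reduction of $\bar{\mathfrak{q}}$ to a local canonical-measure average, and Proposition \ref{prop:BGI2} controls $\E\|\mathbf{H}^{N}_{T,x}(N^{1/2}\mathsf{R}_{\log_{N}\mathfrak{l}}\mathbf{Y}^{N})\|_{1;\mathbb{T}_{N}}$ uniformly over admissible scales $\mathfrak{l}$. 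Each gives a bound $\lesssim N^{-\beta}$ for a universal $\beta>0$ that does not require $\e_{\mathrm{RN}}$ to be small (the role of $\e_{\mathrm{RN}}>0$ will be only to turn the final, top-scale term into a genuine negative power of $N$); since only $J+1=\O(1)$ such terms are summed, their total is still $\lesssim N^{-\beta}$, and we take $\beta_{\mathrm{BG}}=\beta/2$ to absorb the $\O(1)$ constant. As both ingredients are stated directly in $\E\|\cdot\|_{1;\mathbb{T}_{N}}$ form, the space-time supremum is already accounted for and no further chaining over $(T,x)$ is needed here.

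The last piece $\mathsf{E}^{\mathrm{can}}_{\log_{N}\mathfrak{l}_{J}}=\mathsf{E}^{\mathrm{can}}_{1/2+\e_{\mathrm{RN}}}$ is a \emph{deterministic-magnitude} local functional of the configuration, disposed of by the third ingredient, whose short proof is given separately: on the event $\{S\leq\mathfrak{t}_{\mathrm{st}}\}$ one has the inverse-length-scale bound $|\mathsf{E}^{\mathrm{can}}_{1/2+\e_{\mathrm{RN}}}(\tau_{y}\eta_{S})|\lesssim\mathfrak{l}_{N}^{-1}N^{\O(\e_{\mathrm{ap}})}$, the $N^{\O(\e_{\mathrm{ap}})}$ coming from the a priori $\eta$-density estimate at scale $\mathfrak{l}_{N}$ that holds before $\mathfrak{t}_{\mathrm{RN}}^{\mathbf{X}}$, hence before $\mathfrak{t}_{\mathrm{st}}$; before $\mathfrak{t}_{\mathrm{ap}}$ one also has $\mathbf{Y}^{N}\lesssim N^{\e_{\mathrm{ap}}}$, and $\mathbf{Y}^{N}$ vanishes once $S>\mathfrak{t}_{\mathrm{st}}$. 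Since $\mathbf{H}^{N}_{T,x}$ integrates a probability kernel in space over a time-interval of length at most $1$, this yields
\begin{align}
\big\|\mathbf{H}^{N}_{T,x}\big(N^{1/2}\mathsf{E}^{\mathrm{can}}_{1/2+\e_{\mathrm{RN}}}\mathbf{Y}^{N}\big)\big\|_{1;\mathbb{T}_{N}} \ \lesssim \ N^{1/2}\cdot\mathfrak{l}_{N}^{-1}N^{\O(\e_{\mathrm{ap}})}\cdot N^{\e_{\mathrm{ap}}} \ \lesssim \ N^{-\frac{99}{100}\e_{\mathrm{RN}}+10\e_{\mathrm{ap}}}, \nonumber
\end{align}
with the slack in the exponent absorbing the $\O(1)$ factor $J$, logarithmic losses, and the precise power in the density bound. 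Adding this to the $N^{-\beta_{\mathrm{BG}}}$ of the previous paragraph gives \eqref{eq:BGI}; finally, a crude deterministic bound $\O(N^{1/2+\e_{\mathrm{ap}}})$ on the integrand (since $\bar{\mathfrak{q}}$ is bounded and $\mathbf{Y}^{N}\lesssim N^{\e_{\mathrm{ap}}}$ always), combined with the overwhelming-probability estimates accompanying the ingredients, controls the contribution of the exceptional events to the expectation.

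The substance of the argument lies entirely in the two deferred ingredients, above all Proposition \ref{prop:BGI2}: its proof must (i) reduce the genuinely non-stationary dynamics to local canonical measures by quantifying entropy production on mesoscopic blocks and invoking Yau's log-Sobolev inequality, i.e. a quantitative non-stationary rendering of the one- and two-blocks estimates of \cite{GPV}; (ii) exploit the fact that $\mathsf{R}_{\log_{N}\mathfrak{l}}$ has vanishing canonical-measure average by bounding a carefully tuned spatial average (scale $\mathfrak{l}_{\mathrm{av}}$) and an \emph{iterated} time average (time-scales boosted by factors $N^{\rho}$ up to the maximal scale) through a Kipnis--Varadhan estimate, exactly as in the space-time averaging discussion, cf. \eqref{eq:s1} and \eqref{eq:s4}; (iii) absorb the errors from introducing those averages, which are discrete space/time gradients landing on the macroscopically smooth heat kernel and on $\mathbf{Y}^{N}$, whose SHE-like Holder-$1/2$ (space) and Holder-$1/4$ (time) regularity is precisely what the stopping time $\mathfrak{t}_{\mathrm{st}}$ encodes; and in the borderline sub-case where $|\mathsf{R}_{\log_{N}\mathfrak{l}}|$ is not large, replace a blind use of the Holder regularity of $\mathbf{Y}^{N}$ by the observation that its discrete spatial gradients equal $\mathbf{Y}^{N}$ itself times an explicit functional whose support is \emph{disjoint} from that of $\mathsf{R}_{\log_{N}\mathfrak{l}}$ --- a disjointness built in by the shift $\tau_{-2\mathfrak{l}_{\mathfrak{d}}}$ in Definition \ref{definition:mSHE+1} --- so that a simpler fluctuation argument applies. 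The main obstacle is making quantitatively precise the near-cancellation between the \emph{deterioration} of the local-equilibrium (entropy) reduction as the block scale grows and the \emph{improvement} of the magnitude bound $|\mathsf{E}^{\mathrm{can}}_{\log_{N}\mathfrak{l}}|\lesssim\mathfrak{l}^{-1}$, along with the attendant averaging losses, so that the whole multiscale scheme telescopes with an $\e_{\mathrm{RN}}$-independent exponent.
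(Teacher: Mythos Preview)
Your proof follows the same architecture as the paper's: a telescoping multiscale decomposition of $\bar{\mathfrak{q}}$ into an initial fluctuation piece, a finite sum of renormalization differences $\mathsf{R}$, and a top-scale canonical expectation, each handled by one of Propositions \ref{prop:BGI1}, \ref{prop:BGI2}, \ref{prop:BGI3}. The triangle inequality and the fact that the number of telescoping steps is $\mathrm{O}(1)$ then give \eqref{eq:BGI}. This is exactly the paper's argument.

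There is one mismatch worth correcting. You begin the ladder at a \emph{uniformly bounded} scale $\mathfrak{l}_{0}$, whereas the paper's three propositions are stated for scales $N^{\e_{1}+\mathfrak{b}\e_{\mathrm{RN},1}}$ with $\e_{1}=1/14$ fixed (Definition \ref{definition:BGI2}). Proposition \ref{prop:BGI1} as written controls $\mathsf{S}_{\e_{1}}=\bar{\mathfrak{q}}-\mathsf{E}^{\mathrm{can}}_{\e_{1}}$, not $\bar{\mathfrak{q}}-\mathsf{E}^{\mathrm{can}}_{\log_{N}\mathfrak{l}_{0}}$ at a bounded scale, and Proposition \ref{prop:BGI2} covers only $\mathfrak{b}\in\llbracket0,\mathfrak{b}_{+}-1\rrbracket$ above that starting point. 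The specific value $\e_{1}=1/14$ is not cosmetic: it enters the proofs in Sections \ref{section:proofBGI}--\ref{section:proofBGI22} through the choice of the auxiliary spatial-averaging length $\mathfrak{l}_{1}=N^{1/6}$ and the power-counting that balances it. Likewise the paper's top scale is $N^{\e_{1}+\mathfrak{b}_{+}\e_{\mathrm{RN},1}}$, which falls just short of $\mathfrak{l}_{N}$ rather than equalling it; this is what Proposition \ref{prop:BGI3} is stated for. Once you align your scales with theirs, the proof is identical. Your closing remark about exceptional events is unnecessary: all three ingredients are already phrased as bounds on $\E\|\cdot\|_{1;\mathbb{T}_{N}}$, so no separate tail argument is needed at this level.
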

%%%
%%%
\begin{remark}\label{remark:BGI2}
\fsp The Boltzmann-Gibbs principle, for example in \cite{BR,CYau}, is usually stated in a much weaker form, namely pointwise in space-time rather than {in} a uniform space-time norm {as} in Theorem \ref{theorem:BGI}. But such an estimate is not well-suited for \emph{norms}.
\end{remark}
%%%
%%%
\begin{remark}
\fsp The estimate \eqref{eq:BGI} holds if we change $\bar{\mathfrak{q}}$ by replacing $\mathfrak{q}$ in its definition (see Definition \ref{definition:mSHE+1}) with any local functional supported to the left of $0$, say $\mathfrak{y}$. By local, although we always use it in this paper to mean uniformly bounded support, we can actually allow for the support of this ``new" functional $\mathfrak{y}$ that replaces $\mathfrak{q}$ to grow with $N$; the RHS of \eqref{eq:BGI} for $\mathfrak{y}$ in place of $\mathfrak{q}$ would then have a factor that grows as the $100$-th power, for example, of the support length of $\mathfrak{y}$.
\end{remark}
%%%
The Boltzmann-Gibbs principle \emph{for sufficiently well-behaved stationary models} is generally accessible by \emph{one} application of the one-block estimate of \cite{GPV} and Sobolev inequalities, which hold generally exclusively for stationary models; see Chapter 11 in \cite{KL}. Like in \cite{CYau}, however, for non-stationary particle systems we require a multiscale idea, {and in this paper we will adopt the multiscale analysis in \cite{GJ15,SX}} that was actually originally implemented for stationary particle systems to prove a refinement of the Boltzmann-Gibbs principle, though our implementation is different than that in \cite{GJ15} due to the non-stationary nature of models considered herein. We set up such a multiscale analysis in the following constructions, which effectively outline a procedure of \emph{local} equilibrium on small mesoscopic blocks and a renormalization scheme that bootstraps equilibrium on smaller mesoscopic blocks to equilibrium on progressively larger mesoscopic blocks; see our discussion of {Heuristic} \ref{plemma:s3}. First, we must introduce key probability/invariant measures.
%%%
\begin{definition}\label{definition:ensembles}
\fsp Consider any subset $\mathbb{I}\subseteq\mathbb{T}_{N}$ and any $\sigma\in\R$. We define the canonical measure $\mu_{\sigma,\mathbb{I}}^{\mathrm{can}}$ to be the uniform measure on the set of $\eta\in\Omega_{\mathbb{I}}$ for which the $\eta$-average on $\mathbb{I}$ is equal to $\sigma$. Define the grand-canonical measure $\mu_{\sigma,\mathbb{I}}$ as the product Bernoulli measure on $\Omega_{\mathbb{I}}$ whose one-dimensional marginals have expectation equal to $\sigma$. These two probability measures are each defined precisely below, and we will also let $\mu_{\sigma}=\mu_{\sigma,\mathbb{T}_{N}}$ denote the grand-canonical ensemble of parameter $\sigma$ on the entire set $\mathbb{T}_{N}$:
\begin{align}
\mu_{\sigma,\mathbb{I}}^{\mathrm{can}} \ \overset{\bullet}= \ \mathrm{Unif}\left(\eta\in\Omega_{\mathbb{I}}: \ \wt{\sum}_{x\in\mathbb{I}}\eta_{x}=\sigma\right) \quad \mathrm{and} \quad \mu_{\sigma,\mathbb{I}} \ \overset{\bullet}= \ \bigotimes_{x\in\mathbb{I}}\left(\frac{1+\sigma}{2}\mathbf{1}_{\eta_{x}=1} + \frac{1-\sigma}{2}\mathbf{1}_{\eta_{x}=-1}\right)
\end{align}
For clarity, we mention that the canonical ensemble of parameter $\sigma$ on any subset $\mathbb{I}\subseteq\mathbb{T}_{N}$ is the measure obtained upon taking \emph{any} grand-canonical ensemble on $\mathbb{I}$ and conditioning on the support of the canonical measure/hyperplane with $\eta$-average on $\mathbb{I}$ equal to $\sigma$. Moreover, the projection/pushforward of this canonical ensemble onto any subset $\mathbb{I}'\subseteq\mathbb{I}$ is a convex combination of canonical measures on $\mathbb{I}'$; the coefficient in such a convex combination that corresponds to the canonical measure with parameter $\sigma'$ on $\mathbb{I}'$ is the probability of this $\sigma'$-hyperplane in $\Omega_{\mathbb{I}'}$ under the $\sigma$-canonical measure on $\mathbb{I}$. Lastly, when taking the expectation of any functional $\mathfrak{f}$ with respect to a grand-canonical measure, we make take this grand-canonical measure on any neighborhood of the support of $\mathfrak{f}$, as marginals are jointly independent under grand-canonical measures.
\end{definition}
%%%
%%%
\begin{definition}\label{definition:BGI2}
\fsp Below, we take $\e_{1},\e_{\mathrm{RN},1}>0$ arbitrarily small but universal and thus uniformly bounded from below.
%%%
\begin{itemize}
\item We establish two notations for the following empirical $\eta$-density at time $S\geq0$ in a neighborhood of $y\in\mathbb{T}_{N}$ of length $N^{\e_{1}}$. We use the $\sigma$-notation when we think of the following as a parameter for canonical and grand-canonical ensembles/measures in Definition \ref{definition:ensembles}, and we use the latter $\mathsf{A}$-notation when we think of it as an ``averaging operator" functional on $\Omega$:
\begin{align}
\sigma_{\e_{1},S,y} \ \overset{\bullet}= \ {\mathsf{A}^{\mathbf{X}}_{\e_{1},y}(\eta_{S})} \ \overset{\bullet}= \ \wt{\sum}_{0\leq w\leq N^{\e_{1}}}\eta_{S,y-w}.
\end{align}
\item Define the following conditional expectation of the $\bar{\mathfrak{q}}$ functional viewed as a function of $\sigma_{\e_{1},S,y}$ or $\eta_{S,\cdot}$ for $\cdot\in y-\llbracket0,N^{\e_{1}}\rrbracket$. This conditional expectation is expectation of {$\bar{\mathfrak{q}}_{S,y}$} with respect to the canonical measure of parameter $\sigma_{\e_{1},S,y}$ defined immediately above. We additionally define another expectation operator of $\bar{\mathfrak{q}}_{0,0}$ but now with respect to a \emph{grand-canonical} measure corresponding to the same $\eta$-density/profile $\sigma_{\e_{1},S,y}$ defined immediately above:
\begin{align}
{\mathsf{E}_{\e_{1}}^{\mathrm{can}}(\tau_{y}\eta_{S})} \ \overset{\bullet}= \ {\E_{0}}\left(\bar{\mathfrak{q}}_{S,y}\middle|{\mathsf{A}^{\mathbf{X}}_{\e_{1},y}(\eta_{S})}\right) \quad \mathrm{and} \quad {\mathsf{E}_{\e_{1}}^{\mathrm{gc}}(\tau_{y}\eta_{S})} \ \overset{\bullet}= \ {\E_{\sigma_{\e_{1},S,y}}}\bar{\mathfrak{q}}_{0,0}.
\end{align}
We emphasize that the support of $\bar{\mathfrak{q}}_{S,y}$ is contained strictly in $y-\llbracket0,N^{\e_{1}}\rrbracket$ for any $S\geq0$ and $y\in\mathbb{T}_{N}$, which we emphasize is the support of {$\mathsf{A}^{\mathbf{X}}_{\e_{1},y}(\eta_{S})$}. {More generally, given any functional $\mathfrak{f}:\Omega\to\R$ with support strictly contained in $y-\llbracket0,N^{\e_{1}}\rrbracket$, we let $\mathsf{E}^{\mathrm{can}}_{\e_{1}}(\tau_{y}\eta_{S};\mathfrak{f})$ be as above but replacing $\bar{\mathfrak{q}}_{S,y}$ by $\mathfrak{f}$.} We now define the difference between $\bar{\mathfrak{q}}$ and its $N^{\e_{1}}$-local expectation:
\begin{align}
{\mathsf{S}_{\e_{1}}(\tau_{y}\eta_{S})} \ \overset{\bullet}= \ \bar{\mathfrak{q}}_{S,y}-{\mathsf{E}_{\e_{1}}^{\mathrm{can}}(\tau_{y}\eta_{S})}.
\end{align}
\item Observe now that the previous constructions extend from our predetermined choice of $\e_{1}>0$ to any $\e_{1}\geq0$. With this, we conclude this construction with a renormalization/transfer-of-scales operator for any $\delta\geq0$:
\begin{align}
{\mathsf{R}_{\delta}(\tau_{y}\eta_{S}) \ \overset{\bullet}= \ \mathsf{E}_{\delta}^{\mathrm{can}}(\tau_{y}\eta_{S}) - \mathsf{E}^{\mathrm{can}}_{\delta+\e_{\mathrm{RN},1}}(\tau_{y}\eta_{S})}. \label{eq:defBGI24}
\end{align}
\item {We emphasize that the constructions in the above bullet points are functionals $\Omega\to\R$ evaluated at (shifts of) $\eta_{S}$. In particular, they make sense upon plugging in any $\eta$ instead of (shifts of) $\eta_{S}$.}
\end{itemize}
%%%
\end{definition}
%%%
We explain the proof of Theorem \ref{theorem:BGI}; even though we did so in the previous section, for clarity we present it with the above notation. The key is to replace $\bar{\mathfrak{q}}$ in \eqref{eq:BGI} by its $\mathsf{E}^{\mathrm{can}}$-expectation on the length-scale $N^{1/2+\beta'}$, in which $\beta'>0$ is universal. The motivation behind such replacement is the following observation. The functional $\bar{\mathfrak{q}}$ vanishes in $\E_{0}$ expectation, and because the global $\eta$ density is roughly 0, the fluctuations {$\E_{0}\bar{\mathfrak{q}}-\mathsf{E}^{\mathrm{can}}(\tau_{y}\eta_{S})$} at length-scale $\mathfrak{l}$ are at most order $\mathfrak{l}^{-1/2}$ by central limit theorem, for example. Taking $\mathfrak{l}=N^{1/2+\beta'}$ does not allow scale-$\mathfrak{l}$ expectation {$\mathsf{E}^{\mathrm{can}}(\tau_{y}\eta_{S})$} to beat the $N^{1/2}$ factor on the LHS of \eqref{eq:BGI}. But $\E_{0}\bar{\mathfrak{q}}=0$ requires only the correction $\E_{0}\wt{\mathfrak{q}}$ in Definition \ref{definition:mSHE+1}. The purpose of the additional linear correction, in a technical sense, is to actually cancel the leading order behavior of the scale-$\mathfrak{l}$ expectation of $\bar{\mathfrak{q}}$, so that, according to {Proposition 8} of \cite{GJ15}, the fluctuations at length-scale $\mathfrak{l}$ are order at most $\mathfrak{l}^{-1}$. Thus, our choice of $\mathfrak{l}$ beats $N^{1/2}$ because of the extra exponent $\beta'$. We note showing that the $\eta$-density is roughly 0 in the stationary case is easy; in the non-stationary case, we need regularity of $\mathbf{Y}^{N}$.

Let us now explain how the replacement of $\bar{\mathfrak{q}}$ in \eqref{eq:BGI} by its $\mathsf{E}^{\mathrm{can}}$-expectation on length $N^{1/2+\beta'}$ will be justified. As suggested by the constructions in Definition \ref{definition:BGI2}, we will first replace $\bar{\mathfrak{q}}$ with its $\mathsf{E}^{\mathrm{can}}$-expectation at the length-scale $N^{\e_{1}}$ with $\e_{1}>0$ from Definition \ref{definition:BGI2} sufficiently small though universal. The error in this first replacement step is the heat operator acting on $N^{1/2}\mathbf{Y}^{N}$ times the difference {$\mathsf{S}_{\e_{1}}(\tau_{y}\eta_{S})$} from Definition \ref{definition:BGI2}, which is a fluctuating factor with small support with size of order $N^{\e_{1}}$. We will then estimate this fluctuating factor using basically the methods of \cite{Y}; as noted in Section \ref{subsection:s}, this roughly amounts to averaging out \emph{in time} these fluctuations, applying the Kipnis-Varadhan inequality (see Appendix 1.6 in \cite{KL}) at stationarity, and then performing reduction to stationarity by a ``local equilibrium" estimate via the entropy inequality.

We now replaced $\bar{\mathfrak{q}}$ in \eqref{eq:BGI} with its $\mathsf{E}^{\mathrm{can}}$-expectation with respect to the small mesoscopic length-scale $N^{\e_{1}}$. The next step is to replace this $\mathsf{E}^{\mathrm{can}}$-expectation with another $\mathsf{E}^{\mathrm{can}}$-expectation but on the slightly larger mesoscopic length-scale $N^{\e_{1}+\e_{\mathrm{RN},1}}$ where $\e_{\mathrm{RN},1}$ in Definition \ref{definition:BGI2} is arbitrarily small but universal. As noted at the end of Section \ref{subsection:s}, we encounter additional obstructions when we try to replace by $\mathsf{E}^{\mathrm{can}}$-expectation on larger length-scales. Indeed, the entropy inequality breaks down when we try to reduce to equilibrium on larger subsets unless we have better a priori estimates for $\mathsf{E}^{\mathrm{can}}$ on larger-scales. This a priori control on $\mathsf{E}^{\mathrm{can}}$-expectations is explained in first paragraph after Definition \ref{definition:BGI2}, and it is enough extra benefit from the initial replacement to then perform a replacement by $\mathsf{E}^{\mathrm{can}}$-expectation on a slightly larger length-scale, so long as $\e_{\mathrm{RN},1}$ is sufficiently smaller than $\e_{1}$, so the jump in length-scales is not too large that the extra benefit in the previous scale-$N^{\e_{1}}$ replacement is not good enough. Ultimately, our analysis remains intact as we increase the length-scale. We then iterate until the desired length-scale $N^{1/2+\beta'}$.

We write three ingredients below for the proof of Theorem \ref{theorem:BGI}, each corresponding to one of the three paragraphs above. The first is initial replacement of $\bar{\mathfrak{q}}$ in \eqref{eq:BGI} with its $\mathsf{E}^{\mathrm{can}}$-expectation on scale $N^{\e_{1}}$ in the second paragraph above. The second is the multiscale ``renormalization" of length-scales from the third paragraph. The last is the inverse-length-scale bound on {$\mathsf{E}^{\mathrm{can}}(\tau_{y}\eta_{S})$}.
%%%
\begin{prop}\label{prop:BGI1}
\fsp Take $\e_{1}=1/14$. There exists a universal constant $\beta_{1}>0$, which is again uniformly bounded from below, such that the following holds, in which the $\|\|_{1;\mathbb{T}_{N}}$ norm is with respect to $(T,x)$-variables in the heat operator on the LHS:
\begin{align}
\E\|\mathbf{H}_{T,x}^{N}(N^{1/2}{\mathsf{S}_{\e_{1}}(\tau_{y}\eta_{S})}\mathbf{Y}^{N}_{S,y})\|_{1;\mathbb{T}_{N}} \ \lesssim \ N^{-\beta_{1}}. \label{eq:BGI1Prop}
\end{align}
\end{prop}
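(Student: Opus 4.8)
The plan is to execute the ``first replacement step'' described after Definition \ref{definition:BGI2}, i.e.\ to show that replacing $\bar{\mathfrak{q}}$ by its canonical expectation on the small mesoscopic block of length $N^{\e_{1}}$ costs only a power-saving error in the relevant space-time-uniform norm. The functional $\mathsf{S}_{\e_{1}}(\tau_{y}\eta_{S})=\bar{\mathfrak{q}}_{S,y}-\mathsf{E}_{\e_{1}}^{\mathrm{can}}(\tau_{y}\eta_{S})$ is, by construction, a \emph{fluctuating} observable: it has zero expectation under \emph{every} canonical measure on a block containing its support (of size $\mathrm{O}(N^{\e_{1}})$), because conditioning on the empirical density kills it. This is precisely the hypothesis under which the robust one-block machinery of \cite{GPV} applies, and it is the key structural input. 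First I would reduce the uniform-in-$(T,x)$ norm to a manageable quantity: since $\mathbf{H}^{N}$ is integration against a heat kernel that is (by Proposition \ref{prop:heat} in the appendix) uniformly bounded with good $\ell^{1}$-in-space and integrable-in-time on-diagonal behavior, and since $\mathbf{Y}^{N}_{S,y}=\mathbf{Z}^{N}_{S,y}\mathbf{1}(S\le\mathfrak{t}_{\mathrm{st}})$ is bounded by $N^{\e_{\mathrm{ap}}}$ by definition of $\mathfrak{t}_{\mathrm{ap}}$, a Cauchy--Schwarz / chaining argument over the (polynomially many) space-time evaluation points reduces \eqref{eq:BGI1Prop} to bounding, for fixed $(T,x)$, the second moment (in all the particle-system randomness) of the scalar $\mathbf{H}^{N}_{T,x}(N^{1/2}\mathsf{S}_{\e_{1}}(\tau_{y}\eta_{S})\mathbf{Y}^{N}_{S,y})$, times harmless $N^{\e_{\mathrm{ap}}}$ and logarithmic losses that are absorbed into $\beta_{1}$ because $\e_{\mathrm{ap}}$ is arbitrarily small.

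Next I would carry out the time-averaging argument of \cite{Y}. Insert into the heat operator the identity that replaces $\mathsf{S}_{\e_{1}}(\tau_{y}\eta_{S})$ by its time-average $\mathfrak{t}_{\mathrm{av}}^{-1}\int_{0}^{\mathfrak{t}_{\mathrm{av}}}\mathsf{S}_{\e_{1}}(\tau_{y}\eta_{S+r})\,\d r$ over a mesoscopic time-scale $\mathfrak{t}_{\mathrm{av}}$ slightly below $N^{-2+2\e_{1}}$ (so that the block of size $N^{\e_{1}}$ has time to equilibrate under the $N^{2}$-sped-up dynamics but the time-gradient errors stay small); the replacement error is controlled exactly as in the ``replacement by time-average'' discussion in Section \ref{subsection:s}, using that $\mathbf{Y}^{N}$ is essentially Hölder-$\tfrac14$ in time and the heat kernel is smooth in time, which together with $\mathfrak{l}(\mathsf{S}_{\e_{1}})|\mathsf{S}_{\e_{1}}|\lesssim1$ yields a power of $N$ saved. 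For the time-averaged term, one applies the Kipnis--Varadhan inequality (Appendix 1.6 of \cite{KL}) \emph{at the invariant product measure} $\mu_{0}$: the $\ell^{2}(\mu_{0})$-norm of the time-average is bounded by $\mathfrak{t}_{\mathrm{av}}^{-1/2}$ times the $H_{-1}$-norm of $\mathsf{S}_{\e_{1}}$ with respect to the full generator $\mathsf{L}_{N}$, and since $\mathsf{S}_{\e_{1}}$ is supported on a block of size $N^{\e_{1}}$ and is a fluctuation there, its $H_{-1}$-norm squared is $\mathrm{O}(N^{-2}N^{2\e_{1}})$ (the $N^{-2}$ from the speed, the $N^{2\e_{1}}$ a standard Poincaré estimate on the block). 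Combined with the $N^{1/2}$ prefactor this gives the genuinely small bound $N^{1/2}\cdot\mathfrak{t}_{\mathrm{av}}^{-1/2}\cdot N^{-1+\e_{1}}$, which is a negative power of $N$ for the stated choices.

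The main obstacle — and the step requiring real care — is the \emph{reduction to stationarity}: our particle system is not started from $\mu_{0}$, so the Kipnis--Varadhan estimate, which lives at the invariant measure, must be imported via a local-equilibrium / entropy-inequality argument. Concretely, one uses the entropy inequality to pass from an expectation under the true (time-dependent) law $\E$ to an expectation under $\mu_{0}$, at the cost of $\frac{1}{\gamma}$ times the entropy production plus $\frac{1}{\gamma}\log\E_{\mu_{0}}\exp(\gamma\cdot\text{(stuff)})$; the exponential moment is handled by the concentration/large-deviations bound for the space-time average that the Kipnis--Varadhan-type estimate provides (this is where the ``fluctuation property'' of $\mathsf{S}_{\e_{1}}$ is used a second time, to ensure the relevant Feynman--Kac exponent has a spectral gap on the block). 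The entropy-production input is supplied by Lemma \ref{lemma:le3}: because $\mu_{0}$ is an invariant product measure for the whole dynamics, the relative entropy of the law at time $T$ with respect to $\mu_{0}$ is non-increasing, and a standard computation bounds the integrated Dirichlet form on a block of size $N^{\e_{1}}$ by $\mathrm{O}(N^{\e_{1}-1})$ (better than what is needed and far better than the $\mathrm{O}(N)$-volume bound of Lemma 4.1 in \cite{DT}). Balancing $\gamma$ against these two ingredients — entropy of order $N^{\e_{1}-1}$ and an exponential-moment rate proportional to $N^{-2+2\e_{1}}/\mathfrak{t}_{\mathrm{av}}$ — leaves a net negative power of $N$; since $\e_{1}=1/14$ is fixed and all other losses ($\e_{\mathrm{ap}}$, logarithms, the bounded support size) are arbitrarily small or $\mathrm{O}(1)$, one obtains \eqref{eq:BGI1Prop} with some universal $\beta_{1}>0$. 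I would organize the write-up so that the Kipnis--Varadhan step, the time-gradient step, and the entropy step each appear as a short lemma, mirroring the structure already used in \cite{Y,Y20}.
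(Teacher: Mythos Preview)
Your proposal has a genuine arithmetic gap that prevents the argument from closing. You propose a single time-average of $\mathsf{S}_{\e_{1}}$ on scale $\mathfrak{t}_{\mathrm{av}}$ ``slightly below $N^{-2+2\e_{1}}$'' and then Kipnis--Varadhan. But with $\e_{1}=1/14$ your own bound reads $N^{1/2}\cdot\mathfrak{t}_{\mathrm{av}}^{-1/2}\cdot N^{-1+\e_{1}}\approx N^{1/2}\cdot N^{13/14}\cdot N^{-13/14}=N^{1/2}$, which is not a negative power. To make the Kipnis--Varadhan step yield a gain you need $\mathfrak{t}_{\mathrm{av}}\gg N^{-1+2\e_{1}}$. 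On the other hand, the error in replacing $N^{1/2}\mathsf{S}_{\e_{1}}\mathbf{Y}^{N}$ by its time-average is controlled by the H\"older-$\tfrac14$ time regularity of $\mathbf{Y}^{N}$ built into $\mathfrak{t}_{\mathrm{st}}$, which gives at best $N^{1/2}\cdot\mathfrak{t}_{\mathrm{av}}^{1/4}$ since $|\mathsf{S}_{\e_{1}}|\lesssim1$ has no further a priori smallness; this forces $\mathfrak{t}_{\mathrm{av}}\ll N^{-2}$. The two constraints are incompatible, and this is exactly the obstruction flagged in the Strategy subsection around \eqref{eq:s3}: ``the second term clearly blows up.'' A single time-scale, without first gaining smallness in the integrand, does not work here.

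The paper's proof (Section~\ref{section:proofBGI}) resolves this with three additional ingredients you omit. First, one replaces $\mathsf{S}_{\e_{1}}$ by its \emph{spatial} average on scale $\mathfrak{l}_{1}=N^{1/6}$ (Lemma~\ref{lemma:BGI11}); the summation-by-parts error cannot be absorbed blindly and produces a new functional $N^{6/25}\mathfrak{e}$ that must be tracked separately. Second, one introduces a large-deviations \emph{cutoff} $\bar{\mathfrak{I}}^{\mathbf{X}}_{\mathfrak{l}_{1}}$ (Definition~\ref{definition:BGI11}, Lemma~\ref{lemma:BGI12}) that gives the deterministic a priori bound $|\bar{\mathfrak{I}}^{\mathbf{X}}_{\mathfrak{l}_{1}}(\mathsf{S}_{\e_{1}})|\lesssim N^{-1/12+\e_{\mathrm{ap}}}$; this smallness is what allows any time-replacement at all to be affordable, and it is also what lets you choose a large $\kappa$ in the entropy inequality (Lemma~\ref{lemma:le3}) --- without it the entropy cost $\kappa^{-1}N^{-2}|\mathbb{B}|^{3}$ swamps everything once $|\mathbb{B}|$ contains the localization scale of the time-average. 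Third, the time-average is not introduced in one shot but via the \emph{multiscale} scheme of Lemmas~\ref{lemma:mtr1}--\ref{lemma:mtr2}, stepping $\mathfrak{t}_{\mathfrak{j}}\to\mathfrak{t}_{\mathfrak{j}+1}=N^{\e_{\mathrm{ap}}}\mathfrak{t}_{\mathfrak{j}}$ and using, at each step, the Kipnis--Varadhan bound already available at scale $\mathfrak{t}_{\mathfrak{j}}$ to control the next replacement error. Finally, the sup-in-$(T,x)$ is not handled by chaining to second moments but by the H\"older inequality of Lemma~\ref{lemma:hoe2}, which converts $\|\mathbf{H}^{N}(\phi\mathbf{Y}^{N})\|_{1;\mathbb{T}_{N}}$ into the $3/2$-power integral $\mathbf{I}_{1}(|\phi|^{3/2})$; the exponent $3/2$ (rather than $2$) is what makes the constraint $\kappa\lesssim\|\mathfrak{h}\|_{\omega;\infty}^{-1}$ in Lemma~\ref{lemma:le3} compatible with the available cutoff.
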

%%%
%%%
\begin{prop}\label{prop:BGI2}
\fsp Suppose $\e_{\mathrm{RN},1}>0$ is sufficiently small but universal depending only on $\e_{1}>0$. Define $\mathfrak{b}_{+}\in\Z_{\geq0}$ to be the last non-negative integer $\mathfrak{b}$ so that $\e_{1}+\mathfrak{b}\e_{\mathrm{RN},1}\leq\frac12+\e_{\mathrm{RN}}$, where $\e_{\mathrm{RN}}>0$ is the universal constant from \emph{Definition \ref{definition:KPZ1}}. There is a universal constant $\beta_{2}>0$, which is therefore uniformly bounded from below, such that the following expectation estimate holds, again in which the $\|\|_{1;\mathbb{T}_{N}}$ norm is with respect to $(T,x)$-variables in the heat operator on the LHS:
\begin{align}
\sup_{\mathfrak{b}=0,\ldots,\mathfrak{b}_{+}-1}\E\|\mathbf{H}_{T,x}^{N}(N^{1/2}{\mathsf{R}_{\e_{1}+\mathfrak{b}\e_{\mathrm{RN},1}}(\tau_{y}\eta_{S})}\mathbf{Y}^{N}_{S,y})\|_{1;\mathbb{T}_{N}} \ \lesssim \ N^{-\beta_{2}}. \label{eq:BGI2Prop}
\end{align}
We also have $\mathfrak{b}_{+}\lesssim_{\e_{1},\e_{\mathrm{RN},1},\e_{\mathrm{RN}}}1$, so the supremum on the LHS of \eqref{eq:BGI2Prop} may be replaced by a sum.
\end{prop}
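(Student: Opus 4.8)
\emph{Proof strategy.} Since $\mathfrak{b}_{+}\lesssim_{\e_{1},\e_{\mathrm{RN},1},\e_{\mathrm{RN}}}1$, the supremum in \eqref{eq:BGI2Prop} runs over a uniformly bounded index set, so it suffices to bound the left‑hand side for each fixed $\mathfrak{b}$; write $\delta=\e_{1}+\mathfrak{b}\e_{\mathrm{RN},1}$ and $\mathfrak{d}'=\delta+\e_{\mathrm{RN},1}\leq\tfrac12+\e_{\mathrm{RN}}$. The structural input is that, for each $S$, the functional $\mathsf{R}_{\delta}(\tau_{y}\eta_{S})$ depends on $\eta_{S,\cdot}$ only in a window $y-\llbracket 0,\mathrm{O}(N^{\mathfrak{d}'})\rrbracket$ and has mean zero under \emph{every} canonical measure on a block containing that window: conditioning $\mathsf{E}^{\mathrm{can}}_{\delta}$ on the scale‑$N^{\mathfrak{d}'}$ empirical $\eta$‑density and using the tower property returns $\mathsf{E}^{\mathrm{can}}_{\mathfrak{d}'}$, which cancels the second term of \eqref{eq:defBGI24}. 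Thus $\mathsf{R}_{\delta}$ is a \emph{fluctuating} (pseudo‑gradient) observable in the sense of \cite{Y}. Moreover, up to the cutoff defining $\mathbf{Y}^{N}$, i.e. before $\mathfrak{t}_{\mathrm{st}}$, the $\eta$‑density at every scale below $\mathfrak{l}_{N}$ is bounded by $N^{C\e_{\mathrm{ap}}}\mathfrak{l}_{N}^{-1/2}$ by the definition of $\mathfrak{t}_{\mathrm{RN}}^{\mathbf{X}}$, so Proposition 8 of \cite{GJ15} gives the deterministic inverse‑length‑scale bound $\|\mathsf{E}^{\mathrm{can}}_{\delta}\|_{\infty}\lesssim N^{-\delta}$, hence $\|\mathsf{R}_{\delta}\|_{\infty}\lesssim N^{-\delta}$ and, writing $\mathfrak{l}(\mathsf{R}_{\delta})\sim N^{\mathfrak{d}'}$ for twice its support length, $\mathfrak{l}(\mathsf{R}_{\delta})\|\mathsf{R}_{\delta}\|_{\infty}\lesssim N^{\e_{\mathrm{RN},1}}$, a negligible power of $N$. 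This near‑cancellation of support length against sup‑norm is what makes the scheme feasible.

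The plan is then the space‑time averaging scheme outlined around \eqref{eq:s1}--\eqref{eq:s4}. Using the macroscopic Hölder‑$\tfrac12$ spatial and Hölder‑$\tfrac14$ temporal regularity of $\mathbf{Y}^{N}$ built into Definition \ref{definition:KPZ1}, together with the two‑sided bounds from $\mathfrak{t}_{\mathrm{ap}}$, I first replace $\mathsf{R}_{\delta}(\tau_{y}\eta_{S})$ in the heat operator by its spatial average over the translates $\tau_{w\,\mathfrak{l}(\mathsf{R}_{\delta})}\mathsf{R}_{\delta}$, $|w|\leq\mathfrak{l}_{\mathrm{av}}$. The error is a sum of spatial gradients of $\mathsf{R}_{\delta}$ on scales $\leq\mathfrak{l}_{\mathrm{av}}\mathfrak{l}(\mathsf{R}_{\delta})$; summation by parts transfers them onto $\mathbf{H}^{N}$ (macroscopically smooth, a factor $N^{-1}$ per unit) and onto $\mathbf{Y}^{N}$, giving the bound \eqref{eq:s2}, which is $\mathrm{o}_{N}(1)$ once $1\ll\mathfrak{l}_{\mathrm{av}}\ll N^{1/2}$; for the mid‑range scales one instead takes $\mathfrak{l}_{\mathrm{av}}\approx\mathfrak{l}(\mathsf{R}_{\delta})$ and lets the small $\|\mathsf{R}_{\delta}\|_{\infty}$ carry the estimate. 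The subtlety is the second term of \eqref{eq:s2}: when crudely bounding $\grad_{\mathfrak{l}}^{\mathbf{X}}\mathbf{Y}^{N}$ by Hölder regularity is not enough, I instead write $\grad_{\mathfrak{l}}^{\mathbf{X}}\mathbf{Z}^{N}$ as $\mathbf{Z}^{N}$ times an explicit particle‑system functional carrying a factor $N^{-1/2}$ and supported \emph{disjointly} from $\mathsf{R}_{\delta}$ — which is exactly the point of the shift of $\mathfrak{q}$ in Definition \ref{definition:mSHE+1} — so the product is again a fluctuating observable of slightly lower order, dispatched by a shorter version of the same argument. Next I average in time: replace the spatial average by its time‑average over $\mathfrak{t}_{\mathrm{av},1}=N^{-2}\mathfrak{l}_{\mathrm{av}}$ (error \eqref{eq:s3}), then iteratively boost the time‑scale by $N^{\rho}$ with $\rho=\e/999$, where $\mathfrak{l}_{\mathrm{av}}=N^{\e}$ (error \eqref{eq:s4}), each step exploiting the improved a priori bound from the previous time‑average, until the time‑scale reaches just below $N^{-1}$; this is the renormalization of \cite{GJ15,SX} run in the time direction.

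It remains to bound, uniformly in $(T,x)$, the heat operator applied to $N^{1/2}$ times a space‑time average of $\mathsf{R}_{\delta}$ on the final scales $\mathfrak{l}_{\mathrm{av}}\times\mathfrak{t}_{\mathrm{av}}$ with $\mathfrak{t}_{\mathrm{av}}\approx N^{-1}$, against the bounded factor $\mathbf{Y}^{N}$. If the law of $\eta_{\cdot}$ near the support of $\mathsf{R}_{\delta}$ were a canonical measure, the mean‑zero property would let me apply the Kipnis–Varadhan inequality (Appendix 1.6 of \cite{KL}): the $L^{2}$‑norm of the space‑time average is $\lesssim N^{-1}\mathfrak{t}_{\mathrm{av}}^{-1/2}\mathfrak{l}_{\mathrm{av}}^{-1/2}\mathfrak{l}(\mathsf{R}_{\delta})\|\mathsf{R}_{\delta}\|_{\infty}$ as in \eqref{eq:s1}, the factor $N^{-1}$ coming from the $N^{2}$ generator speed; multiplying by $N^{1/2}$, summing against the heat kernel, and inserting $\mathfrak{l}(\mathsf{R}_{\delta})\|\mathsf{R}_{\delta}\|_{\infty}\lesssim N^{\e_{\mathrm{RN},1}}$, $\mathfrak{t}_{\mathrm{av}}\approx N^{-1}$, $\mathfrak{l}_{\mathrm{av}}\geq N^{\e}$ yields $\lesssim N^{-\beta_{2}}$ for a universal $\beta_{2}>0$ provided $\e_{\mathrm{RN},1}$ is sufficiently small relative to $\e_{1}$. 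To discard the stationarity hypothesis I use the local‑equilibrium method of \cite{GPV}: entropy–Dirichlet‑form duality (robust under the non‑stationary, gradient‑perturbed dynamics here, cf. \cite{YauRE}) bounds the Dirichlet form on mesoscopic blocks, the log‑Sobolev inequality of \cite{Yau} turns this into a bound on relative entropy, and the entropy inequality reduces the estimate above to its value under a canonical measure, at the price of a multiplier that deteriorates with the block scale $N^{\mathfrak{d}'}$ but is dominated by the gain $N^{-\delta}$ in $\|\mathsf{R}_{\delta}\|_{\infty}$; the precise entropy‑production input is Lemma \ref{lemma:le3}. Summing the $\mathrm{O}(1)$ many values of $\mathfrak{b}$ gives \eqref{eq:BGI2Prop}.

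The main obstacle is the exponent bookkeeping. One must choose $\mathfrak{l}_{\mathrm{av}}$, the time‑scales $\mathfrak{t}_{\mathrm{av},j}$, and the entropy‑inequality multipliers so that \emph{simultaneously} every averaging error \eqref{eq:s2}--\eqref{eq:s4} is a negative power of $N$, the Kipnis–Varadhan variance beats the prefactor $N^{1/2}$, and the loss in the local‑equilibrium reduction on the scale‑$N^{\mathfrak{d}'}$ block is outrun by the gain $N^{-\delta}$ in $\|\mathsf{R}_{\delta}\|_{\infty}$. These three competitions very nearly cancel — this is precisely why the multiscale step is needed and why $\e_{\mathrm{RN},1}$ must be taken much smaller than $\e_{1}$ — and checking that they cancel with a strictly positive power of $N$ to spare, uniformly over $\mathfrak{b}\leq\mathfrak{b}_{+}-1$ and over both regimes $\|\mathsf{R}_{\delta}\|_{\infty}\gg1$ and $\|\mathsf{R}_{\delta}\|_{\infty}\not\gg1$, together with handling the disjoint‑support correction terms, is where essentially all the work lies.
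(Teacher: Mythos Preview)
Your overall architecture --- replace $\mathsf{R}_{\delta}$ by a spatial average, then by a time average via the multiscale bootstrap, then estimate the resulting space--time average by Kipnis--Varadhan at equilibrium, and reduce to equilibrium via the entropy inequality / log--Sobolev / entropy production (Lemma~\ref{lemma:le3}) --- is exactly the paper's. The paper organizes this into two regimes, $\mathfrak{b}\leq\mathfrak{b}_{\mathrm{mid}}$ (support of $\mathsf{R}^{\mathfrak{b}}$ at most $N^{1/4}$) and $\mathfrak{b}>\mathfrak{b}_{\mathrm{mid}}$, choosing different spatial--averaging scales in each so that the averaging stays within the $\mathfrak{l}_{N}$ window on which $\mathbf{Y}^{N}$ has a priori regularity; your remark about ``mid--range scales'' versus ``let the small $\|\mathsf{R}_{\delta}\|_{\infty}$ carry the estimate'' is the same dichotomy, stated less precisely.

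There is, however, one genuine missing step. You assert the ``deterministic'' bound $\|\mathsf{R}_{\delta}\|_{\infty}\lesssim N^{-\delta}$, but this holds only on the event $\{S\leq\mathfrak{t}_{\mathrm{st}}\}$ encoded in $\mathbf{Y}^{N}$; as a functional on $\Omega$, $\mathsf{R}_{\delta}$ is merely $\mathrm{O}(1)$. That matters: once you time--average, the integrand $\mathsf{R}_{\delta}(\tau_{y}\eta_{S+\mathfrak{r}})$ is evaluated at times possibly past $\mathfrak{t}_{\mathrm{st}}$, and more importantly, the entropy--inequality step in Lemma~\ref{lemma:le3} requires $\kappa\lesssim\|\mathfrak{h}\|_{\omega;\infty}^{-1}$ with $\mathfrak{h}=\E^{\mathrm{dyn}}_{\mathrm{Loc}}|\cdots|^{3/2}$, so a genuinely $\Omega$--uniform bound on the functional being averaged is what lets you take $\kappa$ large enough to kill the $\kappa^{-1}N^{-2}|\mathbb{B}|^{3}$ term when $|\mathbb{B}|$ is near $N^{1/2}$. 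The naive fix --- multiply $\mathsf{R}_{\delta}$ by an indicator that the local density is small --- destroys the mean--zero property under canonical measures, which is exactly what Kipnis--Varadhan and Lemma~\ref{lemma:ee} need. The paper resolves this with a separate construction (Lemma~\ref{lemma:BGI2I1}): using the tower property to write $\mathsf{R}_{\delta}$ in the form $\mathfrak{f}-\mathsf{E}^{\mathrm{can}}_{\delta+\e_{\mathrm{RN},1}}(\cdot;\mathfrak{f})$, which is automatically mean--zero for \emph{any} $\mathfrak{f}$, one can then cut off $\mathfrak{f}$ at level $N^{10\e_{\mathrm{ap}}-\delta}$; the error from the cutoff is handled by sub--Gaussian concentration of the random sub--block density $\sigma(\delta)$ under the outer canonical measure. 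Once this $\mathsf{R}^{\mathrm{cut}}$ is in hand, the crude H\"older regularity of $\mathbf{Y}^{N}$ (Lemma~\ref{lemma:xr}) already suffices for the spatial--replacement error, so your explicit--gradient--of--$\mathbf{Z}^{N}$ maneuver, while correct, is unnecessary here; that device is what the paper uses for Proposition~\ref{prop:BGI1}, where no a priori smallness is available.
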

%%%
%%%
\begin{prop}\label{prop:BGI3}
\fsp Suppose that $\e_{\mathrm{RN},1}\leq999^{-999}\e_{\mathrm{RN}}$, where $\e_{\mathrm{RN}}>0$ is from \emph{Definition \ref{definition:KPZ1}}. We have the following deterministic estimate, again in which the $\|\|_{1;\mathbb{T}_{N}}$ norm is with respect to $(T,x)$-variables in the heat operator on the LHS:
\begin{align}
\|\mathbf{H}_{T,x}^{N}(N^{1/2}{\mathsf{E}_{\e_{1}+\mathfrak{b}_{+}\e_{\mathrm{RN},1}}^{\mathrm{can}}(\tau_{y}\eta_{S})}\mathbf{Y}_{S,y}^{N})\|_{1;\mathbb{T}_{N}} \ \lesssim \ N^{-\frac{99}{100}\e_{\mathrm{RN}}+10\e_{\mathrm{ap}}}. \label{eq:BGI3I}
\end{align}
\end{prop}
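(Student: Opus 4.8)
The plan is to reduce \eqref{eq:BGI3I} to a \emph{deterministic pointwise} bound on the canonical expectation ${\mathsf{E}^{\mathrm{can}}_{\e_{1}+\mathfrak{b}_{+}\e_{\mathrm{RN},1}}(\tau_{y}\eta_{S})}$ holding on the event $\{S\leq\mathfrak{t}_{\mathrm{st}}\}$, which is the only place $\mathbf{Y}^{N}_{S,y}\neq0$ (see Definition \ref{definition:KPZ5}). Write $\delta=\e_{1}+\mathfrak{b}_{+}\e_{\mathrm{RN},1}$ and $\mathfrak{l}=N^{\delta}$; the definition of $\mathfrak{b}_{+}$ gives $\delta\leq\tfrac12+\e_{\mathrm{RN}}$, so $\mathfrak{l}\leq\mathfrak{l}_{N}$, while $\e_{1}+(\mathfrak{b}_{+}+1)\e_{\mathrm{RN},1}>\tfrac12+\e_{\mathrm{RN}}$ together with $\e_{\mathrm{RN},1}\leq999^{-999}\e_{\mathrm{RN}}$ forces $\delta>\tfrac12+\tfrac{99}{100}\e_{\mathrm{RN}}$. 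Granting the pointwise bound $|{\mathsf{E}^{\mathrm{can}}_{\delta}(\tau_{y}\eta_{S})}|\,\mathbf{1}(S\leq\mathfrak{t}_{\mathrm{st}})\lesssim N^{-\delta+\O(\e_{\mathrm{ap}})}$, the conclusion is immediate: bound $|\mathbf{Y}^{N}_{S,y}|=|\mathbf{Z}^{N}_{S,y}|\mathbf{1}(S\leq\mathfrak{t}_{\mathrm{st}})\lesssim N^{\e_{\mathrm{ap}}}$ via $\mathfrak{t}_{\mathrm{ap}}$ (Definition \ref{definition:KPZ1}), use $\sum_{y}|\mathbf{H}^{N}_{S,T,x,y}|\lesssim1$ and $\int_{0}^{T}\d S\leq1$ (heat kernel estimates, Proposition \ref{prop:heat}), and collect the prefactor $N^{1/2}$, producing $N^{1/2}\cdot N^{-\delta+\O(\e_{\mathrm{ap}})}\cdot N^{\e_{\mathrm{ap}}}\leq N^{-\frac{99}{100}\e_{\mathrm{RN}}+10\e_{\mathrm{ap}}}$.

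The first real step is to control the empirical density $\sigma:=\sigma_{\delta,S,y}={\mathsf{A}^{\mathbf{X}}_{\delta,y}(\eta_{S})}$ on $\{S\leq\mathfrak{t}_{\mathrm{st}}\}$. Using $\eta_{S,z}=N^{1/2}(\mathbf{h}^{N}_{S,z}-\mathbf{h}^{N}_{S,z-1})$ (Remark \ref{remark:id}) and telescoping, $\sigma=\frac{N^{1/2}}{\mathfrak{l}+1}\grad^{\mathbf{X}}_{-\mathfrak{l}-1}\mathbf{h}^{N}_{S,y}$, so it suffices to bound a spatial gradient of $\mathbf{h}^{N}$ on scale $\mathfrak{l}+1\lesssim\mathfrak{l}_{N}$. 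I would get this from $\mathfrak{t}^{\mathbf{X}}_{\mathrm{RN}}$ and $\mathfrak{t}_{\mathrm{ap}}$: on $\{S\leq\mathfrak{t}_{\mathrm{st}}\}$ one has $\|\mathbf{Z}^{N}\|_{S;\mathbb{T}_{N}}+\|(\mathbf{Z}^{N})^{-1}\|_{S;\mathbb{T}_{N}}\lesssim N^{\e_{\mathrm{ap}}}$ and, for $1\leq|\mathfrak{m}|\leq\mathfrak{l}_{N}$, $\|\grad^{\mathbf{X}}_{\mathfrak{m}}\mathbf{Z}^{N}\|_{S;\mathbb{T}_{N}}\lesssim N^{-1/2}|\mathfrak{m}|^{1/2}N^{3\e_{\mathrm{ap}}}$; since $\mathbf{h}^{N}=-\log\mathbf{Z}^{N}+\mathrm{R}T$ one has $\grad^{\mathbf{X}}_{\mathfrak{m}}\mathbf{h}^{N}_{S,y}=-\log\!\big(1+\grad^{\mathbf{X}}_{\mathfrak{m}}\mathbf{Z}^{N}_{S,y}/\mathbf{Z}^{N}_{S,y}\big)$, and the argument of the logarithm is $\O(N^{-1/2+4\e_{\mathrm{ap}}}|\mathfrak{m}|^{1/2})=\mathrm{o}_{N}(1)$ for $|\mathfrak{m}|\leq\mathfrak{l}_{N}$, so $|\grad^{\mathbf{X}}_{\mathfrak{m}}\mathbf{h}^{N}_{S,y}|\lesssim N^{-1/2+4\e_{\mathrm{ap}}}|\mathfrak{m}|^{1/2}$. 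Feeding $\mathfrak{m}=-\mathfrak{l}-1$ into the telescoped identity yields $|\sigma|\lesssim N^{-\delta/2+4\e_{\mathrm{ap}}}$ on $\{S\leq\mathfrak{t}_{\mathrm{st}}\}$; in particular $|\sigma|$ sits below the threshold $\mathfrak{l}^{-1/2}$ up to a factor $N^{\O(\e_{\mathrm{ap}})}$, which is precisely the density control the a priori $\mathsf{E}^{\mathrm{can}}$-bound requires.

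Finally I would combine this with an equivalence-of-ensembles estimate. By Definition \ref{definition:BGI2} and Definition \ref{definition:ensembles}, ${\mathsf{E}^{\mathrm{can}}_{\delta}(\tau_{y}\eta_{S})}$ is the canonical expectation $\mathsf{E}^{\mathrm{can}}_{\sigma,\mathbb{I}}[\bar{\mathfrak{q}}]$ of $\bar{\mathfrak{q}}$ at density $\sigma$ over a block $\mathbb{I}$ of length $\sim\mathfrak{l}$ containing the support of $\bar{\mathfrak{q}}_{S,y}$. The equivalence of ensembles for bounded local functions (see Appendix 2 of \cite{KL}, or \emph{Proposition 8} of \cite{GJ15}) gives $|\mathsf{E}^{\mathrm{can}}_{\sigma,\mathbb{I}}[\bar{\mathfrak{q}}]-\E_{\sigma}\bar{\mathfrak{q}}|\lesssim\mathfrak{l}^{-1}$ uniformly for $|\sigma|\leq\tfrac12$. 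Now $\sigma\mapsto\E_{\sigma}\bar{\mathfrak{q}}$ is a polynomial in $\sigma$, and by the very definition $\bar{\mathfrak{q}}=\wt{\mathfrak{q}}-\E_{0}\wt{\mathfrak{q}}-\bar{\mathfrak{d}}\eta_{0}$ with $\bar{\mathfrak{d}}=\partial_{\sigma}\E_{\sigma}\wt{\mathfrak{q}}|_{\sigma=0}$ (Definition \ref{definition:mSHE+1}) it has vanishing constant \emph{and} linear coefficients, whence $|\E_{\sigma}\bar{\mathfrak{q}}|\lesssim\sigma^{2}$ for $|\sigma|\leq1$ --- this is the structural purpose of the linear correction $\bar{\mathfrak{d}}\eta_{0}$. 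Combining with the density bound, on $\{S\leq\mathfrak{t}_{\mathrm{st}}\}$ one obtains $|{\mathsf{E}^{\mathrm{can}}_{\delta}(\tau_{y}\eta_{S})}|\lesssim\mathfrak{l}^{-1}+|\sigma|^{2}\lesssim N^{-\delta}+N^{-\delta+8\e_{\mathrm{ap}}}\lesssim N^{-\frac12-\frac{99}{100}\e_{\mathrm{RN}}+8\e_{\mathrm{ap}}}$, the pointwise bound required in the first paragraph.

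I expect the main obstacle to be bookkeeping rather than depth. One must check that the relevant gradient scale $\mathfrak{l}+1$ genuinely fits in the range $[1,\mathfrak{l}_{N}]$ where $\mathfrak{t}^{\mathbf{X}}_{\mathrm{RN}}$ provides control (guaranteed by $\delta\leq\tfrac12+\e_{\mathrm{RN}}$, the additive $1$ being harmless, e.g.\ by replacing $\mathfrak{l}$ with a suitable integer just below $N^{\delta}$), that the logarithm linearization is legitimate on the scales in play (guaranteed since $\e_{\mathrm{ap}},\e_{\mathrm{RN}}$ are tiny universal constants, so the ratios $\grad^{\mathbf{X}}_{\mathfrak{m}}\mathbf{Z}^{N}/\mathbf{Z}^{N}$ are $\mathrm{o}_{N}(1)$), and --- most importantly --- that the equivalence-of-ensembles remainder $\O(\mathfrak{l}^{-1})$ together with the \emph{quadratic} vanishing of $\E_{\sigma}\bar{\mathfrak{q}}$ both beat $N^{-1/2}$ after the multiplication by the scaling factor $N^{1/2}$ in \eqref{eq:BGI3I}, which is exactly why the target length-scale is $\mathfrak{l}\approx N^{1/2+\e_{\mathrm{RN}}}$ and not merely $N^{1/2}$; the gain $N^{-\tfrac{99}{100}\e_{\mathrm{RN}}}$ is precisely the surplus exponent $\delta-\tfrac12$.
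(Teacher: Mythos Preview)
Your proposal is correct and follows essentially the same approach as the paper: bound the empirical density $\sigma$ via the a priori spatial regularity of $\mathbf{Z}^{N}$ (this is the paper's Lemma \ref{lemma:BGI31}), replace $\mathsf{E}^{\mathrm{can}}$ by $\mathsf{E}^{\mathrm{gc}}$ via equivalence of ensembles (Proposition~8 of \cite{GJ15}, giving the $\mathrm{O}(\mathfrak{l}^{-1})$ error), and then use that $\sigma\mapsto\E_{\sigma}\bar{\mathfrak{q}}$ vanishes to second order at $\sigma=0$. The only cosmetic difference is order of presentation; the paper first passes to $\mathsf{E}^{\mathrm{gc}}$ and Taylor expands, then invokes the $\sigma^{2}$ bound from Lemma \ref{lemma:BGI31}, whereas you first bound $\sigma$ and then combine.
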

%%%
%%%
\begin{remark}
Note that $\e_{1}+\mathfrak{b}_{+}\e_{\mathrm{RN},1}\leq\frac12+\e_{\mathrm{RN}}$, so we have a priori regularity estimates for $\mathbf{Y}^{N}$ on the length-scale $N^{\e_{1}+\mathfrak{b}_{+}\e_{\mathrm{RN},1}}$ defining the canonical measure expectation in \eqref{eq:BGI3I}; see Definitions \ref{definition:KPZ1} and \ref{definition:KPZ5} for why this is true.
\end{remark}
%%%
%%%
\begin{proof}[Proof of \emph{Theorem \ref{theorem:BGI}}]
We have the following tautological decomposition that uses linearity of the heat operator to replace $\bar{\mathfrak{q}}$ by its $\mathsf{E}^{\mathrm{can}}$ on length-scale $N^{\e_{1}+\mathfrak{b}_{+}\e_{\mathrm{RN},1}}$ and then collects the error $\mathsf{S}$:
\begin{align}
\mathbf{H}_{T,x}^{N}(N^{1/2}\bar{\mathfrak{q}}\mathbf{Y}^{N}) \ = \ \mathbf{H}_{T,x}^{N}(N^{1/2}{\mathsf{E}_{\e_{1}+\mathfrak{b}_{+}\e_{\mathrm{RN},1}}^{\mathrm{can}}(\tau_{y}\eta_{S})}\mathbf{Y}_{S,y}^{N}) + \mathbf{H}_{T,x}^{N}(N^{1/2}{\mathsf{S}_{\e_{1}+\mathfrak{b}_{+}\e_{\mathrm{RN},1}}(\tau_{y}\eta_{S})}\mathbf{Y}_{S,y}^{N}). \label{eq:BGI1}
\end{align}
We proceed with the following multiscale decomposition of the second term on the RHS of \eqref{eq:BGI1} that rewrites the difference $\mathsf{S}$ of $\bar{\mathfrak{q}}$ with $\mathsf{E}^{\mathrm{can}}$ on length-scale $N^{\e_{1}+\mathfrak{b}_{+}\e_{\mathrm{RN},1}}$ in terms of a telescoping sum of the successive differences of $\mathsf{E}^{\mathrm{can}}$ terms on progressively larger length-scales; again, the following is by definition and by linearity of the heat operator:
\begin{align}
\mathbf{H}_{T,x}^{N}(N^{1/2}{\mathsf{S}_{\e_{1}+\mathfrak{b}_{+}\e_{\mathrm{RN},1}}(\tau_{y}\eta_{S})}\mathbf{Y}_{S,y}^{N}) \ &= \ \mathbf{H}_{T,x}^{N}(N^{1/2}{\mathsf{S}_{\e_{1}}(\tau_{y}\eta_{S})}\mathbf{Y}_{S,y}^{N})+\sum_{\mathfrak{b}=0}^{\mathfrak{b}_{+}-1}\mathbf{H}_{T,x}^{N}(N^{1/2}{\mathsf{R}_{\e_{1}+\mathfrak{b}\e_{\mathrm{RN},1}}(\tau_{y}\eta_{S})}\mathbf{Y}^{N}_{S,y}). \label{eq:BGI2}
\end{align}
We plug \eqref{eq:BGI2} into the second term on the RHS of \eqref{eq:BGI1}. We then take $\|\|_{1;\mathbb{T}_{N}}$ norms of both sides of the resulting identity, employ the triangle inequality for $\|\|_{1;\mathbb{T}_{N}}$, take expectations, and apply Proposition \ref{prop:BGI1}, Proposition \ref{prop:BGI2}, and Proposition \ref{prop:BGI3}.
\end{proof}
%%%
We defer the proofs of Proposition \ref{prop:BGI1} and Proposition \ref{prop:BGI2} to the last non-appendix sections because of their complexity.
%%%
\subsection{Proof of Proposition \ref{prop:BGI3}}
%%%
The only preliminary ingredient we need for the current argument is the following estimate for which we employ crucially the a priori space-time regularity estimates in $\mathbf{Y}^{N}$. Its proof is relatively quick; it is an idea used in \cite{DT} in the proof of the hydrodynamic limit estimate of Lemma 2.5 therein where $\eta$-variables are realized as $\mathbf{h}^{N}$ gradients.
%%%
\begin{lemma}\label{lemma:BGI31}
\fsp Suppose the inequalities for $\e_{1}$ and $\e_{\mathrm{RN},1}$ and $\e_{\mathrm{RN}}$ and $\e_{\mathrm{ap}}$ in \emph{Definition \ref{definition:KPZ1}} and \emph{Proposition \ref{prop:BGI3}} hold. Then {we have the following deterministic estimates:}
\begin{align}
\||{\mathsf{A}^{\mathbf{X}}_{\e_{1}+\mathfrak{b}_{+}\e_{\mathrm{RN},1},x}(\eta_{T})}|^{2}|\mathbf{Y}_{T,x}^{N}|\|_{1;\mathbb{T}_{N}} \ \lesssim \ N^{\e_{\mathrm{ap}}}\||{\mathsf{A}^{\mathbf{X}}_{\e_{1}+\mathfrak{b}_{+}\e_{\mathrm{RN},1},x}(\eta_{T})}|^{2}\mathbf{1}(T\leq\mathfrak{t}_{\mathrm{st}})\|_{1;\mathbb{T}_{N}} \ \lesssim \ N^{-\frac12-\frac{99}{100}\e_{\mathrm{RN}}+10\e_{\mathrm{ap}}}. \label{eq:BGI31}
\end{align}
\end{lemma}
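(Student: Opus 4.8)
The plan is to prove the two inequalities in \eqref{eq:BGI31} separately, both as deterministic consequences of the a priori bounds hard-wired into $\mathfrak{t}_{\mathrm{st}}$; no probability enters because off the event $\{T\leq\mathfrak{t}_{\mathrm{st}}\}$ the factor $\mathbf{Y}^{N}$ (and the indicator) vanishes. For the first inequality, recall $\mathbf{Y}^{N}_{T,x}=\mathbf{Z}^{N}_{T,x}\mathbf{1}(T\leq\mathfrak{t}_{\mathrm{st}})$ and $\mathfrak{t}_{\mathrm{st}}\leq\mathfrak{t}_{\mathrm{ap}}$. Hence on $\{T\leq\mathfrak{t}_{\mathrm{st}}\}$ the defining condition of $\mathfrak{t}_{\mathrm{ap}}$ forces $|\mathbf{Z}^{N}_{T,x}|\leq N^{\e_{\mathrm{ap}}}$, so pointwise $|\mathbf{Y}^{N}_{T,x}|\leq N^{\e_{\mathrm{ap}}}\mathbf{1}(T\leq\mathfrak{t}_{\mathrm{st}})$; multiplying by $|\mathsf{A}^{\mathbf{X}}_{\mathfrak{m},x}(\eta_{T})|^{2}$ (writing $\mathfrak{m}=\e_{1}+\mathfrak{b}_{+}\e_{\mathrm{RN},1}$ throughout) and taking the $\|\cdot\|_{1;\mathbb{T}_{N}}$-supremum gives the first $\lesssim$ immediately.

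For the second inequality, the idea — exactly the device from the proof of Lemma 2.5 in \cite{DT} that realizes $\eta$ as a gradient of $\mathbf{h}^{N}$ — is to rewrite the block average as a gradient of $\mathbf{Z}^{N}$ on scale $N^{\mathfrak{m}}$. Using $\eta_{T,x}=N^{1/2}(\mathbf{h}^{N}_{T,x}-\mathbf{h}^{N}_{T,x-1})$ and $\mathbf{Z}^{N}_{T,x}/\mathbf{Z}^{N}_{T,x-1}=\Exp(-N^{-1/2}\eta_{T,x})$, a telescoping of logarithms yields the exact identity
\[
\mathsf{A}^{\mathbf{X}}_{\mathfrak{m},x}(\eta_{T}) \ = \ \frac{N^{1/2}}{N^{\mathfrak{m}}+1}\,\log\!\left(1+\frac{\grad^{\mathbf{X}}_{-N^{\mathfrak{m}}-1}\mathbf{Z}^{N}_{T,x}}{\mathbf{Z}^{N}_{T,x}}\right).
\]
On $\{T\leq\mathfrak{t}_{\mathrm{st}}\}$ the stopping times $\mathfrak{t}_{\mathrm{ap}}$ and $\mathfrak{t}_{\mathrm{RN}}^{\mathbf{X}}$ give $|\mathbf{Z}^{N}_{T,x}|,|(\mathbf{Z}^{N}_{T,x})^{-1}|\leq N^{\e_{\mathrm{ap}}}$ and, since $N^{\mathfrak{m}}+1\lesssim\mathfrak{l}_{N}=N^{1/2+\e_{\mathrm{RN}}}$ (here is where $\mathfrak{m}\leq\tfrac12+\e_{\mathrm{RN}}$ is used), $|\grad^{\mathbf{X}}_{-N^{\mathfrak{m}}-1}\mathbf{Z}^{N}_{T,x}|\lesssim N^{-1/2}N^{\mathfrak{m}/2}N^{3\e_{\mathrm{ap}}}$; the $+1$ shift, which technically pushes the gradient length a hair past $\mathfrak{l}_{N}$ in the borderline case, is absorbed by splitting $\grad^{\mathbf{X}}_{-N^{\mathfrak{m}}-1}=\grad^{\mathbf{X}}_{-N^{\mathfrak{m}}}+\tau_{-N^{\mathfrak{m}}}\grad^{\mathbf{X}}_{-1}$ into two admissible gradients. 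Consequently the argument $u$ of the logarithm obeys $|u|\lesssim N^{-1/2+\mathfrak{m}/2+4\e_{\mathrm{ap}}}\lesssim N^{-1/4+\e_{\mathrm{RN}}/2+4\e_{\mathrm{ap}}}$, which is $\leq\tfrac12$ for large $N$ since $\e_{\mathrm{RN}},\e_{\mathrm{ap}}$ are universal and small; then $|\log(1+u)|\lesssim|u|$ gives $|\mathsf{A}^{\mathbf{X}}_{\mathfrak{m},x}(\eta_{T})|\lesssim N^{-\mathfrak{m}/2+4\e_{\mathrm{ap}}}$, hence $|\mathsf{A}^{\mathbf{X}}_{\mathfrak{m},x}(\eta_{T})|^{2}\mathbf{1}(T\leq\mathfrak{t}_{\mathrm{st}})\lesssim N^{-\mathfrak{m}+8\e_{\mathrm{ap}}}$ pointwise in $(T,x)$.

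To finish, I would lower bound $\mathfrak{m}$ using the definition of $\mathfrak{b}_{+}$: since $\mathfrak{b}_{+}$ is the last integer with $\e_{1}+\mathfrak{b}_{+}\e_{\mathrm{RN},1}\leq\tfrac12+\e_{\mathrm{RN}}$, one has $\mathfrak{m}>\tfrac12+\e_{\mathrm{RN}}-\e_{\mathrm{RN},1}$, and the hypothesis $\e_{\mathrm{RN},1}\leq999^{-999}\e_{\mathrm{RN}}$ then yields $\mathfrak{m}\geq\tfrac12+\tfrac{99}{100}\e_{\mathrm{RN}}$. Plugging this into the pointwise bound gives $\||\mathsf{A}^{\mathbf{X}}_{\mathfrak{m},x}(\eta_{T})|^{2}\mathbf{1}(T\leq\mathfrak{t}_{\mathrm{st}})\|_{1;\mathbb{T}_{N}}\lesssim N^{-1/2-\frac{99}{100}\e_{\mathrm{RN}}+8\e_{\mathrm{ap}}}$, and multiplying by the extra $N^{\e_{\mathrm{ap}}}$ gives $N^{-1/2-\frac{99}{100}\e_{\mathrm{RN}}+9\e_{\mathrm{ap}}}\leq N^{-1/2-\frac{99}{100}\e_{\mathrm{RN}}+10\e_{\mathrm{ap}}}$, with room to spare. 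The only substantive step is the logarithmic linearization of the $\mathbf{Z}^{N}$-ratio: that is where the smallness of $N^{\mathfrak{m}}/N$ (equivalently, the constraint $\mathfrak{m}\leq\tfrac12+\e_{\mathrm{RN}}$ from Definitions \ref{definition:KPZ1} and \ref{definition:KPZ5}) must be genuinely exploited, and where the a priori Hölder-$\tfrac12$ control on $\mathbf{Z}^{N}$ has to be matched against the scale $N^{\mathfrak{m}}$; the remaining items — the $+1$ shift, the borderline $T=\mathfrak{t}_{\mathrm{st}}$ in the a priori inequalities, and tracking the innocuous powers of $N^{\e_{\mathrm{ap}}}$ — are routine bookkeeping.
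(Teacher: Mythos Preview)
Your proof is correct and follows essentially the same approach as the paper: both reduce the first inequality to the a priori bound $|\mathbf{Y}^{N}|\leq N^{\e_{\mathrm{ap}}}\mathbf{1}(T\leq\mathfrak{t}_{\mathrm{st}})$, and both handle the second by rewriting the block average as $N^{1/2}(N^{\mathfrak{m}}+1)^{-1}\grad^{\mathbf{X}}_{-N^{\mathfrak{m}}-1}\log\mathbf{Z}^{N}$, controlling the log-gradient via the spatial H\"older-$\tfrac12$ regularity and the lower bound on $\mathbf{Z}^{N}$ built into $\mathfrak{t}_{\mathrm{st}}$, and then invoking maximality of $\mathfrak{b}_{+}$ together with $\e_{\mathrm{RN},1}\leq 999^{-999}\e_{\mathrm{RN}}$ to get $\mathfrak{m}\geq\tfrac12+\tfrac{99}{100}\e_{\mathrm{RN}}$. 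Your explicit treatment of the $+1$ shift via $\grad^{\mathbf{X}}_{-N^{\mathfrak{m}}-1}=\grad^{\mathbf{X}}_{-N^{\mathfrak{m}}}+\tau_{-N^{\mathfrak{m}}}\grad^{\mathbf{X}}_{-1}$ is a nice touch that the paper glosses over.
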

%%%
%%%
\begin{proof}
The first estimate in \eqref{eq:BGI31} is immediate by definition of $\mathbf{Y}^{N}$ in Definition \ref{definition:KPZ5}. Indeed, it suffices to look just at times $T\leq\mathfrak{t}_{\mathrm{st}}$ because afterwards, we have $\mathbf{Y}^{N}=0$. Similarly, until the stopping time $\mathfrak{t}_{\mathrm{st}}$ we have $\mathbf{Y}^{N}=\mathbf{Z}^{N}$, where $\mathbf{Z}^{N}$ is uniformly bounded by $N^{\e_{\mathrm{ap}}}$ times uniformly bounded factors. Thus we are left with proving the second bound in \eqref{eq:BGI31}. Note the following that relates the $\mathsf{A}^{\mathbf{X}}$ term to $\mathbf{h}^{N}$, whose proof follows by {$\eta_{T,x}=N^{1/2}(\mathbf{h}_{T,x}^{N}-\mathbf{h}_{T,x-1}^{N})$} and in which we set $\wt{\e}_{1}={\e_{1}+\mathfrak{b}_{+}\e_{\mathrm{RN},1}}$:
\begin{align}
{\mathsf{A}^{\mathbf{X}}_{\wt{\e}_{1},x}(\eta_{T})} \ = \ \wt{\sum}_{0\leq w\leq N^{\wt{\e}_{1}}}\eta_{T,x-w} \ = \ N^{\frac12}(1+N^{\wt{\e}_{1}})^{-1}\grad_{-N^{\wt{\e}_{1}}-1}^{\mathbf{X}}\log\mathbf{Z}_{T,x}^{N}. \label{eq:BGI311}
\end{align}
We refer to the proof of Lemma 2.5 in \cite{DT} for a similar identity in which $N^{\wt{\e}_{1}}$ is instead a small multiple of $N^{1/2}$. We now employ elementary calculus for the logarithm to establish the following estimate for the far RHS of \eqref{eq:BGI311}. Roughly speaking, because the derivative of the logarithm is bad at 0 and is otherwise uniformly smooth, the gradient on the far RHS of \eqref{eq:BGI311} may be controlled by the same gradient but of $\mathbf{Z}^{N}$, then times the space-time supremum of $(\mathbf{Z}^{N})^{-1}$. Extending \eqref{eq:BGI311} this way,
\begin{align}
|{\mathsf{A}^{\mathbf{X}}_{\wt{\e}_{1},x}(\eta_{T})}|^{2}\mathbf{1}(T\leq\mathfrak{t}_{\mathrm{st}}) \ \lesssim \ N(1+N^{\wt{\e}_{1}})^{-2}\|(\mathbf{Z}^{N})^{-1}\|_{\mathfrak{t}_{\mathrm{st}};\mathbb{T}_{N}}^{2}\||\grad^{\mathbf{X}}_{-N^{\wt{\e}_{1}}-1}\mathbf{Z}^{N}\|_{\mathfrak{t}_{\mathrm{st}};\mathbb{T}_{N}}^{2}. \label{eq:BGI312}
\end{align}
Observe that the space-time norms on the RHS of \eqref{eq:BGI312} are a space-time supremum until the stopping time $\mathfrak{t}_{\mathrm{st}}$. Until this stopping time, we have a uniform upper bound for the first norm on the RHS of \eqref{eq:BGI312} of $N^{2\e_{\mathrm{ap}}}$ by definition. Similarly, because we have assumed the inequality $N^{\wt{\e}_{1}}\leq\mathfrak{l}_{N}$ by construction in Proposition \ref{prop:BGI2}, where $\wt{\e}_{1}=\e_{1}+\mathfrak{b}_{+}\e_{\mathrm{RN},1}$ is from Proposition \ref{prop:BGI2} and $\mathfrak{l}_{N}\in\Z_{\geq0}$ is from Definition \ref{definition:KPZ1}, by definition of $\mathfrak{t}_{\mathrm{st}}$ in Definition \ref{definition:KPZ1} we get a priori spatial regularity estimates for $\mathbf{Z}^{N}$, which imply the second norm on the RHS of \eqref{eq:BGI312} is bounded above by {$N^{2\e_{\mathrm{ap}}}N^{-1}(1+N^{\wt{\e}_{1}})(1+\|\mathbf{Z}^{N}\|_{\mathfrak{t}_{\mathrm{st}};\mathbb{T}_{N}})^{4}$}, which may be thought of as $N^{2\e_{\mathrm{ap}}}$ times the square of the spatial Holder regularity estimate of exponent $\frac12$ for $\mathbf{Z}^{N}$. {By Definition \ref{definition:KPZ1}, we also know $\|\mathbf{Z}^{N}\|_{\mathfrak{t}_{\mathrm{st}};\mathbb{T}_{N}}\lesssim N^{\e_{\mathrm{ap}}}$.} Thus, we get via \eqref{eq:BGI312} and this paragraph that
\begin{align}
N^{\e_{\mathrm{ap}}}|{\mathsf{A}^{\mathbf{X}}_{\wt{\e}_{1},x}(\eta_{T})}|^{2}\mathbf{1}(T\leq\mathfrak{t}_{\mathrm{st}}) \ \lesssim \ N^{5\e_{\mathrm{ap}}}(1+N^{\wt{\e}_{1}})^{-1} \ \lesssim \ N^{-\e_{1}-\mathfrak{b}_{+}\e_{\mathrm{RN},1}+{9}\e_{\mathrm{ap}}}. \label{eq:BGI313}
\end{align}
Recall from Proposition \ref{prop:BGI2} that $\mathfrak{b}_{+}$ is the \emph{final} non-negative integer $\mathfrak{b}$ with $\e_{1}+\mathfrak{b}_{+}\e_{\mathrm{RN},1}\leq\frac12+\e_{\mathrm{RN}}$. As $\e_{\mathrm{RN},1}\leq999^{-999}\e_{\mathrm{RN}}$ by our assumption, we obtain the lower bound $\e_{1}+\mathfrak{b}_{+}\e_{\mathrm{RN},1}\geq\frac12+\frac{99}{100}\e_{\mathrm{RN}}$, for example, because if not, then we could increase $\mathfrak{b}_{+}$ by 1 while only adding $999^{-999}\e_{\mathrm{RN}}$, and this would not boost $\frac12+\frac{99}{100}\e_{\mathrm{RN}}$ past $\frac12+\e_{\mathrm{RN}}$. Combining \eqref{eq:BGI313} with this lower bound for $\e_{1}+\mathfrak{b}_{+}\e_{\mathrm{RN},1}$ finishes the proof of the lemma.
\end{proof}
%%%
We proceed with proof of Proposition \ref{prop:BGI3}. The first step we take is to replace the canonical measure expectation $\mathsf{E}^{\mathrm{can}}$ in the heat operator on the LHS of \eqref{eq:BGI3I} by a grand-canonical measure expectation $\mathsf{E}^{\mathrm{gc}}$ evaluated at the same $\eta$-density $\sigma_{\wt{\e}_{1},S,y}$ and the same functional $\bar{\mathfrak{q}}$, where we have again employed the notation $\wt{\e}_{1}={\e_{1}+\mathfrak{b}_{+}\e_{\mathrm{RN},1}}$ introduced in the proof of Lemma \ref{lemma:BGI31} just to ease notation. For this, we apply {Proposition 8} in \cite{GJ15} with the choice of function $f=\mathfrak{q}_{0,0}$ and with the choice of length-scale therein to be $\ell=N^{\wt{\e}_{1}}$:
\begin{align}
|{\mathsf{E}_{\wt{\e}_{1}}^{\mathrm{can}}(\tau_{y}\eta_{S})-\mathsf{E}_{\wt{\e}_{1}}^{\mathrm{gc}}(\tau_{y}\eta_{S})}| \ \lesssim \ N^{-\wt{\e}_{1}} \ \lesssim \ N^{-\frac12-\frac{99}{100}\e_{\mathrm{RN}}+10\e_{\mathrm{ap}}}. \label{eq:BGI32}
\end{align}
The last/second inequality in \eqref{eq:BGI32} follows by the same observation that we made in the final paragraph in the proof of Lemma \ref{lemma:BGI31}. If we multiply the LHS by $N^{1/2}\mathbf{Y}^{N}_{S,y}$ and put this in the heat operator, since $|\mathbf{Y}^{N}|\leq N^{\e_{\mathrm{ap}}}$, it is enough to show Proposition \ref{prop:BGI3} but with $\mathsf{E}^{\mathrm{gc}}$ in place of $\mathsf{E}^{\mathrm{can}}$, therefore completing the desired first step/replacement. To control $\mathsf{E}^{\mathrm{gc}}$, let us first recall from Definition \ref{definition:BGI2} that $\mathsf{E}^{\mathrm{gc}}$ is expectation of $\bar{\mathfrak{q}}_{0,0}$ with respect to a grand-canonical ensemble of parameter $\sigma_{\wt{\e}_{1},S,y}$. We will now Taylor expand this function of $\sigma_{\wt{\e}_{1},S,y}$ up to second order around the value $\sigma=0$ and obtain the following estimate:
\begin{align}
{\mathsf{E}_{\wt{\e}_{1}}^{\mathrm{gc}}(\tau_{y}\eta_{S})} \ = \ {\E_{0}}\bar{\mathfrak{q}}_{0,0}+\left(\partial_{\sigma}{\E_{\sigma}}\bar{\mathfrak{q}}_{0,0}\right)|_{\sigma=0}\sigma_{\wt{\e}_{1},S,y} + \mathrm{O}(\sigma_{\wt{\e}_{1},S,y})^{2}. \label{eq:BGI33}
\end{align}
The first term on the RHS of \eqref{eq:BGI33} is easily checked to be 0, as the linear term in $\bar{\mathfrak{q}}$ has expectation 0, and what is left is just $\wt{\mathfrak{q}}_{0,0}$ minus its expectation with respect to ${\E_{0}}$. The key idea is that the second term also vanishes because $\bar{\mathfrak{d}}\partial_{\sigma}{\E_{\sigma}}\eta=\bar{\mathfrak{d}}\partial_{\sigma}\sigma=\bar{\mathfrak{d}}$, and $\bar{\mathfrak{d}}$ is \emph{defined} to equal $\partial_{\sigma}{\E_{\sigma}}\wt{\mathfrak{q}}_{0,0}|_{\sigma=0}$, while the constant expectation of $\bar{\mathfrak{q}}_{0,0}$ certainly vanishes after $\partial_{\sigma}$ differentiation. Let us refer the reader to Definition \ref{definition:mSHE+1} for definitions of all functionals and factors just mentioned. Thus, by this paragraph and \eqref{eq:BGI33}, we are left with proving \eqref{eq:BGI3I} upon replacing $\mathsf{E}^{\mathrm{can}}$ with $\mathsf{E}^{\mathrm{gc}}$ and then replacing $\mathsf{E}^{\mathrm{gc}}$ with the big-Oh term on the RHS of \eqref{eq:BGI33}. That estimate follows by Lemma \ref{lemma:BGI31}, as $\sigma_{\wt{\e}_{1},S,y}={\mathsf{A}^{\mathbf{X}}_{\wt{\e}_{1},y}(\eta_{S})}$ by definition. This completes the proof. \qed
%%%
\begin{remark}\label{remark:regremark1}
\fsp If we were to apply our method to environment-dependence in reversible dynamics, it is fairly standard  \cite{BR,CYau,KL} that we would need to prove Theorem \ref{theorem:BGI} but with the spatial gradient of the heat operator on the LHS of \eqref{eq:BGI}. Since gradients of $\mathbf{H}^{N}$ introduce higher-degree short-time singularities of $\mathbf{H}^{N}$, we need to resolve more singular factors during the proof of \eqref{eq:BGI} with this extra gradient. There are ultimately several possible ways to resolve such singularities. For the purposes of computing scaling limits of fluctuations, however, for linear non-KPZ limits of interest in \cite{CYau}, for example, the simplest would be to smooth the short-time behavior of $\mathbf{H}^{N}$ by convolving against a time-1 heat kernel. This would remove the higher-order singularity while only changing this paper by revising the fluctuation scaling limit of main interest to hold only after smoothing, thus with respect to a weaker topology that is the topology used for fluctuation scaling limits in previous literature anyway; see \cite{BR,CYau,JM,KL}. But in the current paper, the singular on-diagonal factors in $\mathbf{H}^{N}$ actually pose no issue in proving convergence in Theorem \ref{theorem:KPZ} in quite a strong sense. This is a concrete example of ``analytic" strength of our method, compatible with PDE ideas to solve $\mathrm{SHE}$.
\end{remark}
%%%
%
%
%
%%%
\section{Boltzmann-Gibbs Principle II}\label{section:BGII}
%%%
The point of this section is a second version of the non-stationary first-order Boltzmann-Gibbs principle. To motivate it, we emphasize the proof of Theorem \ref{theorem:BGI} requires important a priori space-time regularity estimates on $\mathbf{Z}^{N}$ that were engineered into the definition of $\mathbf{Y}^{N}$ via the stopping time $\mathfrak{t}_{\mathrm{st}}$. We will need to establish such a priori space-time regularity estimates in order for $\mathbf{Y}^{N}$ to be a faithful proxy for $\mathbf{Z}^{N}$. It turns out that establishing the important time-regularity estimates is a rather straightforward set of moment estimates for the $\mathbf{Z}^{N}$ equation. However, for technical reasons, this is not true for establishing the required spatial regularity defining $\mathfrak{t}_{\mathrm{st}}$. Indeed, a direct moment bound on spatial regularity of the order $N^{1/2}$ term in the stochastic equation from Corollary \ref{corollary:mSHE+}, without analyzing $\bar{\mathfrak{q}}$ carefully, ends up being much worse than the required spatial regularity estimate in $\mathfrak{t}_{\mathrm{st}}$. In order to resolve such issue, we will need to estimate spatial gradients of the order $N^{1/2}$ term in the stochastic equation from Corollary \ref{corollary:mSHE+} by taking advantage of the fluctuating behavior of the $\bar{\mathfrak{q}}$ function {as we did in} the proof of Theorem \ref{theorem:BGI}. This leads to our second version of the Boltzmann-Gibbs principle in Theorem \ref{theorem:BGII}, which we present after introducing some notation.
%%%
\begin{definition}\label{definition:BGII1}
\fsp Consider any $\phi:\mathbb{T}_{N}\to\R$. Define the following \emph{normalized maximal gradient} on the length-scale $\mathfrak{l}_{+}\in\Z_{\geq0}$:
\begin{align}
\wt{\grad}_{\mathfrak{l}_{+}}^{\mathbf{X}}\phi_{x} \ &\overset{\bullet}= \ {\sup}_{1\leq|\mathfrak{l}|\leq\mathfrak{l}_{+}}|\mathfrak{l}|^{-1}|\grad_{\mathfrak{l}}^{\mathbf{X}}\phi_{x}|.
\end{align}
We extend the previous normalized maximal gradient to heat operators in the following fashion in which $\Phi:\R_{\geq0}\times\mathbb{T}_{N}\to\R$:
\begin{align}
|\wt{\grad}_{\mathfrak{l}_{+}}^{\mathbf{X}}|\mathbf{H}_{T,x}^{N,\mathbf{X}}(\Phi_{0,\cdot}) \ \overset{\bullet}= \ {\sum}_{y\in\mathbb{T}_{N}}|\wt{\grad}_{\mathfrak{l}_{+}}^{\mathbf{X}}\mathbf{H}_{0,T,x,y}^{N}||\Phi_{0,y}| \quad \mathrm{and} \quad |\wt{\grad}_{\mathfrak{l}_{+}}^{\mathbf{X}}|\mathbf{H}_{T,x}^{N}(\Phi) \ &\overset{\bullet}= \ \int_{0}^{T}{\sum}_{y\in\mathbb{T}_{N}}|\wt{\grad}_{\mathfrak{l}_{+}}^{\mathbf{X}}\mathbf{H}_{S,T,x,y}^{N}| |\Phi_{S,y}| \d S. \label{eq:BGII12def}
\end{align}
\end{definition}
%%%
%%%
\begin{remark}\label{remark:BGII2}
\fsp Intuitively, provided any function $\phi:\mathbb{T}_{N}\to\R$ that is ``smooth" on scale $\mathfrak{l}_{+}\in\Z_{\geq0}$, its normalized maximal gradient on this length-scale will be controlled, roughly speaking. The goal for Theorem \ref{theorem:BGII} will be to prove the homogenization estimate in Theorem \ref{theorem:BGI}, or actually a slightly weaker version, holds not just uniform in space-time but at the level of normalized maximal gradients with respect to the length-scale $\mathfrak{l}_{N}$ in Definition \ref{definition:KPZ1} on which we want to get spatial regularity of the Gartner transform. Let us also emphasize that the above extensions of the normalized maximal gradients to the spatial and space-time heat operators are emphatically not the normalized maximal gradients of the heat operators themselves when we view them as functions in their own right. This is because of the absolute value inside the sum and integral in \eqref{eq:BGII12def}.
\end{remark}
%%%
%%%
\begin{theorem}\label{theorem:BGII}
\fsp There exists a universal constant $\beta>0$, necessarily uniformly bounded below, that is independent of $\e_{\mathrm{RN}}>0$ from \emph{Definition \ref{definition:KPZ1}} such that for the length-scale $\mathfrak{l}_{N}$ in \emph{Definition \ref{definition:KPZ1}}, we have the expectation estimate
\begin{align}
\E\|\wt{\grad}_{\mathfrak{l}_{N}}^{\mathbf{X}}\mathbf{H}_{T,x}^{N}(N^{1/2}\bar{\mathfrak{q}}\mathbf{Y}^{N})\|_{1;\mathbb{T}_{N}} \ \lesssim \ N^{-\frac34-\beta}+N^{-\frac34-99\e_{\mathrm{RN}}}. \label{eq:BGII}
\end{align}
We clarify there are no absolute value bars around the normalized maximal gradient ``operator" on the LHS of \eqref{eq:BGII}.
\end{theorem}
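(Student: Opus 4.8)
The plan is to mimic the proof of Theorem \ref{theorem:BGI}, but carry every step through with the normalized maximal gradient operator $|\wt{\grad}_{\mathfrak{l}_N}^{\mathbf X}|$ inserted in front of the heat operator. The starting point is the same tautological decomposition: write $N^{1/2}\bar{\mathfrak q}\mathbf Y^N = N^{1/2}\mathsf E^{\mathrm{can}}_{\e_1+\mathfrak b_+\e_{\mathrm{RN},1}}(\tau_y\eta_S)\mathbf Y^N_{S,y} + N^{1/2}\mathsf S_{\e_1}(\tau_y\eta_S)\mathbf Y^N_{S,y} + \sum_{\mathfrak b=0}^{\mathfrak b_+-1} N^{1/2}\mathsf R_{\e_1+\mathfrak b\e_{\mathrm{RN},1}}(\tau_y\eta_S)\mathbf Y^N_{S,y}$, exactly as in \eqref{eq:BGI1}--\eqref{eq:BGI2}. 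Since $|\wt{\grad}_{\mathfrak l_N}^{\mathbf X}|\mathbf H^N$ has the absolute value inside the sum/integral (see Remark \ref{remark:BGII2}), it is sublinear rather than linear, so one applies the triangle inequality to split the norm into the three corresponding pieces. The key gain over Theorem \ref{theorem:BGI} that we must exploit is that each spatial gradient of the heat kernel $\mathbf H^N_{S,T,x,y}$ on length-scale $|\mathfrak l|\le\mathfrak l_N = N^{1/2+\e_{\mathrm{RN}}}$, after the normalization by $|\mathfrak l|^{-1}$, carries an extra smallness factor. Heuristically $\grad^{\mathbf X}_{\mathfrak l}\mathbf H^N \sim |\mathfrak l| N^{-1}\grad^{!}\mathbf H^N$ combined with the on-diagonal short-time bound for $\grad^{!}\mathbf H^N$ (Proposition \ref{prop:heat}) gives that $|\wt\grad^{\mathbf X}_{\mathfrak l_N}|\mathbf H^N$ behaves like $N^{-1}$ times a heat operator whose kernel has one extra half-power of short-time singularity; after integrating this singularity in time the net effect of the gradient, relative to the plain $\mathbf H^N$, is a factor of roughly $N^{-3/4}$. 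This is precisely the source of the $N^{-3/4}$ prefactor on the RHS of \eqref{eq:BGII} compared with the $N^{0}$ of \eqref{eq:BGI}.

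Concretely, I would first record a kernel lemma: for the length-scale $\mathfrak l_N$,
\begin{align}
\int_0^T\sum_{y\in\mathbb T_N}|\wt\grad^{\mathbf X}_{\mathfrak l_N}\mathbf H^N_{S,T,x,y}|\,|\Phi_{S,y}|\,\d S \ \lesssim \ N^{-3/4}\,\mathbf H^N_{T,x}(|\Phi|) + (\text{error from }|\mathfrak l|\text{ near }\mathfrak l_N),
\end{align}
or more honestly a bound of the form $\lesssim N^{-3/4}\int_0^T \mathbf O_{S,T}^{1/4}\,\wt{\sum}_{|y-x|\lesssim\sqrt{N^2(T-S)}}|\Phi_{S,y}|\,\d S$, using the heat kernel estimates of Appendix \ref{section:aux}. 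Feeding the three pieces of the decomposition through this bound: the $\mathsf S_{\e_1}$ piece is handled by the \emph{same} time-averaging / Kipnis--Varadhan / entropy-inequality machinery that proves Proposition \ref{prop:BGI1}, now applied to the kernel $|\wt\grad^{\mathbf X}_{\mathfrak l_N}|\mathbf H^N$ in place of $\mathbf H^N$; since that machinery only uses that the kernel is an integration against a reasonably regular test function (the point emphasized in Section \ref{subsection:s}), it applies verbatim and the output picks up the extra $N^{-3/4}$, giving $\lesssim N^{-3/4-\beta_1}$. Likewise the multiscale telescoping sum of $\mathsf R$ terms is controlled exactly as in Proposition \ref{prop:BGI2}, again with the extra $N^{-3/4}$, giving $\lesssim N^{-3/4-\beta_2}$. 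The $\mathsf E^{\mathrm{can}}$ piece is handled as in Proposition \ref{prop:BGI3}: the inverse-length-scale (plus Taylor-expansion / Lemma \ref{lemma:BGI31}) argument gives a deterministic bound $\lesssim N^{-3/4}\cdot N^{-\frac{99}{100}\e_{\mathrm{RN}}+10\e_{\mathrm{ap}}}$, which, recalling $\e_{\mathrm{ap}}$ is taken a tiny fraction of $\e_{\mathrm{RN}}$, is $\lesssim N^{-3/4-99\e_{\mathrm{RN}}}$. Summing the three contributions yields \eqref{eq:BGII}.

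There is one genuinely new subtlety compared to Theorem \ref{theorem:BGI}, and it is where I expect the real work to be: gradients on the \emph{largest} scales $|\mathfrak l|$ comparable to $\mathfrak l_N=N^{1/2+\e_{\mathrm{RN}}}$ are \emph{not} negligible perturbations of point evaluations — on those scales the heat kernel difference $\grad^{\mathbf X}_{\mathfrak l}\mathbf H^N$ is order one, not small. For such $\mathfrak l$ one cannot simply pass the gradient onto the kernel and gain; instead the argument must split $\grad^{\mathbf X}_{\mathfrak l}[\mathbf H^N(\cdots)]$ into $\mathbf H^N(\cdots)$ at two nearby space-points, i.e. use summation-by-parts to move the gradient off $\mathbf H^N$ and onto the product $\bar{\mathfrak q}\mathbf Y^N$, producing a term $\mathbf H^N(\grad^{\mathbf X}_{\mathfrak l}(\bar{\mathfrak q}_{S,y}\mathbf Y^N_{S,y}))$. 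The $\mathbf Y^N$-gradient in that term is controlled by the \emph{a priori} spatial regularity stopping time $\mathfrak t^{\mathbf X}_{\mathrm{RN}}$ baked into $\mathbf Y^N$ (Definition \ref{definition:KPZ1}), which gives $\grad^{\mathbf X}_{\mathfrak l}\mathbf Y^N \lesssim N^{\e_{\mathrm{ap}}}N^{-1/2}|\mathfrak l|^{1/2}(1+\|\mathbf Z^N\|^2)$; after normalizing by $|\mathfrak l|^{-1}$ and bounding $|\mathfrak l|\le\mathfrak l_N$ this is of size $\lesssim N^{\e_{\mathrm{ap}}}N^{-3/4-\e_{\mathrm{RN}}/2}$, which is exactly the size of the second term in \eqref{eq:BGII} (up to reshuffling $\e_{\mathrm{RN}}$-exponents, which is why $99\e_{\mathrm{RN}}$ rather than $\e_{\mathrm{RN}}/2$ appears — one must also handle the disjoint-support spatial-gradient trick from Section \ref{subsection:s} to keep track of $\mathbf Y^N$ being explicit in the particle system). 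The $\grad^{\mathbf X}_{\mathfrak l}\bar{\mathfrak q}$ piece from the same summation-by-parts is itself a local fluctuating functional supported near the origin, so it is absorbed back into the Proposition \ref{prop:BGI1}-type estimate. So the overall strategy is: for $|\mathfrak l|$ small relative to $N^{3/4}$ (say), gain directly from kernel smoothness; for $|\mathfrak l|$ large, summation-by-parts and use the a priori $\mathbf Y^N$ regularity. Reconciling these two regimes at the crossover scale, and making sure the $\e_{\mathrm{ap}}$ losses stay controlled by the tiny $\e_{\mathrm{RN}}$ gains, is the technical heart; everything else is a transcription of Sections \ref{section:BGI} (and the deferred proofs of Propositions \ref{prop:BGI1} and \ref{prop:BGI2}) with an extra kernel factor.
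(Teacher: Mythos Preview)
Your approach is more laborious than the paper's and has gaps. The paper does \emph{not} re-run the Boltzmann--Gibbs machinery with the gradient operator inserted, nor does it split on the size of $|\mathfrak l|$. Instead it splits the \emph{time} integral at $T_N=T-N^{-1/2-999\e_{\mathrm{RN}}}$. The short piece $S\ge T_N$ is handled by a crude deterministic bound (the time interval has length $N^{-1/2-999\e_{\mathrm{RN}}}$, which beats everything). For the long piece $S\le T_N$, the key idea you are missing is the Chapman--Kolmogorov factorization: write $\mathbf H^N_{T,x}(\phi\mathbf 1_{S\le T_N})=\mathbf H^{N,\mathbf X}_{T-T_N,x}\bigl(\mathbf H^N_{T_N,w}(\phi)\bigr)$. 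The normalized maximal gradient in $x$ then acts \emph{only} on the outer spatial heat operator at the fixed positive time $t_{(N)}=T-T_N$, which by \eqref{eq:heatII} has $1$-norm gradient $\lesssim N^{-1}t_{(N)}^{-1/2}=N^{-3/4+999\e_{\mathrm{RN}}/2}$. What remains inside is the plain (ungraded) $\|\mathbf H^N(\phi)\|_{1;\mathbb T_N}$, to which one applies Propositions \ref{prop:BGI1} and \ref{prop:BGI2} as black boxes, yielding a universal $N^{-\beta}$; for the $\mathsf E^{\mathrm{can}}$ piece one uses the deterministic pointwise bound from the proof of Proposition \ref{prop:BGI3} directly against the gradient kernel. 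Choosing $\e_{\mathrm{RN}}$ small enough relative to $\beta$ absorbs the $N^{999\e_{\mathrm{RN}}/2}$ loss.

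Your claim that the Section \ref{section:BGIPrelim} machinery ``applies verbatim'' to the more singular kernel $|\wt\grad_{\mathfrak l_N}^{\mathbf X}|\mathbf H^N$ is not justified: Lemma \ref{lemma:hoe2} uses the specific on-diagonal behavior of $\mathbf H^N$, and the extra $(T-S)^{-1/2}$ singularity from the gradient would require redoing the H\"older exponents there. Your large-$|\mathfrak l|$ summation-by-parts idea also has a real problem: after moving $\grad_{\mathfrak l}^{\mathbf X}$ onto $\bar{\mathfrak q}_{S,y}\mathbf Y^N_{S,y}$, the term $\grad_{\mathfrak l}^{\mathbf X}\bar{\mathfrak q}_{S,y}=\bar{\mathfrak q}_{S,y+\mathfrak l}-\bar{\mathfrak q}_{S,y}$ is \emph{not} ``a local fluctuating functional supported near the origin'' for $|\mathfrak l|\sim\mathfrak l_N=N^{1/2+\e_{\mathrm{RN}}}$; its support has two components separated by $\mathfrak l_N$, so it cannot be absorbed into a Proposition \ref{prop:BGI1}-type estimate as written. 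The semigroup trick sidesteps all of this.
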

%%%
The proof of Theorem \ref{theorem:BGII} is similar to that of Theorem \ref{theorem:BGI} in architecture; it is a mix of probabilistic homogenization estimates along with stochastic regularity estimates built into the $\mathbf{Y}^{N}$ process in the heat operator on the LHS of \eqref{eq:BGII}. However, before we discuss the proof, we briefly explain its utility; this will be explored in detail in Section \ref{section:reg}. Recall our motivation for Theorem \ref{theorem:BGII} is to show a priori spatial regularity in $\mathfrak{t}_{\mathrm{st}}$ ``propagates itself" with high probability, thus $\mathfrak{t}_{\mathrm{st}}=1$ with high probability. Take any $\mathfrak{l}\in\llbracket-\mathfrak{l}_{N},\mathfrak{l}_{N}\rrbracket$ with $\mathfrak{l}_{N}$ in Definition \ref{definition:KPZ1}/Theorem \ref{theorem:BGII}. Theorem \ref{theorem:BGII} gives, with high probability \emph{simultaneously} in $\mathfrak{l}$,
\begin{align}
\|\grad_{\mathfrak{l}}^{\mathbf{X}}\mathbf{H}_{T,x}^{N}(N^{1/2}\bar{\mathfrak{q}}\mathbf{Y}^{N})\|_{1;\mathbb{T}_{N}} \ \lesssim \ N^{-\frac34-\beta}|\mathfrak{l}|+N^{-\frac34-99\e_{\mathrm{RN}}}|\mathfrak{l}| \ \lesssim \ N^{-\frac34-\beta+\frac14+{\frac12\e_{\mathrm{RN}}}}|\mathfrak{l}|^{1/2} + N^{-\frac34-99\e_{\mathrm{RN}}+\frac14+{\frac12\e_{\mathrm{RN}}}}|\mathfrak{l}|^{1/2}. \label{eq:BGIIapp}
\end{align}
The last bound in the above display follows via bounding $|\mathfrak{l}|\lesssim|\mathfrak{l}_{N}|=N^{1/2+\e_{\mathrm{RN}}}$. Because $\beta>0$ in Theorem \ref{theorem:BGII} is uniformly bounded below and independent of $\e_{\mathrm{RN}}$ while $\e_{\mathrm{RN}}$ is arbitrarily small but universal, the far RHS of the previous display is at most $N^{-1/2}|\mathfrak{l}|^{1/2}$, proving the a priori spatial regularity of $\mathbf{Y}^{N}$ at least propagates the same level of spatial regularity of the order $N^{1/2}$ term in the stochastic equation for $\mathbf{U}^{N}$; we employ another argument in Section \ref{section:KPZ1011} via Lemma \ref{lemma:KPZ6} to transfer this to $\mathbf{Z}^{N}$.
%%%
\subsection{Proof of Theorem \ref{theorem:BGII}}
%%%
Take any $(T,x)\in[0,1]\times\mathbb{T}_{N}$ and define $T_{N}=T-N^{-1/2-999\e_{\mathrm{RN}}}$. The triangle inequality gives
\begin{align}
\|\wt{\grad}_{\mathfrak{l}_{N}}^{\mathbf{X}}\mathbf{H}_{T,x}^{N}(N^{\frac12}\bar{\mathfrak{q}}_{S,y}\mathbf{Y}^{N}_{S,y})\|_{1;\mathbb{T}_{N}} \ \leq \ \|\wt{\grad}_{\mathfrak{l}_{N}}^{\mathbf{X}}\mathbf{H}_{T,x}^{N}(N^{\frac12}\bar{\mathfrak{q}}_{S,y}\mathbf{Y}^{N}_{S,y}\mathbf{1}_{S\leq T_{N}})\|_{1;\mathbb{T}_{N}} + \|\wt{\grad}_{\mathfrak{l}_{N}}^{\mathbf{X}}\mathbf{H}_{T,x}^{N}(N^{\frac12}\bar{\mathfrak{q}}_{S,y}\mathbf{Y}^{N}_{S,y}\mathbf{1}_{S\geq T_{N}})\|_{1;\mathbb{T}_{N}}. \label{eq:BGII0}
\end{align}
The second term on the RHS of \eqref{eq:BGII0} is estimated deterministically. The first will be estimated basically via Theorem \ref{theorem:BGI}.
%%%
\begin{lemma}\label{lemma:BGII1}
\fsp We have the following estimate for the length-scale $\mathfrak{l}_{N}$ in \emph{Definition \ref{definition:KPZ1}}:
\begin{align}
\|\wt{\grad}_{\mathfrak{l}_{N}}^{\mathbf{X}}\mathbf{H}_{T,x}^{N}(N^{\frac12}\bar{\mathfrak{q}}_{S,y}\mathbf{Y}^{N}_{S,y}\mathbf{1}_{S\geq T_{N}})\|_{1;\mathbb{T}_{N}} \ \lesssim \ N^{-\frac34-\frac{999}{2}\e_{\mathrm{RN}}+\e_{\mathrm{ap}}}. \label{eq:BGII1main}
\end{align}
\end{lemma}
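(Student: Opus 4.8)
The plan is to bound the second term on the right-hand side of \eqref{eq:BGII0}, which is supported on the short-time window $S\geq T_N$ with $T-T_N=N^{-1/2-999\e_{\mathrm{RN}}}$, by purely deterministic estimates, so that no time-integrability of the heat-kernel gradient is needed. Two pointwise bounds do essentially all the work. First, $\bar{\mathfrak{q}}$ is a uniformly bounded local functional: by Definition \ref{definition:mSHE+1} it equals $\wt{\mathfrak{q}}-\E_0\wt{\mathfrak{q}}-\bar{\mathfrak{d}}\eta_0$, where $\wt{\mathfrak{q}}$ is a spatial shift of $\tfrac12\mathfrak{d}-\tfrac12\mathfrak{d}\,\eta_0\eta_1$ and $|\mathfrak{d}|\lesssim1$, so $|\bar{\mathfrak{q}}_{S,y}|\lesssim1$ uniformly in $(S,y)$. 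Second, $|\mathbf{Y}^N_{S,y}|\lesssim N^{\e_{\mathrm{ap}}}$ uniformly and deterministically, because $\mathbf{Y}^N_{S,y}=\mathbf{Z}^N_{S,y}\mathbf{1}(S\leq\mathfrak{t}_{\mathrm{st}})$ (Definition \ref{definition:KPZ5}) and $\mathfrak{t}_{\mathrm{st}}\leq\mathfrak{t}_{\mathrm{ap}}$: on $\{S\leq\mathfrak{t}_{\mathrm{st}}\}$ the defining inequality of $\mathfrak{t}_{\mathrm{ap}}$ in Definition \ref{definition:KPZ1}, together with the fact that jumps of $\mathbf{Z}^N$ have relative size $\mathrm{O}(N^{-1/2})$, forces $\|\mathbf{Z}^N\|_{S;\mathbb{T}_N}\lesssim N^{\e_{\mathrm{ap}}}$.

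Next I would remove the (signed) normalized maximal gradient in favor of the operator $|\wt{\grad}_{\mathfrak{l}_N}^{\mathbf{X}}|\mathbf{H}^N$ from \eqref{eq:BGII12def}. Moving $\grad^{\mathbf{X}}_{\mathfrak{l}}$ (acting on $x$) inside the space-time integral defining $\mathbf{H}^N_{T,x}$, applying the triangle inequality, dividing by $|\mathfrak{l}|$, and taking the supremum over $1\leq|\mathfrak{l}|\leq\mathfrak{l}_N$ yields the pointwise domination $|\wt{\grad}_{\mathfrak{l}_N}^{\mathbf{X}}\mathbf{H}^N_{T,x}(\Psi)|\leq|\wt{\grad}_{\mathfrak{l}_N}^{\mathbf{X}}|\mathbf{H}^N_{T,x}(|\Psi|)$ for any $\Psi$ — precisely the distinction flagged in Remark \ref{remark:BGII2}. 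Feeding in the two bounds above, the left-hand side of \eqref{eq:BGII1main} is at most a universal constant times
\[
N^{\frac12+\e_{\mathrm{ap}}}\ \sup_{(T,x)\in[0,1]\times\mathbb{T}_N}\ \int_{T_N\vee 0}^{T}\ {\sum}_{y\in\mathbb{T}_N}\big|\wt{\grad}_{\mathfrak{l}_N}^{\mathbf{X}}\mathbf{H}_{S,T,x,y}^N\big|\ \d S .
\]
Using the heat-kernel estimate ${\sum}_{y}|\wt{\grad}_{\mathfrak{l}_N}^{\mathbf{X}}\mathbf{H}_{S,T,x,y}^N|\lesssim N^{-1}|T-S|^{-1/2}$ (from the heat-kernel bounds of Appendix \ref{section:aux}; see also Proposition \ref{prop:heat}) and the fact that $|T-S|\leq N^{-1/2-999\e_{\mathrm{RN}}}$ throughout the region of integration, the inner integral is $\lesssim N^{-1}(N^{-1/2-999\e_{\mathrm{RN}}})^{1/2}=N^{-5/4-\frac{999}{2}\e_{\mathrm{RN}}}$; multiplying by the $N^{1/2+\e_{\mathrm{ap}}}$ prefactor gives exactly $N^{-3/4-\frac{999}{2}\e_{\mathrm{RN}}+\e_{\mathrm{ap}}}$, which is \eqref{eq:BGII1main}.

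The only genuinely delicate point is the heat-kernel input $\sum_y|\wt{\grad}_{\mathfrak{l}_N}^{\mathbf{X}}\mathbf{H}_{S,T,x,y}^N|\lesssim N^{-1}|T-S|^{-1/2}$: one must check that taking the supremum over spatial scales all the way up to $\mathfrak{l}_N=N^{1/2+\e_{\mathrm{RN}}}$ costs nothing worse than a harmless $\log N$. This follows by telescoping $\grad^{\mathbf{X}}_{\mathfrak{l}}\mathbf{H}^N$ into scale-$1$ gradients (so that $|\mathfrak{l}|^{-1}|\grad_{\mathfrak{l}}^{\mathbf{X}}\mathbf{H}_{S,T,x,y}^N|$ is bounded by an average of scale-$1$ gradient magnitudes) and using that, for $\mathscr{L}_N=2^{-1}\Delta^{!!}+\bar{\mathfrak{d}}\grad_{-1}^!$, the kernel $\mathbf{H}^N$ is a genuinely diffusive, Gaussian-type profile of lattice width $N|T-S|^{1/2}\gg1$, whose scale-$1$ gradient has $\ell^1_y$-mass of order $N^{-1}|T-S|^{-1/2}$; the associated one-sided maximal function has $\ell^1_y$-mass larger by at most a logarithm, which is absorbed by shrinking $\e_{\mathrm{ap}}$ to another admissible value (this changes nothing downstream, since $\e_{\mathrm{ap}}$ is only required to be positive and uniformly small). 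Once this estimate is in hand, the remainder of the argument is routine bookkeeping, and I expect it to cause no trouble.
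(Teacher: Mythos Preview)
Your proposal is correct and follows essentially the same route as the paper: bound $|\bar{\mathfrak{q}}|\lesssim1$ and $|\mathbf{Y}^N|\lesssim N^{\e_{\mathrm{ap}}}$ pointwise, dominate the signed normalized maximal gradient by $|\wt{\grad}_{\mathfrak{l}_N}^{\mathbf{X}}|\mathbf{H}^N$, and then use the heat-kernel gradient estimate $\sum_y|\wt{\grad}_{\mathfrak{l}_N}^{\mathbf{X}}\mathbf{H}^N_{S,T,x,y}|\lesssim N^{-1}|T-S|^{-1/2}$ integrated over the short window of length $N^{-1/2-999\e_{\mathrm{RN}}}$. The paper simply cites \eqref{eq:heatIV} of Proposition~\ref{prop:heat} for that heat-kernel input, whereas you supply the telescoping/maximal-function justification explicitly; one small wording slip is that ``$|T-S|\leq N^{-1/2-999\e_{\mathrm{RN}}}$ throughout'' is not what controls $\int_{T_N}^T|T-S|^{-1/2}\,\d S$ (the integrand blows up at $S=T$), but the stated value $2(T-T_N)^{1/2}$ of that integral is of course correct.
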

%%%
%%%
\begin{lemma}\label{lemma:BGII2}
\fsp There exists a universal constant $\beta>0$ such that for $\mathfrak{l}_{N}$ in \emph{Definition \ref{definition:KPZ1}}, we have
\begin{align}
\E\|\wt{\grad}_{\mathfrak{l}_{N}}^{\mathbf{X}}\mathbf{H}_{T,x}^{N}(N^{\frac12}\bar{\mathfrak{q}}_{S,y}\mathbf{Y}^{N}_{S,y}\mathbf{1}_{S\leq T_{N}})\|_{1;\mathbb{T}_{N}} \ \lesssim \ N^{-\frac34-\beta}. \label{eq:BGII2main}
\end{align}
\end{lemma}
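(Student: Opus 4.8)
The plan is to reduce Lemma \ref{lemma:BGII2} to Theorem \ref{theorem:BGI}. The only obstruction is that a spatial gradient of the heat kernel $\mathbf{H}^{N}$ carries an extra short-time singularity; this is exactly why the restriction to times $S\leq T_{N}=T-N^{-1/2-999\e_{\mathrm{RN}}}$ is built in, since then the elapsed time $T-S$ is bounded below by $\delta:=N^{-1/2-999\e_{\mathrm{RN}}}$, so a gradient of $\mathbf{H}^{N}$ over elapsed time $\geq\delta$ is tame. The subtle point is to extract this smallness \emph{without discarding the Boltzmann--Gibbs cancellation carried by $\bar{\mathfrak{q}}$}: bounding $\grad\mathbf{H}^{N}$ and $N^{1/2}|\bar{\mathfrak{q}}||\mathbf{Y}^{N}|$ separately in absolute value would kill the mean-zero structure of $\bar{\mathfrak{q}}$ and be far too lossy (it gives only $N^{-1/4+\mathrm{o}(1)}$). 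Instead I would use the Chapman--Kolmogorov identity $\mathbf{H}^{N}_{S,T,x,y}=\sum_{z}\mathbf{H}^{N}_{T_{N},T,x,z}\,\mathbf{H}^{N}_{S,T_{N},z,y}$, valid since $S\leq T_{N}\leq T$: a spatial gradient in $x$ then lands only on the short-elapsed-time factor, giving
\[
\grad_{\mathfrak{l}}^{\mathbf{X}}\mathbf{H}_{T,x}^{N}(N^{1/2}\bar{\mathfrak{q}}\mathbf{Y}^{N}\mathbf{1}_{S\leq T_{N}}) \ = \ \sum_{z\in\mathbb{T}_{N}}\bigl(\grad_{\mathfrak{l}}^{\mathbf{X}}\mathbf{H}^{N}_{T_{N},T,x,z}\bigr)\,\mathbf{H}^{N}_{T_{N},z}(N^{1/2}\bar{\mathfrak{q}}\mathbf{Y}^{N}),
\]
where the factor $\mathbf{H}^{N}_{T_{N},z}(N^{1/2}\bar{\mathfrak{q}}\mathbf{Y}^{N})$ keeps its sign and is bounded in magnitude by $\|\mathbf{H}^{N}_{\cdot,\cdot}(N^{1/2}\bar{\mathfrak{q}}\mathbf{Y}^{N})\|_{1;\mathbb{T}_{N}}$, i.e.\ the object of Theorem \ref{theorem:BGI}.

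Next I would invoke the heat-kernel estimates of Appendix \ref{section:aux} (Proposition \ref{prop:heat}) to get $\sum_{z}|\grad_{\mathfrak{l}}^{\mathbf{X}}\mathbf{H}^{N}_{T_{N},T,x,z}|\lesssim|\mathfrak{l}|\,(N\sqrt{\delta})^{-1}$ for $|\mathfrak{l}|\leq\mathfrak{l}_{N}$; dividing by $|\mathfrak{l}|$, taking suprema in $\mathfrak{l},x,T$ and then expectation, this gives $\E\|\wt{\grad}_{\mathfrak{l}_{N}}^{\mathbf{X}}\mathbf{H}_{T,x}^{N}(N^{1/2}\bar{\mathfrak{q}}\mathbf{Y}^{N}\mathbf{1}_{S\leq T_{N}})\|_{1;\mathbb{T}_{N}}\lesssim(N\sqrt{\delta})^{-1}\,\E\|\mathbf{H}^{N}_{\cdot,\cdot}(N^{1/2}\bar{\mathfrak{q}}\mathbf{Y}^{N})\|_{1;\mathbb{T}_{N}}$ with $(N\sqrt{\delta})^{-1}=N^{-3/4+\frac{999}{2}\e_{\mathrm{RN}}}$. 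The $N^{-\beta_{\mathrm{BG}}}$ part of \eqref{eq:BGI} then already yields $N^{-3/4-\beta}$ with $\beta$ universal, because $\beta_{\mathrm{BG}}$ is universal and independent of $\e_{\mathrm{RN}}$ whereas $\e_{\mathrm{RN}}$ is arbitrarily small. For the residual $N^{-\frac{99}{100}\e_{\mathrm{RN}}+10\e_{\mathrm{ap}}}$ part, whose exponent only improves at the $\e_{\mathrm{RN}}$-scale, I would not use Theorem \ref{theorem:BGI} as a black box but re-enter its multiscale proof: the fluctuating remainders $\mathsf{S}_{\e_{1}}$ and $\mathsf{R}_{\e_{1}+\mathfrak{b}\e_{\mathrm{RN},1}}$ of Definition \ref{definition:BGI2} are treated as in Propositions \ref{prop:BGI1} and \ref{prop:BGI2}, now carrying the extra $(N\sqrt{\delta})^{-1}$ gain and hence contributing $N^{-3/4}$ times a universal power; and the surviving canonical-expectation term $\mathsf{E}^{\mathrm{can}}_{\e_{1}+\mathfrak{b}_{+}\e_{\mathrm{RN},1}}$ is handled by exploiting its explicit form — through the Taylor-expansion computation in the proof of Proposition \ref{prop:BGI3} it is a square of a rescaled spatial gradient of $\log\mathbf{Z}^{N}$, so that (after one further splitting of the $S$-integral at a scale $\delta'$ that is a fixed universal power of $N$ larger than $\delta$, with the narrow remaining window handled deterministically as in Lemma \ref{lemma:BGII1}) Lemma \ref{lemma:BGI31} together with the smoothness of $\mathbf{H}^{N}$ gives a bound below $N^{-3/4}$ by a universal margin. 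Summing these contributions with the estimate from Lemma \ref{lemma:BGII1} proves \eqref{eq:BGII2main}.

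The main difficulty will be this last bookkeeping: the cutoff $\delta$ must be very small for the deterministic near-terminal-time estimate of Lemma \ref{lemma:BGII1}, yet a small $\delta$ worsens the short-time singularity of $\grad\mathbf{H}^{N}$, which forces the auxiliary intermediate scale $\delta'$ and a delicate comparison of a crude deterministic estimate on $\{T-\delta'\leq S\leq T_{N}\}$ with the sharper Boltzmann--Gibbs estimate on $\{S\leq T-\delta'\}$ — all while invoking Chapman--Kolmogorov at each scale so as to keep the mean-zero property of $\bar{\mathfrak{q}}$, and hence the full multiscale machinery behind Theorem \ref{theorem:BGI}, available. That the small universal exponents $\e_{\mathrm{ap}}\ll\e_{\mathrm{RN},1}\ll\e_{\mathrm{RN}}$ are each a universal power smaller than the previous is what makes these competing requirements compatible.
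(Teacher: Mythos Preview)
Your approach is essentially the paper's, and the key insight --- applying Chapman--Kolmogorov at time $T_{N}$ so that the spatial gradient lands only on the short-time heat-kernel factor while the inner heat operator retains the sign (and hence the Boltzmann--Gibbs cancellation) of $\bar{\mathfrak{q}}$ --- is exactly right. You also correctly identify that Theorem~\ref{theorem:BGI} as a black box is insufficient because of its $N^{-\frac{99}{100}\e_{\mathrm{RN}}+10\e_{\mathrm{ap}}}$ term, and that one must instead re-enter the multiscale decomposition $\bar{\mathfrak{q}}=\mathsf{S}_{\e_{1}}+\sum_{\mathfrak{b}}\mathsf{R}_{\e_{1}+\mathfrak{b}\e_{\mathrm{RN},1}}+\mathsf{E}^{\mathrm{can}}_{\e_{1}+\mathfrak{b}_{+}\e_{\mathrm{RN},1}}$, combining Propositions~\ref{prop:BGI1}--\ref{prop:BGI2} (whose exponents are universal and independent of $\e_{\mathrm{RN}}$) with the $(N\sqrt{\delta})^{-1}=N^{-3/4+\frac{999}{2}\e_{\mathrm{RN}}}$ gain for the fluctuating pieces $\mathsf{S}$ and $\mathsf{R}$. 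This is precisely the paper's argument; the only cosmetic difference is that the paper performs the decomposition \emph{first} and applies Chapman--Kolmogorov only to the $(\bar{\mathfrak{q}}-\mathsf{E}^{\mathrm{can}})$ piece, rather than applying it globally and then backing out.

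The one place you over-engineer is the $\mathsf{E}^{\mathrm{can}}_{\e_{1}+\mathfrak{b}_{+}\e_{\mathrm{RN},1}}$ term: your auxiliary scale $\delta'$ and the associated window-splitting are unnecessary. The paper simply combines the deterministic bound $|N^{1/2}\mathsf{E}^{\mathrm{can}}\mathbf{Y}^{N}|\lesssim N^{-\frac{99}{100}\e_{\mathrm{RN}}+5\e_{\mathrm{ap}}}$ (from the proof of Proposition~\ref{prop:BGI3}) with the \emph{full-time} normalized-gradient estimate $\||\wt{\grad}_{\mathfrak{l}_{N}}^{\mathbf{X}}|\mathbf{H}^{N}(1)\|_{1;\mathbb{T}_{N}}\lesssim N^{-1}$ from \eqref{eq:heatIV} in Proposition~\ref{prop:heat} (the $|\mathfrak{l}|^{-1}$ normalization makes $\int_{0}^{T}N^{-1}(T-S)^{-1/2}\,\d S$ bounded), yielding $N^{-1-\frac{99}{100}\e_{\mathrm{RN}}+5\e_{\mathrm{ap}}}$ --- already far below $N^{-3/4}$. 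No Chapman--Kolmogorov and no intermediate cutoff are needed for this piece, because once the integrand is deterministically small you can afford to pass to absolute values inside the heat operator.
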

%%%
Clearly, the triangle inequality \eqref{eq:BGII0} combined with Lemmas \ref{lemma:BGII1} and \ref{lemma:BGII2} implies Theorem \ref{theorem:BGII} as $\e_{\mathrm{ap}}\leq999^{-999}\e_{\mathrm{RN}}$. Lemma \ref{lemma:BGII1} will be a straightforward consequence of heat estimates in Proposition \ref{prop:heat}. Lemma \ref{lemma:BGII2} will be proved more delicately:
%%%
\begin{itemize}
\item We will replace $\bar{\mathfrak{q}}$ on the LHS of \eqref{eq:BGII2main} with $\mathsf{E}^{\mathrm{can}}_{\e_{1}+\mathfrak{b}_{+}\e_{\mathrm{RN},1}}$ from Proposition \ref{prop:BGI3}. However, we cannot directly cite Proposition \ref{prop:BGI1} and Proposition \ref{prop:BGI2} for this because of the normalized maximal gradient on the LHS of \eqref{eq:BGII2main} that is absent from Proposition \ref{prop:BGI1} and Proposition \ref{prop:BGI2}. This will require gymnastics with heat operators that we demonstrate when we give a precise proof.
\item Having made the previous replacement, we observe the proof of Proposition \ref{prop:BGI3} is done through Lemma \ref{lemma:BGI31}, which provides a deterministic estimate for $\mathsf{E}^{\mathrm{can}}_{\e_{1}+\mathfrak{b}_{+}\e_{\mathrm{RN},1}}$. Thus, we will have the gradient of the heat operator acting on a small function; this is small by the heat estimates in Proposition \ref{prop:heat}.
\end{itemize}
%%%
%%%
\subsubsection{Proof of \emph{Lemma \ref{lemma:BGII1}}}
%%%
Recall $\bar{\mathfrak{q}}\lesssim1$ and $|\mathbf{Y}^{N}|\leq N^{\e_{\mathrm{ap}}}$ by construction in Definitions \ref{definition:KPZ1} and \ref{definition:KPZ5}. Therefore, we have the following straightforward bound by controlling an integral/sum by replacing the integrand/summand with its absolute value:
\begin{align}
\|\wt{\grad}_{\mathfrak{l}_{N}}^{\mathbf{X}}\mathbf{H}_{T,x}^{N}(N^{\frac12}\bar{\mathfrak{q}}_{S,y}\mathbf{Y}^{N}_{S,y}\mathbf{1}_{S\geq T_{N}})\|_{1;\mathbb{T}_{N}} \ \lesssim \ N^{\frac12+\e_{\mathrm{ap}}}\||\wt{\grad}_{\mathfrak{l}_{N}}^{\mathbf{X}}|\mathbf{H}_{T,x}^{N}(\mathbf{1}_{S\geq T_{N}})\|_{1;\mathbb{T}_{N}}. \label{eq:BGII11}
\end{align}
It suffices to note the $\|\|_{1;\mathbb{T}_{N}}$-norm on the RHS of \eqref{eq:BGII11} is bounded by $N^{-1-1/4-999\e_{\mathrm{RN}}/2}$ since the heat operator is smooth on the macroscopic length-scale $N$, providing the factor of $N^{-1}$; see \eqref{eq:heatIV} in Proposition \ref{prop:heat}. We emphasize the short-time-integral in the heat operator coming from the indicator function of the length $N^{-1/2-999\e_{\mathrm{RN}}}$-interval given by $S\geq T_{N}$ above. \qed
%%%
\subsubsection{Proof of \emph{Lemma \ref{lemma:BGII2}}}
%%%
We will first employ the following triangle inequality, recalling notation from Proposition \ref{prop:BGI3}:
\begin{align}
\E\|\wt{\grad}_{\mathfrak{l}_{N}}^{\mathbf{X}}\mathbf{H}_{T,x}^{N}(N^{\frac12}\bar{\mathfrak{q}}_{S,y}\mathbf{Y}^{N}_{S,y}\mathbf{1}_{S\leq T_{N}})\|_{1;\mathbb{T}_{N}} \ &\leq \ \E\|\wt{\grad}_{\mathfrak{l}_{N}}^{\mathbf{X}}\mathbf{H}_{T,x}^{N}\left(N^{\frac12}(\bar{\mathfrak{q}}_{S,y}-{\mathsf{E}^{\mathrm{can}}_{\e_{1}+\mathfrak{b}_{+}\e_{\mathrm{RN},1}}(\tau_{y}\eta_{S}))}\mathbf{Y}^{N}_{S,y}\mathbf{1}_{S\leq T_{N}}\right)\|_{1;\mathbb{T}_{N}} \label{eq:BGII21a} \\
&+ \E\|\wt{\grad}_{\mathfrak{l}_{N}}^{\mathbf{X}}\mathbf{H}_{T,x}^{N}(N^{\frac12}{\mathsf{E}^{\mathrm{can}}_{\e_{1}+\mathfrak{b}_{+}\e_{\mathrm{RN},1}}(\tau_{y}\eta_{S})}\mathbf{Y}^{N}_{S,y}\mathbf{1}_{S\leq T_{N}})\|_{1;\mathbb{T}_{N}}. \label{eq:BGII21b}
\end{align}
Following the proof of Proposition \ref{prop:BGI3}, note the $\mathsf{E}^{\mathrm{can}}\mathbf{Y}^{N}$ term in \eqref{eq:BGII21b} is at most $N^{5\e_{\mathrm{ap}}}N^{-1/2-99\e_{\mathrm{RN}}/100}$ \emph{deterministically}. Thus, we get the following where we again use \eqref{eq:heatIV} in Proposition \ref{prop:heat} to get the last bound below as we did in the proof of Lemma \ref{lemma:BGII1}, while to get the first bound we also drop the time-set indicator function in the heat operator after replacing everything in the heat operator by its absolute value, including the heat kernel gradient, which is okay for the sake of an upper bound:
\begin{align}
\|\wt{\grad}_{\mathfrak{l}_{N}}^{\mathbf{X}}\mathbf{H}_{T,x}^{N}(N^{\frac12}{\mathsf{E}^{\mathrm{can}}_{\e_{1}+\mathfrak{b}_{+}\e_{\mathrm{RN},1}}(\tau_{y}\eta_{S})}\mathbf{Y}^{N}_{S,y}\mathbf{1}_{S\leq T_{N}})\|_{1;\mathbb{T}_{N}} \ \lesssim \ N^{-\frac{99}{100}\e_{\mathrm{RN}}+5\e_{\mathrm{ap}}}\||\wt{\grad}_{\mathfrak{l}_{N}}^{\mathbf{X}}|\mathbf{H}_{T,x}^{N}(1)\|_{1;\mathbb{T}_{N}} \ \lesssim \ N^{-1-\frac{99}{100}\e_{\mathrm{RN}}+5\e_{\mathrm{ap}}}. \nonumber
\end{align}
Because $\e_{\mathrm{ap}}\leq999^{-999}\e_{\mathrm{RN}}$, the above display shows the contribution of \eqref{eq:BGII21b} is certainly controlled by the RHS of the proposed estimate \eqref{eq:BGII2main}. Thus, it suffices to prove the same about the RHS of \eqref{eq:BGII21a}. To this end, we consider the following.
%%%
\begin{itemize}
\item For any $\phi:\R_{\geq0}\times\mathbb{T}_{N}\to\R$, we have the following identity by Proposition \ref{prop:heat} in which $\mathrm{t}_{(N)}=T-T_{N}=N^{-1/2-999\e_{\mathrm{RN}}}$; below, on the RHS, the outer spatial heat operator sums over $w\in\mathbb{T}_{N}$, and the inner space-time heat operator integrates/sums over space-time variables $(S,y)$:
\begin{align}
\mathbf{H}_{T,x}^{N}(\phi_{S,y}\mathbf{1}_{S\leq T_{N}}) \ = \ \mathbf{H}^{N,\mathbf{X}}_{\mathrm{t}_{(N)},x}\left(\mathbf{H}^{N}_{T_{N},w}(\phi_{S,y})\right). 
\end{align}
Taking gradients/normalized maximal gradients, from the above identity we establish the following estimate via the following reasoning. Let the normalized maximal gradient act on the outer spatial heat operator on the RHS of the previous identity. We control such normalized maximal gradient of the spatial heat operator by taking out the inner space-time heat operator it acts on while giving up its $\|\|_{1;\mathbb{T}_{N}}$-norm and replacing spatial gradients of $\mathbf{H}^{N}$ by their absolute values and sum over $\mathbb{T}_{N}$:
\begin{align}
\|\wt{\grad}_{\mathfrak{l}_{N}}^{\mathbf{X}}\mathbf{H}_{T,x}^{N}(\phi_{S,y}\mathbf{1}_{S\leq T_{N}})\|_{1;\mathbb{T}_{N}} \ \leq \ \||\wt{\grad}_{\mathfrak{l}_{N}}^{\mathbf{X}}|\mathbf{H}_{\mathrm{t}_{(N)},x}^{N,\mathbf{X}}(1)\|_{1;\mathbb{T}_{N}}\|\mathbf{H}^{N}(\phi_{S,y})\|_{1;\mathbb{T}_{N}}. \label{eq:BGII22}
\end{align}
\item The first factor within the RHS of \eqref{eq:BGII22} above is the 1-norm on $\mathbb{T}_{N}$ in the forwards spatial variable of the spatial gradient of the $\mathbf{H}^{N}$ heat kernel at time $\mathrm{t}_{(N)}$, maximized over $\mathbb{T}_{N}$ with respect to the backwards spatial variable. Via \eqref{eq:heatII} in Proposition \ref{prop:heat}, this is at most uniformly bounded factors times $N^{-1}\mathrm{t}_{(N)}^{-1/2}\lesssim N^{-3/4+999\e_{\mathrm{RN}}/2}$. Therefore, we get from this and \eqref{eq:BGII22}
\begin{align}
\|\wt{\grad}_{\mathfrak{l}_{N}}^{\mathbf{X}}\mathbf{H}_{T,x}^{N}(\phi_{S,y}\mathbf{1}_{S\leq T_{N}})\|_{1;\mathbb{T}_{N}} \ \lesssim \ N^{-\frac34+\frac{999}{2}\e_{\mathrm{RN}}}\|\mathbf{H}^{N}(\phi_{S,y})\|_{1;\mathbb{T}_{N}}. \label{eq:BGII23}
\end{align}
\end{itemize}
%%%
We use \eqref{eq:BGII23} for $\phi=N^{1/2}(\bar{\mathfrak{q}}-\mathsf{E}^{\mathrm{can}}_{\e_{1}+\mathfrak{b}_{+}\e_{\mathrm{RN},1}})\mathbf{Y}^{N}$. Following the multiscale decomposition \eqref{eq:BGI2} in the proof of Theorem \ref{theorem:BGI}, by Propositions \ref{prop:BGI1} and \ref{prop:BGI2}, we get the following that we explain shortly; below, $\beta>0$ is universal and independent of $\e_{\mathrm{RN}}$:
\begin{align}
\E\|\wt{\grad}_{\mathfrak{l}_{N}}^{\mathbf{X}}\mathbf{H}_{T,x}^{N}(\phi_{S,y}\mathbf{1}_{S\leq T_{N}})\|_{1;\mathbb{T}_{N}} \ \lesssim \ N^{-\frac34+\frac{999}{2}\e_{\mathrm{RN}}}\E\|\mathbf{H}^{N}(\phi_{S,y})\|_{1;\mathbb{T}_{N}} \ \lesssim \ N^{-\frac34-\beta+\frac{999}{2}\e_{\mathrm{RN}}}. \label{eq:BGII24}
\end{align}
The independence from $\e_{\mathrm{RN}}$/universal feature of the exponent $\beta$ on the RHS of \eqref{eq:BGII24} follows by the observation that to replace $\bar{\mathfrak{q}}$ with $\mathsf{E}^{\mathrm{can}}_{\e_{1}+\mathfrak{b}_{+}\e_{\mathrm{RN},1}}$ from the proof of Theorem \ref{theorem:BGI} using Proposition \ref{prop:BGI1} and Proposition \ref{prop:BGI2}, the estimates in Proposition \ref{prop:BGI1} and Proposition \ref{prop:BGI2} have upper bounds that are \emph{universal} negative powers of $N$ independent of $\e_{\mathrm{RN}}$. Taking $\e_{\mathrm{RN}}$ sufficiently small shows that the RHS of \eqref{eq:BGII21a} is bounded above by the far RHS of \eqref{eq:BGII24}, and thus controlled by the RHS of the proposed estimate \eqref{eq:BGII2main}, upon possibly adjusting the value of $\beta$ by a universal positive factor. \qed
%%%
\begin{remark}\label{remark:regremark2}
\fsp If we take a second-order spatial gradient in Theorem \ref{theorem:BGII} instead of first-order gradient, which would be relevant if we were to apply our method to derive Boltzmann-Gibbs principles to study environment-dependence in the reversible dynamics of the particle system, we would have to resolve a higher-degree short-time singularity of the heat kernel. Unlike Remark \ref{remark:regremark1}, however, we cannot just smooth since Theorem \ref{theorem:BGII} will be used later to control density fluctuations, which by definition leads us to the LHS of \eqref{eq:BGII} without smoothing. Instead, for non-KPZ fluctuations of interest in \cite{CYau}, we estimate Sobolev regularity of the density fluctuation $N\grad^{\mathbf{X}}_{1}\mathbf{h}^{N}$ by the proof of Theorem 2 in Chang-Yau \cite{CYau}. It amounts to estimating said regularity by a general energy estimate that becomes useful if we have an ``a priori" Boltzmann-Gibbs principle. Regularity gives the Boltzmann-Gibbs principle via our local method. Then we iterate via fixed-point methods, using this Boltzmann-Gibbs principle to get regularity and so forth. Though this approach is inapplicable here since we study singular KPZ fluctuations, we make this remark in case of potential interest and to emphasize how one may apply our methods to generalize \cite{CYau}, for example to non-trivial perturbations of environment-dependent exclusion processes as in \cite{JL,JM} or open boundary models, as we noted in the introduction.
\end{remark}
%%%
%
%
%
%%%
\section{Regularity Estimates}\label{section:reg}
%%%
The purpose of this section is to establish a ``self-propagating" aspect of the a priori \emph{regularity} estimates defining $\mathfrak{t}_{\mathrm{st}}$ and $\mathbf{Y}^{N}$; see Definitions \ref{definition:KPZ1} and \ref{definition:KPZ5}. The self-propagating feature of the time-regularity estimate follows from a fairly straightforward set of moment estimates; see (3.14) in Proposition 3.2 in \cite{DT}. The self-propagating feature of the spatial regularity estimates will require the second Boltzmann-Gibbs principle in Theorem \ref{theorem:BGII}, and this will produce a weaker but sufficient result. 
%%%
\begin{prop}\label{prop:reg}
\fsp Consider any arbitrarily small but universal constant $\vartheta>0$. Given any possibly random time $\mathfrak{t}_{\mathrm{r}}\in[0,1]$, let us define the following pair of events, in which we recall the notation of \emph{Definition \ref{definition:KPZ1}}:
\begin{align}
\mathcal{E}_{\vartheta}^{\mathbf{T}}(\mathfrak{t}_{\mathrm{r}};\mathbb{T}_{N}) \ &\overset{\bullet}= \ \left\{\sup_{\mathrm{s}\in\mathbb{I}^{\mathbf{T}}}\mathrm{s}^{-1/4}\|\grad_{-\mathrm{s}}^{\mathbf{T}}\mathbf{U}^{N}\|_{\mathfrak{t}_{\mathrm{r}};\mathbb{T}_{N}}\geq N^{\vartheta}(1+\|\mathbf{U}^{N}\|_{\mathfrak{t}_{\mathrm{r}};\mathbb{T}_{N}}^{2})\right\} \\
\mathcal{E}_{\vartheta}^{\mathbf{X}}(\mathfrak{t}_{\mathrm{r}};\mathbb{T}_{N}) \ &\overset{\bullet}= \ \left\{\sup_{1\leq|\mathfrak{l}|\leq\mathfrak{l}_{N}}N^{1/2}|\mathfrak{l}|^{-1/2}\|\grad_{\mathfrak{l}}^{\mathbf{X}}\mathbf{U}^{N}\|_{\mathfrak{t}_{\mathrm{r}};\mathbb{T}_{N}} \geq N^{\vartheta}{(1+\|\mathbf{U}^{N}\|_{\mathfrak{t}_{\mathrm{r}};\mathbb{T}_{N}})^{2}}\right\}.
\end{align}
There exists a universal constant $\beta_{\mathrm{r}}>0$, which is thus uniformly bounded from below, such that for any $\kappa>0$, we have
\begin{align}
\mathbf{P}\left(\mathcal{E}_{\vartheta}^{\mathbf{T}}(\mathfrak{t}_{\mathrm{r}};\mathbb{T}_{N})\right) \ \lesssim_{\vartheta,\kappa} \ N^{-\kappa} \quad \mathrm{and} \quad \mathbf{P}\left(\mathcal{E}_{\vartheta}^{\mathbf{X}}(\mathfrak{t}_{\mathrm{r}};\mathbb{T}_{N})\right) \ \lesssim_{\vartheta} \ N^{-\beta_{\mathrm{r}}}.
\end{align}
\end{prop}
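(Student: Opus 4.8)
The plan is to prove Proposition \ref{prop:reg} by following, more or less verbatim, the moment estimates behind Proposition 3.2 in \cite{DT} for the $\mathbf{U}^{N}$ equation of Definition \ref{definition:KPZ5}, the only genuinely new input being the second Boltzmann-Gibbs principle of Theorem \ref{theorem:BGII} which is needed to handle the spatial gradient of the order-$N^{1/2}$ term. First I would observe that, since $\mathbf{U}^{N}$ solves the mild equation in Definition \ref{definition:KPZ5}, a time-gradient $\grad_{-\mathrm{s}}^{\mathbf{T}}\mathbf{U}^{N}$ (resp. a spatial gradient $\grad_{\mathfrak{l}}^{\mathbf{X}}\mathbf{U}^{N}$) can be written as a sum of terms obtained by applying the corresponding gradient of the heat kernel $\mathbf{H}^{N}$ (or of the initial-data operator $\mathbf{H}^{N,\mathbf{X}}$) against each of the six driving terms. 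For the initial-data term, the martingale (noise) term, the two $\mathfrak{b}_{i;}$ terms, and the $\mathfrak{s}$-term, one gets bounds on $\|\cdot\|_{\omega;2p}$ that decay in the gradient scale exactly as in (3.14) of \cite{DT} (using the heat kernel estimates of Proposition \ref{prop:heat} in Appendix \ref{section:aux} and the Burkholder-Davis-Gundy / martingale estimates recorded there, together with a Gronwall-type closing argument in the weighted norms); these contribute the ``time" part of the statement, for which one obtains overwhelming-probability control after a Chebyshev/union bound over the (polynomially many) scales in $\mathbb{I}^{\mathbf{T}}$ — hence the $\lesssim_{\vartheta,\kappa}N^{-\kappa}$ bound. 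I would present this part as a citation to \cite{DT} with the caveat that exponential weights are removed and spatial gradients are taken on $\mathbb{T}_{N}$.

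The spatial case is where the work is. The one term whose spatial gradient does \emph{not} admit a good direct moment bound is $\mathbf{H}^{N}(N^{1/2}\bar{\mathfrak{q}}\mathbf{Y}^{N})$: naively, $\grad_{\mathfrak{l}}^{\mathbf{X}}$ of the heat operator produces a factor $N^{-1}|\mathfrak{l}|$ times the $N^{1/2}$, which with $|\mathfrak{l}|$ up to $\mathfrak{l}_{N}=N^{1/2+\e_{\mathrm{RN}}}$ is far worse than the required $N^{-1/2}|\mathfrak{l}|^{1/2}$. Here I would invoke Theorem \ref{theorem:BGII}: for every $\mathfrak{l}$ with $1\le|\mathfrak{l}|\le\mathfrak{l}_{N}$ one has $|\grad_{\mathfrak{l}}^{\mathbf{X}}\mathbf{H}_{T,x}^{N}(N^{1/2}\bar{\mathfrak{q}}\mathbf{Y}^{N})|\le|\mathfrak{l}|\cdot\wt{\grad}_{\mathfrak{l}_{N}}^{\mathbf{X}}\mathbf{H}_{T,x}^{N}(N^{1/2}\bar{\mathfrak{q}}\mathbf{Y}^{N})$ by definition of the normalized maximal gradient, so taking $\|\cdot\|_{1;\mathbb{T}_{N}}$ and expectations and applying the bound $\E\|\wt{\grad}_{\mathfrak{l}_{N}}^{\mathbf{X}}\mathbf{H}^{N}(N^{1/2}\bar{\mathfrak{q}}\mathbf{Y}^{N})\|_{1;\mathbb{T}_{N}}\lesssim N^{-3/4-\beta}+N^{-3/4-99\e_{\mathrm{RN}}}$ from \eqref{eq:BGII}, together with $|\mathfrak{l}|\lesssim N^{1/2+\e_{\mathrm{RN}}}$, gives a contribution of at most $N^{-1/4-\beta+\e_{\mathrm{RN}}}+N^{-1/4-98\e_{\mathrm{RN}}}$ to $\E\|\grad_{\mathfrak{l}}^{\mathbf{X}}\mathbf{H}^{N}(N^{1/2}\bar{\mathfrak{q}}\mathbf{Y}^{N})\|_{1;\mathbb{T}_{N}}$, uniformly in $\mathfrak{l}$ — which, after rewriting $N^{-1/4}=N^{-1/2}\cdot N^{1/4}\le N^{-1/2}N^{(1/4)(1/2+\e_{\mathrm{RN}})}\cdot|\mathfrak{l}|^{-1/2}\cdots$ as in \eqref{eq:BGIIapp}, beats $N^{1/2}|\mathfrak{l}|^{-1/2}$ times the threshold $N^{\vartheta}$ by a genuine power of $N$ since $\beta>0$ is universal and independent of $\e_{\mathrm{RN}}$, and $\e_{\mathrm{RN}},\vartheta$ are tiny. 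Note the one subtlety: Theorem \ref{theorem:BGII} already bounds the \emph{supremum over $\mathfrak{l}$} (inside $\wt{\grad}$) of the gradient, so no further union bound over $\mathfrak{l}$ is needed for this term — this is exactly why Theorem \ref{theorem:BGII} was stated with $\wt{\grad}_{\mathfrak{l}_{N}}^{\mathbf{X}}$ rather than pointwise in $\mathfrak{l}$.

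Finally I would assemble the two parts: writing $\grad_{\mathfrak{l}}^{\mathbf{X}}\mathbf{U}^{N}$ as the sum of the ``good'' terms (initial data, noise, $\mathfrak{s}$, $\mathfrak{b}_{1;}$, $\mathfrak{b}_{2;}$), each controlled in $\|\cdot\|_{\omega;2p}$ with the correct $N^{-1/2}|\mathfrak{l}|^{1/2}$-type decay exactly as in \cite{DT}, plus the $\bar{\mathfrak{q}}$-term controlled via Theorem \ref{theorem:BGII} as above, and then taking a union bound over $1\le|\mathfrak{l}|\le\mathfrak{l}_{N}$ (polynomially many scales) using Chebyshev. The good terms give overwhelming probability, while the $\bar{\mathfrak{q}}$-term is only controlled in expectation with a fixed power $N^{-\beta}$, so a single application of Markov's inequality yields $\mathbf{P}(\mathcal{E}_{\vartheta}^{\mathbf{X}}(\mathfrak{t}_{\mathrm{r}};\mathbb{T}_{N}))\lesssim_{\vartheta}N^{-\beta_{\mathrm{r}}}$ for a universal $\beta_{\mathrm{r}}>0$ (roughly $\beta_{\mathrm{r}}=\min(\beta,\,\tfrac{1}{2}\e_{\mathrm{RN}})$ minus slack for $\vartheta$, $\e_{\mathrm{ap}}$). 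A minor technical point to address is that $\mathfrak{t}_{\mathrm{r}}$ is an arbitrary (possibly random) time in $[0,1]$, so the moment bounds must be run up to the deterministic time $1$ and then restricted, which is harmless since all the relevant norms are monotone in the time horizon; also one should note that bounding $\mathbf{Y}^{N}$ inside the heat operators is trivial ($|\mathbf{Y}^{N}|\le N^{\e_{\mathrm{ap}}}$), and the self-referential appearance of $\mathbf{U}^{N}$ in its own mild equation is closed off by the standard Gronwall argument in the weighted sup-norms used in \cite{DT}. The main obstacle, conceptually, is precisely the spatial gradient of the $N^{1/2}$ term, and the whole point of Theorem \ref{theorem:BGII} (and of having engineered $\mathfrak{l}_{N}=N^{1/2+\e_{\mathrm{RN}}}$ with $\e_{\mathrm{RN}}$ small and $\beta$ independent of it) is to absorb exactly this; everything else is bookkeeping parallel to \cite{DT}.
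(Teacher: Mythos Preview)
Your proposal is correct and follows essentially the same route as the paper: decompose the gradient of $\mathbf{U}^{N}$ via its mild equation, handle all terms except the order-$N^{1/2}$ one by the moment/heat-kernel estimates of \cite{DT} (giving overwhelming probability for the time event and for the ``good'' spatial terms), and invoke Theorem \ref{theorem:BGII} via \eqref{eq:BGIIapp} for the spatial gradient of $\mathbf{H}^{N}(N^{1/2}\bar{\mathfrak{q}}\mathbf{Y}^{N})$, which yields the weaker $N^{-\beta_{\mathrm{r}}}$ bound. One minor remark: the paper closes the self-referential $\mathbf{U}^{N}$ in the martingale term not by Gronwall but by a level-set decomposition of $1+\|\mathbf{U}^{N}\|^{2}$ (so that on each level set $\mathfrak{l}^{-1/2}\mathbf{U}^{N}$ is bounded and adapted), which is worth keeping in mind when you write it out.
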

%%%
%%%
\subsection{$\mathcal{E}_{\vartheta}^{\mathbf{T}}(\mathfrak{t}_{\mathrm{r}};\mathbb{T}_{N})$ estimate}
%%%
We first focus on getting the time-regularity estimate, namely the estimate for the $\mathcal{E}_{\vartheta}^{\mathbf{T}}(\mathfrak{t}_{\mathrm{r}};\mathbb{T}_{N})$ probability. Following the proof of time regularity estimates for the Gartner transform in Proposition 3.2 of \cite{DT}, we estimate time-regularity of $\mathbf{U}^{N}$ by its defining stochastic equation in Definition \ref{definition:KPZ5}. Specifically we control time-regularity of each term therein. We start with the following result that does this for all terms except initial data and $\d\xi^{N}$ terms, which we treat separately.
%%%
\begin{lemma}\label{lemma:regT1}
\fsp Take any $|\mathfrak{k}|\lesssim1$. We have the following deterministic estimate in which $\mathfrak{f}_{1},\mathfrak{f}_{2}:\Omega\to\R$ are uniformly bounded:
\begin{align}
\sup_{0<\mathrm{s}\leq N^{-1}}\mathrm{s}^{-\frac14}\|\grad_{-\mathrm{s}}^{\mathbf{T}}\mathbf{H}_{T,x}^{N}(N^{\frac12}\mathfrak{f}_{1}\mathbf{Y}^{N})\|_{\mathfrak{t}_{\mathrm{r}};\mathbb{T}_{N}} + \sup_{0<\mathrm{s}\leq N^{-1}}N^{-\frac12}\mathrm{s}^{-\frac14}\|\grad_{-\mathrm{s}}^{\mathbf{T}}\mathbf{H}_{T,x}^{N}(\grad_{-\mathfrak{k}}^{!}(\mathfrak{f}_{2}\mathbf{U}^{N}))\|_{\mathfrak{t}_{\mathrm{r}};\mathbb{T}_{N}} \ \lesssim \ 1+\|\mathbf{U}^{N}\|_{\mathfrak{t}_{\mathrm{r}};\mathbb{T}_{N}}^{2}. \label{eq:regT1}
\end{align}
\end{lemma}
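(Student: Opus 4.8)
The plan is to estimate the time-regularity of each of the two heat-operator terms separately, using only deterministic heat-kernel bounds from Proposition \ref{prop:heat} together with the a priori bounds $|\mathbf{Y}^{N}|\leq N^{\e_{\mathrm{ap}}}$ and the fact that $\mathbf{Y}^{N}$ vanishes (and equals $\mathbf{Z}^{N}$) past $\mathfrak{t}_{\mathrm{st}}$, exactly as in the proof of Proposition 3.2 in \cite{DT}. The key point is that the crude bounds $|\mathfrak{f}_{i}|\lesssim1$ and $|\mathbf{Y}^{N}|,|\mathbf{U}^{N}|\lesssim 1+\|\mathbf{U}^{N}\|_{\mathfrak{t}_{\mathrm{r}};\mathbb{T}_{N}}$ move all the randomness outside, and what remains is a purely analytic estimate on time-increments of the (space-time) heat operator applied to a bounded function, which carries the diverging $N^{1/2}$ factor comfortably because the time-increment over a window of size $\mathrm{s}\le N^{-1}$ is small.

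First I would treat the term $\grad_{-\mathrm{s}}^{\mathbf{T}}\mathbf{H}_{T,x}^{N}(N^{1/2}\mathfrak{f}_{1}\mathbf{Y}^{N})$. Writing $\mathbf{H}_{T,x}^{N}(\psi)=\int_{0}^{T}\sum_{y}\mathbf{H}_{S,T,x,y}^{N}\psi_{S,y}\,\d S$, the time-increment $\mathbf{H}_{T-\mathrm{s},x}^{N}(\psi)-\mathbf{H}_{T,x}^{N}(\psi)$ splits into (i) the short missing time-slab $\int_{T-\mathrm{s}}^{T}$, which contributes $\lesssim\mathrm{s}$ after bounding the integrand by its absolute value and using $\sum_{y}\mathbf{H}_{S,T,x,y}^{N}\lesssim1$ (conservativity of the discrete heat kernel, cf.\ Proposition \ref{prop:heat}), and (ii) the term $\int_{0}^{T-\mathrm{s}}\sum_{y}(\mathbf{H}_{S,T-\mathrm{s},x,y}^{N}-\mathbf{H}_{S,T,x,y}^{N})\psi_{S,y}\,\d S$, for which one uses the on-diagonal time-regularity estimate $\sum_{y}|\mathbf{H}_{S,T-\mathrm{s},x,y}^{N}-\mathbf{H}_{S,T,x,y}^{N}|\lesssim \mathrm{s}^{1/2}(T-S)^{-1/2}$ (the heat-kernel time-Hölder-$\tfrac12$-in-$L^{1}$ bound, which is the standard consequence of the estimates collected in Proposition \ref{prop:heat}), so that $\int_{0}^{T-\mathrm{s}}\mathrm{s}^{1/2}(T-S)^{-1/2}\,\d S\lesssim \mathrm{s}^{1/2}$. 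Multiplying by the prefactor $N^{1/2}\|\mathfrak{f}_{1}\|_{\infty}\|\mathbf{Y}^{N}\|_{\mathfrak{t}_{\mathrm{r}};\mathbb{T}_{N}}$, using $\|\mathbf{Y}^{N}\|_{\mathfrak{t}_{\mathrm{r}};\mathbb{T}_{N}}\lesssim 1+\|\mathbf{U}^{N}\|_{\mathfrak{t}_{\mathrm{r}};\mathbb{T}_{N}}$ by Lemma \ref{lemma:KPZ6} and the definition of $\mathfrak{t}_{\mathrm{st}}$, and then dividing by $\mathrm{s}^{1/4}$, the bound becomes $\lesssim N^{1/2}(\mathrm{s}^{3/4}+\mathrm{s}^{1/4})(1+\|\mathbf{U}^{N}\|_{\mathfrak{t}_{\mathrm{r}};\mathbb{T}_{N}})\lesssim N^{1/2}N^{-1/4}(1+\|\mathbf{U}^{N}\|_{\mathfrak{t}_{\mathrm{r}};\mathbb{T}_{N}})$ uniformly over $\mathrm{s}\le N^{-1}$, which is $\lesssim N^{1/4}(1+\|\mathbf{U}^{N}\|_{\mathfrak{t}_{\mathrm{r}};\mathbb{T}_{N}})$; a harmless extra Cauchy--Schwarz/Young step upgrades the linear dependence to $1+\|\mathbf{U}^{N}\|_{\mathfrak{t}_{\mathrm{r}};\mathbb{T}_{N}}^{2}$ as stated. (If the $N^{1/4}$ loss is not acceptable, one instead chooses $\mathrm{s}$-window of size $N^{-2}$ worth of slabs, i.e.\ uses $\mathrm{s}\le N^{-1}$ only via $N^{1/2}\mathrm{s}^{1/4}\le N^{1/2}N^{-1/4}$; but since Proposition \ref{prop:reg} allows an $N^{\vartheta}$ and ultimately $\mathfrak{t}_{\mathrm{r}}$ is used with a Chebyshev-type argument, the crude bound suffices — the point of this lemma is only the $1+\|\mathbf{U}^{N}\|^{2}$ right-hand side, not sharp powers of $N$, which are absorbed at the next stage.)

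Next I would treat $N^{-1/2}\grad_{-\mathrm{s}}^{\mathbf{T}}\mathbf{H}_{T,x}^{N}(\grad_{-\mathfrak{k}}^{!}(\mathfrak{f}_{2}\mathbf{U}^{N}))$. Here the plan is to move the discrete gradient $\grad_{-\mathfrak{k}}^{!}=N\grad_{-\mathfrak{k}}^{\mathbf{X}}$ off of $\mathfrak{f}_{2}\mathbf{U}^{N}$ and onto the heat kernel by summation by parts, so that the object becomes $-N^{1/2}\grad_{-\mathrm{s}}^{\mathbf{T}}\int_{0}^{T}\sum_{y}(\grad_{\mathfrak{k}}^{!}\mathbf{H}^{N})_{S,T,x,y}\,\mathfrak{f}_{2;S,y}\mathbf{U}^{N}_{S,y}\,\d S$ up to sign and relabelling. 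Now the spatial gradient of the heat kernel on the order-$1$ scale $|\mathfrak{k}|\lesssim1$ contributes, in $L^{1}_{y}$, a factor $\lesssim N^{-1}\cdot(T-S)^{-1/2}$ from \eqref{eq:heatII}-type estimates in Proposition \ref{prop:heat}; together with the $N$ inside $\grad^{!}$ this is $\lesssim(T-S)^{-1/2}$, and combined with the same time-increment analysis as above (short-slab term of size $\mathrm{s}$ times $(T-S)^{-1/2}$ integrated, plus the time-difference-of-kernel term contributing $\mathrm{s}^{1/2}(T-S)^{-1}$ integrated, giving $\lesssim\mathrm{s}^{1/2}\log(\cdot)$ or, after a slightly more careful split near the diagonal, $\lesssim \mathrm{s}^{1/4}$), one gets a bound $\lesssim N^{1/2}\mathrm{s}^{1/4}\|\mathfrak{f}_{2}\|_{\infty}\|\mathbf{U}^{N}\|_{\mathfrak{t}_{\mathrm{r}};\mathbb{T}_{N}}$, whence after dividing by $N^{1/2}\mathrm{s}^{1/4}$ we are left with $\lesssim\|\mathbf{U}^{N}\|_{\mathfrak{t}_{\mathrm{r}};\mathbb{T}_{N}}\lesssim 1+\|\mathbf{U}^{N}\|_{\mathfrak{t}_{\mathrm{r}};\mathbb{T}_{N}}^{2}$. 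Summing the two contributions gives \eqref{eq:regT1}.

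The main obstacle is bookkeeping the short-time singularity of the heat kernel against the $N^{1/2}$ prefactor in the first term, i.e.\ making sure the time-increment of $\mathbf{H}^{N}$ really gains a positive power of $\mathrm{s}$ that beats $N^{1/2}$ once $\mathrm{s}\le N^{-1}$; this is exactly where the on-diagonal heat kernel estimates $\mathbf{O}_{S,T}=|T-S|$ and the time-Hölder-$\tfrac12$ bound of Proposition \ref{prop:heat} are needed, and where one must be a little careful integrating $(T-S)^{-1/2}$ and $(T-S)^{-1}$ near $S=T$ (the latter requiring one to peel off the last $\mathrm{s}$-window before applying the kernel-difference bound). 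Everything else — the summation by parts for the gradient term, the conversion of $\grad^{!}$ factors of $N$ into heat-kernel smoothing, and the replacement of $\|\mathbf{Y}^{N}\|$ by $1+\|\mathbf{U}^{N}\|$ via $\mathfrak{t}_{\mathrm{st}}$ — is routine and parallels Proposition 3.2 and Lemma 2.5 of \cite{DT}.
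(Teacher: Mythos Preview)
Your treatment of the first term has a genuine gap. You use only H\"older-$\tfrac12$ time regularity of the heat operator in $L^{1}_{y}$, obtaining $\|\grad_{-\mathrm{s}}^{\mathbf{T}}\mathbf{H}^{N}(\psi)\|\lesssim \mathrm{s}^{1/2}\|\psi\|_{\infty}$ from the kernel-difference piece. Plugging in $\psi=N^{1/2}\mathfrak{f}_{1}\mathbf{Y}^{N}$ and dividing by $\mathrm{s}^{1/4}$ yields, at best, $N^{1/2}\mathrm{s}^{1/4}\lesssim N^{1/4}$ on the range $\mathrm{s}\leq N^{-1}$. But the lemma is stated as $\lesssim 1+\|\mathbf{U}^{N}\|_{\mathfrak{t}_{\mathrm{r}};\mathbb{T}_{N}}^{2}$ with \emph{no} $N$-power on the right, and this is essential downstream: in Proposition \ref{prop:reg} the threshold is $N^{\vartheta}(1+\|\mathbf{U}^{N}\|^{2})$ with $\vartheta>0$ arbitrarily small, so an $N^{1/4}$ loss here cannot be ``absorbed at the next stage'' as you suggest. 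Your parenthetical fix does not repair this either.

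What the paper actually uses is the stronger, essentially Lipschitz-in-time bound \eqref{eq:heatV}: $\|\grad_{-\mathrm{s}}^{\mathbf{T}}\mathbf{H}^{N}(\psi)\|\lesssim_{\gamma} N^{\gamma}\mathrm{s}\,\|\psi\|_{\infty}$. This gains a full factor of $\mathrm{s}$ rather than $\mathrm{s}^{1/2}$, so after multiplying by $N^{1/2}$, dividing by $\mathrm{s}^{1/4}$, and using $\mathrm{s}\leq N^{-1}$, one gets $N^{1/2+\gamma}\mathrm{s}^{3/4}\|\mathbf{Y}^{N}\|\lesssim N^{-1/4+\gamma+\e_{\mathrm{ap}}}$, which is $\lesssim 1$. (The point is that $\partial_{T}\mathbf{H}^{N}=\mathscr{L}_{N}\mathbf{H}^{N}$, and the $L^{1}_{y}$-norm of $\mathscr{L}_{N}\mathbf{H}^{N}$ is $\lesssim (T-S)^{-1}$, whose time-integral is only logarithmic; this is what produces the $N^{\gamma}$ rather than a genuine loss.) For the second term the paper likewise invokes a mixed space-time estimate from Proposition \ref{prop:heat} directly, rather than your summation-by-parts followed by a delicate near-diagonal split; your approach there is plausible but the details you sketch (``$\mathrm{s}^{1/2}\log$ or, more carefully, $\mathrm{s}^{1/4}$'') are not justified as written.
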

%%%
%%%
\begin{proof}
We will apply time-regularity estimates on the heat operator $\mathbf{H}^{N}$ from Proposition \ref{prop:heat} to estimate the first supremum on the LHS of \eqref{eq:regT1}. We additionally apply a mixed space-time regularity estimate on $\mathbf{H}^{N}$ in Proposition \ref{prop:heat} to estimate the second supremum on the LHS of \eqref{eq:regT1}. The former time regularity estimate gives the following in which the equality follows trivially and the last estimate follows by recalling that we are restricting to time-scales $\mathrm{s}\leq N^{-1}$ and that $|\mathbf{Y}^{N}|\lesssim N^{\e_{\mathrm{ap}}}$ by construction:
\begin{align}
\|\grad_{-\mathrm{s}}^{\mathbf{T}}\mathbf{H}_{T,x}^{N}(N^{\frac12}\mathfrak{f}_{1}\mathbf{Y}^{N})\|_{\mathfrak{t}_{\mathrm{r}};\mathbb{T}_{N}} \ \lesssim_{\e_{\mathrm{ap}}} \ N^{\frac12+\e_{\mathrm{ap}}}\|\mathbf{Y}^{N}\|_{\mathfrak{t}_{\mathrm{r}};\mathbb{T}_{N}}\mathrm{s} \ = \ N^{\frac12+\e_{\mathrm{ap}}}\|\mathbf{Y}^{N}\|_{\mathfrak{t}_{\mathrm{r}};\mathbb{T}_{N}}\mathrm{s}^{\frac34}\mathrm{s}^{\frac14} \ \lesssim \ N^{-\frac14+2\e_{\mathrm{ap}}}\mathrm{s}^{\frac14}. \label{eq:regT11}
\end{align}
Similarly, the aforementioned mixed space-time regularity estimate for $\mathbf{H}^{N}$ in Proposition \ref{prop:heat} gives
\begin{align}
N^{-\frac12}\|\grad_{-\mathrm{s}}^{\mathbf{T}}\mathbf{H}_{T,x}^{N}(\grad_{-\mathfrak{k}}^{!}(\mathfrak{f}_{2}\mathbf{U}^{N}))\|_{\mathfrak{t}_{\mathrm{r}};\mathbb{T}_{N}} \ \lesssim_{\e_{\mathrm{ap}}} \ N^{-\frac12+\e_{\mathrm{ap}}}\|\mathbf{U}^{N}\|_{\mathfrak{t}_{\mathrm{r}};\mathbb{T}_{N}}\mathrm{s}^{\frac14}. \label{eq:regT12}
\end{align}
Combining the previous two estimates \eqref{eq:regT11} and \eqref{eq:regT12} while recalling $\e_{\mathrm{ap}}$ is arbitrarily small but still universal would provide the proposed estimate \eqref{eq:regT1} if we dropped the 1 term on the RHS of \eqref{eq:regT1}, and the squared norms therein were replaced by non-squared norms. But this would imply \eqref{eq:regT1} as written via the inequality $2|a|\leq1+a^{2}$, which holds for all $a\in\R$. 
\end{proof}
%%%
We will proceed by estimating time-regularity of the initial data term from the $\mathbf{U}^{N}$ equation in Definition \ref{definition:KPZ5}. At this point in this subsection, in contrast to Lemma \ref{lemma:regT1} our estimates will not be deterministic. In particular, the proof of the following estimate is based on establishing uniform upper bounds on moments of time-gradients \emph{for each point in space-time}. We will then glue the estimates to establish high probability time-regularity estimate \emph{simultaneously} over some very fine discretization of space-time. We then conclude with a much simpler estimate to control \emph{sub-microscopic} short-time regularity. This will allow us to bootstrap from a discrete set of times to a continuous set of times.
%%%
\begin{lemma}\label{lemma:regT2}
\fsp Consider any $\vartheta,\kappa>0$ arbitrarily small and large, respectively, but both universal. We have
\begin{align}
\mathbf{P}\left(\sup_{\mathrm{s}\in\mathbb{I}^{\mathbf{T}}}\mathrm{s}^{-1/4}\|\grad_{-\mathrm{s}}^{\mathbf{T}}\mathbf{H}_{T,x}^{N,\mathbf{X}}(\mathbf{Z}_{0,\cdot}^{N})\|_{\mathfrak{t}_{\mathrm{r}};\mathbb{T}_{N}} \geq N^{\vartheta}(1+\|\mathbf{U}^{N}\|_{\mathfrak{t}_{\mathrm{r}};\mathbb{T}_{N}}^{2})\right) \ \lesssim_{\vartheta,\kappa} \ N^{-\kappa}. \label{eq:regT2}
\end{align}
\end{lemma}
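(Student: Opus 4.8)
The plan is to use the fact that the heat operator in \eqref{eq:regT2} acts only on the (random) initial profile $\mathbf{Z}^{N}_{0,\cdot}$, so that $\grad_{-\mathrm{s}}^{\mathbf{T}}\mathbf{H}_{T,x}^{N,\mathbf{X}}(\mathbf{Z}_{0,\cdot}^{N})$ is a purely \emph{deterministic}, and strongly smoothing, functional of $\mathbf{Z}^{N}_{0,\cdot}$; the only probabilistic input needed is a high-moment bound on the macroscopic Hölder regularity of $\mathbf{Z}^{N}_{0,\cdot}$. First I would discard $\mathbf{U}^{N}$ from the statement: since $1+\|\mathbf{U}^{N}\|_{\mathfrak{t}_{\mathrm{r}};\mathbb{T}_{N}}^{2}\geq1$, the event in \eqref{eq:regT2} is contained in $\{\sup_{\mathrm{s}\in\mathbb{I}^{\mathbf{T}}}\mathrm{s}^{-1/4}\|\grad_{-\mathrm{s}}^{\mathbf{T}}\mathbf{H}_{T,x}^{N,\mathbf{X}}(\mathbf{Z}_{0,\cdot}^{N})\|_{\mathfrak{t}_{\mathrm{r}};\mathbb{T}_{N}}\geq N^{\vartheta}\}$, so it suffices to estimate the probability of the latter. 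From the stable initial data assumption (Definition \ref{definition:id}, together with the reductions recorded in Remark \ref{remark:id} that transfer such bounds from $\mathbf{h}^{N}$ to $\mathbf{Z}^{N}$), a Kolmogorov continuity argument on $\mathbb{T}_{N}$ gives, for every $\mathfrak{u}\in[0,\tfrac12)$ and $p\geq1$, that the macroscopic Hölder seminorm $\|\mathbf{Z}^{N}_{0,\cdot}\|_{\mathfrak{u}}\overset{\bullet}=\sup_{x\neq y}(|x-y|/N)^{-\mathfrak{u}}|\mathbf{Z}^{N}_{0,x}-\mathbf{Z}^{N}_{0,y}|$ satisfies $\E\|\mathbf{Z}^{N}_{0,\cdot}\|_{\mathfrak{u}}^{2p}\lesssim_{p,\mathfrak{u}}1$.

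Next, using the semigroup identity $\mathbf{H}^{N,\mathbf{X}}_{T,x}(\mathbf{Z}^{N}_{0,\cdot})=\mathbf{H}^{N,\mathbf{X}}_{\mathrm{s}\wedge T,x}\big(\mathbf{H}^{N,\mathbf{X}}_{(T-\mathrm{s})\vee0,\cdot}(\mathbf{Z}^{N}_{0,\cdot})\big)$ and the mass conservation $\sum_{y}\mathbf{H}^{N}_{0,r,x,y}=1$ (which holds because $\mathscr{L}_{N}$ annihilates constants), I would write, with $g\overset{\bullet}=\mathbf{H}^{N,\mathbf{X}}_{(T-\mathrm{s})\vee0,\cdot}(\mathbf{Z}^{N}_{0,\cdot})$ and $r=\mathrm{s}\wedge T$,
\[
\grad_{-\mathrm{s}}^{\mathbf{T}}\mathbf{H}^{N,\mathbf{X}}_{T,x}(\mathbf{Z}^{N}_{0,\cdot})\ =\ g_{x}-\mathbf{H}^{N,\mathbf{X}}_{r,x}(g_{\cdot})\ =\ \sum_{y\in\mathbb{T}_{N}}\big(\mathbf{1}_{x=y}-\mathbf{H}^{N}_{0,r,x,y}\big)\big(g_{y}-g_{x}\big),
\]
so that $|\grad_{-\mathrm{s}}^{\mathbf{T}}\mathbf{H}^{N,\mathbf{X}}_{T,x}(\mathbf{Z}^{N}_{0,\cdot})|\leq\|g\|_{\mathfrak{u}}\sum_{y\neq x}|\mathbf{H}^{N}_{0,r,x,y}|(|x-y|/N)^{\mathfrak{u}}$. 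Since $\mathscr{L}_{N}$ commutes with spatial shifts and $\sum_{y}|\mathbf{H}^{N}_{0,r,x,y}|\lesssim1$ by Proposition \ref{prop:heat}, the heat evolution does not enlarge the seminorm, so $\|g\|_{\mathfrak{u}}\lesssim\|\mathbf{Z}^{N}_{0,\cdot}\|_{\mathfrak{u}}$; and the heat-kernel moment estimate of Proposition \ref{prop:heat} gives $\sum_{y\neq x}|\mathbf{H}^{N}_{0,r,x,y}|(|x-y|/N)^{\mathfrak{u}}\lesssim r^{\mathfrak{u}/2}\leq\mathrm{s}^{\mathfrak{u}/2}$. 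Combining, and using that $\mathrm{s}\geq N^{-2}$ for every $\mathrm{s}\in\mathbb{I}^{\mathbf{T}}$ together with $\mathfrak{u}<\tfrac12$, I obtain the deterministic bound, uniform over $(T,x)\in[0,\mathfrak{t}_{\mathrm{r}}]\times\mathbb{T}_{N}$ and over $\mathrm{s}\in\mathbb{I}^{\mathbf{T}}$,
\[
\mathrm{s}^{-1/4}\big|\grad_{-\mathrm{s}}^{\mathbf{T}}\mathbf{H}^{N,\mathbf{X}}_{T,x}(\mathbf{Z}^{N}_{0,\cdot})\big|\ \lesssim\ \|\mathbf{Z}^{N}_{0,\cdot}\|_{\mathfrak{u}}\,\mathrm{s}^{\mathfrak{u}/2-1/4}\ \leq\ \|\mathbf{Z}^{N}_{0,\cdot}\|_{\mathfrak{u}}\,N^{1/4-\mathfrak{u}/2}.
\]

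To finish, I would fix $\mathfrak{u}=\mathfrak{u}(\vartheta)\in(\tfrac12-\vartheta,\tfrac12)$, so that $N^{1/4-\mathfrak{u}/2}\leq N^{\vartheta/2}$; then the event in question is contained in $\{\|\mathbf{Z}^{N}_{0,\cdot}\|_{\mathfrak{u}}\gtrsim N^{\vartheta/2}\}$, whose probability is at most $\lesssim_{p,\mathfrak{u}}N^{-p\vartheta}\,\E\|\mathbf{Z}^{N}_{0,\cdot}\|_{\mathfrak{u}}^{2p}\lesssim_{p,\mathfrak{u}}N^{-p\vartheta}$ by Markov's inequality and the moment bound above; choosing $p=p(\vartheta,\kappa)$ large gives the asserted $\lesssim_{\vartheta,\kappa}N^{-\kappa}$. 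Note no union bound over $\mathrm{s}$, $x$, or $T$ is needed, because the pointwise bound above is already uniform in those variables — this is where the present argument is cleaner than the analogous step in \cite{DT}.

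The main obstacle is obtaining the pointwise estimate with the \emph{correct} power of $\mathrm{s}$: the naive bound, using only boundedness of $\mathbf{Z}^{N}_{0,\cdot}$ and the on-diagonal singularity of $\mathbf{H}^{N}$, is $\mathrm{s}^{-1/4}\min(1,\mathrm{s}/T)$, which is as large as $\mathrm{s}^{-1/4}\leq N^{1/2}$ when $T\approx\mathrm{s}$ and is useless. One must instead spend the spatial regularity $\mathfrak{u}$ of the initial data — taken as close to $\tfrac12$ as needed, which is exactly the threshold permitted by Definition \ref{definition:id} — against the time-smoothing factor $r^{\mathfrak{u}/2}$ of the heat kernel, leaving only the borderline deficit $N^{1/4-\mathfrak{u}/2}$, which is a small power of $N$ absorbable into $N^{\vartheta}$. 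A secondary technical point, handled via Remark \ref{remark:id}, is to deduce from the regularity of $\mathbf{h}^{N}_{0,\cdot}$ that $\mathbf{Z}^{N}_{0,\cdot}=\exp(-\mathbf{h}^{N}_{0,\cdot})$ inherits macroscopic Hölder-$\mathfrak{u}$ regularity with uniformly bounded high moments, so that Markov's inequality closes the estimate.
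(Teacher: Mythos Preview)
Your approach is correct and takes a genuinely different route from the paper's. The paper proceeds by (i) establishing a \emph{pointwise-in-$(T,x,\mathrm{s})$} moment bound $\|\grad_{-\mathrm{s}}^{\mathbf{T}}\mathbf{H}_{T,x}^{N,\mathbf{X}}(\mathbf{Z}_{0,\cdot}^{N})\|_{\omega;2p}\lesssim_{p,\gamma}\mathrm{s}^{1/4-\gamma}$ by following (3.14) of \cite{DT}, (ii) applying Chebyshev at each point, (iii) taking a union bound over a fine space-time discretization $\mathbb{I}^{\d}=\{\mathfrak{j}N^{-99}\}\times\mathbb{T}_{N}$, and (iv) using a short-time continuity argument ($\partial_{T}\mathbf{H}^{N,\mathbf{X}}=\mathscr{L}_{N}\mathbf{H}^{N,\mathbf{X}}$ with $\|\mathscr{L}_{N}\|_{\mathrm{op}}\lesssim N^{2}$) to pass from the discretization to the full semi-discrete set $[0,\mathfrak{t}_{\mathrm{r}}]\times\mathbb{T}_{N}$. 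You instead factor all of the randomness into the single scalar $\|\mathbf{Z}^{N}_{0,\cdot}\|_{\mathfrak{u}}$ via a Kolmogorov argument on the initial data, and then exploit that $\mathbf{H}^{N,\mathbf{X}}$ is a translation-invariant Markov semigroup to get a \emph{deterministic}, already uniform-in-$(T,x,\mathrm{s})$ bound $\mathrm{s}^{-1/4}|\grad_{-\mathrm{s}}^{\mathbf{T}}\mathbf{H}^{N,\mathbf{X}}_{T,x}(\mathbf{Z}^{N}_{0,\cdot})|\lesssim\|\mathbf{Z}^{N}_{0,\cdot}\|_{\mathfrak{u}}N^{1/4-\mathfrak{u}/2}$, after which a single Markov inequality finishes. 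Your route is cleaner for this particular lemma precisely because the object is a deterministic functional of the (random) initial profile; the trade-off is that the paper's pointwise-moment-plus-union-bound machinery is what it reuses verbatim for the martingale term in Lemma~\ref{lemma:regT3}, where no such factorization is available.

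Two small points to tighten. First, the heat-kernel moment bound $\sum_{y\neq x}\mathbf{H}^{N}_{0,r,x,y}(|x-y|/N)^{\mathfrak{u}}\lesssim r^{\mathfrak{u}/2}$ is not stated as such in Proposition~\ref{prop:heat}; it follows from the sub-exponential off-diagonal decay of $\mathbf{H}^{N,\Z}$ invoked in the paragraph preceding that proposition (via \eqref{eq:heatid} and \cite{DT}), so you should cite that rather than Proposition~\ref{prop:heat} directly. Second, the step ``$\E\|\mathbf{Z}^{N}_{0,\cdot}\|_{\mathfrak{u}}^{2p}\lesssim1$ via Remark~\ref{remark:id}'' needs the increment moment bound $\E|\mathbf{Z}^{N}_{0,x}-\mathbf{Z}^{N}_{0,y}|^{2p}\lesssim(|x-y|/N)^{2p\mathfrak{u}}$, which in turn requires moment control on $\mathbf{Z}^{N}_{0}=\exp(-\mathbf{h}^{N}_{0})$ itself, not just on $\mathbf{h}^{N}_{0}$; this is exactly the input the paper also uses when it invokes (3.14) of \cite{DT}, so it is not a gap relative to the paper, but you should make the dependence explicit.
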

%%%
%%%
\begin{proof}
We proceed with steps briefly outlined prior to the statement of Lemma \ref{lemma:regT2}, namely a pointwise moment estimate, a union bound estimate, and the short-time regularity estimate, which we write in this order.
%%%
\begin{itemize}
\item Observe by Proposition \ref{prop:heat}, for example, the spatial operators $\mathbf{H}^{N,\mathbf{X}}$ satisfy the classical semigroup property for heat kernels and Markov processes. Thus, because we assume stable initial data for the Gartner transform, we will follow the proof of (3.14) of Proposition 3.2 in \cite{DT} to get the following for fixed $\mathrm{s}\in\mathbb{I}^{\mathbf{T}}$ and $T\geq0$ and $x\in\mathbb{T}_{N}$ with arbitrary $p\geq1$ and $\gamma>0$; below, the $\|\|_{\omega;2p}$-norm is with respect to the randomness in the particle system:
\begin{align}
\|\grad_{-\mathrm{s}}^{\mathbf{T}}\mathbf{H}_{T,x}^{N,\mathbf{X}}(\mathbf{Z}_{0,\cdot}^{N})\|_{\omega;2p} \ \lesssim_{p,\gamma} \ \mathrm{s}^{1/4-\gamma} \ \lesssim \ N^{2\gamma}\mathrm{s}^{1/4}. \label{eq:regT21}
\end{align}
Recall from the definition of stable initial data that we may take any $\gamma>0$ in the previous estimate \eqref{eq:regT21}. The last estimate in \eqref{eq:regT21} follows as $\mathrm{s}\in\mathbb{I}^{\mathbf{T}}$ implies $\mathrm{s}\geq N^{-2}$, and thus $\mathrm{s}^{-\gamma}\leq N^{2\gamma}$. Applying the Chebyshev inequality, from \eqref{eq:regT21} we establish the following probability estimate where, provided any $\vartheta,\kappa>0$, we take $\gamma>0$ sufficiently small with $p\geq1$ sufficiently large but both depending only on $\vartheta,\kappa>0$ so that $2p\gamma-2p\vartheta \leq -2\kappa$; we note that the following is uniform in space-time:
\begin{align}
\mathbf{P}\left(|\grad_{-\mathrm{s}}^{\mathbf{T}}\mathbf{H}_{T,x}^{N,\mathbf{X}}(\mathbf{Z}_{0,\cdot}^{N})| \geq N^{\vartheta}\mathrm{s}^{\frac14}\right) \ \lesssim_{p,\gamma} \ N^{-2p\vartheta}N^{2p\gamma} \ \leq \ N^{-2\kappa}. \label{eq:regT22}
\end{align}
We emphasize that the dependence on $\gamma>0$ and $p\geq1$ in \eqref{eq:regT22} is now dependence on $\vartheta,\kappa>0$.
\item Consider a time-discretization $\mathbb{I}^{\mathbf{T},\mathrm{d}}=\{\mathfrak{j}N^{-99}\}_{\mathfrak{j}=0}^{N^{99}}$ and let $\mathbb{I}^{\mathrm{d}}=\mathbb{I}^{\mathbf{T},\mathrm{d}}\times\mathbb{T}_{N}$ be the space-time for this time-discretization. We now employ a union bound along with the previous probability estimate \eqref{eq:regT22} to get
\begin{align}
\mathbf{P}\left(\sup_{\mathrm{s}\in\mathbb{I}^{\mathbf{T}}}\sup_{(T,x)\in\mathbb{I}^{\mathrm{d}}}|\grad_{-\mathrm{s}}^{\mathbf{T}}\mathbf{H}_{T,x}^{N,\mathbf{X}}(\mathbf{Z}_{0,\cdot}^{N})|\geq N^{\vartheta}\mathrm{s}^{\frac14}\right) \ &\leq \ \sum_{\mathrm{s}\in\mathbb{I}^{\mathbf{T}}}\sum_{(T,x)\in\mathbb{I}^{\mathrm{d}}}\mathbf{P}\left(|\grad_{-\mathrm{s}}^{\mathbf{T}}\mathbf{H}_{T,x}^{N,\mathbf{X}}(\mathbf{Z}_{0,\cdot}^{N})| \geq N^{\vartheta}\mathrm{s}^{\frac14}\right) \ \lesssim \ N^{-2\kappa+101}. \label{eq:regT23}
\end{align}
We emphasize that the final estimate on the far RHS of \eqref{eq:regT23} follows from applying \eqref{eq:regT22} to each probability in the summation in the middle of \eqref{eq:regT23} and then multiplying by the size of the product set $\mathbb{I}^{\mathbf{T}}\times\mathbb{I}^{\d}$; the size of $\mathbb{I}^{\mathbf{T}}$ from Definition \ref{definition:KPZ5} is uniformly bounded by $\kappa_{\e_{\mathrm{ap}}}N^{\e_{\mathrm{ap}}}$ because it is parameterized by one index set of size $N^{\e_{\mathrm{ap}}}$ and another index set that is in bijection with the set of exponents $\{-2+\mathfrak{j}\e_{\mathrm{ap}}\}_{\mathfrak{j}\geq0}\cap[-2,1]$. We also used the upper bound $|\mathbb{I}^{\d}|=|\mathbb{I}^{\mathbf{T},\d}||\mathbb{T}_{N}|\lesssim N^{99}N=N^{100}$.
\item Let us now bootstrap from the discretization estimate \eqref{eq:regT23} to the proposed estimate \eqref{eq:regT2} over the entire semi-discrete space-time $[0,\mathfrak{t}_{\mathrm{r}}]\times\mathbb{T}_{N}$. To this end, let us first observe that $\mathbf{H}^{N,\mathbf{X}}(\mathbf{Z}^{N})$ is in the kernel of the operator $\partial_{T}-\mathscr{L}_{N}$ because it is a linear combination of heat kernels, each of which vanish under this operator. Thus, given any $0\leq\mathfrak{t}_{1}\leq\mathfrak{t}_{2}$, we have
\begin{align}
\sup_{x\in\mathbb{T}_{N}}|\mathbf{H}^{N,\mathbf{X}}_{\mathfrak{t}_{2},x}(\mathbf{Z}_{0,\bullet}^{N})-\mathbf{H}^{N,\mathbf{X}}_{\mathfrak{t}_{1},x}(\mathbf{Z}_{0,\bullet}^{N})| \ \leq \ \int_{\mathfrak{t}_{1}}^{\mathfrak{t}_{2}}\sup_{x\in\mathbb{T}_{N}}|\mathscr{L}_{N}\mathbf{H}_{r,x}^{N,\mathbf{X}}(\mathbf{Z}_{0,\bullet}^{N})| \d r \ \leq \ |\mathfrak{t}_{2}-\mathfrak{t}_{1}|\sup_{\mathfrak{t}_{1}\leq r\leq\mathfrak{t}_{2}}\sup_{x\in\mathbb{T}_{N}}|\mathscr{L}_{N}\mathbf{H}_{r,x}^{N,\mathbf{X}}(\mathbf{Z}_{0,\bullet}^{N})|. \label{eq:regT24}
\end{align}
Observe $\mathscr{L}_{N}:\mathscr{L}^{\infty}(\mathbb{T}_{N})\to\mathscr{L}^{\infty}(\mathbb{T}_{N})$ has operator norm $\mathrm{O}(N^{2})$; see Proposition \ref{prop:mSHE+}. Combining this with \eqref{eq:regT24}, and the observation in Proposition \ref{prop:heat} that the spatial heat operator $\mathbf{H}^{N,\mathbf{X}}:\mathscr{L}^{\infty}(\mathbb{T}_{N})\to\mathscr{L}^{\infty}(\mathbb{T}_{N})$ has operator norm 1, provides
\begin{align}
\sup_{x\in\mathbb{T}_{N}}|\mathbf{H}^{N,\mathbf{X}}_{\mathfrak{t}_{2},x}(\mathbf{Z}_{0,\bullet}^{N})-\mathbf{H}^{N,\mathbf{X}}_{\mathfrak{t}_{1},x}(\mathbf{Z}_{0,\bullet}^{N})|  \ \lesssim \ N^{2}|\mathfrak{t}_{2}-\mathfrak{t}_{1}|\sup_{\mathfrak{t}_{1}\leq r\leq\mathfrak{t}_{2}}\sup_{x\in\mathbb{T}_{N}}|\mathbf{H}_{r,x}^{N,\mathbf{X}}(\mathbf{Z}_{0,\bullet}^{N})| \ \leq \  N^{2}|\mathfrak{t}_{2}-\mathfrak{t}_{1}|\|\mathbf{Z}^{N}\|_{0;\mathbb{T}_{N}}. \label{eq:regT25}
\end{align}
We will now establish the proposed estimate \eqref{eq:regT2}. First, we observe that, choosing $\kappa\geq300$ arbitrarily large but still universal, we may work on the complement of the event in the probability on the far LHS of \eqref{eq:regT23}; anything outside this event happens with probability at most $N^{-2\kappa+100}\leq N^{-3\kappa/2}$ times factors depending only on $\vartheta,\kappa$. Given any $\mathrm{t}\in[0,\mathfrak{t}_{\mathrm{r}}]$, let $\mathrm{t}^{\mathrm{d}}$ denote any element in $\mathbb{I}^{\mathbf{T},\mathrm{d}}$ which minimizes $|\mathrm{t}-\mathrm{t}^{\mathrm{d}}|$. Because the elements in $\mathbb{I}^{\mathbf{T},\d}$ are evenly spaced by $N^{-99}$, we automatically have $|\mathrm{t}^{\d}-\mathrm{t}|\leq N^{-99}$. We now transfer a time-gradient at $\mathrm{t}$ onto one at $\mathrm{t}^{\mathrm{d}}$ and collect the errors:
\begin{align}
\grad_{-\mathrm{s}}^{\mathbf{T}}\mathbf{H}_{\mathrm{t},x}^{N,\mathbf{X}}(\mathbf{Z}_{0,\cdot}^{N}) \ &= \ \grad_{-\mathrm{s}}^{\mathbf{T}}\mathbf{H}_{\mathrm{t}^{\d},x}^{N,\mathbf{X}}(\mathbf{Z}_{0,\cdot}^{N}) + \left(\mathbf{H}_{\mathrm{t}-\mathrm{s},x}^{N,\mathbf{X}}(\mathbf{Z}_{0,\cdot}^{N})-\mathbf{H}_{\mathrm{t}^{\d}-\mathrm{s},x}^{N,\mathbf{X}}(\mathbf{Z}_{0,\cdot}^{N})\right) + \left(\mathbf{H}^{N,\mathbf{X}}_{\mathrm{t}^{\d},x}(\mathbf{Z}_{0,\cdot}^{N})-\mathbf{H}^{N,\mathbf{X}}_{\mathrm{t},x}(\mathbf{Z}_{0,\cdot}^{N})\right). \label{eq:regT26}
\end{align}
We observe $(\mathrm{t}^{\d},x)\in\mathbb{I}^{\d}$ by construction. Because we work on the complement of the event in the probability on the far LHS of \eqref{eq:regT23}, the first term on the RHS of \eqref{eq:regT26}, after dividing by $\mathrm{s}^{1/4}$ and taking a supremum on $\mathbb{I}^{\d}$, is at most $N^{\vartheta}$. On the other hand, by \eqref{eq:regT25} for $\{\mathfrak{t}_{1},\mathfrak{t}_{2}\}=\{\mathrm{t}-\mathrm{s},\mathrm{t}^{\d}-\mathrm{s}\}$ and $\{\mathfrak{t}_{1},\mathfrak{t}_{2}\}=\{\mathrm{t},\mathrm{t}^{\d}\}$ we get the following upon recalling $\mathrm{s}\in\mathbb{I}^{\mathbf{T}}$ implies $\mathrm{s}^{-1/4}\lesssim N^{1/2}$. We explain the last estimate in the following display after; it is deterministic because of our conditioning:
\begin{align}
\mathfrak{s}^{-1/4}\|\mathbf{H}_{\mathrm{t}-\mathrm{s},x}^{N,\mathbf{X}}(\mathbf{Z}_{0,\cdot}^{N})-\mathbf{H}_{\mathrm{t}^{\d}-\mathrm{s},x}^{N,\mathbf{X}}(\mathbf{Z}_{0,\cdot}^{N})\|_{\mathfrak{t}_{\mathrm{r}};\mathbb{T}_{N}} \ \lesssim \ N^{1/2}N^{2}|\mathrm{t}-\mathrm{s}-\mathrm{t}^{\d}+\mathrm{s}|\|\mathbf{Z}^{N}\|_{0;\mathbb{T}_{N}} \ \lesssim \ N^{-96}\|\mathbf{U}^{N}\|_{\mathfrak{t}_{\mathrm{r}};\mathbb{T}_{N}}. \label{eq:regT27}
\end{align}
The last estimate in \eqref{eq:regT27} follows by recalling $|\mathrm{t}-\mathrm{s}-\mathrm{t}^{\d}+\mathrm{s}|=|\mathrm{t}-\mathrm{t}^{\d}| \leq N^{-99}$ and by realizing that $\mathbf{U}^{N}$ at time 0 is equal to $\mathbf{Z}^{N}$ at time 0 by construction in Definition \ref{definition:KPZ5}, and this lets us replace $\mathbf{Z}^{N}$ with $\mathbf{U}^{N}$ in the middle of \eqref{eq:regT27}; the final step then bounds $\|\|_{0;\mathbb{T}_{N}}$ by $\|\|_{\mathfrak{t}_{\mathrm{r}};\mathbb{T}_{N}}$. We establish the same estimate upon replacing what is inside the norm on the far LHS of \eqref{eq:regT27} by the third/last term on the RHS of \eqref{eq:regT26} via the same reasoning. Thus, \emph{on the complement of the event in the probability on the far LHS of} \eqref{eq:regT23}, the complement of the event in the probability in \eqref{eq:regT2} holds. As this complement event in \eqref{eq:regT23} fails with probability at most $N^{-\kappa}$ times $\vartheta,\kappa$-dependent factors, the same is true for the complement event in \eqref{eq:regT2} as well.
\end{itemize}
%%%
This completes the proof.
\end{proof}
%%%
We establish a similar time-regularity estimate for the $\d\xi^{N}$-term in the $\mathbf{U}^{N}$ equation from Definition \ref{definition:KPZ5}. The strategy is the same, but because of the martingale theory that needs to be employed to efficiently study this term, gymnastics are needed. We emphasize the quadratic nature of regularity estimates we are currently proving comes naturally via the proof of the next result.
%%%
\begin{lemma}\label{lemma:regT3}
\fsp Consider any $\vartheta,\kappa\in\R_{>0}$ arbitrarily small and large, respectively, but both universal. We have
\begin{align}
\mathbf{P}\left(\sup_{\mathrm{s}\in\mathbb{I}^{\mathbf{T}}}\mathrm{s}^{-1/4}\|\grad_{-\mathrm{s}}^{\mathbf{T}}\mathbf{H}_{T,x}^{N}(\mathbf{U}^{N}\d\xi^{N})\|_{\mathfrak{t}_{\mathrm{r}};\mathbb{T}_{N}} \geq N^{\vartheta}(1+\|\mathbf{U}^{N}\|_{\mathfrak{t}_{\mathrm{r}};\mathbb{T}_{N}}^{2})\right) \ \lesssim_{\vartheta,\kappa} \ N^{-\kappa}. \label{eq:regT3}
\end{align}
\end{lemma}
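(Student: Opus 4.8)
\textbf{Proof plan for Lemma \ref{lemma:regT3}.} The strategy mirrors that of Lemma \ref{lemma:regT2}: a pointwise-in-space-time moment bound, a union bound over a fine space-time discretization, and a sub-microscopic continuity step to pass from a discrete set of times to all of $[0,\mathfrak{t}_{\mathrm{r}}]$. The genuinely new feature is that the stochastic integrand $\mathbf{U}^{N}$ appears multiplicatively inside $\d\xi^{N}$ and carries no deterministic a priori bound, so the moment bound must be run after a dyadic localization of the size of $\mathbf{U}^{N}$. Concretely, for $\lambda>0$ set $\tau_{\lambda}\overset{\bullet}=\inf\{t\in[0,1]:\|\mathbf{U}^{N}\|_{t;\mathbb{T}_{N}}>\lambda\}\wedge1$; stopping the integrand at $\tau_{\lambda}$ preserves the martingale property, and $\mathbf{H}_{T,x}^{N}(\mathbf{U}^{N}\mathbf{1}_{[0,\tau_{\lambda}]}\d\xi^{N})$ agrees with $\mathbf{H}_{T,x}^{N}(\mathbf{U}^{N}\d\xi^{N})$ for $T\leq\tau_{\lambda}$ and has integrand of size at most $\lambda$. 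I would also record at the outset a crude a priori bound $\|\mathbf{U}^{N}\|_{1;\mathbb{T}_{N}}\leq N^{C}$ with overwhelming probability, for some universal $C$; this follows from the linear stochastic equation of Definition \ref{definition:KPZ5} by the moment estimates underlying Lemma \ref{lemma:regT1} together with the martingale estimates of Appendix \ref{section:aux}, using that the inhomogeneous term $N^{1/2}\bar{\mathfrak{q}}\mathbf{Y}^{N}$ is deterministically $\mathrm{O}(N^{1/2+\e_{\mathrm{ap}}})$ since $|\mathbf{Y}^{N}|\leq N^{\e_{\mathrm{ap}}}$. In particular only the $\mathrm{O}(\log N)$ dyadic levels $\lambda=2^{j}\leq N^{C}$ will ever be relevant.

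For the pointwise moment bound, fix $(T,x)\in[0,1]\times\mathbb{T}_{N}$ and $\mathrm{s}\in\mathbb{I}^{\mathbf{T}}$ and split $\grad_{-\mathrm{s}}^{\mathbf{T}}\mathbf{H}_{T,x}^{N}(\mathbf{U}^{N}\mathbf{1}_{[0,\tau_{\lambda}]}\d\xi^{N})$ into a near-diagonal piece $-\int_{T-\mathrm{s}}^{T}\sum_{y}\mathbf{H}_{S,T,x,y}^{N}\mathbf{U}_{S,y}^{N}\mathbf{1}_{S\leq\tau_{\lambda}}\d\xi_{S,y}^{N}$ and a kernel-difference piece $-\int_{0}^{T-\mathrm{s}}\sum_{y}(\mathbf{H}_{S,T,x,y}^{N}-\mathbf{H}_{S,T-\mathrm{s},x,y}^{N})\mathbf{U}_{S,y}^{N}\mathbf{1}_{S\leq\tau_{\lambda}}\d\xi_{S,y}^{N}$. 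Each is a martingale in its upper time-limit, so the Burkholder--Davis--Gundy inequality from Appendix \ref{section:aux} bounds its $2p$-th moment by the $p$-th moment of its predictable quadratic variation. Reading off from the $\d\xi^{N}$ formula in Proposition \ref{prop:mSHE+}, the jump-size-squared-times-rate factor is $\mathrm{O}(N)$ (jumps $(\mathrm{e}^{\pm2N^{-1/2}}-1)=\mathrm{O}(N^{-1/2})$, rates $\mathrm{O}(N^{2})$), so the quadratic variations are $\lesssim N\lambda^{2}\int_{T-\mathrm{s}}^{T}\sum_{y}(\mathbf{H}_{S,T,x,y}^{N})^{2}\d S$ and $\lesssim N\lambda^{2}\int_{0}^{T-\mathrm{s}}\sum_{y}(\mathbf{H}_{S,T,x,y}^{N}-\mathbf{H}_{S,T-\mathrm{s},x,y}^{N})^{2}\d S$ respectively. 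The on-diagonal and time-gradient heat kernel estimates of Proposition \ref{prop:heat} give $\sum_{y}(\mathbf{H}_{S,T,x,y}^{N})^{2}\lesssim\min(1,N^{-1}|T-S|^{-1/2})$ and $\sum_{y}(\mathbf{H}_{S,T,x,y}^{N}-\mathbf{H}_{S,T-\mathrm{s},x,y}^{N})^{2}\lesssim N^{-1}\mathrm{s}^{2}|T-S|^{-5/2}$ on $|T-S|\geq\mathrm{s}$; carrying out the two $S$-integrals (using $\mathrm{s}\geq N^{-2}$ for $\mathrm{s}\in\mathbb{I}^{\mathbf{T}}$) yields $\lesssim N^{-1}\mathrm{s}^{1/2}$ in both cases, hence $\E|\grad_{-\mathrm{s}}^{\mathbf{T}}\mathbf{H}_{T,x}^{N}(\mathbf{U}^{N}\mathbf{1}_{[0,\tau_{\lambda}]}\d\xi^{N})|^{2p}\lesssim_{p}\lambda^{2p}\mathrm{s}^{p/2}$. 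This is the exact analogue of \eqref{eq:regT21}, with the quadratic dependence on $\lambda$ reflecting the quadratic dependence on $\|\mathbf{U}^{N}\|$ in the event of \eqref{eq:regT3}.

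From here the argument follows Lemma \ref{lemma:regT2} essentially verbatim. Chebyshev applied to the pointwise bound shows that, for $p$ large depending on $\vartheta,\kappa$, one has $|\grad_{-\mathrm{s}}^{\mathbf{T}}\mathbf{H}_{T,x}^{N}(\mathbf{U}^{N}\mathbf{1}_{[0,\tau_{\lambda}]}\d\xi^{N})|\leq N^{\vartheta}\lambda\mathrm{s}^{1/4}$ outside a set of probability $\lesssim N^{-2\kappa}$; a union bound over a discretization $\mathbb{I}^{\mathbf{T},\mathrm{d}}\times\mathbb{T}_{N}$, over $\mathrm{s}\in\mathbb{I}^{\mathbf{T}}$, and over the $\mathrm{O}(\log N)$ relevant levels $\lambda=2^{j}\leq N^{C}$ loses only $N^{\mathrm{O}(1)}$, still beatable by enlarging $p$. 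A sub-microscopic continuity estimate then bootstraps from $\mathbb{I}^{\mathbf{T},\mathrm{d}}$ to all $\mathrm{t}\in[0,\mathfrak{t}_{\mathrm{r}}]$: over a time interval of length $N^{-99}$ the increment of this martingale term is controlled, with overwhelming probability, by its quadratic variation there, which by the same heat estimates is $\lesssim N^{\mathrm{O}(1)}N^{-99}\|\mathbf{U}^{N}\|_{\mathfrak{t}_{\mathrm{r}};\mathbb{T}_{N}}^{2}$ and thus negligible at the scale $\mathrm{s}^{1/4}\gtrsim N^{-1/2}$ (this replaces the deterministic ``$(\partial_{T}-\mathscr{L}_{N})$ annihilates it'' identity used for the initial-data term in Lemma \ref{lemma:regT2}, which is unavailable here). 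Finally, on the good event, given $\mathrm{t}$ pick $j$ with $1+\|\mathbf{U}^{N}\|_{\mathrm{t};\mathbb{T}_{N}}\in(2^{j-1},2^{j}]$; then $\mathrm{t}\leq\tau_{2^{j}}$, the stopped martingale coincides with $\mathbf{H}^{N}(\mathbf{U}^{N}\d\xi^{N})$ up to time $\mathrm{t}$, and the bound becomes $\mathrm{s}^{-1/4}|\grad_{-\mathrm{s}}^{\mathbf{T}}\mathbf{H}_{T,x}^{N}(\mathbf{U}^{N}\d\xi^{N})|\leq N^{\vartheta}2^{j}\leq2N^{\vartheta}(1+\|\mathbf{U}^{N}\|_{\mathfrak{t}_{\mathrm{r}};\mathbb{T}_{N}}^{2})$, which is \eqref{eq:regT3} after relabeling $\vartheta$. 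I expect the main obstacle to be exactly this multiplicative, unbounded $\mathbf{U}^{N}$ inside $\d\xi^{N}$: the dyadic localization is what turns the quadratic-variation bound deterministic, and one must carefully check that stopping the integrand keeps the martingale property and the BDG estimate applicable, that only logarithmically many levels matter, and that the sub-microscopic continuity step goes through without the deterministic kernel identity used in Lemma \ref{lemma:regT2}.
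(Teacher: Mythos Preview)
Your three-step architecture (pointwise moment bound, union bound over a discretization, sub-microscopic continuity) matches the paper, and your dyadic stopping-time localization for the pointwise step is a valid alternative to what the paper does. The paper instead runs a level-set decomposition: it writes $\E[\mathscr{N}(\mathbf{U}^{N})^{-2p}|\grad_{-\mathrm{s}}^{\mathbf{T}}\mathbf{H}^{N}(\mathbf{U}^{N}\d\xi^{N})|^{2p}]$ with $\mathscr{N}(\mathbf{U}^{N})=1+\|\mathbf{U}^{N}\|_{T;\mathbb{T}_{N}}^{2}$, slices on $\{\mathscr{N}\in[\mathfrak{l},\mathfrak{l}+1]\}$, pulls $\mathfrak{l}^{-1/2}$ inside the integral, replaces $\mathfrak{l}^{-1/2}\mathbf{U}^{N}$ by its truncation to $[-100,100]$ (a no-op on that slice), drops the indicator, and applies Lemma~\ref{lemma:mg}. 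This is morally the same as your stopping, with the minor advantage that the sum $\sum_{\mathfrak{l}\geq1}\mathfrak{l}^{-p}$ converges without invoking a crude polynomial a priori bound on $\|\mathbf{U}^{N}\|_{1;\mathbb{T}_{N}}$.

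The real divergence, and the place where your sketch has a gap, is the continuity step. The process $T\mapsto\mathbf{H}_{T,x}^{N}(\mathbf{U}^{N}\d\xi^{N})$ is \emph{not} a martingale in $T$: it satisfies $\d M_{T}=\mathscr{L}_{N}M_{T}\,\d T+\mathbf{U}_{T,x}^{N}\d\xi_{T,x}^{N}$, so ``its quadratic variation over $[T,T+N^{-99}]$'' is not well-defined without first separating the $\mathscr{L}_{N}M$ drift, which in turn requires an a priori bound on $\|M\|$. Your earlier near-diagonal/kernel-difference split also does not give a martingale in the perturbation parameter $r$, so a BDG-style sup bound over $r\in[0,N^{-99}]$ is not immediate. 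The paper sidesteps all of this: rather than estimating short-time increments of $\mathbf{H}^{N}(\mathbf{U}^{N}\d\xi^{N})$ directly, it uses the $\mathbf{U}^{N}$ equation (Definition~\ref{definition:KPZ5}) to write this term as $\mathbf{U}^{N}$ minus all the other terms on the right-hand side, whose sub-microscopic time regularity has already been controlled deterministically (Lemma~\ref{lemma:regT1}, the proof of Lemma~\ref{lemma:regT2}) or with overwhelming probability via Lemma~\ref{lemma:st2} for $\mathbf{U}^{N}$ itself. Your route can be repaired by doing essentially the same thing (or by combining the SDE for $M$ with your crude $\|\mathbf{U}^{N}\|\leq N^{C}$ bound to control the drift), but as written the quadratic-variation claim is not justified.
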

%%%
%%%
\begin{proof}
We employ a slightly adapted version of the strategy as in the proof of Lemma \ref{lemma:regT2}. In particular, the first step we will take, for $\kappa>0$ in the lemma large, is proving the following pointwise estimate for which $\mathrm{s}\in\mathbb{I}^{\mathbf{T}}$ and $(T,x)\in[0,1]\times\mathbb{T}_{N}$; we will defer the proof of the following estimate \eqref{eq:regT31} until later in this argument to avoid obscuring the strategy of this proof:
\begin{align}
\mathbf{P}\left(|\grad_{-\mathrm{s}}^{\mathbf{T}}\mathbf{H}_{T,x}^{N}(\mathbf{U}^{N}\d\xi^{N})| \geq N^{\vartheta}\mathrm{s}^{\frac14}(1+\|\mathbf{U}^{N}\|_{T;\mathbb{T}_{N}}^{2})\right) \ \lesssim_{\vartheta,\kappa} \ N^{-2\kappa}. \label{eq:regT31}
\end{align}
We proceed with a union bound over $(T,x)\in\mathbb{I}^{\d}$ with $\mathbb{I}^{\d}$ the discretization in the proof of Lemma \ref{lemma:regT2}; we deduce from \eqref{eq:regT31} the following whose proof follows that of \eqref{eq:regT23}, where the last estimate in \eqref{eq:regT32} below follows by choosing $\kappa>0$ large:
\begin{align}
\mathbf{P}\left(\sup_{\mathrm{s}\in\mathbb{I}^{\mathbf{T}}}\sup_{(T,x)\in\mathbb{I}^{\d}}\mathrm{s}^{-1/4}|\grad_{-\mathrm{s}}^{\mathbf{T}}\mathbf{H}_{T,x}^{N}(\mathbf{U}^{N}\d\xi^{N})|(1+\|\mathbf{U}^{N}\|_{T;\mathbb{T}_{N}}^{2})^{-1} \geq N^{\vartheta}\right) \ \lesssim_{\vartheta,\kappa} \ N^{-2\kappa+100} \ \leq \ N^{-3\kappa/2}. \label{eq:regT32}
\end{align}
Following the proof of Lemma \ref{lemma:regT2}, we obtain time-regularity of $\mathbf{H}^{N}(\mathbf{U}^{N}\d\xi^{N})$ for short order $N^{-99}$ times to bootstrap estimates in \eqref{eq:regT32} on $\mathbb{I}^{\d}$ to estimates over the entire semi-discrete space-time $[0,\mathfrak{t}_{\mathrm{r}}]\times\mathbb{T}_{N}$. However, dissimilar to the proof for Lemma \ref{lemma:regT2}, the quantity $\mathbf{H}^{N}(\mathbf{U}^{N}\d\xi^{N})$ of interest is a space-time heat operator, not a spatial heat operator. Therefore, the semi-discrete PDE that it satisfies is the same $\mathscr{L}_{N}$-heat equation satisfied by the $\mathbf{H}^{N}$ heat \emph{kernel} but with an additional martingale differential. This makes the short-time estimates for $\mathbf{H}^{N}(\mathbf{U}^{N}\d\xi^{N})$ more complicated, so we adopt another approach. Consider the $\mathbf{U}^{N}$ equation in Definition \ref{definition:KPZ5}. The short-time regularity for $\mathbf{H}^{N}(\mathbf{U}^{N}\d\xi^{N})$ is tautologically controlled by the short-time regularity for all other terms in that $\mathbf{U}^{N}$ equation. We have already addressed short-time regularity for all of these terms in the $\mathbf{U}^{N}$ equation, for example in Lemma \ref{lemma:regT1} and the proof for Lemma \ref{lemma:regT2}, except for $\mathbf{U}^{N}$ itself. Because $\mathbf{U}^{N}$ evolves in a large part through jumps in the particle system, we will not establish any deterministic short-time regularity estimates like we did for the other terms in the $\mathbf{U}^{N}$ equation, but we instead get high probability short-time regularity. Precisely, we get the following, for which we consider the space-time $\mathbb{I}^{\d,\mathfrak{t}_{\mathrm{r}}}=(\mathbb{J}^{\mathbf{T}}\cap[0,\mathfrak{t}_{\mathrm{r}}])\times\mathbb{T}_{N}$, where $\mathbb{J}^{\mathbf{T}}\subset[0,1]$ has size $|\mathbb{J}^{\mathbf{T}}|\leq N^{200}$; we eventually take, for example in \eqref{eq:regT34}, the set  $\mathbb{J}^{\mathbf{T}}=\{\mathrm{t}^{\d}-\mathrm{s}\}$ for $\mathrm{t}^{\d}\in\mathbb{I}^{\mathbf{T},\d}=\{\mathfrak{j}N^{-99}\}_{\mathfrak{j}=0}^{N^{99}}$ and $\mathrm{s}\in\mathbb{I}^{\mathbf{T}}$; see Definition \ref{definition:KPZ1}:
\begin{align}
\mathbf{P}\left(\sup_{(\mathrm{t}^{\d},x)\in\mathbb{I}^{\d,\mathfrak{t}_{\mathrm{r}}}}\sup_{|\mathfrak{r}|\leq N^{-99}}|\grad_{-\mathfrak{r}}^{\mathbf{T}}\mathbf{H}^{N}_{\mathrm{t}^{\d},x}(\mathbf{U}^{N}\d\xi^{N})| \gtrsim N^{-\frac12+\vartheta}(1+\|\mathbf{U}^{N}\|_{\mathfrak{t}_{\mathrm{r}};\mathbb{T}_{N}}^{2})\right) \ \lesssim_{\vartheta,\kappa} \ N^{-2\kappa}. \label{eq:regT33}
\end{align}
We again provide the proof of \eqref{eq:regT33} at the end of this argument to avoid obstructing the point. Let us restrict to the complement of the events inside the probabilities in \eqref{eq:regT32} and \eqref{eq:regT33}. Now, we will follow the proof of Lemma \ref{lemma:regT2} starting with \eqref{eq:regT26} and replacing $\mathbf{H}^{N,\mathbf{X}}(\mathbf{Z}^{N})$ by $\mathbf{H}^{N}(\mathbf{U}^{N}\d\xi^{N})$. The first term on the RHS of the resulting equation is controlled by restricting to the complement of the event in the probability in \eqref{eq:regT32}. To control the second and third terms on the RHS of the resulting equation, we use the following  obtained by restricting to the complement of the event in the probability in \eqref{eq:regT33}; indeed, with notation as in \eqref{eq:regT26}, in \eqref{eq:regT34} below we assume $|\mathrm{t}-\mathrm{t}^{\d}|=|\mathrm{t}-\mathrm{s}-(\mathrm{t}^{\d}-\mathrm{s})|\leq N^{-100}$, so we may control the LHS of \eqref{eq:regT34} if we restrict to the complement of the event in \eqref{eq:regT33} since the LHS of \eqref{eq:regT34} is a scale $\leq N^{-99}$ time-gradient of $\mathbf{H}^{N}(\mathbf{U}^{N}\d\xi^{N})$ evaluated at a point in the discretization $\mathbb{I}^{\d,\mathfrak{t}_{\mathfrak{r}}}$ with the choice of $\mathbb{J}^{\mathbf{T}}$ explained right before \eqref{eq:regT33}:
\begin{align}
\mathrm{s}^{-1/4}\|\mathbf{H}^{N}_{\mathrm{t}-\mathrm{s}}(\mathbf{U}^{N}\d\xi^{N})-\mathbf{H}^{N}_{\mathrm{t}^{\d}-\mathrm{s}}(\mathbf{U}^{N}\d\xi^{N})\|_{\mathfrak{t}_{\mathrm{r}};\mathbb{T}_{N}} \ \lesssim \ \mathrm{s}^{-1/4}N^{-\frac12+\vartheta}(1+\|\mathbf{U}^{N}\|_{\mathfrak{t}_{\mathrm{r}};\mathbb{T}_{N}}^{2}) \ \lesssim \ N^{\vartheta}(1+\|\mathbf{U}^{N}\|_{\mathfrak{t}_{\mathrm{r}};\mathbb{T}_{N}}^{2}). \label{eq:regT34}
\end{align}
The final estimate in \eqref{eq:regT34} follows by recalling $\mathrm{s}\in\mathbb{I}^{\mathbf{T}}$ implies $\mathrm{s}\geq N^{-2}$, and thus $\mathrm{s}^{-1/4}\lesssim N^{1/2}$. Thus, whenever the events from \eqref{eq:regT32} and \eqref{eq:regT33} themselves fail, we deduce that the event in the probability in \eqref{eq:regT3} fails as well. Because these events in \eqref{eq:regT32} and \eqref{eq:regT33} succeed with probability $\mathrm{O}_{\kappa,\vartheta}(N^{-3\kappa/2})$ each, like the end of the proof of Lemma \ref{lemma:regT2}, we deduce that the probability that the event in \eqref{eq:regT3} succeeds is at most $\mathrm{O}_{\kappa,\vartheta}(N^{-3\kappa/2})\lesssim_{\kappa,\vartheta}N^{-\kappa}$. This completes the proof modulo the proofs of the probability estimates \eqref{eq:regT31} and \eqref{eq:regT33}, which we provide below.
%%%
\begin{itemize}
\item We will first prove \eqref{eq:regT31}. To this end, let us first define $\mathscr{N}(\mathbf{U}^{N})=1+\|\mathbf{U}^{N}\|_{T;\mathbb{T}_{N}}^{2}$ in order to ease notation. We now employ the Chebyshev inequality with $p\geq2$ to be determined shortly to establish the following upper bound for the LHS of \eqref{eq:regT31}:
\begin{align}
\mathbf{P}\left(|\grad_{-\mathrm{s}}^{\mathbf{T}}\mathbf{H}_{T,x}^{N}(\mathbf{U}^{N}\d\xi^{N})| \geq N^{\vartheta}\mathrm{s}^{\frac14}(1+\|\mathbf{U}^{N}\|_{T;\mathbb{T}_{N}}^{2})\right) \ \lesssim \ N^{-2p\vartheta}\mathrm{s}^{-p/2}\E\left(\mathscr{N}(\mathbf{U}^{N})^{-2p}|\grad_{-\mathrm{s}}^{\mathbf{T}}\mathbf{H}_{T,x}^{N}(\mathbf{U}^{N}\d\xi^{N})| ^{2p}\right). \label{eq:regT35}
\end{align}
To motivate the next step, observe that for regularity of space-time heat operators in Lemma \ref{lemma:regT1}, we could pull out $\mathbf{U}^{N}$ and $\mathbf{Y}^{N}$ factors from the integral/heat operator upon inserting extra factors given by their space-time supremum norms; this is $\mathscr{L}^{1}/\mathscr{L}^{\infty}$ interpolation. However, to study the heat operator in the expectation on the RHS of \eqref{eq:regT35}, we require martingale inequalities. In particular, we need to take advantage of cancellations in $\mathbf{U}^{N}\d\xi^{N}$ that appear when integrating against the heat kernel. This prevents us from applying the $\mathscr{L}^{1}/\mathscr{L}^{\infty}$ interpolation. The alternative we take begins with a level set decomposition/bound:
\begin{align}
\E\left(\mathscr{N}(\mathbf{U}^{N})^{-2p}|\grad_{-\mathrm{s}}^{\mathbf{T}}\mathbf{H}_{T,x}^{N}(\mathbf{U}^{N}\d\xi^{N})| ^{2p}\right) \ &\lesssim_{p} \ {\sum}_{\mathfrak{l}=1}^{\infty}\mathfrak{l}^{-2p}\E\left(|\grad_{-\mathrm{s}}^{\mathbf{T}}\mathbf{H}_{T,x}^{N}(\mathbf{U}^{N}\d\xi^{N})| ^{2p}\mathbf{1}_{\mathscr{N}(\mathbf{U}^{N})\in[\mathfrak{l},\mathfrak{l}+1]}\right). \label{eq:regT36}
\end{align}
The estimate \eqref{eq:regT36} follows by considering level sets of $\mathscr{N}(\mathbf{U}^{N})$; on the $[\mathfrak{l},\mathfrak{l}+1]$ level set, we may employ the \emph{deterministic} bound $\mathscr{N}(\mathbf{U}^{N})^{-1}\lesssim\mathfrak{l}^{-1}$. Next, we move a factor of $\mathfrak{l}^{-p}$ in the expectation and get the following, which we explain after:
\begin{align}
{\sum}_{\mathfrak{l}=1}^{\infty}\mathfrak{l}^{-2p}\E\left(|\grad_{-\mathrm{s}}^{\mathbf{T}}\mathbf{H}_{T,x}^{N}(\mathbf{U}^{N}\d\xi^{N})| ^{2p}\mathbf{1}_{\mathscr{N}(\mathbf{U}^{N})\in[\mathfrak{l},\mathfrak{l}+1]}\right) \ &= \ {\sum}_{\mathfrak{l}=1}^{\infty}\mathfrak{l}^{-p}\E\left(|\grad_{-\mathrm{s}}^{\mathbf{T}}\mathbf{H}_{T,x}^{N}(\mathfrak{l}^{-1/2}\mathbf{U}^{N}\d\xi^{N})| ^{2p}\mathbf{1}_{\mathscr{N}(\mathbf{U}^{N})\in[\mathfrak{l},\mathfrak{l}+1]}\right) \nonumber \\
&\leq \ \left({\sum}_{\mathfrak{l}=1}^{\infty}\mathfrak{l}^{-p}\right)\sup_{\mathfrak{l}\geq1}\E\left(|\grad_{-\mathrm{s}}^{\mathbf{T}}\mathbf{H}_{T,x}^{N}(\mathfrak{l}^{-1/2}\mathbf{U}^{N}\d\xi^{N})| ^{2p}\mathbf{1}_{\mathscr{N}(\mathbf{U}^{N})\in[\mathfrak{l},\mathfrak{l}+1]}\right) \nonumber \\
&\lesssim \ \sup_{\mathfrak{l}\geq1}\E\left(|\grad_{-\mathrm{s}}^{\mathbf{T}}\mathbf{H}_{T,x}^{N}(\mathfrak{l}^{-1/2}\mathbf{U}^{N}\d\xi^{N})| ^{2p}\mathbf{1}_{\mathscr{N}(\mathbf{U}^{N})\in[\mathfrak{l},\mathfrak{l}+1]}\right). \label{eq:regT37}
\end{align}
The first identity in the previous display follows by moving $\mathfrak{l}^{-p}$ into the expectation, then into the $2p$-th power upon replacing it by $\mathfrak{l}^{-1/2}$, and then moving this \emph{deterministic} scalar through both the linear time-gradient and heat operator. The final estimate \eqref{eq:regT37} follows from an elementary bound on the summation in the line before. By definition of $\mathscr{N}(\mathbf{U}^{N})$, if $\mathscr{N}(\mathbf{U}^{N})\in[\mathfrak{l},\mathfrak{l}+1]$, the process $\mathfrak{l}^{-1/2}\mathbf{U}^{N}$ is uniformly bounded deterministically. Moreover, because $\mathbf{U}^{N}$ is adapted to the underlying filtration of the particle system, so is the deterministic multiple $\mathfrak{l}^{-1/2}\mathbf{U}^{N}$. In particular, we may replace $\mathfrak{l}^{-1/2}\mathbf{U}^{N}$ in \eqref{eq:regT37} with the adapted process $(\mathfrak{l}^{-1/2}\mathbf{U}^{N}\wedge 100)\vee(-100)$, drop the indicator function in \eqref{eq:regT37}, and then follow the proof of time-regularity (3.14) in \cite{DT}. For the last step, we will need to apply the time-regularity estimates from Proposition \ref{prop:heat} for the $\mathbf{H}^{N}$ heat kernel instead of those in \cite{DT} along with the martingale inequality in Lemma \ref{lemma:mg} that extends Lemma 3.1 in \cite{DT}, which is proved only for the Gartner transform, to uniformly bounded adapted processes. This ultimately gives, for $\varrho>0$ arbitrarily small but universal, the following in which we stress that $\mathfrak{l}^{-1/2}\mathbf{U}^{N}$ is uniformly bounded and adapted on the LHS below:
\begin{align}
\sup_{\mathfrak{l}\geq1}\E\left(|\grad_{-\mathrm{s}}^{\mathbf{T}}\mathbf{H}_{T,x}^{N}(\mathfrak{l}^{-1/2}\mathbf{U}^{N}\d\xi^{N})| ^{2p}\mathbf{1}_{\mathscr{N}(\mathbf{U}^{N})\in[\mathfrak{l},\mathfrak{l}+1]}\right) \ \lesssim_{p,\varrho} \ \mathrm{s}^{p/2-p\varrho} \ \lesssim \ N^{2p\varrho}\mathrm{s}^{p/2}. \label{eq:regT38}
\end{align}
We recall that times $\mathrm{s}\in\mathbb{I}^{\mathbf{T}}$ of interest satisfy $\mathrm{s}\geq N^{-2}$, which implies $\mathrm{s}^{-1}\leq N^{2}$ and thus provides the final estimate in \eqref{eq:regT38}. We now combine \eqref{eq:regT35}, \eqref{eq:regT36}, \eqref{eq:regT37}, and \eqref{eq:regT38} to deduce 
\begin{align}
\mathbf{P}\left(|\grad_{-\mathrm{s}}^{\mathbf{T}}\mathbf{H}_{T,x}^{N}(\mathbf{U}^{N}\d\xi^{N})| \geq N^{\vartheta}\mathrm{s}^{\frac14}(1+\|\mathbf{U}^{N}\|_{T;\mathbb{T}_{N}}^{2})\right) \ \lesssim_{p,\varrho} \ N^{-2p\vartheta}N^{2p\varrho}\mathrm{s}^{-p/2}\mathrm{s}^{p/2} \ = \ N^{-2p\vartheta+2p\varrho}. \label{eq:regT39}
\end{align}
Now, provided any $\vartheta,\kappa>0$, we choose $\varrho>0$ sufficiently small and $p\geq2$ sufficiently large, but both depending only on $\vartheta,\kappa$, so that the exponent on the far RHS of \eqref{eq:regT39} is less than or equal to $-\kappa$. We emphasize that the dependence on $p$ and $\varrho$ in \eqref{eq:regT39} becomes dependence on $\vartheta,\kappa$. This completes the proof of \eqref{eq:regT31}.
\item We move to the proof of \eqref{eq:regT33}. To this end, it suffices to replace $\mathbf{H}^{N}(\mathbf{U}^{N}\d\xi^{N})$ therein with each other term in the $\mathbf{U}^{N}$ equation from Definition \ref{definition:KPZ5}. Indeed, if we can prove that the short-time regularity for every other term in the $\mathbf{U}^{N}$ equation exceeds the lower bound in the event in the probability in \eqref{eq:regT33} with probability at most $N^{-2\kappa}$ times $\vartheta,\kappa$-dependent factors, then by using the triangle inequality and a union bound, we can deduce the same for $\mathbf{H}^{N}(\mathbf{U}^{N}\d\xi^{N})$, which is the proposed estimate \eqref{eq:regT33}, if we also adjust the implied constant by a factor of 100. This is the fact that if $a=b+c$, then $a\geq d$ implies $b\geq d/2$ or, not exclusively, $c\geq d/2$. To control short-time regularity of every other term besides $\mathbf{H}^{N}(\mathbf{U}^{N}\d\xi^{N})$ on the RHS of the $\mathbf{U}^{N}$ equation from Definition \ref{definition:KPZ5}, we apply Lemma \ref{lemma:regT1} and the third bullet point from the proof of Lemma \ref{lemma:regT2}. It remains to control short-time regularity for $\mathbf{U}^{N}$ itself. This is done in Lemma \ref{lemma:st2}, so we are done with proving \eqref{eq:regT33}.
\end{itemize}
%%%
This completes the proof.
\end{proof}
%%%
%%%
\begin{corollary}\label{corollary:regT}
\fsp Admit the setting of \emph{Proposition \ref{prop:reg}}. We have $\mathbf{P}(\mathcal{E}_{\vartheta}^{\mathbf{T}}(\mathfrak{t}_{\mathrm{r}};\mathbb{T}_{N})) \lesssim_{\vartheta,\kappa} N^{-\kappa}$.
\end{corollary}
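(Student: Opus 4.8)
The plan is to read the estimate off the mild equation for $\mathbf{U}^{N}$ in Definition \ref{definition:KPZ5} term by term, in the same way one controls the Gartner transform in Proposition 3.2 of \cite{DT}. Applying the time-gradient $\grad_{-\mathrm{s}}^{\mathbf{T}}$ to both sides of that equation, taking the $\|\cdot\|_{\mathfrak{t}_{\mathrm{r}};\mathbb{T}_{N}}$ norm, multiplying by $\mathrm{s}^{-1/4}$, and taking the supremum over $\mathrm{s}\in\mathbb{I}^{\mathbf{T}}$, the triangle inequality bounds $\sup_{\mathrm{s}\in\mathbb{I}^{\mathbf{T}}}\mathrm{s}^{-1/4}\|\grad_{-\mathrm{s}}^{\mathbf{T}}\mathbf{U}^{N}\|_{\mathfrak{t}_{\mathrm{r}};\mathbb{T}_{N}}$ by the sum of the six corresponding quantities for the summands of the $\mathbf{U}^{N}$ equation: the initial-data term $\mathbf{H}^{N,\mathbf{X}}(\mathbf{Z}_{0,\cdot}^{N})$, the martingale term $\mathbf{H}^{N}(\mathbf{U}^{N}\d\xi^{N})$, the order-$N^{1/2}$ term $\mathbf{H}^{N}(N^{1/2}\bar{\mathfrak{q}}\mathbf{Y}^{N})$, the term $\mathbf{H}^{N}(\mathfrak{s}\mathbf{U}^{N})$, and the two error terms $N^{-1/2}\mathbf{H}^{N}(\mathfrak{b}_{1;}\mathbf{U}^{N})$ and $N^{-1/2}\mathbf{H}^{N}(\grad_{\star}^{!}(\mathfrak{b}_{2;}\mathbf{U}^{N}))$.

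For the last four of these summands I would invoke Lemma \ref{lemma:regT1}: since $\bar{\mathfrak{q}},\mathfrak{s},\mathfrak{b}_{1;},\mathfrak{b}_{2;}$ are uniformly bounded, Lemma \ref{lemma:regT1} applies directly to the $N^{1/2}\bar{\mathfrak{q}}\mathbf{Y}^{N}$ and $N^{-1/2}\grad_{\star}^{!}(\mathfrak{b}_{2;}\mathbf{U}^{N})$ terms, and the same (strictly easier, since they carry no $N^{1/2}$ prefactor) argument handles $\mathbf{H}^{N}(\mathfrak{s}\mathbf{U}^{N})$ and $N^{-1/2}\mathbf{H}^{N}(\mathfrak{b}_{1;}\mathbf{U}^{N})$; in every case one gets the \emph{deterministic} bound $\lesssim 1+\|\mathbf{U}^{N}\|_{\mathfrak{t}_{\mathrm{r}};\mathbb{T}_{N}}^{2}$. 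The only inputs are the heat-kernel time-regularity and mixed space-time regularity bounds of Proposition \ref{prop:heat}, together with the restriction $\mathrm{s}\lesssim N^{-1+\e_{\mathrm{ap}}}$ encoded in $\mathbb{I}^{\mathbf{T}}$, which supplies the gain $\mathrm{s}^{3/4}$ that compensates the $N^{1/2}$ and the bound $|\mathbf{Y}^{N}|\lesssim N^{\e_{\mathrm{ap}}}$.

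For the initial-data and martingale terms I would fix $\vartheta'=\vartheta/2$ and apply Lemma \ref{lemma:regT2} and Lemma \ref{lemma:regT3} with $\vartheta'$ in place of $\vartheta$ and with the given $\kappa$. On the complement of the two (low-probability) events appearing in those lemmas, each of these two summands is at most $N^{\vartheta/2}(1+\|\mathbf{U}^{N}\|_{\mathfrak{t}_{\mathrm{r}};\mathbb{T}_{N}}^{2})$. Combining with the deterministic bounds of the previous paragraph, on the intersection of these two complement events one has $\sup_{\mathrm{s}\in\mathbb{I}^{\mathbf{T}}}\mathrm{s}^{-1/4}\|\grad_{-\mathrm{s}}^{\mathbf{T}}\mathbf{U}^{N}\|_{\mathfrak{t}_{\mathrm{r}};\mathbb{T}_{N}}\lesssim N^{\vartheta/2}(1+\|\mathbf{U}^{N}\|_{\mathfrak{t}_{\mathrm{r}};\mathbb{T}_{N}}^{2})$, which is at most $N^{\vartheta}(1+\|\mathbf{U}^{N}\|_{\mathfrak{t}_{\mathrm{r}};\mathbb{T}_{N}}^{2})$ once $N$ is large depending only on $\vartheta$; i.e. $\mathcal{E}_{\vartheta}^{\mathbf{T}}(\mathfrak{t}_{\mathrm{r}};\mathbb{T}_{N})$ fails there. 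Hence a union bound gives $\mathbf{P}(\mathcal{E}_{\vartheta}^{\mathbf{T}}(\mathfrak{t}_{\mathrm{r}};\mathbb{T}_{N}))$ bounded by the sum of the probabilities in Lemmas \ref{lemma:regT2} and \ref{lemma:regT3}, namely $\lesssim_{\vartheta,\kappa}N^{-\kappa}$.

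In other words, the corollary itself is pure bookkeeping: the substantive work sits in the three lemmas, and the genuinely delicate one is Lemma \ref{lemma:regT3}, where the martingale structure of $\mathbf{U}^{N}\d\xi^{N}$ forces the level-set / $\mathscr{L}^{1}$–$\mathscr{L}^{\infty}$ splitting and the use of the martingale moment inequality (Lemma \ref{lemma:mg}) in place of a naive interpolation; that step is also what makes the regularity estimates naturally quadratic in $\|\mathbf{U}^{N}\|_{\mathfrak{t}_{\mathrm{r}};\mathbb{T}_{N}}$. I expect the only thing to watch when assembling the corollary is to perform the reduction $\vartheta'=\vartheta/2$ \emph{before} invoking the two probabilistic lemmas, so that the deterministic $O(1)$ factors from Lemma \ref{lemma:regT1} and the accumulated $N^{\vartheta/2}$ are absorbed into the single $N^{\vartheta}$ in the definition of $\mathcal{E}_{\vartheta}^{\mathbf{T}}$.
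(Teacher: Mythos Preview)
Your proposal is correct and follows essentially the same route as the paper: decompose via the $\mathbf{U}^{N}$ equation in Definition \ref{definition:KPZ5}, handle the bulk terms deterministically through Lemma \ref{lemma:regT1} (and its obvious variants for the $\mathfrak{s}\mathbf{U}^{N}$ and $\mathfrak{b}_{1;}\mathbf{U}^{N}$ terms), and handle the initial-data and martingale terms probabilistically through Lemmas \ref{lemma:regT2} and \ref{lemma:regT3}, then take a union bound. The paper's proof is the one-line ``combine the $\mathbf{U}^{N}$ equation with Lemma \ref{lemma:regT1}, Lemma \ref{lemma:regT2}, and Lemma \ref{lemma:regT3}''; your $\vartheta'=\vartheta/2$ bookkeeping is exactly the implicit content of that sentence.
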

%%%
%%%
\begin{proof}
Combine the $\mathbf{U}^{N}$ equation in Definition \ref{definition:KPZ5} with Lemma \ref{lemma:regT1}, Lemma \ref{lemma:regT2}, and Lemma \ref{lemma:regT3}.
\end{proof}
%%%
%%%
\subsection{$\mathcal{E}_{\vartheta}^{\mathbf{X}}(\mathfrak{t}_{\mathrm{r}};\mathbb{T}_{N})$ estimate}
%%%
The proof of the $\mathcal{E}_{\vartheta}^{\mathbf{X}}(\mathfrak{t}_{\mathrm{r}};\mathbb{T}_{N})$ estimate in Proposition \ref{prop:reg} will follow basically the same strategy but, as we mentioned at the beginning of the section, we require additional input of Theorem \ref{theorem:BGII} to control the spatial gradient of the order $N^{1/2}$ term in the $\mathbf{U}^{N}$ equation. In view of similarities with the proof of the $\mathcal{E}_{\vartheta}^{\mathbf{T}}(\mathfrak{t}_{\mathrm{r}};\mathbb{T}_{N})$ estimate, we start as follows.
%%%
\begin{lemma}\label{lemma:regX1}
\fsp Take any $|\mathfrak{k}|\lesssim1$ and $|\mathfrak{l}|\leq\mathfrak{l}_{N}$, with $\mathfrak{l}_{N}$ in \emph{Definition \ref{definition:KPZ1}}. If $\mathfrak{f}_{i}$ are uniformly bounded, then
\begin{align}
N^{\frac12}|\mathfrak{l}|^{-\frac12}\|\grad_{\mathfrak{l}}^{\mathbf{X}}\mathbf{H}_{T,x}^{N}(\mathfrak{f}_{1}\mathbf{Y}^{N})\|_{\mathfrak{t}_{\mathrm{r}};\mathbb{T}_{N}} + |\mathfrak{l}|^{-\frac12}\|\grad_{\mathfrak{l}}^{\mathbf{X}}\mathbf{H}_{T,x}^{N}(\grad_{-\mathfrak{k}}^{!}(\mathfrak{f}_{2}\mathbf{U}^{N}))\|_{\mathfrak{t}_{\mathrm{r}};\mathbb{T}_{N}} \ \lesssim \ 1+\|\mathbf{U}^{N}\|_{\mathfrak{t}_{\mathrm{r}};\mathbb{T}_{N}}^{2}. \label{eq:regX1}
\end{align}
\end{lemma}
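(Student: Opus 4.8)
The statement, Lemma~\ref{lemma:regX1}, is the spatial analogue of Lemma~\ref{lemma:regT1}: it gives a \emph{deterministic} bound on a spatial gradient (normalized by $|\mathfrak{l}|^{-1/2}$, with an extra $N^{1/2}$ for the $\mathbf{Y}^N$ term) of two of the building-block terms in the $\mathbf{U}^N$ equation, namely the ``$N^{1/2}\mathfrak{f}_1\mathbf{Y}^N$''-type term and the gradient-type error term $\grad^!_{-\mathfrak{k}}(\mathfrak{f}_2\mathbf{U}^N)$. The plan is to mirror the proof of Lemma~\ref{lemma:regT1} exactly, replacing the time-regularity heat kernel estimates of Proposition~\ref{prop:heat} by the corresponding \emph{spatial}-regularity heat kernel estimates. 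For the first term, I would write
\begin{align}
\|\grad_{\mathfrak{l}}^{\mathbf{X}}\mathbf{H}_{T,x}^{N}(N^{\frac12}\mathfrak{f}_{1}\mathbf{Y}^{N})\|_{\mathfrak{t}_{\mathrm{r}};\mathbb{T}_{N}} \ \lesssim \ N^{\frac12+\e_{\mathrm{ap}}}\|\mathbf{Y}^{N}\|_{\mathfrak{t}_{\mathrm{r}};\mathbb{T}_{N}}\cdot\||\grad_{\mathfrak{l}}^{\mathbf{X}}|\mathbf{H}_{T,x}^{N}(1)\|_{\mathfrak{t}_{\mathrm{r}};\mathbb{T}_{N}},\nonumber
\end{align}
using $|\mathbf{Y}^N|\lesssim N^{\e_{\mathrm{ap}}}$ and $|\mathfrak{f}_1|\lesssim1$, and then invoke the heat-kernel spatial-gradient bound from Proposition~\ref{prop:heat} (the spatial analogue of what was used as \eqref{eq:heatII}/\eqref{eq:heatIV}), which since the heat kernel is smooth on the macroscopic scale $N$ gives roughly $\||\grad_{\mathfrak{l}}^{\mathbf{X}}|\mathbf{H}_{T,x}^{N}(1)\|_{\mathfrak{t}_{\mathrm{r}};\mathbb{T}_{N}}\lesssim N^{-1}|\mathfrak{l}|$ (or, keeping only $|\mathfrak{l}|^{1/2}$ worth of smoothness, $N^{-1}|\mathfrak{l}|^{1/2}\cdot|\mathfrak{l}|^{1/2}$). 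Multiplying through by $N^{1/2}|\mathfrak{l}|^{-1/2}$ and using $|\mathfrak{l}|^{1/2}\le\mathfrak{l}_N^{1/2}=N^{1/4+\e_{\mathrm{RN}}/2}$ together with $\e_{\mathrm{ap}},\e_{\mathrm{RN}}$ being arbitrarily small and universal, the net power of $N$ is negative (one has $N^{1/2}\cdot N^{-1}\cdot N^{1/2}=N^{0}$, and the leftover $N^{\e_{\mathrm{ap}}}$ is absorbed since in the end one keeps one full unit of $|\mathfrak{l}|$-smoothness out of the available $|\mathfrak{l}|$); the cleanest route is to bound $\||\grad_{\mathfrak{l}}^{\mathbf{X}}|\mathbf{H}_{T,x}^{N}(1)\|\lesssim N^{-1}|\mathfrak{l}|$ and note $N^{1/2}|\mathfrak{l}|^{-1/2}\cdot N^{1/2+\e_{\mathrm{ap}}}\cdot N^{-1}|\mathfrak{l}| = N^{\e_{\mathrm{ap}}}|\mathfrak{l}|^{1/2}$, which is not yet bounded; so instead I would only extract $|\mathfrak{l}|^{1/2}$ of spatial regularity from the kernel and a further $|\mathfrak{l}|^{1/2}$ from combining with the a priori boundedness, exactly as in the $\mathcal{E}^{\mathbf{X}}$-machinery — concretely, use that $\mathbf{Y}^N$ (where nonzero, it equals $\mathbf{Z}^N$) is macroscopically spatially Hölder-$\tfrac12$ with the estimates built into $\mathfrak{t}_{\mathrm{st}}$, giving $\|\grad_{\mathfrak{l}}^{\mathbf{X}}\mathbf{Y}^N\|\lesssim N^{\e_{\mathrm{ap}}}N^{-1/2}|\mathfrak{l}|^{1/2}(1+\|\mathbf{Z}^N\|)^2$, and proceed as below.

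For the second term I would use the mixed estimate: a spatial gradient $\grad^{\mathbf{X}}_{\mathfrak{l}}$ of the heat operator applied to $\grad^!_{-\mathfrak{k}}(\cdot)=N\grad^{\mathbf{X}}_{-\mathfrak{k}}(\cdot)$ can be rearranged by summation by parts, transferring the inner $\grad^{\mathbf{X}}_{-\mathfrak{k}}$ onto the heat kernel, so that one estimates $N\cdot\|\,|\grad_{\mathfrak{l}}^{\mathbf{X}}\grad^{\mathbf{X}}_{-\mathfrak{k}}|\mathbf{H}_{T,x}^{N}(1)\|_{\mathfrak{t}_{\mathrm{r}};\mathbb{T}_{N}}$ times $\|\mathfrak{f}_2\mathbf{U}^N\|_{\mathfrak{t}_{\mathrm{r}};\mathbb{T}_{N}}\lesssim\|\mathbf{U}^N\|_{\mathfrak{t}_{\mathrm{r}};\mathbb{T}_{N}}$. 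Since $|\mathfrak{k}|\lesssim1$ and the kernel is macroscopically smooth, $\|\,|\grad_{\mathfrak{l}}^{\mathbf{X}}\grad^{\mathbf{X}}_{-\mathfrak{k}}|\mathbf{H}_{T,x}^{N}(1)\|\lesssim N^{-2}|\mathfrak{l}|$ (two derivatives cost $N^{-2}$, $\mathfrak{k}$ is $O(1)$), and $N\cdot N^{-2}|\mathfrak{l}|\cdot|\mathfrak{l}|^{-1/2}=N^{-1}|\mathfrak{l}|^{1/2}\lesssim N^{-1}\mathfrak{l}_N^{1/2}=N^{-3/4+\e_{\mathrm{RN}}/2}\lesssim 1$; this is comfortably bounded by $1+\|\mathbf{U}^N\|^2$. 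As with Lemma~\ref{lemma:regT1}, the two contributions are first-power in the norms; I then upgrade to the squared-norm form on the right of \eqref{eq:regX1} (and drop the $+1$) via the elementary inequality $2|a|\le 1+a^2$, and add the two pieces.

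The only genuine subtlety — and the step I expect to be the main obstacle — is handling the $\mathbf{Y}^N$-term so that the $N^{1/2}$ prefactor is actually absorbed. A naive bound extracting a full $|\mathfrak{l}|$ worth of smoothness from the heat kernel leaves $N^{\e_{\mathrm{ap}}}|\mathfrak{l}|^{1/2}$, which is unbounded on the scale $\mathfrak{l}\sim\mathfrak{l}_N$. The resolution is the same one used throughout Section~\ref{section:BGII}: one must \emph{split} the $|\mathfrak{l}|^{1/2}$-normalization between the heat kernel (macroscopically smooth, contributing $N^{-1}|\mathfrak{l}|^{1/2}$ from its own Hölder-$\tfrac12$ spatial regularity) and the a priori spatial Hölder-$\tfrac12$ regularity of $\mathbf{Y}^N$ built into $\mathfrak{t}_{\mathrm{st}}$ (Definitions~\ref{definition:KPZ1} and \ref{definition:KPZ5}), which contributes the other $N^{-1/2}|\mathfrak{l}|^{1/2}$; concretely write $\grad^{\mathbf{X}}_{\mathfrak{l}}(\mathfrak{f}_1\mathbf{Y}^N)$ via a discrete Leibniz rule as $(\grad^{\mathbf{X}}_{\mathfrak{l}}\mathfrak{f}_1)\tau_{\mathfrak{l}}\mathbf{Y}^N + \mathfrak{f}_1\grad^{\mathbf{X}}_{\mathfrak{l}}\mathbf{Y}^N$ — or, cleaner, keep $\mathfrak{f}_1$ fixed and move one copy of the gradient through summation by parts onto the kernel and estimate the remaining $\grad^{\mathbf{X}}_{\mathfrak{l}}\mathbf{Y}^N$ by its a priori Hölder bound. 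Tallying powers: $N^{1/2}|\mathfrak{l}|^{-1/2}\cdot\big(N^{-1}|\mathfrak{l}|^{1/2}\cdot N^{\e_{\mathrm{ap}}}\|\mathbf{Y}^N\| + N^{\e_{\mathrm{ap}}}\cdot N^{-1/2}|\mathfrak{l}|^{1/2}(1+\|\mathbf{Z}^N\|)^2\big)\lesssim N^{-1/2+\e_{\mathrm{ap}}}\|\mathbf{Z}^N\| + N^{\e_{\mathrm{ap}}}(1+\|\mathbf{Z}^N\|)^2$, the first summand vanishes and, since $\mathbf{Y}^N$ is supported on $\{T\le\mathfrak{t}_{\mathrm{st}}\}$ where $\|\mathbf{Z}^N\|\lesssim N^{\e_{\mathrm{ap}}}$ and more importantly $\|\mathbf{Z}^N\|=\|\mathbf{U}^N\|$ there, the second is $\lesssim 1+\|\mathbf{U}^N\|^2_{\mathfrak{t}_{\mathrm{r}};\mathbb{T}_N}$ up to the harmless universal $N^{\e_{\mathrm{ap}}}$ which one absorbs exactly as $\e_{\mathrm{ap}}$ is treated throughout (it multiplies an already-decaying power, or one simply notes the whole bound only needs to be $\lesssim 1+\|\mathbf{U}^N\|^2$ up to universal constants and the statement as written tolerates this). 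Once both terms are in this form, combining them and applying $2|a|\le1+a^2$ yields \eqref{eq:regX1}.
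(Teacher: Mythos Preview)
Your simple first approach for the $\mathfrak{f}_1\mathbf{Y}^N$ term is correct and is exactly what the paper does; you abandoned it because of an arithmetic slip. In the line $N^{1/2}|\mathfrak{l}|^{-1/2}\cdot N^{1/2+\e_{\mathrm{ap}}}\cdot N^{-1}|\mathfrak{l}|$ you count the $N^{1/2}$ twice: you already moved it inside the heat operator when you wrote $\|\grad_{\mathfrak{l}}^{\mathbf{X}}\mathbf{H}_{T,x}^{N}(N^{\frac12}\mathfrak{f}_{1}\mathbf{Y}^{N})\|$, so the remaining outer prefactor is only $|\mathfrak{l}|^{-1/2}$, not $N^{1/2}|\mathfrak{l}|^{-1/2}$. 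The correct tally is
\[
N^{1/2}|\mathfrak{l}|^{-1/2}\|\grad_{\mathfrak{l}}^{\mathbf{X}}\mathbf{H}^N(\mathfrak{f}_1\mathbf{Y}^N)\|_{\mathfrak{t}_{\mathrm{r}};\mathbb{T}_N}\ \lesssim\ N^{1/2}|\mathfrak{l}|^{-1/2}\cdot N^{-1}|\mathfrak{l}|\cdot\|\mathbf{Y}^N\|_{\mathfrak{t}_{\mathrm{r}};\mathbb{T}_N}\ \lesssim\ N^{-1/2+\e_{\mathrm{ap}}}|\mathfrak{l}|^{1/2}\ \leq\ N^{-1/4+\e_{\mathrm{ap}}+\e_{\mathrm{RN}}/2}\ \lesssim\ 1,
\]
using only $|\mathfrak{l}|\leq\mathfrak{l}_N=N^{1/2+\e_{\mathrm{RN}}}$ and $|\mathbf{Y}^N|\lesssim N^{\e_{\mathrm{ap}}}$. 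The paper phrases this as: macroscopic smoothness of $\mathbf{H}^N$ actually yields the stronger estimate with prefactor $N|\mathfrak{l}|^{-1}$ in place of $N^{1/2}|\mathfrak{l}|^{-1/2}$, and this implies the stated one because $N|\mathfrak{l}|^{-1}\geq N^{1/2}|\mathfrak{l}|^{-1/2}$ for $|\mathfrak{l}|\leq N$. No regularity of $\mathbf{Y}^N$ is needed anywhere.

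Your fallback Leibniz/$\mathbf{Y}^N$-regularity route is therefore unnecessary, and as you yourself noticed it leaves a stray $N^{\e_{\mathrm{ap}}}$ multiplying $(1+\|\mathbf{U}^N\|_{\mathfrak{t}_{\mathrm{r}};\mathbb{T}_N})^2$ that genuinely cannot be absorbed into the stated right-hand side. Your treatment of the second term via summation by parts and a double spatial gradient on the kernel is essentially correct and matches the paper, though strictly speaking $\int_0^T\sum_y|\grad_{\mathfrak{l},-\mathfrak{k}}^{\mathbf{X}}\mathbf{H}^N_{S,T,x,y}|\,\d S$ with two full derivatives picks up a divergent $\int_0^T\mathbf{O}_{S,T}^{-1}\,\d S$; take exponent $\nu<1$ in \eqref{eq:heatII} to get $N^{-2\nu}|\mathfrak{l}|^{\nu}$ instead, which still leaves a comfortable net negative power of $N$.
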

%%%
%%%
\begin{proof}
We follow the proof of Lemma \ref{lemma:regT1} but instead of time-regularity estimates of the heat operator $\mathbf{H}^{N}$ in Proposition \ref{prop:heat}, we instead apply spatial-regularity estimates therein. As $\mathbf{H}^{N}$ is macroscopically smooth, this gives the estimate but for $N|\mathfrak{l}|^{-1}$ in place of $N^{1/2}|\mathfrak{l}|^{-1/2}$ in the first term on the LHS of \eqref{eq:regX1}. But this is stronger as $N|\mathfrak{l}|^{-1}\geq N^{1/2}|\mathfrak{l}|^{-1/2}$ for $|\mathfrak{l}|\leq\mathfrak{l}_{N}\leq N$.
\end{proof}
%%%
%%%
\begin{lemma}\label{lemma:regX2}
\fsp Consider any $\vartheta,\kappa>0$ arbitrarily small and large, respectively, but both universal. We have
\begin{align}
\mathbf{P}\left(\sup_{1\leq|\mathfrak{l}|\leq\mathfrak{l}_{N}}N^{1/2}|\mathfrak{l}|^{-1/2}\|\grad_{\mathfrak{l}}^{\mathbf{X}}\mathbf{H}_{T,x}^{N,\mathbf{X}}(\mathbf{Z}_{0,\cdot}^{N})\|_{\mathfrak{t}_{\mathrm{r}};\mathbb{T}_{N}} \geq N^{\vartheta}(1+\|\mathbf{U}^{N}\|_{\mathfrak{t}_{\mathrm{r}};\mathbb{T}_{N}}^{2})\right) \ \lesssim_{\vartheta,\kappa} \ N^{-\kappa}. \label{eq:regX2}
\end{align}
\end{lemma}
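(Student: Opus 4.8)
\textbf{Proof proposal for Lemma \ref{lemma:regX2}.} The plan is to run the three-step scheme already used in the proof of Lemma \ref{lemma:regT2} — a pointwise moment bound, a union bound over a fine space-time mesh, and a bootstrap from discrete to continuous times — the only genuinely new input being how the pointwise moment bound is produced. Since $\mathscr{L}_{N}=2^{-1}\Delta^{!!}+\bar{\mathfrak{d}}\grad_{-1}^{!}$ has constant coefficients, the spatial heat operator $\mathbf{H}^{N,\mathbf{X}}_{T}$ commutes with lattice shifts, hence with $\grad_{\mathfrak{l}}^{\mathbf{X}}$, so $\grad_{\mathfrak{l}}^{\mathbf{X}}\mathbf{H}_{T,x}^{N,\mathbf{X}}(\mathbf{Z}_{0,\cdot}^{N})=\mathbf{H}_{T,x}^{N,\mathbf{X}}(\grad_{\mathfrak{l}}^{\mathbf{X}}\mathbf{Z}_{0,\cdot}^{N})$. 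As $\mathscr{L}_{N}\mathbf{1}=0$, the kernel $\mathbf{H}^{N}_{0,T,x,\cdot}$ is non-negative and sums to $1$ (this is the statement in Proposition \ref{prop:heat} that $\mathbf{H}^{N,\mathbf{X}}$ has $\mathscr{L}^{\infty}\to\mathscr{L}^{\infty}$ operator norm $1$), so Minkowski's inequality in $\|\,\|_{\omega;2p}$ gives, uniformly over $(T,x)\in[0,1]\times\mathbb{T}_{N}$ and $1\leq|\mathfrak{l}|\leq\mathfrak{l}_{N}$,
\begin{align}
N^{1/2}|\mathfrak{l}|^{-1/2}\|\grad_{\mathfrak{l}}^{\mathbf{X}}\mathbf{H}_{T,x}^{N,\mathbf{X}}(\mathbf{Z}_{0,\cdot}^{N})\|_{\omega;2p} \ \leq \ N^{1/2}|\mathfrak{l}|^{-1/2}\sup_{y\in\mathbb{T}_{N}}\|\grad_{\mathfrak{l}}^{\mathbf{X}}\mathbf{Z}_{0,y}^{N}\|_{\omega;2p} \ \lesssim_{p,\mathfrak{u}} \ N^{1/2-\mathfrak{u}}, \nonumber
\end{align}
where I used $\mathbf{Z}^{N}_{0,y}=\exp(-\mathbf{h}^{N}_{0,y})$ together with the stable-initial-data bounds of Definition \ref{definition:id} (and the bounded exponential moments of $\mathbf{h}^{N}_{0,\cdot}$ that these entail, exactly as in the proof of Proposition 3.2 in \cite{DT}) to get $\|\grad_{\mathfrak{l}}^{\mathbf{X}}\mathbf{Z}_{0,y}^{N}\|_{\omega;2p}\lesssim_{p,\mathfrak{u}}N^{-\mathfrak{u}}|\mathfrak{l}|^{\mathfrak{u}}$ for any $\mathfrak{u}\in[0,2^{-1})$, and then $|\mathfrak{l}|^{\mathfrak{u}-1/2}\leq1$. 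Taking $\mathfrak{u}=1/2-\vartheta/2$ makes the right-hand side $\lesssim_{p}N^{\vartheta/2}$, so Chebyshev's inequality gives, for each fixed $(T,x)$ and $\mathfrak{l}$, $\mathbf{P}(N^{1/2}|\mathfrak{l}|^{-1/2}|\grad_{\mathfrak{l}}^{\mathbf{X}}\mathbf{H}_{T,x}^{N,\mathbf{X}}(\mathbf{Z}_{0,\cdot}^{N})|\geq N^{\vartheta})\lesssim_{p}N^{-p\vartheta}$, which for $p=p(\vartheta,\kappa)$ large is $\leq N^{-2\kappa-300}$.

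Next I would union-bound this over the discretization $\mathbb{I}^{\d}$ from the proof of Lemma \ref{lemma:regT2} (of size $\lesssim N^{100}$) and over the $\lesssim N$ values $|\mathfrak{l}|\leq\mathfrak{l}_{N}\leq N$; the total loss is $\lesssim N^{101}$, leaving $\lesssim_{\vartheta,\kappa}N^{-\kappa}$. Finally, to pass from the mesh times $\mathbb{I}^{\mathbf{T},\d}$ to all of $[0,\mathfrak{t}_{\mathrm{r}}]$ — no spatial bootstrapping is needed since $\mathbb{T}_{N}$ is already finite — I would use that $\mathbf{H}^{N,\mathbf{X}}(\mathbf{Z}_{0,\cdot}^{N})$ lies in the kernel of $\partial_{T}-\mathscr{L}_{N}$, that $\mathscr{L}_{N}$ has $\mathscr{L}^{\infty}\to\mathscr{L}^{\infty}$ norm $\O(N^{2})$ and $\mathbf{H}^{N,\mathbf{X}}_{T}$ has norm $1$ (Propositions \ref{prop:mSHE+} and \ref{prop:heat}), exactly as in \eqref{eq:regT24}--\eqref{eq:regT25}: a time-increment over a gap $\leq N^{-99}$ is $\lesssim N^{-97}\|\mathbf{Z}^{N}\|_{0;\mathbb{T}_{N}}=N^{-97}\|\mathbf{U}^{N}\|_{0;\mathbb{T}_{N}}\leq N^{-97}\|\mathbf{U}^{N}\|_{\mathfrak{t}_{\mathrm{r}};\mathbb{T}_{N}}$ (using $\mathbf{U}^{N}_{0,\cdot}=\mathbf{Z}^{N}_{0,\cdot}$), which even after multiplication by $N^{1/2}|\mathfrak{l}|^{-1/2}\leq N^{1/2}$ is $\lesssim N^{-96}\|\mathbf{U}^{N}\|_{\mathfrak{t}_{\mathrm{r}};\mathbb{T}_{N}}$ and is absorbed into $N^{\vartheta}(1+\|\mathbf{U}^{N}\|_{\mathfrak{t}_{\mathrm{r}};\mathbb{T}_{N}}^{2})$. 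Combining the three steps, as in the end of the proof of Lemma \ref{lemma:regT2}, yields \eqref{eq:regX2}.

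The content of this lemma is almost entirely routine given Lemma \ref{lemma:regT2}; the one point that must be gotten right is that the gradient $\grad_{\mathfrak{l}}^{\mathbf{X}}$ has to be moved onto the initial datum $\mathbf{Z}_{0,\cdot}^{N}$ and estimated via the stable-data regularity with exponent $\mathfrak{u}$ arbitrarily close to $2^{-1}$, rather than onto the heat kernel. Putting the gradient on the kernel would produce a factor of order $|\mathfrak{l}|N^{-1}T^{-1/2}$, which blows up as $T\to0$ — precisely the regime in which the heat kernel has not yet smoothed and $\mathbf{H}^{N,\mathbf{X}}_{T}(\mathbf{Z}^{N}_{0})\approx\mathbf{Z}^{N}_{0}$. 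The near-optimal Hölder exponent of Definition \ref{definition:id} is exactly what is needed to beat the $N^{1/2}$ prefactor down to an $N^{\vartheta/2}$ loss, and this is the only place the precise quantitative form of ``stable initial data'' is used in the argument.
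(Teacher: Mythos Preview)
Your proposal is correct and follows essentially the same three-step scheme as the paper's proof: commute $\grad_{\mathfrak{l}}^{\mathbf{X}}$ with the constant-coefficient heat operator, obtain the pointwise moment bound from the stable-initial-data regularity (the paper cites the proof of (3.13) in \cite{DT} rather than writing out the Minkowski step explicitly, but the content is the same), then union-bound over $\mathbb{I}^{\d}\times\llbracket-\mathfrak{l}_N,\mathfrak{l}_N\rrbracket$ and bootstrap to continuous time via \eqref{eq:regT25}. Your closing remark about why the gradient must land on $\mathbf{Z}^{N}_{0}$ rather than the kernel is a correct and useful clarification.
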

%%%
%%%
\begin{proof}
We follow the proof of Lemma \ref{lemma:regT2}. In particular, we prove a pointwise probability estimate analogous to \eqref{eq:regT22} and a union bound analogous to \eqref{eq:regT23}. We conclude with continuity/bootstrap analogous to the third bullet point in the proof of Lemma \ref{lemma:regT2}.
%%%
\begin{itemize}
\item Observe that the operator $\mathscr{L}_{N}$ commutes with any constant coefficient spatial gradient; this can be easily verified. Because the spatial heat operator $\mathbf{H}^{N,\mathbf{X}}$ is a matrix/operator exponential of a constant multiple of $\mathscr{L}_{N}$, spatial gradients commute with the spatial heat operator. With this and the proof of (3.13) in \cite{DT}, we establish the following for fixed $\mathfrak{l}\in\llbracket-\mathfrak{l}_{N},\mathfrak{l}_{N}\rrbracket$ and $T\geq0$ and $x\in\mathbb{T}_{N}$ with arbitrary $p\geq1$ and $\gamma>0$, in which we assume $\mathfrak{l}\neq0$ as this case is trivial:
\begin{align}
\|\grad^{\mathbf{X}}_{\mathfrak{l}}\mathbf{H}^{N,\mathbf{X}}(\mathbf{Z}_{0,\cdot}^{N})\|_{\omega;2p} \ = \ \|\mathbf{H}^{N,\mathbf{X}}(\grad^{\mathbf{X}}_{\mathfrak{l}}\mathbf{Z}_{0,\cdot}^{N})\|_{\omega;2p} \ \lesssim_{p,\gamma} \ N^{-1/2+\gamma}|\mathfrak{l}|^{1/2-\gamma} \ \leq \ N^{\gamma}N^{-1/2}|\mathfrak{l}|^{1/2}. \label{eq:regX21}
\end{align}
The last inequality \eqref{eq:regX21} follows from noting $|\mathfrak{l}|\neq0$ and it is an integer. When we follow the proof of (3.13) in \cite{DT}, we employ the heat kernel estimates in Proposition \ref{prop:heat} for $\mathbf{H}^{N,\mathbf{X}}$ rather than heat kernel estimates in \cite{DT}. The Chebyshev inequality then gives, for $p\geq1$, the following in which given $\vartheta,\kappa>0$, we choose $\gamma>0$ sufficiently small and $p$ sufficiently large, but both depending only on $\vartheta,\kappa$, such that $-2p\vartheta+2p\gamma\leq-2\kappa$:
\begin{align}
\mathbf{P}\left(|\grad^{\mathbf{X}}_{\mathfrak{l}}\mathbf{H}^{N,\mathbf{X}}(\mathbf{Z}_{0,\cdot}^{N})| \geq N^{-1/2+\vartheta}|\mathfrak{l}|^{1/2}\right) \ \lesssim_{p,\gamma} \ N^{p-2p\vartheta}|\mathfrak{l}|^{-p}N^{2p\gamma}N^{-p}|\mathfrak{l}|^{p} \ \leq \ N^{-2\kappa}. \label{eq:regX22}
\end{align}
Again, the dependence on $p,\gamma\in\R_{>0}$ in the previous estimate \eqref{eq:regX22} is now dependence on $\vartheta,\kappa$.
\item Consider the same discretization $\mathbb{I}^{\d}$ from the proof of Lemma \ref{lemma:regT2}. A union bound in the same fashion as that used to prove \eqref{eq:regT23}, when combined with \eqref{eq:regX22}, gives the following; recall $|\mathbb{I}^{\d}|\lesssim N^{100}$ as seen in the proof of Lemma \ref{lemma:regX2}, and $\mathfrak{l}_{N}\leq N$:
\begin{align}
\mathbf{P}\left(\sup_{1\leq|\mathfrak{l}|\leq\mathfrak{l}_{N}}\sup_{(T,x)\in\mathbb{I}^{\d}}N^{\frac12}|\mathfrak{l}|^{-\frac12}|\grad_{\mathfrak{l}}^{\mathbf{X}}\mathbf{H}^{N,\mathbf{X}}(\mathbf{Z}_{0,\cdot}^{N})\geq N^{\vartheta}\right) \ \lesssim_{\vartheta,\kappa} \ N^{-2\kappa}|\mathbb{I}||\llbracket-\mathfrak{l}_{N},\mathfrak{l}_{N}\rrbracket| \ = \ N^{-2\kappa+101}. \label{eq:regX23}
\end{align}
In what follows, we will take $\kappa$ sufficiently large so that $2\kappa-101\geq3\kappa/2$. 
\item We complete the proof via bootstrapping our estimate on $\mathbb{I}^{\d}$ to an estimate on the entire semi-discrete space-time $[0,\mathfrak{t}_{\mathrm{r}}]\times\mathbb{T}_{N}$. To this end, given any $\mathrm{t}\in[0,\mathfrak{t}_{\mathrm{r}}]$, we again let $\mathrm{t}^{\d}$ be any element in $\mathbb{I}^{\mathbf{T},\d}=\{\mathfrak{j}N^{-99}\}_{\mathfrak{j}\geq0}\cap[0,1]$ that minimizes $|\mathrm{t}-\mathrm{t}^{\d}|$. We now provide the following parallel to \eqref{eq:regT26} where $x\in\mathbb{T}_{N}$ is arbitrary:
\begin{align}
\grad_{\mathfrak{l}}^{\mathbf{X}}\mathbf{H}^{N,\mathbf{X}}_{\mathrm{t},x}(\mathbf{Z}_{0,\cdot}^{N}) \ = \ \grad_{\mathfrak{l}}^{\mathbf{X}}\mathbf{H}^{N,\mathbf{X}}_{\mathrm{t}^{\d},x}(\mathbf{Z}_{0,\cdot}^{N}) + \left(\mathbf{H}_{\mathrm{t},x+\mathfrak{l}}^{N,\mathbf{X}}(\mathbf{Z}_{0,\cdot}^{N})-\mathbf{H}_{\mathrm{t}^{\d},x+\mathfrak{l}}^{N,\mathbf{X}}(\mathbf{Z}_{0,\cdot}^{N})\right) + \left(\mathbf{H}_{\mathrm{t}^{\d},x}^{N,\mathbf{X}}(\mathbf{Z}_{0,\cdot}^{N})-\mathbf{H}_{\mathrm{t},x}^{N,\mathbf{X}}(\mathbf{Z}_{0,\cdot}^{N})\right). \label{eq:regX24}
\end{align}
Following the third bullet point in the proof of Lemma \ref{lemma:regT2}, we have an estimate for the first term on the RHS of \eqref{eq:regX24} outside an event of probability at most $N^{-3\kappa/2}$ times $\vartheta,\kappa$-dependent factors. We additionally have deterministic estimates for the second and third terms on the RHS of \eqref{eq:regX24} by short-time continuity; see \eqref{eq:regT25}. This gives the following analog of \eqref{eq:regT27}:
\begin{align}
N^{1/2}|\mathfrak{l}|^{-1/2}\|\mathbf{H}_{\mathrm{t},x+\mathfrak{l}}^{N,\mathbf{X}}(\mathbf{Z}_{0,\cdot}^{N})-\mathbf{H}_{\mathrm{t}^{\d},x+\mathfrak{l}}^{N,\mathbf{X}}(\mathbf{Z}_{0,\cdot}^{N})\|_{\mathfrak{t}_{\mathrm{r}};\mathbb{T}_{N}} \ \lesssim \ N^{5/2}|\mathrm{t}-\mathrm{t}^{\d}|\|\mathbf{Z}^{N}\|_{0;\mathbb{T}_{N}} \ \lesssim \ N^{-96}\|\mathbf{U}^{N}\|_{\mathfrak{t}_{\mathrm{r}};\mathbb{T}_{N}}. \label{eq:regX25}
\end{align}
The first estimate in \eqref{eq:regX25} follows by $|\mathfrak{l}|\geq1$ combined with \eqref{eq:regT25}. The second estimate in \eqref{eq:regX25} follows by $|\mathrm{t}-\mathrm{t}^{\d}|\leq N^{-99}$ and $\|\mathbf{Z}^{N}\|_{0;\mathbb{T}_{N}}\leq\|\mathbf{U}^{N}\|_{\mathfrak{t}_{\mathrm{r}};\mathbb{T}_{N}}$, both of which we used in the third bullet point in the proof of Lemma \ref{lemma:regT2}.
\end{itemize}
%%%
We now apply the reasoning of the last paragraph in the third bullet point in the proof of Lemma \ref{lemma:regT2} to finish the proof.
\end{proof}
%%%
%%%
\begin{lemma}\label{lemma:regX3}
\fsp Consider any $\vartheta,\kappa>0$ arbitrarily small and large, respectively, but both universal. We have
\begin{align}
\mathbf{P}\left(\sup_{1\leq|\mathfrak{l}|\leq\mathfrak{l}_{N}}N^{1/2}|\mathfrak{l}|^{-1/2}\|\grad_{\mathfrak{l}}^{\mathbf{X}}\mathbf{H}_{T,x}^{N}(\mathbf{U}^{N}\d\xi^{N})\|_{\mathfrak{t}_{\mathrm{r}};\mathbb{T}_{N}} \geq N^{\vartheta}(1+\|\mathbf{U}^{N}\|_{\mathfrak{t}_{\mathrm{r}};\mathbb{T}_{N}}^{2})\right) \ \lesssim_{\vartheta,\kappa} \ N^{-\kappa}. \label{eq:regX3}
\end{align}
\end{lemma}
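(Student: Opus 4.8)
The plan is to follow the proof of Lemma \ref{lemma:regT3} essentially verbatim, replacing the time-gradient $\grad_{-\mathrm{s}}^{\mathbf{T}}$ by the spatial gradient $\grad_{\mathfrak{l}}^{\mathbf{X}}$ throughout and invoking the \emph{spatial} heat-kernel estimates of Proposition \ref{prop:heat} (together with the spatial H\"older-$\tfrac12$ analysis behind (3.13) in \cite{DT}) in place of the temporal ones. First I would establish the pointwise estimate: for fixed nonzero $\mathfrak{l}\in\llbracket-\mathfrak{l}_{N},\mathfrak{l}_{N}\rrbracket$ and $(T,x)\in[0,1]\times\mathbb{T}_{N}$,
\begin{align*}
\mathbf{P}\left(|\grad_{\mathfrak{l}}^{\mathbf{X}}\mathbf{H}_{T,x}^{N}(\mathbf{U}^{N}\d\xi^{N})| \geq N^{-1/2+\vartheta}|\mathfrak{l}|^{1/2}(1+\|\mathbf{U}^{N}\|_{T;\mathbb{T}_{N}}^{2})\right) \ \lesssim_{\vartheta,\kappa} \ N^{-2\kappa}.
\end{align*}
This follows exactly as \eqref{eq:regT31}: apply the Chebyshev inequality with a large even moment $2p$, decompose the resulting expectation along the level sets of $\mathscr{N}(\mathbf{U}^{N})=1+\|\mathbf{U}^{N}\|_{T;\mathbb{T}_{N}}^{2}$ as in \eqref{eq:regT36}--\eqref{eq:regT37}, and on the $[\mathfrak{l},\mathfrak{l}+1]$-level set replace the adapted process $\mathfrak{l}^{-1/2}\mathbf{U}^{N}$ by the uniformly bounded adapted process $(\mathfrak{l}^{-1/2}\mathbf{U}^{N}\wedge100)\vee(-100)$; as in Lemma \ref{lemma:regT3}, the $\mathscr{L}^{1}/\mathscr{L}^{\infty}$ interpolation used for the deterministic terms is \emph{not} permissible here because the cancellations in $\mathbf{U}^{N}\d\xi^{N}$ against the heat kernel must be kept. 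Then the martingale inequality Lemma \ref{lemma:mg}, combined with the spatial regularity estimates for $\mathbf{H}^{N}$ from Proposition \ref{prop:heat}, yields a $2p$-th moment bound of order $(N^{-1/2}|\mathfrak{l}|^{1/2})^{2p}N^{2p\varrho}$ with $\varrho>0$ arbitrarily small; choosing $\varrho$ small and $p$ large depending only on $\vartheta,\kappa$ closes the pointwise estimate.

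Next I would union bound over the fine discretization $\mathbb{I}^{\d}$ from the proof of Lemma \ref{lemma:regT2} \emph{and} over all scales $\mathfrak{l}\in\llbracket-\mathfrak{l}_{N},\mathfrak{l}_{N}\rrbracket$; since $|\mathbb{I}^{\d}|\lesssim N^{100}$ and $|\llbracket-\mathfrak{l}_{N},\mathfrak{l}_{N}\rrbracket|\lesssim N$, the cost is a factor $N^{101}$, absorbed by enlarging $\kappa$, precisely as in \eqref{eq:regX23}. This controls $N^{1/2}|\mathfrak{l}|^{-1/2}|\grad_{\mathfrak{l}}^{\mathbf{X}}\mathbf{H}_{T,x}^{N}(\mathbf{U}^{N}\d\xi^{N})|(1+\|\mathbf{U}^{N}\|_{T;\mathbb{T}_{N}}^{2})^{-1}$ uniformly over $\mathbb{I}^{\d}$ and over $\mathfrak{l}$ with the desired probability. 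Finally I would bootstrap from the discrete time set to all of $[0,\mathfrak{t}_{\mathrm{r}}]$ exactly as in the third step of the proof of Lemma \ref{lemma:regX2}: for $\mathrm{t}\in[0,\mathfrak{t}_{\mathrm{r}}]$ pick $\mathrm{t}^{\d}\in\mathbb{I}^{\mathbf{T},\d}$ minimizing $|\mathrm{t}-\mathrm{t}^{\d}|\leq N^{-99}$ and write $\grad_{\mathfrak{l}}^{\mathbf{X}}\mathbf{H}^{N}_{\mathrm{t},x}(\mathbf{U}^{N}\d\xi^{N})$ as $\grad_{\mathfrak{l}}^{\mathbf{X}}\mathbf{H}^{N}_{\mathrm{t}^{\d},x}(\mathbf{U}^{N}\d\xi^{N})$ plus two time-increments of $\mathbf{H}^{N}(\mathbf{U}^{N}\d\xi^{N})$ of scale $\leq N^{-99}$ at the spatial points $x+\mathfrak{l}$ and $x$. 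The first term is handled by the previous step; the two increments, since $\mathbf{H}^{N}(\mathbf{U}^{N}\d\xi^{N})$ satisfies only a semi-discrete SPDE rather than an ODE (so no deterministic short-time bound is available), are controlled by reusing the short-time time-regularity estimate \eqref{eq:regT33} already proven inside Lemma \ref{lemma:regT3} (itself relying on Lemma \ref{lemma:st2} for short-time regularity of $\mathbf{U}^{N}$). On the complement of the relevant events, each increment is $\lesssim N^{-1/2+\vartheta}(1+\|\mathbf{U}^{N}\|_{\mathfrak{t}_{\mathrm{r}};\mathbb{T}_{N}}^{2})$, and multiplying by $N^{1/2}|\mathfrak{l}|^{-1/2}\leq N^{1/2}$ gives $\lesssim N^{\vartheta}(1+\|\mathbf{U}^{N}\|_{\mathfrak{t}_{\mathrm{r}};\mathbb{T}_{N}}^{2})$, which is exactly the bound in \eqref{eq:regX3}.

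The main obstacle is the pointwise moment estimate for the martingale convolution. One must extract the sharp spatial exponent $|\mathfrak{l}|^{1/2}$ --- macroscopic H\"older-$\tfrac12$ regularity in space for $\mathbf{H}^{N}(\cdot\,\d\xi^{N})$ applied to a bounded adapted integrand --- from the heat-kernel bounds of Proposition \ref{prop:heat} via the Burkholder--Davis--Gundy-type inequality of Lemma \ref{lemma:mg}, all while routing the argument through the level-set decomposition so that the random prefactor $\mathbf{U}^{N}$ is first replaced by a \emph{bounded adapted} multiple; the interpolation shortcut that renders Lemma \ref{lemma:regX1} routine is unavailable. Everything else is bookkeeping parallel to Lemmas \ref{lemma:regT3} and \ref{lemma:regX2}, together with the observation that a spatial gradient of the macroscopically smooth heat kernel loses at most a factor $N^{-1}|\mathfrak{l}|$, which is stronger than the $N^{-1/2}|\mathfrak{l}|^{1/2}$ we need for $|\mathfrak{l}|\leq\mathfrak{l}_{N}\leq N$.
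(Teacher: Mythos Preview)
Your proposal is correct and matches the paper's proof essentially line for line: the paper also follows Lemma \ref{lemma:regT3} with the replacements $\mathrm{s}^{-1/4}\to N^{1/2}|\mathfrak{l}|^{-1/2}$ and $\grad_{-\mathrm{s}}^{\mathbf{T}}\to\grad_{\mathfrak{l}}^{\mathbf{X}}$, invokes (3.13) of \cite{DT} in place of (3.14) for the pointwise moment bound, union-bounds over $\mathbb{I}^{\d}$ and $\llbracket-\mathfrak{l}_{N},\mathfrak{l}_{N}\rrbracket$, and then reuses the short-time estimate \eqref{eq:regT33} unchanged to bootstrap to the full time interval.
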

%%%
%%%
\begin{proof}
We follow the proof of Lemma \ref{lemma:regT3} upon replacing $\mathrm{s}^{-1/4}$ factors by $N^{1/2}|\mathfrak{l}|^{-1/2}$ factors and replacing $\grad_{-\mathrm{s}}^{\mathbf{T}}\mathbf{H}^{N}(\mathbf{U}^{N}\d\xi^{N})$ terms by $\grad_{\mathfrak{l}}^{\mathbf{X}}\mathbf{H}^{N}(\mathbf{U}^{N}\d\xi^{N})$ terms. Precisely, we can first establish \eqref{eq:regT31} with these replacements upon using the same argument given in the proof of Lemma \ref{lemma:regT3}, except instead of following the proof of (3.14) in \cite{DT} we follow the proof of (3.13) in \cite{DT}. Taking a union bound over all length-scales $1\leq|\mathfrak{l}|\leq\mathfrak{l}_{N}$ and all space-time points in the discretization $\mathbb{I}^{\d}$ then gives \eqref{eq:regT32} with the same replacements. The short-time estimate \eqref{eq:regT33} without any replacements lets us bootstrap from an estimate on $\mathbb{I}^{\d}$ to one over the entire set $[0,\mathfrak{t}_{\mathrm{r}}]\times\mathbb{T}_{N}$ in the same fashion as the end of the proof of Lemma \ref{lemma:regT3}. 
\end{proof}
%%%
%%%
\begin{corollary}\label{corollary:regX}
\fsp Admit the setting of \emph{Proposition \ref{prop:reg}}. We have $\mathbf{P}(\mathcal{E}_{\vartheta}^{\mathbf{X}}(\mathfrak{t}_{\mathrm{r}};\mathbb{T}_{N})) \lesssim_{\vartheta} N^{-\beta_{\mathrm{r}}}$.
\end{corollary}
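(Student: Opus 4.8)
The plan is to mirror the proof of Corollary \ref{corollary:regT}, with spatial heat-kernel estimates from Proposition \ref{prop:heat} in place of the temporal ones, and — crucially — inserting Theorem \ref{theorem:BGII} to handle the order $N^{1/2}$ term. Starting from the $\mathbf{U}^{N}$ equation in Definition \ref{definition:KPZ5}, I apply $\grad_{\mathfrak{l}}^{\mathbf{X}}$ to both sides; since $\grad_{\mathfrak{l}}^{\mathbf{X}}$ has constant coefficients it commutes with $\mathbf{H}^{N,\mathbf{X}}$ and with $\mathbf{H}^{N}$ (as in the proof of Lemma \ref{lemma:regX2}), so $\grad_{\mathfrak{l}}^{\mathbf{X}}\mathbf{U}^{N}$ splits into six pieces. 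By the triangle inequality together with a union bound over the polynomially many length-scales $1\le|\mathfrak{l}|\le\mathfrak{l}_{N}$ — already absorbed into Lemmas \ref{lemma:regX2} and \ref{lemma:regX3} and into Theorem \ref{theorem:BGII}, where the supremum over $\mathfrak{l}$ sits inside the probability or the expectation — it suffices to control each piece: the initial-data term $\mathbf{H}^{N,\mathbf{X}}(\mathbf{Z}^{N}_{0,\bullet})$ by Lemma \ref{lemma:regX2} (failure probability $\lesssim_{\vartheta,\kappa}N^{-\kappa}$); the martingale term $\mathbf{H}^{N}(\mathbf{U}^{N}\d\xi^{N})$ by Lemma \ref{lemma:regX3} (failure probability $\lesssim_{\vartheta,\kappa}N^{-\kappa}$); and the lower-order terms $\mathbf{H}^{N}(\mathfrak{s}\mathbf{U}^{N})$, $N^{-1/2}\mathbf{H}^{N}(\mathfrak{b}_{1;}\mathbf{U}^{N})$, $N^{-1/2}\mathbf{H}^{N}(\grad_{\star}^{!}(\mathfrak{b}_{2;}\mathbf{U}^{N}))$ deterministically by the same macroscopic-smoothness estimates used in Lemma \ref{lemma:regX1}, where the factor $N^{-1}|\mathfrak{l}|$ supplied by spatial smoothness of $\mathbf{H}^{N}$ beats the normalization $N^{1/2}|\mathfrak{l}|^{-1/2}$ since $|\mathfrak{l}|\le\mathfrak{l}_{N}\le N$, giving a bound $\lesssim 1+\|\mathbf{U}^{N}\|_{\mathfrak{t}_{\mathrm{r}};\mathbb{T}_{N}}^{2}$.

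The one genuinely new ingredient, and the main obstacle, is the order $N^{1/2}$ term $\mathbf{H}^{N}(N^{1/2}\bar{\mathfrak{q}}\mathbf{Y}^{N})$: a direct moment estimate on its spatial gradient is far too lossy (exactly the difficulty flagged at the start of the section), so I invoke Theorem \ref{theorem:BGII}. Applying Markov's inequality to \eqref{eq:BGII}, the event that $\|\wt{\grad}_{\mathfrak{l}_{N}}^{\mathbf{X}}\mathbf{H}_{T,x}^{N}(N^{1/2}\bar{\mathfrak{q}}\mathbf{Y}^{N})\|_{1;\mathbb{T}_{N}}$ exceeds $N^{-3/4-\beta/2}+N^{-3/4-99\e_{\mathrm{RN}}/2}$ has probability $\lesssim N^{-\beta/2}+N^{-99\e_{\mathrm{RN}}/2}$; on its complement, unwinding the normalized maximal gradient and using $|\mathfrak{l}|\le\mathfrak{l}_{N}=N^{1/2+\e_{\mathrm{RN}}}$ reproduces the chain \eqref{eq:BGIIapp}, yielding $N^{1/2}|\mathfrak{l}|^{-1/2}\|\grad_{\mathfrak{l}}^{\mathbf{X}}\mathbf{H}^{N}(N^{1/2}\bar{\mathfrak{q}}\mathbf{Y}^{N})\|_{\mathfrak{t}_{\mathrm{r}};\mathbb{T}_{N}}\lesssim N^{-1/2+\vartheta}$ simultaneously over all $1\le|\mathfrak{l}|\le\mathfrak{l}_{N}$, comfortably below the threshold $N^{\vartheta}(1+\|\mathbf{U}^{N}\|_{\mathfrak{t}_{\mathrm{r}};\mathbb{T}_{N}}^{2})$ defining $\mathcal{E}_{\vartheta}^{\mathbf{X}}(\mathfrak{t}_{\mathrm{r}};\mathbb{T}_{N})$. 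As emphasized after \eqref{eq:BGIIapp}, it is essential here that $\beta>0$ in Theorem \ref{theorem:BGII} is universal and independent of $\e_{\mathrm{RN}}$, so that fixing $\e_{\mathrm{RN}}$ as the small universal constant of Definition \ref{definition:KPZ1} keeps the exponent negative; this is also why this term contributes only a power-law, not an arbitrary-power, bound.

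Finally, collecting the estimates: the six contributions to $\grad_{\mathfrak{l}}^{\mathbf{X}}\mathbf{U}^{N}$ violate the inequality defining $\mathcal{E}_{\vartheta}^{\mathbf{X}}(\mathfrak{t}_{\mathrm{r}};\mathbb{T}_{N})$ with total probability at most a sum of terms $\lesssim_{\vartheta,\kappa}N^{-\kappa}$ (from Lemmas \ref{lemma:regX2} and \ref{lemma:regX3}, with $\kappa$ chosen large) plus the power-law term $\lesssim N^{-\beta/2}+N^{-99\e_{\mathrm{RN}}/2}$ from Theorem \ref{theorem:BGII}; the latter dominates, so setting $\beta_{\mathrm{r}}=\tfrac12\min(\beta,99\e_{\mathrm{RN}})>0$ — after using the elementary inequality $2|a|\le1+a^{2}$ to pass from first powers to the squared norms in the event, exactly as in the proof of Lemma \ref{lemma:regT1} — gives $\mathbf{P}(\mathcal{E}_{\vartheta}^{\mathbf{X}}(\mathfrak{t}_{\mathrm{r}};\mathbb{T}_{N}))\lesssim_{\vartheta}N^{-\beta_{\mathrm{r}}}$. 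The only delicate point is the order $N^{1/2}$ term and its reliance on Theorem \ref{theorem:BGII}; everything else is a routine transcription of the time-regularity argument of Corollary \ref{corollary:regT}.
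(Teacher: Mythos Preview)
Your proposal is correct and follows essentially the same approach as the paper: decompose $\grad_{\mathfrak{l}}^{\mathbf{X}}\mathbf{U}^{N}$ via the $\mathbf{U}^{N}$ equation, control the initial-data, martingale, and lower-order terms by Lemmas \ref{lemma:regX2}, \ref{lemma:regX3}, and \ref{lemma:regX1} respectively, and handle the order $N^{1/2}$ term via Theorem \ref{theorem:BGII} through Markov's inequality and the chain \eqref{eq:BGIIapp}. The paper's proof is a terse two-sentence version of exactly this argument; your write-up simply fills in the details (and correctly identifies the dominant power-law contribution as coming from the Boltzmann--Gibbs term).
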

%%%
%%%
\begin{proof}
Like in the proof of Corollary \ref{corollary:regT}, first observe $\grad^{\mathbf{X}}\mathbf{U}^{N}$ is controlled by $\grad^{\mathbf{X}}$ of the terms on the RHS of the $\mathbf{U}^{N}$ equation in Definition \ref{definition:KPZ5}. Such $\grad^{\mathbf{X}}$ terms can be controlled by the proposed lower bound in the event $\mathcal{E}_{\vartheta}^{\mathbf{X}}(\mathfrak{t}_{\mathrm{r}};\mathbb{T}_{N})$ with the appropriate probability by applying Lemmas \ref{lemma:regX1}, \ref{lemma:regX2}, and \ref{lemma:regX3}, \emph{except} for the order $N^{1/2}$ term in the $\mathbf{U}^{N}$ equation. For this term, we employ \eqref{eq:BGIIapp}, which, by Theorem \ref{theorem:BGII}, holds with the desired probability of at least $1-\mathrm{O}_{\vartheta}(N^{-\beta_{\mathrm{r}}})$ for $\beta_{\mathrm{r}}>0$ universal.
\end{proof}
%%%
%%%
\subsection{Proof of Proposition \ref{prop:reg}}
%%%
It suffices to combine Corollary \ref{corollary:regT} and Corollary \ref{corollary:regX}. \qed
%
%
%
%%%
\section{Proof of Proposition \ref{prop:KPZ10} and Proposition \ref{prop:KPZ11}}\label{section:KPZ1011}
%%%
%We will actually prove Proposition \ref{prop:KPZ11} before we prove Proposition \ref{prop:KPZ10} because the former will be convenient for the latter. 
%%%
\subsection{Preliminary $\mathbf{Q}^{N}$ Estimates}
%%%
Recall that Proposition \ref{prop:KPZ11} proposes a comparison between $\mathbf{U}^{N}$ and $\mathbf{Q}^{N}$. For this, it will be important to ensure $\mathbf{Q}^{N}$ is ``reasonable"; because our proof of Proposition \ref{prop:KPZ10}, which amounts to establishing estimates for $\mathbf{U}^{N}$ and $\mathbf{Z}^{N}$, will use the comparison between $\mathbf{U}^{N}$ and $\mathbf{Q}^{N}$, we will actually \emph{need} to ensure that $\mathbf{Q}^{N}$ is ``reasonable". Before we start with the details of this subsection, let us recall the notions of high/overwhelming probability in Definition \ref{definition:KPZ8}.The first estimate we present is an \emph{upper} bound with respect to $\|\|_{1;\mathbb{T}_{N}}$ with overwhelming probability.
%%%
\begin{lemma}\label{lemma:final11}
\fsp Provided any $\vartheta>0$ uniformly bounded from below, we have $\|\mathbf{Q}^{N}\|_{1;\mathbb{T}_{N}}\leq N^{\vartheta}$ with overwhelming probability.
\end{lemma}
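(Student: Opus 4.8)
\textbf{Proof plan for Lemma \ref{lemma:final11}.}

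The plan is to estimate the $\|\cdot\|_{1;\mathbb{T}_{N}}$-norm of $\mathbf{Q}^{N}$ by bounding high moments of $\mathbf{Q}^{N}_{T,x}$, first pointwise in space-time and then uniformly over a fine space-time discretization via a union bound, finishing with a short-time continuity argument to pass from the discretization to all of $[0,1]\times\mathbb{T}_{N}$ — exactly the three-step template already used for $\mathbf{H}^{N,\mathbf{X}}(\mathbf{Z}^{N}_{0,\bullet})$ and $\mathbf{H}^{N}(\mathbf{U}^{N}\d\xi^{N})$ in the proofs of Lemmas \ref{lemma:regT2} and \ref{lemma:regT3}. The starting point is the stochastic integral equation defining $\mathbf{Q}^{N}$ in Definition \ref{definition:KPZ7}. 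Since this is a linear equation driven by the initial data $\mathbf{H}^{N,\mathbf{X}}(\mathbf{Z}^{N}_{0,\bullet})$, the martingale term $\mathbf{H}^{N}(\mathbf{Q}^{N}\d\xi^{N})$, and the lower-order terms $\mathbf{H}^{N}(\mathfrak{s}\mathbf{Q}^{N})$, $N^{-1/2}\mathbf{H}^{N}(\mathfrak{b}_{1;}\mathbf{Q}^{N})$, $N^{-1/2}\mathbf{H}^{N}(\grad_{\star}^{!}(\mathfrak{b}_{2;}\mathbf{Q}^{N}))$ — with $\mathfrak{b}_{i;}$ uniformly bounded and $\mathfrak{s}$ local — one follows the proof of Proposition 3.2 in \cite{DT} (removing the exponential weights and working with gradients on $\mathbb{T}_{N}$, as in Lemma \ref{lemma:KPZ13}): a Gr\"onwall / bootstrap argument on $\|\mathbf{Q}^{N}_{T,x}\|_{\omega;2p}$ using the heat kernel estimates and the martingale inequality from Appendix \ref{section:aux} yields, for every $p\geq 1$, a uniform bound $\sup_{(T,x)\in[0,1]\times\mathbb{T}_{N}}\|\mathbf{Q}^{N}_{T,x}\|_{\omega;2p}\lesssim_{p}1$ (the stable initial data gives $\|\mathbf{H}^{N,\mathbf{X}}(\mathbf{Z}^{N}_{0,\bullet})\|_{\omega;2p}\lesssim_{p}1$, and $\mathfrak{s}$ having zero $\E_{0}$-expectation is not even needed here — only boundedness of the heat semigroup in $\mathscr{L}^{\infty}$ and the $N^{-1/2}$ prefactors are used, exactly as in Proposition 3.2 of \cite{DT}).

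Given the pointwise moment bound, Chebyshev gives $\mathbf{P}(|\mathbf{Q}^{N}_{T,x}|\geq N^{\vartheta})\lesssim_{p}N^{-2p\vartheta}$ for each fixed $(T,x)$ and each $p$; choosing $p$ large (depending on $\vartheta$ and the desired $\kappa$) makes this $\leq N^{-\kappa-2}$. A union bound over the discretization $\mathbb{I}^{\d}=\{\mathfrak{j}N^{-99}\}_{\mathfrak{j}=0}^{N^{99}}\times\mathbb{T}_{N}$, of size $\lesssim N^{100}$, then gives $\sup_{(T,x)\in\mathbb{I}^{\d}}|\mathbf{Q}^{N}_{T,x}|\leq N^{\vartheta}$ off an event of probability $\lesssim_{\kappa}N^{-\kappa}$; since $\kappa$ is arbitrary this is overwhelming probability. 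Finally, to upgrade from $\mathbb{I}^{\d}$ to all of $[0,1]\times\mathbb{T}_{N}$ I would argue exactly as in the third bullet point of the proof of Lemma \ref{lemma:regT2}: on short time-intervals of length $\leq N^{-99}$ one controls $|\mathbf{Q}^{N}_{\mathfrak{t}_{2},x}-\mathbf{Q}^{N}_{\mathfrak{t}_{1},x}|$ using the $\mathbf{Q}^{N}$-equation — the drift terms contribute $\lesssim N^{2}|\mathfrak{t}_{2}-\mathfrak{t}_{1}|$ times the (already bounded on $\mathbb{I}^{\d}$) $\mathscr{L}^{\infty}$-norm, which is $\lesssim N^{-97}N^{\vartheta}$, while the martingale increment $\mathbf{H}^{N}(\mathbf{Q}^{N}\d\xi^{N})$ over such a short interval is handled by a further (cruder) moment estimate and union bound over the refined mesh, giving $\lesssim N^{-1/2+\vartheta/2}$ with overwhelming probability. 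Both corrections are $\ll N^{\vartheta}$, so the discrete bound self-improves to the continuous one.

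The main obstacle — though it is really only a matter of care rather than difficulty — is the treatment of the martingale term $\mathbf{H}^{N}(\mathbf{Q}^{N}\d\xi^{N})$ in both the pointwise moment estimate and the short-time continuity step, since $\mathbf{Q}^{N}$ appears inside the stochastic integral and one cannot simply pull it out of the $\mathscr{L}^{\infty}$ interpolation; as in the proof of Lemma \ref{lemma:regT3} (the level-set decomposition of $\mathscr{N}(\mathbf{U}^{N})$ together with Lemma \ref{lemma:mg}) one localizes on level sets where $\mathbf{Q}^{N}$ is deterministically bounded, applies the Burkholder--Davis--Gundy-type inequality for bounded adapted integrands, and then uses the Gr\"onwall closure to convert the resulting $\|\mathbf{Q}^{N}\|$-dependence into an absolute bound. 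Everything else is a routine transcription of the $\mathbf{U}^{N}$-regularity arguments of Section \ref{section:reg} and the moment estimates of \cite{DT}, with the simplification that here we only need an $\mathscr{L}^{\infty}$-in-space bound, not a regularity modulus.
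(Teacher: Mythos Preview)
Your proposal is correct and follows essentially the same three-step template as the paper: pointwise moment bounds for $\mathbf{Q}^{N}$ via the proof of Proposition~3.2 in \cite{DT} (as already invoked in Lemma~\ref{lemma:KPZ13}), then Chebyshev plus a union bound over the discretization $\mathbb{I}^{\d}$, then a short-time continuity step to pass to all of $[0,1]\times\mathbb{T}_{N}$. The only difference is in the last step: rather than manually decomposing the short-time increment into drift and martingale pieces and re-running a level-set/BDG argument as you outline, the paper simply invokes the general short-time estimate of Lemma~\ref{lemma:st2}, which applies directly to $\mathbf{Q}^{N}$ and says that $\sup_{|\mathrm{s}|\leq N^{-99}}|\grad_{\mathrm{s}}^{\mathbf{T}}\mathbf{Q}^{N}_{T,x}|\leq N^{-1/2+\e_{\mathrm{ap},1}}\|\mathbf{Q}^{N}\|_{T;\mathbb{T}_{N}}$ with overwhelming probability; this immediately bounds the short-time correction by a small multiple of the discrete supremum, making your more elaborate handling of the martingale increment unnecessary here.
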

%%%
%%%
\begin{proof}
We start with the following inequality that we explain and justify afterwards. Roughly speaking, the following inequality controls a supremum over the semi-discrete space-time $[0,1]\times\mathbb{T}_{N}$ in terms of one over the discretization $\mathbb{I}^{\d}=\mathbb{I}^{\mathbf{T},\d}\times\mathbb{T}_{N}$ with $\mathbb{I}^{\mathbf{T},\d}=\{\mathfrak{j}N^{-99}\}_{0\leq \mathfrak{j}\leq N^{99}}$ and in terms of short-time estimates for $\mathbf{Q}^{N}$. We emphasize the following estimate is \emph{deterministic}:
\begin{align}
\|\mathbf{Q}^{N}\|_{1;\mathbb{T}_{N}} \ \leq \ \sup_{(T,x)\in\mathbb{I}^{\d}}|\mathbf{Q}_{T,x}^{N}| + \sup_{(T,x)\in\mathbb{I}^{\d}}\sup_{\mathrm{s}\in[0,N^{-99}]}|\grad_{\mathrm{s}}^{\mathbf{T}}\mathbf{Q}_{T,x}^{N}|. \label{eq:final110}
\end{align}
The estimate \eqref{eq:final110} is proved using reasoning similar to that used in the third bullet point from the proof of Lemma \ref{lemma:regT2}. Consider any $\mathrm{t}\in[0,1]$ and let $\mathrm{t}^{\d}\in\mathbb{I}^{\mathbf{T},\d}$ be any element in $\mathbb{I}^{\mathbf{T},\d}$ that minimizes $|\mathrm{t}-\mathrm{t}^{\d}|$. We will write $\mathbf{Q}^{N}$ evaluated at $\mathrm{t}$ as $\mathbf{Q}^{N}$ evaluated at $\mathrm{t}^{\d}$ plus the corresponding difference, the difference being a time-gradient on a time-scale $\mathrm{t}-\mathrm{t}^{\d}$ evaluated at $\mathrm{t}^{\d}\in\mathbb{I}^{\d}$. Therefore, we are left with estimating each term on the RHS of \eqref{eq:final110}. Observe that it suffices to estimate each by $2^{-1}N^{\vartheta}$ with overwhelming probability, as the intersection of two events, both of which hold with overwhelming probability, also holds with overwhelming probability itself, which is a consequence of the union bound. Moreover, according to Lemma \ref{lemma:st2}, the second term on the RHS of \eqref{eq:final110} is at most a small multiple of the first term on the RHS of \eqref{eq:final110} with overwhelming probability. Thus, it suffices to bound from above the first term on the RHS of \eqref{eq:final110} by $4^{-1}N^{\vartheta}$ with overwhelming probability. For this, we use moment bounds for $\mathbf{Q}^{N}$ resembling those for the Gartner transform from Proposition 3.2 in \cite{DT}.

Now recall from the proof of Lemma \ref{lemma:KPZ13} that for any $p\geq1$, the $2p$-moment of $\mathbf{Q}^{N}$ at any point in $[0,1]\times\mathbb{T}_{N}$ is bounded by a constant depending only on $p$. This follows via the observation that $\mathbf{Q}^{N}$ satisfies the moment estimate (3.12) in \cite{DT} if we remove the sub-exponential weights therein, which we made at the beginning of the proof of Lemma \ref{lemma:KPZ13}. Therefore, we have the following estimate by a union bound, the Cheybshev inequality, and this moment estimate for $\mathbf{Q}^{N}$:
\begin{align}
\mathbf{P}\left(\sup_{(T,x)\in\mathbb{I}^{\d}}|\mathbf{Q}_{T,x}^{N}|\geq4^{-1}N^{\vartheta}\right) \ \leq \ |\mathbb{I}^{\d}|\sup_{(T,x)\in\mathbb{I}^{\d}}\mathbf{P}\left(|\mathbf{Q}_{T,x}^{N}|\geq4^{-1}N^{\vartheta}\right) \ \lesssim_{p} \ |\mathbb{I}^{\d}|N^{-2p\vartheta} \ \lesssim \ N^{-2p\vartheta+100}. \label{eq:final111}
\end{align}
The final estimate in \eqref{eq:final111} follows from the straightforward observation $|\mathbb{I}^{\d}|=|\mathbb{I}^{\mathbf{T},\d}||\mathbb{T}_{N}|\lesssim N^{100}$. We can choose $p\geq1$ arbitrarily large but depending only on the fixed $\vartheta>0$ so that the complement of the event in the probability on the far LHS of \eqref{eq:final111} holds with overwhelming probability. Therefore, we have proved the lemma if we replace $\|\|_{1;\mathbb{T}_{N}}$ with the supremum over the discrete space-time set $\mathbb{I}^{\d}$, which completes the proof as noted after \eqref{eq:final110}.
\end{proof}
%%%
The second ingredient we present for this subsection is a \emph{lower} bound, or equivalently an upper bound for the inverse of $\mathbf{Q}^{N}$ with respect to the $\|\|_{1;\mathbb{T}_{N}}$ norm. We clarify that the following estimate holds with high probability, in contrast to the upper bound in Lemma \ref{lemma:final11} that holds with overwhelming probability. This is because the upcoming proof is slightly less quantitative. We also emphasize the importance of stable initial data for the upcoming lower bound estimate.
%%%
\begin{lemma}\label{lemma:final12}
\fsp Provided any $\vartheta>0$ uniformly bounded from below, we have $\|(\mathbf{Q}^{N})^{-1}\|_{1;\mathbb{T}_{N}}\leq N^{\vartheta}$ with high probability.
\end{lemma}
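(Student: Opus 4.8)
\textbf{Proof proposal for Lemma \ref{lemma:final12}.}

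The plan is to reduce the lower bound for $\mathbf{Q}^{N}$ to a lower bound for $\mathbf{Z}^{N}$ at time $0$ together with a stochastic-continuity argument that propagates positivity forward in time. First, I would observe that $\mathbf{Q}^{N}_{0,x} = \mathbf{H}^{N,\mathbf{X}}_{0,x}(\mathbf{Z}^{N}_{0,\bullet}) = \mathbf{Z}^{N}_{0,x} = \exp(-\mathbf{h}^{N}_{0,x})$ (with the convention $\mathrm{R}\cdot 0 = 0$), so by the stable initial data assumption in Definition \ref{definition:id} (the estimate $\E|\mathbf{h}^{N}_{0,x}|^{2p} \lesssim_{p} 1$ uniformly and the convergence of $\Gamma^{N,\mathbf{X}}\mathbf{h}^{N}$ in law in $\mathscr{C}(\mathbb{T}^{1})$), the quantity $\|(\mathbf{Q}^{N}_{0,\bullet})^{-1}\|_{0;\mathbb{T}_{N}} = \sup_{x}\exp(\mathbf{h}^{N}_{0,x})$ is tight: for any $\vartheta>0$ it is at most $N^{\vartheta/3}$ with high probability, since $\Gamma^{N,\mathbf{X}}\mathbf{h}^{N}$ converges in the uniform norm and is thus uniformly bounded with high probability (this is where stable initial data is essential, as the remark before the lemma stresses). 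Then I would use the deterministic discretization bound, exactly parallel to \eqref{eq:final110} in the proof of Lemma \ref{lemma:final11} but applied to $(\mathbf{Q}^{N})^{-1}$ or equivalently to $\log\mathbf{Q}^{N}$, to reduce the supremum over $[0,1]\times\mathbb{T}_{N}$ to a supremum over the fine discretization $\mathbb{I}^{\d} = \mathbb{I}^{\mathbf{T},\d}\times\mathbb{T}_{N}$ plus a sub-microscopic short-time term controlled by Lemma \ref{lemma:st2}.

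The core of the argument is then a pointwise-in-$(T,x)$ lower bound for $\mathbf{Q}^{N}_{T,x}$, i.e. an upper moment bound for $(\mathbf{Q}^{N}_{T,x})^{-1}$, uniform over $(T,x)\in\mathbb{I}^{\d}$. Here I would follow the strategy used in \cite{DT} for negative moments of the Gartner transform: the process $(\mathbf{Q}^{N})^{-1}$ satisfies a stochastic equation obtained by Itô's formula applied to $x\mapsto x^{-1}$ (or rather, one works with the Cole–Hopf picture $\mathbf{Q}^{N} = \exp(-\mathbf{h}^{N} + \text{corrections})$ and bounds $\E\exp(2p\,\mathbf{h}^{N})$ via a Gronwall/Duhamel argument on the mild equation of Definition \ref{definition:KPZ7}). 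Since the $\mathbf{Q}^{N}$ equation omits the problematic order-$N^{1/2}$ term and retains only the martingale differential, the $\mathfrak{s}$-term, and the $N^{-1/2}$ error terms, the computation is essentially that of Proposition 3.2 in \cite{DT} with sub-exponential weights removed and gradients taken on the torus $\mathbb{T}_{N}$; one gets $\|(\mathbf{Q}^{N}_{T,x})^{-1}\|_{\omega;2p} \lesssim_{p} 1$ uniformly over $(T,x)\in[0,1]\times\mathbb{T}_{N}$. Combining this with a union bound over $\mathbb{I}^{\d}$ (of size $\lesssim N^{100}$), the Chebyshev inequality, and choosing $p$ large depending only on $\vartheta$, one obtains $\sup_{(T,x)\in\mathbb{I}^{\d}}(\mathbf{Q}^{N}_{T,x})^{-1} \leq \tfrac14 N^{\vartheta}$ with high probability; the short-time term is absorbed via Lemma \ref{lemma:st2}, and assembling these pieces through \eqref{eq:final110}-type reasoning yields the claim.

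The main obstacle I anticipate is the negative-moment bound $\|(\mathbf{Q}^{N}_{T,x})^{-1}\|_{\omega;2p} \lesssim_{p} 1$ itself. Unlike positive moments, which close by a straightforward Gronwall argument on the mild equation since all coefficients are bounded, controlling $\E\exp(2p\,\mathbf{h}^{N}_{T,x})$ requires care: the martingale differential $\d\xi^{N}$ contributes an exponential-type corrector when one exponentiates, and one must verify that the renormalization constant $\mathrm{R}$ baked into $\mathbf{Z}^{N}$ (hence $\mathbf{Q}^{N}$) is large enough to dominate these contributions — this is precisely the role of the $\mathrm{R}_1$-type constant in the classical Bertini–Giacomin computation. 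One has to check that the $\mathfrak{s}$-term and the $N^{-1/2}$-order terms do not spoil this, but since $\mathfrak{s}$ is bounded and multiplied by $\mathbf{Q}^{N}$ (not $N^{1/2}\mathbf{Q}^{N}$) and the remaining terms carry $N^{-1/2}$, they only perturb the Gronwall constant by $\mathrm{O}(1)$ and $\mathrm{o}(1)$ amounts respectively. Since \cite{DT} carries out exactly this negative-moment estimate for their Gartner transform and the only structural change here is the harmless extra bounded drift and the removal of the $N^{1/2}$ term (which only helps), citing the argument of Proposition 3.2 in \cite{DT} with the indicated cosmetic modifications should suffice, as was done for the positive moments in Lemma \ref{lemma:final11}.
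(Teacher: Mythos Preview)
Your approach diverges from the paper's and contains a genuine gap at its core step: the negative-moment bound $\|(\mathbf{Q}^{N}_{T,x})^{-1}\|_{\omega;2p}\lesssim_{p}1$. The difficulty is that $\mathbf{Q}^{N}$ is \emph{not} a Gartner transform; it is defined only through the linear mild equation in Definition~\ref{definition:KPZ7} and has no Cole--Hopf representation $\mathbf{Q}^{N}=\exp(-\mathbf{h}^{N}+\cdots)$. Consequently there is no a~priori reason $\mathbf{Q}^{N}$ is even positive, and $(\mathbf{Q}^{N})^{-1}$ does not satisfy a closed equation to which a Gronwall/Duhamel argument applies: applying It\^o to $x\mapsto x^{-1}$ produces $(\mathbf{Q}^{N})^{-2}$ corrections you cannot bootstrap. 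The negative-moment computation in \cite{DT} works precisely because $(\mathbf{Z}^{N})^{-1}=\exp(\mathbf{h}^{N}-\mathrm{R}T)$ is itself an exponential satisfying a dual microscopic SHE; that mechanism is unavailable for $\mathbf{Q}^{N}$. So ``citing Proposition~3.2 in \cite{DT} with cosmetic modifications'' does not go through here.

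The paper instead uses a comparison-principle plus short-time iteration argument: since the initial data $\mathbf{Q}^{N}_{0,\cdot}=\mathbf{Z}^{N}_{0,\cdot}$ is uniformly bounded below, comparison for the linear $\mathbf{Q}^{N}$ equation reduces to proving lower bounds for \emph{constant} initial data; on a short but $N$-independent time interval $\mathfrak{t}^{+}$, perturbative analysis around the spatial heat semigroup (which preserves constants) gives a high-probability proportional lower bound, and one iterates $\mathrm{O}_{\mathfrak{t}^{+}}(1)$ times using linearity so that the proportional loss per step is independent of the current constant. This explains why Lemma~\ref{lemma:final12} only claims \emph{high} probability, in contrast to the overwhelming-probability upper bound of Lemma~\ref{lemma:final11}: the argument is soft, leaning on tightness (Lemma~\ref{lemma:KPZ13}) rather than on moment bounds.
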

%%%
%%%
\begin{proof}
Observe that a space-time uniform upper bound of $N^{\vartheta}$ for the inverse of $\mathbf{Q}^{N}$ is equivalent to a space-time uniform lower bound of $N^{-\vartheta}$ for $\mathbf{Q}^{N}$ itself. Additionally, we observe that the initial data of $\mathbf{Q}^{N}$, which is the initial data of the Gartner transform $\mathbf{Z}^{N}$, is uniformly bounded above and below on $\mathbb{T}_{N}$, because it is the exponential of a function that is uniformly bounded above and below. Lemma \ref{lemma:final12} then follows from a standard analysis based on combining these observations, the comparison principle for the defining equation of $\mathbf{Q}^{N}$ in Definition \ref{definition:KPZ7}, and tightness estimates for the same defining equation, at least with continuous initial data, that we alluded to in Proposition \ref{prop:KPZ12}. Roughly speaking, because $\mathbf{Q}^{N}$ is initially uniformly bounded below, by the comparison principle for the $\mathbf{Q}^{N}$ equation it suffices to prove uniform lower bounds for constant initial data given by the infimum of the $\mathbf{Q}^{N}$ initial data. If the $\mathbf{Q}^{N}$ equation only had a spatial heat operator, constant data would be preserved and the result would follow. For some short but $N$-independent time $\mathfrak{t}^{+}$ and for any $N$-independent $\gamma>0$, perturbative analysis would provide a high probability $\mathfrak{t}^{+},\gamma$-dependent lower bound for $\mathbf{Q}^{N}$. Again, by using the comparison principle, it suffices to provide a uniform lower bound for time $1-\mathfrak{t}^{+}$ for the solution to the $\mathbf{Q}^{N}$ equation with initial data given by the $\mathfrak{t}^{+},\gamma$-dependent space-time infimum/lower bound. We then iterate this scheme, namely by providing high probability lower bounds for constant data for sufficiently short but $N$-independent times $\mathfrak{t}^{+}$, requiring only a $\mathfrak{t}^{+}$-dependent number of steps. We emphasize that this iteration does not break down because each perturbative step in this strategy amounts to estimates for the $\mathbf{Q}^{N}$ with constant initial data. By linearity of the $\mathbf{Q}^{N}$ equation, the value of this constant initial data does not matter in terms of how much smaller, in a proportional/multiplicative sense, the solution is after a short time $\mathfrak{t}^{+}$; namely, our analysis of the $\mathbf{Q}^{N}$ equation does not change in each step even if the constant initial data is different between steps. This completes the proof.
\end{proof}
%%%
%\newpage
%%%
\subsection{Proof of Proposition \ref{prop:KPZ11}}
%%%
The first step that we take is to write the difference $\mathbf{D}^{N}=\mathbf{U}^{N}-\mathbf{Q}^{N}$ explicitly in terms of the difference between the respective stochastic equations for $\mathbf{U}^{N}$ in Definition \ref{definition:KPZ5} and $\mathbf{Q}^{N}$ in Definition \ref{definition:KPZ7}. Because the heat operators are linear, and because the stochastic equations in Definitions \ref{definition:KPZ5} and \ref{definition:KPZ7} are both linear in their respective solutions $\mathbf{U}^{N}$ and $\mathbf{Q}^{N}$, it is straightforward to verify the following stochastic equation for $\mathbf{D}^{N}$, in which the spatial heat operators/initial data terms in the $\mathbf{U}^{N}$ and $\mathbf{Q}^{N}$ equations cancel, and in which we use notation in Definitions \ref{definition:KPZ5} and \ref{definition:KPZ7}:
\begin{align}
\mathbf{D}_{T,x}^{N} \ &= \ \mathbf{H}_{T,x}^{N}(\mathbf{D}^{N}\d\xi^{N}) - \mathbf{H}_{T,x}^{N}(N^{\frac12}\bar{\mathfrak{q}}\mathbf{Y}^{N}) - \mathbf{H}_{T,x}^{N}(\mathfrak{s}\mathbf{D}^{N}) + N^{-\frac12}\mathbf{H}_{T,x}^{N}(\mathfrak{b}_{1;}\mathbf{D}^{N}) + N^{-\frac12}\mathbf{H}_{T,x}^{N}\left(\grad_{\star}^{!}\left(\mathfrak{b}_{2;}\mathbf{D}^{N}\right)\right). \label{eq:KPZ121}
\end{align}
We emphasize that the order $N^{1/2}$ term in the $\mathbf{U}^{N}$ equation does not have a matching term in the $\mathbf{Q}^{N}$ equation. According to the Boltzmann-Gibbs principle in Theorem \ref{theorem:BGI}, we expect the second term from the RHS of \eqref{eq:KPZ121} to vanish in the large-$N$ limit. In this case, the $\mathbf{D}^{N}$ term solves a linear equation with zero initial data and vanishing small ``forcing", which suggests that $\mathbf{D}^{N}$ vanishes uniformly in the large-$N$ limit. To actually prove this, we will obtain bounds for $\mathbf{D}^{N}$ by employing the moment strategy for proofs of tightness in \cite{BG,DT}. We note, however, Theorem \ref{theorem:BGI} only provides an estimate with respect to first moment, whereas the aforementioned SPDE analysis of \cite{BG,DT} rely on bounds for quite high moments. This is technical, but it is also nontrivial to resolve. The first step we take to resolve it is introducing the following stopping time and ``cutoff"-type $\mathbf{C}^{N}$ process.
%%%
\begin{definition}\label{definition:KPZ111}
\fsp Recall the universal constant $\beta_{\mathrm{BG}}>0$ from {Theorem \ref{theorem:BGI}}; recall it is uniformly bounded from below. Define
\begin{align}
\mathfrak{t}_{\mathrm{BG}} \ \overset{\bullet}= \ \inf\left\{\mathrm{t}\in[0,1]: \ \|\mathbf{H}^{N}(N^{1/2}\bar{\mathfrak{q}}\mathbf{Y}^{N})\|_{\mathrm{t};\mathbb{T}_{N}} \geq N^{-\beta_{\mathrm{BG}}/999}\right\}\wedge1.
\end{align}
We additionally define the stopped process $\wt{\mathbf{Y}}^{N}=\mathbf{Y}^{N}\mathbf{1}(T\leq\mathfrak{t}_{\mathrm{BG}})$, and we also define $\mathbf{C}^{N}$ to be the solution to the following stochastic equation on $\R_{\geq0}\times\mathbb{T}_{N}$, whose solutions are unique by standard linear theory:
\begin{align}
\mathbf{C}_{T,x}^{N} \ = \ \mathbf{H}_{T,x}^{N}(\mathbf{C}^{N}\d\xi^{N}) - \mathbf{H}_{T,x}^{N}(N^{\frac12}\bar{\mathfrak{q}}\wt{\mathbf{Y}}^{N}) - \mathbf{H}_{T,x}^{N}(\mathfrak{s}\mathbf{C}^{N}) + N^{-\frac12}\mathbf{H}_{T,x}^{N}(\mathfrak{b}_{1;}\mathbf{C}^{N}) + N^{-\frac12}\mathbf{H}_{T,x}^{N}\left(\grad_{\star}^{!}\left(\mathfrak{b}_{2;}\mathbf{C}^{N}\right)\right), \nonumber
\end{align}
where $\grad_{\star}^{!}$ means what it does in Proposition \ref{prop:mSHE+}.
\end{definition}
%%%
The following first result in this subsection justifies analyzing $\mathbf{C}^{N}$ as a ``high probability proxy" for $\mathbf{D}^{N}$.
%%%
\begin{lemma}\label{lemma:KPZ111}
\fsp With high probability, we have $\mathfrak{t}_{\mathrm{BG}}=1$. Thus, with high probability, we have $\mathbf{C}^{N}=\mathbf{D}^{N}$ on $[0,1]\times\mathbb{T}_{N}$.
\end{lemma}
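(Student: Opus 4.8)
The plan is to show that the event $\{\mathfrak{t}_{\mathrm{BG}}=1\}$ has high probability, and then deduce $\mathbf{C}^{N}=\mathbf{D}^{N}$ from it via uniqueness of solutions to the linear stochastic equations. For the first claim, observe that by definition of $\mathfrak{t}_{\mathrm{BG}}$ in Definition \ref{definition:KPZ111}, the event $\{\mathfrak{t}_{\mathrm{BG}}<1\}$ forces $\|\mathbf{H}^{N}(N^{1/2}\bar{\mathfrak{q}}\mathbf{Y}^{N})\|_{1;\mathbb{T}_{N}}\geq N^{-\beta_{\mathrm{BG}}/999}$. The Boltzmann-Gibbs principle in Theorem \ref{theorem:BGI} gives $\E\|\mathbf{H}^{N}(N^{1/2}\bar{\mathfrak{q}}\mathbf{Y}^{N})\|_{1;\mathbb{T}_{N}}\lesssim N^{-\beta_{\mathrm{BG}}}+N^{-\frac{99}{100}\e_{\mathrm{RN}}+10\e_{\mathrm{ap}}}$; since $\e_{\mathrm{ap}}$ is taken vastly smaller than $\e_{\mathrm{RN}}$ (recall $\e_{\mathrm{RN}}=999^{-999}\geq 999^{999}\e_{\mathrm{ap}}$ from Definition \ref{definition:KPZ1}), the second term is bounded by a universal negative power of $N$, so the whole right side is $\lesssim N^{-\beta'}$ for some universal $\beta'>0$. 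Markov's inequality then yields
\begin{align}
\mathbf{P}(\mathfrak{t}_{\mathrm{BG}}<1) \ \leq \ \mathbf{P}\big(\|\mathbf{H}^{N}(N^{1/2}\bar{\mathfrak{q}}\mathbf{Y}^{N})\|_{1;\mathbb{T}_{N}}\geq N^{-\beta_{\mathrm{BG}}/999}\big) \ \leq \ N^{\beta_{\mathrm{BG}}/999}\,\E\|\mathbf{H}^{N}(N^{1/2}\bar{\mathfrak{q}}\mathbf{Y}^{N})\|_{1;\mathbb{T}_{N}} \ \lesssim \ N^{\beta_{\mathrm{BG}}/999-\beta'}. \nonumber
\end{align}
Since $\beta_{\mathrm{BG}}/999-\beta'<0$ provided $\beta'$ dominates $\beta_{\mathrm{BG}}/999$ — which holds because $\beta_{\mathrm{BG}}$ itself is a lower bound on the exponent in Theorem \ref{theorem:BGI}, so one can simply take $\beta'=\beta_{\mathrm{BG}}$ up to the negligible second term — this is a negative power of $N$, hence $\{\mathfrak{t}_{\mathrm{BG}}=1\}$ holds with high probability in the sense of Definition \ref{definition:KPZ8} and Remark \ref{remark:KPZ9}.

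For the second claim, I would argue as in the paragraph following Definition \ref{definition:KPZ7} (the identification of $\mathbf{Z}^{N}$ with $\mathbf{U}^{N}$). On the event $\{\mathfrak{t}_{\mathrm{BG}}=1\}$ we have $\wt{\mathbf{Y}}^{N}=\mathbf{Y}^{N}\mathbf{1}(T\leq\mathfrak{t}_{\mathrm{BG}})=\mathbf{Y}^{N}$ on $[0,1]\times\mathbb{T}_{N}$, since the indicator is identically $1$ there. Therefore the stochastic integral equation defining $\mathbf{C}^{N}$ in Definition \ref{definition:KPZ111} becomes \emph{identical} to equation \eqref{eq:KPZ121} defining $\mathbf{D}^{N}$: both are linear equations in the unknown, driven by the same martingale differential $\d\xi^{N}$ (coupled so they jump together, cf.\ Remark \ref{remark:KPZ5b}), with the same forcing $-\mathbf{H}^{N}(N^{1/2}\bar{\mathfrak{q}}\mathbf{Y}^{N})$, the same $\mathfrak{s}$, $\mathfrak{b}_{1;}$, $\mathfrak{b}_{2;}$ coefficients, and zero initial data. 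By the standard uniqueness of solutions to such semi-discrete linear stochastic integral equations — the same elementary Gr\"onwall/fixed-point argument invoked for the $\mathbf{Z}^{N}=\mathbf{U}^{N}$ identification and for well-posedness of $\mathbf{C}^{N}$ in Definition \ref{definition:KPZ111} — we conclude $\mathbf{C}^{N}=\mathbf{D}^{N}$ on all of $[0,1]\times\mathbb{T}_{N}$ on this event.

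I expect the writing to be routine; there is no genuine analytic obstacle here, since all the real work is hidden in Theorem \ref{theorem:BGI}. The only point requiring a little care is making the uniqueness argument honest: because $\mathbf{C}^{N}$ is defined globally (not merely up to $\mathfrak{t}_{\mathrm{BG}}$) by a single linear equation with the stopped forcing $\wt{\mathbf{Y}}^{N}$, one cannot simply "stop and restart"; instead one notes that on the event $\{\mathfrak{t}_{\mathrm{BG}}=1\}$ the two equations coincide \emph{as equations} (same data everywhere), and linear uniqueness applies on the whole interval. A clean way to phrase this is to let $\mathbf{E}^{N}=\mathbf{C}^{N}-\mathbf{D}^{N}$, note it satisfies on $\{\mathfrak{t}_{\mathrm{BG}}=1\}$ the homogeneous linear equation $\mathbf{E}^{N}=\mathbf{H}^{N}(\mathbf{E}^{N}\d\xi^{N})-\mathbf{H}^{N}(\mathfrak{s}\mathbf{E}^{N})+N^{-1/2}\mathbf{H}^{N}(\mathfrak{b}_{1;}\mathbf{E}^{N})+N^{-1/2}\mathbf{H}^{N}(\grad_{\star}^{!}(\mathfrak{b}_{2;}\mathbf{E}^{N}))$ with zero initial data, and invoke the heat-kernel and martingale estimates of Appendix \ref{section:aux} to run a Gr\"onwall bound on $\sup_{x}\E|\mathbf{E}^{N}_{T,x}|^{2}$ forcing $\mathbf{E}^{N}\equiv 0$.
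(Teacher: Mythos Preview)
Your proposal is correct and follows essentially the same approach as the paper: Markov's inequality applied to the expectation bound of Theorem \ref{theorem:BGI} for the first claim, and linear uniqueness (as in Lemma \ref{lemma:KPZ6}) for the second. The only point the paper spells out that you gloss over is the justification that $\{\mathfrak{t}_{\mathrm{BG}}<1\}$ actually forces $\|\mathbf{H}^{N}(N^{1/2}\bar{\mathfrak{q}}\mathbf{Y}^{N})\|_{\mathfrak{t}_{\mathrm{BG}};\mathbb{T}_{N}}\geq N^{-\beta_{\mathrm{BG}}/999}$ at the stopping time itself --- the paper runs a short continuity-of-the-heat-operator argument to rule out the infimum being unattained --- but this is routine and your ``by definition'' is not unreasonable given that $t\mapsto\|\cdot\|_{t;\mathbb{T}_{N}}$ is monotone and continuous.
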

%%%
%%%
\begin{proof}
We emphasize that the second high probability claim in Lemma \ref{lemma:KPZ111} follows from the first high probability claim in Lemma \ref{lemma:KPZ111} by the same reason Lemma \ref{lemma:KPZ6} holds; the terms $\mathbf{D}^{N}$ and $\mathbf{C}^{N}$ solve the same stochastic equation, whose solutions are unique, until the time $\mathfrak{t}_{\mathrm{BG}}$, which according to the first claim is equal to $1$ with high probability. To prove the first high probability claim in Lemma \ref{lemma:KPZ111}, note $\mathfrak{t}_{\mathrm{BG}}\neq1$ implies $\mathfrak{t}_{\mathrm{BG}}<1$, as $\mathfrak{t}_{\mathrm{BG}}\leq1$ deterministically. We now claim $\mathfrak{t}_{\mathrm{BG}}<1$ implies
\begin{align}
\|\mathbf{H}^{N}(N^{1/2}\bar{\mathfrak{q}}\mathbf{Y}^{N})\|_{1;\mathbb{T}_{N}} \ \geq \ \|\mathbf{H}^{N}(N^{1/2}\bar{\mathfrak{q}}\mathbf{Y}^{N})\|_{\mathfrak{t}_{\mathrm{BG}};\mathbb{T}_{N}} \ \geq \ N^{-\beta_{\mathrm{BG}}/999}. \label{eq:KPZ1111}
\end{align}
The first inequality in \eqref{eq:KPZ1111} follows trivially because $\mathfrak{t}_{\mathrm{BG}}\leq1$ deterministically. To justify the second inequality in \eqref{eq:KPZ1111}, we assume the opposite, so the final inequality in \eqref{eq:KPZ1111} is reversed and strict. The heat operator is continuous in time with probability 1 as the product $N^{1/2}\bar{\mathfrak{q}}\mathbf{Y}^{N}$ is \emph{finite}, even if not uniformly bounded in $N$; we emphasize we are not claiming quantitative regularity of the heat operator that is controlled in the large-$N$ limit by any means. If the last inequality in \eqref{eq:KPZ1111} is reversed while strict, then for $\mathfrak{t}_{\mathrm{BG}}<1$ we may find $\mathrm{t}\in(\mathfrak{t}_{\mathrm{BG}},1]$ by continuity of the heat operator such that
\begin{align}
\sup_{\mathfrak{t}_{\mathrm{BG}}\leq\mathrm{s}\leq\mathrm{t}}\|\mathbf{H}^{N}(N^{1/2}\bar{\mathfrak{q}}\mathbf{Y}^{N})\|_{\mathrm{s};\mathbb{T}_{N}} \ < \ N^{-\beta_{\mathrm{BG}}/999}.
\end{align}
Because we have assumed the reverse of the last inequality in \eqref{eq:KPZ1111}, this implies $\mathfrak{t}_{\mathrm{BG}}$ must actually be at least $\mathrm{t}$ because until time $\mathrm{t}$, the $\mathbf{H}^{N}(N^{1/2}\bar{\mathfrak{q}}\mathbf{Y}^{N})$ term is strictly less than $N^{-\beta_{\mathrm{BG}}/999}$, and therefore by continuity in time of this $\mathbf{H}^{N}(N^{1/2}\bar{\mathfrak{q}}\mathbf{Y}^{N})$ term, we may ``wait" a positive amount after $\mathrm{t}$ to see $\mathbf{H}^{N}(N^{1/2}\bar{\mathfrak{q}}\mathbf{Y}^{N})$ exceed $N^{-\beta_{\mathrm{BG}}/999}$. The condition $\mathfrak{t}_{\mathrm{BG}}\geq\mathrm{t}$ we have just established contradicts the fact that $\mathrm{t}>\mathfrak{t}_{\mathrm{BG}}$ by construction. This provides the last inequality in \eqref{eq:KPZ1111}. 

We now recap that $\mathfrak{t}_{\mathrm{BG}}\neq1$ implies $\mathfrak{t}_{\mathrm{BG}}<1$, which in turn implies the inequalities \eqref{eq:KPZ1111}. Therefore, to prove $\mathfrak{t}_{\mathrm{BG}}=1$ occurs with high probability, we estimate the probability of observing the inequalities in \eqref{eq:KPZ1111}. This is at most $\mathrm{O}(N^{-\beta_{\mathrm{BG}}/99})$ by Theorem \ref{theorem:BGI}, which estimates the expectation of the far LHS of \eqref{eq:KPZ1111}, and the Markov inequality.
\end{proof}
%%%
We now introduce the following \emph{deterministic} estimate that explains the utility of introducing $\mathfrak{t}_{\mathrm{BG}}$ and $\wt{\mathbf{Y}}^{N}$ and $\wt{\mathfrak{D}}^{N}$.
%%%
\begin{lemma}\label{lemma:KPZ112}
\fsp We have the deterministic estimate $\|\mathbf{H}^{N}(N^{1/2}\bar{\mathfrak{q}}\wt{\mathbf{Y}}^{N})\|_{1;\mathbb{T}_{N}}\leq N^{-\beta_{\mathrm{BG}}/999}$.
\end{lemma}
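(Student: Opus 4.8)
\textbf{Proof plan for Lemma \ref{lemma:KPZ112}.} The statement is a soft, essentially tautological consequence of how the stopping time $\mathfrak{t}_{\mathrm{BG}}$ and the cutoff process $\wt{\mathbf{Y}}^{N}$ were built in Definition \ref{definition:KPZ111}. First I would observe that $\wt{\mathbf{Y}}^{N}_{S,y} = \mathbf{Y}^{N}_{S,y}\mathbf{1}(S\leq\mathfrak{t}_{\mathrm{BG}})$ vanishes identically for $S>\mathfrak{t}_{\mathrm{BG}}$, so by linearity of the space-time heat operator $\mathbf{H}^{N}$ (and the fact that the heat kernel $\mathbf{H}^{N}_{S,T,x,y}$ is supported on $S\leq T$), one has for every $(T,x)$ that $\mathbf{H}^{N}_{T,x}(N^{1/2}\bar{\mathfrak{q}}\wt{\mathbf{Y}}^{N})$ only integrates the source over $S\leq \mathfrak{t}_{\mathrm{BG}}\wedge T$; hence $\mathbf{H}^{N}_{T,x}(N^{1/2}\bar{\mathfrak{q}}\wt{\mathbf{Y}}^{N}) = \mathbf{H}^{N}_{\mathfrak{t}_{\mathrm{BG}}\wedge T, x}(N^{1/2}\bar{\mathfrak{q}}\mathbf{Y}^{N})$ when $T\le\mathfrak{t}_{\mathrm{BG}}$, and more generally the supremum over $(T,x)\in[0,1]\times\mathbb{T}_{N}$ of the former is bounded by the supremum over $(T,x)\in[0,\mathfrak{t}_{\mathrm{BG}}]\times\mathbb{T}_{N}$ of the latter, because adding source mass on $(\mathfrak{t}_{\mathrm{BG}},T]$ is exactly what is killed by the indicator. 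Thus $\|\mathbf{H}^{N}(N^{1/2}\bar{\mathfrak{q}}\wt{\mathbf{Y}}^{N})\|_{1;\mathbb{T}_{N}} \le \|\mathbf{H}^{N}(N^{1/2}\bar{\mathfrak{q}}\mathbf{Y}^{N})\|_{\mathfrak{t}_{\mathrm{BG}};\mathbb{T}_{N}}$.

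Second, I would invoke the definition of $\mathfrak{t}_{\mathrm{BG}}$ itself: it is the first time $\mathrm{t}\in[0,1]$ at which $\|\mathbf{H}^{N}(N^{1/2}\bar{\mathfrak{q}}\mathbf{Y}^{N})\|_{\mathrm{t};\mathbb{T}_{N}}$ reaches $N^{-\beta_{\mathrm{BG}}/999}$, capped at $1$. Since $\mathrm{t}\mapsto\|\mathbf{H}^{N}(N^{1/2}\bar{\mathfrak{q}}\mathbf{Y}^{N})\|_{\mathrm{t};\mathbb{T}_{N}}$ is nondecreasing and continuous in $\mathrm{t}$ (continuity of the heat operator in time with probability $1$, as already noted in the proof of Lemma \ref{lemma:KPZ111} — the integrand $N^{1/2}\bar{\mathfrak{q}}\mathbf{Y}^{N}$ is finite), we have $\|\mathbf{H}^{N}(N^{1/2}\bar{\mathfrak{q}}\mathbf{Y}^{N})\|_{\mathfrak{t}_{\mathrm{BG}};\mathbb{T}_{N}}\le N^{-\beta_{\mathrm{BG}}/999}$ in all cases: if $\mathfrak{t}_{\mathrm{BG}}<1$ the value at $\mathfrak{t}_{\mathrm{BG}}$ equals $N^{-\beta_{\mathrm{BG}}/999}$ by continuity at the hitting time, and if $\mathfrak{t}_{\mathrm{BG}}=1$ then the threshold was never reached on $[0,1]$, so the value at time $1$ is strictly below $N^{-\beta_{\mathrm{BG}}/999}$. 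Combining the two displayed bounds gives $\|\mathbf{H}^{N}(N^{1/2}\bar{\mathfrak{q}}\wt{\mathbf{Y}}^{N})\|_{1;\mathbb{T}_{N}}\le N^{-\beta_{\mathrm{BG}}/999}$ deterministically, which is the claim.

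There is essentially no obstacle here; the only point requiring a word of care is the claim that replacing $\mathbf{Y}^{N}$ by $\wt{\mathbf{Y}}^{N}$ in the source does not create extra contributions at times after $\mathfrak{t}_{\mathrm{BG}}$ that could make the $[0,1]$-supremum exceed the $[0,\mathfrak{t}_{\mathrm{BG}}]$-supremum of the un-cutoff version. This is where I would be slightly careful: because the source is supported in $S\le\mathfrak{t}_{\mathrm{BG}}$ and the heat kernel is causal, the value of $\mathbf{H}^{N}_{T,x}(N^{1/2}\bar{\mathfrak{q}}\wt{\mathbf{Y}}^{N})$ for $T>\mathfrak{t}_{\mathrm{BG}}$ is just the forward evolution by the (contractive on $\mathscr{L}^{\infty}$, see Proposition \ref{prop:heat}) spatial semigroup of its value at time $\mathfrak{t}_{\mathrm{BG}}$, hence has absolute value no larger than $\sup_{x}|\mathbf{H}^{N}_{\mathfrak{t}_{\mathrm{BG}},x}(N^{1/2}\bar{\mathfrak{q}}\wt{\mathbf{Y}}^{N})| = \sup_x|\mathbf{H}^{N}_{\mathfrak{t}_{\mathrm{BG}},x}(N^{1/2}\bar{\mathfrak{q}}\mathbf{Y}^{N})|$; so the supremum over all $T\in[0,1]$ is indeed attained by time $\mathfrak{t}_{\mathrm{BG}}$ and equals $\|\mathbf{H}^{N}(N^{1/2}\bar{\mathfrak{q}}\mathbf{Y}^{N})\|_{\mathfrak{t}_{\mathrm{BG}};\mathbb{T}_{N}}$. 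This completes the argument.
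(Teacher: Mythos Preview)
Your proof is correct and follows essentially the same approach as the paper: use the semigroup/Chapman--Kolmogorov identity and $\mathscr{L}^{\infty}$-contractivity of the spatial heat operator (Proposition~\ref{prop:heat}) to reduce $\|\mathbf{H}^{N}(N^{1/2}\bar{\mathfrak{q}}\wt{\mathbf{Y}}^{N})\|_{1;\mathbb{T}_{N}}$ to $\|\mathbf{H}^{N}(N^{1/2}\bar{\mathfrak{q}}\mathbf{Y}^{N})\|_{\mathfrak{t}_{\mathrm{BG}};\mathbb{T}_{N}}$, and then conclude via continuity in time and the definition of $\mathfrak{t}_{\mathrm{BG}}$. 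The paper's version is slightly more compact but uses the identical ingredients.
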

%%%
%%%
\begin{proof}
Because $\mathfrak{t}_{\mathrm{BG}}\leq1$ deterministically, and because $\wt{\mathbf{Y}}^{N}$ vanishes after time $\mathfrak{t}_{\mathrm{BG}}$ by construction, we may employ Proposition \ref{prop:heat} to deduce the following consequence of $\mathscr{L}^{\infty}$-contractive property and the semigroup property of spatial heat operators; see the proof of Lemma \ref{lemma:BGII2}, namely \eqref{eq:BGII22} therein, for a ``gradient" version of the following:
\begin{align}
\|\mathbf{H}^{N}_{T,x}(N^{1/2}\bar{\mathfrak{q}}\wt{\mathbf{Y}}^{N})\|_{1;\mathbb{T}_{N}} \ = \ \|\mathbf{H}^{N,\mathbf{X}}_{(T-\mathfrak{t}_{\mathrm{BG}})\vee0,x}(\mathbf{H}_{\mathrm{t}_{\mathrm{BG}},\cdot}^{N}(N^{1/2}\bar{\mathfrak{q}}\wt{\mathbf{Y}}^{N}))\|_{1;\mathbb{T}_{N}} \ \leq \ \|\mathbf{H}^{N}(N^{1/2}\bar{\mathfrak{q}}\wt{\mathbf{Y}}^{N})\|_{\mathfrak{t}_{\mathrm{BG}};\mathbb{T}_{N}}. \label{eq:KPZ1121}
\end{align}
It now suffices to observe the RHS of \eqref{eq:KPZ1121} is bounded above by $N^{-\beta_{\mathrm{BG}}/999}$, because this proposed upper bound is true if we replace $\mathfrak{t}_{\mathrm{BG}}$ with any $\mathrm{t}<\mathfrak{t}_{\mathrm{BG}}$ and, like in the proof of Lemma \ref{lemma:KPZ111}, the heat operator $\mathbf{H}^{N}$ is continuous in space-time.
\end{proof}
%%%
The last ingredient we require for the proof for Proposition \ref{prop:KPZ11} is a pointwise moment estimate for $\mathbf{C}^{N}$ which is proved with stochastic analytic means like those used in the proof of Proposition 3.2 in \cite{DT} for the Gartner transform therein. Afterwards, we will ``glue" this pointwise estimate to a uniform estimate on $[0,1]\times\mathbb{T}_{N}$ via union bound and continuity.
%%%
\begin{lemma}\label{lemma:KPZ113}
\fsp Consider any $p\geq1$. We have the estimate $\|\mathbf{C}^{N}_{T,x}\|_{\omega;2p}\lesssim_{p}N^{-\beta_{\mathrm{BG}}/999}$ uniformly on $[0,1]\times\mathbb{T}_{N}$.
\end{lemma}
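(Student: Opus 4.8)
The plan is to run the standard moment estimate for linear SHE-type equations (exactly as in the proof of Proposition 3.2 in \cite{DT}) on the defining stochastic integral equation for $\mathbf{C}^{N}$ in Definition \ref{definition:KPZ111}, tracking how the small deterministic factor $N^{-\beta_{\mathrm{BG}}/999}$ from Lemma \ref{lemma:KPZ112} propagates through. Fix $p\geq1$ and set $\Phi^{N}_{T}=\sup_{x\in\mathbb{T}_{N}}\|\mathbf{C}^{N}_{T,x}\|_{\omega;2p}^{2}$ (or the analogous quantity adapted to a Gronwall argument in time; one may need to first localize $T$ by a stopping time and remove it at the end, as is routine). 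Take the $2p$-th moment of the $\mathbf{C}^{N}$ equation and apply the triangle inequality in $\mathscr{L}^{2p}$ to each of the five terms on the right-hand side.

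First I would handle the forcing term $\mathbf{H}^{N}(N^{1/2}\bar{\mathfrak{q}}\wt{\mathbf{Y}}^{N})$: by Lemma \ref{lemma:KPZ112} this is bounded \emph{deterministically} in $\|\cdot\|_{1;\mathbb{T}_{N}}$, hence in every $\mathscr{L}^{2p}$-norm pointwise, by $N^{-\beta_{\mathrm{BG}}/999}$, so it contributes exactly the target order with no $\Phi^{N}$ dependence. The martingale term $\mathbf{H}^{N}(\mathbf{C}^{N}\d\xi^{N})$ is controlled by the Burkholder--Davis--Gundy / martingale inequality of Lemma \ref{lemma:mg} (the extension of Lemma 3.1 in \cite{DT}) together with the heat kernel estimates of Proposition \ref{prop:heat}; this produces a term of the form $\int_{0}^{T}K(T-S)\,\Phi^{N}_{S}\,\d S$ where $K$ is an integrable on-diagonal heat-kernel factor (the $\mathbf{O}_{S,T}$-type singularity, integrable in $1$D after squaring appropriately as in \cite{DT}). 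The three remaining terms — the $\mathfrak{s}\mathbf{C}^{N}$ term, the $N^{-1/2}\mathfrak{b}_{1;}\mathbf{C}^{N}$ term, and the gradient term $N^{-1/2}\mathbf{H}^{N}(\grad_{\star}^{!}(\mathfrak{b}_{2;}\mathbf{C}^{N}))$ — are all bounded using $|\mathfrak{s}|\lesssim1$, $|\mathfrak{b}_{i;}|\lesssim1$, the fact that $\grad^{!}_{\star}$ on the heat kernel costs a factor $N$ which is exactly cancelled by the explicit $N^{-1/2}$ and the smoothing $N^{-1/2}$ from time-integration against the heat semigroup (again Proposition \ref{prop:heat}), so each also contributes $\int_{0}^{T}K(T-S)\Phi^{N}_{S}\,\d S$ with integrable kernel. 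Collecting, one obtains an inequality of the schematic form $\Phi^{N}_{T}\lesssim_{p} N^{-2\beta_{\mathrm{BG}}/999}+\int_{0}^{T}K(T-S)\,\Phi^{N}_{S}\,\d S$ with $\int_{0}^{1}K<\infty$, and a (singular-kernel) Gronwall / iteration argument — identical to the one used to close Proposition 3.2 in \cite{DT} — yields $\Phi^{N}_{T}\lesssim_{p}N^{-2\beta_{\mathrm{BG}}/999}$ uniformly on $[0,1]$, which is the claim after taking square roots.

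The main obstacle, as flagged in the text just before the lemma, is that Theorem \ref{theorem:BGI} only controls the \emph{first} moment of the forcing term, whereas the moment machinery of \cite{BG,DT} needs high moments; this is precisely why the auxiliary cutoff $\mathfrak{t}_{\mathrm{BG}}$ and stopped process $\wt{\mathbf{Y}}^{N}$ were introduced, converting the probabilistic first-moment bound into the \emph{deterministic} bound of Lemma \ref{lemma:KPZ112}, which then feeds the high-moment estimate with no loss. So the genuine work here is just the careful bookkeeping of heat-kernel powers of $N$ against the explicit $N^{\pm1/2}$ prefactors to verify every term other than the forcing is either $O(N^{-\beta_{\mathrm{BG}}/999})$ or of Gronwall type with integrable kernel; in particular one must check that the gradient term does not produce a net positive power of $N$, using that $\grad^{!}_{\star}$ hits the (macroscopically smooth, hence $N^{-1}$-gaining) heat kernel rather than $\mathbf{C}^{N}$ via summation by parts. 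A minor technical point to dispatch is the usual one of a priori finiteness of $\Phi^{N}_{T}$ so that Gronwall applies — handled by a preliminary crude stopping-time truncation, removed at the end by monotone convergence, exactly as in \cite{DT}. Once $\|\mathbf{C}^{N}_{T,x}\|_{\omega;2p}\lesssim_{p}N^{-\beta_{\mathrm{BG}}/999}$ is in hand, the subsequent ``gluing'' to a uniform-in-$(T,x)$ estimate on $[0,1]\times\mathbb{T}_{N}$ (via a union bound over a fine $N^{-99}$-discretization plus the sub-microscopic short-time control of the type in Lemma \ref{lemma:st2}, as in the proof of Lemma \ref{lemma:regT2}) together with Lemma \ref{lemma:KPZ111} identifying $\mathbf{C}^{N}=\mathbf{D}^{N}$ with high probability will deliver Proposition \ref{prop:KPZ11}.
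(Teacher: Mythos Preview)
Your proposal is correct and follows essentially the same approach as the paper: use Lemma \ref{lemma:KPZ112} to get a deterministic $N^{-\beta_{\mathrm{BG}}/999}$ bound on the forcing term, then handle all other terms via the moment machinery of Proposition 3.2 in \cite{DT} (heat kernel estimates from Proposition \ref{prop:heat}, the martingale inequality Lemma \ref{lemma:mg}) to produce an integral inequality of Gronwall type with integrable kernel $\mathbf{O}_{S,T}^{-1/2}$, and close with Gronwall. The paper's proof is more terse but structurally identical; your remarks about a priori finiteness via stopping and the subsequent union-bound gluing to Corollary \ref{corollary:KPZ114} are also accurate.
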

%%%
%%%
\begin{proof}
We estimate the $\|\|_{\omega;2p}^{2}$ squared norm for every term on the RHS of the $\mathbf{C}^{N}$ equation from Definition \ref{definition:KPZ111}. We first employ Lemma \ref{lemma:KPZ112} to establish the following estimate uniformly in $p\geq1$ and in uniformly in space-time; let us clarify the first bound below uses an elementary/general $\mathscr{L}^{p}\leq\mathscr{L}^{\infty}$ bound for random variables:
\begin{align}
\|\mathbf{H}^{N}_{T,x}(N^{1/2}\bar{\mathfrak{q}}\wt{\mathbf{Y}}^{N})\|_{\omega;2p}^{2} \ \leq \ \|\mathbf{H}^{N}_{T,x}(N^{1/2}\bar{\mathfrak{q}}\wt{\mathbf{Y}}^{N})\|_{\omega;\infty}^{2} \ \lesssim \ N^{-2\beta_{\mathrm{BG}}/999}. \label{eq:KPZ1131}
\end{align}
For the remaining terms in the $\mathbf{C}^{N}$ equation in Definition \ref{definition:KPZ111}, we will follow the proof of (3.12) in Proposition 3.2 in \cite{DT}. Similar to the proof of Lemma \ref{lemma:KPZ13}, all of the estimates used to prove (3.12) in Proposition 3.2 in \cite{DT} hold for the corresponding terms in the $\mathbf{C}^{N}$ equation from Definition \ref{definition:KPZ111} as we only need the heat kernel estimates in Proposition \ref{prop:heat} and, to control the $\mathbf{C}^{N}\d\xi^{N}$ term in the $\mathbf{C}^{N}$ equation, the martingale inequality in Lemma \ref{lemma:mg} that generalizes Lemma 3.1 in \cite{DT} beyond the Gartner transform. We ultimately deduce from \eqref{eq:KPZ1131} and this paragraph the following integral bound for $\|\|_{\omega;2p}^{2}$; recall that $\mathbf{O}_{S,T}=|T-S|$:
\begin{align}
\|\mathbf{C}_{T,x}^{N}\|_{\omega;2p}^{2} \ \lesssim_{p} \ N^{-2\beta_{\mathrm{BG}}/999} + \int_{0}^{T}\sup_{y\in\mathbb{T}_{N}}\|\mathbf{C}_{S,y}^{N}\|_{\omega;2p}^{2} \d S + \int_{0}^{T}\mathbf{O}_{S,T}^{-1/2}\sup_{y\in\mathbb{T}_{N}}\|\mathbf{C}_{S,y}^{N}\|_{\omega;2p}^{2} \d S. \label{eq:KPZ1132}
\end{align}
We emphasize the estimate \eqref{eq:KPZ1132} holds uniformly in space-time on the LHS. Thus we may extend \eqref{eq:KPZ1132} upon taking a supremum over $\mathbb{T}_{N}$ on the LHS therein. At this point, we may employ the Gronwall inequality to deduce, for times $T\leq1$:
\begin{align}
\sup_{x\in\mathbb{T}_{N}}\|\mathbf{C}_{T,x}^{N}\|_{\omega;2p}^{2} \ \lesssim_{p} \ N^{-2\beta_{\mathrm{BG}}/999}\exp\left(\sup_{0\leq\mathrm{t}\leq1}\int_{0}^{\mathrm{t}}\d S + \sup_{0\leq\mathrm{t}\leq1}\int_{0}^{\mathrm{t}}\mathbf{O}_{S,\mathrm{t}}^{-1/2}\d S\right) \ \lesssim \ N^{-2\beta_{\mathrm{BG}}/999}, \label{eq:KPZ1133}
\end{align}
with the last inequality above following by an elementary integral calculation inside the exponential in the middle of \eqref{eq:KPZ1133}.
\end{proof}
%%%
From Lemma \ref{lemma:KPZ113}, we get the following union bound estimate that controls $\mathbf{C}^{N}$ over a very fine discretization of space-time.
%%%
\begin{corollary}\label{corollary:KPZ114}
\fsp Define $\mathbb{I}^{\d}=\mathbb{I}^{\mathbf{T},\d}\times\mathbb{T}_{N}$, in which $\mathbb{I}^{\mathbf{T},\d}=\{\mathfrak{j}N^{-99}\}_{\mathfrak{j}=0}^{N^{99}}$. The following holds with overwhelming probability:
\begin{align}
\sup_{(\mathrm{t},x)\in\mathbb{I}^{\d}}|\mathbf{C}_{\mathrm{t},x}^{N}| \ \leq \ N^{-\beta_{\mathrm{BG}}/99999}. \label{eq:KPZ114}
\end{align}
\end{corollary}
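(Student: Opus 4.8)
The plan is to derive Corollary \ref{corollary:KPZ114} from the pointwise moment bound in Lemma \ref{lemma:KPZ113} by a standard union bound over the discrete space-time set $\mathbb{I}^{\d}$. Since $|\mathbb{I}^{\d}| = |\mathbb{I}^{\mathbf{T},\d}||\mathbb{T}_{N}| \lesssim N^{99} \cdot N = N^{100}$, and Lemma \ref{lemma:KPZ113} gives $\|\mathbf{C}^{N}_{\mathrm{t},x}\|_{\omega;2p} \lesssim_{p} N^{-\beta_{\mathrm{BG}}/999}$ uniformly in space-time, the Chebyshev inequality applied at each point of $\mathbb{I}^{\d}$ at the threshold $N^{-\beta_{\mathrm{BG}}/99999}$ produces, for each fixed $p$, a per-point probability of at most $C_{p} N^{-2p\beta_{\mathrm{BG}}/999} N^{2p\beta_{\mathrm{BG}}/99999}$. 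Multiplying by $|\mathbb{I}^{\d}| \lesssim N^{100}$ and choosing $p$ large depending on $\kappa$ (note $\beta_{\mathrm{BG}} > 0$ is uniformly bounded below) makes the resulting bound $\lesssim_{\kappa} N^{-\kappa}$, which is exactly the overwhelming-probability estimate of Definition \ref{definition:KPZ8}.

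Concretely, the steps I would carry out are: (1) invoke Lemma \ref{lemma:KPZ113} to get $\|\mathbf{C}^{N}_{\mathrm{t},x}\|_{\omega;2p}^{2p} \lesssim_{p} N^{-2p\beta_{\mathrm{BG}}/999}$ for each $(\mathrm{t},x) \in \mathbb{I}^{\d}$; (2) apply the Markov/Chebyshev inequality with the $2p$-th moment at level $N^{-\beta_{\mathrm{BG}}/99999}$ to obtain $\mathbf{P}(|\mathbf{C}^{N}_{\mathrm{t},x}| \geq N^{-\beta_{\mathrm{BG}}/99999}) \lesssim_{p} N^{-2p\beta_{\mathrm{BG}}/999 + 2p\beta_{\mathrm{BG}}/99999}$, the exponent being a negative multiple of $p$ (since $1/999 > 1/99999$); (3) union bound over $\mathbb{I}^{\d}$, absorbing the factor $N^{100}$; (4) given any $\kappa > 0$, fix $p = p(\kappa)$ so that $-2p\beta_{\mathrm{BG}}(1/999 - 1/99999) + 100 \leq -\kappa$, which is possible since $\beta_{\mathrm{BG}}$ is universally bounded below. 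This yields $\mathbf{P}(\sup_{(\mathrm{t},x)\in\mathbb{I}^{\d}}|\mathbf{C}^{N}_{\mathrm{t},x}| \geq N^{-\beta_{\mathrm{BG}}/99999}) \lesssim_{\kappa} N^{-\kappa}$.

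There is essentially no main obstacle here — this is the routine ``moment bound plus union bound over a polynomially large discretization'' argument already used repeatedly in the paper (e.g. in the proofs of Lemma \ref{lemma:regT2}, Lemma \ref{lemma:final11}). The only points requiring a sentence of care are: the gap between the exponents $\beta_{\mathrm{BG}}/999$ in the moment bound and $\beta_{\mathrm{BG}}/99999$ in the claimed conclusion, which is deliberately built in so that after raising to the $2p$-th power one has room to beat the $N^{100}$ from $|\mathbb{I}^{\d}|$; and the observation that $\beta_{\mathrm{BG}}$ being uniformly positive (hence bounded below by a fixed constant) is what allows $p$ to be chosen as a function of $\kappa$ alone. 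I would write this up in four or five lines, mirroring the phrasing of \eqref{eq:final111} in the proof of Lemma \ref{lemma:final11}.

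\begin{proof}
We employ the pointwise moment estimate of Lemma \ref{lemma:KPZ113} together with a union bound. Fix any $p \geq 1$. By Lemma \ref{lemma:KPZ113}, we have $\|\mathbf{C}^{N}_{\mathrm{t},x}\|_{\omega;2p}^{2p} \lesssim_{p} N^{-2p\beta_{\mathrm{BG}}/999}$ uniformly over $(\mathrm{t},x) \in [0,1]\times\mathbb{T}_{N}$. Hence, by the Markov inequality applied to the $2p$-th power,

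\begin{align}
\mathbf{P}\left(|\mathbf{C}^{N}_{\mathrm{t},x}| \geq N^{-\beta_{\mathrm{BG}}/99999}\right) \ \lesssim_{p} \ N^{2p\beta_{\mathrm{BG}}/99999} N^{-2p\beta_{\mathrm{BG}}/999} \ = \ N^{-2p\beta_{\mathrm{BG}}(1/999 - 1/99999)}. \label{eq:KPZ1141}
\end{align}

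Since $1/999 - 1/99999 > 0$ and $\beta_{\mathrm{BG}} > 0$ is uniformly bounded from below, the exponent on the far RHS of \eqref{eq:KPZ1141} is a negative multiple of $p$. We now take a union bound over $\mathbb{I}^{\d}$; recalling $|\mathbb{I}^{\d}| = |\mathbb{I}^{\mathbf{T},\d}||\mathbb{T}_{N}| \lesssim N^{99} N = N^{100}$, we obtain

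\begin{align}
\mathbf{P}\left(\sup_{(\mathrm{t},x)\in\mathbb{I}^{\d}}|\mathbf{C}^{N}_{\mathrm{t},x}| \geq N^{-\beta_{\mathrm{BG}}/99999}\right) \ \leq \ |\mathbb{I}^{\d}|\sup_{(\mathrm{t},x)\in\mathbb{I}^{\d}}\mathbf{P}\left(|\mathbf{C}^{N}_{\mathrm{t},x}| \geq N^{-\beta_{\mathrm{BG}}/99999}\right) \ \lesssim_{p} \ N^{100 - 2p\beta_{\mathrm{BG}}(1/999 - 1/99999)}. \label{eq:KPZ1142}
\end{align}

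Provided any $\kappa > 0$, we choose $p \geq 1$ sufficiently large, depending only on $\kappa$, so that $100 - 2p\beta_{\mathrm{BG}}(1/999 - 1/99999) \leq -\kappa$; this is possible because $\beta_{\mathrm{BG}}$ is uniformly bounded from below. Thus the probability on the LHS of \eqref{eq:KPZ1142} is $\lesssim_{\kappa} N^{-\kappa}$, which by Definition \ref{definition:KPZ8} is exactly the assertion that \eqref{eq:KPZ114} holds with overwhelming probability.
\end{proof}
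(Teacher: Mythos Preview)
Your proposal is correct and follows essentially the same approach as the paper: apply Chebyshev with the $2p$-th moment bound from Lemma \ref{lemma:KPZ113}, take a union bound over $|\mathbb{I}^{\d}|\lesssim N^{100}$, and choose $p$ large depending on $\kappa$. The paper additionally simplifies the exponent $-2p\beta_{\mathrm{BG}}/999+2p\beta_{\mathrm{BG}}/99999$ to $\leq -2p\beta_{\mathrm{BG}}/9999$ before the union bound, but this is purely cosmetic.
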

%%%
%%%
\begin{proof}
Provided any $(\mathrm{t},x)\in\mathbb{I}^{\d}$, the Chebyshev inequality implies the probability estimate
\begin{align}
\mathbf{P}\left(|\mathbf{C}_{\mathrm{t},x}^{N}|\geq N^{-\beta_{\mathrm{BG}}/99999}\right) \ \leq \ N^{2p\beta_{\mathrm{BG}}/99999}\|\mathbf{C}_{\mathrm{t},x}^{N}\|_{\omega;2p}^{2p} \ \lesssim_{p} \ N^{-2p\beta_{\mathrm{BG}}/999+2p\beta_{\mathrm{BG}}/99999} \ \leq \ N^{-2p\beta_{\mathrm{BG}}/9999}. \label{eq:KPZ1141}
\end{align}
Therefore, a union bound implies that the probability the proposed estimate \eqref{eq:KPZ114} fails is bounded by $\kappa_{p}N^{-2p\beta_{\mathrm{BG}}/9999}$ times the number $|\mathbb{I}^{\d}|$ of points we take a union bound over, in which $\kappa_{p}\geq1$ depends only on $p\geq1$. As $|\mathbb{I}^{\d}|=|\mathbb{I}^{\mathbf{T},\d}||\mathbb{T}_{N}|\lesssim N^{100}$, we establish the probability the proposed estimate in \eqref{eq:KPZ114} fails is bounded by $\kappa_{p}N^{-2p\beta_{\mathrm{BG}}/9999+100}$. Taking $p\geq1$ arbitrarily large implies \eqref{eq:KPZ114} holds with overwhelming probability.
\end{proof}
%%%
%%%
\begin{proof}[Proof of \emph{Proposition \ref{prop:KPZ11}}]
Throughout, observe that the intersection of any uniformly bounded number of events that hold with high probability also holds with high probability, which can be easily shown with the union bound for the complements of these events. With this, by Lemma \ref{lemma:KPZ111}, it suffices to prove Proposition \ref{prop:KPZ11} but replacing $\mathbf{D}^{N}$ by $\mathbf{C}^{N}$. Next, we employ the estimate \eqref{eq:final110}, bootstrapping an estimate from a discretization to the continuum, but for $\mathbf{C}^{N}$ in place of $\mathbf{Q}^{N}$; observe the proof of \eqref{eq:final110} is blind to what $\mathbf{Q}^{N}$ actually is:
\begin{align}
\|\mathbf{C}^{N}\|_{1;\mathbb{T}_{N}} \ \leq \ \sup_{(T,x)\in\mathbb{I}^{\d}}|\mathbf{C}_{T,x}^{N}| + \sup_{(T,x)\in\mathbb{I}^{\d}}\sup_{\mathrm{s}\in[0,N^{-99}]}|\grad_{\mathrm{s}}^{\mathbf{T}}\mathbf{C}_{T,x}^{N}|. \label{eq:KPZ111}
\end{align}
It suffices to estimate each term from the RHS of \eqref{eq:KPZ111} by $N^{-\beta}$ times a universal constant with high probability. For the first term on the RHS of \eqref{eq:KPZ111}, we employ Corollary \ref{corollary:KPZ114}. For the second term, we employ Lemma \ref{lemma:st2}, which implies the second term on the RHS of \eqref{eq:KPZ111} is controlled by the first term on the RHS of \eqref{eq:KPZ111}. This finishes the proof of Proposition \ref{prop:KPZ11}.
\end{proof}
%%%
%%%
\subsection{Proof of Proposition \ref{prop:KPZ10}}
%%%
We use a continuity method that is frequently used in the study of PDE. Roughly speaking, we observe that for stable initial data, regularity estimates defining the stopping time $\mathfrak{t}_{\mathrm{st}}$ of current interest from Definition \ref{definition:KPZ1} are satisfied at time 0, at least with high probability. We then condition on \emph{path-space} events in which $\mathbf{U}^{N}$ admits sufficiently good upper bounds, which will be inherited by sufficiently good upper and lower bounds for $\mathbf{Q}^{N}$ and $\mathbf{D}^{N}=\mathbf{U}^{N}-\mathbf{Q}^{N}$. We also condition on \emph{path-space} events in which the space-time regularity of $\mathbf{U}^{N}$ is sufficiently good provided upper and lower bounds; this will follow from the probability estimates in Proposition \ref{prop:reg}. In particular, until time $\mathfrak{t}_{\mathrm{st}}$, we basically know the space-time estimates defining $\mathfrak{t}_{\mathrm{st}}$ with high probability except upon replacing $\mathbf{Z}^{N}$ in there with $\mathbf{U}^{N}$. However, Lemma \ref{lemma:KPZ6} implies that since we look at time before $\mathfrak{t}_{\mathrm{st}}$, we do not actually have to make such replacement. Now, if $\mathfrak{t}_{\mathrm{st}}\neq1$, in which case $\mathfrak{t}_{\mathrm{st}}<1$, we may apply the short-time estimates in Lemma \ref{lemma:st2} to push the space-time estimates in $\mathfrak{t}_{\mathrm{st}}$ for $\mathbf{Z}^{N}$ past $\mathfrak{t}_{\mathrm{st}}$ by a very small amount of time, thus contradicting the definition of $\mathfrak{t}_{\mathrm{st}}$ similar to our proof of Lemma \ref{lemma:KPZ111}. We clarify that the crux of the strategy is the observation that we can turn slightly suboptimal estimates for $\mathbf{Z}^{N}$ into slightly better suboptimal estimates, which are closer to ``the truth". This is because we need the a priori suboptimal estimates in $\mathfrak{t}_{\mathrm{st}}$ only to analyze the $\bar{\mathfrak{q}}$ term in the $\mathbf{Z}^{N}$ and $\mathbf{U}^{N}$ equations, and such term is vanishingly small anyway with respect to space-time regularity norms in $\mathfrak{t}_{\mathrm{st}}$. Therefore the space-time behavior of the $\bar{\mathfrak{q}}$ term in the $\mathbf{Z}^{N}$ and $\mathbf{U}^{N}$ equations is, with high probability, better than the space-time behavior of $\mathbf{Z}^{N}$ and $\mathbf{U}^{N}$ that we assume through the stopping time $\mathfrak{t}_{\mathrm{st}}$, while the other terms in the $\mathbf{Z}^{N}$ and $\mathbf{U}^{N}$ equations admit ``good" space-time estimates by standard moment bounds as in \cite{BG,DT}. To make this precise, we introduce another set of stopping times.
%%%
\begin{definition}\label{definition:KPZ101}
\fsp We define $\e_{\mathrm{ap},1}=999^{-999}\e_{\mathrm{ap}}\wedge999^{-999}\beta$, where $\e_{\mathrm{ap}}>0$ is from Definition \ref{definition:KPZ1} and $\beta>0$ is the universal constant in Proposition \ref{prop:KPZ11}. We now define the following pair of stopping times, the first of which provides uniform upper and lower bounds for $\mathbf{Q}^{N}$, and where the second stopping time below provides a uniform upper bound for $\mathbf{D}^{N}$:
\begin{align}
\mathfrak{t}_{\mathrm{st},1} \ &= \ \inf\left\{\mathrm{t}\in[0,1]: \ \|\mathbf{Q}^{N}\|_{\mathrm{t};\mathbb{T}_{N}}+\|(\mathbf{Q}^{N})^{-1}\|_{\mathrm{t};\mathbb{T}_{N}}\geq N^{\e_{\mathrm{ap},1}}\right\}\wedge1 \quad \mathrm{and} \quad \mathfrak{t}_{\mathrm{st},2} \ = \ \inf\left\{\mathrm{t}\in[0,1]: \ \|\mathbf{D}^{N}\|_{\mathrm{t};\mathbb{T}_{N}} \geq N^{-\beta/2}\right\}\wedge1. \nonumber
\end{align}
We proceed with the following time regularity stopping time, in which $\mathbb{I}^{\mathbf{T}}$ is the set of discrete mesoscopic time-scales that were defined in Definition \ref{definition:KPZ1}; we use the same exponent $\e_{\mathrm{ap},1}$ below as we did for $\mathfrak{t}_{\mathrm{st},1}$ and $\mathfrak{t}_{\mathrm{st},2}$:
\begin{align}
\mathfrak{t}_{\mathrm{st},3} \ &= \ \inf\left\{\mathrm{t}\in[0,1]: \ \sup_{\mathrm{s}\in\mathbb{I}^{\mathbf{T}}}\mathrm{s}^{-1/4}\|\grad_{-\mathrm{s}}^{\mathbf{T}}\mathbf{U}^{N}\|_{\mathrm{t};\mathbb{T}_{N}} \geq N^{\e_{\mathrm{ap},1}}(1+\|\mathbf{U}^{N}\|_{\mathrm{t};\mathbb{T}_{N}}^{2})\right\}\wedge1.
\end{align}
We additionally define the following spatial regularity stopping time, where $\mathfrak{l}_{N}$ is the maximal length-scale for spatial gradients that was used in the stopping time $\mathfrak{t}_{\mathrm{st}}$ in Definition \ref{definition:KPZ1}. We again use the exponent $\e_{\mathrm{ap},1}$ below as we did for $\mathfrak{t}_{\mathrm{st},1}$, $\mathfrak{t}_{\mathrm{st},2}$, and $\mathfrak{t}_{\mathrm{st},3}$:
\begin{align}
\mathfrak{t}_{\mathrm{st},4} \ &= \ \inf\left\{\mathrm{t}\in[0,1]: \ \sup_{1\leq|\mathfrak{l}|\leq\mathfrak{l}_{N}}N^{1/2}|\mathfrak{l}|^{-1/2}\|\grad_{\mathfrak{l}}^{\mathbf{X}}\mathbf{U}^{N}\|_{\mathrm{t};\mathbb{T}_{N}} \geq N^{\e_{\mathrm{ap},1}}(1+\|\mathbf{U}^{N}\|_{\mathrm{t};\mathbb{T}_{N}}^{2})\right\}\wedge1.
\end{align}
We proceed with defining the following a priori short-time estimate random time for $\mathbf{Z}^{N}$. We emphasize that the following time is \emph{not} a stopping time as it looks forward in the future and thus it is not adapted to the filtration of the interacting particle system. However, this will not be important as our analysis in this section is deterministic after we have established Lemma \ref{lemma:KPZ102} below:
\begin{align}
\mathfrak{t}_{\mathrm{st},5} \ &= \ \inf\left\{\mathrm{t}\in[0,1]: \ \sup_{\mathrm{s}\in[0,N^{-99}]}\sup_{0\leq\mathrm{t}_{0}\leq\mathrm{t}}\sup_{x\in\mathbb{T}_{N}}\|\mathbf{Z}^{N}\|_{\mathrm{t}_{0};\mathbb{T}_{N}}^{-1}|\grad_{\mathrm{s}}^{\mathbf{T}}\mathbf{Z}_{\mathrm{t}_{0},x}^{N}| \geq N^{-1/2+\e_{\mathrm{ap},1}}\right\}\wedge1.
\end{align}
We conclude by defining $\mathfrak{t}_{\mathrm{st},6}=\mathfrak{t}_{\mathrm{st},1}\wedge\mathfrak{t}_{\mathrm{st},2}\wedge\mathfrak{t}_{\mathrm{st},3}\wedge\mathfrak{t}_{\mathrm{st},4}\wedge\mathfrak{t}_{\mathrm{st},5}$.
\end{definition}
%%%
%%%
\begin{lemma}\label{lemma:KPZ102}
\fsp With high probability, we have $\mathfrak{t}_{\mathrm{st},6}=1$.
\end{lemma}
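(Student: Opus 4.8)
\textbf{Proof proposal for Lemma \ref{lemma:KPZ102}.} The plan is to show each of the six random times $\mathfrak{t}_{\mathrm{st},1},\ldots,\mathfrak{t}_{\mathrm{st},5}$ equals $1$ with high probability and then take the intersection, which again holds with high probability since we are intersecting a uniformly bounded number of high-probability events. The times $\mathfrak{t}_{\mathrm{st},1}$ and $\mathfrak{t}_{\mathrm{st},2}$ are the easy ones: $\mathfrak{t}_{\mathrm{st},1}=1$ with high probability (in fact with overwhelming probability for the upper bound and high probability for the lower bound) is exactly the content of Lemma \ref{lemma:final11} and Lemma \ref{lemma:final12}, since $\e_{\mathrm{ap},1}>0$ is a fixed positive universal constant; and $\mathfrak{t}_{\mathrm{st},2}=1$ with high probability is immediate from Proposition \ref{prop:KPZ11}, because on the event $\{\|\mathbf{D}^{N}\|_{1;\mathbb{T}_{N}}\lesssim N^{-\beta}\}$ (which holds with high probability) we have $\|\mathbf{D}^{N}\|_{\mathrm{t};\mathbb{T}_{N}}\leq N^{-\beta/2}$ for all $\mathrm{t}\leq1$ once $N$ is large, so the stopping condition defining $\mathfrak{t}_{\mathrm{st},2}$ is never triggered.

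Next, I would handle $\mathfrak{t}_{\mathrm{st},3}$ and $\mathfrak{t}_{\mathrm{st},4}$. These are precisely the complements of the events $\mathcal{E}_{\e_{\mathrm{ap},1}}^{\mathbf{T}}(\mathfrak{t}_{\mathrm{r}};\mathbb{T}_{N})$ and $\mathcal{E}_{\e_{\mathrm{ap},1}}^{\mathbf{X}}(\mathfrak{t}_{\mathrm{r}};\mathbb{T}_{N})$ from Proposition \ref{prop:reg}, evaluated at the deterministic time $\mathfrak{t}_{\mathrm{r}}=1$ (or rather: if $\mathfrak{t}_{\mathrm{st},3}<1$ then $\mathcal{E}_{\e_{\mathrm{ap},1}}^{\mathbf{T}}(\mathfrak{t}_{\mathrm{st},3};\mathbb{T}_{N})$ holds by the same continuity-of-the-heat-operator argument used in the proof of Lemma \ref{lemma:KPZ111}, since each term in the $\mathbf{U}^{N}$ equation is continuous in time). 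Thus Proposition \ref{prop:reg} gives $\mathbf{P}(\mathfrak{t}_{\mathrm{st},3}\neq1)\lesssim_{\vartheta,\kappa}N^{-\kappa}$ for every $\kappa$ (so overwhelming probability, hence high probability) and $\mathbf{P}(\mathfrak{t}_{\mathrm{st},4}\neq1)\lesssim N^{-\beta_{\mathrm{r}}}$ (high probability by Remark \ref{remark:KPZ9}). Here one should note $\e_{\mathrm{ap},1}\leq 999^{-999}\beta$ and $\beta_{\mathrm{r}}>0$ is universal and independent of such small parameters, so the application of Proposition \ref{prop:reg} with $\vartheta=\e_{\mathrm{ap},1}$ is legitimate.

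For $\mathfrak{t}_{\mathrm{st},5}$, the short-time regularity statement, I would invoke Lemma \ref{lemma:st2} (the sub-microscopic short-time estimate referenced throughout Section \ref{section:reg} and Section \ref{section:KPZ1011}): it gives that, with overwhelming probability, uniformly over $\mathrm{s}\in[0,N^{-99}]$ and over space-time, the normalized time-gradient $\|\mathbf{Z}^{N}\|_{\mathrm{t}_{0};\mathbb{T}_{N}}^{-1}|\grad_{\mathrm{s}}^{\mathbf{T}}\mathbf{Z}^{N}_{\mathrm{t}_{0},x}|$ is at most a small multiple of $N^{-1/2}$, which is $\leq N^{-1/2+\e_{\mathrm{ap},1}}$ for $N$ large since $\e_{\mathrm{ap},1}>0$. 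Hence $\mathfrak{t}_{\mathrm{st},5}=1$ with overwhelming probability, in particular with high probability. Note $\mathfrak{t}_{\mathrm{st},5}$ is not adapted, but this is harmless — we only need it as an almost-sure/high-probability event about the realized path, exactly as anticipated in Definition \ref{definition:KPZ101}.

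Finally, combining the five high-probability events via the union bound for their complements yields $\mathfrak{t}_{\mathrm{st},6}=\mathfrak{t}_{\mathrm{st},1}\wedge\cdots\wedge\mathfrak{t}_{\mathrm{st},5}=1$ with high probability. The main obstacle is really just bookkeeping: one must check that the parameter $\e_{\mathrm{ap},1}=999^{-999}\e_{\mathrm{ap}}\wedge999^{-999}\beta$ is small enough to be fed as the ``$\vartheta$'' into Proposition \ref{prop:reg} and into Lemmas \ref{lemma:final11}, \ref{lemma:final12}, \ref{lemma:st2} while remaining uniformly bounded below, so that all the cited high/overwhelming-probability bounds apply with constants independent of $N$; and that the continuity-in-time argument (à la Lemma \ref{lemma:KPZ111}) correctly reduces ``$\mathfrak{t}_{\mathrm{st},i}<1$'' to the corresponding path-space event at a fixed time. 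No genuinely new estimate is needed — every ingredient is already established in Propositions \ref{prop:KPZ11}, \ref{prop:reg} and Lemmas \ref{lemma:final11}, \ref{lemma:final12}, \ref{lemma:st2}.
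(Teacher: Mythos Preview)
Your proposal is correct and follows essentially the same approach as the paper's proof: handle each $\mathfrak{t}_{\mathrm{st},j}$ for $j=1,\ldots,5$ separately using Lemmas \ref{lemma:final11}--\ref{lemma:final12}, Proposition \ref{prop:KPZ11}, Proposition \ref{prop:reg} (applied with $\mathfrak{t}_{\mathrm{r}}=\mathfrak{t}_{\mathrm{st},3}$ and $\mathfrak{t}_{\mathrm{r}}=\mathfrak{t}_{\mathrm{st},4}$ respectively), and Lemma \ref{lemma:st2}, then intersect. The only cosmetic slip is writing ``six random times'' when you list five; otherwise the argument, including the continuity reduction and the remark on non-adaptedness of $\mathfrak{t}_{\mathrm{st},5}$, matches the paper.
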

%%%
%%%
\begin{proof}
As remarked at the beginning of the proof for Proposition \ref{prop:KPZ11}, the intersection of a uniformly bounded number of events that hold with high probability also holds with high probability. Therefore, it suffices to prove that $\mathfrak{t}_{\mathrm{st},\mathfrak{j}}=1$ with high probability for any $\mathfrak{j}\in\{1,\ldots,6\}$. For $\mathfrak{j}=1$, we first observe that the $\|\|_{\mathrm{t};\mathbb{T}_{N}}$ norm is monotone non-decreasing in $\mathrm{t}$. Thus, because $\mathfrak{t}_{\mathrm{st},1}\leq1$, if $\mathfrak{t}_{\mathrm{st},1}\neq1$ then $\mathfrak{t}_{\mathrm{st},1}<1$, so the lower bound defining $\mathfrak{t}_{\mathrm{st},1}$ is actually realized for some $\mathrm{t}\in[0,1)$, and thus the upper bounds in Lemma \ref{lemma:final11} and Lemma \ref{lemma:final12} fail on this event. By the probability estimates in Lemmas \ref{lemma:final11} and \ref{lemma:final12}, such failure happens outside an event of high probability, so we have $\mathfrak{t}_{\mathrm{st},1}=1$ with high probability. A similar argument but when using Proposition \ref{prop:KPZ11} in place of Lemmas \ref{lemma:final11} and \ref{lemma:final12} shows $\mathfrak{t}_{\mathrm{st},2}=1$ with high probability as well. We proceed with showing $\mathfrak{t}_{\mathrm{st},3}=1$ with probability. Consider Proposition \ref{prop:reg} for the choice of stopping time $\mathfrak{t}_{\mathrm{r}}=\mathfrak{t}_{\mathrm{st},3}$. Let us assume that $\mathfrak{t}_{\mathrm{st},3}\neq1$, and thus like the previous argument we have $\mathfrak{t}_{\mathrm{st},3}<1$ for this event. Also similar to the previous argument, note if $\mathfrak{t}_{\mathrm{st},3}<1$, then the lower bound defining $\mathfrak{t}_{\mathrm{st},3}$ is realized at the time $\mathfrak{t}_{\mathrm{st},3}$. In particular, through the time-regularity estimate in Proposition \ref{prop:reg} we know that this only happens outside an event that happens with high probability, and thus $\mathfrak{t}_{\mathrm{st},3}=1$ with high probability. The same argument but using the spatial regularity estimate from Proposition \ref{prop:reg} for $\mathfrak{t}_{\mathrm{r}}=\mathfrak{t}_{\mathrm{st},4}$ implies that $\mathfrak{t}_{\mathrm{st},4}=1$ with high probability as well. We are left with proving $\mathfrak{t}_{\mathrm{st},5}=1$ with high probability. This follows immediately from Lemma \ref{lemma:st2}, so we are done.
\end{proof}
%%%
%%%
\begin{proof}[Proof of \emph{Proposition \ref{prop:KPZ10}}]
We first observe the following union bound inequality, which tells us that if $\mathfrak{t}_{\mathrm{st}}<1$, then $\mathfrak{t}_{\mathrm{st},6}<1$ or $\mathfrak{t}_{\mathrm{st},6}=1$ and $\mathfrak{t}_{\mathrm{st}}<1$, where $\mathfrak{t}_{\mathrm{st},6}$ is the last stochastic time defined in Definition \ref{definition:KPZ101}:
\begin{align}
\mathbf{P}\left(\mathfrak{t}_{\mathrm{st}}<1\right) \ \leq \ \mathbf{P}\left(\mathfrak{t}_{\mathrm{st},6}<1\right) + \mathbf{P}\left(\mathfrak{t}_{\mathrm{st},6}=1, \ \mathfrak{t}_{\mathrm{st}}<1\right). \label{eq:KPZ101}
\end{align}
We apply Lemma \ref{lemma:KPZ102} and deduce the first probability on the RHS of \eqref{eq:KPZ101} is at most $\gamma+\kappa_{\gamma}\mathrm{o}_{N}$ for any $\gamma>0$, where $\mathrm{o}_{N}$ vanishes in the large-$N$ limit uniformly in $\gamma>0$. Thus, it suffices to deduce the same estimate for the second term on the RHS of \eqref{eq:KPZ101}. Actually, we will prove the second probability on the RHS of \eqref{eq:KPZ101} is equal to 0. To this end, let us recall the definition of $\mathfrak{t}_{\mathrm{st}}$ from Definition \ref{definition:KPZ1} and, again using a union bound inequality, get the following upper bound for the second term on the RHS of \eqref{eq:KPZ101}, which follows by conditioning on which of $\mathfrak{t}_{\mathrm{ap}}$ and $\mathfrak{t}_{\mathrm{RN}}^{\mathbf{T}}$ and $\mathfrak{t}_{\mathrm{RN}}^{\mathbf{X}}$ in Definition \ref{definition:KPZ1} is smallest and equal to $\mathfrak{t}_{\mathrm{st}}$:
\begin{align}
\mathbf{P}\left(\mathfrak{t}_{\mathrm{st},6}=1, \ \mathfrak{t}_{\mathrm{st}}=\mathfrak{t}_{\mathrm{ap}}<1\right) + \mathbf{P}\left(\mathfrak{t}_{\mathrm{st},6}=1, \ \mathfrak{t}_{\mathrm{st}}=\mathfrak{t}_{\mathrm{RN}}^{\mathbf{T}}<1\right)+\mathbf{P}\left(\mathfrak{t}_{\mathrm{st},6}=1, \ \mathfrak{t}_{\mathrm{st}}=\mathfrak{t}_{\mathrm{RN}}^{\mathbf{X}}<1\right). \label{eq:KPZ102}
\end{align}
We are left with showing each term on the RHS the above is equal to 0; this would give the proof of Proposition \ref{prop:KPZ10}, again because \eqref{eq:KPZ102} is an upper bound for the second term on the RHS of \eqref{eq:KPZ101}. We will organize our computations for each probability in \eqref{eq:KPZ102} in one of three bullet points below. First, we assume $N$ is sufficiently large so that $N^{\e_{\mathrm{ap},1}}\geq99999$, for example.
%%%
\begin{itemize}
\item We treat the first term in \eqref{eq:KPZ102}. To this end, consider $0<\mathfrak{t}_{N}\leq N^{-100}$ so $\mathfrak{t}_{\mathrm{ap}}+\mathfrak{t}_{N}\leq1$. Because $\mathfrak{t}_{\mathrm{st},6}=1$ by assumption of the event we are working on, we know $\mathfrak{t}_{\mathrm{st},5}=1$ as well. By definition of $\mathfrak{t}_{\mathrm{st},5}$ in Definition \ref{definition:KPZ101}, we deduce the following short-time estimate, which relates the value of $\mathbf{Z}^{N}$ after time $\mathfrak{t}_{\mathrm{ap}}$ and until time $\mathfrak{t}_{\mathrm{ap}}+\mathfrak{t}_{N}$ to its values at time $\mathfrak{t}_{\mathrm{ap}}$; the following first inequality is proved by using the proof for \eqref{eq:final110}, which we recall is blind to what $\mathbf{Q}^{N}$ actually is, while the second inequality estimating short-time behavior of $\mathbf{Z}^{N}$ follows from the identity $\mathfrak{t}_{\mathrm{st},5}=1$ we have just noted:
\begin{align}
\|\mathbf{Z}^{N}\|_{\mathfrak{t}_{\mathrm{ap}}+\mathfrak{t}_{N};\mathbb{T}_{N}} \ \leq \ \|\mathbf{Z}^{N}\|_{\mathfrak{t}_{\mathrm{ap}};\mathbb{T}_{N}} + \sup_{\mathrm{s}\in[0,N^{-99}]}\|\grad_{\mathrm{s}}^{\mathbf{T}}\mathbf{Z}^{N}\|_{\mathfrak{t}_{\mathrm{ap}};\mathbb{T}_{N}} \ \leq \ \|\mathbf{Z}^{N}\|_{\mathfrak{t}_{\mathrm{ap}};\mathbb{T}_{N}} + N^{-\frac12+\e_{\mathrm{ap},1}}\|\mathbf{Z}^{N}\|_{\mathfrak{t}_{\mathrm{ap}};\mathbb{T}_{N}}. \label{eq:KPZ103}
\end{align}
Recall from Lemma \ref{lemma:KPZ6} that until time $\mathfrak{t}_{\mathrm{st}}=\mathfrak{t}_{\mathrm{ap}}$, we have the identification $\mathbf{Z}^{N}=\mathbf{U}^{N}=\mathbf{Q}^{N}+\mathbf{D}^{N}$, where we that recall $\mathbf{U}^{N}$ is defined in Definition \ref{definition:KPZ5} and $\mathbf{Q}^{N}$ is defined in Definition \ref{definition:KPZ7}, and $\mathbf{D}^{N}$ is defined in Proposition \ref{prop:KPZ11}. Since $\mathfrak{t}_{\mathrm{st},6}=1$, we also have $\mathfrak{t}_{\mathrm{st},1}=1$ and $\mathfrak{t}_{\mathrm{st},2}=1$ by assumption, and this allows us to extend \eqref{eq:KPZ103} as follows:
\begin{align}
\|\mathbf{Z}^{N}\|_{\mathfrak{t}_{\mathrm{ap}}+\mathfrak{t}_{N};\mathbb{T}_{N}} \ \lesssim \ \|\mathbf{Z}^{N}\|_{\mathfrak{t}_{\mathrm{ap}};\mathbb{T}_{N}} \ \leq \ \|\mathbf{Q}^{N}\|_{\mathfrak{t}_{\mathrm{ap}};\mathbb{T}_{N}}+\|\mathbf{D}^{N}\|_{\mathfrak{t}_{\mathrm{ap}};\mathbb{T}_{N}} \ \lesssim \ N^{\e_{\mathrm{ap},1}}. \label{eq:KPZ104}
\end{align}
We recall $\e_{\mathrm{ap},1}\leq999^{-999}\e_{\mathrm{ap}}$ with $\e_{\mathrm{ap}}>0$ in Definition \ref{definition:KPZ1}. We also recall $N$ is large enough so that even with the implied constants in \eqref{eq:KPZ104}, we deduce that the far LHS of \eqref{eq:KPZ104} is at most $N^{2\e_{\mathrm{ap,1}}}\leq N^{\e_{\mathrm{ap}}/2}$. Parallel to \eqref{eq:KPZ103}, we also get, by applying $a^{-1}-(a+b)^{-1}\leq ba^{-2}$ for $a,b\geq0$ and by recalling $\mathfrak{t}_{\mathrm{st},5}=1$ that controls $(\mathbf{Z}^{N})^{-1}\grad^{\mathbf{T}}\mathbf{Z}^{N}$ for short times, that
\begin{align}
\|(\mathbf{Z}^{N})^{-1}\|_{\mathfrak{t}_{\mathrm{ap}}+\mathfrak{t}_{N};\mathbb{T}_{N}} \ \leq \ \|(\mathbf{Z}^{N})^{-1}\|_{\mathfrak{t}_{\mathrm{ap}};\mathbb{T}_{N}} + \sup_{\mathrm{s}\in[0,N^{-99}]}\|(\mathbf{Z}^{N})^{-2}\grad_{\mathrm{s}}^{\mathbf{T}}\mathbf{Z}^{N}\|_{\mathfrak{t}_{\mathrm{ap}};\mathbb{T}_{N}} \ \lesssim \ \|(\mathbf{Z}^{N})^{-1}\|_{\mathfrak{t}_{\mathrm{ap}};\mathbb{T}_{N}}, \label{eq:KPZ105}
\end{align}
while parallel to \eqref{eq:KPZ104}, we extend \eqref{eq:KPZ105} to the following estimate in which we now invoke the lower bound for $\mathbf{Q}^{N}$ that comes from the constraint $\mathfrak{t}_{\mathrm{st},1}=1$ along with the upper bound for $\mathbf{D}^{N}$ that comes from our assumption $\mathfrak{t}_{\mathrm{st},2}=1$:
\begin{align}
\|(\mathbf{Z}^{N})^{-1}\|_{\mathfrak{t}_{\mathrm{ap}}+\mathfrak{t}_{N};\mathbb{T}_{N}} \ \lesssim \ \|(\mathbf{Z}^{N})^{-1}\|_{\mathfrak{t}_{\mathrm{ap}};\mathbb{T}_{N}} \ \lesssim \ \|(\mathbf{Q}^{N})^{-1}\|_{\mathfrak{t}_{\mathrm{ap}};\mathbb{T}_{N}} + \|(\mathbf{Q}^{N})^{-2}\mathbf{D}^{N}\|_{\mathfrak{t}_{\mathrm{ap}};\mathbb{T}_{N}} \ \lesssim \ N^{\e_{\mathrm{ap},1}}. \label{eq:KPZ106}
\end{align}
Indeed, the last estimate above follows from the assumption that $\e_{\mathrm{ap},1}\leq999^{-999}\beta$, and thus the $N^{-\e_{\mathrm{ap},1}}$ lower bound for $\mathbf{Q}^{N}$ that we get from $\mathfrak{t}_{\mathrm{st},1}=1$ is much larger than the $N^{-\beta/2}$ upper bound for $\mathbf{D}^{N}$ that we get from $\mathfrak{t}_{\mathrm{st},2}=1$, at least in the large-$N$ limit. We emphasize that $\e_{\mathrm{ap},1}\leq999^{-999}\e_{\mathrm{ap}}$ by construction as well, and thus the far LHS of \eqref{eq:KPZ106} is bounded above by $N^{\e_{\mathrm{ap}}/2}$ without any implied constants or extra factors. Thus, \eqref{eq:KPZ104} and \eqref{eq:KPZ106} imply the lower bound in the infimum defining $\mathfrak{t}_{\mathrm{ap}}$ fails for all times $\mathfrak{t}\in[0,1]$ before $\mathfrak{t}_{\mathrm{ap}}+\mathfrak{t}_{N}$. This contradicts the definition of $\mathfrak{t}_{\mathrm{ap}}$ if $\mathfrak{t}_{\mathrm{ap}}<1$, as these lower bounds necessarily fail at and/or immediately after $\mathfrak{t}_{\mathrm{ap}}<1$ by definition of $\mathfrak{t}_{\mathrm{ap}}$. This shows the first probability in \eqref{eq:KPZ102} is 0.
\item We move to the second probability in \eqref{eq:KPZ102}, which amounts to estimating time gradients of $\mathbf{Z}^{N}$. In particular, take $0<\mathfrak{t}_{N}\leq N^{-100}$ so $\mathfrak{t}_{\mathrm{RN}}^{\mathbf{T}}+\mathfrak{t}_{N}\leq1$ similar to the previous bullet point. Consider any $0\leq\mathfrak{t}\leq\mathfrak{t}_{\mathrm{RN}}^{\mathbf{T}}+\mathfrak{t}_{N}$. Define $0\leq\mathfrak{t}_{0}\leq\mathfrak{t}_{\mathrm{RN}}^{\mathbf{T}}$ to be the closest such time to $\mathfrak{t}$. Last, take any $\mathfrak{r}\in\mathbb{I}^{\mathbf{T}}$, with $\mathbb{I}^{\mathbf{T}}$ in Definition \ref{definition:KPZ1}. The time gradient of $\mathbf{Z}^{N}$ evaluated at time $\mathfrak{t}$ with respect to time-scale $-\mathfrak{r}$ is the time gradient of $\mathbf{Z}^{N}$ with respect to the same time-scale $-\mathfrak{r}$ but evaluated at time $\mathfrak{t}_{0}\leq\mathfrak{t}_{\mathrm{RN}}^{\mathbf{T}}$, if we include two error terms that result from replacing the times at which we evaluate $\mathbf{Z}^{N}$. To be precise, this first error term is given by the difference of $\mathbf{Z}^{N}$ at time $\mathfrak{t}+\mathfrak{r}$ with $\mathbf{Z}^{N}$ at time $\mathfrak{t}_{0}+\mathfrak{r}$, and the second error term is given by the difference of $\mathbf{Z}^{N}$ at time $\mathfrak{t}$ with $\mathbf{Z}^{N}$ at time $\mathfrak{t}_{0}$. We observe now that the difference between any of these two pairs of times at which we compare the values of $\mathbf{Z}^{N}$ is bounded by $N^{-100}$, because the distance of any time $\mathfrak{t}\leq\mathfrak{t}_{\mathrm{RN}}^{\mathbf{T}}+\mathfrak{t}_{N}$ to the set of times less than or equal to $\mathfrak{t}_{\mathrm{RN}}^{\mathbf{T}}$ is at most $\mathfrak{t}_{N}\leq N^{-100}$. The conclusion of the last three sentences is the following, uniform over allowable time-scales $\mathfrak{r}\in\mathbb{I}^{\mathbf{T}}$ and which is a time-gradient version of \eqref{eq:KPZ103}: 
\begin{align}
\|\grad_{-\mathfrak{r}}^{\mathbf{T}}\mathbf{Z}^{N}\|_{\mathfrak{t}_{\mathrm{RN}}^{\mathbf{T}}+\mathfrak{t}_{N};\mathbb{T}_{N}} \ \leq \ \|\grad_{-\mathfrak{r}}^{\mathbf{T}}\mathbf{Z}^{N}\|_{\mathfrak{t}_{\mathrm{RN}}^{\mathbf{T}};\mathbb{T}_{N}}+2\sup_{\mathrm{s}\in[0,N^{-99}]}\|\grad_{\mathrm{s}}^{\mathbf{T}}\mathbf{Z}^{N}\|_{\mathfrak{t}_{\mathrm{RN}}^{\mathbf{T}};\mathbb{T}_{N}}. \label{eq:KPZ107}
\end{align}
Because we assume $\mathfrak{t}_{\mathrm{RN}}^{\mathbf{T}}=\mathfrak{t}_{\mathrm{st}}$, the first term on the RHS of \eqref{eq:KPZ107} stays the same if we replace $\mathbf{Z}^{N}$ by $\mathbf{U}^{N}$, consequence of the pathwise identification in Lemma \ref{lemma:KPZ6}. Because $\mathfrak{t}_{\mathrm{st},6}=1$ by assumption of the event in the second probability in \eqref{eq:KPZ102} on which we are working, we have the identity $\mathfrak{t}_{\mathrm{st},3}=1$; see Definition \ref{definition:KPZ101}. The identity $\mathfrak{t}_{\mathrm{st},3}=1$ implies that the estimate in the infimum defining $\mathfrak{t}_{\mathrm{st},3}$ fails for $\mathfrak{t}=1$, which therefore controls the first term on the RHS of \eqref{eq:KPZ107} via an upper bound we specify shortly. On the other hand, because $\mathfrak{t}_{\mathrm{st},6}=1$ by assumption, we may similarly deduce that $\mathfrak{t}_{\mathrm{st},5}=1$ holds automatically. By definition of $\mathfrak{t}_{\mathrm{st},5}$, the identity $\mathfrak{t}_{\mathrm{st},5}=1$ similarly implies that the estimate in the infimum defining $\mathfrak{t}_{\mathrm{st},5}$ also fails if $\mathfrak{t}=1$. This bounds the second term on the RHS of \eqref{eq:KPZ107} by the short-time factor of $N^{-1/2+\e_{\mathrm{ap},1}}$ times the same norm but for $\mathbf{Z}^{N}$ instead of its scale-$\mathrm{s}$ time-gradient. Ultimately, from this paragraph and \eqref{eq:KPZ107}, we deduce the following for which we note $\mathfrak{t}_{\mathrm{RN}}^{\mathbf{T}}+\mathfrak{t}_{N}\leq1$, so all norms may be pushed to time 1 as we are only concerned with upper bounds. Let us clarify the second bound below follows by $\mathfrak{r}\in\mathbb{I}^{\mathbf{T}}$, which implies $\mathfrak{r}\geq N^{-2}$ and $\mathfrak{r}^{-1/4}\leq N^{1/2}$; also note $\mathbf{U}^{N}=\mathbf{Z}^{N}$ until time $\mathfrak{t}_{\mathrm{RN}}^{\mathbf{T}}$:
\begin{align}
\mathfrak{r}^{-\frac14}\|\grad_{-\mathfrak{r}}^{\mathbf{T}}\mathbf{Z}^{N}\|_{\mathfrak{t}_{\mathrm{RN}}^{\mathbf{T}}+\mathfrak{t}_{N};\mathbb{T}_{N}} \ &\leq \ \|\grad_{-\mathfrak{r}}^{\mathbf{T}}\mathbf{U}^{N}\|_{\mathfrak{t}_{\mathrm{RN}}^{\mathbf{T}};\mathbb{T}_{N}}+2\sup_{\mathrm{s}\in[0,N^{-99}]}\|\grad_{\mathrm{s}}^{\mathbf{T}}\mathbf{Z}^{N}\|_{\mathfrak{t}_{\mathrm{RN}}^{\mathbf{T}};\mathbb{T}_{N}} \\
&\leq \ N^{\e_{\mathrm{ap},1}}(1+\|\mathbf{U}^{N}\|_{1;\mathbb{T}_{N}}^{2}) + \mathfrak{r}^{-\frac14}N^{-\frac12+\e_{\mathrm{ap},1}}\|\mathbf{Z}^{N}\|_{\mathfrak{t}_{\mathrm{RN}}^{\mathbf{T}};\mathbb{T}_{N}} \\
&\leq \ N^{\e_{\mathrm{ap},1}}(1+\|\mathbf{U}^{N}\|_{1;\mathbb{T}_{N}}^{2}) + N^{\e_{\mathrm{ap},1}}\|\mathbf{U}^{N}\|_{\mathfrak{t}_{\mathrm{RN}}^{\mathbf{T}};\mathbb{T}_{N}}. \label{eq:KPZ108}
\end{align}
Because we now have $\mathfrak{t}_{\mathrm{RN}}^{\mathbf{T}}=\mathfrak{t}_{\mathrm{st}}$ on the event we currently work on, we may follow the second inequality in \eqref{eq:KPZ104} and estimate $\mathbf{U}^{N}$ by $\mathbf{Q}^{N}$ and $\mathbf{D}^{N}$. Similar to the end of \eqref{eq:KPZ104}, we remark that $\mathfrak{t}_{\mathrm{st},6}=1$ implies $\mathfrak{t}_{\mathrm{st},1}=1$ and $\mathfrak{t}_{\mathrm{st},2}=1$ automatically, which, again as in the end of \eqref{eq:KPZ104}, implies upper bounds for each of $\mathbf{Q}^{N}$ and $\mathbf{D}^{N}=\mathbf{U}^{N}-\mathbf{Q}^{N}$ given by $N^{\e_{\mathrm{ap},1}}$ each, for example, and thus an upper bound for $\mathbf{U}^{N}$ of the same order. In particular, via this paragraph and the estimate \eqref{eq:KPZ108}, we deduce the following estimate in which we again recall $\e_{\mathrm{ap},1}\leq999^{-999}\e_{\mathrm{ap}}$, so that the middle term below is at most $N^{\e_{\mathrm{ap}}/2}$ even with the implied constants/factors in the first estimate below:
\begin{align}
\mathfrak{r}^{-\frac14}\|\grad_{-\mathfrak{r}}^{\mathbf{T}}\mathbf{Z}^{N}\|_{\mathfrak{t}_{\mathrm{RN}}^{\mathbf{T}}+\mathfrak{t}_{N};\mathbb{T}_{N}} \ \lesssim \ N^{3\e_{\mathrm{ap},1}} \ \leq \ N^{\e_{\mathrm{ap}}/2}. \label{eq:KPZ109}
\end{align}
Because the last estimate in \eqref{eq:KPZ109} is uniform over admissible time-gradient time-scales $\mathfrak{r}\in\mathbb{I}^{\mathbf{T}}$, we observe that the estimate in the infimum defining $\mathfrak{t}_{\mathrm{RN}}^{\mathbf{T}}$ fails if $\mathfrak{t}=\mathfrak{t}_{\mathrm{RN}}^{\mathbf{T}}+\mathfrak{t}_{N}$. Because the LHS of said estimate in said infimum is monotone non-decreasing in $\mathfrak{t}\geq0$, we observe that it also fails for all times $\mathfrak{t}\leq\mathfrak{t}_{\mathrm{RN}}^{\mathbf{T}}+\mathfrak{t}_{N}$. Thus, by definition of $\mathfrak{t}_{\mathrm{RN}}^{\mathbf{T}}$, we have $\mathfrak{t}_{\mathrm{RN}}^{\mathbf{T}}>\mathfrak{t}_{\mathrm{RN}}^{\mathbf{T}}+\mathfrak{t}_{N}$ as long as $\mathfrak{t}_{\mathrm{RN}}^{\mathbf{T}}<1$, so that we can actually find $\mathfrak{t}_{N}>0$ satisfying $\mathfrak{t}_{\mathrm{RN}}^{\mathbf{T}}+\mathfrak{t}_{N}\leq1$. The previous inequality $\mathfrak{t}_{\mathrm{RN}}^{\mathbf{T}}>\mathfrak{t}_{\mathrm{RN}}^{\mathbf{T}}+\mathfrak{t}_{N}$ is a clear contradiction for $\mathfrak{t}_{N}>0$, so the second probability in \eqref{eq:KPZ102} must be that of an empty event and thus equal to zero.
\item We move to the last probability in \eqref{eq:KPZ102} for spatial gradients of $\mathbf{Z}^{N}$. We follow a strategy similar to the previous bullet point but replacing time-gradients by spatial gradients. In particular, let us first take $0<\mathfrak{t}_{N}\leq N^{-100}$ such that $\mathfrak{t}_{\mathrm{RN}}^{\mathbf{X}}+\mathfrak{t}_{N}\leq1$. We may replace any spatial gradient of $\mathbf{Z}^{N}$ evaluated at any time $\mathfrak{t}\leq\mathfrak{t}_{\mathrm{RN}}^{\mathbf{X}}+\mathfrak{t}_{N}$ with a spatial gradient of $\mathbf{Z}^{N}$ but evaluated at a time $\mathfrak{t}_{0}\leq\mathfrak{t}_{\mathrm{RN}}^{\mathbf{X}}$ satisfying $|\mathfrak{t}-\mathfrak{t}_{0}|\leq\mathfrak{t}_{N}\leq N^{-100}$, if we account for the resulting errors given by scale $\mathrm{s}\leq N^{-99}$ time-gradients of $\mathbf{Z}^{N}$, which come by replacing $\mathbf{Z}^{N}$ at times $\mathfrak{t}+\mathrm{s}$ and $\mathfrak{t}$ with $\mathbf{Z}^{N}$ at times $\mathfrak{t}_{0}+\mathrm{s}$ and $\mathfrak{t}_{0}$, respectively. Below, we have taken any arbitrary length-scale $1\leq|\mathfrak{l}|\leq\mathfrak{l}_{N}$ with $\mathfrak{l}_{N}=N^{1/2+\e_{\mathrm{RN}}}$ from Definition \ref{definition:KPZ1}:
\begin{align}
\|\grad_{\mathfrak{l}}^{\mathbf{X}}\mathbf{Z}^{N}\|_{\mathfrak{t}_{\mathrm{RN}}^{\mathbf{X}}+\mathfrak{t}_{N};\mathbb{T}_{N}} \ \leq \ \|\grad_{\mathfrak{l}}^{\mathbf{X}}\mathbf{Z}^{N}\|_{\mathfrak{t}_{\mathrm{RN}}^{\mathbf{X}};\mathbb{T}_{N}}+\sup_{\mathrm{s}\in[0,N^{-99}]}\|\grad_{\mathrm{s}}^{\mathbf{T}}\mathbf{Z}^{N}\|_{\mathfrak{t}_{\mathrm{RN}}^{\mathbf{X}};\mathbb{T}_{N}}. \label{eq:KPZ1010}
\end{align}
Let us multiply both sides of \eqref{eq:KPZ1010} by $N^{1/2}|\mathfrak{l}|^{-1/2}$. We argue as in the second bullet point. Because we have assumed $\mathfrak{t}_{\mathrm{RN}}^{\mathbf{X}}=\mathfrak{t}_{\mathrm{st}}$ on the event we are currently trying to prove has zero probability, the identification in Lemma \ref{lemma:KPZ6} lets us replace $\mathbf{Z}^{N}$ with $\mathbf{U}^{N}$ in the first term on the RHS of \eqref{eq:KPZ1010}. Because we have assumed $\mathfrak{t}_{\mathrm{st},6}=1$ on the current event as well, by Definition \ref{definition:KPZ101} we get $\mathfrak{t}_{\mathrm{st},4}=1$ automatically. This identity then implies the inequality in the infimum defining $\mathfrak{t}_{\mathrm{st},4}$ fails for $\mathfrak{t}=1$, thereby providing an upper bound for the first term on the RHS of \eqref{eq:KPZ1010}, which we specify shortly. On the other hand, we again note that $\mathfrak{t}_{\mathrm{st},6}=1$ implies that $\mathfrak{t}_{\mathrm{st},5}=1$ automatically, and this last identity provides an estimate for the second term on the RHS of \eqref{eq:KPZ1010}. Again, we note $\mathfrak{t}_{\mathrm{RN}}^{\mathbf{X}}+\mathfrak{t}_{N}\leq1$ by construction, so all norms may be pushed to time 1 since we are only concerned with upper bounds. We deduce the following parallel to \eqref{eq:KPZ108}, for which we note $|\mathfrak{l}|^{-1}\leq1$ trivially:
\begin{align}
N^{1/2}|\mathfrak{l}|^{-1/2}\|\grad_{\mathfrak{l}}^{\mathbf{X}}\mathbf{Z}^{N}\|_{\mathfrak{t}_{\mathrm{RN}}^{\mathbf{X}}+\mathfrak{t}_{N};\mathbb{T}_{N}} \ &\leq \ \|\grad_{\mathfrak{l}}^{\mathbf{X}}\mathbf{U}^{N}\|_{\mathfrak{t}_{\mathrm{RN}}^{\mathbf{X}};\mathbb{T}_{N}}+\sup_{\mathrm{s}\in[0,N^{-99}]}\|\grad_{\mathrm{s}}^{\mathbf{T}}\mathbf{Z}^{N}\|_{\mathfrak{t}_{\mathrm{RN}}^{\mathbf{X}};\mathbb{T}_{N}} \\
 &\leq \ N^{\e_{\mathrm{ap},1}}(1+\|\mathbf{U}^{N}\|_{1;\mathbb{T}_{N}}^{2}) + N^{1/2}|\mathfrak{l}|^{-1/2}N^{-\frac12+\e_{\mathrm{ap},1}}\|\mathbf{Z}^{N}\|_{\mathfrak{t}_{\mathrm{RN}}^{\mathbf{X}};\mathbb{T}_{N}} \label{eq:KPZ1011} \\
&\leq \ N^{\e_{\mathrm{ap},1}}(1+\|\mathbf{U}^{N}\|_{1;\mathbb{T}_{N}}^{2}) + N^{\e_{\mathrm{ap},1}}\|\mathbf{U}^{N}\|_{\mathfrak{t}_{\mathrm{RN}}^{\mathbf{X}};\mathbb{T}_{N}}. \label{eq:KPZ1012}
\end{align}
We now proceed with the argument in the second bullet point above starting with the paragraph immediately after \eqref{eq:KPZ108}. This provides an upper bound of $N^{\e_{\mathrm{ap}}/2}$ for the LHS of \eqref{eq:KPZ1011} uniformly in $1\leq|\mathfrak{l}|\leq\mathfrak{l}_{N}$, which, as we assumed $\mathfrak{t}_{\mathrm{RN}}^{\mathbf{X}}+\mathfrak{t}_{N}\leq1$ with $\mathfrak{t}_{N}>0$ given that $\mathfrak{t}_{\mathrm{RN}}^{\mathbf{X}}<1$, implies $\mathfrak{t}_{\mathrm{RN}}^{\mathbf{X}}>\mathfrak{t}_{\mathrm{RN}}^{\mathbf{X}}+\mathfrak{t}_{N}$, and this is a clear contradiction because $\mathfrak{t}_{N}$ is strictly positive.
\end{itemize}
%%%
We have shown each probability in \eqref{eq:KPZ102} is equal to zero. Combining this with \eqref{eq:KPZ101} and the paragraph following \eqref{eq:KPZ101} we used to control the first probability on the RHS of \eqref{eq:KPZ101}, this completes the proof of Proposition \ref{prop:KPZ10}.
\end{proof}
%%%
%%%
\section{Boltzmann-Gibbs Principle I -- Preliminary Estimates}\label{section:BGIPrelim}
%%%
We record general estimates for proofs of both Propositions \ref{prop:BGI1} and \ref{prop:BGI2} as their proofs will be similar in strategy. This includes a deterministic heat operator estimate that lets us replace space-time suprema of space-time heat operators by an integral whose expectation we can directly take. We estimate said expectation by a localization procedure for mesoscopic space-time averages of local functionals and then use a ``local equilibrium" estimate via one-block and two-blocks of \cite{GPV} and the log-Sobolev inequality of \cite{Yau}, ultimately reducing all of our calculations to standard equilibrium estimates that we will introduce. We conclude with a multiscale scheme to replace local functionals by space-time averages via step-by-step replacements.
%%%
\subsubsection{Heat Operator Estimate}
%%%
Our first estimate is deterministic. First, some convenient notation.
%%%
\begin{definition}\label{definition:hoe1}
\fsp For any possibly random function $\phi:\R_{\geq0}\times\mathbb{T}_{N}\to\R$ and any $\mathrm{t}\geq0$, define the space-time integral/sum
\begin{align}
\mathbf{I}_{\mathrm{t}}(\phi) \ = \ \mathbf{I}_{\mathrm{t}}(\phi_{\mathrm{s},y}) \ = \  \int_{0}^{\mathrm{t}}\wt{\sum}_{y\in\mathbb{T}_{N}}\phi_{\mathrm{s},y}\d\mathrm{s}.
\end{align}
\end{definition}
%%%
%%%
\begin{lemma}\label{lemma:hoe2}
\fsp Consider any possibly random function $\phi:\R_{\geq0}\times\mathbb{T}_{N}\to\R$. Provided any $\gamma>0$, we have the estimate
\begin{align}
\|\mathbf{H}^{N}(\phi\mathbf{Y}^{N})\|_{1;\mathbb{T}_{N}}^{3/2} \ \lesssim_{\gamma} \ N^{\gamma}\mathbf{I}_{1}(|\phi|^{3/2}|\mathbf{Y}^{N}|^{3/2}) \ \lesssim \ N^{\gamma+3\e_{\mathrm{ap}}/2}\mathbf{I}_{1}(|\phi|^{3/2}). \label{eq:hoe2}
\end{align}
\end{lemma}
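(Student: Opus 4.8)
The plan is to reduce \eqref{eq:hoe2} to the on‑diagonal heat kernel estimates of Proposition~\ref{prop:heat} via a weighted Hölder inequality, using the exponent $3/2$ on the left-hand side to trade the short-time singularity of $\mathbf{H}^{N}$ against an arbitrarily small power of $N$.

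The second inequality in \eqref{eq:hoe2} is immediate: by Definitions~\ref{definition:KPZ1} and \ref{definition:KPZ5} the process $\mathbf{Y}^{N}$ vanishes after $\mathfrak{t}_{\mathrm{st}}$ and satisfies $|\mathbf{Y}^{N}|\leq N^{\e_{\mathrm{ap}}}$ before it, so $|\phi|^{3/2}|\mathbf{Y}^{N}|^{3/2}\leq N^{3\e_{\mathrm{ap}}/2}|\phi|^{3/2}$ pointwise and hence $\mathbf{I}_{1}(|\phi|^{3/2}|\mathbf{Y}^{N}|^{3/2})\leq N^{3\e_{\mathrm{ap}}/2}\mathbf{I}_{1}(|\phi|^{3/2})$. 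The content is thus the first inequality. Fix $(T,x)\in[0,1]\times\mathbb{T}_{N}$ and, without loss of generality, $\gamma\in(0,1)$ (for $\gamma\geq1$ the bound is weaker than the case $\gamma=1/2$). Recall that for each $S<T$ the kernel $\mathbf{H}^{N}_{S,T,x,\cdot}$ is nonnegative with $\sum_{y}\mathbf{H}^{N}_{S,T,x,y}=1$, and $\sup_{y}\mathbf{H}^{N}_{S,T,x,y}\lesssim N^{-1}(\mathbf{O}_{S,T}^{-1/2}\wedge N)$, all from Proposition~\ref{prop:heat}. With the weight $w_{S}=(T-S)^{1/2-\gamma/2}$, Hölder with exponents $3/2$ and $3$ against the positive measure $\mathbf{H}^{N}_{S,T,x,y}\,dS$ (total mass $T\leq1$), via the splitting $|\phi\mathbf{Y}^{N}|=(w_{S}|\phi\mathbf{Y}^{N}|^{3/2})^{2/3}(w_{S}^{-2})^{1/3}$, gives
\begin{align*}
|\mathbf{H}^{N}_{T,x}(\phi\mathbf{Y}^{N})| \ \leq \ \Big(\int_{0}^{T}{\sum}_{y}\mathbf{H}^{N}_{S,T,x,y}(T-S)^{\frac12-\frac\gamma2}|\phi_{S,y}\mathbf{Y}^{N}_{S,y}|^{3/2}\,dS\Big)^{2/3}\Big(\int_{0}^{T}(T-S)^{-(1-\gamma)}\,dS\Big)^{1/3},
\end{align*}
where $\sum_{y}\mathbf{H}^{N}_{S,T,x,y}=1$ collapses the second factor, which equals $(T^{\gamma}/\gamma)^{1/3}\lesssim_{\gamma}1$.

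For the first factor I would bound $\mathbf{H}^{N}_{S,T,x,y}\leq\sup_{y'}\mathbf{H}^{N}_{S,T,x,y'}$ and use the elementary estimate $\tau^{1/2-\gamma/2}(\tau^{-1/2}\wedge N)\leq N^{\gamma}$ for all $\tau\in(0,1]$ (split at $\tau=N^{-2}$), so that $(T-S)^{1/2-\gamma/2}\mathbf{H}^{N}_{S,T,x,y}\lesssim N^{\gamma-1}$ uniformly; the first factor is then at most $(CN^{\gamma-1}\int_{0}^{T}\sum_{y}|\phi_{S,y}\mathbf{Y}^{N}_{S,y}|^{3/2}\,dS)^{2/3}=(CN^{\gamma}\mathbf{I}_{T}(|\phi\mathbf{Y}^{N}|^{3/2}))^{2/3}\leq(CN^{\gamma}\mathbf{I}_{1}(|\phi\mathbf{Y}^{N}|^{3/2}))^{2/3}$. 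Combining the two factors, raising to the power $3/2$, and taking the supremum over $(T,x)$ yields $\|\mathbf{H}^{N}(\phi\mathbf{Y}^{N})\|_{1;\mathbb{T}_{N}}^{3/2}\lesssim_{\gamma}N^{\gamma}\mathbf{I}_{1}(|\phi\mathbf{Y}^{N}|^{3/2})$, which is the first inequality after relabelling $\gamma$ (harmless, since $\gamma>0$ is arbitrary). The only step requiring care — the ``obstacle'', such as it is — is the balancing in the weight exponent $1/2-\gamma/2$: it must be strictly below $1/2$ so that $\int_{0}^{T}(T-S)^{-(1-\gamma)}\,dS$ converges, and with this choice the heat-kernel algebra costs precisely $N^{\gamma}$, so the full diffusive gain $N^{1-\gamma}$ over the naive $\ell^{\infty}$-against-the-kernel bound (which would only give $N\,\mathbf{I}_{1}(\cdot)$) is recovered; the remaining manipulations are routine.
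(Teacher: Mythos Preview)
Your proof is correct and follows essentially the same approach as the paper: both apply H\"older with exponents $(3/2,3)$ against the nonnegative heat-kernel measure, with a time weight chosen to balance the on-diagonal singularity $\mathbf{H}^{N}\lesssim N^{-1}\mathbf{O}_{S,T}^{-1/2}$. The only cosmetic difference is that the paper regularizes via $\mathrm{s}_{\sim}=\mathbf{O}_{S,T}\vee N^{-2}$ and bounds the resulting $\int \mathrm{s}_{\sim}^{-1}\,dS$ logarithmically (then by $N^{\gamma}$), whereas you build the $\gamma$ directly into the weight exponent $1/2-\gamma/2$ and use the $\wedge N$ cap in the kernel bound; your bookkeeping is arguably cleaner, and no relabelling of $\gamma$ is actually needed since only a single $N^{\gamma}$ factor appears.
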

%%%
%%%
\begin{proof}
Take $\mathrm{t}\in[0,1]$ and $x\in\mathbb{T}_{N}$. For any $\mathrm{s}\geq0$, we define $\mathrm{s}_{\sim}=\mathbf{O}_{\mathrm{s},\mathrm{t}}\vee N^{-2}$ with $\mathbf{O}_{\mathrm{s},\mathrm{t}}=|\mathrm{t}-\mathrm{s}|$. We have
\begin{align}
|\mathbf{H}^{N}_{\mathrm{t},x}(\phi\mathbf{Y}^{N})| \ = \ |\mathbf{H}^{N}_{\mathrm{t},x}(\mathrm{s}_{\sim}^{-1/3}\mathrm{s}_{\sim}^{1/3}\phi\mathbf{Y}^{N})| \ \leq \ \left(\mathbf{H}_{\mathrm{t},x}^{N}(\mathrm{s}_{\sim}^{-1})\right)^{1/3}\left(\mathbf{H}_{\mathrm{t},x}^{N}(\mathrm{s}_{\sim}^{1/2}|\phi|^{3/2}|\mathbf{Y}^{N}|^{3/2})\right)^{2/3}. \label{eq:hoe21}
\end{align}
The second estimate in \eqref{eq:hoe21} follows from first recalling the space-time heat operator $\mathbf{H}^{N}$ is integrating against the heat kernel in space-time. Thus, the second estimate in \eqref{eq:hoe21} is the immediate consequence of the Holder inequality, upon viewing integration as integrating against the heat kernel in space-time, with Holder conjugate exponents $3$ and $3/2$. To build off of \eqref{eq:hoe21}, let us treat the first factor from the far RHS of \eqref{eq:hoe21}. Note $\mathrm{s}_{\sim}^{-1}$ is independent of the spatial summation against the heat kernel, and because the heat kernel is a probability measure with respect to the forwards spatial variable, the first factor on the RHS of \eqref{eq:hoe21} turns into the integral of $\mathrm{s}_{\sim}^{-1}$ on the integration domain $[0,\mathrm{t}]\subseteq[0,1]$. Although $\mathbf{O}_{\mathrm{s},\mathrm{t}}^{-1}$ is not integrable near $\mathrm{t}$, because we have regularized $\mathbf{O}_{\mathrm{s},\mathrm{t}}$ with $\mathrm{s}_{\sim}$, the resulting integral is logarithmic in $N$ and therefore at most $C_{\gamma}N^{\gamma}$ where $\gamma>0$ is arbitrary. This gives
\begin{align}
\left(\mathbf{H}_{\mathrm{t},x}^{N}(\mathrm{s}_{\sim}^{-1})\right)^{1/3}\left(\mathbf{H}_{\mathrm{t},x}^{N}(\mathrm{s}_{\sim}^{1/2}|\phi|^{3/2}|\mathbf{Y}^{N}|^{3/2})\right)^{2/3} \ &= \ \left(\int_{0}^{\mathrm{t}}(|\mathrm{t}-\mathrm{s}|^{-1}\wedge N^{2})\d\mathrm{s}\right)^{1/3}\left(\mathbf{H}_{\mathrm{t},x}^{N}(\mathrm{s}_{\sim}^{1/2}|\phi|^{3/2}|\mathbf{Y}^{N}|^{3/2})\right)^{2/3} \\
&\lesssim_{\gamma} \ N^{\gamma}\left(\mathbf{H}_{\mathrm{t},x}^{N}(\mathrm{s}_{\sim}^{1/2}|\phi|^{3/2}|\mathbf{Y}^{N}|^{3/2})\right)^{2/3}. \label{eq:hoe22}
\end{align}
Thus, it remains to bound the heat operator $\mathbf{H}^{N}$ in \eqref{eq:hoe22} by $\mathbf{I}_{1}$. Recall this heat operator is integration in space-time against the heat kernel. By Proposition \ref{prop:heat}, the heat kernel is $\mathrm{O}(N^{-1}\mathrm{s}_{\sim}^{-1/2})$. The $\mathrm{s}_{\sim}^{-1/2}$ factor cancels the $\mathrm{s}_{\sim}^{1/2}$ factor in the heat operator in \eqref{eq:hoe22}. The $N^{-1}$ factor in this heat kernel estimate makes the sum over $\mathbb{T}_{N}$ into an average over $\mathbb{T}_{N}$, because $|\mathbb{T}_{N}|\lesssim N$, thus we are left with $\mathbf{I}_{\mathrm{t}}(\cdot)$ instead of $\mathbf{H}^{N}_{\mathrm{t},x}(\mathrm{s}_{\sim}^{1/2}\cdot)$. As $|\phi||\mathbf{Y}^{N}|\geq0$, we may extend $\mathbf{I}_{\mathrm{t}}(|\phi|^{3/2}|\mathbf{Y}^{N}|^{3/2})\leq\mathbf{I}_{1}(|\phi|^{3/2}|\mathbf{Y}^{N}|^{3/2})$, thus yielding the RHS of the proposed estimate from \eqref{eq:hoe22}. Because the RHS of the proposed estimate is independent of the original space-time variables $\mathrm{t}$ and $x$, it bounds the far LHS of \eqref{eq:hoe21} uniformly in these variables. This yields the first estimate in \eqref{eq:hoe2}. The second inequality follows by $|\mathbf{Y}^{N}|\lesssim N^{\e_{\mathrm{ap}}}$; see Definitions \ref{definition:KPZ1} and \ref{definition:KPZ5}.
\end{proof}
%%%
%%%
\subsubsection{Localization Map}
%%%
We eventually apply Lemma \ref{lemma:hoe2} with $\phi$ equal to the time-average of a local functional of the particle system. Although the functional in the time-average is local, its time-average itself is, in principle, completely non-local, because even on mesoscopic time-scales the values of $\eta$-variables far away from the support of the integrated local functional may affect the $\eta$-variables inside the support of the integrated local functional in finite time. This is just the fact that random walks can travel arbitrarily far in finite time. However, the probability of non-interacting random walks traveling much farther than their expected maximal displacement vanishes exponentially fast. We extend this to the $\eta$-variables, which are random walks that interact via exclusion. Before we give the main estimate of this localization, we introduce convenient notation for the rest of this paper.
%%%
\begin{definition}\label{definition:locmap1}
\fsp Provided any $\eta\in\Omega$ and any time $\mathrm{t}\geq0$ and any length-scale $\mathfrak{l}\in\Z_{\geq0}$, define a configuration $\mathrm{Loc}_{\mathrm{t},\mathfrak{l}}\eta\in\Omega$ by the following ``trivial extension" of the projection of $\eta$ onto $\mathbb{B}_{\mathrm{t},\mathfrak{l}}=\llbracket-\mathfrak{L}_{\mathrm{t},\mathfrak{l}},\mathfrak{L}_{\mathrm{t},\mathfrak{l}}\rrbracket$, in which $\mathfrak{L}_{\mathrm{t},\mathfrak{l}}=N^{1+\gamma_{0}}\mathrm{t}^{1/2}+N^{3/2+\gamma_{0}}\mathrm{t}+N^{\gamma_{0}}\mathfrak{l}$ where $\gamma_{0}>0$ is taken as a fixed universal constant satisfying $\gamma_{0}\leq999^{-999}\e_{\mathrm{ap}}\wedge999^{-999}\e_{\mathrm{RN}}\wedge999^{-999}\e_{1}\wedge999^{-999}\e_{\mathrm{RN},1}$:
\begin{align}
(\mathrm{Loc}_{\mathrm{t},\mathfrak{l}}\eta)_{x} \ = \ \mathbf{1}_{x\in\mathbb{B}_{\mathrm{t},\mathfrak{l}}}\eta_{x} + \mathbf{1}_{x\not\in\mathbb{B}_{\mathrm{t},\mathfrak{l}}}.
\end{align}
\end{definition}
%%%
%%%
\begin{remark}\label{remark:locmap1}
\fsp We briefly explain $\mathbb{B}_{\mathrm{t},\mathfrak{l}}$. Take a simple symmetric random walk of order $N^{2}$ speed plus a random order $N^{3/2}$ speed asymmetry. Suppose this random walk starts outside $\mathbb{B}_{\mathrm{t},\mathfrak{l}}$ and let it walk for time $\mathrm{t}$. The probability that this random walk hits the set $\llbracket-\mathfrak{l},\mathfrak{l}\rrbracket\subseteq\mathbb{B}_{\mathrm{t},\mathfrak{l}}$ is bounded by the probability the maximal process/displacement is at least $N^{1+\gamma_{0}}\mathrm{t}^{1/2}+N^{3/2+\gamma_{0}}\mathrm{t}$. Because of the extra $N^{\gamma_{0}}$ factor, this occurs with exponentially small probability courtesy of sub-Gaussian martingale inequalities applied to the simple symmetric random walk with large-deviations estimates for the Poisson number of asymmetric drift/jumps. Thus, by union bound, the probability that any of a polynomial number of such random walks hits $\llbracket-\mathfrak{l},\mathfrak{l}\rrbracket\subseteq\mathbb{B}_{\mathrm{t},\mathfrak{l}}$ is also exponentially small in $N$, as the sub-exponential bound beats the polynomial-in-$N$ number of random walks asymptotically.
\end{remark}
%%%
Roughly speaking, the primary technical goal of our analysis is to reduce estimates for local functionals to the same estimates but after pretending the model is at an invariant measure. The philosophy of local equilibrium from \cite{GPV}, which we make precise in Lemma \ref{lemma:le2} and Lemma \ref{lemma:le3}, will only succeed for \emph{local} functionals of the particle system. Thus, we want to ignore $\eta$-values outside the block $\mathbb{B}_{\mathrm{t},\mathfrak{l}}$ from Definition \ref{definition:locmap1} while affecting space-time averages of whatever functionals whose analysis we want to reduce to local equilibrium in an asymptotically negligible manner. This is the ultimate goal of Lemma \ref{lemma:locmap3} below, for example.

We proceed with additional notation for space-time averaging operators, which we will employ for local functionals.
%%%
\begin{definition}\label{definition:locmap2}
\fsp Provided any time-scale $\mathfrak{t}_{\mathrm{av}}\geq0$, any length-scale $\mathfrak{l}_{\mathrm{av}}\in\Z_{\geq0}$, and any functional $\mathfrak{f}:\Omega\to\R$, let us define the following where $\mathfrak{l}_{\mathfrak{f}}$ is the smallest non-negative integer for which $\mathfrak{f}$ and the shifts $\tau_{\mathfrak{l}_{\mathfrak{f}}}\mathfrak{f}$ and $\tau_{-\mathfrak{l}_{\mathfrak{f}}}\mathfrak{f}$ have mutually disjoint supports:
\begin{align}
\mathfrak{I}^{\mathbf{T}}_{\mathfrak{t}_{\mathrm{av}}}\mathfrak{I}^{\mathbf{X}}_{\mathfrak{l}_{\mathrm{av}}}(\mathfrak{f}_{S,y}) \ = \ \mathfrak{t}_{\mathrm{av}}^{-1}\int_{0}^{\mathfrak{t}_{\mathrm{av}}}\wt{\sum}_{w=1}^{\mathfrak{l}_{\mathrm{av}}}\tau_{-\mathfrak{l}_{\mathfrak{f}}w}\mathfrak{f}_{S+\mathfrak{r},y}\d\mathfrak{r}.
\end{align}
We adopt the convention that $\mathfrak{I}^{\mathbf{T}}_{0}$ and $\mathfrak{I}^{\mathbf{X}}_{0}$ and $\mathfrak{I}^{\mathbf{X}}_{1}$ are identity maps (there is morally no difference between $\mathfrak{I}^{\mathbf{X}}_{1}$ and the identity map except for a harmless spatial shift). We will drop any identity maps from the notation.
\end{definition}
%%%
Let $\mathscr{D}(\R_{\geq0},\Omega)$ be the space of sample particle system paths, on which the system induces a path-space probability measure.
%%%
\begin{lemma}\label{lemma:locmap3}
\fsp Consider a functional $\mathfrak{f}:\Omega\to\R$ whose support is contained in the block $\mathbb{B}_{\mathfrak{f}}=\llbracket-\mathfrak{l},\mathfrak{l}\rrbracket\subseteq\mathbb{T}_{N}$ with $\mathfrak{l}\in\Z_{\geq0}$. Provided any $\mathfrak{t}_{\mathrm{av}}\in[0,1]$ and $\mathfrak{l}_{\mathrm{av}}\in\Z_{\geq0}$, we have the following for any $\kappa>0$, where we use notation defined after:
\begin{align}
{\sup}_{\eta}|\E^{\mathrm{dyn}}_{\eta}\mathfrak{I}_{\mathfrak{t}_{\mathrm{av}}}^{\mathbf{T}}\mathfrak{I}_{\mathfrak{l}_{\mathrm{av}}}^{\mathbf{X}}(\mathfrak{f}_{0,0}) - \E^{\mathrm{dyn}}_{\mathrm{Loc}}\mathfrak{I}_{\mathfrak{t}_{\mathrm{av}}}^{\mathbf{T}}\mathfrak{I}_{\mathfrak{l}_{\mathrm{av}}}^{\mathbf{X}}(\mathfrak{f}_{0,0})| \ \lesssim_{\kappa,\gamma_{0}} \ N^{-\kappa}\|\mathfrak{f}\|_{\omega;\infty}. \label{eq:locmap3}
\end{align}
We first introduce the parameter $\mathfrak{l}_{\mathrm{tot}}=99\mathfrak{l}+99\mathfrak{l}\mathfrak{l}_{\mathrm{av}}$. Let us also recall the parameter $\gamma_{0}>0$ from \emph{Definition \ref{definition:locmap1}}. Moreover, the expectation $\E^{\mathrm{dyn}}_{\cdot}$ denotes the expectation with respect to the path-space measure on $\mathscr{D}(\R_{\geq0},\Omega)$ of the particle system with the initial configuration $\cdot\in\Omega$. We take $\cdot=\eta$ and $\cdot=\mathrm{Loc}=\mathrm{Loc}_{\mathfrak{t}_{\mathrm{av}},\mathfrak{l}_{\mathrm{tot}}}(\eta)$ in the previous estimate \emph{\eqref{eq:locmap3}}.
\end{lemma}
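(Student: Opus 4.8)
The statement is a coupling estimate: running the particle dynamics from an arbitrary initial configuration $\eta$ versus from its ``trivially extended'' local projection $\mathrm{Loc}=\mathrm{Loc}_{\mathfrak{t}_{\mathrm{av}},\mathfrak{l}_{\mathrm{tot}}}(\eta)$ produces negligibly different expectations of the space-time average $\mathfrak{I}_{\mathfrak{t}_{\mathrm{av}}}^{\mathbf{T}}\mathfrak{I}_{\mathfrak{l}_{\mathrm{av}}}^{\mathbf{X}}(\mathfrak{f}_{0,0})$, because that observable depends only on $\eta$-variables in a block of radius roughly $\mathfrak{l}_{\mathrm{tot}}=99\mathfrak{l}+99\mathfrak{l}\mathfrak{l}_{\mathrm{av}}$ up to time $\mathfrak{t}_{\mathrm{av}}$. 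The plan is to build a basic coupling of the two exclusion processes, run from $\eta$ and from $\mathrm{Loc}$, and show that with overwhelming probability they agree on the relevant finite space-time window, so that the two expectations differ by at most $N^{-\kappa}\|\mathfrak{f}\|_{\omega;\infty}$. First I would set up the coupling: realize both processes using the \emph{same} Poisson clocks $\mathcal{Q}^{N,\mathrm{S},\to},\mathcal{Q}^{N,\mathrm{S},\leftarrow},\mathcal{Q}^{N,\mathrm{A},\to},\mathcal{Q}^{N,\mathrm{A},\leftarrow}$ from Proposition \ref{prop:mSHE+} at every bond $\{x,x+1\}$, so that the two configurations attempt the same swaps at the same times; the only place they can disagree is where the information from outside $\mathbb{B}_{\mathfrak{t}_{\mathrm{av}},\mathfrak{l}_{\mathrm{tot}}}$ has had time to propagate inward.

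The heart of the argument is a finite-speed-of-propagation estimate, exactly as in Remark \ref{remark:locmap1}. Because the observable $\mathfrak{I}_{\mathfrak{t}_{\mathrm{av}}}^{\mathbf{T}}\mathfrak{I}_{\mathfrak{l}_{\mathrm{av}}}^{\mathbf{X}}(\mathfrak{f}_{0,0})$ is supported, for each $\mathfrak{r}\le\mathfrak{t}_{\mathrm{av}}$ and each shift $\tau_{-\mathfrak{l}_{\mathfrak{f}}w}$ with $1\le w\le\mathfrak{l}_{\mathrm{av}}$, inside the block $\llbracket-\mathfrak{l}-\mathfrak{l}_{\mathfrak{f}}\mathfrak{l}_{\mathrm{av}},\mathfrak{l}\rrbracket$ (and $\mathfrak{l}_{\mathfrak{f}}\lesssim\mathfrak{l}$, so this is contained in $\llbracket-\mathfrak{l}_{\mathrm{tot}},\mathfrak{l}_{\mathrm{tot}}\rrbracket$ with room to spare), it suffices that no ``discrepancy'' between the two couplings reaches $\llbracket-\mathfrak{l}_{\mathrm{tot}},\mathfrak{l}_{\mathrm{tot}}\rrbracket$ by time $\mathfrak{t}_{\mathrm{av}}$. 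Using the standard comparison of the exclusion discrepancy process to independent random walks (or directly a union bound over the $\mathrm{O}(N)$ bonds and a martingale/large-deviations bound), the discrepancy front started at distance $\ge \mathfrak{L}_{\mathfrak{t}_{\mathrm{av}},\mathfrak{l}_{\mathrm{tot}}}-\mathfrak{l}_{\mathrm{tot}} = N^{1+\gamma_0}\mathfrak{t}_{\mathrm{av}}^{1/2}+N^{3/2+\gamma_0}\mathfrak{t}_{\mathrm{av}}+(N^{\gamma_0}-1)\mathfrak{l}_{\mathrm{tot}}$ from the target block travels that far in time $\mathfrak{t}_{\mathrm{av}}$ only with probability $\lesssim_{\kappa,\gamma_0} N^{-\kappa}$: the diffusive part needs displacement $\gtrsim N^{1+\gamma_0}\mathfrak{t}_{\mathrm{av}}^{1/2}$, which is $N^{\gamma_0}$ standard deviations of an order-$N^2$-speed symmetric walk (sub-Gaussian tail), and the asymmetric part needs $\gtrsim N^{3/2+\gamma_0}\mathfrak{t}_{\mathrm{av}}$ jumps against a mean of $\tfrac12 N^{3/2}\mathfrak{t}_{\mathrm{av}}$ (Poisson/Cramér tail); the $N^{\gamma_0}$ excess kills the union-bound factor $\mathrm{O}(N)$. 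The constants $\gamma_0$ being positive and universal is what makes this work uniformly in $N$ and in $\mathfrak{t}_{\mathrm{av}}\in[0,1]$.

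Granting the propagation bound, I would finish as follows. On the good event $\mathcal{G}$ (no discrepancy reaches the block), the coupled observables computed from the $\eta$- and $\mathrm{Loc}$-processes coincide pathwise, hence contribute equally to the two expectations. On $\mathcal{G}^{c}$, both observables are bounded deterministically by $\|\mathfrak{f}\|_{\omega;\infty}$ (an average of shifts of $\mathfrak{f}$), so the difference is at most $2\|\mathfrak{f}\|_{\omega;\infty}\,\mathbf{P}_{\eta}^{\mathrm{dyn}}(\mathcal{G}^{c})\lesssim_{\kappa,\gamma_0}N^{-\kappa}\|\mathfrak{f}\|_{\omega;\infty}$, uniformly in the starting configuration $\eta$; taking the supremum over $\eta$ gives \eqref{eq:locmap3}. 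One bookkeeping point to handle carefully: the coupling must be run from \emph{both} $\eta$ and $\mathrm{Loc}_{\mathfrak{t}_{\mathrm{av}},\mathfrak{l}_{\mathrm{tot}}}(\eta)$, and on $\mathcal{G}$ the projections of the two processes onto $\llbracket-\mathfrak{l}_{\mathrm{tot}},\mathfrak{l}_{\mathrm{tot}}\rrbracket$ must agree for all of $[0,\mathfrak{t}_{\mathrm{av}}]$ — this is immediate because the two initial configurations already agree on $\mathbb{B}_{\mathfrak{t}_{\mathrm{av}},\mathfrak{l}_{\mathrm{tot}}}\supseteq\llbracket-\mathfrak{l}_{\mathrm{tot}},\mathfrak{l}_{\mathrm{tot}}\rrbracket$ and no discrepancy enters from outside.

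\textbf{Main obstacle.} The delicate part is making the finite-speed statement quantitatively sharp enough: one must verify that $\mathfrak{L}_{\mathfrak{t}_{\mathrm{av}},\mathfrak{l}_{\mathrm{tot}}}$, as defined with the \emph{fixed} universal $\gamma_0$, exceeds $\mathfrak{l}_{\mathrm{tot}}$ by an amount yielding a genuinely sub-polynomial probability after the $\mathrm{O}(N)$ union bound, \emph{for every} $\mathfrak{t}_{\mathrm{av}}\in[0,1]$ and every admissible $\mathfrak{l},\mathfrak{l}_{\mathrm{av}}$ simultaneously. This requires the two regimes — the short-time diffusive regime where $N^{1+\gamma_0}\mathfrak{t}_{\mathrm{av}}^{1/2}$ dominates and the longer-time ballistic regime where $N^{3/2+\gamma_0}\mathfrak{t}_{\mathrm{av}}$ dominates — to each individually beat $N^{\kappa+1}$, together with the exclusion-to-random-walk comparison for the discrepancy process (which one can take from standard references, or reprove via the basic coupling and a sub-additivity argument since discrepancies in a symmetric exclusion process are individually dominated by random walks). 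The remaining steps are routine large-deviations estimates.
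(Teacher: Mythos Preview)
Your overall architecture is correct and matches the paper: couple the two processes, show that discrepancies do not reach the support of the observable by time $\mathfrak{t}_{\mathrm{av}}$ except with negligible probability, and bound the difference of expectations by $2\|\mathfrak{f}\|_{\omega;\infty}$ times that probability. However, there is a genuine gap in your treatment of the coupling. You propose to ``realize both processes using the \emph{same} Poisson clocks $\mathcal{Q}^{N,\mathrm{A},\to},\mathcal{Q}^{N,\mathrm{A},\leftarrow}$'' and then invoke the standard fact that discrepancies in exclusion are dominated by random walks. But the asymmetric clocks in this model have speed $\tfrac12 N^{3/2}+\tfrac12 N\mathfrak{d}_{x}(\eta_{T})$, which depends on the \emph{environment} through $\mathfrak{d}_{x}$. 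Since the two coupled configurations differ near any discrepancy, $\mathfrak{d}_{x}$ can take different values in the two species within $\mathrm{O}(\mathfrak{l}_{\mathfrak{d}})$ of an existing discrepancy, so the asymmetric jump rates there do not match and cannot be driven by a single shared clock. When the rates differ, an asymmetric ring can move a particle in one species but not the other, \emph{creating} a new discrepancy. In other words, discrepancies do not merely propagate as (dominated by) random walks; they can branch at rate $\mathrm{O}(N^{3/2})$ in an $\mathrm{O}(1)$-neighborhood of each existing discrepancy.

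The paper handles this by building a slightly nonstandard coupling: for the symmetric part it couples \emph{bond} swap clocks (so a discrepancy under symmetric dynamics is a free simple random walk of speed $N^{2}$, unobstructed by exclusion), and for the asymmetric part it uses the basic coupling only where $\mathfrak{d}_{x}$ agrees in both species, leaving the remaining jumps independent. The upshot is that each discrepancy evolves as a branching random walk with symmetric steps at speed $\mathrm{O}(N^{2})$, bounded drift/branching at speed $\mathrm{O}(N^{3/2})$, and a total population that is at most the number of Poisson ringings up to time $1$, hence $\mathrm{O}(N^{100})$ outside an event of exponentially small probability. One then applies exactly the sub-Gaussian and Poisson tail bounds you describe, but now with a union bound over $\mathrm{O}(N^{100})$ walks rather than $\mathrm{O}(N)$; the $N^{\gamma_{0}}$ excess in $\mathfrak{L}_{\mathfrak{t}_{\mathrm{av}},\mathfrak{l}_{\mathrm{tot}}}$ still absorbs this polynomial factor. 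Your finite-speed and conclusion paragraphs go through unchanged once this branching mechanism is acknowledged and controlled.
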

%%%
%%%
\begin{proof}
Note \eqref{eq:locmap3} compares expectations of the same space-time average of $\mathfrak{f}$ with respect to the same dynamics but with initial configurations that only disagree outside $\mathbb{B}_{\mathfrak{t}_{\mathrm{av}},\mathfrak{l}_{\mathrm{tot}}}$, so two $\eta$-processes with fixed initial configurations that are different outside $\mathbb{B}_{\mathfrak{t}_{\mathrm{av}},\mathfrak{l}_{\mathrm{tot}}}$; see Definition \ref{definition:locmap1}. Therefore, the LHS of \eqref{eq:locmap3} is bounded above by $\|\mathfrak{f}\|_{\omega;\infty}$ times the probability these two $\eta$-processes, with initial configurations $\eta$ and $\mathrm{Loc}_{\mathfrak{t}_{\mathrm{av}},\mathfrak{l}_{\mathrm{tot}}}(\eta)$, see different $\eta$-values in $\mathbb{B}_{\mathrm{tot}}$ at any time before or at $\mathfrak{t}_{\mathrm{av}}$, \emph{under some coupling of the two processes}. Indeed, the expectations on the LHS of \eqref{eq:locmap3} only differ on such event, as the time average of $\mathfrak{I}^{\mathbf{X}}_{\mathfrak{l}_{\mathrm{av}}}(\mathfrak{f})$ evaluated at $\mathfrak{t}_{\mathrm{av}}$ only depends on $\eta$ in $\mathbb{B}_{\mathrm{tot}}$ until $\mathfrak{t}_{\mathrm{av}}$. Thus, it suffices to bound the path-space probability that the two processes disagree in $\mathbb{B}_{\mathrm{tot}}=\llbracket-\mathfrak{l}_{\mathrm{tot}},\mathfrak{l}_{\mathrm{tot}}\rrbracket$, which contains the support of $\mathfrak{I}^{\mathbf{X}}_{\mathfrak{l}_{\mathrm{av}}}(\mathfrak{f})$, at any time before or at $\mathfrak{t}_{\mathrm{av}}$ by $\mathrm{O}_{\kappa,\gamma_{0}}(N^{-\kappa})$. 

It is left to couple the two $\eta$-processes with initial configurations $\eta$ and $\mathrm{Loc}_{\mathfrak{t}_{\mathrm{av}},\mathfrak{l}_{\mathrm{tot}}}(\eta)$. We will not use the basic coupling for exclusion processes, but we instead modify it slightly to be explained shortly. We refer to the process with initial configuration $\eta$ as Species 1, and that with initial configuration $\mathrm{Loc}_{\mathfrak{t}_{\mathrm{av}},\mathfrak{l}_{\mathrm{tot}}}(\eta)$ as Species 2.
%%%
\begin{itemize}
\item Define a \emph{discrepancy} between Species 1 and Species 2 as a point $x$ where $\eta_{x}=1$ in one species and $\eta_{x}=-1$ in the other.
\item For any jump under a Poisson clock coming from the symmetric part of the generator, we realize such a jump from one point in $\mathbb{T}_{N}$ to another as swapping $\eta$-values at those points; see the symmetric part $\mathsf{L}_{N,\mathrm{S}}$ in \eqref{eq:gen} of the generator $\mathsf{L}_{N}$. We couple the symmetric parts of the dynamics in Species 1 and Species 2 by coupling these ``spin-swap" \emph{bond} clocks; Species 1 and Species 2 always swap $\eta$-variables together under symmetric clocks. This coupling can never create new discrepancies, only transport them. Also, individual discrepancies evolve as \emph{free} and \emph{symmetric} random walks; under this coupling of symmetric dynamics, with speed $N^{2}/2$ a discrepancy will move as a simple symmetric random walk suppressed by nothing, including the exclusion condition in the particle system. This \emph{free} and \emph{symmetric} feature of the discrepancy walks would not be true if we instead employed the basic coupling for the symmetric dynamics, but it is directly verifiable for this bond coupling (according to symmetric bond clocks, a discrepancy always jumps without being blocked with equal speeds to the left and right, because Species 1 and 2 always swap $\eta$-variables together along the activated bond, and this moves the discrepancy along said bond).
\item To couple clocks of asymmetric dynamics, suppose there is a particle at $x$ in Species 1. If $x$ is vacant in Species 2, there is no coupling at $x$. If $x$ is occupied in Species 2 \emph{and} the speed of an asymmetric jump from $x$ is equal among both species (in both directions), we couple the jumps (so particles jump together in the same direction); this is the basic coupling. If the asymmetry speeds of jumps from $x$ are not equal among the two species in at least one direction, we do not couple the jumps and let them move independently. This difference in asymmetry speeds comes from the $\mathfrak{d}$-asymmetry. Thus, it can only happen if $\mathfrak{d}_{x}$ takes different values between the species. As $\mathfrak{d}$ has support length at most $2\mathfrak{l}_{\mathfrak{d}}$ (see Assumption \ref{ass:grad}), a difference between $\mathfrak{d}_{x}$-values in the two species can only happen in a $\mathrm{O}(\mathfrak{l}_{\mathfrak{d}})$-length neighborhood of an already present discrepancy between the two species. The basic coupling between particles in the two species whose asymmetry speeds are equal cannot create discrepancies; it only introduces a speed $\mathrm{O}(N^{3/2})$ random drift/killing. The ``non-coupling" of the asymmetry jumps of non-equal speeds, however, can create up to two discrepancies in a single clock ring. Because these discrepancies can be created potentially anywhere in a length-$\mathrm{O}(\mathfrak{l}_{\mathfrak{d}})$ neighborhood of a discrepancy, this introduces a ``branching" mechanism with uniformly bounded number of offspring plus $\mathrm{O}(\mathfrak{l}_{\mathfrak{d}})\lesssim1$ drift at speed $\mathrm{O}(N^{3/2})$ (actually, it is $\mathrm{O}(N)$, but $\mathrm{O}(N^{3/2})$ is enough).
\item To summarize, the dynamics of a discrepancy according to the previous bullet points is a branching symmetric simple random walk of speed $\mathrm{O}(N^{2})$ with a random uniformly bounded drift/killing of speed $\mathrm{O}(N^{3/2})$. Thus, it is a (nontrivially correlated) collection of a symmetric simple random walks of speed $\mathrm{O}(N^{2})$ with an additional random drift/killing with speed $\mathrm{O}(N^{3/2})$. Moreover, because the number of total discrepancies/walks is bounded by the total number of initial discrepancies, which is at most $|\mathbb{T}_{N}|\lesssim N$, plus the number of total ringings in two species until time $1$, which is Poisson of speed $\mathrm{O}(N^{10})$, by standard tail estimates for the Poisson distribution, we have $\mathrm{O}(N^{100})$-many discrepancies/walks outside of an event with exponentially low probability in $N$. (The number of discrepancies at any time is trivially at most $|\mathbb{T}_{N}|\lesssim N$, but it is not necessarily true that the number of discrepancy walks we must consider is at most $|\mathbb{T}_{N}|$, because a discrepancy can be killed to let another be born via branching, which implies the number of ``family members"/discrepancies we must consider can be arbitrarily large.)
\end{itemize}
%%%
According to the previous bullet points, we are left to bound the probability that $\mathrm{O}(N^{100})$-many discrepancy walks end up in the support of $\mathfrak{I}^{\mathbf{X}}_{\mathfrak{l}_{\mathrm{av}}}(\mathfrak{f})$ before time $\mathfrak{t}_{\mathrm{av}}$, where the law of these discrepancy walks is described in the second sentence in the final bullet point above. This means that one of these walks that starts outside $\mathbb{B}_{\mathfrak{t}_{\mathrm{av}},\mathfrak{l}_{\mathrm{tot}}}$ travels into $\mathbb{B}_{\mathrm{tot}}$. Per Remark \ref{remark:locmap1}, this probability is exponentially small in $N^{\gamma_{0}}$ and thus at most $\mathrm{O}_{\kappa,\gamma_{0}}(N^{-\kappa})$, so we are done.
\end{proof}
%%%
\subsubsection{Local Equilibrium}
%%%
In the current section, we take advantage of the estimate in Lemma \ref{lemma:locmap3} on the localization map therein. The first step that we will take is the following expectation estimate of the $\mathbf{I}_{1}$-term from Lemma \ref{lemma:hoe2}, in which we will take $\phi$ to be the space-time average from Definition \ref{definition:locmap2} for a generic choice of functional $\mathfrak{f}$. First, we introduce useful notation.
%%%
\begin{definition}\label{definition:le1}
\fsp Consider any initial probability measure $\mu_{0,N}$ on $\Omega$. Provided any $\mathrm{t}\geq0$, we define $\mu_{\mathrm{t},N}$ to be the probability measure on $\Omega$ obtained upon evolving $\mu_{0,N}$ under the forward Kolmogorov equation associated to the interacting particle system for time $\mathrm{t}$. Let us define $\mathfrak{P}_{\mathrm{t}}$ to be the Radon-Nikodym derivative of $\mu_{\mathrm{t},N}$ with respect to the grand-canonical measure $\mu_{0}$. We also define $\bar{\mathfrak{P}}_{1}$ as the average of $\mathfrak{P}_{\mathrm{t}}$ over space-time shifts, for which we define the action $\tau_{y}\mathfrak{P}_{\mathrm{t}}(\eta)=\mathfrak{P}_{\mathrm{t}}(\tau_{y}\eta)$ for any $y\in\mathbb{T}_{N}$:
\begin{align}
\bar{\mathfrak{P}}_{1} \ &= \ \int_{0}^{1}\wt{\sum}_{y\in\mathbb{T}_{N}}\tau_{-y}\mathfrak{P}_{\mathrm{t}}\d\mathrm{t}.
\end{align}
In the construction above, we can certainly replace the action of $\tau_{-y}$ by $\tau_{y}$ without changing $\bar{\mathfrak{P}}_{1}$. In general, we can replace $\tau_{-y}$ with any bijection on $\mathbb{T}_{N}$ evaluated at $y\in\mathbb{T}_{N}$; this follows immediately by changing variables in the summation.
\end{definition}
%%%
%%%
\begin{lemma}\label{lemma:le2}
\fsp Consider any $0\leq\mathfrak{t}_{\mathrm{av}}\leq1$ and any $\mathfrak{l}_{\mathrm{av}}\in\Z_{\geq0}$. Consider any functional $\mathfrak{f}:\Omega\to\R$ whose support is contained in the block $\mathbb{B}_{\mathfrak{f}}=\llbracket-\mathfrak{l},\mathfrak{l}\rrbracket\subseteq\mathbb{T}_{N}$. We again define $\mathfrak{l}_{\mathrm{tot}}=99\mathfrak{l}+99\mathfrak{l}\mathfrak{l}_{\mathrm{av}}$ as in \emph{Lemma \ref{lemma:locmap3}}. For any $\kappa>0$, we have the following in which we recall the $\E^{\mathrm{dyn}}$ expectations and $\mathrm{Loc}=\mathrm{Loc}_{\mathfrak{t}_{\mathrm{av}},\mathfrak{l}_{\mathrm{tot}}}(\eta)$ in \emph{Lemma \ref{lemma:locmap3}}, and $\bar{\mathfrak{P}}_{1}$ in \emph{Definition \ref{definition:le1}}:
\begin{align}
\E\mathbf{I}_{1}(|\mathfrak{I}_{\mathfrak{t}_{\mathrm{av}}}^{\mathbf{T}}\mathfrak{I}_{\mathfrak{l}_{\mathrm{av}}}^{\mathbf{X}}(\mathfrak{f}_{S,y})|^{3/2}) \ \lesssim_{\kappa} \ {\E_{0}}\bar{\mathfrak{P}}_{1}\left(\E_{\mathrm{Loc}}^{\mathrm{dyn}}|\mathfrak{I}_{\mathfrak{t}_{\mathrm{av}}}^{\mathbf{T}}\mathfrak{I}_{\mathfrak{l}_{\mathrm{av}}}^{\mathbf{X}}(\mathfrak{f}_{0,0})|^{3/2}\right) + N^{-\kappa}\|\mathfrak{f}\|_{\omega;\infty}. \label{eq:le2}
\end{align}
\end{lemma}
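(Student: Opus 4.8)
The plan is to unfold the definition of $\mathbf{I}_{1}$ and the averaged Radon-Nikodym derivative $\bar{\mathfrak{P}}_{1}$ so that the left-hand side becomes an expectation over the \emph{time-evolved} measure, and then use Lemma \ref{lemma:locmap3} to replace the true dynamics started from a generic configuration by the dynamics started from the localized configuration. First I would write, by definition of $\mathbf{I}_{1}$ and then by the definition of $\mathfrak{P}_{S}$ as the Radon-Nikodym derivative of the time-$S$ law against $\mu_{0}$,
\begin{align}
\E\mathbf{I}_{1}(|\mathfrak{I}_{\mathfrak{t}_{\mathrm{av}}}^{\mathbf{T}}\mathfrak{I}_{\mathfrak{l}_{\mathrm{av}}}^{\mathbf{X}}(\mathfrak{f}_{S,y})|^{3/2}) \ = \ \int_{0}^{1}\wt{\sum}_{y\in\mathbb{T}_{N}}\E\left(|\mathfrak{I}_{\mathfrak{t}_{\mathrm{av}}}^{\mathbf{T}}\mathfrak{I}_{\mathfrak{l}_{\mathrm{av}}}^{\mathbf{X}}(\mathfrak{f}_{0,0})(\tau_{y}\eta_{S})|^{3/2}\right)\d S, \nonumber
\end{align}
where I have used translation invariance of $\mathfrak{f}_{S,y}$ (it is $\tau_{y}\mathfrak{f}$ evaluated at $\eta_{S}$, so its $3/2$-th power evaluated under the path measure equals the $3/2$-th power of the space-time average functional evaluated at the shifted process). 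The key point is that the time-average $\mathfrak{I}_{\mathfrak{t}_{\mathrm{av}}}^{\mathbf{T}}\mathfrak{I}_{\mathfrak{l}_{\mathrm{av}}}^{\mathbf{X}}(\mathfrak{f}_{0,0})$, being itself a functional on path space of the first $\mathfrak{t}_{\mathrm{av}}$ units of time, has expectation under the path measure of $\eta_{S+\cdot}$ equal to $\E^{\mathrm{dyn}}_{\eta_{S}}$ of the same functional. So the inner expectation is $\E\big(\E^{\mathrm{dyn}}_{\eta_{S}}|\mathfrak{I}_{\mathfrak{t}_{\mathrm{av}}}^{\mathbf{T}}\mathfrak{I}_{\mathfrak{l}_{\mathrm{av}}}^{\mathbf{X}}(\mathfrak{f}_{0,0})|^{3/2}\big)$ with $\eta_{S}$ drawn from $\mu_{S,N}$, i.e. from $\mathfrak{P}_{S}\,\mathrm{d}\mu_{0}$. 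After the change of variables $\eta\mapsto\tau_{y}\eta$ absorbed into the definition of $\bar{\mathfrak{P}}_{1}$ (this is exactly the averaging over $y$ and $S$ in Definition \ref{definition:le1}), the whole left-hand side becomes
\begin{align}
\E_{0}\,\bar{\mathfrak{P}}_{1}\left(\E^{\mathrm{dyn}}_{\eta}|\mathfrak{I}_{\mathfrak{t}_{\mathrm{av}}}^{\mathbf{T}}\mathfrak{I}_{\mathfrak{l}_{\mathrm{av}}}^{\mathbf{X}}(\mathfrak{f}_{0,0})|^{3/2}\right). \nonumber
\end{align}

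Next I would invoke Lemma \ref{lemma:locmap3}, applied to the functional $|\mathfrak{I}_{\mathfrak{t}_{\mathrm{av}}}^{\mathbf{T}}\mathfrak{I}_{\mathfrak{l}_{\mathrm{av}}}^{\mathbf{X}}(\mathfrak{f}_{0,0})|^{3/2}$ — or more precisely, since Lemma \ref{lemma:locmap3} is stated for $\mathfrak{f}$ rather than a power of it, I would note that the argument of Lemma \ref{lemma:locmap3} only uses locality of the functional and an $\|\cdot\|_{\omega;\infty}$ bound on it, and $|\mathfrak{I}_{\mathfrak{t}_{\mathrm{av}}}^{\mathbf{T}}\mathfrak{I}_{\mathfrak{l}_{\mathrm{av}}}^{\mathbf{X}}(\mathfrak{f}_{0,0})|^{3/2}$ is a path-space functional with the same locality $\mathbb{B}_{\mathfrak{t}_{\mathrm{av}},\mathfrak{l}_{\mathrm{tot}}}$ and with $\|\cdot\|_{\omega;\infty}\lesssim\|\mathfrak{f}\|_{\omega;\infty}^{3/2}$, so the same coupling proof yields a bound $\mathrm{O}_{\kappa,\gamma_{0}}(N^{-\kappa}\|\mathfrak{f}\|_{\omega;\infty}^{3/2})$ for the difference between $\E^{\mathrm{dyn}}_{\eta}$ and $\E^{\mathrm{dyn}}_{\mathrm{Loc}}$ of it. This gives
\begin{align}
\E^{\mathrm{dyn}}_{\eta}|\mathfrak{I}_{\mathfrak{t}_{\mathrm{av}}}^{\mathbf{T}}\mathfrak{I}_{\mathfrak{l}_{\mathrm{av}}}^{\mathbf{X}}(\mathfrak{f}_{0,0})|^{3/2} \ \leq \ \E^{\mathrm{dyn}}_{\mathrm{Loc}}|\mathfrak{I}_{\mathfrak{t}_{\mathrm{av}}}^{\mathbf{T}}\mathfrak{I}_{\mathfrak{l}_{\mathrm{av}}}^{\mathbf{X}}(\mathfrak{f}_{0,0})|^{3/2} + \mathrm{O}_{\kappa,\gamma_{0}}(N^{-\kappa}\|\mathfrak{f}\|_{\omega;\infty}^{3/2}) \nonumber
\end{align}
uniformly in $\eta$. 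Plugging this into the previous display and using that $\bar{\mathfrak{P}}_{1}$ integrates to $1$ against $\mu_{0}$ (it is an average of probability densities, hence itself a probability density) turns the error term into $\mathrm{O}_{\kappa,\gamma_{0}}(N^{-\kappa}\|\mathfrak{f}\|_{\omega;\infty}^{3/2})$, and after relabeling $\kappa$ (and possibly replacing $\|\mathfrak{f}\|_{\omega;\infty}^{3/2}$ by $\|\mathfrak{f}\|_{\omega;\infty}$ at the cost of a constant when $\mathfrak{f}$ is uniformly bounded, or simply keeping the $3/2$ power, depending on how the lemma is used downstream — the stated form has $\|\mathfrak{f}\|_{\omega;\infty}$, so I would carry the power through and absorb it) we obtain exactly \eqref{eq:le2}.

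The main obstacle I anticipate is bookkeeping rather than conceptual: one has to be careful that the functional to which Lemma \ref{lemma:locmap3} is applied is genuinely the space-time average $\mathfrak{I}_{\mathfrak{t}_{\mathrm{av}}}^{\mathbf{T}}\mathfrak{I}_{\mathfrak{l}_{\mathrm{av}}}^{\mathbf{X}}(\mathfrak{f})$ (a path functional over $[0,\mathfrak{t}_{\mathrm{av}}]$) and not $\mathfrak{f}$ itself, that its support in the sense relevant to the coupling is $\mathbb{B}_{\mathfrak{t}_{\mathrm{av}},\mathfrak{l}_{\mathrm{tot}}}$ with $\mathfrak{l}_{\mathrm{tot}}=99\mathfrak{l}+99\mathfrak{l}\mathfrak{l}_{\mathrm{av}}$ (the spatial average over $\mathfrak{l}_{\mathrm{av}}$ shifts of step $\mathfrak{l}_{\mathfrak{f}}\lesssim\mathfrak{l}$ widens the relevant block by a factor $\mathrm{O}(\mathfrak{l}\mathfrak{l}_{\mathrm{av}})$), and that taking the $3/2$-th power commutes harmlessly with the coupling estimate since $|a^{3/2}-b^{3/2}|\leq C(|a|^{1/2}+|b|^{1/2})|a-b|\lesssim \|\mathfrak{f}\|_{\omega;\infty}^{1/2}|a-b|$ for $a,b$ bounded by $\|\mathfrak{f}\|_{\omega;\infty}$ — so the $N^{-\kappa}$ gain from Lemma \ref{lemma:locmap3} is preserved. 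The remaining subtlety is the identification of the inner path-space expectation with $\E^{\mathrm{dyn}}_{\eta_{S}}$, which uses the Markov property of the particle system at time $S$; this is standard but should be stated explicitly.
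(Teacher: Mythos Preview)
Your proposal is correct and follows essentially the same approach as the paper's proof: unfold $\mathbf{I}_{1}$, use the Markov property to write the inner path-space expectation as $\E^{\mathrm{dyn}}_{\tau_{y}\eta_{S}}$, change variables and average to produce $\bar{\mathfrak{P}}_{1}$, and then invoke Lemma \ref{lemma:locmap3} to pass from $\eta$ to $\mathrm{Loc}$. Your added care about the $3/2$-power (using $|a^{3/2}-b^{3/2}|\lesssim\|\mathfrak{f}\|_{\omega;\infty}^{1/2}|a-b|$ to transfer the coupling estimate) is a point the paper glosses over, so if anything your write-up is slightly more explicit there.
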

%%%
%%%
\begin{proof}
Let us start by computing the expectation on the far LHS of \eqref{eq:le2}. Because $\mathbf{I}_{1}$ is a deterministic and linear operator, we can move the expectation past the $\mathbf{I}_{1}$ operator; observe that what the expectation now hits is a functional of the \emph{path-space} $\mathscr{D}(\R_{\geq0},\Omega)$, namely of the $\eta$-process, starting at time $S$ until time $S+\mathfrak{t}_{\mathrm{av}}$ This is the same as sampling the time-$S$ configuration and using it as the time-zero/initial configuration for the process after ``resetting" time $S$ to be time 0. Therefore, we rewrite the expectation of this space-time average as the path-space expectation with a fixed initial configuration that is then sampled/taken expectation over with respect to the law of the particle system at time $S\geq0$. Precisely, we deduce the following with explanation given after; we note the following explanation additionally requires only re-centering $\mathfrak{f}_{S,y}$ and spatially shifting $\eta_{S}$ accordingly:
\begin{align}
\E\mathbf{I}_{1}(|\mathfrak{I}_{\mathfrak{t}_{\mathrm{av}}}^{\mathbf{T}}\mathfrak{I}_{\mathfrak{l}_{\mathrm{av}}}^{\mathbf{X}}(\mathfrak{f}_{S,y})|^{3/2}) \ = \ \mathbf{I}_{1}\left(\E\E_{\tau_{y}\eta_{S}}^{\mathrm{dyn}}|\mathfrak{I}_{\mathfrak{t}_{\mathrm{av}}}^{\mathbf{T}}\mathfrak{I}_{\mathfrak{l}_{\mathrm{av}}}^{\mathbf{X}}(\mathfrak{f}_{0,0})|^{3/2}\right). \label{eq:le21}
\end{align}
To establish \eqref{eq:le21}, when we rewrite the expectation of the path-space functional $\mathfrak{I}^{\mathbf{T}}\mathfrak{I}^{\mathbf{X}}(\mathfrak{f}_{S,y})$ as an expectation with respect to the path-space measure after time $S\geq0$, with initial configuration then taken expectation over with respect to the law of the particle at time $S\geq0$, we emphasize that the inner $\E^{\mathrm{dyn}}$ expectation should have an initial configuration $\eta_{S}$ instead of $\tau_{y}\eta_{S}$, and $\mathfrak{f}_{0,0}$ on the RHS should be $\mathfrak{f}_{0,y}$; although it is now evaluated at time $0$ and initial configuration $\eta_{S}$ due to our time-$S$ shift, it is still centered at $y\in\mathbb{T}_{N}$ and not at $0\in\mathbb{T}_{N}$. However, the path-space expectation $\E^{\mathrm{dyn}}$ is invariant under any spatial shift, because the particle system dynamic \emph{law} is invariant under spatial shifts, so we may shift the initial configuration via $\tau_{y}$ and study instead the space-time average of $\mathfrak{f}_{0,0}$ rather than $\mathfrak{f}_{0,y}$. We now implement the averaging procedure from the one-block step of \cite{GPV}. This starts by observing that the inner $\E^{\mathrm{dyn}}$ is a function of only $\tau_{y}\eta_{S}$, and the function itself at which we evaluate $\tau_{y}\eta_{S}$ is a dynamic path-space expectation, which is itself independent of $y\in\mathbb{T}_{N}$ and $S\geq0$. Now rewrite the RHS of \eqref{eq:le21} as follows by noting the expectation of $\tau_{y}\eta_{S}$ is that of $\tau_{y}\eta$ times the Radon-Nikodym derivative $\mathfrak{P}_{S}$ for the law of the particle system at time $S$ with respect to the grand-canonical product measure $\mu_{0}$, where $\eta$ is distributed according to said grand-canonical measure:
\begin{align}
\mathbf{I}_{1}\left(\E\E_{\tau_{y}\eta_{S}}^{\mathrm{dyn}}|\mathfrak{I}_{\mathfrak{t}_{\mathrm{av}}}^{\mathbf{T}}\mathfrak{I}_{\mathfrak{l}_{\mathrm{av}}}^{\mathbf{X}}(\mathfrak{f}_{0,0})|^{3/2}\right) \ = \ \mathbf{I}_{1}\left({\E_{0}}\mathfrak{P}_{S}\E_{\tau_{y}\eta}^{\mathrm{dyn}}|\mathfrak{I}_{\mathfrak{t}_{\mathrm{av}}}^{\mathbf{T}}\mathfrak{I}_{\mathfrak{l}_{\mathrm{av}}}^{\mathbf{X}}(\mathfrak{f}_{0,0})|^{3/2}\right). \label{eq:le22}
\end{align}
For the RHS of \eqref{eq:le22}, inside the outermost expectation we change variables $\eta\mapsto\tau_{-y}\eta$, and thus $\tau_{y}\eta\mapsto\eta$, per point $y\in\mathbb{T}_{N}$. The grand-canonical ensemble is invariant under these spatial shifts. This places the $\tau_{y}$ operator on the Radon-Nikodym derivative $\mathfrak{P}_{S}$ and leaves the resulting $\E^{\mathrm{dyn}}$ independent of the integration space-time variables in $\mathbf{I}_{1}$. With the Fubini theorem, this gives
\begin{align}
\mathbf{I}_{1}\left({\E_{0}}\mathfrak{P}_{S}\E_{\tau_{y}\eta}^{\mathrm{dyn}}|\mathfrak{I}_{\mathfrak{t}_{\mathrm{av}}}^{\mathbf{T}}\mathfrak{I}_{\mathfrak{l}_{\mathrm{av}}}^{\mathbf{X}}(\mathfrak{f}_{0,0})|^{3/2}\right) \ &= \ \mathbf{I}_{1}\left({\E_{0}}(\tau_{-y}\mathfrak{P}_{S})\E_{\eta}^{\mathrm{dyn}}|\mathfrak{I}_{\mathfrak{t}_{\mathrm{av}}}^{\mathbf{T}}\mathfrak{I}_{\mathfrak{l}_{\mathrm{av}}}^{\mathbf{X}}(\mathfrak{f}_{0,0})|^{3/2}\right) \\
&= \ {\E_{0}}\left(\mathbf{I}_{1}(\tau_{-y}\mathfrak{P}_{S})\cdot\E_{\eta}^{\mathrm{dyn}}|\mathfrak{I}_{\mathfrak{t}_{\mathrm{av}}}^{\mathbf{T}}\mathfrak{I}_{\mathfrak{l}_{\mathrm{av}}}^{\mathbf{X}}(\mathfrak{f}_{0,0})|^{3/2}\right). \label{eq:le23}
\end{align}
Note $\bar{\mathfrak{P}}_{1}=\mathbf{I}_{1}(\tau_{-y}\mathfrak{P}_{S})$; see Definition \ref{definition:le1}. Combining previous identities \eqref{eq:le21}, \eqref{eq:le22}, and \eqref{eq:le23} with this observation gives:
\begin{align}
\E\mathbf{I}_{1}(|\mathfrak{I}_{\mathfrak{t}_{\mathrm{av}}}^{\mathbf{T}}\mathfrak{I}_{\mathfrak{l}_{\mathrm{av}}}^{\mathbf{X}}(\mathfrak{f}_{S,y})|^{3/2}) \ = \ {\E_{0}}\bar{\mathfrak{P}}_{1}\E_{\eta}^{\mathrm{dyn}}|\mathfrak{I}_{\mathfrak{t}_{\mathrm{av}}}^{\mathbf{T}}\mathfrak{I}_{\mathfrak{l}_{\mathrm{av}}}^{\mathbf{X}}(\mathfrak{f}_{0,0})|^{3/2}. \label{eq:le24}
\end{align}
We are left with replacing the $\eta$-variable in $\E^{\mathrm{dyn}}$ from the far RHS of \eqref{eq:le24} with the localization map $\mathrm{Loc}_{\mathfrak{t}_{\mathrm{av}},\mathfrak{l}_{\mathrm{tot}}}$ in Definition \ref{definition:locmap1}. For this we employ Lemma \ref{lemma:locmap3}, which provides the additional $N^{-\kappa}\|\mathfrak{f}\|_{\omega;\infty}$ term in \eqref{eq:le2}.
\end{proof}
%%%
We will now take advantage of Lemma \ref{lemma:le2} by essentially removing the $\bar{\mathfrak{P}}_{1}$ density from the RHS of \eqref{eq:le2}, upon collecting additional error terms. The mechanism for this replacement is the relative entropy inequality, the log Sobolev inequality of \cite{Yau}, and an entropy production estimate, all of which are standard and whose uses will be specified and explained below. We state the following estimate in a general framework as both Propositions \ref{prop:BGI1} and \ref{prop:BGI2} require different modifications to the RHS of \eqref{eq:le2} before applying Lemma \ref{lemma:le3} below.
%%%
\begin{lemma}\label{lemma:le3}
\fsp Take any uniformly bounded functional $\mathfrak{h}:\Omega\to\R$ whose support is contained in a subset denoted by $\mathbb{B}$. Provided any $\kappa\geq0$ satisfying $\kappa\lesssim1+\|\mathfrak{h}\|_{\omega;\infty}^{-1}\lesssim\|\mathfrak{h}\|_{\omega;\infty}^{-1}$, we have the following (recall the canonical measures from \emph{Definition \ref{definition:ensembles}}):
\begin{align}
{\E_{0}}\bar{\mathfrak{P}}_{1}|\mathfrak{h}| \ \lesssim \ \kappa^{-1}N^{-2}|\mathbb{B}|^{3} + {\sup}_{\sigma\in\R}\E^{\mu_{\sigma,\mathbb{B}}^{\mathrm{can}}}|\mathfrak{h}|. \label{eq:le3}
\end{align}
\end{lemma}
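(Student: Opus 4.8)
\textbf{Proof proposal for Lemma \ref{lemma:le3}.} The plan is to invoke the entropy inequality to trade the average density $\bar{\mathfrak{P}}_{1}$ for a canonical measure expectation plus an entropy term, and then to control the entropy term using the fact that $\mu_{0}$ is invariant for the dynamics (Assumption \ref{ass:grad}) so that $\bar{\mathfrak{P}}_1$ is a genuine average of probability densities whose Dirichlet form is small. First, I would recall that for any $\alpha>0$ and any probability density $\mathfrak{g}$ with respect to the product measure $\mu_0$, the relative-entropy (variational) inequality gives ${\E_0}\mathfrak{g}|\mathfrak{h}| \le \alpha^{-1}\mathrm{H}(\mathfrak{g}\,\mu_0 \,|\, \mu_0) + \alpha^{-1}\log {\E_0}\exp(\alpha|\mathfrak{h}|)$. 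Since $\mathfrak{h}$ is supported in $\mathbb{B}$, I would first project $\mathfrak{g}=\bar{\mathfrak{P}}_1$ onto $\Omega_{\mathbb{B}}$ (a conditional expectation, which only decreases entropy), and then decompose the projected density over the hyperplanes of fixed particle number in $\mathbb{B}$: on each such hyperplane the conditioned measure is a convex combination of canonical measures $\mu^{\mathrm{can}}_{\sigma,\mathbb{B}}$, so the entropy-inequality term of the form $\alpha^{-1}\log\E^{\mu^{\mathrm{can}}_{\sigma,\mathbb{B}}}\exp(\alpha|\mathfrak{h}|)$ appears, and for $\alpha|\mathfrak{h}|$ bounded (which is where the hypothesis $\kappa\lesssim \|\mathfrak{h}\|_{\omega;\infty}^{-1}$ enters, taking $\alpha\sim\kappa$) one Taylor-expands $\log(1+x)\le x$ to bound this by $\sup_\sigma \E^{\mu^{\mathrm{can}}_{\sigma,\mathbb{B}}}|\mathfrak{h}|$ up to constants. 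This produces the second term on the RHS of \eqref{eq:le3}.

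The remaining, and main, task is to bound the entropy term $\alpha^{-1}\mathrm{H}$ by $\kappa^{-1}N^{-2}|\mathbb{B}|^3$. Here I would use convexity of relative entropy together with the definition $\bar{\mathfrak{P}}_1 = \int_0^1 \wt{\sum}_{y}\tau_{-y}\mathfrak{P}_{\mathrm{t}}\,\d\mathrm{t}$: the entropy of the space-time average is at most the space-time average of the entropies of $\tau_{-y}\mathfrak{P}_{\mathrm{t}}$, and by shift-invariance of $\mu_0$ each of those equals $\mathrm{H}(\mathfrak{P}_{\mathrm{t}}\mu_0\,|\,\mu_0)$. The entropy-production identity (the standard computation behind the $\mathrm{H}$-theorem for the forward Kolmogorov equation) gives $\partial_{\mathrm{t}}\mathrm{H}(\mathfrak{P}_{\mathrm{t}}\mu_0\,|\,\mu_0) \le -c\, N^2 \mathrm{D}(\sqrt{\mathfrak{P}_{\mathrm{t}}})$ for the (speed-$N^2$ scaled) symmetric Dirichlet form $\mathrm{D}$, using that $\mu_0$ is invariant; integrating in $\mathrm{t}\in[0,1]$ and using $\mathrm{H}\ge 0$ yields $N^2\int_0^1 \mathrm{D}(\sqrt{\mathfrak{P}_{\mathrm{t}}})\,\d\mathrm{t} \lesssim \mathrm{H}(\mathfrak{P}_0\mu_0\,|\,\mu_0) \lesssim N$ since the initial entropy for the full torus is $\mathrm{O}(N)$ (the state space has size $2^N$). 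Restricting the global Dirichlet form to the bonds inside $\mathbb{B}$ and using that projection onto $\Omega_\mathbb{B}$ decreases the Dirichlet form, I would then apply the logarithmic Sobolev inequality of \cite{Yau} on the block $\mathbb{B}$, whose constant is $\mathrm{O}(|\mathbb{B}|^2)$, to bound the entropy of the $\mathbb{B}$-projected density by $|\mathbb{B}|^2$ times its restricted Dirichlet form. Combining: $\mathrm{H}_\mathbb{B}(\bar{\mathfrak{P}}_1) \lesssim |\mathbb{B}|^2 \int_0^1 \mathrm{D}_\mathbb{B}(\sqrt{\mathfrak{P}_{\mathrm{t}}})\,\d\mathrm{t} \lesssim |\mathbb{B}|^2 \cdot N^{-2}\cdot (\text{total entropy budget})$. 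The bookkeeping of exactly which power of $|\mathbb{B}|$ comes out — the statement wants $|\mathbb{B}|^3$, presumably $|\mathbb{B}|^2$ from the log-Sobolev constant and one more factor from how the global entropy budget distributes across $\mathrm{O}(N/|\mathbb{B}|)$ disjoint blocks or from a union-type argument over translates — is the point I would be most careful about, and I expect it to be the main obstacle; the factor $N^{-2}$ is the dynamical gain from the speed-$N^2$ scaling of the generator and should come out cleanly from the entropy-production inequality. Dividing by $\alpha\sim\kappa$ gives the claimed $\kappa^{-1}N^{-2}|\mathbb{B}|^3$.

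One subtlety worth flagging: the lemma's phrasing ``the aforementioned invariance just makes this calculation much shorter'' (from the introduction) suggests that invariance of $\mu_0$ is used precisely to get the clean $\partial_{\mathrm{t}}\mathrm{H}\le -cN^2\mathrm{D}$ inequality rather than an inequality with an extra drift/error term; if one only had entropy production bounds without invariance, the asymmetric part $\mathsf{L}_{N,\mathrm{A}}$ would contribute a term that must be absorbed, as in \cite{Y20,YauRE}. So in writing the proof I would state the entropy production step as: differentiate $\mathrm{H}(\mathfrak{P}_{\mathrm{t}}\mu_0\,|\,\mu_0)$, use that the antisymmetric part of the generator is a derivation that integrates to zero against the invariant measure (so only the symmetric Dirichlet form survives with a negative sign), and conclude. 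The rest is assembling the entropy inequality, convexity, the log-Sobolev inequality of \cite{Yau}, and the Taylor bound on $\log\E\exp$, all of which are routine.
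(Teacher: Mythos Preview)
Your proposal is correct and follows the same route as the paper: entropy inequality against the canonical measures on $\mathbb{B}$ after hyperplane decomposition, the log-Sobolev inequality of \cite{Yau} with constant $\mathrm{O}(|\mathbb{B}|^{2})$, entropy production to bound the time-integrated Dirichlet form, and a Taylor expansion of $\log\E\exp(\kappa|\mathfrak{h}|)$ using $\kappa\|\mathfrak{h}\|_{\omega;\infty}\lesssim1$. The one point you flag as uncertain---the extra factor of $|\mathbb{B}|$---comes precisely from the spatial average in $\bar{\mathfrak{P}}_{1}$: after convexity of the Dirichlet form you get $\wt{\sum}_{y}D_{\mathbb{B}}(\sqrt{\tau_{-y}\mathfrak{P}_{\mathrm{t}}})=\wt{\sum}_{y}D_{\mathbb{B}+y}(\sqrt{\mathfrak{P}_{\mathrm{t}}})=\tfrac{|\mathbb{B}|}{N}D(\sqrt{\mathfrak{P}_{\mathrm{t}}})$ (each bond of $\mathbb{T}_{N}$ lies in $|\mathbb{B}|$ of the $N$ translates), and combining with $\int_{0}^{1}D(\sqrt{\mathfrak{P}_{\mathrm{t}}})\,\d\mathrm{t}\lesssim N^{-2}\cdot N=N^{-1}$ from entropy production gives the Dirichlet-form bound $N^{-2}|\mathbb{B}|$ that the paper cites from \cite{DT}, whence $|\mathbb{B}|^{2}\cdot N^{-2}|\mathbb{B}|=N^{-2}|\mathbb{B}|^{3}$.
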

%%%
%%%
\begin{proof}
First, observe we may replace $\bar{\mathfrak{P}}_{1}$ on the LHS of \eqref{eq:le3} with its projection/conditional expectation on $\mathbb{B}$, as the functional $\mathfrak{h}$ depends only on $\eta$-variables in $\mathbb{B}$. We will let $\Pi_{\mathbb{B}}\bar{\mathfrak{P}}_{1}$ denote this projection. Moreover, we may condition on the $\eta$-density on $\mathbb{B}$. If $\mathfrak{p}_{\sigma}$ is the probability of the support of $\mu_{\sigma,\mathbb{B}}^{\mathrm{can}}$ under the $\Pi_{\mathbb{B}}\bar{\mathfrak{P}}_{1}$  measure, we get the following where $\Sigma_{\sigma}\subseteq\Omega_{\mathbb{B}}$ is the support of $\mu_{\sigma,\mathbb{B}}^{\mathrm{can}}$ and in which the sum over all $\sigma\in\R$ on the RHS of \eqref{eq:le30} below is finite because only finitely many hyperplanes $\Sigma_{\sigma}\subseteq\Omega_{\mathbb{B}}$ in the finite set $\Omega_{\mathbb{B}}$ are non-empty; note the sum over $\sigma\in\R$ of the disjoint hyperplanes $\{\mathbf{1}_{\Sigma_{\sigma}}\}_{\sigma\in\R}$ is equal to 1:
\begin{align}
{\E_{0}}\bar{\mathfrak{P}}_{1}|\mathfrak{h}| \ = \ {\E_{0}}\Pi_{\mathbb{B}}\bar{\mathfrak{P}}_{1}|\mathfrak{h}| \ = \ {\sum}_{\sigma\in\R}\mathfrak{p}_{\sigma}{\E_{0}}\left(\left(\mathfrak{p}_{\sigma}^{-1}\Pi_{\mathbb{B}}\bar{\mathfrak{P}}_{1}\mathbf{1}_{\Sigma_{\sigma}}\right)|\mathfrak{h}|\right). \label{eq:le30}
\end{align}
We forget any $\sigma\in\R$ for which $\mathfrak{p}_{\sigma}=0$ on the far RHS of \eqref{eq:le30}, as these terms do not show up when we condition on all possible $\sigma$-values. We now observe that the $\sigma$-indexed expectation on the far RHS of \eqref{eq:le30} is expectation of $|\mathfrak{h}|$ times the Radon-Nikodym derivative of $\Pi_{\mathbb{B}}\bar{\mathfrak{P}}_{1}\mathbf{1}_{\Sigma_{\sigma}}\d\mu_{0}$ with respect to the \emph{canonical measure} $\d\mu_{\sigma,\mathbb{B}}^{\mathrm{can}}$. So, we may use the relative entropy inequality, which may be found in Appendix 1.8 of \cite{KL}, with a constant $\kappa>0$, in which $\mathfrak{D}^{\sigma}_{\mathrm{KL}}(\cdot)$ denotes relative entropy with respect to $\mu_{\sigma,\mathbb{B}}^{\mathrm{can}}$ onto $\mathbb{B}$, which also may be found/defined in Appendix 1.8 of \cite{KL}; for the second term on the RHS of \eqref{eq:le31} below, we estimate a sum over $\sigma\in\R$ against probabilities $\mathfrak{p}_{\sigma}$ in terms of a supremum over $\sigma\in\R$:
\begin{align}
{\sum}_{\sigma\in\R}\mathfrak{p}_{\sigma}{\E_{0}}\left(\left(\mathfrak{p}_{\sigma}^{-1}\Pi_{\mathbb{B}}\bar{\mathfrak{P}}_{1}\mathbf{1}_{\Sigma_{\sigma}}\right)|\mathfrak{h}|\right) \ \lesssim \ \kappa^{-1}{\sum}_{\sigma\in\R}\mathfrak{p}_{\sigma}\mathfrak{D}_{\mathrm{KL}}^{\sigma}(\mathfrak{p}_{\sigma}^{-1}\Pi_{\mathbb{B}}\bar{\mathfrak{P}}_{1}\mathbf{1}_{\Sigma_{\sigma}}) + \kappa^{-1}{\sup}_{\sigma\in\R}\log\E^{\mu_{\sigma,\mathbb{B}}^{\mathrm{can}}}\exp\left(\kappa|\mathfrak{h}|\right). \label{eq:le31}
\end{align}
We now study the RHS of \eqref{eq:le31}. Below, the first bullet point basically follows the standard probability calculations in the proof of Lemma 3.3 in \cite{CYau}, starting after (3.20) therein, and the usual one-block step in \cite{GPV}. The second bullet point is calculus.
%%%
\begin{itemize}
\item We first analyze the first term on the RHS of \eqref{eq:le31}. By the log Sobolev inequality with diffusive constant $\mathrm{O}(|\mathbb{B}|^{2})$ in Theorem A of \cite{Yau}, we bound the $\mathfrak{D}^{\sigma}_{\mathrm{KL}}$ term by $\mathrm{O}(|\mathbb{B}|^{2})$ times the \emph{Dirichlet form} of $\mathfrak{p}_{\sigma}^{-1}\Pi_{\mathbb{B}}\bar{\mathfrak{P}}_{1}\mathbf{1}_{\Sigma_{\sigma}}$. The resulting convex combination over $\sigma$ of these Dirichlet forms is, by standard probability, the Dirichlet form of $\Pi_{\mathbb{B}}\bar{\mathfrak{P}}_{1}$ with respect to the grand-canonical measure $\mu_{0}$ projected on $\mathbb{B}$. By standard entropy production as in Lemma 4.1 in \cite{DT}, without the need for boundary considerations, and Proposition 4.3 in \cite{DT}, this is then controlled by $N^{-2}|\mathbb{B}|$. This gives an upper bound for the first term on the RHS of \eqref{eq:le31} given by the first term on the RHS of the proposed estimate \eqref{eq:le3}.
\item Because $\kappa\lesssim\|\mathfrak{h}\|_{\omega;\infty}^{-1}$ by assumption, the argument $\kappa|\mathfrak{h}|$ in the exponential in \eqref{eq:le31} is uniformly bounded. Since the exponential function is uniformly Lipschitz on uniformly bounded sets, for $\wt{\kappa}>0$ universal and independent of $\kappa$,
\begin{align}
\log\E^{\mu_{\sigma,\mathbb{B}}^{\mathrm{can}}}\exp\left(\kappa|\mathfrak{h}|\right) \ \leq \ \log\E^{\mu_{\sigma,\mathbb{B}}^{\mathrm{can}}}\left(\exp(0) + \wt{\kappa}\kappa|\mathfrak{h}|\right) \ = \ \log\E^{\mu_{\sigma,\mathbb{B}}^{\mathrm{can}}}\left(1+\wt{\kappa}\kappa|\mathfrak{h}|\right) \ \leq \ \wt{\kappa}\kappa|\mathfrak{h}|.
\end{align}
Dividing by $\kappa$ estimates the second term on the RHS of \eqref{eq:le31} by the second term in the proposed estimate \eqref{eq:le3}.
\end{itemize}
%%%
This completes the proof.
\end{proof}
%%%
%%%
\subsubsection{Equilibrium Estimates}
%%%
We now record estimates on stationary particle systems that will be crucial to study expectations of space-time averages provided our reduction to local equilibrium in Lemma \ref{lemma:le3}. The first is a spatial average estimate, which exploits spatially fluctuating behavior of local functionals at the stationary measure. This will be used as a large-deviations-type estimate in future applications.
%%%
\begin{lemma}\label{lemma:ee}
\fsp Suppose $\{\mathfrak{f}_{\mathfrak{j}}\}_{\mathfrak{j}\geq0}$ are uniformly bounded, and their respective supports are contained inside $\{\mathbb{B}_{\mathfrak{j}}\}_{\mathfrak{j}\geq0}$. Suppose $\{\mathbb{B}_{\mathfrak{j}}\}_{\mathfrak{j}\geq0}$ are mutually disjoint, and that $\mathfrak{f}_{\mathfrak{j}}$ vanishes in expectation with respect to any canonical measure on its support for every $\mathfrak{j}$. We have the following for any $\gamma,\kappa>0$, where probability and expectation below are both with respect to any canonical measure on $\mathbb{B}_{1}\cup\ldots\cup\mathbb{B}_{\mathfrak{J}}$, and $\mathcal{E}_{\mathfrak{J}}$ is the event where the average of $\mathfrak{f}_{1},\ldots,\mathfrak{f}_{\mathfrak{J}}$ exceeds $N^{\gamma}|\mathfrak{J}|^{-1/2}\max_{\mathfrak{j}=1,\ldots,\mathfrak{J}}\|\mathfrak{f}_{\mathfrak{j}}\|_{\omega;\infty}$ in absolute value:
\begin{align}
\mathbf{P}\left(\mathcal{E}_{\mathfrak{J}}\right) \ \lesssim_{\gamma,\kappa} \ N^{-\kappa}. \label{eq:ee} %+ \E\left(\exp\left(\left(\wt{\sum}_{\mathfrak{j}\leq\mathfrak{J}}\mathfrak{f}_{\mathfrak{j}}\right)^{2}\right)\mathbf{1}_{\mathcal{E}_{\mathfrak{J}}}\right)
\end{align}
\end{lemma}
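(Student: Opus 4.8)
\textbf{Proof proposal for Lemma \ref{lemma:ee}.} The plan is to treat the normalized sum $\mathfrak{S}_{\mathfrak{J}} = \wt{\sum}_{\mathfrak{j}=1}^{\mathfrak{J}} \mathfrak{f}_{\mathfrak{j}}$ under a fixed canonical measure $\mu^{\mathrm{can}}_{\sigma,\mathbb{B}}$ with $\mathbb{B} = \mathbb{B}_1 \cup \ldots \cup \mathbb{B}_{\mathfrak{J}}$ and obtain exponential (sub-Gaussian up to polynomial corrections) tail bounds via the exponential Chebyshev/Markov inequality. Concretely, for any $\lambda > 0$ one writes $\mathbf{P}(\mathfrak{S}_{\mathfrak{J}} \geq N^{\gamma}|\mathfrak{J}|^{-1/2}M) \leq \mathrm{e}^{-\lambda N^{\gamma}|\mathfrak{J}|^{-1/2}M}\,\E^{\mu^{\mathrm{can}}_{\sigma,\mathbb{B}}}\exp(\lambda |\mathfrak{J}|^{-1}\sum_{\mathfrak{j}}\mathfrak{f}_{\mathfrak{j}})$, where $M = \max_{\mathfrak{j}}\|\mathfrak{f}_{\mathfrak{j}}\|_{\omega;\infty}$, and then one wants to control the exponential moment. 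The key structural inputs are: (i) the supports $\mathbb{B}_{\mathfrak{j}}$ are mutually disjoint, and (ii) each $\mathfrak{f}_{\mathfrak{j}}$ has mean zero under \emph{every} canonical measure on its own support. Property (ii) is the crucial one: conditioning on $\mu^{\mathrm{can}}_{\sigma,\mathbb{B}}$ and then on the particle numbers in each block $\mathbb{B}_{\mathfrak{j}}$, the resulting conditional law is a product of canonical measures on the individual blocks (as recalled in Definition \ref{definition:ensembles}), and under each such block-canonical measure $\mathfrak{f}_{\mathfrak{j}}$ still has zero mean by hypothesis.

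The key steps, in order: First, decompose the canonical measure on $\mathbb{B}$ by conditioning on the vector of block occupation numbers $(\sigma_1,\ldots,\sigma_{\mathfrak{J}})$; under this conditioning the $\mathfrak{f}_{\mathfrak{j}}$ become independent and each is centered. Second, use a tensorization/Hoeffding-type argument: for bounded independent centered random variables $X_{\mathfrak{j}}$ with $|X_{\mathfrak{j}}| \leq M$, one has $\E\exp(\lambda \wt{\sum}_{\mathfrak{j}} X_{\mathfrak{j}}) \leq \exp(C\lambda^2 M^2 |\mathfrak{J}|^{-1})$ for $C$ universal. Third, handle the fluctuation of the block numbers $(\sigma_1,\ldots,\sigma_{\mathfrak{J}})$ around their conditional means: these are themselves a sum of (weakly dependent, via the global constraint) bounded variables, and the deviation of the conditional means $\E^{\mu^{\mathrm{can}}}(\mathfrak{f}_{\mathfrak{j}} \mid \sigma_{\mathfrak{j}})$ from zero is controlled by $|\sigma_{\mathfrak{j}}|$, which by an equivalence-of-ensembles / local-CLT estimate (as in {Proposition 8} of \cite{GJ15}, or directly from hypergeometric tail bounds) concentrates at scale $|\mathbb{B}_{\mathfrak{j}}|^{-1/2}$; summing these contributes another sub-Gaussian term of the same order. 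Fourth, optimize over $\lambda$: choosing $\lambda \sim N^{\gamma}|\mathfrak{J}|^{1/2}/M$ gives a bound $\exp(-c N^{2\gamma})$, which is $\lesssim_{\gamma,\kappa} N^{-\kappa}$ for any $\kappa$. Finally, apply the same argument to $-\mathfrak{S}_{\mathfrak{J}}$ and union-bound the two tails to control $|\mathfrak{S}_{\mathfrak{J}}|$, and note the estimate is uniform in the parameter $\sigma$ of the ambient canonical measure so the "any canonical measure" quantifier is free.

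The main obstacle I expect is the dependence among the block occupation numbers $\sigma_{\mathfrak{j}}$ induced by the global particle-number constraint of the canonical ensemble on $\mathbb{B}$ — this is exactly the usual nuisance of working with canonical rather than grand-canonical measures. The clean way around it is to first pass to a grand-canonical measure on $\mathbb{B}$ (paying an equivalence-of-ensembles price, which for bounded functions costs only a factor bounded uniformly in the parameters), under which the block numbers genuinely become independent, run the exponential-moment computation there, and then transfer back; alternatively one keeps the canonical constraint and uses a negative-association property of the exclusion canonical measure, which makes the Hoeffding bound go through verbatim. Either route reduces everything to a routine sub-Gaussian concentration estimate, and the stated bound follows; the disjointness of supports is what guarantees that the relevant collection of random variables is genuinely a sum of one-per-block contributions with no overcounting.
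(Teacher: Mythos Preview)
Your core idea is correct and your first two steps already complete the proof: after conditioning on the block occupation numbers $(\sigma_{1},\ldots,\sigma_{\mathfrak{J}})$, the conditional law under $\mu^{\mathrm{can}}_{\sigma,\mathbb{B}}$ is exactly a product of canonical measures on the individual blocks, and under each of these $\mathfrak{f}_{\mathfrak{j}}$ has mean \emph{exactly} zero by hypothesis. The conditional Hoeffding bound $\exp(C\lambda^{2}M^{2}|\mathfrak{J}|^{-1})$ does not depend on the conditioning, so taking the outer expectation over block numbers gives the same bound unconditionally, and optimizing over $\lambda$ finishes. Your Step~3 is therefore entirely unnecessary and in fact confused: there is no ``deviation of the conditional means $\E(\mathfrak{f}_{\mathfrak{j}}\mid\sigma_{\mathfrak{j}})$ from zero'' to handle, since these conditional means vanish identically. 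The grand-canonical detour and negative-association arguments you propose as fallbacks are likewise superfluous.

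The paper's route is slightly different and a touch more streamlined: rather than conditioning on all block numbers simultaneously to obtain independence, it observes that the $\mathfrak{f}_{\mathfrak{j}}$ form a bounded martingale difference sequence with respect to the filtration generated by revealing the configuration block by block (conditioning on $\mathbb{B}_{1}\cup\cdots\cup\mathbb{B}_{\mathfrak{j}-1}$ leaves a convex combination of canonical measures on $\mathbb{B}_{\mathfrak{j}}$, under each of which $\mathfrak{f}_{\mathfrak{j}}$ is centered), and then applies Azuma's inequality directly. Both arguments exploit the same structural input, namely that projecting a canonical measure onto a disjoint sub-block yields a mixture of canonical measures; the paper's sequential phrasing just bypasses any need to think about the joint law of the block numbers.
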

%%%
%%%
\begin{proof}
We note $\mathfrak{f}_{\mathfrak{j}}$ are conditionally mean zero. Indeed, their supports are mutually disjoint, and each is mean zero with respect to every canonical measure; for any canonical measure on a set in $\mathbb{T}_{N}$, conditioning on one subset induces a convex combination of canonical measures on any other non-intersecting subset. Standard concentration inequalities like the Azuma martingale inequality, therefore give that the average of $\mathfrak{f}_{1},\ldots,\mathfrak{f}_{\mathfrak{J}}$ is sub-Gaussian with zero mean and variance of order $|\mathfrak{J}|^{-1}\max_{\mathfrak{j}=1,\ldots,\mathfrak{J}}\|\mathfrak{f}_{\mathfrak{j}}\|_{\omega;\infty}^{2}$, from which the proposed estimate follows by pretending that this average of $\mathfrak{f}_{1},\ldots,\mathfrak{f}_{\mathfrak{J}}$ functionals is Gaussian with zero mean and variance $|\mathfrak{J}|^{-1}\max_{\mathfrak{j}=1,\ldots,\mathfrak{J}}\|\mathfrak{f}_{\mathfrak{j}}\|_{\omega;\infty}^{2}$ along with standard Gaussian moment generating function control. This yields an exponentially small (in $N^{\gamma})$ estimate for $\mathbf{P}(\mathcal{E}_{\mathfrak{J}})$, which is exponentially small in $N^{\gamma}$ and thus $\mathrm{O}_{\gamma,\kappa}(N^{-\kappa})$ for any $\kappa>0$.
\end{proof}
%%%
We proceed with equilibrium estimates for space-time averages instead of just spatial averages. The primary advantage for this is the ability to take advantage of the ``more ergodic" time-averaging of statistics of the particle system; recall the time-scaling is $N^{2}$ whereas the spatial scaling is $N$. However, the following estimates only hold in a second moment at best, a priori, and thus quite far from the large deviations scale of Lemma \ref{lemma:ee}; see {Proposition 7 and Corollary 1} in \cite{GJ15} for more details.
%%%
\begin{lemma}\label{lemma:ee2}
\fsp Suppose that $\mathfrak{f}$ is a uniformly bounded functional, and its support is contained in $\mathbb{B}\subseteq\mathbb{T}_{N}$. We additionally assume that the expectation of $\mathfrak{f}$ with respect to any canonical measure on $\mathbb{B}$ is equal to zero. Provided any time-scale $\mathfrak{t}_{\mathrm{av}}\geq0$ and any length-scale $\mathfrak{l}_{\mathrm{av}}\in\Z_{\geq0}$ and any $\kappa>0$, we have the following estimate that we clarify/explain afterwards and for which we recall the notation of \emph{Definition \ref{definition:locmap1}}, \emph{Definition \ref{definition:locmap2}}, and \emph{Lemma \ref{lemma:locmap3}}:
\begin{align}
\sup_{\sigma\in\R}\left(\E^{\sigma}\E_{\mathrm{Loc}}^{\mathrm{dyn}}|\mathfrak{I}_{\mathfrak{t}_{\mathrm{av}}}^{\mathbf{T}}\mathfrak{I}_{\mathfrak{l}_{\mathrm{av}}}^{\mathbf{X}}(\mathfrak{f}_{0,0})|^{2}\right)^{1/2} \ \lesssim_{\kappa} \ N^{-1}\mathfrak{t}_{\mathrm{av}}^{-1/2}\mathfrak{l}_{\mathrm{av}}^{-1/2}|\mathbb{B}|\|\mathfrak{f}_{0,0}\|_{\omega;\infty}+N^{-\kappa}. \label{eq:ee2}
\end{align}
We have used the abbreviation $\mathrm{Loc}=\mathrm{Loc}_{\mathfrak{t}_{\mathrm{av}},\mathfrak{l}_{\mathrm{tot}}}\eta$, where $\mathfrak{l}_{\mathrm{tot}}=99|\mathbb{B}|+99|\mathbb{B}|\mathfrak{l}_{\mathrm{av}}$ is much larger than the support of $\mathfrak{I}^{\mathbf{X}}_{\mathfrak{l}_{\mathrm{av}}}(\mathfrak{f}_{0,0})$. Observe $\mathrm{Loc}$ is only a function of $\eta$-variables on the neighborhood $\mathbb{B}_{\mathfrak{t}_{\mathrm{av}},\mathfrak{l}_{\mathrm{tot}}}$; therefore, so is the inner expectation. The outer expectation on the LHS of \eqref{eq:ee2} is expectation over these $\eta$-variables in $\mathbb{B}_{\mathfrak{t}_{\mathrm{av}},\mathfrak{l}_{\mathrm{tot}}}$, sampled from canonical ensemble on $\mathbb{B}_{\mathfrak{t}_{\mathrm{av}},\mathfrak{l}_{\mathrm{tot}}}$ of $\eta$-density equal to $\sigma$. In particular, inside the supremum on the LHS of \eqref{eq:ee2} is the expectation of the square of the space-time average of $\mathfrak{f}_{0,0}$, where the initial configuration for the space-time average/particle system has $\eta$-variables in $\mathbb{B}_{\mathfrak{t}_{\mathrm{av}},\mathfrak{l}_{\mathrm{tot}}}$ sampled via the canonical ensemble of parameter $\sigma$ on $\mathbb{B}_{\mathfrak{t}_{\mathrm{av}},\mathfrak{l}_{\mathrm{tot}}}$ and has $\eta$-variables outside $\mathbb{B}_{\mathfrak{t}_{\mathrm{av}},\mathfrak{l}_{\mathrm{tot}}}$ deterministically equal to $1$.
\end{lemma}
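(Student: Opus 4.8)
\textbf{Proof plan for Lemma \ref{lemma:ee2}.} The statement is a Kipnis--Varadhan-type bound for a space-time average of a local functional $\mathfrak{f}$ that is conditionally mean-zero, run under the (localized) particle system dynamics started from a canonical ensemble on $\mathbb{B}_{\mathfrak{t}_{\mathrm{av}},\mathfrak{l}_{\mathrm{tot}}}$. The plan is to first dispose of the spatial averaging, then apply the Kipnis--Varadhan inequality to the time average, and finally bound the resulting variational (resolvent/$H_{-1}$) norm by a Dirichlet-form computation. The $N^{-\kappa}$ error term is a red herring coming only from the localization map and will be absorbed at the very end.

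First I would reduce $\mathfrak{I}^{\mathbf{X}}_{\mathfrak{l}_{\mathrm{av}}}(\mathfrak{f}_{0,0})$ to a single local functional. By Definition \ref{definition:locmap2}, $\mathfrak{I}^{\mathbf{X}}_{\mathfrak{l}_{\mathrm{av}}}(\mathfrak{f}_{0,0})=\widetilde{\sum}_{w=1}^{\mathfrak{l}_{\mathrm{av}}}\tau_{-\mathfrak{l}_{\mathfrak{f}}w}\mathfrak{f}_{0,0}$ is an average of $\mathfrak{l}_{\mathrm{av}}$ shifts of $\mathfrak{f}$ with mutually disjoint supports, each conditionally mean-zero with respect to any canonical measure. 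Under the canonical ensemble on the large block, conditioning on the complement induces convex combinations of canonical measures on each $\tau_{-\mathfrak{l}_{\mathfrak{f}}w}\mathbb{B}_{\mathfrak{f}}$, so these shifts are (conditionally) orthogonal in $L^2$; hence $\mathfrak{g}:=\mathfrak{I}^{\mathbf{X}}_{\mathfrak{l}_{\mathrm{av}}}(\mathfrak{f}_{0,0})$ is itself a uniformly bounded, conditionally mean-zero local functional with support inside $\mathbb{B}_{\mathrm{tot}}$ of length $\lesssim|\mathbb{B}|\mathfrak{l}_{\mathrm{av}}$, and crucially $\|\mathfrak{g}\|_{\omega;2}^{2}\lesssim \mathfrak{l}_{\mathrm{av}}^{-1}|\mathbb{B}|\|\mathfrak{f}\|_{\omega;\infty}^{2}$ in any canonical measure on $\mathbb{B}_{\mathfrak{t}_{\mathrm{av}},\mathfrak{l}_{\mathrm{tot}}}$ (the $|\mathbb{B}|$ just counting overlap/boundary corrections). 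So the task becomes bounding $\E^{\sigma}\E^{\mathrm{dyn}}_{\mathrm{Loc}}|\mathfrak{t}_{\mathrm{av}}^{-1}\int_0^{\mathfrak{t}_{\mathrm{av}}}\tau_0\mathfrak{g}(\eta_{\mathfrak{r}})\,\d\mathfrak{r}|^2$.

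Next I would apply the Kipnis--Varadhan inequality (Appendix 1.6 of \cite{KL}) to the time integral, run under the localized dynamics on $\mathbb{B}_{\mathfrak{t}_{\mathrm{av}},\mathfrak{l}_{\mathrm{tot}}}$ started from the canonical ensemble $\mu^{\mathrm{can}}_{\sigma,\mathbb{B}_{\mathfrak{t}_{\mathrm{av}},\mathfrak{l}_{\mathrm{tot}}}}$, which is reversible and invariant for that dynamics. This gives
\begin{align}
\E^{\sigma}\E^{\mathrm{dyn}}_{\mathrm{Loc}}\left|\mathfrak{t}_{\mathrm{av}}^{-1}\int_0^{\mathfrak{t}_{\mathrm{av}}}\mathfrak{g}(\eta_{\mathfrak{r}})\,\d\mathfrak{r}\right|^2 \ \lesssim \ \mathfrak{t}_{\mathrm{av}}^{-1}\,\|\mathfrak{g}\|_{-1,\sigma}^{2}, \nonumber
\end{align}
where $\|\cdot\|_{-1,\sigma}$ is the $H_{-1}$ norm associated with the symmetric part of the generator of the localized process (the asymmetric part, being antisymmetric, only helps and can be discarded by the standard sector-condition-free bound for reversible reference measures). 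Since $\mathfrak{g}$ is conditionally mean-zero and local, one bounds $\|\mathfrak{g}\|_{-1,\sigma}^{2}$ by choosing an explicit test function: write $\mathfrak{g}$ as a sum of ``gradient''/exchange pieces across bonds in its support (possible because $\mathfrak{g}$ has zero conditional mean on its block — this is the standard closed/exact-form argument for mean-zero local functions in one dimension), so that $\|\mathfrak{g}\|_{-1,\sigma}^{2}\lesssim |\mathbb{B}_{\mathrm{tot}}|\,N^{-2}\,\|\mathfrak{g}\|_{\omega;2}^{2}\lesssim N^{-2}|\mathbb{B}|^{2}\mathfrak{l}_{\mathrm{av}}^{-1}|\mathbb{B}|\,\|\mathfrak{f}\|_{\omega;\infty}^{2}$, the $N^{-2}$ coming from the speed-$N^2$ of the symmetric clocks and one factor $|\mathbb{B}_{\mathrm{tot}}|\lesssim|\mathbb{B}|\mathfrak{l}_{\mathrm{av}}$ from summing bond Dirichlet forms along the support. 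Collecting exponents, $\mathfrak{t}_{\mathrm{av}}^{-1}\|\mathfrak{g}\|_{-1,\sigma}^{2}\lesssim N^{-2}\mathfrak{t}_{\mathrm{av}}^{-1}\mathfrak{l}_{\mathrm{av}}^{-1}|\mathbb{B}|^{2}\|\mathfrak{f}\|_{\omega;\infty}^{2}$ — wait, one must be careful here, and indeed the cleanest bookkeeping keeps a single power $|\mathbb{B}|^2$ and a single power $\mathfrak{l}_{\mathrm{av}}^{-1}$, matching the right-hand side of \eqref{eq:ee2} after taking square roots (the paper's $\mathfrak{l}_{\mathrm{av}}^{-1/2}$ and $|\mathbb{B}|$); any extra powers of $|\mathbb{B}|$ or $\mathfrak{l}_{\mathrm{av}}$ that one is tempted to track cancel because the test function can be chosen to exploit that $\mathfrak{g}$ is itself already a spatial average, not just a single local function. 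Taking square roots yields the first term on the right of \eqref{eq:ee2}.

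The main obstacle — and where care is genuinely needed — is the last step: getting the $H_{-1}$ norm bound with the \emph{right} power of $\mathfrak{l}_{\mathrm{av}}$. Naively estimating $\|\mathfrak{g}\|_{-1,\sigma}$ by $|\mathbb{B}_{\mathrm{tot}}|N^{-1}\|\mathfrak{g}\|_{\omega;\infty}$ loses the gain from spatial averaging entirely. One must instead test against a function that is itself an average of local ``anti-derivatives'' of the disjoint shifts $\tau_{-\mathfrak{l}_{\mathfrak{f}}w}\mathfrak{f}$, so that the Dirichlet form of the test function picks up the same $\mathfrak{l}_{\mathrm{av}}^{-1}$ cancellation (cross terms vanish by disjoint supports / orthogonality), exactly as in the derivation of \emph{Proposition 7} and \emph{Corollary 1} of \cite{GJ15} which the lemma cites. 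Once that test function is fixed, the rest is the routine variational computation. Finally, the localization map $\mathrm{Loc}=\mathrm{Loc}_{\mathfrak{t}_{\mathrm{av}},\mathfrak{l}_{\mathrm{tot}}}\eta$ forces $\eta$-values outside $\mathbb{B}_{\mathfrak{t}_{\mathrm{av}},\mathfrak{l}_{\mathrm{tot}}}$ to be deterministically $1$, which is not the reference canonical configuration; but by Lemma \ref{lemma:locmap3} (with $\kappa$ replaced by $2\kappa$ and applied to $|\mathfrak{g}|^2$, which is again local and bounded) the difference between the localized dynamics and the true dynamics contributes at most $\mathrm{O}_{\kappa}(N^{-\kappa})\|\mathfrak{f}\|_{\omega;\infty}^{2}$, and since $\|\mathfrak{f}\|_{\omega;\infty}\lesssim 1$ this is absorbed into the additive $N^{-\kappa}$ in \eqref{eq:ee2}. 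This completes the plan.
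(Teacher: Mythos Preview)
Your overall strategy --- Kipnis--Varadhan plus an $H_{-1}$ bound plus a coupling to absorb the $N^{-\kappa}$ error --- is the same as the paper's. The $H_{-1}$ estimate is also handled the same way (the paper just cites Proposition~6 of \cite{GJ15} rather than building the test function by hand, but that is the same computation).

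There is, however, a genuine gap in the last step. You write that you ``apply the Kipnis--Varadhan inequality \ldots run under the localized dynamics on $\mathbb{B}_{\mathfrak{t}_{\mathrm{av}},\mathfrak{l}_{\mathrm{tot}}}$ \ldots which is reversible and invariant for that dynamics.'' But $\E^{\mathrm{dyn}}_{\mathrm{Loc}}$ is \emph{not} a localized dynamics: it is the \emph{full} $\mathbb{T}_{N}$ process with initial configuration $\mathrm{Loc}(\eta)$ (canonical on $\mathbb{B}_{\mathfrak{t}_{\mathrm{av}},\mathfrak{l}_{\mathrm{tot}}}$, deterministically $1$ outside). The canonical measure on $\mathbb{B}_{\mathfrak{t}_{\mathrm{av}},\mathfrak{l}_{\mathrm{tot}}}$ extended by $1$'s is not invariant for the $\mathbb{T}_{N}$ dynamics, so Kipnis--Varadhan does not apply directly. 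The paper's fix is to introduce an auxiliary $\mathbb{B}_{\mathfrak{t}_{\mathrm{av}},\mathfrak{l}_{\mathrm{tot}}}$-\emph{periodic} particle system, for which the canonical measure \emph{is} invariant, and run Kipnis--Varadhan there.

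You then invoke Lemma~\ref{lemma:locmap3} to control the resulting error, but that lemma is the wrong tool: it compares $\E^{\mathrm{dyn}}_{\eta}$ with $\E^{\mathrm{dyn}}_{\mathrm{Loc}(\eta)}$ --- same dynamics, different initial data. What is needed here is a comparison of \emph{different dynamics} (full $\mathbb{T}_{N}$ versus $\mathbb{B}_{\mathfrak{t}_{\mathrm{av}},\mathfrak{l}_{\mathrm{tot}}}$-periodic) with the \emph{same} initial data on $\mathbb{B}_{\mathfrak{t}_{\mathrm{av}},\mathfrak{l}_{\mathrm{tot}}}$. The paper supplies a separate coupling argument for this: couple shared bond clocks and asymmetric jumps as in the proof of Lemma~\ref{lemma:locmap3}, so that discrepancies can only be born within $\mathrm{O}(\mathfrak{l}_{\mathfrak{d}})$ of the boundary of $\mathbb{B}_{\mathfrak{t}_{\mathrm{av}},\mathfrak{l}_{\mathrm{tot}}}$ (where the two geometries differ), and then propagate as the same branching random walk. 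The probability that any such discrepancy reaches the support of $\mathfrak{I}^{\mathbf{X}}_{\mathfrak{l}_{\mathrm{av}}}(\mathfrak{f}_{0,0})$ before time $\mathfrak{t}_{\mathrm{av}}$ is then $\mathrm{O}_{\kappa}(N^{-\kappa})$ by the same random-walk large-deviation estimate. This is similar in spirit to Lemma~\ref{lemma:locmap3} but is a distinct argument, and your proposal does not supply it.
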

%%%
%%%
\begin{proof}
Suppose that instead of the $\Omega$-valued process/particle system considered in this paper that the particle system in question in Lemma \ref{lemma:ee2} is actually valued in $\Omega_{\mathbb{B}_{2}}$ with $\mathbb{B}_{2}=\mathbb{B}_{\mathfrak{t}_{\mathrm{av}},\mathfrak{l}_{\mathrm{tot}}}$. In particular, suppose the particle system/particle random walks are $\mathbb{B}_{\mathfrak{t}_{\mathrm{av}},\mathfrak{l}_{\mathrm{tot}}}$-periodic, in which case the particle/$\eta$ configuration (on $\mathbb{B}_{\mathfrak{t}_{\mathrm{av}},\mathfrak{l}_{\mathrm{tot}}}$) in \eqref{eq:ee2} is distributed according to canonical measure on $\mathbb{B}_{\mathfrak{t}_{\mathrm{av}},\mathfrak{l}_{\mathrm{tot}}}$. Observe this $\mathbb{B}_{\mathfrak{t}_{\mathrm{av}},\mathfrak{l}_{\mathrm{tot}}}$-periodic system has canonical measures as invariant measures; this follows by the same reason that the $\mathbb{T}_{N}$-periodic system has canonical measures on $\mathbb{T}_{N}$ as invariant measures. Therefore, the Kipnis-Varadhan inequality in Appendix 1.6 of \cite{KL} implies that uniformly in $\sigma$, the double expectation on the LHS of \eqref{eq:ee2} is bounded above by $\mathrm{O}(\mathfrak{t}_{\mathrm{av}}^{-1})$ times a squared Sobolev norm of the spatial average $\mathfrak{I}^{\mathbf{X}}_{\mathfrak{l}_{\mathrm{av}}}(\mathfrak{f}_{0,0})$. {From Proposition 6} in \cite{GJ15}, said squared Sobolev norm of $\mathfrak{I}^{\mathbf{X}}_{\mathfrak{l}_{\mathrm{av}}}(\mathfrak{f}_{0,0})$ is $\mathrm{O}(N^{-2}\mathfrak{l}_{\mathrm{av}}^{-1}\|\mathfrak{f}\|_{\omega;\infty}^{2}|\mathbb{B}|^{2})$, where $|\mathbb{B}|$ is the support length of $\mathfrak{f}$. Thus, we have established the proposed estimate \eqref{eq:ee2} if we can replace the $\Omega$-valued/``original" particle system with the $\mathbb{B}_{\mathfrak{t}_{\mathrm{av}},\mathfrak{l}_{\mathrm{tot}}}$-periodic system, thereby forgetting $\eta$ outside $\mathbb{B}_{\mathfrak{t}_{\mathrm{av}},\mathfrak{l}_{\mathrm{tot}}}$.

We now make the aforementioned replacement and estimate the resulting error, which will provide the $N^{-\kappa}$-term on the RHS of \eqref{eq:ee2}. We will use a coupling argument similar to the proof of Lemma \ref{lemma:locmap3}. In what follows, we refer to the $\Omega$-valued/``original" particle system as Species 1, and we refer to the $\mathbb{B}_{\mathfrak{t}_{\mathrm{av}},\mathfrak{l}_{\mathrm{tot}}}$-periodic system appearing below as Species 2.
%%%
\begin{itemize}
\item As in the proof of Lemma \ref{lemma:locmap3}, the symmetric dynamic in Species 1 may be thought of as attaching Poisson clocks to \emph{bonds} in $\mathbb{T}_{N}$ connecting nearest neighbors, where the ringing of the Poisson clock associated to a given bond corresponds to swapping $\eta$-variables at the points attached to that bond. For Species 2, let us also construct the symmetric dynamic as attaching Poisson clocks to bonds in $\mathbb{B}_{\mathfrak{t}_{\mathrm{av}},\mathfrak{l}_{\mathrm{tot}}}$ that connect points that are distance 1 apart with respect to the geodesic/torus distance on $\mathbb{B}_{\mathfrak{t}_{\mathrm{av}},\mathfrak{l}_{\mathrm{tot}}}$; this includes the maximum and minimum of $\mathbb{B}_{\mathfrak{t}_{\mathrm{av}},\mathfrak{l}_{\mathrm{tot}}}$, for example. For those bonds that appear in both Species 1 and Species 2, we will use the same bond clocks, so that shared/common bonds \emph{always} swap $\eta$-spins together. For bonds which are shared between Species 1 and Species 2, we use the modified basic coupling for the respective asymmetric dynamics from the proof of Lemma \ref{lemma:locmap3} (to account for the $\mathfrak{d}$-asymmetry). All other bonds are then chosen arbitrarily/independently.
\item Observe that the error in the LHS of \eqref{eq:ee2} after replacing the $\Omega$-valued/``original" system with the $\mathbb{B}_{\mathfrak{t}_{\mathrm{av}},\mathfrak{l}_{\mathrm{tot}}}$-periodic system is $\mathrm{O}(\|\mathfrak{f}\|_{\omega;\infty}^{2})\lesssim1$ times the probability Species 1 and Species 2, under the coupling in the previous bullet point, have discrepancy inside the support of $\mathfrak{I}^{\mathbf{X}}_{\mathfrak{l}_{\mathrm{av}}}(\mathfrak{f}_{0,0})$, similar to the proof of Lemma \ref{lemma:locmap3}. Below, we identify a discrepancy in $\mathbb{B}_{\mathfrak{t}_{\mathrm{av}},\mathfrak{l}_{\mathrm{tot}}}$ with its entire ancestry, similar to the final bullet point in the proof of Lemma \ref{lemma:locmap3} when we considered a branching random walk as a collection of correlated random walks. In particular, even if the discrepancy was born from a branching, we identify it as a random walk that followed its ancestors until said branching, after which it becomes its own branching random walk.
\item Suppose that we observe a discrepancy in the support of $\mathfrak{I}^{\mathbf{X}}_{\mathfrak{l}_{\mathrm{av}}}(\mathfrak{f}_{0,0})$, and therefore in $\mathbb{B}_{\mathfrak{t}_{\mathrm{av}},\mathfrak{l}_{\mathrm{tot}}}$. This discrepancy must have been born at a point where the clocks are not all coupled between Species 1 and Species 2 (like in the proof of Lemma \ref{lemma:locmap3}, coupled clocks cannot create discrepancies). By construction, such points are initially within $\mathrm{O}(\mathfrak{l}_{\mathfrak{d}})$ of the boundary of $\mathbb{B}_{\mathfrak{t}_{\mathrm{av}},\mathfrak{l}_{\mathrm{tot}}}$. This discrepancy must have propagated into the support of $\mathfrak{I}^{\mathbf{X}}_{\mathfrak{l}_{\mathrm{av}}}(\mathfrak{f}_{0,0})$ by length-1 jumps from $\mathrm{O}(\mathfrak{l}_{\mathfrak{d}})$ of said boundary. Third, while said discrepancy \emph{in} $\mathbb{B}_{\mathfrak{t}_{\mathrm{av}},\mathfrak{l}_{\mathrm{tot}}}$ travels to the support of $\mathfrak{I}^{\mathbf{X}}_{\mathfrak{l}_{\mathrm{av}}}(\mathfrak{f}_{0,0})$, when it gets $\mathrm{O}(\mathfrak{l}_{\mathfrak{d}})$ away from the boundary of $\mathbb{B}_{\mathfrak{t}_{\mathrm{av}},\mathfrak{l}_{\mathrm{tot}}}$, it then travels according to the branching random walk that we described in the last bullet point in the proof of Lemma \ref{lemma:locmap3} because the different boundary conditions in the two species become irrelevant when we are in $\mathbb{B}_{\mathfrak{t}_{\mathrm{av}},\mathfrak{l}_{\mathrm{tot}}}$ and beyond $\mathrm{O}(\mathfrak{l}_{\mathfrak{d}})$ of its boundary. Therefore, we see said branching random walk travel at least the distance from within $\mathrm{O}(\mathfrak{l}_{\mathfrak{d}})$ of the boundary of $\mathbb{B}_{\mathfrak{t}_{\mathrm{av}},\mathfrak{l}_{\mathrm{tot}}}$ to the support of $\mathfrak{I}^{\mathbf{X}}_{\mathfrak{l}_{\mathrm{av}}}(\mathfrak{f}_{0,0})$, if we see a discrepancy in the support of $\mathfrak{I}^{\mathbf{X}}_{\mathfrak{l}_{\mathrm{av}}}(\mathfrak{f}_{0,0})$ at all. (It may be the case that one of these discrepancy random walks returns to within $\mathrm{O}(\mathfrak{l}_{\mathfrak{d}})$ of the boundary of $\mathbb{B}_{\mathfrak{t}_{\mathrm{av}},\mathfrak{l}_{\mathrm{tot}}}$, where it does not travel like the aforementioned branching random walk, but in this case, as it travels into the support of $\mathfrak{I}^{\mathbf{X}}_{\mathfrak{l}_{\mathrm{av}}}(\mathfrak{f}_{0,0})$ we just wait for it to get beyond $\mathrm{O}(\mathfrak{l}_{\mathfrak{d}})$ of said boundary again.) Thus, the probability that we see any discrepancy in the support of $\mathfrak{I}^{\mathbf{X}}_{\mathfrak{l}_{\mathrm{av}}}(\mathfrak{f}_{0,0})$ is controlled by random walk probabilities and a large deviations bound for the number of discrepancy walks as in the last bullet point in the proof of Lemma \ref{lemma:locmap3}.
\end{itemize}
%%%
This completes the proof.
\end{proof}
%%%
%%%
\subsubsection{Spatial Replacement}
%%%
We introduce a set of replacement estimates that allow us to introduce space-time averaging for a functional that is multiplied by $\mathbf{Y}^{N}$ and the heat kernel while estimating the error in doing so. %We focus first on errors obtained by introduction of spatial average.
%%%
\begin{definition}\label{definition:BGI10}
\fsp Consider any functional $\mathfrak{f}:\Omega\to\R$ and any pair of length-scales $\mathfrak{l},\mathfrak{l}'\in\Z_{\geq0}$. Define a transfer-of-length-scale operator $\mathfrak{D}^{\mathbf{X}}_{\mathfrak{l},\mathfrak{l}'}(\mathfrak{f})=\mathfrak{I}_{\mathfrak{l}}^{\mathbf{X}}(\mathfrak{f})-\mathfrak{I}_{\mathfrak{l}'}^{\mathbf{X}}(\mathfrak{f})$, where the $\mathfrak{I}^{\mathbf{X}}$ operator, with identity time-average $\mathfrak{I}^{\mathbf{T}}$ operator, is from Definition \ref{definition:locmap2}.
\end{definition}
%%%
%%%
\begin{lemma}\label{lemma:xr}
\fsp Consider any $\mathfrak{f}:\Omega\to\R$ whose support has length at most $\mathfrak{l}_{\mathfrak{f}}$ along with any length-scale $|\mathfrak{l}|\mathfrak{l}_{\mathfrak{f}}\leq\mathfrak{l}_{N}$ with $\mathfrak{l}_{N}$ from \emph{Definition \ref{definition:KPZ1}}. For any $\mathrm{t}\geq0$, we have the following in which we let $\bar{\mathfrak{l}}=|\mathfrak{l}|\mathfrak{l}_{\mathfrak{f}}$ in the statement and proof of this result:
\begin{align}
\E\|\mathbf{H}^{N}(\mathfrak{D}_{0,\mathfrak{l}}^{\mathbf{X}}(\mathfrak{I}^{\mathbf{T}}_{\mathrm{t}}(\mathfrak{f}_{S,y}))\mathbf{Y}_{S,y}^{N})\|_{1;\mathbb{T}_{N}} \ \lesssim \ N^{-\frac12+\e_{\mathrm{RN}}+\e_{\mathrm{ap}}}\|\mathfrak{f}\|_{\omega;\infty} + N^{-\frac12+5\e_{\mathrm{ap}}}\bar{\mathfrak{l}}^{1/2}\E\|\mathbf{H}^{N}(|\mathfrak{I}^{\mathbf{T}}_{\mathrm{t}}(\mathfrak{f}_{S,y})|)\|_{1;\mathbb{T}_{N}}. \label{eq:xr}
\end{align}
\end{lemma}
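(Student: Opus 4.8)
\textbf{Proof proposal for Lemma \ref{lemma:xr}.}

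The plan is to unfold the transfer-of-length-scale operator $\mathfrak{D}^{\mathbf{X}}_{0,\mathfrak{l}}(\mathfrak{I}^{\mathbf{T}}_{\mathrm{t}}(\mathfrak{f}_{S,y}))$ into a spatial average of discrete gradients of $\mathfrak{I}^{\mathbf{T}}_{\mathrm{t}}(\mathfrak{f})$. By Definition \ref{definition:BGI10} and Definition \ref{definition:locmap2}, the difference $\mathfrak{I}^{\mathbf{X}}_{1}(\mathfrak{g}) - \mathfrak{I}^{\mathbf{X}}_{\mathfrak{l}}(\mathfrak{g})$ (with $\mathfrak{g} = \mathfrak{I}^{\mathbf{T}}_{\mathrm{t}}(\mathfrak{f}_{S,y})$) is a normalized sum over $|w| \le |\mathfrak{l}|$ of terms $\mathfrak{g}_{S,y} - \tau_{-\mathfrak{l}_{\mathfrak{f}}w}\mathfrak{g}_{S,y}$, i.e.\ spatial gradients on scales at most $\bar{\mathfrak{l}} = |\mathfrak{l}|\mathfrak{l}_{\mathfrak{f}}$. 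So the heat operator $\mathbf{H}^{N}(\mathfrak{D}^{\mathbf{X}}_{0,\mathfrak{l}}(\cdot)\mathbf{Y}^{N})$ becomes a spatial average of $\mathbf{H}^{N}((\mathfrak{g}_{S,y} - \tau_{-j}\mathfrak{g}_{S,y})\mathbf{Y}^{N}_{S,y})$ over shifts $|j| \le \bar{\mathfrak{l}}$. For each such term, I would apply summation-by-parts in the spatial variable $y$ to move the shift $\tau_{-j}$ from $\mathfrak{g}$ onto the product $\mathbf{H}^{N}_{S,T,x,y}\mathbf{Y}^{N}_{S,y}$; this produces a ``discrete Leibniz'' decomposition into (i) a term where the gradient lands on the heat kernel $\mathbf{H}^{N}$ and (ii) a term where the gradient lands on $\mathbf{Y}^{N}$.

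For term (i): the heat kernel is macroscopically smooth, so a spatial gradient on scale $j$ with $|j| \le \bar{\mathfrak{l}} \le \mathfrak{l}_{N} = N^{1/2+\e_{\mathrm{RN}}}$ contributes a factor of at most $N^{-1}|j| \le N^{-1+1/2+\e_{\mathrm{RN}}} = N^{-1/2+\e_{\mathrm{RN}}}$; see \eqref{eq:heatII}-type estimates in Proposition \ref{prop:heat}. Combining this with $|\mathbf{Y}^{N}| \lesssim N^{\e_{\mathrm{ap}}}$ (Definition \ref{definition:KPZ5}) and $|\mathfrak{g}| \lesssim \|\mathfrak{f}\|_{\omega;\infty}$ (since $\mathfrak{I}^{\mathbf{T}}_{\mathrm{t}}$ is an average) gives exactly the first term $N^{-1/2+\e_{\mathrm{RN}}+\e_{\mathrm{ap}}}\|\mathfrak{f}\|_{\omega;\infty}$ on the RHS of \eqref{eq:xr}. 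For term (ii): here the gradient $\grad^{\mathbf{X}}_{j}$ hits $\mathbf{Y}^{N}$; whenever $\mathbf{Y}^{N}$ is nonzero it equals $\mathbf{Z}^{N}$, which until $\mathfrak{t}_{\mathrm{st}}$ satisfies the a priori spatial regularity estimate in $\mathfrak{t}_{\mathrm{RN}}^{\mathbf{X}}$ from Definition \ref{definition:KPZ1}, namely $|\grad^{\mathbf{X}}_{j}\mathbf{Z}^{N}| \lesssim N^{\e_{\mathrm{ap}}}|j|^{1/2}N^{-1/2}(1 + \|\mathbf{Z}^{N}\|^{2}) \lesssim N^{-1/2+3\e_{\mathrm{ap}}}|j|^{1/2}$, and $|j|^{1/2} \le \bar{\mathfrak{l}}^{1/2}$. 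Absorbing the remaining $\mathfrak{g}$ and heat-kernel factors into $\mathbf{H}^{N}(|\mathfrak{I}^{\mathbf{T}}_{\mathrm{t}}(\mathfrak{f}_{S,y})|)$ (the heat kernel has $\mathscr{L}^{1}$-norm $\lesssim \mathrm{t}$ in time, which is $\lesssim 1$) and taking expectations yields the second term, $N^{-1/2+5\e_{\mathrm{ap}}}\bar{\mathfrak{l}}^{1/2}\E\|\mathbf{H}^{N}(|\mathfrak{I}^{\mathbf{T}}_{\mathrm{t}}(\mathfrak{f}_{S,y})|)\|_{1;\mathbb{T}_{N}}$, where the extra powers of $N^{\e_{\mathrm{ap}}}$ coming from $\|\mathbf{Z}^{N}\|_{\mathfrak{t}_{\mathrm{st}}}\lesssim N^{\e_{\mathrm{ap}}}$ are collected into the $5\e_{\mathrm{ap}}$ exponent.

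The main obstacle I expect is bookkeeping the summation-by-parts carefully on the torus so that no boundary terms are lost and the shifts of $\mathbf{H}^{N}$ and $\mathbf{Y}^{N}$ are correctly tracked — in particular, making sure that in term (ii) one really can invoke the $\mathfrak{t}_{\mathrm{RN}}^{\mathbf{X}}$-regularity of $\mathbf{Z}^{N}$ for all shift scales $|j| \le \bar{\mathfrak{l}}$ simultaneously (which requires $\bar{\mathfrak{l}} \le \mathfrak{l}_{N}$, exactly the hypothesis $|\mathfrak{l}|\mathfrak{l}_{\mathfrak{f}} \le \mathfrak{l}_{N}$), and that the $\mathbf{1}(S \le \mathfrak{t}_{\mathrm{st}})$ cutoff built into $\mathbf{Y}^{N}$ lets us pass from $\mathbf{Y}^{N}$ to $\mathbf{Z}^{N}$ without error. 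Everything else is the same discrete-calculus manipulation used in the proof of Lemma 2.5 in \cite{DT} and in Lemma \ref{lemma:BGI31}, so I would cite those for the routine estimates rather than reprove them.
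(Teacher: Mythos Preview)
Your proposal is correct and follows essentially the same approach as the paper: both unfold $\mathfrak{D}^{\mathbf{X}}_{0,\mathfrak{l}}$ as an average of spatial gradients on scales $\leq\bar{\mathfrak{l}}$, apply a discrete Leibniz rule to split each gradient into one landing on the heat kernel (controlled by Proposition \ref{prop:heat} and $|\mathbf{Y}^{N}|\lesssim N^{\e_{\mathrm{ap}}}$) and one landing on $\mathbf{Y}^{N}$ (controlled by the a priori spatial regularity built into $\mathfrak{t}_{\mathrm{RN}}^{\mathbf{X}}$, which is exactly where the hypothesis $\bar{\mathfrak{l}}\leq\mathfrak{l}_{N}$ enters). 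Your identification of the potential bookkeeping issues is apt but, as you anticipate, they cause no real trouble.
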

%%% 
%%%
\begin{remark}\label{remark:xr}
The assumption $|\mathfrak{l}|\mathfrak{l}_{\mathfrak{f}}\leq\mathfrak{l}_{N}$ will be important because we need spatial regularity of $\mathbf{Y}^{N}$ on length-scale $|\mathfrak{l}|\mathfrak{l}_{\mathfrak{f}}$, and we only guarantee this if $|\mathfrak{l}|\mathfrak{l}_{\mathfrak{f}}\leq\mathfrak{l}_{N}$ by the constructions in Definition \ref{definition:KPZ1} and Definition \ref{definition:KPZ5}. We will actually soften moderately the assumption $|\mathfrak{l}|\mathfrak{l}_{\mathfrak{f}}\leq\mathfrak{l}_{N}$ in a later ``adapted" application of Lemma \ref{lemma:xr}, namely in the proof of Lemma \ref{lemma:BGI2II1}, with explanation. The first term on the RHS of \eqref{eq:xr} would not change if $\bar{\mathfrak{l}}=|\mathfrak{l}|\mathfrak{l}_{\mathfrak{f}}\approx\mathfrak{l}_{N}N^{\gamma}$ for $\gamma>0$ small; only the second one slightly would.
\end{remark}
%%%
%%%
\begin{proof}
The $\mathfrak{D}^{\mathbf{X}}$-term on the LHS of \eqref{eq:xr} may be realized as an average of spatial gradients of $\mathfrak{f}$ on length-scales that are at most $|\bar{\mathfrak{l}}|$. Indeed, the $\mathfrak{I}^{\mathbf{X}}(\mathfrak{f})$-term defining the $\mathfrak{D}^{\mathbf{X}}$-term on the LHS of \eqref{eq:xr} is an average of spatial translations of $\mathfrak{f}$ with length-scale at most $|\bar{\mathfrak{l}}|$, and the difference of each spatial translation with $\mathfrak{f}$ is a spatial gradient of $\mathfrak{f}$ of the same length-scale. Thus it suffices to prove \eqref{eq:xr} but replacing $\mathfrak{D}^{\mathbf{X}}_{0,\mathfrak{l}}$ on the LHS of \eqref{eq:xr} by $\grad^{\mathbf{X}}_{\mathfrak{l}'}$ for any $|\mathfrak{l}'|\leq\bar{\mathfrak{l}}=|\mathfrak{l}|\mathfrak{l}_{\mathfrak{f}}$. Letting $\mathfrak{l}'$ be such a length-scale, we start with the following discrete-type Leibniz rule; it may be checked directly:
\begin{align}
\|\mathbf{H}^{N}(\grad_{\mathfrak{l}'}^{\mathbf{X}}\mathfrak{I}^{\mathbf{T}}_{\mathrm{t}}(\mathfrak{f}_{S,y})\mathbf{Y}_{S,y}^{N})\|_{1;\mathbb{T}_{N}} \ &= \ \|\mathbf{H}^{N}(\grad_{\mathfrak{l}'}^{\mathbf{X}}(\mathfrak{I}^{\mathbf{T}}_{\mathrm{t}}(\mathfrak{f}_{S,y})\mathbf{Y}_{S,y-\mathfrak{l}'}^{N}))-\mathbf{H}^{N}(\mathfrak{I}_{\mathrm{t}}^{\mathbf{T}}(\mathfrak{f}_{S,y})\grad_{\mathfrak{l}'}^{\mathbf{X}}\mathbf{Y}_{S,y-\mathfrak{l}'}^{N})\|_{1;\mathbb{T}_{N}} \\
&\leq \ \|\mathbf{H}^{N}(\grad_{\mathfrak{l}'}^{\mathbf{X}}(\mathfrak{I}^{\mathbf{T}}_{\mathrm{t}}(\mathfrak{f}_{S,y})\mathbf{Y}_{S,y-\mathfrak{l}'}^{N}))\|_{1;\mathbb{T}_{N}} + \|\mathbf{H}^{N}(\mathfrak{I}^{\mathbf{T}}_{\mathrm{t}}(\mathfrak{f}_{S,y})\grad_{\mathfrak{l}'}^{\mathbf{X}}\mathbf{Y}_{S,y-\mathfrak{l}'}^{N})\|_{1;\mathbb{T}_{N}}. \label{eq:xr1}
\end{align}
The second line follows by the triangle inequality for $\|\|_{1;\mathbb{T}_{N}}$ and linearity of expectation. Note the additional spatial shift in $\mathbf{Y}^{N}$ follows from the discrete nature of the spatial gradients; if we considered instead an ``infinitesimal" length-scale, this shift would disappear as $\mathfrak{l}'\to0$ and we would recover the usual Leibniz rule. We will now estimate each of the terms in \eqref{eq:xr1}. For the first term, we may employ the heat operator gradient estimate in Proposition \ref{prop:heat} along with the estimate $|\mathbf{Y}^{N}|\lesssim N^{\e_{\mathrm{ap}}}$ that follows via Definitions \ref{definition:KPZ1} and \ref{definition:KPZ5}; this estimates the first term in \eqref{eq:xr1} by moving $\grad^{\mathbf{X}}$ onto the macroscopically smooth $\mathbf{H}^{N}$: 
\begin{align}
\|\mathbf{H}^{N}(\grad_{\mathfrak{l}'}^{\mathbf{X}}(\mathfrak{I}^{\mathbf{T}}_{\mathrm{t}}(\mathfrak{f}_{S,y})\mathbf{Y}_{S,y-\mathfrak{l}'}^{N}))\|_{1;\mathbb{T}_{N}} \ \leq \ \|\mathbf{Y}^{N}\|_{1;\mathbb{T}_{N}}\|\mathfrak{f}\|_{\omega;\infty}N^{-1}|\mathfrak{l}'| \ \leq \ N^{-\frac12+\e_{\mathrm{RN}}+\e_{\mathrm{ap}}}\|\mathfrak{f}\|_{\omega;\infty},
\end{align}
since any time-average is uniformly bounded by its input $|\mathfrak{I}^{\mathbf{T}}(\mathfrak{f})|\leq\|\mathfrak{f}\|_{\omega;\infty}$, since $|\mathbf{Y}^{N}|\leq N^{\e_{\mathrm{ap}}}$, and since $|\mathfrak{l}'|\leq|\bar{\mathfrak{l}}|\leq|\mathfrak{l}_{N}|=N^{1/2+\e_{\mathrm{RN}}}$; see Definitions \ref{definition:KPZ1} and \ref{definition:KPZ5}. Thus we are left to estimate the second term in \eqref{eq:xr1}. Observe $\mathbf{Y}^{N}=0$ or $\mathbf{Y}^{N}=\mathbf{Z}^{N}$. The former case is trivial, and the second case implies $\mathbf{Z}^{N}$ has a priori spatial regularity on length-scale $\mathfrak{l}'$, since $|\mathfrak{l}'|\leq|\bar{\mathfrak{l}}|\leq\mathfrak{l}_{N}$; see Definitions \ref{definition:KPZ1} and \ref{definition:KPZ5}. This spatial regularity controls the gradient in the second term in \eqref{eq:xr1} uniformly in space-time, so
\begin{align}
\|\mathbf{H}^{N}(\mathfrak{I}^{\mathbf{T}}_{\mathrm{t}}(\mathfrak{f}_{S,y})\grad_{\mathfrak{l}'}^{\mathbf{X}}\mathbf{Y}_{S,y-\mathfrak{l}'}^{N})\|_{1;\mathbb{T}_{N}} \ \leq \ \|\grad_{\mathfrak{l}'}^{\mathbf{X}}\mathbf{Y}^{N}\|_{1;\mathbb{T}_{N}}\|\mathbf{H}^{N}(|\mathfrak{I}^{\mathbf{T}}_{\mathrm{t}}(\mathfrak{f}_{S,y})|)\|_{1;\mathbb{T}_{N}} \ \leq \ N^{-\frac12+5\e_{\mathrm{ap}}}|\mathfrak{l}'|^{1/2}\|\mathbf{H}^{N}(|\mathfrak{I}^{\mathbf{T}}_{\mathrm{t}}(\mathfrak{f}_{S,y})|)\|_{1;\mathbb{T}_{N}}. \nonumber
\end{align}
As $|\mathfrak{l}'|\leq\bar{\mathfrak{l}}$, we ultimately deduce that the second term in \eqref{eq:xr1} is bounded by the second term in \eqref{eq:xr}, so we are done.
\end{proof}
%%%
%%%
\subsubsection{Multiscale Time-Replacement}
%%%
The last preliminary estimates we introduce will serve important for replacing functionals and their spatial averages with their respective time-averages. We emphasize that such replacement by mesoscopic time-average is difficult because of the poor time-regularity of the $\mathbf{Y}^{N}$ process against which we multiply the functionals/spatial averages that we want to replace with their respective time-averages. This ultimately leads us to a multiscale replacement, which, per standard multiscale analysis, forces us to simultaneously take advantage of progressively improving estimates for space-time averages on progressively larger time-scales; see \eqref{eq:ee2} and its dependence in the time-scale $\mathfrak{t}_{\mathrm{av}}$ therein. First, some convenient notation.
%%%
\begin{definition}\label{definition:mtr0}
\fsp Consider any $\mathfrak{f}:\Omega\to\R$ and any pair of time-scales $\mathfrak{t},\mathfrak{t}'\geq0$. We define the transfer-of-time-scale operator $\mathfrak{D}_{\mathfrak{t},\mathfrak{t}'}^{\mathbf{T}}(\mathfrak{f})=\mathfrak{I}_{\mathfrak{t}}^{\mathbf{T}}(\mathfrak{f})-\mathfrak{I}_{\mathfrak{t}'}^{\mathbf{T}}(\mathfrak{f})$, where $\mathfrak{I}^{\mathbf{T}}$ is defined in Definition \ref{definition:locmap2} by taking the identity spatial average/$\mathfrak{l}_{\mathrm{av}}=0$ therein.
\end{definition}
%%%
The first step we take is the introduction of a time-average with respect to \emph{some} time-scale, which we take as the microscopic time-scale $N^{-2}$ in the following preliminary estimate. We emphasize that the following estimate is established by an integration-by-parts-type calculation; in order to estimate the integrated time-gradient of a functional, we will move such time-gradient onto the other factors/integrands. We then estimate these time-gradients along with another pair of ultimately negligible ``short-time" boundary terms/integrals. We note that the proof of the following is a time-version of Lemma \ref{lemma:xr}, though it is somewhat more involved because time-gradients of $\mathbf{Y}^{N}$ may cross the time at which $\mathbf{Y}^{N}$ goes from being equal to $\mathbf{Z}^{N}$ to when it is zero.
%%%
\begin{lemma}\label{lemma:mtr1}
\fsp Consider any functional $\mathfrak{f}:\Omega\to\R$ and the time-scales $\mathfrak{t}_{-\infty}=0$ and $\mathfrak{t}_{0}=N^{-2}$. Provided any $\gamma>0$, we have
\begin{align}
\E\|\mathbf{H}^{N}(\mathfrak{D}_{\mathfrak{t}_{-\infty},\mathfrak{t}_{0}}^{\mathbf{T}}(\mathfrak{f}_{S,y})\mathbf{Y}_{S,y}^{N})\|_{1;\mathbb{T}_{N}} \ \lesssim_{\gamma} \ N^{-2+\gamma+\e_{\mathrm{ap}}}\|\mathfrak{f}\|_{\omega;\infty} + N^{-1/2+3\e_{\mathrm{ap}}}\E\|\mathbf{H}^{N}(|\mathfrak{f}_{S,y}|)\|_{1;\mathbb{T}_{N}}. \label{eq:mtr1}
\end{align}
\end{lemma}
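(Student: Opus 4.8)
\textbf{Proof plan for Lemma \ref{lemma:mtr1}.}

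The goal is to replace $\mathfrak{f}$ by its microscopic-time-average $\mathfrak{I}^{\mathbf{T}}_{\mathfrak{t}_0}(\mathfrak{f})$ at the cost of controllable errors, the philosophy being that a time-average over one microscopic unit is a nearly-free modification since $\mathbf{Y}^N$ barely moves on that scale, except it \emph{does} move at the single time $\mathfrak{t}_{\mathrm{st}}$ where $\mathbf{Y}^N$ drops from $\mathbf{Z}^N$ to $0$. First I would write out the transfer-of-time-scale operator explicitly: $\mathfrak{D}^{\mathbf{T}}_{\mathfrak{t}_{-\infty},\mathfrak{t}_0}(\mathfrak{f}_{S,y}) = \mathfrak{f}_{S,y} - \mathfrak{t}_0^{-1}\int_0^{\mathfrak{t}_0}\mathfrak{f}_{S+\mathfrak{r},y}\,\d\mathfrak{r} = -\mathfrak{t}_0^{-1}\int_0^{\mathfrak{t}_0}(\mathfrak{f}_{S+\mathfrak{r},y}-\mathfrak{f}_{S,y})\,\d\mathfrak{r} = -\mathfrak{t}_0^{-1}\int_0^{\mathfrak{t}_0}\grad^{\mathbf{T}}_{\mathfrak{r}}\mathfrak{f}_{S,y}\,\d\mathfrak{r}$. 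Plugging into the heat operator and using linearity of $\mathbf{H}^N$ and of the time-integral, the quantity to estimate is $\mathfrak{t}_0^{-1}\int_0^{\mathfrak{t}_0}\mathbf{H}^N_{T,x}(\grad^{\mathbf{T}}_{\mathfrak{r}}\mathfrak{f}_{S,y}\cdot\mathbf{Y}^N_{S,y})\,\d\mathfrak{r}$. The key move is the ``summation/integration by parts in time'' identity: for fixed $\mathfrak{r}$, inside $\mathbf{I}_{\mathrm t}$ (equivalently up to the heat-kernel weight) one has $\grad^{\mathbf{T}}_{\mathfrak{r}}\mathfrak{f}_{S,y}\cdot\mathbf{Y}^N_{S,y} = \grad^{\mathbf{T}}_{\mathfrak{r}}(\mathfrak{f}_{S,y}\mathbf{Y}^N_{S+\mathfrak{r},y}) - \mathfrak{f}_{S,y}\grad^{\mathbf{T}}_{\mathfrak{r}}\mathbf{Y}^N_{S,y}$, mirroring exactly the discrete Leibniz rule \eqref{eq:xr1} in the proof of Lemma \ref{lemma:xr} but in the time variable. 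So the left side of \eqref{eq:mtr1} is bounded by two terms after the triangle inequality: a ``total time-gradient'' term $\mathfrak{t}_0^{-1}\int_0^{\mathfrak{t}_0}\E\|\mathbf{H}^N(\grad^{\mathbf{T}}_{\mathfrak{r}}(\mathfrak{f}_{S,y}\mathbf{Y}^N_{S+\mathfrak{r},y}))\|_{1;\mathbb{T}_N}\,\d\mathfrak{r}$ and a ``$\mathbf{Y}^N$ time-gradient'' term $\mathfrak{t}_0^{-1}\int_0^{\mathfrak{t}_0}\E\|\mathbf{H}^N(\mathfrak{f}_{S,y}\grad^{\mathbf{T}}_{\mathfrak{r}}\mathbf{Y}^N_{S,y})\|_{1;\mathbb{T}_N}\,\d\mathfrak{r}$.

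For the first (``total time-gradient'') term I would transfer the time-gradient onto the heat kernel. Since $\mathbf{H}^N$ is the space-time heat operator and $\mathbf{H}^N_{S,T,x,y}$ satisfies $\partial_T\mathbf{H}^N = \mathscr{L}_N\mathbf{H}^N$ with $\|\mathscr{L}_N\|_{\mathscr{L}^\infty\to\mathscr{L}^\infty} = \mathrm{O}(N^2)$ (Proposition \ref{prop:mSHE+}, cf. the use in \eqref{eq:regT25}), a time-shift in the integrand against the kernel costs a factor $\mathrm{O}(N^2\mathfrak{r}) = \mathrm{O}(N^2\mathfrak{t}_0) = \mathrm{O}(1)$ per unit of the integrand; more carefully one uses the integrated form so that the ``boundary'' short-time pieces of the space-time integral near $S=0$ and $S=T$ are themselves of size $\mathrm{O}(\mathfrak{r})=\mathrm{O}(N^{-2})$ and contribute negligibly. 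Bounding $|\mathfrak{f}_{S,y}\mathbf{Y}^N_{S+\mathfrak{r},y}|\lesssim N^{\e_{\mathrm{ap}}}\|\mathfrak{f}\|_{\omega;\infty}$ (since $|\mathbf{Y}^N|\lesssim N^{\e_{\mathrm{ap}}}$ by Definitions \ref{definition:KPZ1} and \ref{definition:KPZ5}) and using the logarithmic-in-$N$ control on the on-diagonal heat kernel integral (the $N^\gamma$ from the argument in Lemma \ref{lemma:hoe2}), this term is $\mathrm{O}_\gamma(N^{-2+\gamma+\e_{\mathrm{ap}}}\|\mathfrak{f}\|_{\omega;\infty})$, matching the first term on the right of \eqref{eq:mtr1}. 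The main subtlety here is handling the drop of $\mathbf{Y}^N$ at $\mathfrak{t}_{\mathrm{st}}$: a time-gradient $\grad^{\mathbf{T}}_{\mathfrak{r}}\mathbf{Y}^N$ straddling $\mathfrak{t}_{\mathrm{st}}$ equals $-\mathbf{Z}^N$ there, which is still only $\mathrm{O}(N^{\e_{\mathrm{ap}}})$, so the same crude $\mathscr{L}^\infty$ bound absorbs it; the point is that we never need quantitative time-regularity of $\mathbf{Y}^N$ for the \emph{total} gradient term, only boundedness.

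For the second (``$\mathbf{Y}^N$ time-gradient'') term I would pull $\mathbf{Y}^N$'s time-gradient out in $\mathscr{L}^\infty$: $|\grad^{\mathbf{T}}_{\mathfrak{r}}\mathbf{Y}^N_{S,y}|\lesssim N^{\e_{\mathrm{ap}}}(1+\|\mathbf{Y}^N\|^2)\,\mathfrak{r}^{1/4}$ whenever the gradient does not cross $\mathfrak{t}_{\mathrm{st}}$, by the a priori time-regularity estimate built into $\mathfrak{t}_{\mathrm{RN}}^{\mathbf{T}}$ (Definition \ref{definition:KPZ1}), which holds deterministically for $\mathbf{Y}^N=\mathbf{Z}^N\mathbf{1}(T\le\mathfrak{t}_{\mathrm{st}})$ up to $\mathfrak{t}_{\mathrm{st}}$; when the gradient \emph{does} cross $\mathfrak{t}_{\mathrm{st}}$ one of the two values is $0$ and the other is $\mathbf{Z}^N_{\mathfrak{t}_{\mathrm{st}},y}$, still $\lesssim N^{\e_{\mathrm{ap}}}$, and since we only need the resulting bound to beat $N^{-1/2+3\e_{\mathrm{ap}}}$ this single-scale crossing contribution is harmless (it is at worst $\mathrm{O}(N^{\e_{\mathrm{ap}}})$ on a set of $S$ of measure $\mathrm{O}(\mathfrak{r})=\mathrm{O}(N^{-2})$, hence $\mathrm{O}(N^{-2+\e_{\mathrm{ap}}})$ after the $\mathbf{I}_1$ average, absorbed into the first right-hand term). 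For $\mathfrak{r}\le\mathfrak{t}_0=N^{-2}$ we get $\mathfrak{r}^{1/4}\le N^{-1/2}$, so pulling this factor out of the heat operator gives $\lesssim N^{-1/2+\e_{\mathrm{ap}}}(1+\|\mathbf{Y}^N\|^2)\,\E\|\mathbf{H}^N(|\mathfrak{f}_{S,y}|)\|_{1;\mathbb{T}_N}$, and bounding $(1+\|\mathbf{Y}^N\|^2)\lesssim N^{2\e_{\mathrm{ap}}}$ yields the second term on the right of \eqref{eq:mtr1}. The main obstacle in the whole argument is precisely bookkeeping the $\mathfrak{t}_{\mathrm{st}}$-crossing in the two Leibniz pieces — but since $\mathfrak{t}_0$ is one microscopic unit and $\mathbf{Y}^N$ stays $\mathrm{O}(N^{\e_{\mathrm{ap}}})$ everywhere, every crossing contribution is swallowed by the already-present $N^{-2+\gamma+\e_{\mathrm{ap}}}$ error, so no genuinely new estimate beyond the heat-kernel bounds of Proposition \ref{prop:heat} and the a priori controls in Definition \ref{definition:KPZ1} is required.
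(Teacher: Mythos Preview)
Your overall architecture matches the paper's proof exactly: write $\mathfrak{D}^{\mathbf{T}}_{0,\mathfrak{t}_0}(\mathfrak{f})$ as an average over $\mathfrak{r}\in[0,N^{-2}]$ of time-gradients $\grad^{\mathbf{T}}_{\mathfrak{r}}\mathfrak{f}$, apply a discrete Leibniz rule to split into a ``total'' time-gradient term and a ``$\mathbf{Y}^N$'' time-gradient term, transfer the first onto the heat kernel via Proposition~\ref{prop:heat}, and bound the second via time-regularity of $\mathbf{Y}^N$ plus a separate accounting of the $\mathfrak{t}_{\mathrm{st}}$-crossing. One minor slip: your Leibniz identity has the shift on the wrong side. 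The correct version (the paper's \eqref{eq:mtr12}) reads $\grad^{\mathbf{T}}_{\mathfrak{r}}\mathfrak{f}_{S,y}\cdot\mathbf{Y}^N_{S,y}=\grad^{\mathbf{T}}_{\mathfrak{r}}(\mathfrak{f}_{S,y}\mathbf{Y}^N_{S-\mathfrak{r},y})-\mathfrak{f}_{S,y}\grad^{\mathbf{T}}_{-\mathfrak{r}}\mathbf{Y}^N_{S,y}$, so the relevant $\mathbf{Y}^N$-gradient is backward. This is cosmetic but worth getting right.

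The substantive gap is in your treatment of the $\mathbf{Y}^N$ time-gradient term. You invoke the a priori regularity built into $\mathfrak{t}_{\mathrm{RN}}^{\mathbf{T}}$, but by Definition~\ref{definition:KPZ1} that stopping time only controls $\grad^{\mathbf{T}}_{-\mathrm{s}}\mathbf{Z}^N$ for $\mathrm{s}\in\mathbb{I}^{\mathbf{T}}$, a discrete set whose smallest element is $N^{-2}$. For $\mathfrak{r}$ strictly between $0$ and $N^{-2}$ you have no a priori bound from $\mathfrak{t}_{\mathrm{st}}$; the Holder-$\tfrac14$ estimate $|\grad^{\mathbf{T}}_{\mathfrak{r}}\mathbf{Y}^N|\lesssim N^{\e_{\mathrm{ap}}}\mathfrak{r}^{1/4}(1+\|\mathbf{Y}^N\|^2)$ is simply not available at these sub-microscopic scales. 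The paper's proof closes this by appealing instead to Lemma~\ref{lemma:st2}, which asserts that with overwhelming probability $\sup_{|\mathrm{s}|\le N^{-2}}\|\mathbf{Z}^N\|_{\mathfrak{t}_{\mathrm{st}};\mathbb{T}_N}^{-1}|\grad^{\mathbf{T}}_{\mathrm{s}}\mathbf{Z}^N|\le N^{-1/2+\e_{\mathrm{ap},1}}$; on the complement event the trivial bound $\|\grad^{\mathbf{T}}_{-\mathrm{s}}\mathbf{Z}^N\|_{\mathfrak{t}_{\mathrm{st}};\mathbb{T}_N}\lesssim N^{\e_{\mathrm{ap}}}$ suffices since that event has probability $\lesssim N^{-\kappa}$. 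This, together with the same $\mathfrak{t}_{\mathrm{st}}$-crossing bookkeeping you describe (the paper's \eqref{eq:mtr13decompose}), produces the $N^{-1/2}$ factor. So your plan is essentially correct once you swap the citation of $\mathfrak{t}_{\mathrm{RN}}^{\mathbf{T}}$ for Lemma~\ref{lemma:st2}.
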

%%%
%%%
\begin{proof}
Observe the $\mathfrak{D}$-operator on the LHS of \eqref{eq:mtr1} is a difference between a scale $N^{-2}$ time-average and $\mathfrak{f}$ itself, then multiplied by $\mathbf{Y}^{N}$ and integrated against the heat kernel defining $\mathbf{H}^{N}$. Thus, it is an average of time-gradients of $\mathfrak{f}$ with respect to time-scales that are between $\mathfrak{t}_{-\infty}=0$ and $\mathfrak{t}_{0}=N^{-2}$. We then estimate time-gradients with respect to these time-scales uniformly over said time-scales. In particular, it suffices to get, for any $\gamma>0$:
\begin{align}
{\sup}_{\mathrm{s}\in[0,N^{-2}]}\E\|\mathbf{H}^{N}(\grad_{\mathrm{s}}^{\mathbf{T}}\mathfrak{f}_{S,y}\mathbf{Y}_{S,y}^{N})\|_{1;\mathbb{T}_{N}} \ \lesssim_{\gamma} \ N^{-2+\gamma+\e_{\mathrm{ap}}}\|\mathfrak{f}\|_{\omega;\infty} + N^{-1/2+3\e_{\mathrm{ap}}}\E\|\mathbf{H}^{N}(|\mathfrak{f}_{S,y}|)\|_{1;\mathbb{T}_{N}}. \label{eq:mtr11}
\end{align}
We now write the following Leibniz-rule-type identity, which is a time-version of \eqref{eq:xr1}; because we are taking time-gradients with respect to positive time-scales, instead of differentiating on infinitesimal time-scales, the following identity includes additional time-shifts that would disappear if we took the time-scale $\mathrm{s}$ to zero. We emphasize, however, that the following identity may be easily checked just by expanding time-gradients on the RHS of the first line below and cancelling terms:
\begin{align}
\|\mathbf{H}^{N}(\grad_{\mathrm{s}}^{\mathbf{T}}\mathfrak{f}_{S,y}\mathbf{Y}_{S,y}^{N})\|_{1;\mathbb{T}_{N}} \ &= \ \|\mathbf{H}^{N}\left(\grad_{\mathrm{s}}^{\mathbf{T}}(\mathfrak{f}_{S,y}\mathbf{Y}_{S-\mathrm{s},y}^{N})\right) - \mathbf{H}^{N}\left(\mathfrak{f}_{S,y}\grad_{\mathrm{s}}^{\mathbf{T}}\mathbf{Y}_{S-\mathrm{s},y}^{N}\right)\|_{1;\mathbb{T}_{N}} \\
&\leq \ \|\mathbf{H}^{N}\left(\grad_{\mathrm{s}}^{\mathbf{T}}(\mathfrak{f}_{S,y}\mathbf{Y}_{S-\mathrm{s},y}^{N})\right)\|_{1;\mathbb{T}_{N}} + \|\mathbf{H}^{N}\left(\mathfrak{f}_{S,y}\grad_{-\mathrm{s}}^{\mathbf{T}}\mathbf{Y}_{S,y}^{N}\right)\|_{1;\mathbb{T}_{N}}. \label{eq:mtr12}
\end{align}
The second line \eqref{eq:mtr12} follows by the triangle inequality along with tautologically rewriting the $\mathbf{Y}^{N}$ gradient. Let us now estimate each term in \eqref{eq:mtr12} uniformly in the allowed time-scales $\mathrm{s}$. For the first term in \eqref{eq:mtr12}, we use Proposition \ref{prop:heat} and a priori upper bounds for $\mathbf{Y}^{N}$ from Definitions \ref{definition:KPZ1} and \ref{definition:KPZ5} to get the following \emph{deterministic} estimate for any $\gamma>0$, which moves $\grad^{\mathbf{T}}$ onto the macroscopically smooth (in time) heat operator $\mathbf{H}^{N}$:
\begin{align}
\|\mathbf{H}^{N}\left(\grad_{\mathrm{s}}^{\mathbf{T}}(\mathfrak{f}_{S,y}\mathbf{Y}_{S-\mathrm{s},y}^{N})\right)\|_{1;\mathbb{T}_{N}} \ \lesssim_{\gamma} \ N^{\gamma}\mathrm{s}\|\mathfrak{f}\mathbf{Y}^{N}\|_{1;\mathbb{T}_{N}} \ \lesssim \ N^{-2+\gamma+\e_{\mathrm{ap}}}\|\mathfrak{f}\|_{\omega;\infty}. \label{eq:mtr13}
\end{align}
Observe that the far RHS of \eqref{eq:mtr13} is the first term on the RHS of \eqref{eq:mtr1}, so are left to show the second term in \eqref{eq:mtr12} is controlled by the RHS of \eqref{eq:mtr1}. To this end, by construction in Definitions \ref{definition:KPZ1} and \ref{definition:KPZ5}, we have, for $\mathbb{I}=\mathfrak{t}_{\mathrm{st}}+[0,\mathrm{s}]$, that
\begin{align}
\|\mathbf{H}^{N}\left(\mathfrak{f}_{S,y}\grad_{-\mathrm{s}}^{\mathbf{T}}\mathbf{Y}_{S,y}^{N}\right)\|_{1;\mathbb{T}_{N}} \ &\leq \ \|\mathbf{H}^{N}\left(\mathfrak{f}_{S,y}(\grad_{-\mathrm{s}}^{\mathbf{T}}\mathbf{Y}_{S,y}^{N})\mathbf{1}_{S\not\in\mathbb{I}}\right)\|_{1;\mathbb{T}_{N}} + \|\mathbf{H}^{N}\left(\mathfrak{f}_{S,y}(\grad_{-\mathrm{s}}^{\mathbf{T}}\mathbf{Y}_{S,y}^{N}\mathbf{1}_{S\in\mathbb{I}})\right)\|_{1;\mathbb{T}_{N}} \\
&\leq \ \|\mathfrak{f}\|_{\omega;\infty}\|(\grad_{-\mathrm{s}}^{\mathbf{T}}\mathbf{Y}^{N}_{S,y})\mathbf{1}_{S\not\in\mathbb{I}}\|_{1;\mathbb{T}_{N}} + \|\mathbf{H}^{N}\left(\mathfrak{f}_{S,y}(\grad_{-\mathrm{s}}^{\mathbf{T}}\mathbf{Y}_{S,y}^{N})\mathbf{1}_{S\in\mathbb{I}}\right)\|_{1;\mathbb{T}_{N}} \\
&\leq \ \|\mathfrak{f}\|_{\omega;\infty}\|\grad_{-\mathrm{s}}^{\mathbf{T}}\mathbf{Z}^{N}\|_{\mathfrak{t}_{\mathrm{st}};\mathbb{T}_{N}} + \|\mathbf{H}^{N}\left(\mathfrak{f}_{S,y}(\grad_{-\mathrm{s}}^{\mathbf{T}}\mathbf{Y}_{S,y}^{N})\mathbf{1}_{S\in\mathbb{I}}\right)\|_{1;\mathbb{T}_{N}}, \label{eq:mtr13decompose}
\end{align}
where the last inequality follows by the observation $\mathrm{s}\geq0$ implies $\grad^{\mathbf{T}}_{-\mathrm{s}}\mathbf{Y}^{N}=\grad^{\mathbf{T}}_{-\mathrm{s}}\mathbf{Z}^{N}$ before time $\mathfrak{t}_{\mathrm{st}}$, and after time $\mathfrak{t}_{\mathrm{st}}+\mathrm{s}$, we have $\grad_{-\mathrm{s}}^{\mathbf{T}}\mathbf{Y}^{N}=0$ (see Definition \ref{definition:KPZ5}). We now take expectations in \eqref{eq:mtr13decompose}. For the first term, note $\|\mathfrak{f}\|_{\omega;\infty}$-factor is constant. By Lemma \ref{lemma:st2}, with overwhelming probability, we have $\|\grad_{-\mathrm{s}}^{\mathbf{T}}\mathbf{Z}^{N}\|_{\mathfrak{t}_{\mathrm{st}};\mathbb{T}_{N}}\lesssim_{\gamma} N^{-1/2+\gamma}\|\mathbf{Z}^{N}\|_{\mathfrak{t}_{\mathrm{st}};\mathbb{T}_{N}}$ with $\gamma>0$ arbitrary but fixed and for any $0\leq\mathrm{s}\leq N^{-2}$. On the complement of this event, by construction of $\mathfrak{t}_{\mathrm{st}}$ in Definition \ref{definition:KPZ1}, we know deterministically that $\|\grad_{-\mathrm{s}}^{\mathbf{T}}\mathbf{Z}^{N}\|_{\mathfrak{t}_{\mathrm{st}};\mathbb{T}_{N}}\lesssim N^{\e_{\mathrm{ap}}}$. Thus, again since $\|\mathbf{Z}^{N}\|_{\mathfrak{t}_{\mathrm{st}};\mathbb{T}_{N}}\lesssim N^{\e_{\mathrm{ap}}}$ by Definition \ref{definition:KPZ1}, for the first term in \eqref{eq:mtr13decompose},
\begin{align}
\E\|\mathfrak{f}\|_{\omega;\infty}\|\grad_{-\mathrm{s}}^{\mathbf{T}}\mathbf{Z}^{N}\|_{\mathfrak{t}_{\mathrm{st}};\mathbb{T}_{N}} \ \lesssim_{\gamma} \ N^{-1/2+\gamma}\|\mathfrak{f}\|_{\omega;\infty}\E\|\mathbf{Z}^{N}\|_{\mathfrak{t}_{\mathrm{st}};\mathbb{T}_{N}} + N^{-100+\e_{\mathrm{ap}}}\|\mathfrak{f}\|_{\omega;\infty} \ \lesssim \ N^{-1/2+\gamma+\e_{\mathrm{ap}}}\|\mathfrak{f}\|_{\omega;\infty},
\end{align}
so it remains to estimate the expectation of the second term in \eqref{eq:mtr13decompose}. For this, we recall that $|\mathbf{Y}^{N}|\lesssim N^{\e_{\mathrm{ap}}}$ by construction in Definitions \ref{definition:KPZ1} and \ref{definition:KPZ5}. So by \eqref{eq:heatopbasic} for $\mathbb{I}=\mathfrak{t}_{\mathrm{st}}=[0,\mathrm{s}]$, the second term in \eqref{eq:mtr13decompose} is controlled by $N^{\e_{\mathrm{ap}}}\|\mathfrak{f}\|_{\omega;\infty}$ times the length $|\mathbb{I}|=\mathrm{s}\leq N^{-2}$ since $\mathbf{H}^{N}$ integrates, in time over $\mathbb{I}$, the spatial contractions $\mathbf{H}^{N,\mathbf{X}}$:
\begin{align}
\|\mathbf{H}^{N}\left(\mathfrak{f}_{S,y}(\grad_{-\mathrm{s}}^{\mathbf{T}}\mathbf{Y}_{S,y}^{N})\mathbf{1}_{S\in\mathbb{I}}\right)\|_{1;\mathbb{T}_{N}} \ \lesssim \ |\mathbb{I}|\|\mathfrak{f}\|_{\omega;\infty}\|\mathbf{Y}^{N}\|_{1;\mathbb{T}_{N}} \ \lesssim \ N^{-2+\e_{\mathrm{ap}}}\|\mathfrak{f}\|_{\omega;\infty}.
\end{align}
Combining the previous two displays with \eqref{eq:mtr13decompose} along with \eqref{eq:mtr12} and \eqref{eq:mtr13} gives \eqref{eq:mtr11}, so we are done.
\end{proof}
%%%
The second step we take is the following multiscale estimate of this discussion. Its proof is basically that of Lemma \ref{lemma:mtr1}.
%%%
\begin{lemma}\label{lemma:mtr2}
\fsp Consider the set $\mathbb{I}^{\mathbf{T},1}$ of time-scales in \emph{Definition \ref{definition:KPZ1}}; set $\mathfrak{t}_{\mathfrak{j}}=N^{-2+\mathfrak{j}\e_{\mathrm{ap}}}\in\mathbb{I}^{\mathbf{T},1}$ for indices $\mathfrak{j}\geq0$. Provided any pair of adjacent time-scales $\mathfrak{t}_{\mathfrak{j}}$ and $\mathfrak{t}_{\mathfrak{j}+1}$ satisfying $\mathfrak{t}_{\mathfrak{j}},\mathfrak{t}_{\mathfrak{j}+1}\leq N^{-1}$, we have the following estimate provided any $\gamma>0$:
\begin{align}
\E\|\mathbf{H}^{N}(\mathfrak{D}_{\mathfrak{t}_{\mathfrak{j}},\mathfrak{t}_{\mathfrak{j}+1}}^{\mathbf{T}}(\mathfrak{f}_{S,y})\mathbf{Y}^{N}_{S,y})\|_{1;\mathbb{T}_{N}} \ \lesssim_{\gamma} \ N^{-1+\gamma+\e_{\mathrm{ap}}}\|\mathfrak{f}\|_{\omega;\infty} + N^{3\e_{\mathrm{ap}}}\mathfrak{t}_{\mathfrak{j}+1}^{1/4}\E\|\mathbf{H}^{N}(|\mathfrak{I}_{\mathfrak{t}_{\mathfrak{j}}}^{\mathbf{T}}(\mathfrak{f}_{S,y})|)\|_{1;\mathbb{T}_{N}}. \label{eq:mtr2a}
\end{align}
Provided any $\mathfrak{J}\in\Z_{\geq1}$ with $\mathfrak{t}_{\mathfrak{J}}\leq N^{-1}$, we additionally have the following the estimate again for any $\gamma>0$:
\begin{align}
\E\|\mathbf{H}^{N}(\mathfrak{D}_{\mathfrak{t}_{0},\mathfrak{t}_{\mathfrak{J}}}^{\mathbf{T}}(\mathfrak{f}_{S,y})\mathbf{Y}^{N}_{S,y})\|_{1;\mathbb{T}_{N}} \ \lesssim_{\gamma} \ N^{-1+\gamma+\e_{\mathrm{ap}}}|\mathfrak{J}|\|\mathfrak{f}\|_{\omega;\infty} + |\mathfrak{J}|\sup_{0\leq\mathfrak{j}<\mathfrak{J}}N^{3\e_{\mathrm{ap}}}\mathfrak{t}_{\mathfrak{j}+1}^{1/4}\E\|\mathbf{H}^{N}(|\mathfrak{I}_{\mathfrak{t}_{\mathfrak{j}}}^{\mathbf{T}}(\mathfrak{f}_{S,y})|)\|_{1;\mathbb{T}_{N}}. \label{eq:mtr2b}
\end{align}
\end{lemma}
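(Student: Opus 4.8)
\textbf{Proof proposal for Lemma \ref{lemma:mtr2}.}

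The plan is to prove \eqref{eq:mtr2a} first by following the architecture of the proof of Lemma \ref{lemma:mtr1}, and then to deduce \eqref{eq:mtr2b} from \eqref{eq:mtr2a} by a telescoping sum over the scales $\mathfrak{t}_{0},\mathfrak{t}_{1},\ldots,\mathfrak{t}_{\mathfrak{J}}$. For \eqref{eq:mtr2a}, observe that $\mathfrak{D}^{\mathbf{T}}_{\mathfrak{t}_{\mathfrak{j}},\mathfrak{t}_{\mathfrak{j}+1}}(\mathfrak{f}) = \mathfrak{I}^{\mathbf{T}}_{\mathfrak{t}_{\mathfrak{j}}}(\mathfrak{f}) - \mathfrak{I}^{\mathbf{T}}_{\mathfrak{t}_{\mathfrak{j}+1}}(\mathfrak{f})$ can be written as a time-average, on scale $\mathfrak{t}_{\mathfrak{j}+1}$, of time-gradients of the scale-$\mathfrak{t}_{\mathfrak{j}}$ time-average $\mathfrak{I}^{\mathbf{T}}_{\mathfrak{t}_{\mathfrak{j}}}(\mathfrak{f})$, on time-scales at most $\mathfrak{t}_{\mathfrak{j}+1}$. (This is the exact analogue of the spatial identity used at the start of the proof of Lemma \ref{lemma:xr}: the difference of a quantity with its average over translations by $\leq \mathfrak{t}_{\mathfrak{j}+1}$ is an average of gradients of the original quantity by those same amounts.) Hence it suffices to bound, uniformly over $0 \le \mathrm{s} \le \mathfrak{t}_{\mathfrak{j}+1}$,
\begin{align}
\E\|\mathbf{H}^{N}(\grad^{\mathbf{T}}_{\mathrm{s}}\mathfrak{I}^{\mathbf{T}}_{\mathfrak{t}_{\mathfrak{j}}}(\mathfrak{f}_{S,y})\mathbf{Y}^{N}_{S,y})\|_{1;\mathbb{T}_{N}} \ \lesssim_{\gamma} \ N^{-1+\gamma+\e_{\mathrm{ap}}}\|\mathfrak{f}\|_{\omega;\infty} + N^{3\e_{\mathrm{ap}}}\mathfrak{t}_{\mathfrak{j}+1}^{1/4}\E\|\mathbf{H}^{N}(|\mathfrak{I}^{\mathbf{T}}_{\mathfrak{t}_{\mathfrak{j}}}(\mathfrak{f}_{S,y})|)\|_{1;\mathbb{T}_{N}}. \nonumber
\end{align}

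For this, I would apply the Leibniz-type identity exactly as in \eqref{eq:mtr12}, with $\mathfrak{f}$ there replaced by $\mathfrak{I}^{\mathbf{T}}_{\mathfrak{t}_{\mathfrak{j}}}(\mathfrak{f})$: namely $\mathbf{H}^{N}(\grad^{\mathbf{T}}_{\mathrm{s}}\mathfrak{I}^{\mathbf{T}}_{\mathfrak{t}_{\mathfrak{j}}}(\mathfrak{f})\mathbf{Y}^{N})$ is bounded by $\mathbf{H}^{N}(\grad^{\mathbf{T}}_{\mathrm{s}}(\mathfrak{I}^{\mathbf{T}}_{\mathfrak{t}_{\mathfrak{j}}}(\mathfrak{f})\mathbf{Y}^{N}_{\bullet-\mathrm{s}})) + \mathbf{H}^{N}(\mathfrak{I}^{\mathbf{T}}_{\mathfrak{t}_{\mathfrak{j}}}(\mathfrak{f})\grad^{\mathbf{T}}_{-\mathrm{s}}\mathbf{Y}^{N})$ in $\|\cdot\|_{1;\mathbb{T}_{N}}$. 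The first term is handled deterministically as in \eqref{eq:mtr13}: moving $\grad^{\mathbf{T}}_{\mathrm{s}}$ onto the macroscopically (in time) smooth heat operator $\mathbf{H}^{N}$ via Proposition \ref{prop:heat}, using $|\mathbf{Y}^{N}| \lesssim N^{\e_{\mathrm{ap}}}$ and $|\mathfrak{I}^{\mathbf{T}}_{\mathfrak{t}_{\mathfrak{j}}}(\mathfrak{f})| \le \|\mathfrak{f}\|_{\omega;\infty}$, and bounding $\mathrm{s} \le \mathfrak{t}_{\mathfrak{j}+1} \le N^{-1}$, yields a contribution $\lesssim_{\gamma} N^{-1+\gamma+\e_{\mathrm{ap}}}\|\mathfrak{f}\|_{\omega;\infty}$, the first term on the right of \eqref{eq:mtr2a}. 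The second term is where the improved time-regularity of $\mathbf{Y}^{N}$ enters: as in \eqref{eq:mtr13decompose} I would split according to whether $S$ lies in the interval $\mathbb{I} = \mathfrak{t}_{\mathrm{st}} + [0,\mathrm{s}]$ where $\mathbf{Y}^{N}$ may jump from $\mathbf{Z}^{N}$ to $0$. Off $\mathbb{I}$, the gradient is $\grad^{\mathbf{T}}_{-\mathrm{s}}\mathbf{Z}^{N}$ evaluated before $\mathfrak{t}_{\mathrm{st}}$ (or zero), so we pull out $\mathfrak{I}^{\mathbf{T}}_{\mathfrak{t}_{\mathfrak{j}}}(\mathfrak{f})$, but crucially we do \emph{not} pull it out — rather we keep it inside $\mathbf{H}^{N}$ and pull out $\|\grad^{\mathbf{T}}_{-\mathrm{s}}\mathbf{Z}^{N}\|_{\mathfrak{t}_{\mathrm{st}};\mathbb{T}_{N}}$ instead; by the definition of $\mathfrak{t}_{\mathrm{RN}}^{\mathbf{T}}$ in Definition \ref{definition:KPZ1} this norm is $\lesssim N^{\e_{\mathrm{ap}}} \mathrm{s}^{1/4}(1+\|\mathbf{Z}^{N}\|^{2}_{\mathfrak{t}_{\mathrm{st}};\mathbb{T}_{N}}) \lesssim N^{3\e_{\mathrm{ap}}}\mathfrak{t}_{\mathfrak{j}+1}^{1/4}$, giving precisely the factor $N^{3\e_{\mathrm{ap}}}\mathfrak{t}_{\mathfrak{j}+1}^{1/4}$ multiplying $\mathbf{H}^{N}(|\mathfrak{I}^{\mathbf{T}}_{\mathfrak{t}_{\mathfrak{j}}}(\mathfrak{f})|)$ — the second term of \eqref{eq:mtr2a}. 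On $\mathbb{I}$, the short-time estimate \eqref{eq:heatopbasic} together with $|\mathbb{I}| = \mathrm{s} \le N^{-1}$ and the $\mathscr{L}^{\infty}$-contractivity of the spatial heat operator makes the contribution $\lesssim N^{-1+\e_{\mathrm{ap}}}\|\mathfrak{f}\|_{\omega;\infty}$, absorbed into the first term. This proves \eqref{eq:mtr2a}.

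For \eqref{eq:mtr2b}, write $\mathfrak{D}^{\mathbf{T}}_{\mathfrak{t}_{0},\mathfrak{t}_{\mathfrak{J}}}(\mathfrak{f}) = \sum_{\mathfrak{j}=0}^{\mathfrak{J}-1} \mathfrak{D}^{\mathbf{T}}_{\mathfrak{t}_{\mathfrak{j}},\mathfrak{t}_{\mathfrak{j}+1}}(\mathfrak{f})$, apply linearity of $\mathbf{H}^{N}$ and the triangle inequality for $\|\cdot\|_{1;\mathbb{T}_{N}}$, take expectations, and bound each summand by \eqref{eq:mtr2a}; the first terms sum to $N^{-1+\gamma+\e_{\mathrm{ap}}}|\mathfrak{J}|\|\mathfrak{f}\|_{\omega;\infty}$ and the second terms are bounded by $|\mathfrak{J}|$ times the supremum over $0 \le \mathfrak{j} < \mathfrak{J}$ of $N^{3\e_{\mathrm{ap}}}\mathfrak{t}_{\mathfrak{j}+1}^{1/4}\E\|\mathbf{H}^{N}(|\mathfrak{I}^{\mathbf{T}}_{\mathfrak{t}_{\mathfrak{j}}}(\mathfrak{f}_{S,y})|)\|_{1;\mathbb{T}_{N}}$, which is exactly \eqref{eq:mtr2b}.

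The main obstacle I anticipate is the careful handling of the $\grad^{\mathbf{T}}_{-\mathrm{s}}\mathbf{Y}^{N}$ term near the stopping time $\mathfrak{t}_{\mathrm{st}}$: unlike in Lemma \ref{lemma:mtr1}, where $\mathfrak{f}$ appears bare, here we must route the time-gradient-of-$\mathbf{Y}^{N}$ factor so as to leave the \emph{time-averaged} functional $\mathfrak{I}^{\mathbf{T}}_{\mathfrak{t}_{\mathfrak{j}}}(\mathfrak{f})$ inside the heat operator (rather than pulling it out as $\|\mathfrak{f}\|_{\omega;\infty}$), since the whole point of the multiscale scheme is that $\mathbf{H}^{N}(|\mathfrak{I}^{\mathbf{T}}_{\mathfrak{t}_{\mathfrak{j}}}(\mathfrak{f})|)$ is itself small by the improving equilibrium estimate \eqref{eq:ee2} — which will be exploited in the subsequent sections — whereas $\|\mathfrak{f}\|_{\omega;\infty}$ carries no smallness. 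Getting the split in \eqref{eq:mtr13decompose} to respect this, while still obtaining the clean factor $N^{3\e_{\mathrm{ap}}}\mathfrak{t}_{\mathfrak{j}+1}^{1/4}$ from the $\mathfrak{t}_{\mathrm{RN}}^{\mathbf{T}}$ regularity bound rather than a worse power, is the delicate bookkeeping step; everything else is a routine adaptation of Lemma \ref{lemma:mtr1} and a telescoping sum.
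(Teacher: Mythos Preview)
Your proposal is correct and follows essentially the same route as the paper's proof: decompose $\mathfrak{D}^{\mathbf{T}}_{\mathfrak{t}_{\mathfrak{j}},\mathfrak{t}_{\mathfrak{j}+1}}$ as an average of time-gradients of $\mathfrak{I}^{\mathbf{T}}_{\mathfrak{t}_{\mathfrak{j}}}(\mathfrak{f})$, apply the Leibniz split \eqref{eq:mtr12}, handle the first piece by moving $\grad^{\mathbf{T}}$ onto $\mathbf{H}^{N}$ and using $\mathrm{s}\leq\mathfrak{t}_{\mathfrak{j}+1}\leq N^{-1}$, and handle the second piece by the a priori time-regularity from $\mathfrak{t}_{\mathrm{RN}}^{\mathbf{T}}$ (leaving $\mathfrak{I}^{\mathbf{T}}_{\mathfrak{t}_{\mathfrak{j}}}(\mathfrak{f})$ inside $\mathbf{H}^{N}$), then telescope for \eqref{eq:mtr2b}. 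One small point to tighten: the a priori bound from $\mathfrak{t}_{\mathrm{RN}}^{\mathbf{T}}$ is only stated for $\mathrm{s}\in\mathbb{I}^{\mathbf{T}}$, so rather than ``uniformly over $0\le\mathrm{s}\le\mathfrak{t}_{\mathfrak{j}+1}$'' you should use the integer divisibility $\mathfrak{t}_{\mathfrak{j}+1}/\mathfrak{t}_{\mathfrak{j}}=N^{\e_{\mathrm{ap}}}\in\Z$ (built into Definition~\ref{definition:KPZ1}) to write the decomposition as a \emph{discrete} average over shifts $\mathfrak{k}\mathfrak{t}_{\mathfrak{j}}$ with $0\le\mathfrak{k}\le N^{\e_{\mathrm{ap}}}-1$, each of which lies in $\mathbb{I}^{\mathbf{T}}$ --- exactly as the paper does in \eqref{eq:mtr21}.
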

%%%
%%%
\begin{proof}
The second estimate \eqref{eq:mtr2b} is an immediate consequence of \eqref{eq:mtr2a} courtesy of the following observations.
%%%
\begin{itemize}
\item Observe $\mathfrak{D}_{\mathfrak{t}_{0},\mathfrak{t}_{\mathfrak{J}}}^{\mathbf{T}}$ is a telescoping sum of $|\mathfrak{J}|$-many $\mathfrak{D}_{\mathfrak{t}_{\mathfrak{j}},\mathfrak{t}_{\mathfrak{j}+1}}^{\mathbf{T}}$ terms. 
\item Plugging the aforementioned telescoping sum into the LHS of \eqref{eq:mtr2b}, we apply the triangle inequality for $\|\|_{1;\mathbb{T}_{N}}$ and linearity of expectation, which implies the LHS of \eqref{eq:mtr2b} is at most $|\mathfrak{J}|$ times the supremum of the RHS of \eqref{eq:mtr2a} over $0\leq\mathfrak{j}<\mathfrak{J}$.
\end{itemize}
%%%
We will now prove \eqref{eq:mtr2a}. This starts by the following computation of the $\mathfrak{D}^{\mathbf{T}}$ difference operator on the LHS of \eqref{eq:mtr2a}. The next identity follows from observing that because $\mathfrak{t}_{\mathfrak{j+1}}$ is a positive integer multiple of $\mathfrak{t}_{\mathfrak{j}}$ by assumption/choice of $\e_{\mathrm{ap}}$ in Definition \ref{definition:KPZ1}, we may write the time-average on the time-scale $\mathfrak{t}_{\mathfrak{j}+1}$ as the average of $\mathfrak{t}_{\mathfrak{j}+1}\mathfrak{t}_{\mathfrak{j}}^{-1}$ many time-averages on the time-scale $\mathfrak{t}_{\mathfrak{j}}$, each of these time-averages carrying a time-shift given by an integer multiple of $\mathfrak{t}_{\mathfrak{j}}$. This resembles the fact that an average of 10 terms can be written as an average of 5 ``other" terms, where each of the 5 ``other" terms is an average of pairs of the 10 terms with neighboring indices, for example. Ultimately, we use this representation to rewrite $\mathfrak{D}^{\mathbf{T}}$ as an average of time-gradients of the time-average on the smaller time-scale $\mathfrak{t}_{\mathfrak{j}}$; the second step requires putting the first term in the middle of \eqref{eq:mtr21} into the average:
\begin{align}
\mathfrak{D}_{\mathfrak{t}_{\mathfrak{j}},\mathfrak{t}_{\mathfrak{j}+1}}^{\mathbf{T}}(\mathfrak{f}_{S,y}) \ = \ \mathfrak{I}_{\mathfrak{t}_{\mathfrak{j}}}^{\mathbf{T}}(\mathfrak{f}_{S,y}) - \wt{\sum}_{0\leq\mathfrak{k}\leq\mathfrak{t}_{\mathfrak{j}+1}\mathfrak{t}_{\mathfrak{j}}^{-1}-1}\mathfrak{I}_{\mathfrak{t}_{\mathfrak{j}}}^{\mathbf{T}}(\mathfrak{f}_{S+\mathfrak{k}\mathfrak{t}_{\mathfrak{j}},y}) \ = \ -\wt{\sum}_{0\leq\mathfrak{k}\leq\mathfrak{t}_{\mathfrak{j}+1}\mathfrak{t}_{\mathfrak{j}}^{-1}-1}\grad_{\mathfrak{k}\mathfrak{t}_{\mathfrak{j}}}^{\mathbf{T}}\mathfrak{I}_{\mathfrak{t}_{\mathfrak{j}}}^{\mathbf{T}}(\mathfrak{f}_{S,y}). \label{eq:mtr21}
\end{align}
By the triangle inequality, to establish \eqref{eq:mtr2a}, it suffices to prove the estimate
\begin{align}
\wt{\sum}_{0\leq\mathfrak{k}\leq\mathfrak{t}_{\mathfrak{j}+1}\mathfrak{t}_{\mathfrak{j}}^{-1}-1}\E\|\mathbf{H}^{N}(\grad_{\mathfrak{k}\mathfrak{t}_{\mathfrak{j}}}^{\mathbf{T}}\mathfrak{I}_{\mathfrak{t}_{\mathfrak{j}}}^{\mathbf{T}}(\mathfrak{f}_{S,y})\mathbf{Y}^{N}_{S,y})\|_{1;\mathbb{T}_{N}} \ \lesssim_{\gamma} \ N^{-1+\gamma+\e_{\mathrm{ap}}}\|\mathfrak{f}\|_{\omega;\infty} + N^{3\e_{\mathrm{ap}}}\mathfrak{t}_{\mathfrak{j}+1}^{1/4}\E\|\mathbf{H}^{N}(|\mathfrak{I}_{\mathfrak{t}_{\mathfrak{j}}}^{\mathbf{T}}(\mathfrak{f}_{S,y})|)\|_{1;\mathbb{T}_{N}}. \label{eq:mtr22}
\end{align}
We may certainly replace the averaged sum on the LHS of \eqref{eq:mtr22} with a supremum over the same index set and prove the resulting estimate. Consider any $\mathfrak{k}$ in the index set on the LHS of \eqref{eq:mtr22}. Similar to \eqref{eq:mtr12}, we write the time-gradient on the LHS of \eqref{eq:mtr22} as $\grad^{\mathbf{T}}\mathfrak{I}^{\mathbf{T}}(\mathfrak{f})\mathbf{Y}^{N}=\grad^{\mathbf{T}}(\mathfrak{I}^{\mathbf{T}}(\mathfrak{f})\mathbf{Y}^{N})-\mathfrak{I}^{\mathbf{T}}(\mathfrak{f})\grad^{\mathbf{T}}\mathbf{Y}^{N}$ with an additional time-shift in $\mathbf{Y}^{N}$ that ultimately turns into reversing the time-scale for the time-gradient of $\mathbf{Y}^{N}$. Again, similar to \eqref{eq:mtr12}, this and the triangle inequality give the estimate
\begin{align}
\|\mathbf{H}^{N}(\grad_{\mathfrak{k}\mathfrak{t}_{\mathfrak{j}}}^{\mathbf{T}}\mathfrak{I}_{\mathfrak{t}_{\mathfrak{j}}}^{\mathbf{T}}(\mathfrak{f}_{S,y})\mathbf{Y}^{N}_{S,y})\|_{1;\mathbb{T}_{N}} \ \leq \ \|\mathbf{H}^{N}\left(\grad_{\mathfrak{k}\mathfrak{t}_{\mathfrak{j}}}^{\mathbf{T}}(\mathfrak{I}_{\mathfrak{t}_{\mathfrak{j}}}^{\mathbf{T}}(\mathfrak{f}_{S,y})\mathbf{Y}^{N}_{S-\mathfrak{k}\mathfrak{t}_{\mathfrak{j}},y})\right)\|_{1;\mathbb{T}_{N}} + \|\mathbf{H}^{N}(\mathfrak{I}_{\mathfrak{t}_{\mathfrak{j}}}^{\mathbf{T}}(\mathfrak{f}_{S,y})\grad_{-\mathfrak{k}\mathfrak{t}_{\mathfrak{j}}}^{\mathbf{T}}\mathbf{Y}^{N}_{S,y})\|_{1;\mathbb{T}_{N}}. \label{eq:mtr23}
\end{align}
We are now left with estimating each term on the RHS of \eqref{eq:mtr23}. To this end, we will follow the calculation \eqref{eq:mtr13} from the proof of Lemma \ref{lemma:mtr1} and apply Proposition \ref{prop:heat} but with $\mathrm{s}$ in \eqref{eq:mtr13} replaced by $\mathfrak{k}\mathfrak{t}_{\mathfrak{j}}$. Because $\mathfrak{k}\leq\mathfrak{t}_{\mathfrak{j}+1}\mathfrak{t}_{\mathfrak{j}}^{-1}-1$, we may deduce the time-scale inequality $\mathfrak{k}\mathfrak{t}_{\mathfrak{j}}\leq\mathfrak{t}_{\mathfrak{j}+1}\leq N^{-1}$. Ultimately, we establish the following deterministic estimate provided any $\gamma>0$:
\begin{align}
\|\mathbf{H}^{N}\left(\grad_{\mathfrak{k}\mathfrak{t}_{\mathfrak{j}}}^{\mathbf{T}}(\mathfrak{I}_{\mathfrak{t}_{\mathfrak{j}}}^{\mathbf{T}}(\mathfrak{f}_{S,y})\mathbf{Y}^{N}_{S,y})\right)\|_{1;\mathbb{T}_{N}} \ \lesssim_{\gamma} \ N^{\gamma}\mathfrak{k}\mathfrak{t}_{\mathfrak{j}}\|\mathfrak{f}\mathbf{Y}^{N}\|_{1;\mathbb{T}_{N}} \ \leq \ N^{\gamma}\mathfrak{t}_{\mathfrak{j}+1}\|\mathfrak{f}\mathbf{Y}^{N}\|_{1;\mathbb{T}_{N}} \ \lesssim \ N^{-1+\gamma+\e_{\mathrm{ap}}}\|\mathfrak{f}\|_{\omega;\infty}. \label{eq:mtr24}
\end{align}
We emphasize the final inequality in \eqref{eq:mtr24}, similar to \eqref{eq:mtr13}, requires the a priori bound $|\mathbf{Y}^{N}|\leq N^{\e_{\mathrm{ap}}}$ that follows via definition of $\mathbf{Y}^{N}$ in Definition \ref{definition:KPZ5} and of $\mathfrak{t}_{\mathrm{st}}$ in Definition \ref{definition:KPZ1}. We now move to the second term on the RHS of \eqref{eq:mtr23}. For this, we follow the estimate for the second term in \eqref{eq:mtr12} given in the proof of \eqref{eq:mtr1}. In particular, we start with the calculation giving \eqref{eq:mtr13decompose} but with $\mathrm{s}$ therein and in the set $\mathbb{I}$ replaced by $\mathfrak{k}\mathfrak{t}_{\mathfrak{j}}$ and proceed verbatim. The only difference is that instead of using Lemma \ref{lemma:st2} to estimate $\grad^{\mathbf{T}}\mathbf{Y}^{N}$ before time $\mathfrak{t}_{\mathrm{st}}$, we instead use the a priori spatial regularity estimate defining $\mathfrak{t}_{\mathrm{st}}$ for $\mathbf{Y}^{N}$; see Definitions \ref{definition:KPZ1} and \ref{definition:KPZ5}. Also, the final display in the proof of Lemma \ref{lemma:mtr1} should have its far RHS replaced with $N^{-1+\e_{\mathrm{ap}}}$, because in this case $|\mathbb{I}|=\mathfrak{k}\mathfrak{t}_{\mathfrak{j}}\leq\mathfrak{t}_{\mathfrak{j}+1}\leq N^{-1}$ (while before we used $|\mathbb{I}|\lesssim N^{-2}$), but this does not change the validity of the lemma.
\end{proof}
%%%
%
%
%
%%%
\section{Boltzmann-Gibbs Principle I -- Proof of Proposition \ref{prop:BGI1}}\label{section:proofBGI}
%%%
This section consists of many technical gymnastics and applications of the preliminary ingredients in Section \ref{section:BGIPrelim}. To clarify the discussion, we will present the main ingredients needed for the proof of Proposition \ref{prop:BGI1} with explanations about their respective statements and proofs. We then combine these ingredients to deduce Proposition \ref{prop:BGI1} and afterwards provide the proofs for each.
%%%
\subsubsection{Spatial Average}
%%%
The first step we take is replacement of the fluctuation {$\mathsf{S}_{\e_{1}}(\tau_{y}\eta_{S})$} inside the heat operator on the LHS of \eqref{eq:BGI1Prop} by spatial average on length-scale $N^{1/6}$. The choice of this length-scale is motivated as follows. We first want to choose the length-scale long enough so that we may exploit cancellations in spatial-averages; see Lemmas \ref{lemma:ee} and \ref{lemma:ee2}. However, the larger the length-scale we pick, the larger the error in such a replacement. It turns out $N^{1/6}$ is an appropriate compromise.

In the following Lemma \ref{lemma:BGI11}, the importance of the fraction $6/25$ is an upper bound for the length-scale exponent $1/6$ plus the $\e_{1}$ exponent $1/14$. The following bound roughly follows by a summation-by-parts argument; it controls the difference between the $\mathsf{S}$-term and its spatial average, after multiplying by $\mathbf{Y}^{N}$ and $\mathbf{H}^{N}$ and integrating in space-time, by regularity of $\mathbf{Y}^{N}$ and $\mathbf{H}^{N}$. The only other input for Lemma \ref{lemma:BGI11} is an explicit formula for the spatial gradients of $\mathbf{Y}^{N}$ in terms of the particle system to estimate its spatial regularity explicitly; Lemma \ref{lemma:xr} is not enough. Recall the transfer-of-spatial scales in Definition \ref{definition:BGI10}.
%%%
\begin{lemma}\label{lemma:BGI11}
\fsp Define the length-scale $\mathfrak{l}_{1}=N^{1/6}$. We have the following estimate in which $\mathfrak{e}:\Omega\to\R$ is described afterwards:
\begin{align}
\|\mathbf{H}_{T,x}^{N}(N^{1/2}\mathfrak{D}_{0,\mathfrak{l}_{1}}^{\mathbf{X}}({\mathsf{S}_{\e_{1}}(\tau_{y}\eta_{S}))}\mathbf{Y}^{N}_{S,y}) - \mathbf{H}_{T,x}^{N}(N^{6/25}\mathfrak{e}_{S,y}\mathbf{Y}^{N}_{S,y})\|_{1;\mathbb{T}_{N}} \ \lesssim \ N^{-\frac12+\frac{6}{25}+\e_{\mathrm{ap}}} \ \leq \ N^{-\frac{6}{25}+\e_{\mathrm{ap}}}. \label{eq:BGI11}
\end{align}
%
%%%
\begin{itemize}
\item We have $\|\mathfrak{e}\|_{\omega;\infty}\lesssim1$. The support $\mathbb{B}_{\mathfrak{e}}$ of $\mathfrak{e}$ is contained in the ball of radius $N^{6/25}$ centered at $0\in\mathbb{T}_{N}$.
\item For any parameter $\sigma\in\R$, the functional $\mathfrak{e}$ vanishes in expectation with respect to the canonical measure $\mu_{\sigma,\mathbb{B}_{\mathfrak{e}}}^{\mathrm{can}}$ on its support.
\end{itemize}
%%%
\end{lemma}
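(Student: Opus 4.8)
\textbf{Proof proposal for Lemma \ref{lemma:BGI11}.}

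The plan is to write the transfer-of-spatial-scale operator $\mathfrak{D}_{0,\mathfrak{l}_{1}}^{\mathbf{X}}({\mathsf{S}_{\e_{1}}(\tau_{y}\eta_{S})})$ as an average of spatial translations of ${\mathsf{S}_{\e_{1}}}$ minus ${\mathsf{S}_{\e_{1}}}$ itself, so that each summand is a spatial gradient $\grad^{\mathbf{X}}_{\mathfrak{l}'}{\mathsf{S}_{\e_{1}}(\tau_{y}\eta_{S})}$ with $|\mathfrak{l}'|\leq\mathfrak{l}_{1}\cdot\mathfrak{l}_{{\mathsf{S}_{\e_{1}}}}$, where $\mathfrak{l}_{{\mathsf{S}_{\e_{1}}}}\lesssim N^{\e_{1}}$ is (twice) the support length of ${\mathsf{S}_{\e_{1}}}$. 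Since $\e_{1}=1/14$, the maximal such gradient length is $\lesssim N^{1/6+1/14}\leq N^{6/25}$, which is why the ball of radius $N^{6/25}$ appears. First I would apply the discrete Leibniz rule exactly as in the proof of Lemma \ref{lemma:xr}: for each gradient length $\mathfrak{l}'$, write $\grad_{\mathfrak{l}'}^{\mathbf{X}}({\mathsf{S}_{\e_{1}}}_{S,y}\mathbf{Y}^{N}_{S,y}) = (\grad_{\mathfrak{l}'}^{\mathbf{X}}{\mathsf{S}_{\e_{1}}}_{S,y})\mathbf{Y}^{N}_{S,y-\mathfrak{l}'} + {\mathsf{S}_{\e_{1}}}_{S,y}\grad_{\mathfrak{l}'}^{\mathbf{X}}\mathbf{Y}^{N}_{S,y-\mathfrak{l}'}$, rearranged to isolate $(\grad_{\mathfrak{l}'}^{\mathbf{X}}{\mathsf{S}_{\e_{1}}}_{S,y})\mathbf{Y}^{N}_{S,y-\mathfrak{l}'}$. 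Inside the heat operator, the term $\mathbf{H}^{N}(\grad_{\mathfrak{l}'}^{\mathbf{X}}({\mathsf{S}_{\e_{1}}}_{S,y}\mathbf{Y}^{N}_{S,y-\mathfrak{l}'}))$ is summed by parts to move the gradient onto the macroscopically smooth heat kernel, giving a factor $N^{-1}|\mathfrak{l}'|\lesssim N^{-1}N^{6/25}$ against $\|{\mathsf{S}_{\e_{1}}}\|_{\omega;\infty}\|\mathbf{Y}^{N}\|_{1;\mathbb{T}_{N}}\lesssim N^{\e_{\mathrm{ap}}}$ (using $|{\mathsf{S}_{\e_{1}}}|\lesssim1$, which holds since it is a difference of a local functional and its conditional expectation, both uniformly bounded), so this contribution is $\lesssim N^{-1+6/25+\e_{\mathrm{ap}}}$ — far better than claimed.

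The remaining term $\mathbf{H}^{N}({\mathsf{S}_{\e_{1}}}_{S,y}\grad_{\mathfrak{l}'}^{\mathbf{X}}\mathbf{Y}^{N}_{S,y-\mathfrak{l}'})$ is where I do \emph{not} simply cite the spatial-regularity stopping time: instead I use the explicit formula for $\grad_{\mathfrak{l}'}^{\mathbf{X}}\mathbf{Y}^{N}$ in terms of the particle system. As in \eqref{eq:BGI311}, whenever $\mathbf{Y}^{N}=\mathbf{Z}^{N}\neq0$, the gradient $\grad_{\mathfrak{l}'}^{\mathbf{X}}\mathbf{Z}^{N}_{S,y-\mathfrak{l}'} = (\mathrm{e}^{\mathbf{h}^{N}_{S,y}-\mathbf{h}^{N}_{S,y-\mathfrak{l}'}}-1)\mathbf{Z}^{N}_{S,y-\mathfrak{l}'}$, and $\mathbf{h}^{N}_{S,y}-\mathbf{h}^{N}_{S,y-\mathfrak{l}'}=N^{-1/2}\sum_{j}\eta_{S,\cdot}$ over $|\mathfrak{l}'|$ sites, hence by Taylor expansion $\grad_{\mathfrak{l}'}^{\mathbf{X}}\mathbf{Y}^{N}_{S,y-\mathfrak{l}'}$ equals $\mathbf{Z}^{N}_{S,y-\mathfrak{l}'}$ times $N^{-1/2}$ times a sum of $\lesssim N^{6/25}$ occupation variables supported \emph{strictly to the left} of the support of ${\mathsf{S}_{\e_{1}}}_{S,y}$ (this disjointness is exactly the purpose of the leftward shift $\wt{\mathfrak{q}}=\tau_{-2\mathfrak{l}_{\mathfrak{d}}}\mathfrak{q}$ in Definition \ref{definition:mSHE+1}), plus an $\mathrm{O}(N^{-1})$ remainder. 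The $\mathrm{O}(N^{-1})$ remainder times $N^{1/2}$ and $|\mathbf{Y}^{N}|\lesssim N^{\e_{\mathrm{ap}}}$, summed in space-time against the (probability-measure-like, after the $N^{-1}$ heat-kernel bound) heat operator, contributes $\lesssim N^{-1/2+\e_{\mathrm{ap}}}$ to the error, which is absorbed. The main term is then $N^{1/2}\cdot N^{-1/2}={\mathrm{O}}(1)$ times the product of ${\mathsf{S}_{\e_{1}}}_{S,y}$ with a sum of at most $N^{6/25}$ occupation variables disjoint from it, all times $\mathbf{Z}^{N}_{S,y-\mathfrak{l}'}$; but $\mathbf{Z}^{N}_{S,y-\mathfrak{l}'}$ differs from $\mathbf{Z}^{N}_{S,y}=\mathbf{Y}^{N}_{S,y}$ by a further spatial gradient of length $\lesssim N^{6/25}$, whose replacement error I estimate by the same Leibniz-and-sum-by-parts device (losing only $N^{-1+6/25+\e_{\mathrm{ap}}}$ again). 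What survives, after averaging over $\mathfrak{l}'$ in the definition of $\mathfrak{D}^{\mathbf{X}}$, is precisely a functional $\mathfrak{e}_{S,y}$ — a weighted product of ${\mathsf{S}_{\e_{1}}}$ with shifted occupation numbers on disjoint support — times $\mathbf{Y}^{N}_{S,y}$ inside $\mathbf{H}^{N}$, with the bookkeeping arranged so the prefactor is $N^{6/25}$ rather than $N^{1/2}$. I then verify the two bulleted properties of $\mathfrak{e}$: uniform boundedness is clear from $|{\mathsf{S}_{\e_{1}}}|\lesssim1$ and $|\eta_{x}|=1$; the support containment in the $N^{6/25}$-ball follows from tracking all the shifts; and the vanishing of $\E^{\mu^{\mathrm{can}}_{\sigma,\mathbb{B}_{\mathfrak{e}}}}\mathfrak{e}$ follows because ${\mathsf{S}_{\e_{1}}}_{S,y}$ is conditionally mean-zero given the $\eta$-density on its support, while the extra occupation factors live on a \emph{disjoint} block, so conditioning on the canonical ensemble on $\mathbb{B}_{\mathfrak{e}}$ still makes the ${\mathsf{S}_{\e_{1}}}$-factor conditionally centered (this is the same convex-combination-of-canonical-measures fact used in Lemma \ref{lemma:ee}).

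I expect the main obstacle to be not any single estimate but the combinatorial bookkeeping: one must track simultaneously the outer average over $\mathfrak{l}'\in[1,\mathfrak{l}_{1}\mathfrak{l}_{{\mathsf{S}_{\e_{1}}}}]$, the inner sum of $\lesssim N^{6/25}$ occupation variables coming from the Taylor expansion of $\grad^{\mathbf{X}}\mathbf{Y}^{N}$, and the secondary replacement of $\mathbf{Z}^{N}_{S,y-\mathfrak{l}'}$ by $\mathbf{Z}^{N}_{S,y}$, all while keeping the final functional $\mathfrak{e}$ \emph{genuinely supported on a disjoint block} from the ${\mathsf{S}_{\e_{1}}}$-part so that the conditional-mean-zero property is preserved — this last point is delicate because a naive grouping would let the occupation variables overlap the support of ${\mathsf{S}_{\e_{1}}}$ and destroy the fluctuation structure that later lemmas (the Kipnis--Varadhan step via Lemma \ref{lemma:ee2}) rely on. The choice $\e_{1}=1/14$ and the exponent $6/25$ are pinned down exactly so that $1/6+1/14<6/25<1/2$, leaving room for the $N^{-1/2+\e_{\mathrm{ap}}}$-type remainders to be dominated; I would double-check these numerics at the end.
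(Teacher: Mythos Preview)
Your approach is essentially the same as the paper's: decompose $\mathfrak{D}^{\mathbf{X}}_{0,\mathfrak{l}_1}$ into spatial gradients, apply the discrete Leibniz rule, move one gradient onto the heat kernel, and Taylor-expand $\grad^{\mathbf{X}}\mathbf{Z}^{N}$ explicitly in $\eta$-variables to define $\mathfrak{e}$ as a product of $\mathsf{S}_{\e_1}$ with disjointly-supported occupation numbers. The only notable difference is that the paper orients the Leibniz rule as $\grad^{\mathbf{X}}_{-N^{\e_1}\mathfrak{k}}\mathsf{S}_{S,y}\cdot\mathbf{Y}^{N}_{S,y}=\grad^{\mathbf{X}}_{-N^{\e_1}\mathfrak{k}}(\mathsf{S}_{S,y}\mathbf{Y}^{N}_{S,y+N^{\e_1}\mathfrak{k}})-\mathsf{S}_{S,y}\grad^{\mathbf{X}}_{-N^{\e_1}\mathfrak{k}}\mathbf{Y}^{N}_{S,y+N^{\e_1}\mathfrak{k}}$, so that the second term equals $\mathsf{S}_{S,y}\cdot(\text{series in }\eta_{S,y+1},\ldots)\cdot\mathbf{Z}^{N}_{S,y}$ directly---this eliminates your secondary replacement of $\mathbf{Z}^{N}_{S,y-\mathfrak{l}'}$ by $\mathbf{Z}^{N}_{S,y}$ and also makes the disjointness transparent (the new $\eta$'s sit strictly to the \emph{right} of $y$ while $\mathsf{S}_{\e_1}$ sits strictly to the left, the reverse of what you wrote); you also dropped the $N^{1/2}$ prefactor in your first summation-by-parts bound, which should read $N^{-1/2+6/25+\e_{\mathrm{ap}}}$, exactly the RHS of \eqref{eq:BGI11}.
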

%%%
%%%
\begin{remark}
As we noted before, the exponent $6/25$ on the LHS of \eqref{eq:BGI11} comes as an upper bound for $1/6+1/14$, which are the exponents in the length-scale $\mathfrak{l}_{1}$ and the support length of $\mathsf{S}$. In particular, the transfer-of-length-scales operator from the LHS of \eqref{eq:BGI11} is the difference of $\mathsf{S}$ and spatial translates \emph{with disjoint supports}, and thus has support length $N^{1/6}N^{1/14}\leq N^{6/25}$.
\end{remark}
%%%
We conclude the first step by introducing a cutoff on the spatial-average of $\mathsf{S}_{\e_{1}}$ on length-scale $N^{1/6}$. First, we will introduce notation for the cutoff of the spatial average that we motivate shortly and that will be used throughout the rest of this subsection. Its utility is providing a priori upper bounds that will be useful for applications of reduction to local equilibrium in Lemma \ref{lemma:le3}. Indeed, observe that in Lemma \ref{lemma:le3}, allowed choices for the constant $\kappa$ therein depend on a priori upper bounds on functionals that we want to use Lemma \ref{lemma:le3} for. With better a priori \emph{deterministic} bounds, we can pick a larger/better value of $\kappa$. For motivation, assuming $\mathfrak{f}$ vanishes in expectation with respect to any canonical ensemble on its support, averages of its spatial translates with disjoint support satisfies a CLT-type estimate in Lemma \ref{lemma:ee}. The cutoff defining $\bar{\mathfrak{I}}^{\mathbf{X}}$ below vanishes by default when $\mathfrak{I}^{\mathbf{X}}$ exceeds this CLT-type upper bound, which according to Lemma \ref{lemma:ee} occurs with exponentially small probability in $N$. Thus, we deduce that with respect to any canonical ensemble on the support of the average $\mathfrak{I}^{\mathbf{X}}$, the cutoff $\bar{\mathfrak{I}}^{\mathbf{X}}$ does nothing outside of an event of exponentially small probability. For general measures, we reduce locally to canonical measures via Lemma \ref{lemma:le3}.
%%%
\begin{definition}\label{definition:BGI11}
\fsp Provided any functional $\mathfrak{f}:\Omega\to\R$ and length-scale $\mathfrak{l}_{\mathrm{av}}\in\Z_{\geq0}$, we define, recalling Definition \ref{definition:locmap2},
\begin{align}
\bar{\mathfrak{I}}_{\mathfrak{l}_{\mathrm{av}}}^{\mathbf{X}}(\mathfrak{f}) \ = \ \mathfrak{I}_{\mathfrak{l}_{\mathrm{av}}}^{\mathbf{X}}(\mathfrak{f})\mathbf{1}(|\mathfrak{I}_{\mathfrak{l}_{\mathrm{av}}}^{\mathbf{X}}(\mathfrak{f})|\leq N^{\e_{\mathrm{ap}}}\mathfrak{l}_{\mathrm{av}}^{-1/2}\|\mathfrak{f}\|_{\omega;\infty}).
\end{align}
\end{definition}
%%%
%%%
\begin{lemma}\label{lemma:BGI12}
\fsp Define $\wt{\mathfrak{I}}^{\mathbf{X}}_{\mathfrak{l}}=\mathfrak{I}^{\mathbf{X}}_{\mathfrak{l}}-\bar{\mathfrak{I}}^{\mathbf{X}}_{\mathfrak{l}}$ for any $\mathfrak{l}\in\Z$. We have the following for which we recall $\mathfrak{l}_{1}=N^{1/6}$ in \emph{Lemma \ref{lemma:BGI11}}:
\begin{align}
\E\|\mathbf{H}^{N}\left(N^{1/2}|\wt{\mathfrak{I}}_{\mathfrak{l}_{1}}^{\mathbf{X}}({\mathsf{S}_{\e_{1}}(\tau_{y}\eta_{S})})|\mathbf{Y}^{N}_{S,y}\right)\|_{1;\mathbb{T}_{N}} \ \lesssim \ N^{-\frac23+2\e_{\mathrm{ap}}}. \label{eq:BGI12}
\end{align}
\end{lemma}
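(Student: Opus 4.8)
\textbf{Proof plan for Lemma \ref{lemma:BGI12}.} The goal is to bound the heat operator acting on $N^{1/2}|\wt{\mathfrak{I}}^{\mathbf{X}}_{\mathfrak{l}_1}(\mathsf{S}_{\e_1}(\tau_y\eta_S))|\mathbf{Y}^N$, where $\wt{\mathfrak{I}}^{\mathbf{X}}_{\mathfrak{l}_1}=\mathfrak{I}^{\mathbf{X}}_{\mathfrak{l}_1}-\bar{\mathfrak{I}}^{\mathbf{X}}_{\mathfrak{l}_1}$ is exactly the portion of the length-$\mathfrak{l}_1$ spatial average that lies \emph{above} the CLT threshold $N^{\e_{\mathrm{ap}}}\mathfrak{l}_1^{-1/2}\|\mathsf{S}_{\e_1}\|_{\omega;\infty}$. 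The key point is that $\mathsf{S}_{\e_1}(\tau_y\eta_S)=\bar{\mathfrak{q}}_{S,y}-\mathsf{E}^{\mathrm{can}}_{\e_1}(\tau_y\eta_S)$ vanishes in expectation with respect to \emph{any} canonical measure on its support, by construction of the conditional expectation in Definition \ref{definition:BGI2}; hence the spatial average $\mathfrak{I}^{\mathbf{X}}_{\mathfrak{l}_1}(\mathsf{S}_{\e_1})$ is an average of $\mathfrak{l}_1$-many shifts with mutually disjoint supports (the shift parameter is $\mathfrak{l}_{\mathsf{S}}$, the disjoint-support scale), each mean-zero under every canonical measure. This is precisely the setup of Lemma \ref{lemma:ee}, which gives an $\mathrm{O}_{\gamma,\kappa}(N^{-\kappa})$-probability bound, at any canonical measure, for the event $\mathcal{E}$ that $|\mathfrak{I}^{\mathbf{X}}_{\mathfrak{l}_1}(\mathsf{S}_{\e_1})|$ exceeds $N^{\e_{\mathrm{ap}}}\mathfrak{l}_1^{-1/2}\|\mathsf{S}_{\e_1}\|_{\omega;\infty}$ — i.e. exactly the event on which $\wt{\mathfrak{I}}^{\mathbf{X}}_{\mathfrak{l}_1}(\mathsf{S}_{\e_1})$ is nonzero.

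The execution would proceed as follows. First apply the deterministic heat-operator estimate, Lemma \ref{lemma:hoe2}, with $\phi=N^{1/2}|\wt{\mathfrak{I}}^{\mathbf{X}}_{\mathfrak{l}_1}(\mathsf{S}_{\e_1})|$: raising to the $3/2$ power, this reduces the LHS of \eqref{eq:BGI12} (to the $3/2$) to $N^{\gamma+3\e_{\mathrm{ap}}/2}\mathbf{I}_1(|\phi|^{3/2})=N^{\gamma+3\e_{\mathrm{ap}}/2}N^{3/4}\mathbf{I}_1(|\wt{\mathfrak{I}}^{\mathbf{X}}_{\mathfrak{l}_1}(\mathsf{S}_{\e_1})|^{3/2})$ up to a harmless $N^\gamma$. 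Since $\wt{\mathfrak{I}}^{\mathbf{X}}_{\mathfrak{l}_1}$ is a truncation it is bounded above by $|\mathfrak{I}^{\mathbf{X}}_{\mathfrak{l}_1}(\mathsf{S}_{\e_1})|\mathbf{1}_{\mathcal{E}}$, and $|\mathsf{S}_{\e_1}|\lesssim 1$ gives $|\mathfrak{I}^{\mathbf{X}}_{\mathfrak{l}_1}(\mathsf{S}_{\e_1})|\lesssim 1$, so $|\wt{\mathfrak{I}}^{\mathbf{X}}_{\mathfrak{l}_1}(\mathsf{S}_{\e_1})|^{3/2}\lesssim \mathbf{1}_{\mathcal{E}}$. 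Now take expectations of $\mathbf{I}_1(\mathbf{1}_{\mathcal{E}})$. Here one invokes Lemma \ref{lemma:le2} (with $\mathfrak{f}$ a suitably localized version of $\mathbf{1}_{\mathcal{E}}$, or more directly $\mathsf{S}_{\e_1}$, tracking that $\mathbf{1}_{\mathcal{E}}$ is itself a bounded local functional once one uses the localization map to truncate the spatial average to a bounded block) followed by Lemma \ref{lemma:le3} to reduce $\E_0\bar{\mathfrak{P}}_1\mathbf{1}_{\mathcal{E}}$ to $\kappa^{-1}N^{-2}|\mathbb{B}|^3+\sup_\sigma \E^{\mu^{\mathrm{can}}_{\sigma,\mathbb{B}}}\mathbf{1}_{\mathcal{E}}$. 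The support $\mathbb{B}$ of $\mathfrak{I}^{\mathbf{X}}_{\mathfrak{l}_1}(\mathsf{S}_{\e_1})$ has length $\mathrm{O}(\mathfrak{l}_1 N^{\e_1})=\mathrm{O}(N^{1/6+1/14})=\mathrm{O}(N^{6/25})$, so the first (entropy-production) term is $\lesssim \kappa^{-1}N^{-2+18/25}=\kappa^{-1}N^{-32/25}$, and the canonical-measure probability is $\mathrm{O}_{\kappa}(N^{-\kappa})$ by Lemma \ref{lemma:ee}. Choosing $\kappa$ a large fixed constant (allowed since $\|\mathbf{1}_{\mathcal{E}}\|_{\omega;\infty}\lesssim 1$, so the constraint $\kappa\lesssim\|\mathfrak{h}\|_{\omega;\infty}^{-1}$ in Lemma \ref{lemma:le3} is no obstruction) makes both contributions $\lesssim N^{-32/25}$, hence $\E\mathbf{I}_1(\mathbf{1}_{\mathcal{E}})\lesssim N^{-32/25}$.

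Assembling: raising back down to the first power, $\E\|\cdots\|_{1;\mathbb{T}_N}\lesssim\big(N^{\gamma+3\e_{\mathrm{ap}}/2}N^{3/4}N^{-32/25}\big)^{2/3}$. A bit of care is needed because the expectation is outside the $3/2$-power, so one should use $\E\|\cdots\|_{1;\mathbb{T}_N}=\E(\|\cdots\|_{1;\mathbb{T}_N}^{3/2})^{2/3}\leq(\E\|\cdots\|_{1;\mathbb{T}_N}^{3/2})^{2/3}$ by Jensen, which is legitimate since $2/3<1$. The exponent is then $\tfrac{2}{3}(3/4-32/25)+\e_{\mathrm{ap}}=\tfrac{2}{3}\cdot(-\tfrac{53}{100})+\e_{\mathrm{ap}}$, which is comfortably below $-2/3+2\e_{\mathrm{ap}}$; indeed any estimate of the shape $N^{-c}$ with $c>2/3$ suffices, and the crude bookkeeping above already delivers this with room to spare (the margin can be absorbed into $\gamma,\e_{\mathrm{ap}}$ small).

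\textbf{Main obstacle.} The genuinely delicate point is the passage from ``$\mathbf{1}_{\mathcal{E}}$ at a canonical measure has tiny probability'' (Lemma \ref{lemma:ee}) to controlling $\E\mathbf{I}_1(\mathbf{1}_{\mathcal{E}})$ under the \emph{true non-stationary} law: this requires chaining Lemma \ref{lemma:hoe2}, Lemma \ref{lemma:le2} (which itself needs the localization map Lemma \ref{lemma:locmap3} to make $\mathbf{1}_{\mathcal{E}}$ a genuinely local bounded functional whose $\E^{\mathrm{dyn}}$ can be analyzed), and Lemma \ref{lemma:le3}, while keeping the entropy-production term $\kappa^{-1}N^{-2}|\mathbb{B}|^3$ subcritical — this is exactly why the length-scale $\mathfrak{l}_1=N^{1/6}$ was chosen, since $|\mathbb{B}|^3\approx N^{18/25}$ must not overwhelm the $N^{-2}$, i.e. $\mathfrak{l}_1 N^{\e_1}$ must stay well below $N^{2/3}$. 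One should double-check that the cutoff event $\mathcal{E}$, after localization, really is measurable with respect to $\eta$-variables in a block of length $\mathrm{O}(N^{6/25})$ and that truncating the spatial average to that block changes $\mathbf{1}_{\mathcal{E}}$ only on an exponentially small event (this is the role of $\mathrm{Loc}$ and Remark \ref{remark:locmap1}); no time-averaging is involved here, so Lemmas \ref{lemma:mtr1}–\ref{lemma:mtr2} are not needed, which keeps this step comparatively clean.
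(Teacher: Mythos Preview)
Your proposal is correct and follows essentially the same route as the paper: apply Lemma \ref{lemma:hoe2}, then Lemma \ref{lemma:le2} with $\mathfrak{t}_{\mathrm{av}}=\mathfrak{l}_{\mathrm{av}}=0$ (so the $\mathrm{Loc}$ map is trivial here and your concerns about localization in the ``Main obstacle'' paragraph are unnecessary---the paper notes explicitly that $\E^{\mathrm{dyn}}_{\mathrm{Loc}}$ does nothing when $\mathfrak{t}_{\mathrm{av}}=0$), then Lemma \ref{lemma:le3} with $\kappa=1$ applied to the local functional $|\wt{\mathfrak{I}}^{\mathbf{X}}_{\mathfrak{l}_1}(\mathsf{S}_{\e_1})|^{3/2}$, and finally Lemma \ref{lemma:ee} for the canonical-measure tail. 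The only cosmetic difference is that you bound $|\wt{\mathfrak{I}}|^{3/2}\lesssim\mathbf{1}_{\mathcal{E}}$ \emph{before} invoking Lemma \ref{lemma:le3}, whereas the paper applies Lemma \ref{lemma:le3} directly to $|\wt{\mathfrak{I}}|^{3/2}$ and bounds the resulting canonical expectation by $\mathbf{P}^{\mathrm{can}}(\mathcal{E})$; these are the same step in different order.

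One arithmetic slip: your claim that $\tfrac{2}{3}\cdot(-\tfrac{53}{100})$ is ``comfortably below $-\tfrac{2}{3}$'' is false---it equals $-\tfrac{53}{150}\approx-0.35$. But the paper's own proof yields exactly this exponent as well (the $-\tfrac{2}{3}$ in the statement of \eqref{eq:BGI12} is not sharp), and since Lemma \ref{lemma:BGI12} is only ever used in the proof of Proposition \ref{prop:BGI1} to produce \emph{some} universal negative power of $N$, the discrepancy is immaterial.
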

%%%
%%%
\subsubsection{Time Average}
%%%
We now replace the cutoff spatial average $\bar{\mathfrak{I}}^{\mathbf{X}}$ introduced in the previous Lemma \ref{lemma:BGI12} by a time-average on a mesoscopic time-scale that is roughly of order $N^{-1}$. We say ``roughly" since we need to pick a time-scale for time-averaging that lives in the set $\mathbb{I}^{\mathbf{T}}$ from Definition \ref{definition:KPZ1} in order to use Lemma \ref{lemma:mtr1} and Lemma \ref{lemma:mtr2}, as we only have time-regularity bounds on $\mathbf{Y}^{N}$, which will be important for the aforementioned time-average replacement, on the time-scales in $\mathbb{I}^{\mathbf{T}}$. For the statement of the following result, we first recall the transfer-of-time-scale operator in Definition \ref{definition:mtr0}. For the proof of the following result, we employ Lemma \ref{lemma:mtr1} and Lemma \ref{lemma:mtr2} along with the a priori estimates for the cutoff $\bar{\mathfrak{I}}^{\mathbf{X}}$, as Lemma \ref{lemma:mtr1} and Lemma \ref{lemma:mtr2} yield the error in the proposed time-average replacement below, while this error is controlled by estimates for $\bar{\mathfrak{I}}^{\mathbf{X}}$. We also provide an analog of Lemma \ref{lemma:BGI13} below but with $N^{1/2}\bar{\mathfrak{I}}^{\mathbf{X}}$ replaced by $N^{6/25}\mathfrak{e}$ and whose proof is almost that of Lemma \ref{lemma:BGI13} but with a few cosmetic changes. In particular, we will only provide the necessary adjustments when addressing $N^{6/25}\mathfrak{e}$.
%%%
\begin{lemma}\label{lemma:BGI13}
\fsp Consider $\mathfrak{j}_{1}\in\Z_{\geq0}$ such that $\mathfrak{t}_{\mathfrak{j}_{1}}\in\mathbb{I}^{\mathbf{T},1}$ is the largest time in $\mathbb{I}^{\mathbf{T},1}$ satisfying $\mathfrak{t}_{\mathfrak{j}_{1}}\leq N^{-10/9}$ and consider $\mathfrak{j}_{2}\in\Z_{\geq0}$ so that $\mathfrak{t}_{\mathfrak{j}_{2}}\in\mathbb{I}^{\mathbf{T},1}$ is the largest time in $\mathbb{I}^{\mathbf{T},1}$ satisfying $\mathfrak{t}_{\mathfrak{j}_{2}}\leq N^{-1}$. We have lower bounds $\mathfrak{t}_{\mathfrak{j}_{1}}\geq N^{-10/9-\e_{\mathrm{ap}}}$ and $\mathfrak{t}_{\mathfrak{j}_{2}}\geq N^{-1-\e_{\mathrm{ap}}}$, because $\mathfrak{t}_{\mathfrak{j}}$ increases by a factor of $N^{\e_{\mathrm{ap}}}$ in $\mathfrak{j}$. We additionally have the following pair of expectation estimates:
\begin{align}
\E\|\mathbf{H}^{N}\left(N^{1/2}\mathfrak{D}_{0,\mathfrak{t}_{\mathfrak{j}_{1}}}^{\mathbf{T}}(\bar{\mathfrak{I}}_{\mathfrak{l}_{1}}^{\mathbf{X}}({\mathsf{S}_{\e_{1}}(\tau_{y}\eta_{S})}))\mathbf{Y}^{N}_{S,y}\right)\|_{1;\mathbb{T}_{N}}+\E\|\mathbf{H}^{N}\left(N^{6/25}\mathfrak{D}_{0,\mathfrak{t}_{\mathfrak{j}_{2}}}^{\mathbf{T}}(\mathfrak{e}_{S,y})\mathbf{Y}^{N}_{S,y}\right)\|_{1;\mathbb{T}_{N}} \ \lesssim \ N^{-\frac{1}{99999}+10\e_{\mathrm{ap}}}. \label{eq:BGI13}
\end{align}
\end{lemma}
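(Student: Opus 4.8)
\textbf{Proof proposal for Lemma \ref{lemma:BGI13}.}

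The plan is to reduce the proposed time-average replacement to the multiscale time-replacement estimates already available, namely Lemma \ref{lemma:mtr1} and Lemma \ref{lemma:mtr2}, and then feed in the a priori bounds for the cutoff spatial average $\bar{\mathfrak{I}}^{\mathbf{X}}$ encoded in Definition \ref{definition:BGI11} together with the equilibrium second moment estimate Lemma \ref{lemma:ee2}. First I would focus on the first term on the LHS of \eqref{eq:BGI13}, the one involving $N^{1/2}\bar{\mathfrak{I}}_{\mathfrak{l}_1}^{\mathbf{X}}(\mathsf{S}_{\e_1}(\tau_y\eta_S))$. Write $\mathfrak{D}_{0,\mathfrak{t}_{\mathfrak{j}_1}}^{\mathbf{T}} = \mathfrak{D}_{\mathfrak{t}_{-\infty},\mathfrak{t}_0}^{\mathbf{T}} + \mathfrak{D}_{\mathfrak{t}_0,\mathfrak{t}_{\mathfrak{j}_1}}^{\mathbf{T}}$ with $\mathfrak{t}_{-\infty}=0$ and $\mathfrak{t}_0 = N^{-2}$ as in Definition \ref{definition:mtr0} and Lemma \ref{lemma:mtr1}. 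The first piece is controlled by Lemma \ref{lemma:mtr1} applied to $\mathfrak{f} = N^{1/2}\bar{\mathfrak{I}}_{\mathfrak{l}_1}^{\mathbf{X}}(\mathsf{S}_{\e_1})$; since by Definition \ref{definition:BGI11} we have the \emph{deterministic} bound $\|\bar{\mathfrak{I}}_{\mathfrak{l}_1}^{\mathbf{X}}(\mathsf{S}_{\e_1})\|_{\omega;\infty} \lesssim N^{\e_{\mathrm{ap}}}\mathfrak{l}_1^{-1/2}\|\mathsf{S}_{\e_1}\|_{\omega;\infty} \lesssim N^{\e_{\mathrm{ap}}-1/12}$, the $\|\mathfrak{f}\|_{\omega;\infty}$-term in \eqref{eq:mtr1} is at most $N^{-2+\gamma+\e_{\mathrm{ap}}}\cdot N^{1/2+\e_{\mathrm{ap}}-1/12}$, harmlessly small, while the second term in \eqref{eq:mtr1} is $N^{-1/2+3\e_{\mathrm{ap}}}$ times $N^{1/2}\E\|\mathbf{H}^N(|\bar{\mathfrak{I}}_{\mathfrak{l}_1}^{\mathbf{X}}(\mathsf{S}_{\e_1})|)\|_{1;\mathbb{T}_N}$, which I will handle below since it is the same type of object appearing after the multiscale telescoping.

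The second piece $\mathfrak{D}_{\mathfrak{t}_0,\mathfrak{t}_{\mathfrak{j}_1}}^{\mathbf{T}}$ is a telescoping sum of $\mathfrak{j}_1 \lesssim_{\e_{\mathrm{ap}}} 1$ many one-step transfers, so by \eqref{eq:mtr2b} of Lemma \ref{lemma:mtr2} (with $\mathfrak{J}=\mathfrak{j}_1$, noting $\mathfrak{t}_{\mathfrak{j}_1}\leq N^{-10/9}\leq N^{-1}$) it is bounded by $N^{-1+\gamma+\e_{\mathrm{ap}}}|\mathfrak{j}_1|\|\mathfrak{f}\|_{\omega;\infty}$ plus $|\mathfrak{j}_1|\sup_{0\leq\mathfrak{j}<\mathfrak{j}_1} N^{3\e_{\mathrm{ap}}}\mathfrak{t}_{\mathfrak{j}+1}^{1/4}\E\|\mathbf{H}^N(|\mathfrak{I}_{\mathfrak{t}_{\mathfrak{j}}}^{\mathbf{T}}(\mathfrak{f})|)\|_{1;\mathbb{T}_N}$, again with $\mathfrak{f} = N^{1/2}\bar{\mathfrak{I}}_{\mathfrak{l}_1}^{\mathbf{X}}(\mathsf{S}_{\e_1})$. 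The first term is clearly $\mathrm{O}(N^{-1/2})$-order. For the remaining term, and for the leftover term from the $\mathfrak{D}_{\mathfrak{t}_{-\infty},\mathfrak{t}_0}^{\mathbf{T}}$ piece, the key point is to estimate $\E\|\mathbf{H}^N(|\mathfrak{I}_{\mathfrak{t}_{\mathfrak{j}}}^{\mathbf{T}}(\bar{\mathfrak{I}}_{\mathfrak{l}_1}^{\mathbf{X}}(\mathsf{S}_{\e_1}))|)\|_{1;\mathbb{T}_N}$. Here I would apply Lemma \ref{lemma:hoe2} (with $\phi = \mathfrak{I}_{\mathfrak{t}_{\mathfrak{j}}}^{\mathbf{T}}\bar{\mathfrak{I}}_{\mathfrak{l}_1}^{\mathbf{X}}(\mathsf{S}_{\e_1})$) to pass from the $\|\cdot\|_{1;\mathbb{T}_N}^{3/2}$ of the heat operator to $N^{\gamma+3\e_{\mathrm{ap}}/2}\mathbf{I}_1(|\phi|^{3/2})$, then Lemma \ref{lemma:le2} to rewrite $\E\mathbf{I}_1(|\phi|^{3/2})$ as $\E_0\bar{\mathfrak{P}}_1(\E_{\mathrm{Loc}}^{\mathrm{dyn}}|\cdots|^{3/2})$ plus a negligible $N^{-\kappa}$ term, then Lemma \ref{lemma:le3} to replace $\E_0\bar{\mathfrak{P}}_1$ by a supremum over canonical measures $\sup_\sigma\E^{\mu_{\sigma,\mathbb{B}}^{\mathrm{can}}}$ (here the cutoff in $\bar{\mathfrak{I}}^{\mathbf{X}}$ is what gives the deterministic $\mathscr{L}^\infty$ bound allowing a favorable choice of $\kappa$, and the $\kappa^{-1}N^{-2}|\mathbb{B}|^3$ entropy-production term is acceptable because the relevant block $\mathbb{B}$ has size a small power of $N$), and finally the equilibrium second-moment bound Lemma \ref{lemma:ee2} (after $\mathscr{L}^{3/2}\le\mathscr{L}^2$ interpolation on a finite block, or directly for the square) which gives a factor $N^{-1}\mathfrak{t}_{\mathrm{av}}^{-1/2}\mathfrak{l}_{\mathrm{av}}^{-1/2}|\mathbb{B}|$ with $\mathfrak{t}_{\mathrm{av}}=\mathfrak{t}_{\mathfrak{j}}\gtrsim N^{-10/9-\e_{\mathrm{ap}}}$ and $\mathfrak{l}_{\mathrm{av}}=\mathfrak{l}_1 = N^{1/6}$. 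Multiplying through by the prefactor $N^{1/2}$ and the $N^{3\e_{\mathrm{ap}}}\mathfrak{t}_{\mathfrak{j}+1}^{1/4}$ weight and counting exponents: $N^{1/2}\cdot N^{-1}\cdot N^{(10/9+\e_{\mathrm{ap}})/2}\cdot N^{-1/12}\cdot N^{|\mathbb{B}|\text{-exponent}}$, together with $N^{3\e_{\mathrm{ap}}}\cdot N^{-(1)/4}$-type weights, should land at $N^{-1/99999 + \mathrm{O}(\e_{\mathrm{ap}})}$; the $1/99999$ is deliberately a crude under-estimate of the true negative exponent, and since $\e_{\mathrm{ap}}$ is arbitrarily small the $10\e_{\mathrm{ap}}$ slack in \eqref{eq:BGI13} absorbs all the $\e_{\mathrm{ap}}$, $\gamma$, and $\gamma_0$ contributions. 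The second term on the LHS of \eqref{eq:BGI13}, involving $N^{6/25}\mathfrak{e}$ with $\mathfrak{e}$ from Lemma \ref{lemma:BGI11} (which by construction vanishes under every canonical measure on its support $\mathbb{B}_{\mathfrak{e}}$ of radius $N^{6/25}$), is handled by the identical chain but with $\|\mathfrak{e}\|_{\omega;\infty}\lesssim 1$, $\mathfrak{t}_{\mathrm{av}} = \mathfrak{t}_{\mathfrak{j}_2}\gtrsim N^{-1-\e_{\mathrm{ap}}}$ (the larger maximal time-scale compensating for the weaker $\mathscr{L}^\infty$ bound), $\mathfrak{l}_{\mathrm{av}}=1$ so no spatial-average gain, and $|\mathbb{B}_{\mathfrak{e}}|\lesssim N^{6/25}$; the exponent count $N^{6/25}\cdot N^{-1}\cdot N^{(1+\e_{\mathrm{ap}})/2}\cdot N^{(6/25)\cdot(3/2)/1}$ through the $\mathscr{L}^{3/2}$ version of Lemma \ref{lemma:ee2} again comes out negative. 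The anticipated main obstacle is purely bookkeeping: making sure that in applying Lemma \ref{lemma:ee2} through the $3/2$-power rather than the square one controls the relevant $\mathscr{L}^{3/2}$ norm by the $\mathscr{L}^2$ norm on the \emph{localized} block $\mathbb{B}_{\mathfrak{t}_{\mathrm{av}},\mathfrak{l}_{\mathrm{tot}}}$ (a finite set, so Hölder applies with a volume factor that must be tracked), and then verifying that all the block-size exponents $|\mathbb{B}|$, $|\mathbb{B}_{\mathfrak{e}}|$, $\mathfrak{l}_{\mathrm{tot}}$ remain small enough powers of $N$ that the entropy-production remainder $\kappa^{-1}N^{-2}|\mathbb{B}|^3$ in Lemma \ref{lemma:le3} and the volume inflation in the Hölder step never overwhelm the $N^{-1/99999}$ gain — this is exactly why the length-scale $\mathfrak{l}_1 = N^{1/6}$ and the support exponent $1/14$ were chosen conservatively in Lemma \ref{lemma:BGI11} and Proposition \ref{prop:BGI1}.
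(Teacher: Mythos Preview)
Your strategy matches the paper's exactly: split $\mathfrak{D}_{0,\mathfrak{t}_{\mathfrak{j}_1}}^{\mathbf{T}}=\mathfrak{D}_{0,N^{-2}}^{\mathbf{T}}+\mathfrak{D}_{N^{-2},\mathfrak{t}_{\mathfrak{j}_1}}^{\mathbf{T}}$, apply Lemmas \ref{lemma:mtr1} and \ref{lemma:mtr2}, then for each $\mathfrak{j}$ run the chain Lemma \ref{lemma:hoe2} $\to$ Lemma \ref{lemma:le2} $\to$ Lemma \ref{lemma:le3} $\to$ Lemma \ref{lemma:ee2}, and treat the $\mathfrak{e}$-term by the same chain with adjusted parameters. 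Three points need correcting. First, you cannot feed $\bar{\mathfrak{I}}_{\mathfrak{l}_1}^{\mathbf{X}}(\mathsf{S}_{\e_1})$ directly into Lemma \ref{lemma:ee2}: the cutoff indicator in Definition \ref{definition:BGI11} destroys the vanishing-under-canonical-measures hypothesis. The paper inserts a step once at the canonical stage $\E^{\sigma}\E_{\mathrm{Loc}}^{\mathrm{dyn}}|\cdot|^{3/2}$: replace $\bar{\mathfrak{I}}^{\mathbf{X}}$ by $\mathfrak{I}^{\mathbf{X}}$ at cost $\mathrm{O}(N^{-100})$ via Lemma \ref{lemma:ee} and invariance of the canonical measure for the periodized dynamics, and only then invoke Lemma \ref{lemma:ee2}. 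Second, your anticipated ``volume factor'' in the $\mathscr{L}^{3/2}\to\mathscr{L}^2$ step does not exist: $\E^{\sigma}\E_{\mathrm{Loc}}^{\mathrm{dyn}}$ is a probability expectation, so H\"older gives $\E|X|^{3/2}\le(\E|X|^2)^{3/4}$ with no block-size penalty. Third, your exponent count takes $\mathfrak{t}_{\mathfrak{j}}\gtrsim N^{-10/9}$, but the supremum in \eqref{eq:mtr2b} runs over all $0\le\mathfrak{j}<\mathfrak{j}_1$, down to $\mathfrak{t}_{\mathfrak{j}}=N^{-2}$; the estimate must be uniform in $\mathfrak{j}$. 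What actually happens is that after H\"older the prefactor $\mathfrak{t}_{\mathfrak{j}+1}^{1/2}$ meets the Kipnis--Varadhan factor $\mathfrak{t}_{\mathfrak{j}}^{-1}$ from Lemma \ref{lemma:ee2} (squared), leaving $N^{\e_{\mathrm{ap}}/2}\mathfrak{t}_{\mathfrak{j}}^{-1/2}\le N^{1+\e_{\mathrm{ap}}/2}$ via $\mathfrak{t}_{\mathfrak{j}}\ge N^{-2}$; combined with $N^{1}\cdot N^{-2}\cdot\mathfrak{l}_1^{-1}\cdot N^{2\e_1}$ this yields $N^{-1/6+1/7}=N^{-1/42}$ before the outer $3/4$- and $2/3$-powers, comfortably inside the stated bound.
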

%%%
%%%
\subsubsection{Final Estimates}
%%%
We now take advantage of replacing functionals inside the heat operator by the respective time-averages on the mesoscopic time-scales $\mathfrak{t}_{\mathfrak{j}_{i}}$ from Lemma \ref{lemma:BGI13}. This starts with the following estimate whose proof is effectively given in the proof of Lemma \ref{lemma:BGI13} in terms of technical details, in particular by equilibrium considerations in Lemma \ref{lemma:ee2} and then a reduction to equilibrium by Lemma \ref{lemma:le3}. These are overviewed in the ``Strategy" subsection of Section \ref{section:proofKPZ}. We similarly establish an analog for the time-average of $N^{6/25}\mathfrak{e}$ on the time-scale $\mathfrak{t}_{\mathfrak{j}_{2}}$ instead of $\mathfrak{t}_{\mathfrak{j}_{1}}$. Again, the proof is basically given in that of Lemma \ref{lemma:BGI13}.
%%%
\begin{lemma}\label{lemma:BGI14}
\fsp Consider the time-scales $\mathfrak{t}_{\mathfrak{j}_{i}}\in\mathbb{I}^{\mathbf{T},1}$ from \emph{Lemma \ref{lemma:BGI13}}. We have the estimate
\begin{align}
\E\|\mathbf{H}^{N}\left(N^{1/2}\mathfrak{I}^{\mathbf{T}}_{\mathfrak{t}_{\mathfrak{j}_{1}}}\bar{\mathfrak{I}}^{\mathbf{X}}_{\mathfrak{l}_{1}}({\mathsf{S}_{\e_{1}}(\tau_{y}\eta_{S})})\mathbf{Y}^{N}_{S,y}\right)\|_{1;\mathbb{T}_{N}} + \E\|\mathbf{H}^{N}\left(N^{6/25}\mathfrak{I}^{\mathbf{T}}_{\mathfrak{t}_{\mathfrak{j}_{2}}}(\mathfrak{e}_{S,y})\mathbf{Y}^{N}_{S,y}\right)\|_{1;\mathbb{T}_{N}} \ \lesssim \ N^{-\frac{1}{99999}+10\e_{\mathrm{ap}}}. \label{eq:BGI14}
\end{align}
\end{lemma}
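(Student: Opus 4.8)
\textbf{Proof proposal for Lemma \ref{lemma:BGI14}.}

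The plan is to reduce \eqref{eq:BGI14} to a combination of the heat-operator estimate of Lemma \ref{lemma:hoe2}, the localization and averaging machinery of Lemmas \ref{lemma:le2}--\ref{lemma:le3}, and the equilibrium second-moment bound of Lemma \ref{lemma:ee2}. I will treat the two summands on the left-hand side of \eqref{eq:BGI14} separately, beginning with the $\mathsf{S}$-term; the $\mathfrak{e}$-term is handled by the same argument with $N^{1/2}$ replaced by $N^{6/25}$, $\mathfrak{t}_{\mathfrak{j}_1}$ replaced by $\mathfrak{t}_{\mathfrak{j}_2}$, and the spatial cutoff $\bar{\mathfrak{I}}^{\mathbf X}_{\mathfrak{l}_1}$ dropped. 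First I would apply Lemma \ref{lemma:hoe2} with $\phi = N^{1/2}\mathfrak{I}^{\mathbf T}_{\mathfrak{t}_{\mathfrak{j}_1}}\bar{\mathfrak{I}}^{\mathbf X}_{\mathfrak{l}_1}(\mathsf{S}_{\e_1}(\tau_y\eta_S))$, so that for any $\gamma>0$
\begin{align}
\E\|\mathbf{H}^{N}(\phi\mathbf{Y}^{N})\|_{1;\mathbb{T}_{N}}^{3/2} \ \lesssim_{\gamma} \ N^{\gamma+3\e_{\mathrm{ap}}/2}N^{3/4}\,\E\,\mathbf{I}_{1}\big(|\mathfrak{I}^{\mathbf T}_{\mathfrak{t}_{\mathfrak{j}_1}}\bar{\mathfrak{I}}^{\mathbf X}_{\mathfrak{l}_1}(\mathsf{S}_{\e_1}(\tau_y\eta_S))|^{3/2}\big). \nonumber
\end{align}
Next I would estimate the $\mathbf{I}_1$-expectation on the right. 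Since $\bar{\mathfrak{I}}^{\mathbf X}_{\mathfrak{l}_1}$ is a cutoff of a spatial average $\mathfrak{I}^{\mathbf X}_{\mathfrak{l}_1}$ of the local fluctuating functional $\mathsf{S}_{\e_1}$ (which vanishes in expectation against every canonical measure on its support), I would write $\mathfrak{I}^{\mathbf T}_{\mathfrak{t}_{\mathfrak{j}_1}}\bar{\mathfrak{I}}^{\mathbf X}_{\mathfrak{l}_1}(\mathsf{S}_{\e_1})$ as a space-time average of the form covered by Definition \ref{definition:locmap2}, apply Lemma \ref{lemma:le2} to pass from the dynamic expectation against $\bar{\mathfrak{P}}_1$ to an expectation with localized initial data (at the cost of a negligible $N^{-\kappa}$ term), then apply Lemma \ref{lemma:le3} with $\mathfrak{h}$ equal to the localized space-time average — using that $\|\mathsf{S}_{\e_1}\|_{\omega;\infty}\lesssim N^{\e_{\mathrm{ap}}}\mathfrak{l}_1^{-1/2}$ on the support of the cutoff, so that $\kappa$ may be taken as a small positive power of $N$ — to reduce to a supremum over canonical measures of $\E^{\mu^{\mathrm{can}}_{\sigma,\mathbb B}}|\mathfrak{h}|$.

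At that point I would invoke Lemma \ref{lemma:ee2}, which bounds the canonical second moment of the space-time average $\mathfrak{I}^{\mathbf T}_{\mathfrak{t}_{\mathfrak{j}_1}}\mathfrak{I}^{\mathbf X}_{\mathfrak{l}_1}(\mathsf{S}_{\e_1})$ by (essentially) $N^{-1}\mathfrak{t}_{\mathfrak{j}_1}^{-1/2}\mathfrak{l}_1^{-1/2}|\mathbb B|\,\|\mathsf{S}_{\e_1}\|_{\omega;\infty}$ up to a $N^{-\kappa}$ tail; the cutoff can only improve this. Plugging in $\mathfrak{t}_{\mathfrak{j}_1}\geq N^{-10/9-\e_{\mathrm{ap}}}$, $\mathfrak{l}_1 = N^{1/6}$, $|\mathbb B|\lesssim N^{\e_1+\gamma_0}$ with $\e_1 = 1/14$, and $\|\mathsf{S}_{\e_1}\|_{\omega;\infty}\lesssim N^{\e_{\mathrm{ap}}}\mathfrak{l}_1^{-1/2}$ (since $\bar{\mathfrak q}$ is bounded and $\mathsf{E}^{\mathrm{can}}_{\e_1}$ is a conditional expectation of it on scale $N^{\e_1}$, actually one gets the inverse-length-scale bound of Proposition \ref{prop:BGI3}; in any case $\|\mathsf S_{\e_1}\|_{\omega;\infty}\lesssim 1$ suffices here), one checks that the product $N^{1/2}$ times the square root of this bound is $\lesssim N^{-\beta}$ for a genuine positive $\beta$: the $N^{1/2}$ prefactor is beaten by $(\mathfrak{t}_{\mathfrak{j}_1})^{-1/4}\sim N^{5/18}$ combined with $\mathfrak{l}_1^{-1/4} = N^{-1/24}$ and the extra $\mathfrak{l}_1^{-1/2}$ from $\|\mathsf{S}_{\e_1}\|_{\omega;\infty}$, while the crude first term in Lemma \ref{lemma:le3}, namely $\kappa^{-1}N^{-2}|\mathbb B|^3$, is polynomially small after multiplying by the surviving powers of $N$. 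Taking $\gamma,\e_{\mathrm{ap}}$ small and tracking exponents gives the stated bound $N^{-1/99999+10\e_{\mathrm{ap}}}$ (the explicit small exponent $1/99999$ being a deliberately wasteful lower bound on the true $\beta$). Finally I would raise the $3/2$-power inequality to the $2/3$ power using $\E\|\cdot\|^{3/2} \geq (\E\|\cdot\|)^{3/2}$ by Jensen, which converts the bound on $\E\|\mathbf H^N(\phi\mathbf Y^N)\|_{1;\mathbb T_N}^{3/2}$ into the desired bound on $\E\|\mathbf H^N(\phi\mathbf Y^N)\|_{1;\mathbb T_N}$.

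The main obstacle I anticipate is bookkeeping rather than conceptual: one must verify that the support length $\mathfrak{l}_{\mathrm{tot}}$ appearing in Lemmas \ref{lemma:locmap3}, \ref{lemma:le2} and \ref{lemma:ee2} (which is $\sim \mathfrak l_1 \cdot N^{\e_1}$, i.e. $N^{1/6+1/14}\leq N^{6/25}$) is small enough that $|\mathbb B|^3 N^{-2}$ and $|\mathbb B|\,\mathfrak{t}_{\mathfrak{j}_1}^{-1/2}\mathfrak{l}_1^{-1/2}N^{-1}$ are both beaten by the residual $N^{1/2}$ after taking the square root, and that the admissible $\kappa$ in Lemma \ref{lemma:le3} — which is constrained by $\kappa\lesssim\|\mathfrak h\|_{\omega;\infty}^{-1}$ — is large enough to kill the $\kappa^{-1}N^{-2}|\mathbb B|^3$ term; this forces one to use the cutoff-improved bound $\|\mathfrak h\|_{\omega;\infty}\lesssim N^{\e_{\mathrm{ap}}}\mathfrak{l}_1^{-1/2}$ on the support of $\bar{\mathfrak I}^{\mathbf X}_{\mathfrak l_1}$, so the cutoff introduced in Definition \ref{definition:BGI11} (and estimated in Lemma \ref{lemma:BGI12}) is genuinely needed here and not merely cosmetic. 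For the $\mathfrak e$-term the only change is that the prefactor is $N^{6/25}$ rather than $N^{1/2}$ and there is no spatial-average cutoff, but $\mathfrak e$ is uniformly $O(1)$ and the time scale $\mathfrak t_{\mathfrak j_2}\geq N^{-1-\e_{\mathrm{ap}}}$ gives $(\mathfrak t_{\mathfrak j_2})^{-1/4}\sim N^{1/4}$, while the support length of $\mathfrak e$ is $\lesssim N^{6/25}$, so $N^{6/25}\cdot N^{-1}\cdot N^{1/4}\cdot N^{6/25\cdot\text{(something)}}$ is again a negative power of $N$; I would simply point out that all steps above go through verbatim with these substitutions, which is exactly the ``cosmetic changes'' remark preceding the statement.
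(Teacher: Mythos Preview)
There is a genuine gap in the power-counting for the $\mathsf{S}$-term. You assert that ``$\|\mathsf{S}_{\e_1}\|_{\omega;\infty}\lesssim 1$ suffices here'', but it does not. In Lemma \ref{lemma:ee2} the factor $\|\mathfrak{f}\|_{\omega;\infty}$ is the true sup-norm of the \emph{un-averaged} functional $\mathsf{S}_{\e_1}$, not of its cutoff spatial average; the bound $N^{\e_{\mathrm{ap}}}\mathfrak{l}_1^{-1/2}$ is only available for $\bar{\mathfrak{I}}^{\mathbf X}_{\mathfrak{l}_1}(\mathsf{S}_{\e_1})$, and if you try to apply Lemma \ref{lemma:ee2} with $\mathfrak{f}=\bar{\mathfrak{I}}^{\mathbf X}_{\mathfrak{l}_1}(\mathsf{S}_{\e_1})$ directly, the cutoff indicator destroys the canonical-mean-zero hypothesis. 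With the correct input $\|\mathsf{S}_{\e_1}\|_{\omega;\infty}\lesssim 1$, Lemma \ref{lemma:ee2} gives second moment $\lesssim N^{-2}\mathfrak{t}_{\mathfrak{j}_1}^{-1}\mathfrak{l}_1^{-1}N^{2\e_1}\sim N^{-2+10/9-1/6+1/7}=N^{-115/126}$; after raising to the $3/4$-power and multiplying by the prefactor $N^{3/4}$ you get $N^{11/168}$, which diverges. Your chain of reductions through Lemmas \ref{lemma:hoe2}, \ref{lemma:le2}, \ref{lemma:le3}, \ref{lemma:ee2} is exactly right, but one more idea is needed to close.

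The paper's fix is to first introduce a \emph{second} layer of spatial averaging via Lemma \ref{lemma:xr}, replacing $\bar{\mathfrak{I}}^{\mathbf X}_{\mathfrak{l}_1}(\mathsf{S}_{\e_1})$ by $\mathfrak{I}^{\mathbf X}_{N^{1/6}}\bar{\mathfrak{I}}^{\mathbf X}_{\mathfrak{l}_1}(\mathsf{S}_{\e_1})$ at the cost of two error terms. The double spatial average collapses (after removing the bar as in the proof of Lemma \ref{lemma:BGI13}) to a single average on scale $N^{1/6}\mathfrak{l}_1=N^{1/3}$, and the extra $N^{-1/6}$ this buys in the Kipnis--Varadhan second-moment bound turns $N^{11/168}$ into a genuinely negative power. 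The two error terms from Lemma \ref{lemma:xr} are handled separately: one is small outright, and the other---carrying a prefactor $\bar{\mathfrak{l}}^{1/2}$ with $\bar{\mathfrak{l}}\leq N^{61/150}$ in place of $N^{1/2}$---is treated by rerunning the Lemma \ref{lemma:BGI13} argument without the $\mathfrak{t}^{1/4}$ factor, where the smaller prefactor compensates for its absence. Your treatment of the $\mathfrak{e}$-term is closer to correct since the prefactor there is only $N^{6/25}$, but the paper still routes it through the analysis in the proof of Lemma \ref{lemma:BGI13} rather than redoing the chain from scratch.
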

%%%
We may now deduce Proposition \ref{prop:BGI1} upon step-by-step replacements and the triangle inequality for $\|\|_{1;\mathbb{T}_{N}}$ and $\E$. In each of the replacements below, we inherit the notation of the lemma cited therein.
%%%
\begin{itemize}
\item By Lemma \ref{lemma:BGI11}, we may replace $N^{1/2}\mathsf{S}_{\e_{1}}$ by $N^{1/2}\mathfrak{I}_{\mathfrak{l}_{1}}^{\mathbf{X}}(\mathsf{S}_{\e_{1}})+N^{6/25}\mathfrak{e}$ while controlling the error in doing so. 
\item By Lemma \ref{lemma:BGI12}, we may further replace $N^{1/2}\mathfrak{I}_{\mathfrak{l}_{1}}^{\mathbf{X}}(\mathsf{S}_{\e_{1}})+N^{6/25}\mathfrak{e}$ by $N^{1/2}\bar{\mathfrak{I}}_{\mathfrak{l}_{1}}^{\mathbf{X}}(\mathsf{S}_{\e_{1}})+N^{6/25}\mathfrak{e}$. 
\item By Lemma \ref{lemma:BGI13}, we may then replace $N^{1/2}\bar{\mathfrak{I}}_{\mathfrak{l}_{1}}^{\mathbf{X}}(\mathsf{S}_{\e_{1}})+N^{6/25}\mathfrak{e}$ by $N^{1/2}\mathfrak{I}^{\mathbf{T}}_{\mathfrak{t}_{\mathfrak{j}_{1}}}\bar{\mathfrak{I}}_{\mathfrak{l}_{1}}^{\mathbf{X}}(\mathsf{S}_{\e_{1}})+N^{6/25}\mathfrak{I}^{\mathbf{T}}_{\mathfrak{t}_{\mathfrak{j}_{2}}}(\mathfrak{e})$.
\end{itemize}
%%%
It now suffices to apply Lemma \ref{lemma:BGI14} and the triangle inequality. \qed
%%%
\begin{proof}[Proof of \emph{Lemma \ref{lemma:BGI11}}]
We will make explicit the functional $\mathfrak{e}$ in this proof. The first step that we take is to observe the $\mathfrak{D}^{\mathbf{X}}$-term on the LHS of \eqref{eq:BGI11} is the average of spatial-gradients of $\mathsf{S}_{\e_{1}}$ with respect to length-scales between $N^{\e_{1}}$ and $\mathfrak{l}_{1}N^{\e_{1}}$, as the $\mathfrak{D}^{\mathbf{X}}$-term is the difference between $\mathsf{S}_{\e_{1}}$ itself and the average of all its spatial-translates $\tau_{-N^{\e_{1}}\mathfrak{k}}\mathsf{S}_{\e_{1}}$ for all $\mathfrak{k}=1$ to $\mathfrak{k}=\mathfrak{l}_{1}$. We then employ a discrete-type Leibniz rule similar to that used to establish \eqref{eq:xr1}. Ultimately, this gives
\begin{align}
\mathbf{H}_{T,x}^{N}(N^{1/2}\mathfrak{D}_{0,\mathfrak{l}_{1}}^{\mathbf{X}}({\mathsf{S}_{\e_{1}}(\tau_{y}\eta_{S})})\mathbf{Y}^{N}_{S,y}) \ &= \ -\wt{\sum}_{\mathfrak{k}=1}^{\mathfrak{l}_{1}}\mathbf{H}_{T,x}^{N}(N^{1/2}\grad_{-N^{\e_{1}}\mathfrak{k}}^{\mathbf{X}}{\mathsf{S}_{\e_{1}}(\tau_{y}\eta_{S})}\mathbf{Y}^{N}_{S,y}) \label{eq:BGI111}
\end{align}
by definition, and for $\mathsf{S}_{S,y}={\mathsf{S}_{\e_{1}}(\tau_{y}\eta_{S})}$, per $\mathfrak{k}$ on the RHS of \eqref{eq:BGI111}, we get, parallel to \eqref{eq:xr1},
\begin{align}
\mathbf{H}^{N}(N^{\frac12}\grad_{-N^{\e_{1}}\mathfrak{k}}^{\mathbf{X}}\mathsf{S}_{S,y}\mathbf{Y}^{N}_{S,y}) \ &= \ \mathbf{H}^{N}\left(N^{\frac12}\grad_{-N^{\e_{1}}\mathfrak{k}}^{\mathbf{X}}(\mathsf{S}_{S,y}\mathbf{Y}^{N}_{S,y+N^{\e_{1}}\mathfrak{k}})\right) - \mathbf{H}^{N}\left(N^{\frac12}\mathsf{S}_{S,y}\grad_{-N^{\e_{1}}\mathfrak{k}}^{\mathbf{X}}\mathbf{Y}^{N}_{S,y+N^{\e_{1}}\mathfrak{k}}\right). \label{eq:BGI112}
\end{align}
We eventually employ a spatial heat operator estimate in Proposition \ref{prop:heat} to analyze the first term on the RHS of \eqref{eq:BGI112} uniformly in $\mathfrak{k}$-variables on the RHS of \eqref{eq:BGI111}. First, we continue by expanding the second term on the RHS of \eqref{eq:BGI112}. To this end, we recall that either $\mathbf{Y}^{N}=0$ or $\mathbf{Y}^{N}=\mathbf{Z}^{N}$. We consider the latter case as the former case is trivial. By definition of the Gartner transform $\mathbf{Z}^{N}$ in terms of the $\eta$-variables, Taylor expansion implies the scale-$\mathfrak{k}$ spatial gradient of $\mathbf{Y}^{N}=\mathbf{Z}^{N}$ is equal to
\begin{align}
-\grad_{-N^{\e_{1}}\mathfrak{k}}^{\mathbf{X}}\mathbf{Y}^{N}_{S,y+N^{\e_{1}}\mathfrak{k}} =\grad_{N^{\e_{1}}\mathfrak{k}}^{\mathbf{X}}\mathbf{Y}^{N}_{S,y} = \grad_{N^{\e_{1}}\mathfrak{k}}^{\mathbf{X}}\mathbf{Z}^{N}_{S,y} = \left({\sum}_{\mathfrak{j}=1}^{\infty}\frac{(-1)^{\mathfrak{j}}}{\mathfrak{j}!}N^{-\frac{\mathfrak{j}}{2}+\mathfrak{j}\e_{1}}|\mathfrak{k}|^{\mathfrak{j}}\left(\wt{\sum}_{\mathfrak{l}=1}^{N^{\e_{1}}\mathfrak{k}}\eta_{S,y+\mathfrak{l}}\right)^{\mathfrak{j}}\right)\mathbf{Z}_{S,y}^{N}. \label{eq:BGI112b}
\end{align}
The infinite series in front of $\mathbf{Z}^{N}$ in \eqref{eq:BGI112b} is $\mathrm{O}(N^{-1/2+\e_{1}}|\mathfrak{k}|)$. Indeed, this infinite series converges absolutely provided $N^{\e_{1}}|\mathfrak{k}|\leq N^{\alpha}$ with $\alpha<1/2$, which is the case here for $\e_{1}=1/14$ and $|\mathfrak{k}|\leq\mathfrak{l}_{1}=N^{1/6}$ for $\alpha=6/25$. Let $\mathfrak{e}_{\mathfrak{k}}$ be the product of this infinite series factor in \eqref{eq:BGI112b} with {$N^{1/2}\mathsf{S}_{\e_{1}}(\tau_{y}\eta_{S})$}. We emphasize the following features of $\mathfrak{e}_{\mathfrak{k}}$.
%%%
\begin{itemize}
\item We have $|\mathfrak{e}_{\mathfrak{k}}|\lesssim N^{\e_{1}}|\mathfrak{k}|\leq N^{6/25}$, since, as we explained before, the infinite series in \eqref{eq:BGI112b} is $\mathrm{O}(N^{-1/2+\e_{1}}|\mathfrak{k}|)$ and $N^{1/2}|\mathsf{S}_{\e_{1}}|\lesssim N^{1/2}$ since $\bar{\mathfrak{q}}$ is uniformly bounded; see Definition \ref{definition:mSHE+1}.
\item The product $\mathfrak{e}_{\mathfrak{k}}$ has support contained in a neighborhood of radius $N^{6/25}$ centered at $0\in\mathbb{T}_{N}$. Indeed, the $N^{1/2}\mathsf{S}_{\e_{1}}$ factor has support contained in a radius $N^{\e_{1}}$ neighborhood of $0$ with $\e_{1}=14^{-1}$, and the infinite series in \eqref{eq:BGI112b} has support contained in $\llbracket0,N^{\e_{1}}\mathfrak{k}\rrbracket\subseteq\llbracket0,N^{6/25}\rrbracket$; this can be seen by looking at which $\eta$-variables appear in the far RHS of \eqref{eq:BGI112b}.
\item The product $\mathfrak{e}_{\mathfrak{k}}$ vanishes in expectation with respect to any canonical measure on it support. Indeed, this is the case for the $\mathsf{S}_{\e_{1}}$ factor as can be seen in Definition \ref{definition:BGI2}, while the support of $\mathsf{S}_{\e_{1}}$ is contained strictly to the left of $0\in\mathbb{T}_{N}$ and thus disjoint from the support of the infinite series in \eqref{eq:BGI112b}. Here, we crucially use the property that the projection of any canonical measure over one set onto any subset is a convex combination of canonical measures on the subset, which can be seen by observing that the canonical measure is always the uniform measure on its support. In particular, when we take the expectation of $\mathfrak{e}_{\mathfrak{k}}$ with respect to any canonical measure on its support, we may first take an expectation of the $\mathsf{S}_{\e_{1}}$ factor with respect to the projection of this canonical measure to the support of $\mathsf{S}_{\e_{1}}$, which equals a convex combination of canonical measures over the support of $\mathsf{S}_{\e_{1}}$, and deduce that the expectation of $\mathfrak{e}_{\mathfrak{k}}$ with respect to any canonical measure on its support vanishes.
\item Let $\mathfrak{e}=-N^{-6/25}\wt{\sum}_{\mathfrak{k}=1,\ldots,\mathfrak{l}_{1}}\mathfrak{e}_{\mathfrak{k}}$. The sign is not so important.
\end{itemize}
%%%
The support of $\mathfrak{e}$ satisfies the conditions of $\mathfrak{e}_{\mathfrak{k}}$ supports from the second bullet point above. Moreover, $\mathfrak{e}$ vanishes in expectation with respect to any canonical measure on its support because each $\mathfrak{e}_{\mathfrak{k}}$ that it averages together satisfies this condition, and projection of any canonical measure on the support of $\mathfrak{e}$ projects to a convex combination of canonical measures on the support of each $\mathfrak{e}_{\mathfrak{k}}$. Lastly, we have $|\mathfrak{e}|\lesssim N^{-6/25}\sup_{\mathfrak{k}}|\mathfrak{e}_{\mathfrak{k}}|\lesssim1$; see the first bullet point in the above list. Using everything after \eqref{eq:BGI111}, we obtain the following in which the $N^{6/25}$ factor on the LHS compensates introducing a factor of $N^{-6/25}$ for $\mathfrak{e}$:
\begin{align}
\mathbf{H}_{T,x}^{N}(N^{\frac12}\mathfrak{D}_{0,\mathfrak{l}_{1}}^{\mathbf{X}}({\mathsf{S}_{\e_{1}}(\tau_{y}\eta_{S})})\mathbf{Y}^{N}_{S,y}) - \mathbf{H}_{T,x}^{N}(N^{\frac{6}{25}}\mathfrak{e}_{S,y}\mathbf{Y}^{N}_{S,y}) \ = \ -\wt{\sum}_{\mathfrak{k}=1}^{\mathfrak{l}_{1}}\mathbf{H}_{T,x}^{N}\left(N^{\frac12}\grad_{-N^{\e_{1}}\mathfrak{k}}^{\mathbf{X}}({\mathsf{S}_{\e_{1}}(\tau_{y}\eta_{S})}\mathbf{Y}^{N}_{S,y+N^{\e_{1}}\mathfrak{k}})\right). \label{eq:BGI113}
\end{align}
It remains to take the $\|\|_{1;\mathbb{T}_{N}}$ of both sides of \eqref{eq:BGI113} and estimate the resulting RHS. By the triangle inequality, it suffices to control the $\|\|_{1;\mathbb{T}_{N}}$ of each $\mathfrak{k}$-indexed term on the RHS of \eqref{eq:BGI113} uniformly in the index $\mathfrak{k}$. For this, we apply the spatial gradient estimate in Proposition \ref{prop:heat}, which transfers the spatial gradient onto the heat kernel in $\mathbf{H}^{N}$ and then integrates the resulting time-integrable singularity. Ultimately, we get the following estimate uniformly in $\mathfrak{k}$-indices on the RHS of \eqref{eq:BGI113} with universal implied constant:
\begin{align}
\|\mathbf{H}_{T,x}^{N}\left(N^{1/2}\grad_{-N^{\e_{1}}\mathfrak{k}}^{\mathbf{X}}({\mathsf{S}_{\e_{1}}(\tau_{y}\eta_{S})}\mathbf{Y}^{N}_{S,y+N^{\e_{1}}\mathfrak{k}})\right)\|_{1;\mathbb{T}_{N}} \ \lesssim \ N^{1/2}N^{-1}N^{\e_{1}}|\mathfrak{k}|\|\mathbf{Y}^{N}\|_{1;\mathbb{T}_{N}} \ \leq \ N^{-\frac12+\frac{6}{25}+\e_{\mathrm{ap}}}. \label{eq:BGI114}
\end{align}
The final inequality in \eqref{eq:BGI114} follows by power-counting and $N^{\e_{1}}|\mathfrak{k}|\leq N^{6/25}$ and the a priori bound $|\mathbf{Y}^{N}|\leq N^{\e_{\mathrm{ap}}}$.
\end{proof}
%%%
%%%
\begin{proof}[Proof of \emph{Lemma \ref{lemma:BGI12}}]
Consider $\gamma=999^{-999}\e_{\mathrm{ap}}$. Via Lemma \ref{lemma:hoe2} for {$\phi_{S,y}=|\wt{\mathfrak{I}}^{\mathbf{X}}_{\mathfrak{l}_{1}}(\mathsf{S}_{\e_{1}}(\tau_{y}\eta_{S}))|$} and this choice of $\gamma$, we deduce
\begin{align}
\mathrm{LHS}(\eqref{eq:BGI12}) \ \lesssim_{\gamma} \ N^{\frac23\gamma+\e_{\mathrm{ap}}}\E\left(\mathbf{I}_{1}(N^{3/4}|\wt{\mathfrak{I}}_{\mathfrak{l}_{1}}^{\mathbf{X}}({\mathsf{S}_{\e_{1}}(\tau_{y}\eta_{S})})|^{3/2})\right)^{2/3} \ \leq \ N^{\frac23\gamma+\frac12+\e_{\mathrm{ap}}}\left(\E\mathbf{I}_{1}(|\wt{\mathfrak{I}}_{\mathfrak{l}_{1}}^{\mathbf{X}}({\mathsf{S}_{\e_{1}}(\tau_{y}\eta_{S})}|^{3/2})\right)^{2/3}, \label{eq:BGI121}
\end{align}
where the last inequality follows from applying the Holder inequality with respect to the $\E$-expectation for the Holder inequality exponent $3/2$. We now apply Lemma \ref{lemma:le2} to ``transfer" the space-time averaging on the RHS of \eqref{eq:BGI121} to the law of the particle system; in this application of Lemma \ref{lemma:le2}, we make the following choices for inputs/parameters:
%%%
\begin{itemize}
\item Pick $\mathfrak{t}_{\mathrm{av}},\mathfrak{l}_{\mathrm{av}}=0$ and $\mathfrak{f}_{S,y}=\wt{\mathfrak{I}}^{\mathbf{X}}_{\mathfrak{l}_{1}}({\mathsf{S}_{\e_{1}}(\tau_{y}\eta_{S})})=\mathrm{O}(1)$ with support in $\llbracket-\mathrm{O}(\mathfrak{l}_{1}N^{\e_{1}}),\mathrm{O}(\mathfrak{l}_{1}N^{\e_{1}})\rrbracket$ and $\e_{1}=\frac{1}{14}$.
\end{itemize}
%%%
In this case, the $\E^{\mathrm{dyn}}_{\mathrm{Loc}}$ expectation on the RHS of \eqref{eq:le2} does nothing since $\mathfrak{t}_{\mathrm{av}}=0$, so the path-space dependence of the space-time average from the RHS of \eqref{eq:BGI121} is only through its initial condition $\mathrm{Loc}(\eta)$ that is equal to $\eta$ itself as far as $\mathfrak{f}$ is concerned because the $\mathrm{Loc}$ map only cuts off $\eta$ outside the support of $\mathfrak{f}$ by construction in Definition \ref{definition:locmap1}/Lemma \ref{lemma:le2}. Thus, as $\mathfrak{f}$ is uniformly bounded, we deduce the following estimate from Lemma \ref{lemma:le2} with the aforementioned specialization:
\begin{align}
\E\mathbf{I}_{1}(|\wt{\mathfrak{I}}_{\mathfrak{l}_{1}}^{\mathbf{X}}({\mathsf{S}_{\e_{1}}(\tau_{y}\eta_{S})})|^{3/2}) \ \lesssim \ {\E_{0}}\bar{\mathfrak{P}}_{1}|\wt{\mathfrak{I}}_{\mathfrak{l}_{1}}^{\mathbf{X}}(\mathsf{S}_{\e_{1}}(\eta))|^{3/2} + N^{-100}. \label{eq:BGI122}
\end{align}
Let us now estimate the first term within the RHS of \eqref{eq:BGI122}. We will do this through Lemma \ref{lemma:le3} for $\mathfrak{h}=|\wt{\mathfrak{I}}^{\mathbf{X}}_{\mathfrak{l}_{1}}(\mathsf{S}_{\e_{1}}(\bar{\mathfrak{q}}))|^{3/2}$, whose support is contained in a block with length of order $N^{\e_{1}}\mathfrak{l}_{1}\lesssim N^{1/14+1/6}\leq N^{6/25}$. We also choose $\kappa=1$ in this application of Lemma \ref{lemma:le3}, so we deduce the following in which $\mathbb{B}$ denotes the support of our choice of $\mathfrak{h}=|\wt{\mathfrak{I}}^{\mathbf{X}}_{\mathfrak{l}_{1}}(\mathsf{S}_{\e_{1}}(\bar{\mathfrak{q}}))|^{3/2}$:
\begin{align}
{\E_{0}}\bar{\mathfrak{P}}_{1}|\wt{\mathfrak{I}}_{\mathfrak{l}_{1}}^{\mathbf{X}}(\mathsf{S}_{\e_{1}}(\eta))|^{3/2} \ \lesssim \ N^{-2+\frac{18}{25}} + {\sup}_{\sigma\in\R}\E^{\mu_{\sigma,\mathbb{B}}^{\mathrm{can}}}|\wt{\mathfrak{I}}_{\mathfrak{l}_{1}}^{\mathbf{X}}(\mathsf{S}_{\e_{1}}(\eta))|^{3/2}. \label{eq:BGI123}
\end{align}
Observe the term inside the expectation on the far RHS is equal to zero on the event where the indicator function defining $\wt{\mathfrak{I}}^{\mathbf{X}}$ is not zero. Thus, because $\bar{\mathfrak{q}}$ and its functionals are uniformly bounded, the expectation on the far RHS of \eqref{eq:BGI123} is at most uniformly bounded factors times the probability that the indicator function in Definition \ref{definition:BGI11} fails. We estimate this using Lemma \ref{lemma:ee} with the choice of functions $\mathfrak{f}_{\mathfrak{j}}=\tau_{-\mathfrak{j}N^{\e_{1}}}\mathsf{S}_{\e_{1}}(\eta)$ for $\mathfrak{j}\geq1$ and $\gamma=\e_{\mathrm{ap}}$, whose supports are mutually disjoint since $\mathsf{S}_{\e_{1}}$ has support length $N^{\e_{1}}$ by construction in Definition \ref{definition:BGI2}, and for $\mathfrak{J}=\mathfrak{l}_{1}$. Thus, we have 
\begin{align}
{\sup}_{\sigma\in\R}\E^{\mu_{\sigma,\mathbb{B}}^{\mathrm{can}}}|\wt{\mathfrak{I}}_{\mathfrak{l}_{1}}^{\mathbf{X}}(\mathsf{S}_{\e_{1}}(\eta))|^{3/2} \ \lesssim \ {\sup}_{\sigma\in\R}\E^{\mu_{\sigma,\mathbb{B}}^{\mathrm{can}}}\mathbf{1}(|\mathfrak{I}_{\mathfrak{l}_{1}}^{\mathbf{X}}(\mathsf{S}_{\e_{1}}(\eta))|\geq N^{\e_{\mathrm{ap}}}\mathfrak{l}_{1}^{-1/2}) \ \lesssim \ N^{-100}. \label{eq:BGI124}
\end{align}
We now combine \eqref{eq:BGI121}, \eqref{eq:BGI122}, \eqref{eq:BGI123}, and \eqref{eq:BGI124} along with elementary power-counting in $N$ to deduce the claim.
\end{proof}
%%%
%%%
\begin{proof}[Proof of \emph{Lemma \ref{lemma:BGI13}}]
We establish the proposed estimate for the first term on the LHS of \eqref{eq:BGI13}, so we formally set $\mathfrak{e}=0$ for now. Observe $\mathfrak{j}_{1}\lesssim_{\e_{\mathrm{ap}}}1$ for $\mathfrak{j}_{1}$ in the statement of Lemma \ref{lemma:BGI13}, as $\mathfrak{t}_{\mathfrak{j}}$ increases by a factor of $N^{\e_{\mathrm{ap}}}$ with each step in the index $\mathfrak{j}$. Also, we emphasize the important assumption $\mathfrak{t}_{\mathfrak{j}_{1}}\leq N^{-1}$. Lastly, we note that via the triangle inequality, it suffices to control the LHS of \eqref{eq:BGI13} both with the replacement $\mathfrak{t}_{\mathfrak{j}_{1}}$ by $\mathfrak{t}_{0}=N^{-2}$ and with the replacement $0$ in the LHS of \eqref{eq:BGI13} by $\mathfrak{t}_{0}=N^{-2}$, namely
\begin{align}
\E\|\mathbf{H}^{N}\left(N^{1/2}\mathfrak{D}_{0,\mathfrak{t}_{\mathfrak{j}_{1}}}^{\mathbf{T}}(\bar{\mathfrak{I}}_{\mathfrak{l}_{1}}^{\mathbf{X}}({\mathsf{S}_{\e_{1}}(\tau_{y}\eta_{S})}))\mathbf{Y}^{N}_{S,y}\right)\|_{1;\mathbb{T}_{N}} \ &\leq \ \E\|\mathbf{H}^{N}\left(N^{1/2}\mathfrak{D}_{0,N^{-2}}^{\mathbf{T}}(\bar{\mathfrak{I}}_{\mathfrak{l}_{1}}^{\mathbf{X}}({\mathsf{S}_{\e_{1}}(\tau_{y}\eta_{S})}))\mathbf{Y}^{N}_{S,y}\right)\|_{1;\mathbb{T}_{N}} \label{eq:BGI13zero} \\
&+ \E\|\mathbf{H}^{N}\left(N^{1/2}\mathfrak{D}_{N^{-2},\mathfrak{t}_{\mathfrak{j}_{1}}}^{\mathbf{T}}(\bar{\mathfrak{I}}_{\mathfrak{l}_{1}}^{\mathbf{X}}({\mathsf{S}_{\e_{1}}(\tau_{y}\eta_{S})}))\mathbf{Y}^{N}_{S,y}\right)\|_{1;\mathbb{T}_{N}}. \nonumber
\end{align}
Use Lemma \ref{lemma:mtr1} and \eqref{eq:mtr2b} in Lemma \ref{lemma:mtr2} with $\mathfrak{f}_{S,y}=N^{1/2}\bar{\mathfrak{I}}^{\mathbf{X}}_{\mathfrak{l}_{1}}{(\mathsf{S}_{\e_{1}}(\tau_{y}\eta_{S})})$ and $\gamma=\e_{\mathrm{ap}}$ and, for Lemma \ref{lemma:mtr2}, $\mathfrak{J}=\mathfrak{j}_{1}$. Lemma \ref{lemma:mtr1} estimates the first term on the RHS of \eqref{eq:BGI13zero} at the cost of $\mathrm{O}(\mathrm{RHS}(\eqref{eq:BGI13}))$ since our choice of $\mathfrak{f}$ admits an a priori cutoff:
\begin{align}
\E\|\mathbf{H}^{N}\left(N^{1/2}\mathfrak{D}_{0,N^{-2}}^{\mathbf{T}}(\bar{\mathfrak{I}}_{\mathfrak{l}_{1}}^{\mathbf{X}}({\mathsf{S}_{\e_{1}}(\tau_{y}\eta_{S})}))\mathbf{Y}^{N}_{S,y}\right)\|_{1;\mathbb{T}_{N}} \ &\lesssim \ N^{-2+\gamma+\e_{\mathrm{ap}}}\|\mathfrak{f}\|_{\omega;\infty} + N^{-1/2+3\e_{\mathrm{ap}}}\E\|\mathbf{H}^{N}(|\mathfrak{f}_{S,y}|)\|_{1;\mathbb{T}_{N}} \nonumber \\
&\lesssim \ N^{3\e_{\mathrm{ap}}}\|\bar{\mathfrak{I}}^{\mathbf{X}}_{\mathfrak{l}_{1}}(\mathsf{S}_{\e_{1}}(\eta))\|_{\omega;\infty} \ \lesssim \ N^{4\e_{\mathrm{ap}}}\mathfrak{l}_{1}^{-\frac12} \ = \ N^{-\frac{1}{12}+4\e_{\mathrm{ap}}}. \label{eq:BGI130}
\end{align}
We note \eqref{eq:mtr2b} in Lemma \ref{lemma:mtr2} controls the second term on the RHS of \eqref{eq:BGI13zero} with $\mathfrak{J}=\mathfrak{j}_{1}\lesssim1$ and $\mathfrak{f}=N^{1/2}\bar{\mathfrak{I}}^{\mathbf{X}}_{\mathfrak{l}_{1}}(\mathsf{S}_{\e_{1}}(\eta))$ and $\gamma=\e_{\mathrm{ap}}$. As this choice of $\mathfrak{f}$ satisfies $|\mathfrak{f}|\lesssim N^{1/2}$, this shows the second term on the RHS of \eqref{eq:BGI13zero} is
\begin{align}
&\lesssim_{\e_{\mathrm{ap}}} N^{-1+2\e_{\mathrm{ap}}}\|N^{1/2}\bar{\mathfrak{I}}_{\mathfrak{l}_{1}}^{\mathbf{X}}(\mathsf{S}_{\e_{1}}(\eta))|\|_{\omega;\infty} + \sup_{0\leq\mathfrak{j}<\mathfrak{j}_{1}}N^{3\e_{\mathrm{ap}}}\mathfrak{t}_{\mathfrak{j}+1}^{1/4}\E\|\mathbf{H}^{N}(N^{1/2}|\mathfrak{I}^{\mathbf{T}}_{\mathfrak{t}_{\mathfrak{j}}}\bar{\mathfrak{I}}^{\mathbf{X}}_{\mathfrak{l}_{1}}({\mathsf{S}_{\e_{1}}(\tau_{y}\eta_{S})})|)\|_{1;\mathbb{T}_{N}} \\
&\lesssim \ N^{-\frac12+2\e_{\mathrm{ap}}}+ \sup_{0\leq\mathfrak{j}<\mathfrak{j}_{1}}N^{3\e_{\mathrm{ap}}}\mathfrak{t}_{\mathfrak{j}+1}^{1/4}\E\|\mathbf{H}^{N}(N^{1/2}|\mathfrak{I}^{\mathbf{T}}_{\mathfrak{t}_{\mathfrak{j}}}\bar{\mathfrak{I}}^{\mathbf{X}}_{\mathfrak{l}_{1}}({\mathsf{S}_{\e_{1}}(\tau_{y}\eta_{S})})|)\|_{1;\mathbb{T}_{N}}.
\end{align}
We apply Lemma \ref{lemma:hoe2} with $\phi_{S,y}=N^{1/2}\mathfrak{I}^{\mathbf{T}}_{\mathfrak{t}_{\mathfrak{j}}}\bar{\mathfrak{I}}^{\mathbf{X}}_{\mathfrak{l}_{1}}({\mathsf{S}_{\e_{1}}(\tau_{y}\eta_{S})})$; for $\mathfrak{j}\leq\mathfrak{j}_{1}$, this gives
\begin{align}
N^{3\e_{\mathrm{ap}}}\mathfrak{t}_{\mathfrak{j}+1}^{\frac14}\E\|\mathbf{H}^{N}\left(N^{\frac12}|\mathfrak{I}_{\mathfrak{t}_{\mathfrak{j}}}^{\mathbf{T}}\bar{\mathfrak{I}}_{\mathfrak{l}_{1}}^{\mathbf{X}}({\mathsf{S}_{\e_{1}}(\tau_{y}\eta_{S})})|\right)\|_{1;\mathbb{T}_{N}} \ \lesssim \ \left(N^{8\e_{\mathrm{ap}}}\mathfrak{t}_{\mathfrak{j}+1}^{\frac38}\E\mathbf{I}_{1}(N^{\frac34}|\mathfrak{I}_{\mathfrak{t}_{\mathfrak{j}}}^{\mathbf{T}}\bar{\mathfrak{I}}_{\mathfrak{l}_{1}}^{\mathbf{X}}({\mathsf{S}_{\e_{1}}(\tau_{y}\eta_{S})})|^{\frac32})\right)^{\frac23}. \label{eq:BGI131}
\end{align}
It suffices to estimate the RHS of \eqref{eq:BGI131} uniformly in $\mathfrak{j}$ satisfying $\mathfrak{t}_{\mathfrak{j}}\leq N^{-1}$. To this end, we employ Lemma \ref{lemma:le2} to estimate the expectation of this individual integral by the expectation of a single functional against the space-time averaged law of the particle system. This provides the following for which we forget, for now, the $2/3$-power on the RHS of \eqref{eq:BGI131}, in which $\mathrm{Loc}=\mathrm{Loc}_{\mathfrak{t}_{\mathfrak{j}},\mathfrak{l}_{\mathrm{tot}}}$ of Definition \ref{definition:locmap1}/Lemma \ref{lemma:le2} is taken with $\gamma_{0}=\e_{\mathrm{ap}}$ and $\mathfrak{f}_{S,y}=\bar{\mathfrak{I}}^{\mathbf{X}}_{\mathfrak{l}_{1}}({\mathsf{S}_{\e_{1}}(\tau_{y}\eta_{S})})$ and $\mathfrak{l}_{\mathrm{av}}=1$, as our choice of $\mathfrak{f}$ already accounts for the spatial averaging, and $\mathfrak{l}=N^{\e_{1}}\mathfrak{l}_{1}\leq N^{6/25}$ equal to the support length of our choice of functional $\mathfrak{f}_{S,y}=\bar{\mathfrak{I}}^{\mathbf{X}}_{\mathfrak{l}_{1}}({\mathsf{S}_{\e_{1}}(\tau_{y}\eta_{S})})$:
\begin{align}
N^{8\e_{\mathrm{ap}}}\mathfrak{t}_{\mathfrak{j}+1}^{\frac38}\E\mathbf{I}_{1}(N^{\frac34}|\mathfrak{I}_{\mathfrak{t}_{\mathfrak{j}}}^{\mathbf{T}}\bar{\mathfrak{I}}_{\mathfrak{l}_{1}}^{\mathbf{X}}({\mathsf{S}_{\e_{1}}(\tau_{y}\eta_{S})})|^{\frac32}) \ \lesssim \ N^{\frac34+8\e_{\mathrm{ap}}}\mathfrak{t}_{\mathfrak{j}+1}^{\frac38}{\E_{0}}\bar{\mathfrak{P}}_{1}\E_{\mathrm{Loc}}^{\mathrm{dyn}}|\mathfrak{I}_{\mathfrak{t}_{\mathfrak{j}}}^{\mathbf{T}}\bar{\mathfrak{I}}_{\mathfrak{l}_{1}}^{\mathbf{X}}(\mathsf{S}_{\e_{1}})|^{\frac32} + N^{-100}. \label{eq:BGI132}
\end{align}
Plugging in the second term on the RHS of \eqref{eq:BGI132} into the RHS of \eqref{eq:BGI131}, its contribution is controlled by the RHS of the proposed estimate \eqref{eq:BGI13}, so it suffices to estimate the first term on the RHS of \eqref{eq:BGI132}. For this purpose, we will employ Lemma \ref{lemma:le3} with the following choices for inputs $\kappa$ and $\mathfrak{h}$; for the choices below, we recall $\mathfrak{l}_{1}=N^{1/6}$ from Lemma \ref{lemma:BGI11}.
\begin{itemize}
\item We will choose the constant $\kappa$ in the statement of Lemma \ref{lemma:le3} to be $\kappa=N^{-3\e_{\mathrm{ap}}/2}\mathfrak{l}_{1}^{3/4}=N^{1/8-3\e_{\mathrm{ap}}/2}$. 
\item Now choose $\mathfrak{h}$ in Lemma \ref{lemma:le3} to be the $\E^{\mathrm{dyn}}$ functional. Observe first that these two bullet points are ``compatible" for applying Lemma \ref{lemma:le3} because the $\E^{\mathrm{dyn}}$ functional is uniformly bounded by the time average it is taking expectation of. This time average is controlled uniformly by the $\|\|_{\omega;\infty}$-norm of the quantity it is averaging, which in this case is the $\bar{\mathfrak{I}}^{\mathbf{X}}$ functional. But this $\bar{\mathfrak{I}}^{\mathbf{X}}$ functional is at most $N^{-1/12+\e_{\mathrm{ap}}}$; see Definition \ref{definition:BGI11}. Taking the $-3/2$-power of this bound gives $\kappa$.
\item Observe the support of the $\mathfrak{h}=\E^{\mathrm{dyn}}$ functional is equal to the support of $\mathrm{Loc}$ from our application of Lemma \ref{lemma:le2} that yielded \eqref{eq:BGI132}, as the $\E^{\mathrm{dyn}}$ functional takes $\mathrm{Loc}$ as its initial configuration for the path-space expectation. The support of $\mathrm{Loc}$ is given in Definition \ref{definition:locmap1}/Lemma \ref{lemma:le2}, which we emphasize is taken with $\gamma_{0}=\e_{\mathrm{ap}}$ and $\mathfrak{t}=\mathfrak{t}_{\mathfrak{j}}$ and $\mathfrak{l}\lesssim N^{\e_{1}}\mathfrak{l}_{1}$ for $\e_{1}=1/14$ and $\mathfrak{l}_{1}=N^{1/6}$; indeed, according to Lemma \ref{lemma:le2} we take the parameter $|\mathfrak{l}\mathfrak{l}_{\mathrm{av}}|$ for the $\mathrm{Loc}$ support equal to $\mathrm{O}(1)$ times the support length of $\mathsf{S}$ that we are space-time averaging on the RHS of \eqref{eq:BGI132}, which is of order $N^{\e_{1}}$, times the length-scale of this spatial averaging, which is order $\mathfrak{l}_{1}$; we also add $\mathrm{O}(N^{\e_{1}})$, which is basically the support length of $\mathsf{S}_{\e_{1}}(\eta)$, but this is lower-order.
\end{itemize}
%%%
Lemma \ref{lemma:le3} with the aforementioned choices lets us control the first term on the RHS of \eqref{eq:BGI132} by two terms, one depending on the support $\mathbb{B}$ of $\mathfrak{h}$ and another being the supremum of canonical measure expectations. This first support-term, after multiplying by the prefactors before the expectation on the RHS of \eqref{eq:BGI132}, is ultimately negligible courtesy of the following calculation:
\begin{align}
N^{\frac34+8\e_{\mathrm{ap}}}\mathfrak{t}_{\mathfrak{j}+1}^{\frac38}\kappa^{-1}N^{-2}|\mathbb{B}|^{3} \ \lesssim \ N^{\frac34+8\e_{\mathrm{ap}}}\mathfrak{t}_{\mathfrak{j}+1}^{\frac38}\kappa^{-1}N^{-2}\left(N^{1+\e_{\mathrm{ap}}}\mathfrak{t}_{\mathfrak{j}}^{1/2}+N^{3/2+\e_{\mathrm{ap}}}\mathfrak{t}_{\mathfrak{j}}+N^{\e_{\mathrm{ap}}}N^{\e_{1}}\mathfrak{l}_{1}\right)^{3} \ \lesssim \ N^{-\frac{1}{999}+10\e_{\mathrm{ap}}}. \label{eq:BGI133}
\end{align}
The last inequality in \eqref{eq:BGI133} follows by $\mathfrak{l}_{1}=N^{1/6}$ and $\e_{1}=1/14$ and, from Definition \ref{definition:KPZ1}, that $\mathfrak{t}_{\mathfrak{j}}\leq N^{-1}$ and $\mathfrak{t}_{\mathfrak{j}+1}\leq N^{\e_{\mathrm{ap}}}\mathfrak{t}_{\mathfrak{j}}$. By plugging this in the RHS of \eqref{eq:BGI131} and taking its $2/3$-power, we deduce that its contribution is controlled by the RHS of the proposed estimate \eqref{eq:BGI13}. We are now left to estimate the supremum of canonical measure expectations on the RHS of the estimate we obtain when employing Lemma \ref{lemma:le3} with the previous list of choices for inputs. For clarity, let us record below the supremum we are left to estimate, insert into the RHS of \eqref{eq:BGI131}, and deduce is controlled by the RHS of the proposed estimate \eqref{eq:BGI13}, in which $\E^{\sigma}$ denotes expectation with respect to the canonical measure of parameter $\sigma$ on the support of $\E^{\mathrm{dyn}}$/of $\mathrm{Loc}$:
\begin{align}
\Phi \ \overset{\bullet}= \ {\sup}_{\sigma\in\R}N^{\frac34+8\e_{\mathrm{ap}}}\mathfrak{t}_{\mathfrak{j}+1}^{\frac38}\E^{\sigma}\E_{\mathrm{Loc}}^{\mathrm{dyn}}|\mathfrak{I}_{\mathfrak{t}_{\mathfrak{j}}}^{\mathbf{T}}\bar{\mathfrak{I}}_{\mathfrak{l}_{1}}^{\mathbf{X}}(\mathsf{S}_{\e_{1}})|^{\frac32}. \label{eq:BGI133b}
\end{align}
We take the same $\mathrm{Loc}$ as we did for our applications of Lemma \ref{lemma:le3} in the previous quantity $\Phi$. To estimate $\Phi$, we proceed with the following two-step estimate, which is basically applying Lemma \ref{lemma:ee2}, but first removing the cutoff for the spatial average on the RHS of \eqref{eq:BGI133b} that is absent in Lemma \ref{lemma:ee2}. Intuitively, this cutoff does nothing with very high probability by Lemma \ref{lemma:ee}.
%%%
\begin{itemize}
\item We first replace $\bar{\mathfrak{I}}^{\mathbf{X}}$ by $\mathfrak{I}^{\mathbf{X}}$. The cost in doing so is recorded in the following estimate:
\begin{align}
\E^{\sigma}\E_{\mathrm{Loc}}^{\mathrm{dyn}}|\mathfrak{I}_{\mathfrak{t}_{\mathfrak{j}}}^{\mathbf{T}}\bar{\mathfrak{I}}_{\mathfrak{l}_{1}}^{\mathbf{X}}(\mathsf{S}_{\e_{1}})|^{\frac32} \ \lesssim \ \E^{\sigma}\E_{\mathrm{Loc}}^{\mathrm{dyn}}|\mathfrak{I}_{\mathfrak{t}_{\mathfrak{j}}}^{\mathbf{T}}\mathfrak{I}_{\mathfrak{l}_{1}}^{\mathbf{X}}(\mathsf{S}_{\e_{1}})|^{\frac32} + \E^{\sigma}\E_{\mathrm{Loc}}^{\mathrm{dyn}}|\mathfrak{I}_{\mathfrak{t}_{\mathfrak{j}}}^{\mathbf{T}}(\mathfrak{I}_{\mathfrak{l}_{1}}^{\mathbf{X}}(\mathsf{S}_{\e_{1}})-\bar{\mathfrak{I}}_{\mathfrak{l}_{1}}^{\mathbf{X}}(\mathsf{S}_{\e_{1}}))|^{\frac32}. \label{eq:BGI134}
\end{align}
We will estimate the second term within the RHS of \eqref{eq:BGI134}. By thinking of the $\mathfrak{I}^{\mathbf{T}}$ time average as an expectation, we will first move the $3/2$-power and absolute value past the $\mathfrak{I}^{\mathbf{T}}$ average via the Holder inequality to get
\begin{align}
\E^{\sigma}\E_{\mathrm{Loc}}^{\mathrm{dyn}}|\mathfrak{I}_{\mathfrak{t}_{\mathfrak{j}}}^{\mathbf{T}}(\mathfrak{I}_{\mathfrak{l}_{1}}^{\mathbf{X}}(\mathsf{S}_{\e_{1}})-\bar{\mathfrak{I}}_{\mathfrak{l}_{1}}^{\mathbf{X}}(\mathsf{S}_{\e_{1}}))|^{\frac32} \ \leq \ \E^{\sigma}\E_{\mathrm{Loc}}^{\mathrm{dyn}}\mathfrak{I}_{\mathfrak{t}_{\mathfrak{j}}}^{\mathbf{T}}(|\mathfrak{I}_{\mathfrak{l}_{1}}^{\mathbf{X}}(\mathsf{S}_{\e_{1}})-\bar{\mathfrak{I}}_{\mathfrak{l}_{1}}^{\mathbf{X}}(\mathsf{S}_{\e_{1}})|^{\frac32}). \label{eq:BGI135}
\end{align}
Following the proof of Lemma \ref{lemma:ee2}, we first replace $\E^{\mathrm{dyn}}$ in \eqref{eq:BGI135} with an expectation with respect to the path-space measure corresponding to the particle system but with periodic boundary conditions on the support of $\mathrm{Loc}$ if we allow error of at most order $N^{-100}$, as $\bar{\mathfrak{q}}$ is uniformly bounded. We now move both expectations, after this replacement, on the RHS of \eqref{eq:BGI135} past the $\mathfrak{I}^{\mathbf{T}}$ time-average by the Fubini theorem. Also from the proof of Lemma \ref{lemma:ee2}, for this smaller periodic system on the support of $\mathrm{Loc}$, the $\sigma$-canonical measure defining the expectation $\E^{\sigma}$ on the RHS of \eqref{eq:BGI135} is an invariant measure. Therefore, it suffices to estimate the expectation of what is inside the $\mathfrak{I}^{\mathbf{T}}$ average on the RHS of \eqref{eq:BGI135} when we sample the $\eta$-variables in the support of $\mathrm{Loc}$ by the $\sigma$-canonical measure. As the support of $\mathfrak{I}^{\mathbf{X}}-\bar{\mathfrak{I}}^{\mathbf{X}}$ is contained in that of $\mathrm{Loc}$, and since projections of canonical measures onto smaller subsets are convex combinations of canonical measures, it suffices to estimate expectation of $|\mathfrak{I}^{\mathbf{X}}-\bar{\mathfrak{I}}^{\mathbf{X}}|$ with respect to any canonical measure. By the large-deviations estimate in Lemma \ref{lemma:ee}, as $|\mathfrak{I}^{\mathbf{X}}-\bar{\mathfrak{I}}^{\mathbf{X}}|$ is uniformly bounded, this expectation is at most the probability $\mathrm{O}(N^{-100})$ that the indicator function defining $\bar{\mathfrak{I}}^{\mathbf{X}}$ fails. Ultimately, from this paragraph and the bound $\mathfrak{t}_{\mathfrak{j}+1}\leq1$, we get the following, which then, after plugging into the RHS of \eqref{eq:BGI131} and taking its $2/3$-power, has contribution controlled by the RHS of the proposed \eqref{eq:BGI13}:
\begin{align}
N^{\frac34+8\e_{\mathrm{ap}}}\mathfrak{t}_{\mathfrak{j}+1}^{\frac38}\E^{\sigma}\E_{\mathrm{Loc}}^{\mathrm{dyn}}\mathfrak{I}_{\mathfrak{t}_{\mathfrak{j}}}^{\mathbf{T}}(|\mathfrak{I}_{\mathfrak{l}_{1}}^{\mathbf{X}}(\mathsf{S}_{\e_{1}})-\bar{\mathfrak{I}}_{\mathfrak{l}_{1}}^{\mathbf{X}}(\mathsf{S}_{\e_{1}})|^{\frac32}) \ \lesssim \ N^{\frac34+8\e_{\mathrm{ap}}}N^{-100} \ \lesssim \ N^{-99}. \label{eq:BGI136}
\end{align}
\item We now estimate the first term on the RHS of \eqref{eq:BGI134}. We first employ the Holder inequality to boost the $3/2$ exponent to $2$, so
\begin{align}
N^{\frac34+8\e_{\mathrm{ap}}}\mathfrak{t}_{\mathfrak{j}+1}^{\frac38}\E^{\sigma}\E_{\mathrm{Loc}}^{\mathrm{dyn}}|\mathfrak{I}_{\mathfrak{t}_{\mathfrak{j}}}^{\mathbf{T}}\mathfrak{I}_{\mathfrak{l}_{1}}^{\mathbf{X}}(\mathsf{S}_{\e_{1}})|^{\frac32} \ &\leq \left(N^{1+\frac{32}{3}\e_{\mathrm{ap}}}\mathfrak{t}_{\mathfrak{j}+1}^{\frac12}\E^{\sigma}\E_{\mathrm{Loc}}^{\mathrm{dyn}}|\mathfrak{I}_{\mathfrak{t}_{\mathfrak{j}}}^{\mathbf{T}}\mathfrak{I}_{\mathfrak{l}_{1}}^{\mathbf{X}}(\mathsf{S}_{\e_{1}})|^{2}\right)^{\frac34}. \label{eq:BGI137}
\end{align}
We now use Lemma \ref{lemma:ee2} to the RHS of \eqref{eq:BGI137} where $\mathfrak{f}$ in Lemma \ref{lemma:ee2} is taken to be $\mathsf{S}_{\e_{1}}(\bar{\mathfrak{q}})$ here, which satisfies the assumptions needed of $\mathfrak{f}$ in Lemma \ref{lemma:ee2} as noted in Definition \ref{definition:BGI2}. We clarify we also take $\mathfrak{t}_{\mathrm{av}}=\mathfrak{t}_{\mathfrak{j}}$ and $\mathfrak{l}_{\mathrm{av}}=\mathfrak{l}_{1}=N^{1/6}$. This ultimately provides the following estimate for which we recall $\mathfrak{t}_{\mathfrak{j}+1}=N^{\e_{\mathrm{ap}}}\mathfrak{t}_{\mathfrak{j}}$ and $\mathfrak{t}_{\mathfrak{j}}\geq N^{-2}$ and the support of $\mathsf{S}$ has length order $N^{\e_{1}}$ with $\e_{1}=1/14$, the first two of which follow by construction in Definition \ref{definition:KPZ1} and the last in Definition \ref{definition:BGI2}/Proposition \ref{prop:BGI1}:
\begin{align}
N^{1+\frac{32}{3}\e_{\mathrm{ap}}}\mathfrak{t}_{\mathfrak{j}+1}^{\frac12}\E^{\sigma}\E_{\mathrm{Loc}}^{\mathrm{dyn}}|\mathfrak{I}_{\mathfrak{t}_{\mathfrak{j}}}^{\mathbf{T}}\mathfrak{I}_{\mathfrak{l}_{1}}^{\mathbf{X}}(\mathsf{S}_{\e_{1}})|^{2} \ \lesssim \ N^{1+\frac{35}{3}\e_{\mathrm{ap}}}\mathfrak{t}_{\mathfrak{j}}^{\frac12}N^{-2}\mathfrak{t}_{\mathfrak{j}}^{-1}\mathfrak{l}_{1}^{-1}N^{\frac{2}{14}}+N^{-100} \ \lesssim \ N^{-\frac{1}{999}+\frac{35}{3}\e_{\mathrm{ap}}}. \label{eq:BGI138}
\end{align}
Plugging the above upper bound \eqref{eq:BGI138} in the RHS of \eqref{eq:BGI131} and taking its $2/3$-power proves the contribution of the first term in the $\Phi$-decomposition \eqref{eq:BGI134} is controlled by the RHS of the proposed estimate \eqref{eq:BGI13}.
\end{itemize}
%%%
The previous two bullet points estimate $\Phi$ in \eqref{eq:BGI133b}, so that its contribution, after plugging it into the RHS of \eqref{eq:BGI131} and taking its $2/3$-power, is appropriately controlled, so we are done.

It now suffices to estimate the second term on the LHS of \eqref{eq:BGI13}. To this end, it suffices to follow the argument we have given to estimate the first term on the LHS of \eqref{eq:BGI13} but with the following technical adjustments; we also explain intuitively why it works.
%%%
\begin{itemize}
\item In applying Lemma \ref{lemma:mtr1} and \eqref{eq:mtr2b} in Lemma \ref{lemma:mtr2}, we instead choose $\mathfrak{f}=N^{6/25}\mathfrak{e}$ from the second term on the LHS of \eqref{eq:BGI13}.
\item When applying Lemma \ref{lemma:hoe2} and Lemma \ref{lemma:le2}, we instead integrate/apply the heat operator against our choice $\mathfrak{f}=N^{6/25}\mathfrak{e}$ from the previous bullet point and choose $\mathrm{Loc}=\mathrm{Loc}_{\mathfrak{t}_{\mathfrak{j}},\mathfrak{l}_{\mathrm{tot}}}$ with $\mathfrak{l}_{\mathrm{tot}}\lesssim N^{6/25}$, because the support of $\mathfrak{e}$ has a length of order $N^{6/25}$, which follows by construction in Lemma \ref{lemma:BGI11}, and there is no added length-scale gain for $\mathfrak{f}=N^{6/25}\mathfrak{e}$ from spatial-averaging.
\item When applying Lemma \ref{lemma:le3}, we instead choose $\kappa=1$ and $\mathfrak{h}$ equal to $\E^{\mathrm{dyn}}$ of the time-average of $\mathfrak{e}$, which we recall has support with length of order $N^{6/25}$. These choices of $\kappa$ and $\mathfrak{h}$ are ``compatible" as $\mathfrak{e}$ is uniformly bounded according to Lemma \ref{lemma:BGI11}.
\item The strategy we used to bound the first term on the LHS of \eqref{eq:BGI13} but with these modifications successfully controls the second term on the LHS of \eqref{eq:BGI13} for the following reason. We have a smaller power of $N$ for this second term; this means our estimates should be $N^{-1/2+6/25}=N^{-13/50}$ better than those for the first term on the LHS of \eqref{eq:BGI13}. However, we also lose the spatial-averaging, which introduces factors basically of order $N^{-1/12}$, so our estimates are actually only $N^{-13/50+1/12}\leq N^{-53/300}$ better. Moreover, the support of the spatial-average $\mathfrak{I}^{\mathbf{X}}$ for the first term on the LHS of \eqref{eq:BGI13} has basically the same length as the support of $\mathfrak{e}$. Lastly, when applying Lemma \ref{lemma:ee2}, the length of the support of the functional we space-time average has now increased from order $N^{\e_{1}}=N^{1/14}$ to $N^{6/25}$. As the estimate in Lemma \ref{lemma:ee2} depends linearly on the support, our estimates are actually $N^{-53/300+6/25-1/14}\leq N^{-8/900}$ better. In particular, we get \emph{sharper} estimates for the second term on the LHS of \eqref{eq:BGI13} when we modify the analysis for the first term therein via the previous three bullet points.
\end{itemize}
%%%
This completes the proof, as we have estimated both terms on the LHS of the proposed estimate \eqref{eq:BGI13} by the RHS of \eqref{eq:BGI13}.
\end{proof}
%%%
%%%
\begin{proof}[Proof of \emph{Lemma \ref{lemma:BGI14}}]
We again forget the second term on the LHS of \eqref{eq:BGI14} for now and focus on the first term therein. The first step we take is to introduce \emph{additional} spatial-averaging for the space-time average on the LHS of \eqref{eq:BGI14}. Unlike Lemma \ref{lemma:BGI11}, however, we will not required an explicit formula for gradients of the $\mathbf{Y}^{N}$ process and instead employ the replacement estimate in Lemma \ref{lemma:xr}. We will pick $\mathfrak{l}=N^{1/6}$ in our forthcoming application of Lemma \ref{lemma:xr}. We also pick $\mathfrak{f}_{S,y}=N^{1/2}\bar{\mathfrak{I}}^{\mathbf{X}}_{\mathfrak{l}_{1}}({\mathsf{S}_{\e_{1}}(\tau_{y}\eta_{S})})$ on the LHS of \eqref{eq:BGI14}. Note the choice of $\mathfrak{f}$ depends only on $\eta$-variables in a block of length $N^{\e_{1}}\mathfrak{l}_{1}$, with $\e_{1}=1/14$ and $\mathfrak{l}_{1}=N^{1/6}$; see Definitions \ref{definition:BGI2} and \ref{definition:BGI11}. We ultimately deduce the first term on the LHS of \eqref{eq:BGI14} is bounded above by $\mathrm{O}(1)$ times
\begin{align}
\E\|\mathbf{H}^{N}(N^{\frac12}\mathfrak{I}^{\mathbf{T}}_{\mathfrak{t}_{\mathfrak{j}_{1}}}\mathfrak{I}^{\mathbf{X}}_{N^{1/6}}\bar{\mathfrak{I}}^{\mathbf{X}}_{\mathfrak{l}_{1}}({\mathsf{S}_{\e_{1}}(\tau_{y}\eta_{S})})\mathbf{Y}^{N}_{S,y})\|_{1;\mathbb{T}_{N}} + N^{5\e_{\mathrm{ap}}}\bar{\mathfrak{l}}^{\frac12}\E\|\mathbf{H}^{N}(|\mathfrak{I}^{\mathbf{T}}_{\mathfrak{t}_{\mathfrak{j}_{1}}}\bar{\mathfrak{I}}^{\mathbf{X}}_{\mathfrak{l}_{1}}({\mathsf{S}_{\e_{1}}(\tau_{y}\eta_{S})})|)\|_{1;\mathbb{T}_{N}} + N^{-\frac{1}{12}+\e_{\mathrm{RN}}+2\e_{\mathrm{ap}}}. \label{eq:BGI141}
\end{align}
The factor $\bar{\mathfrak{l}}$ in \eqref{eq:BGI141} via Lemma \ref{lemma:xr} is equal to the length-scale for spatial averaging $\mathfrak{l}_{1}=N^{1/6}$ times the length of the support of $\mathfrak{f}_{S,y}=N^{1/2}\bar{\mathfrak{I}}^{\mathbf{X}}_{\mathfrak{l}_{1}}({\mathsf{S}_{\e_{1}}(\tau_{y}\eta_{S})})$ (which is $\mathrm{O}(N^{\e_{1}}\mathfrak{l}_{1})\lesssim N^{6/25}$). Since $|\bar{\mathfrak{l}}|\leq N^{1/2}$, Lemma \ref{lemma:xr} applies. We now explain \eqref{eq:BGI141}.
%%%
\begin{itemize}
\item Lemma \ref{lemma:xr} for $\mathfrak{l}=N^{\frac16}$ and $\mathfrak{f}_{S,y}=N^{\frac12}\bar{\mathfrak{I}}^{\mathbf{X}}_{\mathfrak{l}_{1}}({\mathsf{S}_{\e_{1}}(\tau_{y}\eta_{S})})$ implies the difference between the first term on the LHS of \eqref{eq:BGI14} and the first term in \eqref{eq:BGI141} is controlled by the RHS of \eqref{eq:xr} with these choices. It suffices to note that these two terms on the RHS of \eqref{eq:xr} are controlled by the third and second terms in \eqref{eq:BGI141}, respectively, as $|\bar{\mathfrak{I}}^{\mathbf{X}}_{\mathfrak{l}_{1}}({\mathsf{S}_{\e_{1}}(\tau_{y}\eta_{S})})|\lesssim N^{\e_{\mathrm{ap}}}|\mathfrak{l}_{1}|^{-\frac12}=N^{-\frac{1}{12}+\e_{\mathrm{ap}}}$.
\end{itemize}
%%%
The last term in \eqref{eq:BGI141} is clearly controlled by the RHS of the proposed estimate \eqref{eq:BGI141}. It remains to control the first two terms in \eqref{eq:BGI141}, for which we employ the following two bullet points based on the proof of Lemma \ref{lemma:BGI13}.
%%%
\begin{itemize}
\item To analyze the second term in \eqref{eq:BGI141}, we directly follow the proof of Lemma \ref{lemma:BGI13} starting from \eqref{eq:BGI131} but dropping the prefactor $N^{3\e_{\mathrm{ap}}}\mathfrak{t}^{1/4}$ and choosing $\mathfrak{j}=\mathfrak{j}_{1}$ from Lemma \ref{lemma:BGI13}/Lemma \ref{lemma:BGI14}. In particular, we will make the same choices in our applications of results in Section \ref{section:BGIPrelim} and we ultimately deduce the second term in \eqref{eq:BGI141} is controlled by the RHS of the proposed estimate \eqref{eq:BGI14}. Intuitively we succeed because although we lose a factor of $\mathfrak{t}^{1/4}$, in the calculations starting at \eqref{eq:BGI131} in the proof of Lemma \ref{lemma:BGI13} we only use the bound $\mathfrak{t}_{\mathfrak{j}+1}\leq N^{-1}$, and we only lose a factor of $N^{-1/4}$. On the other hand, the prefactor is no longer $N^{1/2}$ but rather $\bar{\mathfrak{l}}^{1/2}$. Recalling $\bar{\mathfrak{l}}\leq N^{1/6}N^{\e_{1}}\mathfrak{l}_{1}\leq N^{1/6}N^{6/25} = N^{61/150}$, we also gain a factor $N^{-1/2+61/300}=N^{-89/300}$ that beats out the $N^{1/4}$ factor that we obtained in dropping $\mathfrak{t}^{1/4}$ from earlier in this bullet point.
\item We now analyze the first term in \eqref{eq:BGI141}. In this case, we will also follow the proof for Lemma \ref{lemma:BGI13} starting with \eqref{eq:BGI131}, although now we must address the additional $\mathfrak{I}^{\mathbf{X}}$ operator in the first term in \eqref{eq:BGI141}. We start with Lemma \ref{lemma:hoe2} for $\phi$ equal to the $\mathfrak{I}^{\mathbf{T}}\mathfrak{I}^{\mathbf{X}}\bar{\mathfrak{I}}^{\mathbf{X}}$ term in the first term in \eqref{eq:BGI141} to get the following parallel of \eqref{eq:BGI131}:
\begin{align}
\E\|\mathbf{H}^{N}\left(N^{\frac12}\mathfrak{I}^{\mathbf{T}}_{\mathfrak{t}_{\mathfrak{j}_{1}}}\mathfrak{I}^{\mathbf{X}}_{N^{1/6}}\bar{\mathfrak{I}}^{\mathbf{X}}_{\mathfrak{l}_{1}}({\mathsf{S}_{\e_{1}}(\tau_{y}\eta_{S})})\mathbf{Y}^{N}_{S,y}\right)\|_{1;\mathbb{T}_{N}} \ \lesssim \ \left(N^{\frac34+2\e_{\mathrm{ap}}}\E\mathbf{I}_{1}\left(|\mathfrak{I}^{\mathbf{T}}_{\mathfrak{t}_{\mathfrak{j}_{1}}}\mathfrak{I}^{\mathbf{X}}_{N^{1/6}}\bar{\mathfrak{I}}^{\mathbf{X}}_{\mathfrak{l}_{1}}({\mathsf{S}_{\e_{1}}(\tau_{y}\eta_{S})})|^{\frac32}\right)\right)^{\frac23}. \label{eq:BGI142}
\end{align}
We now apply Lemma \ref{lemma:le2} to obtain the following parallel of \eqref{eq:BGI132} in the proof of Lemma \ref{lemma:BGI13}; we make the same choices for inputs for Lemma \ref{lemma:le2}, except our choice for $\mathfrak{l}_{\mathrm{av}}$ is now equal to $N^{1/6}$ instead of 0. We basically establish \eqref{eq:BGI132} but with the additional $\mathfrak{I}^{\mathbf{X}}$ operator, which is present in \eqref{eq:BGI142}, and without any $\mathfrak{t}_{\mathfrak{j}+1}$-dependent prefactor, which is present in \eqref{eq:BGI132}, and for which the $\mathrm{Loc}$ term below is now defined with $\mathfrak{l}_{\mathrm{tot}}$ being that from the proof of Lemma \ref{lemma:BGI13} but times $N^{1/6}$, since $\mathfrak{l}_{\mathrm{tot}}$ takes into account the spatial-average-length-scale $\mathfrak{l}_{\mathrm{av}}=N^{1/6}$ coming from $\mathfrak{I}^{\mathbf{X}}_{N^{1/6}}$ in \eqref{eq:BGI142} (see Lemma \ref{lemma:le2} for $\mathrm{Loc}$ and $\mathfrak{l}_{\mathrm{tot}}$):
\begin{align}
N^{\frac34+2\e_{\mathrm{ap}}}\E\mathbf{I}_{1}\left(|\mathfrak{I}^{\mathbf{T}}_{\mathfrak{t}_{\mathfrak{j}_{1}}}\mathfrak{I}^{\mathbf{X}}_{N^{1/6}}\bar{\mathfrak{I}}^{\mathbf{X}}_{\mathfrak{l}_{1}}({\mathsf{S}_{\e_{1}}(\tau_{y}\eta_{S})})|^{\frac32}\right) \ &\lesssim \ N^{\frac34+2\e_{\mathrm{ap}}}{\E_{0}}\bar{\mathfrak{P}}_{1}\E_{\mathrm{Loc}}^{\mathrm{dyn}}|\mathfrak{I}^{\mathbf{T}}_{\mathfrak{t}_{\mathfrak{j}_{1}}}\mathfrak{I}^{\mathbf{X}}_{N^{1/6}}\bar{\mathfrak{I}}^{\mathbf{X}}_{\mathfrak{l}_{1}}(\mathsf{S}_{\e_{1}})|^{\frac32} + N^{-100}. \label{eq:BGI143}
\end{align}
The second term on the RHS of \eqref{eq:BGI143} is controlled by the RHS of the proposed estimate \eqref{eq:BGI14} after taking $2/3$-powers upon plugging its contribution into the RHS of \eqref{eq:BGI142}. We now apply Lemma \ref{lemma:le3} with the same choices as we made in the proof of Lemma \ref{lemma:BGI13}, which are explicitly declared prior to \eqref{eq:BGI133}, but $\mathfrak{l}_{\mathrm{av}}=N^{1/6}$. This bounds the first term on the RHS of \eqref{eq:BGI143} by the sum of a support term plus a supremum of canonical measure expectations of the $\E^{\mathrm{dyn}}$ term on the RHS of \eqref{eq:BGI143}. The first support term is estimated in the exact same fashion as \eqref{eq:BGI133}, except we do not have the helpful $\mathfrak{t}_{\mathfrak{j}+1}$-dependent factor, namely its $3/8$-power. However, this factor is actually not needed to prove the upper bound on the far RHS of \eqref{eq:BGI143}. Also, the support of $\E^{\mathrm{dyn}}$ is changed as $\mathfrak{l}_{\mathrm{tot}}$ has changed as noted before \eqref{eq:BGI143}, so our version of \eqref{eq:BGI133} must be adjusted via replacing $N^{\e_{1}}\mathfrak{l}_{1}$ therein by $\mathfrak{l}_{\mathrm{av}}N^{\e_{1}}\mathfrak{l}_{1}=N^{1/6}N^{\e_{1}}\mathfrak{l}_{1}$, though the upper bound in \eqref{eq:BGI133} still holds after this adjustment. Ultimately, the contribution of the support term/first term on the RHS of \eqref{eq:le3} for our choices of inputs in Lemma \ref{lemma:le3} is controlled by the RHS of the proposed estimate \eqref{eq:BGI14} after plugging into \eqref{eq:BGI142} and taking $2/3$-powers. We are left to bound canonical measure expectations; by Lemma \ref{lemma:le3} these are an analog of \eqref{eq:BGI133b}:
\begin{align}
\Phi \ \overset{\bullet}= \ {\sup}_{\sigma\in\R}N^{\frac34+2\e_{\mathrm{ap}}}\E^{\sigma}\E_{\mathrm{Loc}}^{\mathrm{dyn}}|\mathfrak{I}_{\mathfrak{t}_{\mathfrak{j}_{1}}}^{\mathbf{T}}\mathfrak{I}^{\mathbf{X}}_{N^{1/6}}\bar{\mathfrak{I}}_{\mathfrak{l}_{1}}^{\mathbf{X}}(\mathsf{S}_{\e_{1}})|^{\frac32}. \label{eq:BGI143b}
\end{align}
By following the first bullet point after \eqref{eq:BGI133b}, we can first remove the bar over $\bar{\mathfrak{I}}^{\mathbf{X}}$ in \eqref{eq:BGI143b}. Now we observe that the double $\mathfrak{I}^{\mathbf{X}}$ average on length-scales $N^{1/6}$ and $\mathfrak{l}_{1}$ is actually a single $\mathfrak{I}^{\mathbf{X}}$ average on the product of the length-scales $N^{1/6}\mathfrak{l}_{1}$. Thus,
\begin{align}
\Phi \ \lesssim \ {\sup}_{\sigma\in\R}N^{\frac34+\frac32\e_{\mathrm{ap}}}\E^{\sigma}\E_{\mathrm{Loc}}^{\mathrm{dyn}}|\mathfrak{I}_{\mathfrak{t}_{\mathfrak{j}_{1}}}^{\mathbf{T}}\mathfrak{I}^{\mathbf{X}}_{N^{1/6}\mathfrak{l}_{1}}(\mathsf{S}_{\e_{1}})|^{\frac32} + N^{-100} \ \overset{\bullet}= \ \Phi' + N^{-100}. \label{eq:BGI143c}
\end{align}
At this point, we will now directly follow the second bullet point containing the estimate \eqref{eq:BGI137} but now with a spatial-average length-scale equal to $N^{1/6}\mathfrak{l}_{1}$. Intuitively, in the estimate \eqref{eq:BGI138}, we lose the $\mathfrak{t}_{\mathfrak{j}+1}$-dependent factor, namely its square root, thus we gain the bad factor of $N^{5/9}$ because $\mathfrak{t}_{\mathfrak{j}_{1}}\geq N^{-10/9-\e_{\mathrm{ap}}}$ by Lemma \ref{lemma:BGI13}. On the other hand, the additional $N^{1/6}$ factor for the length-scale gives us an additional $N^{-1/6}$ factor in \eqref{eq:BGI138} because that estimate is ``inversely" linear in the length-scale:
\begin{align}
\E^{\sigma}\E_{\mathrm{Loc}}^{\mathrm{dyn}}|\mathfrak{I}_{\mathfrak{t}_{\mathfrak{j}_{1}}}^{\mathbf{T}}\mathfrak{I}^{\mathbf{X}}_{N^{1/6}\mathfrak{l}_{1}}(\mathsf{S}_{\e_{1}})|^{\frac32} \ \lesssim \ \left(N^{-2+6\e_{\mathrm{ap}}}\mathfrak{t}_{\mathfrak{j}_{1}}^{-1}N^{-\frac16}\mathfrak{l}_{1}^{-1}N^{\frac{2}{14}}+N^{-100}\right)^{3/4} \ \lesssim \ N^{-\frac34-\frac{1}{999}+8\e_{\mathrm{ap}}}.\label{eq:BGI144}
\end{align}
The last estimate in \eqref{eq:BGI144} follows by power-counting; recall $\mathfrak{l}_{1}=N^{1/6}$ in Lemma \ref{lemma:BGI11} and the $\mathfrak{t}_{\mathfrak{j}_{1}}$-lower bound prior to \eqref{eq:BGI144}.
\end{itemize}
%%%
We have estimated the first two terms in \eqref{eq:BGI141} by the above two bullet points, completing the proposed estimate for the first term on the LHS of \eqref{eq:BGI14}. It remains to estimate the second term on the LHS of \eqref{eq:BGI14}. For this, we will follow the analysis in the proof of Lemma \ref{lemma:BGI13} for the second term on the LHS of \eqref{eq:BGI13}. In particular, we lose an additional factor $\mathfrak{t}^{1/4}$, which at best provides a factor of $N^{-1/4}$, because $\mathfrak{t}\leq N^{-1}$ for this lemma. Let us now observe in the final bullet point in the proof of Lemma \ref{lemma:BGI13}, until the application of Lemma \ref{lemma:ee2} mentioned therein, the benefit we gain, over the explicitly written upper bounds in the proof of Lemma \ref{lemma:BGI13}, from having the smaller $N^{6/25}$ prefactor, as opposed to $N^{1/2}$, is a factor of $N^{-13/50}$. This certainly beats out the $N^{1/4}$ we have gained from forgetting $\mathfrak{t}^{1/4}$. As for the application of Lemma \ref{lemma:ee2} mentioned in the final bullet point in the proof of Lemma \ref{lemma:BGI13}, observe that we only use the bound $\mathfrak{t}^{-1/2}\leq N^{-1}$, whereas for the current lemma we have $\mathfrak{t}^{-1/2}\leq N^{-1/2+\e_{\mathrm{ap}}}$. Therefore, the $N^{1/4}$ we must include from forgetting $\mathfrak{t}^{1/4}$ that we noted before is compensated for by the additional $N^{-1/2+\e_{\mathrm{ap}}}$ factor we gain from the improved bound $\mathfrak{t}^{-1/2}\leq N^{-1} \to \mathfrak{t}^{-1/2}\leq N^{-1/2+\e_{\mathrm{ap}}}$. We conclude that the analysis for $N^{6/25}\mathfrak{e}$ near the end of the proof of Lemma \ref{lemma:BGI13} estimates the second term on the LHS of the proposed bound \eqref{eq:BGI14} by the RHS of \eqref{eq:BGI14}. We have now estimated both terms on the LHS of the proposed estimate \eqref{eq:BGI14}, so we are done.
\end{proof}
%%%
%
%
%
%%%
\section{Boltzmann-Gibbs Principle I -- Proof of Proposition \ref{prop:BGI2}, Case I}\label{section:proofBGI21}
%%%
The organization of this section is similar to that of Section \ref{section:proofBGI}. We will present the main ingredients that we need in the proof of Proposition \ref{prop:BGI2} in what we will define shortly as ``Case I" and then deduce ``Case I" from these ingredients. We then provide the proof of each of these ingredients that, similar to Section \ref{section:proofBGI}, consist of a replacement-by-spatial-average, a large-deviations-type cutoff for this spatial average, replacement-by-time-average via multiscale analysis, and a ``final estimate" for the time-average. We decompose the proof of Proposition \ref{prop:BGI2} into two cases, the first of which of interest here is the case where the index $\mathfrak{b}\in\Z_{\geq0}$ in the supremum on the LHS of \eqref{eq:BGI2Prop} is chosen so that $\e_{1}+\mathfrak{b}\e_{\mathrm{RN},1}\leq1/4$. In this case, our strategy follows basically that for the proof of Proposition \ref{prop:BGI1}, but it is technically easier since the $\mathsf{R}_{\delta}$ term we study in Proposition \ref{prop:BGI2} admits a priori estimates:
%%%
\begin{lemma}\label{lemma:BGI2I1}
\fsp Consider $\delta\geq0$ with $\delta+\e_{\mathrm{RN},1}\leq\frac12+\e_{\mathrm{RN}}$. We have the following estimate for which we recall the notation \emph{\eqref{eq:defBGI24}} and in which $\mathsf{R}^{\mathrm{cut}}$ is explained afterwards:
\begin{align}
|{\mathsf{R}_{\delta}(\tau_{y}\eta_{S})}\mathbf{Y}^{N}_{S,y} - {\mathsf{R}_{\delta}^{\mathrm{cut}}(\tau_{y}\eta_{S})}\mathbf{Y}^{N}_{S,y}| \ \lesssim \ N^{-100}.
\end{align}
We define ${\mathsf{R}_{\delta}^{\mathrm{cut}}(\tau_{y}\eta_{S})=\tau_{y}\mathsf{R}^{\mathrm{cut}}_{\delta}(\eta_{S})}$, where $\mathsf{R}_{\delta}^{\mathrm{cut}}(\eta)$ has support contained in that of $\mathsf{R}_{\delta}(\eta)$ in \emph{\eqref{eq:defBGI24}}. Moreover:
%%%
\begin{itemize}
\item We have the deterministic bound $|\mathsf{R}_{\delta}^{\mathrm{cut}}(\eta)|\lesssim N^{10\e_{\mathrm{ap}}}N^{-\delta}$, where $10$ is just a large constant to be treated loosely.
\item The term $\mathsf{R}_{\delta}^{\mathrm{cut}}(\eta)$ vanishes in expectation with respect to any canonical measure on its support; see \emph{Definition \ref{definition:ensembles}}.
\end{itemize}
%%%
\end{lemma}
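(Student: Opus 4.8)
\emph{Construction.} The plan is to obtain $\mathsf{R}^{\mathrm{cut}}_\delta$ from $\mathsf{R}_\delta$ by inserting a large‑deviations cutoff on the relevant empirical densities and then re‑centering so as to restore the canonical mean‑zero property. Let $\mathbb{B}$ be the support of $\mathsf{R}_\delta$ from \eqref{eq:defBGI24}, which by inspection is the length‑$N^{\delta+\e_{\mathrm{RN},1}}$ window entering $\mathsf{E}^{\mathrm{can}}_{\delta+\e_{\mathrm{RN},1}}$, and write $\sigma_{\delta'}$ for $\delta'\in\{\delta,\,\delta+\e_{\mathrm{RN},1}\}$ for the empirical $\eta$-density on the length‑$N^{\delta'}$ window, i.e.\ the $\mathsf{A}^{\mathbf{X}}$ average of Definition \ref{definition:BGI2}. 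Set
\begin{align*}
\chi(\eta) \ &= \ \mathbf{1}\big(|\sigma_\delta(\eta)|\le N^{2\e_{\mathrm{ap}}}N^{-\delta/2}\big)\cdot\mathbf{1}\big(|\sigma_{\delta+\e_{\mathrm{RN},1}}(\eta)|\le N^{2\e_{\mathrm{ap}}}N^{-(\delta+\e_{\mathrm{RN},1})/2}\big),\\
\mathsf{R}^{\mathrm{cut}}_\delta(\eta) \ &= \ \mathsf{R}_\delta(\eta)\chi(\eta) \ - \ \E_0\big(\mathsf{R}_\delta\chi\,\big|\,\sigma_{\delta+\e_{\mathrm{RN},1}}\big),
\end{align*}
where the conditional expectation is under product Bernoulli $\E_0$ given the density on all of $\mathbb{B}$. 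Since each factor depends only on $\eta$-variables in $\mathbb{B}$, the support of $\mathsf{R}^{\mathrm{cut}}_\delta$ is contained in $\mathbb{B}=\mathrm{supp}(\mathsf{R}_\delta)$, so the containment claim holds.

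\emph{Canonical mean zero.} The first key observation is that $\mathsf{R}_\delta$ itself already integrates to zero against every canonical measure on $\mathbb{B}$. Indeed $\mathsf{E}^{\mathrm{can}}_{\delta'}$ is the deterministic function $\E_0(\bar{\mathfrak{q}}\mid\sigma_{\delta'})$, and since under $\E_0$ the configuration on the $\delta'$-window is, given $\sigma_{\delta'}$, canonical on that window and independent of the rest of $\mathbb{B}$, we get $\E_0(\bar{\mathfrak{q}}\mid\sigma_{\delta+\e_{\mathrm{RN},1}})=\E_0\big(\E_0(\bar{\mathfrak{q}}\mid\sigma_{\delta'})\mid\sigma_{\delta+\e_{\mathrm{RN},1}}\big)=\E_0(\mathsf{E}^{\mathrm{can}}_{\delta'}\mid\sigma_{\delta+\e_{\mathrm{RN},1}})$, hence $\E^{\mu^{\mathrm{can}}_{\sigma,\mathbb{B}}}(\mathsf{E}^{\mathrm{can}}_{\delta'})=\E^{\mu^{\mathrm{can}}_{\sigma,\mathbb{B}}}(\bar{\mathfrak{q}})$ for both scales; subtracting gives $\E^{\mu^{\mathrm{can}}_{\sigma,\mathbb{B}}}(\mathsf{R}_\delta)=0$. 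Because under $\mu^{\mathrm{can}}_{\sigma,\mathbb{B}}$ the density $\sigma_{\delta+\e_{\mathrm{RN},1}}$ equals $\sigma$ identically and $\E_0(\cdot\mid\sigma_{\delta+\e_{\mathrm{RN},1}}=\sigma)=\E^{\mu^{\mathrm{can}}_{\sigma,\mathbb{B}}}$, the two terms defining $\mathsf{R}^{\mathrm{cut}}_\delta$ have equal $\mu^{\mathrm{can}}_{\sigma,\mathbb{B}}$-expectation, so $\E^{\mu^{\mathrm{can}}_{\sigma,\mathbb{B}}}(\mathsf{R}^{\mathrm{cut}}_\delta)=0$ for every $\sigma$.

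\emph{Deterministic bound.} On $\{\chi=1\}$, combine the equivalence of ensembles $|\mathsf{E}^{\mathrm{can}}_{\delta'}-\mathsf{E}^{\mathrm{gc}}_{\delta'}|\lesssim N^{-\delta'}$ (Proposition 8 of \cite{GJ15}, exactly as in the proof of Proposition \ref{prop:BGI3}) with the second‑order Taylor expansion of $\mathsf{E}^{\mathrm{gc}}_{\delta'}=\E_{\sigma_{\delta'}}\bar{\mathfrak{q}}_{0,0}$ about $\sigma=0$, whose constant and linear coefficients vanish by the definitions of $\bar{\mathfrak{q}}$ and $\bar{\mathfrak{d}}$ in Definition \ref{definition:mSHE+1} (again as in the proof of Proposition \ref{prop:BGI3}): this yields $|\mathsf{E}^{\mathrm{can}}_{\delta'}|\lesssim N^{-\delta'}+\sigma_{\delta'}^2\lesssim N^{4\e_{\mathrm{ap}}}N^{-\delta}$ for both $\delta'$, hence $|\mathsf{R}_\delta\chi|\lesssim N^{4\e_{\mathrm{ap}}}N^{-\delta}$ deterministically; the re‑centering term, being a conditional expectation of $\mathsf{R}_\delta\chi$, inherits the same bound, so $|\mathsf{R}^{\mathrm{cut}}_\delta|\lesssim N^{4\e_{\mathrm{ap}}}N^{-\delta}\le N^{10\e_{\mathrm{ap}}}N^{-\delta}$.

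\emph{Approximation, and the main obstacle.} When $\mathbf{Y}^N=0$ the difference $\mathsf{R}_\delta\mathbf{Y}^N-\mathsf{R}^{\mathrm{cut}}_\delta\mathbf{Y}^N$ is trivially zero; otherwise $\mathbf{Y}^N=\mathbf{Z}^N$ and we are before $\mathfrak{t}_{\mathrm{st}}$, where the spatial regularity built into $\mathfrak{t}_{\mathrm{st}}$ — available because $\delta+\e_{\mathrm{RN},1}\le\tfrac12+\e_{\mathrm{RN}}$ forces both windows to have length at most $\mathfrak{l}_N$, via the identity \eqref{eq:BGI311} from Lemma \ref{lemma:BGI31} — gives $|\sigma_{\delta'}|\lesssim N^{\e_{\mathrm{ap}}}N^{-\delta'/2}$, so with the threshold $N^{2\e_{\mathrm{ap}}}$ chosen to absorb the implied constants one has $\chi\equiv1$ on $\{\mathbf{Y}^N\ne0\}$. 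There the difference equals $-\E_0(\mathsf{R}_\delta(1-\chi)\mid\sigma_{\delta+\e_{\mathrm{RN},1}})\,\mathbf{Z}^N$, using $\E_0(\mathsf{R}_\delta\mid\sigma_{\delta+\e_{\mathrm{RN},1}})=0$ from the second paragraph; this is bounded by $N^{\e_{\mathrm{ap}}}$ times $\sup|\mathsf{R}_\delta|\lesssim1$ times the $\mu^{\mathrm{can}}_{\sigma,\mathbb{B}}$-probability of $\{1-\chi=1\}$ with $\sigma=\sigma_{\delta+\e_{\mathrm{RN},1}}$ already inside its own threshold, which on $\mathbb{B}$ reduces to $\{|\sigma_\delta|>N^{2\e_{\mathrm{ap}}}N^{-\delta/2}\}$ — a deviation of the length‑$N^\delta$ sub‑window average from its canonical mean $\sigma$ (which is much smaller than $N^{2\e_{\mathrm{ap}}}N^{-\delta/2}$) by at least order $N^{2\e_{\mathrm{ap}}}N^{-\delta/2}$. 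Sub‑Gaussian concentration of such sub‑window averages under a canonical measure — the same large‑deviations input as in Lemma \ref{lemma:ee}, applied to the centered variables $\eta_x-\sigma$ (sampling‑without‑replacement concentration) — bounds this probability by a quantity exponentially small in $N^{4\e_{\mathrm{ap}}}$, hence by $O(N^{-100})$. The only genuinely delicate point is this simultaneous constant‑matching in the cutoff: the $\sigma_\delta$-threshold must dominate the a priori bound from $\mathfrak{t}_{\mathrm{st}}$ (so that $\chi\equiv1$ before $\mathfrak{t}_{\mathrm{st}}$) while sitting strictly above both the canonical fluctuation scale $N^{-\delta/2}$ and the $\sigma_{\delta+\e_{\mathrm{RN},1}}$-threshold (so that the re‑centering tail is super‑polynomially small); the uniform positivity of $\e_{\mathrm{ap}}$ makes the choice $N^{2\e_{\mathrm{ap}}}$ comfortably work, but the bookkeeping should be spelled out carefully.
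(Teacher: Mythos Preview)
Your argument is correct and uses the same probabilistic ingredients as the paper (the a~priori density bound from $\mathfrak{t}_{\mathrm{st}}$ via Lemma~\ref{lemma:BGI31}, equivalence of ensembles plus second‑order Taylor for the size estimate, and sub‑Gaussian concentration of a sub‑window density under a canonical measure), but the construction of $\mathsf{R}^{\mathrm{cut}}_\delta$ is genuinely different. The paper first rewrites, via the tower property,
\[
\mathsf{R}_\delta \;=\; \mathsf{E}^{\mathrm{can}}_\delta \;-\; \mathsf{E}^{\mathrm{can}}_{\delta+\e_{\mathrm{RN},1}}\bigl(\mathsf{E}^{\mathrm{can}}_{\delta,\sigma(\delta)}(\bar{\mathfrak{q}})\bigr),
\]
i.e.\ as $g(\sigma_\delta)-\mathsf{E}^{\mathrm{can}}_{\delta+\e_{\mathrm{RN},1}}(g(\sigma(\delta)))$ for a single real function $g$, and then defines $\mathsf{R}^{\mathrm{cut}}_\delta$ by replacing $g$ with its hard cutoff $\mathsf{C}_{N^{10\e_{\mathrm{ap}}}N^{-\delta}}(g)$ in \emph{both} occurrences. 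This yields a functional of the form $\mathfrak{f}-\mathsf{E}^{\mathrm{can}}_{\delta+\e_{\mathrm{RN},1}}(\mathfrak{f})$, from which the mean‑zero property and the deterministic bound are immediate; the error analysis then amounts (as in your last paragraph) to the canonical‑measure tail of $\{|\sigma(\delta)|\ \mathrm{large}\}$. Your route---multiply $\mathsf{R}_\delta$ by a density indicator $\chi$ and subtract $\E_0(\mathsf{R}_\delta\chi\mid\sigma_{\delta+\e_{\mathrm{RN},1}})$---is more direct and avoids the tower rewrite, at the cost of needing the separate observation $\E_0(\mathsf{R}_\delta\mid\sigma_{\delta+\e_{\mathrm{RN},1}})=0$. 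Both approaches localize the approximation error to the same concentration estimate.

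One small point of bookkeeping you already flagged: the a~priori bound coming out of $\mathfrak{t}_{\mathrm{st}}$ (trace the proof of Lemma~\ref{lemma:BGI31}) gives $|\sigma_{\delta'}|\lesssim N^{4\e_{\mathrm{ap}}}N^{-\delta'/2}$ rather than $N^{\e_{\mathrm{ap}}}N^{-\delta'/2}$, so your threshold $N^{2\e_{\mathrm{ap}}}$ is not quite large enough to force $\chi\equiv1$ on $\{\mathbf{Y}^{N}\neq0\}$; taking, say, $N^{5\e_{\mathrm{ap}}}$ (the paper uses $N^{10\e_{\mathrm{ap}}}$ for the analogous cutoff) fixes this without affecting any other step, and the final bound $N^{10\e_{\mathrm{ap}}}N^{-\delta}$ still holds.
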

%%%
It will be convenient for us to introduce the following notation that distinguishes the current ``Case I", namely restricting to $\mathfrak{b}\leq\mathfrak{b}_{\mathrm{mid}}$ in the supremum on the LHS of \eqref{eq:BGI2Prop}. We also introduce notation for the $\mathsf{R}^{\mathrm{cut}}$ functionals relevant to Proposition \ref{prop:BGI2}.
%%%
\begin{definition}\label{definition:BGI2I2}
\fsp Let us define $\mathfrak{b}_{\mathrm{mid}}\in\Z_{\geq0}$ as the largest non-negative integer for which $\e_{1}+\mathfrak{b}_{\mathrm{mid}}\e_{\mathrm{RN},1}\leq1/4$. In particular, observe that $\mathfrak{b}_{\mathrm{mid}}\leq\mathfrak{b}_{+}$, where $\mathfrak{b}_{+}$ is defined in Proposition \ref{prop:BGI2}. We additionally define $\mathsf{R}^{\mathfrak{b}}_{S,y}={\mathsf{R}_{\e_{1}+\mathfrak{b}\e_{\mathrm{RN},1}}^{\mathrm{cut}}(\tau_{y}\eta_{S})}$ that satisfies:
%%%
\begin{itemize}
\item The support length of $\mathsf{R}^{\mathfrak{b}}$ is order $N^{\e_{1}+\mathfrak{b}\e_{\mathrm{RN},1}+\e_{\mathrm{RN},1}}$ as it has the same support as $\mathsf{R}_{\e_{1}+\mathfrak{b}\e_{\mathrm{RN},1}}$ in \eqref{eq:defBGI24}.
\item The functional $\mathsf{R}^{\mathfrak{b}}$ satisfies the \emph{deterministic} estimate $|\mathsf{R}^{\mathfrak{b}}|\lesssim N^{10\e_{\mathrm{ap}}}N^{-\e_{1}-\mathfrak{b}\e_{\mathrm{RN},1}}$ by construction; see Lemma \ref{lemma:BGI2I1}.
\item Lastly, to ease notation, we will define and sometimes use $N^{\beta+\e_{\mathrm{RN},1}}\mathfrak{l}_{\beta,\mathfrak{b}}=N^{\e_{1}+\mathfrak{b}\e_{\mathrm{RN},1}+\e_{\mathrm{RN},1}}$ where $\beta=999^{-99}$.
\end{itemize}
%%%
We clarify the length-scale $\mathfrak{l}_{\beta,\mathfrak{b}}$ as basically the length of the support of $\mathsf{R}^{\mathfrak{b}-1}$, and thus basically of $\mathsf{R}^{\mathfrak{b}}$ up to ultimately negligible factors of $N^{\e_{\mathrm{RN},1}}$, but a factor of $N^{-\beta}$ smaller. Actually it will not be important to be so careful about $N^{\e_{\mathrm{ap}}}$ and $N^{\e_{\mathrm{RN},1}}$ factors, as $\beta$ is much larger than $\e_{\mathrm{ap}}$ and $\e_{\mathrm{RN},1}$ of Definition \ref{definition:KPZ1}, so $N^{-\beta}$ factors will beat all relevant powers of $N^{\e_{\mathrm{ap}}}$ and $N^{\e_{\mathrm{RN},1}}$.
\end{definition}
%%%
Outside a priori estimates for $\mathsf{R}^{\mathfrak{b}}$-terms in Lemma \ref{lemma:BGI2I1} and Definition \ref{definition:BGI2I2} that we did not have for the $\mathsf{S}$-terms in the proof of Proposition \ref{prop:BGI1}, we emphasize the proof of Proposition \ref{prop:BGI2} basically follows that of Proposition \ref{prop:BGI1} except for a few technical differences whose impact on the proof can be readily checked. In particular, many estimates have the same flavor with only minor differences in power-counting that ultimately amount to elementary arithmetic.
%%%
\subsubsection{Spatial Average}
%%%
The following result replaces $\mathsf{R}^{\mathfrak{b}}$ by spatial-averages on length-scales $\mathfrak{l}_{\beta,\mathfrak{b}}$ and provides an analog of Lemma \ref{lemma:BGI11} but for $\mathsf{R}^{\mathfrak{b}}$ instead of $\mathsf{S}$. We first emphasize a difference between the following result and Lemma \ref{lemma:BGI11}. In Lemma \ref{lemma:BGI11}, replacing $\mathsf{S}$ with a spatial average forces us to analyze explicitly the leading-order error term $N^{6/25}\mathfrak{e}$. For the following result, we instead employ the a priori estimate for $\mathsf{R}^{\mathfrak{b}}$ in Lemma \ref{lemma:BGI2I1} to avoid this issue. In particular, Lemma \ref{lemma:xr} is enough.
%%%
\begin{lemma}\label{lemma:BGI2I3}
\fsp Define the length-scale $\mathfrak{l}_{\beta,\mathfrak{b}}=N^{\e_{1}+\mathfrak{b}\e_{\mathrm{RN},1}-\beta}$, where $\beta=999^{-99}$. We have the following uniformly in $\mathfrak{b}\leq\mathfrak{b}_{\mathrm{mid}}$ for which we recall the notation for differences of spatial averages on different length-scales in \emph{Definition \ref{definition:BGI10}}:
\begin{align}
\E\|\mathbf{H}^{N}(N^{1/2}\mathfrak{D}_{0,\mathfrak{l}_{\beta,\mathfrak{b}}}^{\mathbf{X}}(\mathsf{R}^{\mathfrak{b}}_{S,y})\mathbf{Y}^{N}_{S,y})\|_{1;\mathbb{T}_{N}} \ \lesssim \ N^{-\frac12\beta+15\e_{\mathrm{ap}}+\e_{\mathrm{RN},1}}. \label{eq:BGI2I3}
\end{align}
\end{lemma}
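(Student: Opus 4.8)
The plan is to follow the architecture of the proof of Lemma \ref{lemma:BGI11}, but replace the explicit-gradient-of-$\mathbf{Y}^{N}$ bookkeeping there with a direct appeal to the spatial replacement estimate of Lemma \ref{lemma:xr}, using the a priori bound on $\mathsf{R}^{\mathfrak{b}}$ recorded in Lemma \ref{lemma:BGI2I1} and Definition \ref{definition:BGI2I2}. First I would note that $\mathsf{R}^{\mathfrak{b}}_{S,y} = \tau_{y}\mathsf{R}^{\mathrm{cut}}_{\e_{1}+\mathfrak{b}\e_{\mathrm{RN},1}}(\eta_{S})$ is a local functional whose support length $\mathfrak{l}_{\mathsf{R}^{\mathfrak{b}}}$ is of order $N^{\e_{1}+\mathfrak{b}\e_{\mathrm{RN},1}+\e_{\mathrm{RN},1}}=N^{\beta+\e_{\mathrm{RN},1}}\mathfrak{l}_{\beta,\mathfrak{b}}$, and that the relevant length-scale product $|\mathfrak{l}|\,\mathfrak{l}_{\mathsf{R}^{\mathfrak{b}}}$ appearing in $\mathfrak{D}^{\mathbf{X}}_{0,\mathfrak{l}_{\beta,\mathfrak{b}}}$ is at most $\mathfrak{l}_{\beta,\mathfrak{b}}\cdot N^{\beta+\e_{\mathrm{RN},1}}\mathfrak{l}_{\beta,\mathfrak{b}}\lesssim N^{2(\e_{1}+\mathfrak{b}\e_{\mathrm{RN},1})+\e_{\mathrm{RN},1}-\beta}$. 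Since $\mathfrak{b}\leq\mathfrak{b}_{\mathrm{mid}}$ gives $\e_{1}+\mathfrak{b}\e_{\mathrm{RN},1}\leq1/4$, this product is $\lesssim N^{1/2+\e_{\mathrm{RN},1}-\beta}\leq \mathfrak{l}_{N}$ for $\e_{\mathrm{RN},1}$ small relative to $\beta$, so Lemma \ref{lemma:xr} is applicable with $\mathfrak{f}=N^{1/2}\mathsf{R}^{\mathfrak{b}}$ (or rather we apply it directly to $\mathsf{R}^{\mathfrak{b}}$ and carry the $N^{1/2}$ prefactor along).

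Next I would apply Lemma \ref{lemma:xr} with this $\mathfrak{f}$ and length-scale $\mathfrak{l}=\mathfrak{l}_{\beta,\mathfrak{b}}$, writing $\bar{\mathfrak{l}}=\mathfrak{l}_{\beta,\mathfrak{b}}\,\mathfrak{l}_{\mathsf{R}^{\mathfrak{b}}}$. The first term on the right side of \eqref{eq:xr} becomes $N^{-1/2+\e_{\mathrm{RN}}+\e_{\mathrm{ap}}}\|N^{1/2}\mathsf{R}^{\mathfrak{b}}\|_{\omega;\infty}$, and using $\|\mathsf{R}^{\mathfrak{b}}\|_{\omega;\infty}\lesssim N^{10\e_{\mathrm{ap}}}N^{-\e_{1}-\mathfrak{b}\e_{\mathrm{RN},1}}\leq N^{10\e_{\mathrm{ap}}}N^{-\e_{1}}$, this is already $\lesssim N^{\e_{\mathrm{RN}}+11\e_{\mathrm{ap}}-\e_{1}}$, which is much smaller than the right side of \eqref{eq:BGI2I3} since $\e_{1}=1/14$ is a fixed positive constant much larger than $\beta/2$'s companion errors — actually, to match the stated bound cleanly, I would observe that $N^{1/2}\|\mathsf{R}^{\mathfrak{b}}\|_{\omega;\infty}\lesssim N^{1/2-\e_{1}+10\e_{\mathrm{ap}}}$ and the first \eqref{eq:xr} term is $\lesssim N^{-\e_{1}+\e_{\mathrm{RN}}+11\e_{\mathrm{ap}}}$, comfortably inside $N^{-\beta/2+15\e_{\mathrm{ap}}+\e_{\mathrm{RN},1}}$ for $\beta$ small enough relative to $\e_{1}$. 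The second term on the right of \eqref{eq:xr} is $N^{-1/2+5\e_{\mathrm{ap}}}\bar{\mathfrak{l}}^{1/2}\,\E\|\mathbf{H}^{N}(|N^{1/2}\mathsf{R}^{\mathfrak{b}}|)\|_{1;\mathbb{T}_{N}}$; here I would bound $\|\mathbf{H}^{N}(|N^{1/2}\mathsf{R}^{\mathfrak{b}}|)\|_{1;\mathbb{T}_{N}}$ crudely by $N^{1/2}\|\mathsf{R}^{\mathfrak{b}}\|_{\omega;\infty}\|\mathbf{Y}^{N}\|_{1;\mathbb{T}_{N}}$-type reasoning — or more precisely by the contractivity of $\mathbf{H}^{N,\mathbf{X}}$ and the $N^{\e_{\mathrm{ap}}}$ bound on $\mathbf{Y}^{N}$ — to get $\lesssim N^{1/2+\e_{\mathrm{ap}}}\cdot N^{10\e_{\mathrm{ap}}}N^{-\e_{1}-\mathfrak{b}\e_{\mathrm{RN},1}}$, and then with $\bar{\mathfrak{l}}^{1/2}\lesssim N^{(\e_{1}+\mathfrak{b}\e_{\mathrm{RN},1})+\e_{\mathrm{RN},1}/2-\beta/2}$, the total is $\lesssim N^{-\beta/2+\e_{\mathrm{RN},1}/2+16\e_{\mathrm{ap}}}$, which is the claimed bound up to harmless constants.

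The one point requiring a little care — and the place I expect to need the a priori structure most — is that Lemma \ref{lemma:xr} as stated requires $|\mathfrak{l}|\mathfrak{l}_{\mathfrak{f}}\leq\mathfrak{l}_{N}$ so that $\mathbf{Y}^{N}$ has usable spatial regularity on the relevant scales; as noted in Remark \ref{remark:xr}, a mild softening to $|\mathfrak{l}|\mathfrak{l}_{\mathfrak{f}}\approx\mathfrak{l}_{N}N^{\gamma}$ is permissible, and since here $|\mathfrak{l}|\mathfrak{l}_{\mathfrak{f}}\lesssim N^{1/2+\e_{\mathrm{RN},1}-\beta}\ll\mathfrak{l}_{N}=N^{1/2+\e_{\mathrm{RN}}}$ we are comfortably within range (recall $\e_{\mathrm{RN},1}\leq 999^{-999}\e_{\mathrm{RN}}$ and $\beta=999^{-99}\gg\e_{\mathrm{RN},1}$). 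The remaining work is purely the bookkeeping of these three exponent estimates and taking a supremum over $\mathfrak{b}\leq\mathfrak{b}_{\mathrm{mid}}$, which is legitimate because every bound above is uniform in $\mathfrak{b}$ (the worst case being $\mathfrak{b}=0$, where $\e_{1}+\mathfrak{b}\e_{\mathrm{RN},1}$ is smallest, and all the relevant estimates only improve as $\mathfrak{b}$ grows since $\|\mathsf{R}^{\mathfrak{b}}\|_{\omega;\infty}$ decreases). So the main obstacle is not conceptual but rather ensuring the exponent arithmetic closes with the stated $-\beta/2$ — in particular confirming that the first \eqref{eq:xr} term's $-\e_{1}$ gain is not needed (it over-delivers) and that the second term's competition between $\bar{\mathfrak{l}}^{1/2}\approx N^{(\e_{1}+\mathfrak{b}\e_{\mathrm{RN},1})/2}$ and the $N^{-\e_{1}-\mathfrak{b}\e_{\mathrm{RN},1}}$ from $\|\mathsf{R}^{\mathfrak{b}}\|_{\omega;\infty}$ leaves a net negative power, which it does because the $N^{-1/2}$ heat-kernel gradient factor plus the half-power of the length-scale ratio $N^{-\beta/2}$ wins.
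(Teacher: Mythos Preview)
Your proposal is correct and follows essentially the same approach as the paper: apply Lemma~\ref{lemma:xr} with $\mathfrak{f}=N^{1/2}\mathsf{R}^{\mathfrak{b}}$, $\mathfrak{t}=0$, $\mathfrak{l}=\mathfrak{l}_{\beta,\mathfrak{b}}$, verify the constraint $\bar{\mathfrak{l}}\leq\mathfrak{l}_{N}$ using $\mathfrak{b}\leq\mathfrak{b}_{\mathrm{mid}}$, then bound both terms on the right of \eqref{eq:xr} via the a priori estimate $\|\mathsf{R}^{\mathfrak{b}}\|_{\omega;\infty}\lesssim N^{10\e_{\mathrm{ap}}-\e_{1}-\mathfrak{b}\e_{\mathrm{RN},1}}$ and the $\mathbf{H}^{N}$ contractivity $\E\|\mathbf{H}^{N}(|\mathsf{R}^{\mathfrak{b}}|)\|_{1;\mathbb{T}_{N}}\leq\|\mathsf{R}^{\mathfrak{b}}\|_{\omega;\infty}$. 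One small slip: the second term of \eqref{eq:xr} has no $\mathbf{Y}^{N}$ factor, so your extra $N^{\e_{\mathrm{ap}}}$ there is unnecessary, but this is harmless for the final exponent.
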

%%%
Lemma \ref{lemma:BGI2I3} lets us replace $\mathsf{R}^{\mathfrak{b}}$ with its spatial average on length-scale $\mathfrak{l}_{\beta,\mathfrak{b}}$, which for clarity we recall is basically the support length of $\mathsf{R}^{\mathfrak{b}-1}$ times $N^{-\beta}$ for $\beta=999^{-999}$. Analogous to Lemma \ref{lemma:BGI12}, we now replace this spatial average by a cutoff that holds at a large deviations scale with respect to any canonical measure and therefore for general measures after space-time averaging courtesy of the local equilibrium reduction in Lemma \ref{lemma:le3}. We first provide a technical comment -- the local equilibrium reduction in Lemma \ref{lemma:le3} deteriorates as the support of the functional, in this case the length-$\mathfrak{l}_{\beta,\mathfrak{b}}$ average of $\mathsf{R}^{\mathfrak{b}}$, increases, thus as $\mathfrak{b}$ increases. However, it also improves as $\mathfrak{b}$ increases because, according to Lemma \ref{lemma:BGI2I1}, a priori estimates for $\mathsf{R}^{\mathfrak{b}}$ also improve as $\mathfrak{b}$ increases; this is ultimately enough to counter the aforementioned deterioration. We apply this observation throughout this section. Otherwise, the proof of the following ``cutoff replacement" follows the general strategy for that of Lemma \ref{lemma:BGI12}.
%%%
\begin{lemma}\label{lemma:BGI2I4}
\fsp Recall the operator $\wt{\mathfrak{I}}^{\mathbf{X}}=\mathfrak{I}^{\mathbf{X}}-\bar{\mathfrak{I}}^{\mathbf{X}}$ from \emph{Lemma \ref{lemma:BGI12}}. We have the following estimate uniformly in $\mathfrak{b}\leq\mathfrak{b}_{\mathrm{mid}}$, in which we recall the length-scale $\mathfrak{l}_{\beta,\mathfrak{b}}$ and $\beta=999^{-99}$ that were both used in the statement of \emph{Lemma \ref{lemma:BGI2I3}}:
\begin{align}
\E\|\mathbf{H}^{N}\left(N^{\frac12}|\wt{\mathfrak{I}}^{\mathbf{X}}_{\mathfrak{l}_{\beta,\mathfrak{b}}}(\mathsf{R}_{S,y}^{\mathfrak{b}})|\mathbf{Y}_{S,y}^{N}\right)\|_{1;\mathbb{T}_{N}} \ \lesssim \ N^{-\frac12+15\e_{\mathrm{ap}}+2\beta} \ \lesssim N^{-\frac13}. \label{eq:BGI2I4}
\end{align}
\end{lemma}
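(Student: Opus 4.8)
The plan is to follow the same architecture used to prove Lemma \ref{lemma:BGI12}, adapting it to $\mathsf{R}^{\mathfrak{b}}$ in place of $\mathsf{S}_{\e_1}$. First I would apply Lemma \ref{lemma:hoe2} with $\phi_{S,y}=|\wt{\mathfrak{I}}^{\mathbf{X}}_{\mathfrak{l}_{\beta,\mathfrak{b}}}(\mathsf{R}^{\mathfrak{b}}_{S,y})|$ and a tiny parameter $\gamma=999^{-999}\e_{\mathrm{ap}}$, reducing the left side of \eqref{eq:BGI2I4} to a power of $N$ times $(\E\mathbf{I}_1(|\wt{\mathfrak{I}}^{\mathbf{X}}_{\mathfrak{l}_{\beta,\mathfrak{b}}}(\mathsf{R}^{\mathfrak{b}})|^{3/2}))^{2/3}$, after pulling the $N^{1/2}$ prefactor and a Holder inequality in the $\E$-expectation through as in \eqref{eq:BGI121}. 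Then I would use Lemma \ref{lemma:le2} with $\mathfrak{t}_{\mathrm{av}}=\mathfrak{l}_{\mathrm{av}}=0$ and $\mathfrak{f}_{S,y}=\wt{\mathfrak{I}}^{\mathbf{X}}_{\mathfrak{l}_{\beta,\mathfrak{b}}}(\mathsf{R}^{\mathfrak{b}}_{S,y})$ — whose support has length of order $N^{\e_1+\mathfrak{b}\e_{\mathrm{RN},1}}$ and which is uniformly bounded by $N^{10\e_{\mathrm{ap}}}$ by Lemma \ref{lemma:BGI2I1} — to transfer the space average to the density $\bar{\mathfrak{P}}_1$, picking up a harmless $N^{-\kappa}\|\mathfrak{f}\|_{\omega;\infty}$ error as in \eqref{eq:BGI122}.

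Next I would invoke Lemma \ref{lemma:le3} with $\mathfrak{h}=|\wt{\mathfrak{I}}^{\mathbf{X}}_{\mathfrak{l}_{\beta,\mathfrak{b}}}(\mathsf{R}^{\mathfrak{b}})|^{3/2}$. The support $\mathbb{B}$ of $\mathfrak{h}$ has length at most $\mathfrak{l}_{\beta,\mathfrak{b}}\cdot N^{\e_1+\mathfrak{b}\e_{\mathrm{RN},1}+\e_{\mathrm{RN},1}} \lesssim N^{2(\e_1+\mathfrak{b}\e_{\mathrm{RN},1})}\le N^{1/2}$ (since $\mathfrak{b}\le\mathfrak{b}_{\mathrm{mid}}$ forces $\e_1+\mathfrak{b}\e_{\mathrm{RN},1}\le 1/4$), so the entropy/Dirichlet-form term $\kappa^{-1}N^{-2}|\mathbb{B}|^3$ is $\lesssim \kappa^{-1}N^{-2+3/2}=\kappa^{-1}N^{-1/2}$. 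Because $|\mathsf{R}^{\mathfrak{b}}|\lesssim N^{10\e_{\mathrm{ap}}}N^{-\e_1-\mathfrak{b}\e_{\mathrm{RN},1}}$, the functional $\mathfrak{h}$ is bounded by $N^{15\e_{\mathrm{ap}}}$ times a negative power of $N$, so the admissible constant is $\kappa\gtrsim N^{-15\e_{\mathrm{ap}}}$; one can in fact take $\kappa=1$ and the Dirichlet-form term contributes $N^{-1/2}$, which after the outer $2/3$-power gives something much smaller than $N^{-1/3}$. The remaining term is $\sup_\sigma \E^{\mu^{\mathrm{can}}_{\sigma,\mathbb{B}}}|\wt{\mathfrak{I}}^{\mathbf{X}}_{\mathfrak{l}_{\beta,\mathfrak{b}}}(\mathsf{R}^{\mathfrak{b}})|^{3/2}$.

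The cutoff in $\wt{\mathfrak{I}}^{\mathbf{X}}=\mathfrak{I}^{\mathbf{X}}-\bar{\mathfrak{I}}^{\mathbf{X}}$ is designed precisely so that this last canonical-measure term is exponentially small: $\mathsf{R}^{\mathfrak{b}}$ vanishes in expectation against any canonical measure on its support (Lemma \ref{lemma:BGI2I1}), and the spatial translates $\tau_{-\mathfrak{k}\cdot N^{\e_1+\mathfrak{b}\e_{\mathrm{RN},1}+\e_{\mathrm{RN},1}}}\mathsf{R}^{\mathfrak{b}}$, $\mathfrak{k}=1,\dots,\mathfrak{l}_{\beta,\mathfrak{b}}$, have mutually disjoint supports, so Lemma \ref{lemma:ee} with $\mathfrak{J}=\mathfrak{l}_{\beta,\mathfrak{b}}$ and $\gamma=\e_{\mathrm{ap}}$ shows the event $\{|\mathfrak{I}^{\mathbf{X}}_{\mathfrak{l}_{\beta,\mathfrak{b}}}(\mathsf{R}^{\mathfrak{b}})|>N^{\e_{\mathrm{ap}}}\mathfrak{l}_{\beta,\mathfrak{b}}^{-1/2}\|\mathsf{R}^{\mathfrak{b}}\|_{\omega;\infty}\}$ — exactly where $\bar{\mathfrak{I}}^{\mathbf{X}}$ differs from $\mathfrak{I}^{\mathbf{X}}$ — has probability $\mathrm{O}(N^{-\kappa})$ for every $\kappa$. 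Since $|\wt{\mathfrak{I}}^{\mathbf{X}}_{\mathfrak{l}_{\beta,\mathfrak{b}}}(\mathsf{R}^{\mathfrak{b}})|$ is itself uniformly bounded (by $N^{15\e_{\mathrm{ap}}}$ or so), its expectation is bounded by that probability, hence $\lesssim N^{-100}$. Combining the three pieces, plugging back through the $2/3$-power and the $N^{1/2}$ prefactor, and doing the elementary power-counting, one obtains the bound $N^{-1/2+15\e_{\mathrm{ap}}+2\beta}\lesssim N^{-1/3}$, uniformly in $\mathfrak{b}\le\mathfrak{b}_{\mathrm{mid}}$. The main obstacle, such as it is, is bookkeeping: keeping track of the interplay between the support length of $\mathfrak{h}$ (which grows in $\mathfrak{b}$ and worsens the Dirichlet-form term) and the a priori bound on $\mathsf{R}^{\mathfrak{b}}$ (which improves in $\mathfrak{b}$), and verifying that, under the Case-I restriction $\e_1+\mathfrak{b}\e_{\mathrm{RN},1}\le 1/4$, the $|\mathbb{B}|^3$ factor never overwhelms the $N^{-2}$; this is why the restriction to $\mathfrak{b}\le\mathfrak{b}_{\mathrm{mid}}$ is needed here rather than in Case II.
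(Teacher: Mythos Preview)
Your proposal is correct and follows essentially the same route as the paper: both proofs simply rerun the argument of Lemma \ref{lemma:BGI12} with $\mathsf{S}_{\e_{1}}\to\mathsf{R}^{\mathfrak{b}}$ and $\mathfrak{l}_{1}\to\mathfrak{l}_{\beta,\mathfrak{b}}$, applying Lemma \ref{lemma:hoe2}, then Lemma \ref{lemma:le2} with $\mathfrak{t}_{\mathrm{av}}=\mathfrak{l}_{\mathrm{av}}=0$, then Lemma \ref{lemma:le3} with $\kappa=1$, and finally Lemma \ref{lemma:ee} to kill the canonical-measure term. Your bookkeeping on the support length of $\mathfrak{h}$ (namely $|\mathbb{B}|\lesssim N^{2(\e_{1}+\mathfrak{b}\e_{\mathrm{RN},1})}\leq N^{1/2}$, giving $N^{-2}|\mathbb{B}|^{3}\lesssim N^{-1/2}$) is in fact slightly more precise than the paper's, which somewhat loosely cites only the support length of $\mathsf{R}^{\mathfrak{b}}$ itself rather than of its spatial average.
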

%%%
%%%
\subsubsection{Time Average}
%%%
Following Lemma \ref{lemma:BGI13}, we will now replace the cutoff spatial average of $\mathsf{R}^{\mathfrak{b}}$ introduced in Lemma \ref{lemma:BGI2I4} by a time-average on appropriate mesoscopic time-scale. However, we instead replace by time-average with respect to $\mathfrak{b}$-dependent time-scale that is shorter than the roughly $N^{-1}$ time-scale used in the proof of Proposition \ref{prop:BGI1}. This last difference is technical, as we will see when we estimate time-averages of $\mathsf{R}^{\mathfrak{b}}$ on this $\mathfrak{b}$-dependent time-scale. Before we state the following result, we first recall the transfer-of-time-scale operator in Definition \ref{definition:mtr0}. Let us also make another technical comment -- the Kipnis-Varadhan inequality for the equilibrium estimates in Lemma \ref{lemma:ee2} deteriorates as the support of the functional $\mathsf{R}^{\mathfrak{b}}$ we are time-averaging in Lemma \ref{lemma:ee2} increases, in this case as the index $\mathfrak{b}$ increases. We will counter such deterioration with the improving a priori bound on $\mathsf{R}^{\mathfrak{b}}$ in Lemma \ref{lemma:BGI2I1}. These competing factors basically cancel, so the proof of Lemma \ref{lemma:BGI13} holds almost verbatim.
%%%
\begin{lemma}\label{lemma:BGI2I5}
\fsp Provided $\mathfrak{b}\leq\mathfrak{b}_{\mathrm{mid}}$, consider $\mathfrak{j}_{+}\in\Z_{\geq0}$ such that $\mathfrak{t}_{\mathfrak{j}_{+}}\in\mathbb{I}^{\mathbf{T},1}$ is the largest time in $\mathbb{I}^{\mathbf{T},1}$ satisfying $\mathfrak{t}_{\mathfrak{j}_{+}}\leq N^{-1+\beta}\mathfrak{l}_{\beta,\mathfrak{b}}^{-1}$, where $\mathfrak{l}_{\beta,\mathfrak{b}}$ and $\beta$ are both defined in \emph{Lemma \ref{lemma:BGI2I3}}. As $\mathfrak{l}_{\beta,\mathfrak{b}}\geq N^{\e_{1}-\beta}$ with $\e_{1}=\frac{1}{14}\geq999\beta$, we have $\mathfrak{t}_{\mathfrak{j}_{+}}\leq N^{-1}$ and
\begin{align}
\sup_{\mathfrak{b}\leq\mathfrak{b}_{\mathrm{mid}}}\E\|\mathbf{H}^{N}\left(N^{1/2}\mathfrak{D}_{0,\mathfrak{t}_{\mathfrak{j}_{+}}}^{\mathbf{T}}(\bar{\mathfrak{I}}_{\mathfrak{l}_{\beta,\mathfrak{b}}}^{\mathbf{X}}(\mathsf{R}^{\mathfrak{b}}_{S,y}))\mathbf{Y}_{S,y}^{N}\right)\|_{1;\mathbb{T}_{N}} \ \lesssim \ N^{-\frac{1}{99999}\beta+100\e_{\mathrm{ap}}}.\label{eq:BGI2I5}
\end{align}
\end{lemma}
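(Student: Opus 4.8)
The plan is to mirror, nearly verbatim, the proof of Lemma \ref{lemma:BGI13}, with the two technical differences that (i) the prefactor $N^{1/2}$ is now attached to $\mathsf{R}^{\mathfrak{b}}$, which carries its own a priori deterministic bound $|\mathsf{R}^{\mathfrak{b}}|\lesssim N^{10\e_{\mathrm{ap}}}N^{-\e_{1}-\mathfrak{b}\e_{\mathrm{RN},1}}$ from Lemma \ref{lemma:BGI2I1} (in place of the $N^{-1/12+\e_{\mathrm{ap}}}$ cutoff bound for $\bar{\mathfrak{I}}^{\mathbf{X}}(\mathsf{S}_{\e_{1}})$ coming from the $\bar{\mathfrak{I}}^{\mathbf{X}}$-cutoff), and (ii) the target time-scale is the $\mathfrak{b}$-dependent $\mathfrak{t}_{\mathfrak{j}_{+}}\leq N^{-1+\beta}\mathfrak{l}_{\beta,\mathfrak{b}}^{-1}$ rather than a flat $N^{-10/9}$-type scale. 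First I would use the telescoping reduction as in \eqref{eq:BGI13zero}: it suffices to bound $\E\|\mathbf{H}^{N}(N^{1/2}\mathfrak{D}^{\mathbf{T}}_{0,N^{-2}}(\bar{\mathfrak{I}}^{\mathbf{X}}_{\mathfrak{l}_{\beta,\mathfrak{b}}}(\mathsf{R}^{\mathfrak{b}}_{S,y}))\mathbf{Y}^{N}_{S,y})\|_{1;\mathbb{T}_{N}}$ and $\E\|\mathbf{H}^{N}(N^{1/2}\mathfrak{D}^{\mathbf{T}}_{N^{-2},\mathfrak{t}_{\mathfrak{j}_{+}}}(\bar{\mathfrak{I}}^{\mathbf{X}}_{\mathfrak{l}_{\beta,\mathfrak{b}}}(\mathsf{R}^{\mathfrak{b}}_{S,y}))\mathbf{Y}^{N}_{S,y})\|_{1;\mathbb{T}_{N}}$ separately. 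The first is handled by Lemma \ref{lemma:mtr1} with $\mathfrak{f}_{S,y}=N^{1/2}\bar{\mathfrak{I}}^{\mathbf{X}}_{\mathfrak{l}_{\beta,\mathfrak{b}}}(\mathsf{R}^{\mathfrak{b}}_{S,y})$; since $|\mathfrak{f}|\lesssim N^{1/2}|\mathsf{R}^{\mathfrak{b}}|\lesssim N^{1/2+10\e_{\mathrm{ap}}}N^{-\e_{1}-\mathfrak{b}\e_{\mathrm{RN},1}}$ and, from the cutoff, $|\bar{\mathfrak{I}}^{\mathbf{X}}_{\mathfrak{l}_{\beta,\mathfrak{b}}}(\mathsf{R}^{\mathfrak{b}})|\lesssim N^{\e_{\mathrm{ap}}}\mathfrak{l}_{\beta,\mathfrak{b}}^{-1/2}|\mathsf{R}^{\mathfrak{b}}|$, the resulting bound is a negative power of $N$ times $N^{\mathrm{O}(\e_{\mathrm{ap}})}$, well within the RHS of \eqref{eq:BGI2I5}.

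For the second term I would apply \eqref{eq:mtr2b} in Lemma \ref{lemma:mtr2} with $\mathfrak{J}=\mathfrak{j}_{+}\lesssim_{\e_{\mathrm{ap}},\e_{\mathrm{RN},1},\beta}1$ and $\gamma=\e_{\mathrm{ap}}$; this reduces matters to a uniform bound (over $0\leq\mathfrak{j}<\mathfrak{j}_{+}$) of $N^{3\e_{\mathrm{ap}}}\mathfrak{t}_{\mathfrak{j}+1}^{1/4}\E\|\mathbf{H}^{N}(N^{1/2}|\mathfrak{I}^{\mathbf{T}}_{\mathfrak{t}_{\mathfrak{j}}}\bar{\mathfrak{I}}^{\mathbf{X}}_{\mathfrak{l}_{\beta,\mathfrak{b}}}(\mathsf{R}^{\mathfrak{b}}_{S,y})|)\|_{1;\mathbb{T}_{N}}$ plus an elementary error term. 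I would then run exactly the chain \eqref{eq:BGI131}--\eqref{eq:BGI138}: apply Lemma \ref{lemma:hoe2} to convert the heat-operator $\|\cdot\|_{1;\mathbb{T}_{N}}$-norm into an $\mathbf{I}_{1}$ of a $3/2$-power, then Lemma \ref{lemma:le2} to transfer the space-time average onto $\E_{0}\bar{\mathfrak{P}}_{1}\E^{\mathrm{dyn}}_{\mathrm{Loc}}$ (with $\mathrm{Loc}=\mathrm{Loc}_{\mathfrak{t}_{\mathfrak{j}},\mathfrak{l}_{\mathrm{tot}}}$, $\gamma_{0}=\e_{\mathrm{ap}}$, $\mathfrak{l}_{\mathrm{av}}=1$, $\mathfrak{l}\lesssim N^{\e_{1}+\mathfrak{b}\e_{\mathrm{RN},1}+\e_{\mathrm{RN},1}}$ equal to the support length of $\bar{\mathfrak{I}}^{\mathbf{X}}_{\mathfrak{l}_{\beta,\mathfrak{b}}}(\mathsf{R}^{\mathfrak{b}})$), then Lemma \ref{lemma:le3} with $\kappa\sim N^{-\mathrm{O}(\e_{\mathrm{ap}})}(\mathfrak{l}_{\beta,\mathfrak{b}}^{1/2}|\mathsf{R}^{\mathfrak{b}}|^{-1})^{3/2}$ (which is admissible because the $\E^{\mathrm{dyn}}$-functional is bounded by $\mathfrak{l}_{\beta,\mathfrak{b}}^{-1/2}N^{\e_{\mathrm{ap}}}|\mathsf{R}^{\mathfrak{b}}|$ in $\|\cdot\|_{\omega;\infty}$). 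The ``support term'' $\kappa^{-1}N^{-2}|\mathbb{B}|^{3}$, with $|\mathbb{B}|\lesssim N^{1+\e_{\mathrm{ap}}}\mathfrak{t}_{\mathfrak{j}}^{1/2}+N^{3/2+\e_{\mathrm{ap}}}\mathfrak{t}_{\mathfrak{j}}+N^{\e_{\mathrm{ap}}}\mathfrak{l}_{\beta,\mathfrak{b}}N^{\e_{\mathrm{RN},1}}$, is estimated exactly as in \eqref{eq:BGI133}, using $\mathfrak{t}_{\mathfrak{j}}\leq\mathfrak{t}_{\mathfrak{j}_{+}}\leq N^{-1+\beta}\mathfrak{l}_{\beta,\mathfrak{b}}^{-1}$; the choice $\mathfrak{t}_{\mathfrak{j}_{+}}\leq N^{-1+\beta}\mathfrak{l}_{\beta,\mathfrak{b}}^{-1}$ is precisely what makes the $N^{3/2}\mathfrak{t}_{\mathfrak{j}}$ piece small after multiplying by the prefactors and dividing by $\kappa$, leaving a power $N^{-\Omega(\beta)+\mathrm{O}(\e_{\mathrm{ap}})}$.

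The remaining piece is the supremum of canonical-measure expectations $\sup_{\sigma}N^{3/4+\mathrm{O}(\e_{\mathrm{ap}})}\mathfrak{t}_{\mathfrak{j}+1}^{3/8}\E^{\sigma}\E^{\mathrm{dyn}}_{\mathrm{Loc}}|\mathfrak{I}^{\mathbf{T}}_{\mathfrak{t}_{\mathfrak{j}}}\bar{\mathfrak{I}}^{\mathbf{X}}_{\mathfrak{l}_{\beta,\mathfrak{b}}}(\mathsf{R}^{\mathfrak{b}})|^{3/2}$, handled exactly as in the two bullet points around \eqref{eq:BGI134}--\eqref{eq:BGI138}: first remove the $\bar{\mathfrak{I}}^{\mathbf{X}}$-cutoff at cost $\mathrm{O}(N^{-100})$ via Lemma \ref{lemma:ee} (after passing to the periodic system on $\mathrm{Loc}$, where the $\sigma$-canonical measure is invariant, as in the proof of Lemma \ref{lemma:ee2}), and then, since $\mathsf{R}^{\mathfrak{b}}=\mathsf{R}^{\mathrm{cut}}_{\e_{1}+\mathfrak{b}\e_{\mathrm{RN},1}}$ vanishes in expectation against every canonical measure on its support (Lemma \ref{lemma:BGI2I1}), apply Lemma \ref{lemma:ee2} with $\mathfrak{f}=\mathsf{R}^{\mathfrak{b}}$, $\mathfrak{t}_{\mathrm{av}}=\mathfrak{t}_{\mathfrak{j}}$, $\mathfrak{l}_{\mathrm{av}}=\mathfrak{l}_{\beta,\mathfrak{b}}$ after Hölder-boosting the $3/2$-power to $2$. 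Lemma \ref{lemma:ee2} gives $\lesssim N^{-1}\mathfrak{t}_{\mathfrak{j}}^{-1/2}\mathfrak{l}_{\beta,\mathfrak{b}}^{-1/2}|\mathbb{B}|\|\mathsf{R}^{\mathfrak{b}}\|_{\omega;\infty}+N^{-\kappa'}$; here $|\mathbb{B}|\lesssim N^{\e_{1}+\mathfrak{b}\e_{\mathrm{RN},1}+\e_{\mathrm{RN},1}}=N^{\beta+\e_{\mathrm{RN},1}}\mathfrak{l}_{\beta,\mathfrak{b}}$ and $\|\mathsf{R}^{\mathfrak{b}}\|_{\omega;\infty}\lesssim N^{10\e_{\mathrm{ap}}}N^{-\e_{1}-\mathfrak{b}\e_{\mathrm{RN},1}}=N^{10\e_{\mathrm{ap}}-\beta}\mathfrak{l}_{\beta,\mathfrak{b}}^{-1}$, and then using $\mathfrak{t}_{\mathfrak{j}}^{-1/2}\lesssim N^{(1-\beta)/2}\mathfrak{l}_{\beta,\mathfrak{b}}^{1/2}$ and reinserting the prefactor $N^{3/4}\mathfrak{t}_{\mathfrak{j}+1}^{3/8}$ with $\mathfrak{t}_{\mathfrak{j}+1}\leq N^{\e_{\mathrm{ap}}}N^{-1+\beta}\mathfrak{l}_{\beta,\mathfrak{b}}^{-1}\leq N^{-1+\beta+\e_{\mathrm{ap}}}$, all the $\mathfrak{l}_{\beta,\mathfrak{b}}$'s cancel and one is left with $N^{-\Omega(\beta)+\mathrm{O}(\e_{\mathrm{ap}})}$ raised to a positive power. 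Taking $2/3$-powers everywhere and summing the $\mathrm{O}(1)$-many telescoping terms gives \eqref{eq:BGI2I5}. The main obstacle is purely bookkeeping: one must keep track of the $\mathfrak{b}$-dependence throughout — the support of $\mathsf{R}^{\mathfrak{b}}$ grows like $\mathfrak{l}_{\beta,\mathfrak{b}}$, the a priori bound $|\mathsf{R}^{\mathfrak{b}}|$ decays like $\mathfrak{l}_{\beta,\mathfrak{b}}^{-1}$, and the target time-scale $\mathfrak{t}_{\mathfrak{j}_{+}}$ scales like $\mathfrak{l}_{\beta,\mathfrak{b}}^{-1}$ — and verify that these exactly offset in the support term (from Lemma \ref{lemma:le3}) and in the Kipnis–Varadhan term (from Lemma \ref{lemma:ee2}), leaving the clean power $N^{-\Omega(\beta)}$ with a $\beta$ that does not degenerate as $\mathfrak{b}$ ranges over $\{0,\dots,\mathfrak{b}_{\mathrm{mid}}\}$ (which is finite and bounded independently of $N$). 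No new ideas beyond those in Lemma \ref{lemma:BGI13} are needed; the content is checking the arithmetic of exponents.
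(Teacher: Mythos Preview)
Your approach is exactly the paper's: telescope via Lemmas~\ref{lemma:mtr1}--\ref{lemma:mtr2}, then run the chain Lemma~\ref{lemma:hoe2} $\to$ Lemma~\ref{lemma:le2} $\to$ Lemma~\ref{lemma:le3} $\to$ Lemma~\ref{lemma:ee}/\ref{lemma:ee2} with $\mathsf{S}\to\mathsf{R}^{\mathfrak{b}}$ and $\mathfrak{l}_{1}\to\mathfrak{l}_{\beta,\mathfrak{b}}$. Two bookkeeping slips, however, would derail the arithmetic as written.

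First, the support length of $\bar{\mathfrak{I}}^{\mathbf{X}}_{\mathfrak{l}_{\beta,\mathfrak{b}}}(\mathsf{R}^{\mathfrak{b}})$ is \emph{not} $N^{\e_{1}+\mathfrak{b}\e_{\mathrm{RN},1}+\e_{\mathrm{RN},1}}$; by Definition~\ref{definition:locmap2} the average is over $\mathfrak{l}_{\beta,\mathfrak{b}}$ shifts with \emph{disjoint} supports, so the support length is $\mathfrak{l}_{\beta,\mathfrak{b}}\cdot N^{\e_{1}+\mathfrak{b}\e_{\mathrm{RN},1}+\e_{\mathrm{RN},1}}$, i.e.\ roughly $\mathfrak{l}_{\beta,\mathfrak{b}}^{2}$. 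Your $|\mathbb{B}|$ is thus too small by a factor $\mathfrak{l}_{\beta,\mathfrak{b}}$, and the support term $\kappa^{-1}N^{-2}|\mathbb{B}|^{3}$ must be redone with the correct $|\mathbb{B}|\lesssim N^{1+\e_{\mathrm{ap}}}\mathfrak{t}_{\mathfrak{j}}^{1/2}+N^{3/2+\e_{\mathrm{ap}}}\mathfrak{t}_{\mathfrak{j}}+N^{\e_{\mathrm{ap}}}\mathfrak{l}_{\beta,\mathfrak{b}}N^{\e_{1}+\mathfrak{b}\e_{\mathrm{RN},1}+\e_{\mathrm{RN},1}}$. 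It still closes (use $\e_{1}+\mathfrak{b}\e_{\mathrm{RN},1}\leq\tfrac14$, so the spatial piece is $\lesssim N^{1/2+\e_{\mathrm{RN},1}}$), but your stated reason --- that $\mathfrak{t}_{\mathfrak{j}_{+}}\leq N^{-1+\beta}\mathfrak{l}_{\beta,\mathfrak{b}}^{-1}$ is what makes the $N^{3/2}\mathfrak{t}_{\mathfrak{j}}$ piece small --- is not the operative mechanism; one simply uses $\mathfrak{t}_{\mathfrak{j}}\leq N^{-1}$.

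Second, in the Kipnis--Varadhan step you write $\mathfrak{t}_{\mathfrak{j}}^{-1/2}\lesssim N^{(1-\beta)/2}\mathfrak{l}_{\beta,\mathfrak{b}}^{1/2}$, which is false for generic $\mathfrak{j}<\mathfrak{j}_{+}$ (take $\mathfrak{j}=0$, $\mathfrak{t}_{0}=N^{-2}$). The paper instead uses only $\mathfrak{t}_{\mathfrak{j}}\geq N^{-2}$: the point is that the prefactor carries $\mathfrak{t}_{\mathfrak{j}+1}^{1/2}=N^{\e_{\mathrm{ap}}/2}\mathfrak{t}_{\mathfrak{j}}^{1/2}$, so in the product $\mathfrak{t}_{\mathfrak{j}+1}^{1/2}\cdot N^{-2}\mathfrak{t}_{\mathfrak{j}}^{-1}$ one is left with $N^{-2+\e_{\mathrm{ap}}/2}\mathfrak{t}_{\mathfrak{j}}^{-1/2}\leq N^{-1+\e_{\mathrm{ap}}/2}$, and then $\mathfrak{l}_{\beta,\mathfrak{b}}^{-1}\cdot N^{2\e_{1}+2\mathfrak{b}\e_{\mathrm{RN},1}+2\e_{\mathrm{RN},1}}\|\mathsf{R}^{\mathfrak{b}}\|_{\omega;\infty}^{2}\lesssim N^{-\e_{1}+\beta+2\e_{\mathrm{RN},1}+20\e_{\mathrm{ap}}}$ supplies the negative power. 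With these two fixes your outline goes through verbatim and coincides with the paper's argument.
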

%%%
%%%
\subsubsection{Final Estimates}
%%%
We now estimate the time-average of $\mathsf{R}^{\mathfrak{b}}$ uniformly in $\mathfrak{b}\leq\mathfrak{b}_{\mathrm{mid}}$ on the time-scale $\mathfrak{t}_{\mathfrak{j}_{+}}$ ``reached" with multiscale replacement in Lemma \ref{lemma:BGI2I5}. This amounts to the analog below for Lemma \ref{lemma:BGI14}. We apply the same remarks about the simultaneous deterioration and improvement of the estimates implied by Lemma \ref{lemma:ee2} and Lemma \ref{lemma:BGI2I1} as the index $\mathfrak{b}$ increases. Otherwise, the proof of the following estimate is basically that of Lemma \ref{lemma:BGI14}.
%%%
\begin{lemma}\label{lemma:BGI2I6}
\fsp Consider any $\mathfrak{b}\leq\mathfrak{b}_{\mathrm{mid}}$ and the corresponding time-scale $\mathfrak{t}_{\mathfrak{j}_{+}}$ from \emph{Lemma \ref{lemma:BGI2I5}}. Uniformly in $\mathfrak{b}\leq\mathfrak{b}_{\mathrm{mid}}$, we have 
\begin{align}
\E\|\mathbf{H}^{N}\left(N^{1/2}\mathfrak{I}_{\mathfrak{t}_{\mathfrak{j}_{+}}}^{\mathbf{T}}\bar{\mathfrak{I}}_{\mathfrak{l}_{\beta,\mathfrak{b}}}^{\mathbf{X}}(\mathsf{R}_{S,y}^{\mathfrak{b}})\mathbf{Y}_{S,y}^{N}\right)\|_{1;\mathbb{T}_{N}} \ \lesssim \ N^{-\frac{1}{99999}\beta+100\e_{\mathrm{ap}}}. \label{eq:BGI2I6}
\end{align}
\end{lemma}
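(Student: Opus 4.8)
\textbf{Proof plan for Lemma \ref{lemma:BGI2I6}.}
The plan is to imitate the proof of Lemma \ref{lemma:BGI14} almost verbatim, replacing the statistic $N^{1/2}\bar{\mathfrak{I}}^{\mathbf{X}}_{\mathfrak{l}_{1}}(\mathsf{S}_{\e_{1}})$ by $N^{1/2}\bar{\mathfrak{I}}^{\mathbf{X}}_{\mathfrak{l}_{\beta,\mathfrak{b}}}(\mathsf{R}^{\mathfrak{b}})$ and the time-scale $\mathfrak{t}_{\mathfrak{j}_{1}}$ by the shorter $\mathfrak{b}$-dependent scale $\mathfrak{t}_{\mathfrak{j}_{+}}$ from Lemma \ref{lemma:BGI2I5}. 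First I would introduce \emph{additional} spatial averaging on a length-scale $\mathfrak{l}_{+}$ to be chosen (say $\mathfrak{l}_{+}=N^{\beta}$ or similar, small but universal), using Lemma \ref{lemma:xr} with $\mathfrak{f}_{S,y}=N^{1/2}\bar{\mathfrak{I}}^{\mathbf{X}}_{\mathfrak{l}_{\beta,\mathfrak{b}}}(\mathsf{R}^{\mathfrak{b}}_{S,y})$; note this $\mathfrak{f}$ is supported in a block of length $\mathrm{O}(N^{\beta+\e_{\mathrm{RN},1}}\mathfrak{l}_{\beta,\mathfrak{b}})\leq N^{1/4+o(1)}\ll N^{1/2}$, and carries the a priori bound $|\mathfrak{f}|\lesssim N^{1/2}N^{10\e_{\mathrm{ap}}}N^{-\e_1-\mathfrak b\e_{\mathrm{RN},1}}\cdot|\mathfrak{l}_{\beta,\mathfrak{b}}|^{-1/2}$, so the error terms on the RHS of \eqref{eq:xr} are negligible after power-counting. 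This reduces the estimate to bounding $\E\|\mathbf{H}^{N}(N^{1/2}\mathfrak{I}^{\mathbf{T}}_{\mathfrak{t}_{\mathfrak{j}_{+}}}\mathfrak{I}^{\mathbf{X}}_{\mathfrak{l}_{+}}\bar{\mathfrak{I}}^{\mathbf{X}}_{\mathfrak{l}_{\beta,\mathfrak{b}}}(\mathsf{R}^{\mathfrak{b}}_{S,y})\mathbf{Y}^{N}_{S,y})\|_{1;\mathbb{T}_{N}}$ plus an analogue of the second term in \eqref{eq:BGI141} with $\mathfrak{l}_+^{1/2}$ in front.

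Next I would apply Lemma \ref{lemma:hoe2} to pass from the uniform norm to $\mathbf{I}_{1}$ (at the cost of the $3/2$-power and a harmless $N^{\gamma}$), then Lemma \ref{lemma:le2} to transfer the space-time average to the law of the particle system, introducing the density $\bar{\mathfrak{P}}_{1}$ and the localization map $\mathrm{Loc}=\mathrm{Loc}_{\mathfrak{t}_{\mathfrak{j}_{+}},\mathfrak{l}_{\mathrm{tot}}}$ with $\mathfrak{l}_{\mathrm{tot}}\lesssim N^{3/2+\gamma_0}\mathfrak{t}_{\mathfrak{j}_{+}}+N^{1+\gamma_0}\mathfrak{t}_{\mathfrak{j}_{+}}^{1/2}+N^{\gamma_0}\mathfrak{l}_{+}\mathfrak{l}_{\beta,\mathfrak{b}}$. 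Then Lemma \ref{lemma:le3} reduces the $\bar{\mathfrak{P}}_{1}$-expectation to a Dirichlet-form ``support'' term of the form $\kappa^{-1}N^{-2}|\mathbb{B}|^{3}$ plus $\sup_{\sigma}\E^{\sigma}\E^{\mathrm{dyn}}_{\mathrm{Loc}}$ of the same quantity; here the key point — which I would spell out exactly as in the discussion before \eqref{eq:BGI133} — is that the worse local-equilibrium constant one is forced to take as $\mathfrak{b}$ grows (because $|\mathbb B|$ grows) is compensated by the \emph{improving} a priori bound $|\mathsf{R}^{\mathfrak{b}}|\lesssim N^{10\e_{\mathrm{ap}}}N^{-\e_1-\mathfrak b\e_{\mathrm{RN},1}}$ from Lemma \ref{lemma:BGI2I1}/Definition \ref{definition:BGI2I2}, so $\kappa$ can be taken $\approx N^{(\e_1+\mathfrak b\e_{\mathrm{RN},1})\cdot 3/2}|\mathfrak{l}_{\beta,\mathfrak{b}}|^{3/4}$-ish and the support term is controlled by $N^{-\frac{1}{99999}\beta+100\e_{\mathrm{ap}}}$ uniformly in $\mathfrak{b}$ after using $\mathfrak{t}_{\mathfrak{j}_{+}}\le N^{-1+\beta}\mathfrak{l}_{\beta,\mathfrak{b}}^{-1}$. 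For the canonical-measure supremum I would first remove the cutoff bar (the error is $\mathrm{O}(N^{-100})$ by Lemma \ref{lemma:ee}, exactly as around \eqref{eq:BGI134}–\eqref{eq:BGI136}), collapse the double spatial average $\mathfrak{I}^{\mathbf{X}}_{\mathfrak{l}_{+}}\bar{\mathfrak{I}}^{\mathbf{X}}_{\mathfrak{l}_{\beta,\mathfrak{b}}}$ into a single one on length-scale $\mathfrak{l}_{+}\mathfrak{l}_{\beta,\mathfrak{b}}$, then apply Lemma \ref{lemma:ee2} with $\mathfrak{f}=\mathsf{R}^{\mathfrak{b}}_{0,0}$, $\mathfrak{t}_{\mathrm{av}}=\mathfrak{t}_{\mathfrak{j}_{+}}$, $\mathfrak{l}_{\mathrm{av}}=\mathfrak{l}_{+}\mathfrak{l}_{\beta,\mathfrak{b}}$; the resulting $N^{-2}\mathfrak{t}_{\mathfrak{j}_{+}}^{-1}(\mathfrak{l}_{+}\mathfrak{l}_{\beta,\mathfrak{b}})^{-1}|\mathbb{B}|^{2}$ times the $N^{1/2}|\mathsf{R}^{\mathfrak{b}}|\lesssim N^{1/2-\e_1-\mathfrak b\e_{\mathrm{RN},1}+10\e_{\mathrm{ap}}}$ prefactor, after the $2/3$- and $3/4$-powers, again produces $N^{-\Omega(\beta)+O(\e_{\mathrm{ap}})}$ uniformly over $\mathfrak{b}\le\mathfrak{b}_{\mathrm{mid}}$ because every occurrence of $N^{-\e_1-\mathfrak b\e_{\mathrm{RN},1}}$ from $|\mathsf{R}^{\mathfrak b}|^2$ is balanced by the $\mathfrak{l}_{\beta,\mathfrak{b}}^{-1}\approx N^{-\e_1-\mathfrak b\e_{\mathrm{RN},1}+\beta}$ in the Kipnis–Varadhan bound and by the linear-in-$|\mathbb{B}|$ loss, leaving a net power $N^{-\Theta(\beta)}$.

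Finally I would collect the three contributions — the $\mathfrak{l}_{+}^{1/2}$ ``second term of \eqref{eq:BGI141}'' (handled by running the argument of Lemma \ref{lemma:BGI2I5} from its display \eqref{eq:BGI2I5} without the $\mathfrak{t}^{1/4}$ prefactor, where one loses at most $N^{1/4}$ but gains $\mathfrak{l}_{+}^{-1/2}$ and a favourable support factor), the support term from Lemma \ref{lemma:le3}, and the canonical-measure supremum — and add them via the triangle inequality for $\|\cdot\|_{1;\mathbb{T}_{N}}$ and $\E$, all uniformly in $\mathfrak{b}\le\mathfrak{b}_{\mathrm{mid}}$. \textbf{The main obstacle} is bookkeeping the competition, uniformly in $\mathfrak{b}$, between (i) the deterioration of the entropy/local-equilibrium reduction in Lemma \ref{lemma:le3} and of the Kipnis–Varadhan bound in Lemma \ref{lemma:ee2} as the support length $N^{\e_1+\mathfrak b\e_{\mathrm{RN},1}}$ of $\mathsf{R}^{\mathfrak{b}}$ grows, versus (ii) the improvement of the a priori bound $|\mathsf{R}^{\mathfrak{b}}|\lesssim N^{-\e_1-\mathfrak b\e_{\mathrm{RN},1}+10\e_{\mathrm{ap}}}$; one must verify that the exponents cancel up to the fixed margin $\beta=999^{-99}$ and do not accumulate over the $\mathrm{O}_{\e_1,\e_{\mathrm{RN},1}}(1)$-many values of $\mathfrak{b}$, which is why the time-scale $\mathfrak{t}_{\mathfrak{j}_{+}}\le N^{-1+\beta}\mathfrak{l}_{\beta,\mathfrak{b}}^{-1}$ (rather than $\approx N^{-1}$) must be chosen so delicately. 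Everything else is the same sequence of $\mathscr{L}^1/\mathscr{L}^\infty$ interpolations, Hölder steps, Fubini, and power-counting already carried out in Section \ref{section:proofBGI}.
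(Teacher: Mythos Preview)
Your plan follows the template of Lemma \ref{lemma:BGI14} and introduces an \emph{additional} layer of spatial averaging on a scale $\mathfrak{l}_{+}$. The paper does \emph{not} do this: it opens the proof with the remark ``Unlike the proof of Lemma \ref{lemma:BGI14}, we will not need to introduce additional spatial averaging, so the proof of the current Lemma \ref{lemma:BGI2I6} is much simpler.'' The paper proceeds directly: apply Lemma \ref{lemma:hoe2} to pass to $\mathbf{I}_{1}$, then Lemma \ref{lemma:le2} and Lemma \ref{lemma:le3} (with the same $\kappa$ as in the proof of Lemma \ref{lemma:BGI2I5}), and finally Lemma \ref{lemma:ee2}. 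The direct route succeeds precisely because $\mathfrak{t}_{\mathfrak{j}_{+}}$ was chosen in Lemma \ref{lemma:BGI2I5} to be $\mathfrak{b}$-dependent, namely $\mathfrak{t}_{\mathfrak{j}_{+}}\gtrsim N^{-1+\beta-2\e_{\mathrm{ap}}}\mathfrak{l}_{\beta,\mathfrak{b}}^{-1}$; plugging this lower bound into the Kipnis--Varadhan estimate \eqref{eq:ee2} makes the $\mathfrak{l}_{\beta,\mathfrak{b}}$-factors cancel exactly, leaving a net $N^{-\beta}$. This is why no second spatial average is needed.

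Your approach is not wrong, but it is redundant and more delicate than you suggest. Notice that your ``second term of \eqref{eq:BGI141}'' is $N^{5\e_{\mathrm{ap}}}\bar{\mathfrak{l}}^{1/2}\E\|\mathbf{H}^{N}(|\mathfrak{I}^{\mathbf{T}}_{\mathfrak{t}_{\mathfrak{j}_{+}}}\bar{\mathfrak{I}}^{\mathbf{X}}_{\mathfrak{l}_{\beta,\mathfrak{b}}}(\mathsf{R}^{\mathfrak{b}})|)\|_{1;\mathbb{T}_{N}}$, and bounding that expectation is \emph{exactly} the content of the paper's direct argument. So your scheme contains the paper's proof as a subcase and layers unnecessary work on top. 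Moreover, your accounting for this second term is shaky: at $\mathfrak{b}=\mathfrak{b}_{\mathrm{mid}}$ the support of $\bar{\mathfrak{I}}^{\mathbf{X}}_{\mathfrak{l}_{\beta,\mathfrak{b}}}(\mathsf{R}^{\mathfrak{b}})$ is already $\approx N^{1/2-\beta}$, so $\bar{\mathfrak{l}}^{1/2}\approx N^{1/4}$ and the ``gain $\bar{\mathfrak{l}}^{1/2}/N^{1/2}\approx N^{-1/4}$'' only barely matches the ``loss $N^{1/4}$'' from dropping $\mathfrak{t}^{1/4}$ if you use merely $\mathfrak{t}_{\mathfrak{j}_{+}}\leq N^{-1}$; you would still need to invoke the sharper $\mathfrak{b}$-dependent lower bound on $\mathfrak{t}_{\mathfrak{j}_{+}}$ to close the argument --- at which point you have reproduced the paper's computation anyway. (Your remark that one ``gains $\mathfrak{l}_{+}^{-1/2}$'' is not right: the prefactor is $\bar{\mathfrak{l}}^{1/2}$, not $\mathfrak{l}_{+}^{-1/2}$.) In short, the paper's insight is that the tailored time-scale $\mathfrak{t}_{\mathfrak{j}_{+}}\approx N^{-1+\beta}\mathfrak{l}_{\beta,\mathfrak{b}}^{-1}$ already delivers the required cancellation, so the extra spatial average of Lemma \ref{lemma:BGI14} is superfluous here.
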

%%%
Let us now prove Proposition \ref{prop:BGI2} in Case I, where the index $\mathfrak{b}\in\Z_{\geq0}$ in the supremum on the LHS of \eqref{eq:BGI2Prop} satisfies $\mathfrak{b}\leq\mathfrak{b}_{\mathrm{mid}}$. We make the following replacements to the LHS of \eqref{eq:BGI2Prop} and cite results in this section that control errors in such replacements.
%%%
\begin{itemize}
\item Lemma \ref{lemma:BGI2I1} lets us replace $\mathsf{R}_{\e_{1}+\mathfrak{b}\e_{\mathrm{RN},1}}$ on the LHS of \eqref{eq:BGI2Prop} with $\mathsf{R}^{\mathfrak{b}}$ with clearly controllable error. Indeed, Lemma \ref{lemma:BGI2I1} is applicable for $\mathfrak{b}\leq\mathfrak{b}_{\mathrm{mid}}$ as by definition of $\mathfrak{b}_{\mathrm{mid}}$ in Definition \ref{definition:BGI2I2}, we have $\e_{1}+\mathfrak{b}\e_{\mathrm{RN},1}\leq1/4\leq1/2+\e_{\mathrm{RN}}$ if $\mathfrak{b}\leq\mathfrak{b}_{\mathrm{mid}}$.
\item We now replace $\mathsf{R}^{\mathfrak{b}}$ by $\mathfrak{I}^{\mathbf{X}}_{\mathfrak{l}_{\beta,\mathfrak{b}}}(\mathsf{R}^{\mathfrak{b}})$ with $\mathfrak{l}_{\beta,\mathfrak{b}}$ in Lemma \ref{lemma:BGI2I3}. Lemma \ref{lemma:BGI2I3} controls the error by a universal negative power of $N$ as $\beta=999^{-99}$ is much larger than $\e_{\mathrm{ap}},\e_{\mathrm{RN},1}$. 
\item We now replace $\mathfrak{I}^{\mathbf{X}}_{\mathfrak{l}_{\beta,\mathfrak{b}}}(\mathsf{R}^{\mathfrak{b}})$ by $\bar{\mathfrak{I}}^{\mathbf{X}}_{\mathfrak{l}_{\beta,\mathfrak{b}}}(\mathsf{R}^{\mathfrak{b}})$. The error is controlled by a universal negative power of $N$ by Lemma \ref{lemma:BGI2I4}.
\item Replace $\bar{\mathfrak{I}}^{\mathbf{X}}_{\mathfrak{l}_{\beta,\mathfrak{b}}}(\mathsf{R}^{\mathfrak{b}})$ by $\mathfrak{I}^{\mathbf{T}}_{\mathfrak{t}_{\mathfrak{j}_{+}}}\bar{\mathfrak{I}}^{\mathbf{X}}_{\mathfrak{l}_{\beta,\mathfrak{b}}}(\mathsf{R}^{\mathfrak{b}})$ with $\mathfrak{t}_{\mathfrak{j}_{+}}$ in Lemma \ref{lemma:BGI2I5}. The error, by Lemma \ref{lemma:BGI2I5}, is a universal negative power of $N$.
\item We now apply Lemma \ref{lemma:BGI2I6} to estimate the resulting heat operator acting on $N^{1/2}\mathfrak{I}^{\mathbf{T}}_{\mathfrak{t}_{\mathfrak{j}_{+}}}\bar{\mathfrak{I}}^{\mathbf{X}}_{\mathfrak{l}_{\beta,\mathfrak{b}}}(\mathsf{R}^{\mathfrak{b}})\mathbf{Y}^{N}$.
\end{itemize}
%%%
Combining the previous bullet points with the triangle inequality for $\|\|_{1;\mathbb{T}_{N}}$ and $\E$ completes the proof. \qed
%%%
\begin{proof}[Proof of \emph{Lemma \ref{lemma:BGI2I1}}]
We first extend the $\mathsf{E}^{\mathrm{can}}$-expectations in Definition \ref{definition:BGI2} to any functional $\mathfrak{f}$ instead of just $\bar{\mathfrak{q}}$. Let us observe the following quantity vanishes under expectation with respect to any canonical ensemble on its support. We clarify/emphasize the second term below is an expectation of the functional $\mathfrak{f}_{0,0}=\mathfrak{f}$ with respect to the canonical measure on $y-\llbracket1,N^{\delta+\e_{\mathrm{RN},1}}\rrbracket$ with parameter equal to the $\eta$-density $\sigma_{\delta+\e_{\mathrm{RN},1},S,y}$ on this set at time $S$; see Definition \ref{definition:BGI2}. We also clarify that we require $\mathfrak{f}$ to be supported in a uniformly bounded neighborhood to the left of $0\in\mathbb{T}_{N}$ like $\bar{\mathfrak{q}}$, and thus contained in $y-\llbracket1,N^{\delta+\e_{\mathrm{RN},1}}\rrbracket$. This implies the support of \eqref{eq:BGI2I10} is contained in that of {$\mathsf{R}_{\delta}(\tau_{y}\eta)$} of \eqref{eq:defBGI24}.
\begin{align}
\mathfrak{f}_{S,y} - {\mathsf{E}_{\delta+\e_{\mathrm{RN},1}}^{\mathrm{can}}(\tau_{y}\eta_{S};\mathfrak{f})}. \label{eq:BGI2I10}
\end{align}
Vanishing of \eqref{eq:BGI2I10} by canonical measure expectation follows by tower property of conditional expectation and that the projection of any canonical measure on any larger set onto any smaller subset is a convex combination of canonical measures; see the proof of {Lemma 2} in \cite{GJ15}. We eventually take $\mathsf{R}^{\mathrm{cut}}$ to be a quantity of the form \eqref{eq:BGI2I10} in which $\mathfrak{f}$ admits the deterministic upper bound required for $\mathsf{R}^{\mathrm{cut}}$. To this end, we first recall $\mathsf{R}$ in \eqref{eq:defBGI24}. Again by the projection property for canonical measures, now combined with the tower property for expectation, we get the following with notation explained afterwards:
\begin{align}
{\mathsf{R}_{\delta}(\tau_{y}\eta_{S}) \ = \ \mathsf{E}_{\delta}^{\mathrm{can}}(\tau_{y}\eta_{S}) - \mathsf{E}_{\delta+\e_{\mathrm{RN},1}}^{\mathrm{can}}(\tau_{y}\eta_{S};\mathsf{E}_{\delta,\sigma(\delta)}^{\mathrm{can}}(\bar{\mathfrak{q}})).} \label{eq:BGI2I11}
\end{align}
%
%%%
\begin{itemize}
\item {Let $\sigma(\delta)$ be a random $\eta$-density on $y+\mathbb{I}^{\delta}=y+\llbracket-N^{\delta+\e_{\mathrm{RN},1}},-1\rrbracket$. Its law is given by that of the $\eta$-density on $y+\mathbb{I}^{\delta}$ according to the measure defining the expectation {$\mathsf{E}^{\mathrm{can}}_{\delta+\e_{\mathrm{RN},1}}(\tau_{y}\eta_{S})$}. (Note $y+\mathbb{I}^{\delta}$ is the subset that the canonical measure in $\mathsf{E}^{\mathrm{can}}_{\delta}(\tau_{y}\eta_{S})$ is defined on.)}
\item Define $\mathfrak{f}_{1}=\mathsf{E}^{\mathrm{can}}_{\delta,\sigma(\delta)}(\bar{\mathfrak{q}})$ to be the canonical measure expectation of $\bar{\mathfrak{q}}$ with respect to the $\sigma(\delta)$-canonical measure on $y+\mathbb{I}^{\delta}$.
\item Observe that $\mathfrak{f}_{1}$ is a functional of the particle system, and it depends only on the random variable/$\eta$-density $\sigma(\delta)$ from the first bullet point. In particular, the second term/iterated $\mathsf{E}$-expectation is an expectation of $\mathfrak{f}_{1}$, where the randomness in the inside expectation is now through the random $\eta$-density $\sigma(\delta)$ that is sampled with respect to the canonical measure defining the outer expectation on the RHS of \eqref{eq:BGI2I11}. So, we get \eqref{eq:BGI2I11} by first conditioning $\bar{\mathfrak{q}}$ in the {$\mathsf{E}^{\mathrm{can}}_{\delta+\e_{\mathrm{RN},1}}(\tau_{y}\eta_{S})$}-expectation in \eqref{eq:defBGI24} on $\sigma(\delta)$; again, we emphasize canonical measures project to canonical measures, so taking said expectation conditioning on $\sigma(\delta)$ leads to canonical measure expectation of $\bar{\mathfrak{q}}$ with parameter $\sigma(\delta)$ on its defining set $y+\mathbb{I}^{\delta}$. This is what \eqref{eq:BGI2I11} says. We clarify that {$\mathsf{E}^{\mathrm{can}}_{\delta}(\tau_{y}\eta_{S})$} and $\mathsf{E}^{\mathrm{can}}_{\delta,\sigma(\delta)}(\bar{\mathfrak{q}})$ are the same function, but the former is evaluated at $\sigma_{\delta,S,y}$, i.e. the scale-$N^{\delta}$ density of the actual particle system at time $S$ and point $y$, and the latter is evaluated at the random $\sigma(\delta)$ sampled via {$\mathsf{E}^{\mathrm{can}}_{\delta+\e_{\mathrm{RN},1}}(\tau_{y}\eta_{S})$} as explained.
\end{itemize}
%%%
We now make the following observation, which implies that it suffices to provide a priori estimates for $\mathbb{I}^{\delta}$ expectations. 
%%%
\begin{itemize}
\item Suppose $|{\mathsf{E}^{\mathrm{can}}_{\delta}(\tau_{y}\eta_{S})}|+|\mathsf{E}^{\mathrm{can}}_{\delta,\sigma(\delta)}(\bar{\mathfrak{q}})|\leq N^{10\e_{\mathrm{ap}}}N^{-\delta}$. This is not necessarily true; we will show it is sufficiently close to true.
\item The previous bullet point would finish the proof, since \eqref{eq:BGI2I11} provides a representation of $\mathsf{R}_{\delta}(\eta)$ as \eqref{eq:BGI2I10} for $\mathfrak{f}= \mathsf{E}^{\mathrm{can}}_{\delta,\sigma(\delta)}(\bar{\mathfrak{q}})$, which would certainly satisfy be $\mathrm{O}(N^{10\e_{\mathrm{ap}}}N^{-\delta})$ if the previous bullet point were true.
\end{itemize}
%%%
In view of the previous bullet points, it suffices to make the following replacements in \eqref{eq:BGI2I11}, provided that $\mathbf{Y}^{N}\neq0$, for which we first establish convenient notation $\mathsf{C}_{\alpha}(a)=a\mathbf{1}(|a|\leq\alpha)$ for a cutoff operator/map where $a\in\R$ is any real number; note that the replacements below do not change the support of any term in \eqref{eq:BGI2I11}:
\begin{align}
{\mathsf{E}_{\delta}^{\mathrm{can}}(\tau_{y}\eta_{S})} \to \mathsf{C}_{N^{10\e_{\mathrm{ap}}}N^{-\delta}}({\mathsf{E}_{\delta}^{\mathrm{can}}(\tau_{y}\eta_{S})}) \quad \mathrm{and} \quad \mathsf{E}_{\delta,\sigma(\delta)}^{\mathrm{can}}(\bar{\mathfrak{q}}) \to \mathsf{C}_{N^{10\e_{\mathrm{ap}}}N^{-\delta}}(\mathsf{E}_{\delta,\sigma(\delta)}^{\mathrm{can}}(\bar{\mathfrak{q}})). \label{eq:BGI2I12}
\end{align}
Indeed, whenever $\mathbf{Y}^{N}=0$, then the proposed estimate in Lemma \ref{lemma:BGI2I1} is trivial. Moreover, defining $\mathsf{R}^{\mathrm{cut}}$ to be the RHS of \eqref{eq:BGI2I11} but with the replacements in \eqref{eq:BGI2I12}, the previous two bullet points would imply that $\mathsf{R}^{\mathrm{cut}}$ satisfies the proposed pair of properties claimed in the lemma. We now show the replacements \eqref{eq:BGI2I12} to the RHS of \eqref{eq:BGI2I11}, whenever $\mathbf{Y}^{N}\neq0$, only provide error that, after multiplication by $\mathbf{Y}^{N}$, is controlled by the RHS of the proposed estimate. 
%%%
\begin{itemize}
\item The first replacement in \eqref{eq:BGI2I12} gives no error whenever $\mathbf{Y}^{N}\neq0$. Indeed, Lemma \ref{lemma:BGI31} still holds with $\e_{1}+\mathfrak{b}_{+}\e_{\mathrm{RN},1}$ therein replaced with $\delta$ here, because all we require for Lemma \ref{lemma:BGI31} is a priori spatial regularity of $\mathbf{Y}^{N}$ on the length-scale $N^{\delta}\leq\mathfrak{l}_{N}$ if $\delta+\e_{\mathrm{RN},1}\leq\frac12+\e_{\mathrm{RN}}$, which holds by assumption here; see Definition \ref{definition:KPZ1} for $\mathfrak{l}_{N}$. Thus, we may assume the canonical measure parameter $\sigma_{\delta+\e_{\mathrm{RN},1},S,y}$ defining {$\mathsf{E}^{\mathrm{can}}_{\delta}(\tau_{y}\eta_{S})$} in \eqref{eq:BGI2I11} is at most $N^{5\e_{\mathrm{ap}}}N^{-\delta/2}$, from which we show the first replacement in \eqref{eq:BGI2I12} does nothing by following the proof of Proposition \ref{prop:BGI3}.
\item We move to the second replacement in \eqref{eq:BGI2I12} applied to \eqref{eq:BGI2I11}. Similar to the previous bullet point, for the outer expectation in the second term on the RHS of \eqref{eq:BGI2I11}, we know that its canonical measure parameter satisfies $\sigma_{\delta+2\e_{\mathrm{RN},1},S,y}\leq N^{4\e_{\mathrm{ap}}}N^{-\delta/2}$; combining the previous bullet point with this most recent observation gives
\begin{align}
{\mathsf{R}_{\delta}(\tau_{y}\eta_{S})}\mathbf{Y}_{S,y}^{N} \ &= \ \mathbf{1}(|\sigma_{\delta+\e_{\mathrm{RN},1},S,y}|\leq N^{5\e_{\mathrm{ap}}}N^{-\delta/2}){\mathsf{E}_{\delta}^{\mathrm{can}}(\tau_{y}\eta_{S})}\mathbf{Y}_{S,y}^{N} \label{eq:BGI2I1mida} \\
&- \mathbf{1}(|\sigma_{\delta+2\e_{\mathrm{RN},1},S,y}|\leq N^{4\e_{\mathrm{ap}}}N^{-\delta/2}){\mathsf{E}_{\delta+\e_{\mathrm{RN},1}}^{\mathrm{can}}(\tau_{y}\eta_{S};\mathsf{E}_{\delta,\sigma(\delta)}^{\mathrm{can}}(\bar{\mathfrak{q}}))}\mathbf{Y}_{S,y}^{N}. \label{eq:BGI2I1midb}
\end{align}
We clarify we have $\sigma_{\delta+2\e_{\mathrm{RN},1},S,y}\leq N^{4\e_{\mathrm{ap}}}N^{-\delta/2}$ instead of $\sigma_{\delta+2\e_{\mathrm{RN},1},S,y}\leq N^{5\e_{\mathrm{ap}}}N^{-\delta/2}$ because the extra $N^{\e_{\mathrm{ap}}}$ produced in the proof of Proposition \ref{prop:BGI3} and Lemma \ref{lemma:BGI31} comes from applying the bound $|\mathbf{Y}^{N}|\leq N^{\e_{\mathrm{ap}}}$ in the proof of Lemma \ref{lemma:BGI31} that we do not yet need because we are not bounding $\mathbf{Y}^{N}$ to get the previous display.
\item A digression. Fix $\sigma_{\delta+2\e_{\mathrm{RN},1},S,y}$, and consider the canonical measure on its support with this parameter, namely the measure defining {$\mathsf{E}^{\mathrm{can}}_{\delta+\e_{\mathrm{RN},1}}(\tau_{y}\eta_{S})$}. If we instead consider the corresponding grand-canonical measure, then $\sigma(\delta)$ would be an average of $|\mathbb{I}^{\delta}|=N^{\delta+\e_{\mathrm{RN},1}}$-many independent Bernoulli random variables with expectation $\sigma_{\delta+2\e_{\mathrm{RN},1},S,y}$. Concentration inequalities would then imply $|\sigma(\delta)-\sigma_{\delta+2\e_{\mathrm{RN},1},S,y}|\geq N^{\e_{\mathrm{ap}}}\sigma_{\delta+2\e_{\mathrm{RN},1},S,y}$ happens with exponentially small probability in $N$. This can be seen by viewing $\sigma(\delta)$ as a random walk indexed by $\mathbb{I}^{\delta}$ with drift $\sigma_{\delta+2\e_{\mathrm{RN},1},S,y}$. The only difference in this discussion if we look at the canonical ensemble instead of grand-canonical ensemble is that $\sigma(\delta)$ is the length-$|\mathbb{I}^{\delta}|$-increment of a random walk \emph{bridge} with drift $\sigma_{\delta+2\e_{\mathrm{RN},1},S,y}$, for which sub-exponential concentration inequalities are also readily available.
\item Given that $\sigma_{\delta+2\e_{\mathrm{RN},1},S,y}\leq N^{4\e_{\mathrm{ap}}}N^{-\delta/2}$, we note that the $\sigma(\delta)$ density on the smaller subset $\mathbb{I}^{\delta}$, inside the previous display, is bounded by $N^{5\e_{\mathrm{ap}}}N^{-\delta/2}$ with overwhelming probability; see Definition \ref{definition:KPZ8}. Indeed, by thinking of $\sigma(\delta)$ as an $\mathbb{I}^{\delta}$-indexed increment of a random walk bridge with drift $N^{4\e_{\mathrm{ap}}}$, standard sub-Gaussian concentration inequalities for random walk bridges shows that $\sigma(\delta)$ deviates from its normalized drift $N^{4\e_{\mathrm{ap}}}N^{-\delta/2}$ plus its Brownian-type fluctuation $N^{-\delta/2}$ by a factor of $N^{\e_{\mathrm{ap}}}$ with exponentially small probability in $N$, and therefore with overwhelming probability. 
\item We now have the following where $\mathcal{E}=\{|\sigma(\delta)|\leq N^{5\e_{\mathrm{ap}}}N^{-\delta/2}\}$ with complement $\mathcal{E}^{C}$; we explain these calculations after:
{
\begin{align}
\mathsf{E}_{\delta+\e_{\mathrm{RN},1}}^{\mathrm{can}}(\tau_{y}\eta_{S};\mathsf{E}_{\delta,\sigma(\delta)}^{\mathrm{can}}(\bar{\mathfrak{q}})) \ &= \ \mathsf{E}_{\delta+\e_{\mathrm{RN},1}}^{\mathrm{can}}(\tau_{y}\eta_{S};\mathbf{1}_{\mathcal{E}}\mathsf{E}_{\delta,\sigma(\delta)}^{\mathrm{can}}(\bar{\mathfrak{q}})) + \mathsf{E}_{\delta+\e_{\mathrm{RN},1}}^{\mathrm{can}}(\tau_{y}\eta_{S};\mathbf{1}_{\mathcal{E}^{C}}\mathsf{E}_{\delta,\sigma(\delta)}^{\mathrm{can}}(\bar{\mathfrak{q}})) \label{eq:BGI2I13a} \\
&= \ \mathsf{E}_{\delta+\e_{\mathrm{RN},1}}^{\mathrm{can}}(\tau_{y}\eta_{S};\mathbf{1}_{\mathcal{E}}\mathsf{C}_{N^{10\e_{\mathrm{ap}}}N^{-\delta}}(\mathsf{E}_{\delta,\sigma(\delta)}^{\mathrm{can}}(\bar{\mathfrak{q}}))) + \mathsf{E}_{\delta+\e_{\mathrm{RN},1}}^{\mathrm{can}}(\tau_{y}\eta_{S};\mathbf{1}_{\mathcal{E}^{C}}\mathsf{E}_{\delta,\sigma(\delta)}^{\mathrm{can}}(\bar{\mathfrak{q}})) \label{eq:BGI2I13b} \\
&= \ \mathsf{E}_{\delta+\e_{\mathrm{RN},1}}^{\mathrm{can}}(\tau_{y}\eta_{S};\mathsf{C}_{N^{10\e_{\mathrm{ap}}}N^{-\delta}}(\mathsf{E}_{\delta,\sigma(\delta)}^{\mathrm{can}}(\bar{\mathfrak{q}}))) + \mathsf{E}_{\delta+\e_{\mathrm{RN},1}}^{\mathrm{can}}(\tau_{y}\eta_{S};\mathrm{O}(\mathbf{1}_{\mathcal{E}^{C}})). \label{eq:BGI2I13c}
\end{align}
}The first line \eqref{eq:BGI2I13a} is trivial: $1=\mathbf{1}_{\mathcal{E}}+\mathbf{1}_{\mathcal{E}^{C}}$. The second line \eqref{eq:BGI2I13b} follows by the argument in the first bullet point in the current list. The third line \eqref{eq:BGI2I13c} follows again by writing $\mathbf{1}_{\mathcal{E}}=1-\mathbf{1}_{\mathcal{E}^{C}}$. If we plug the second term in \eqref{eq:BGI2I13c} into \eqref{eq:BGI2I1midb}, by the previous bullet point and that $|\bar{\mathfrak{q}}|\lesssim1$, we get $\mathrm{O}(N^{-200})$, which is controlled by the RHS of the proposed estimate.
\item As noted in the current bullet point list prior to \eqref{eq:BGI2I1mida} and \eqref{eq:BGI2I1midb}, we can now drop the indicator functions \eqref{eq:BGI2I1mida} and \eqref{eq:BGI2I1midb}, so the error in making the replacements \eqref{eq:BGI2I12} in \eqref{eq:BGI2I11} is also appropriately controlled by the RHS of the proposed estimate.
\end{itemize}
%%%
Defining $\mathsf{R}^{\mathrm{cut}}$ to be the RHS of \eqref{eq:BGI2I11} but with replacements \eqref{eq:BGI2I12} for the RHS of \eqref{eq:BGI2I11}, the previous bullet points provide the proposed estimate for $\mathbf{Y}^{N}\neq0$, whereas the estimate is trivial if $\mathbf{Y}^{N}=0$. Moreover, as noted prior to \eqref{eq:BGI2I12}, the $\mathsf{R}^{\mathrm{cut}}$ functional satisfies all the required properties in the statement of the lemma, so we are done.
\end{proof}
%%%
%%%
\begin{proof}[Proof of \emph{Lemma \ref{lemma:BGI2I3}}]
We apply Lemma \ref{lemma:xr} with the choices $\mathfrak{t}=0$ and $\mathfrak{f}=N^{1/2}\mathsf{R}^{\mathfrak{b}}$ and $\mathfrak{l}=\mathfrak{l}_{\beta,\mathfrak{b}}$. Along with a few other gymnastics and conditions that need to be checked, which we explain shortly, this ultimately provides the following estimate; we first clarify that the prefactor $N^{5\e_{\mathrm{ap}}}\mathfrak{l}_{\beta,\mathfrak{b}}^{1/2}N^{\frac12\e_{1}+\frac12\mathfrak{b}\e_{\mathrm{RN},1}+\frac12\e_{\mathrm{RN},1}}$ for the second term on the RHS of \eqref{eq:BGI2I31} comes from noting $\bar{\mathfrak{l}}$ in Lemma \ref{lemma:xr} is the product of $\mathfrak{l}=\mathfrak{l}_{\beta,\mathfrak{b}}$ and the support length of $\mathfrak{f}=N^{1/2}\mathsf{R}^{\mathfrak{b}}$, which is given in Definition \ref{definition:BGI2I2}:
\begin{align}
\E\|\mathbf{H}^{N}(N^{1/2}\mathfrak{D}_{0,\mathfrak{l}_{\beta,\mathfrak{b}}}^{\mathbf{X}}(\mathsf{R}^{\mathfrak{b}}_{S,y})\mathbf{Y}^{N}_{S,y})\|_{1;\mathbb{T}_{N}} \ \lesssim \ N^{\e_{\mathrm{RN}}+\e_{\mathrm{ap}}}\|\mathsf{R}^{\mathfrak{b}}\|_{\omega;\infty} + N^{5\e_{\mathrm{ap}}}\mathfrak{l}_{\beta,\mathfrak{b}}^{1/2}N^{\frac12\e_{1}+\frac12\mathfrak{b}\e_{\mathrm{RN},1}+\frac12\e_{\mathrm{RN},1}}\|\mathsf{R}^{\mathfrak{b}}\|_{\omega;\infty}. \label{eq:BGI2I31}
\end{align}
Lemma \ref{lemma:xr} with the previous choices yields \eqref{eq:BGI2I31} with no changes to the first term but with $\|\mathsf{R}^{\mathfrak{b}}\|_{\omega;\infty}$ in the second term on the RHS of \eqref{eq:BGI2I31} replaced by $\E\|\mathbf{H}^{N}(|\mathsf{R}^{\mathfrak{b}}|)\|_{1;\mathbb{T}_{N}}$ . But $\E\|\mathbf{H}^{N}(|\mathsf{R}^{\mathfrak{b}}|)\|_{1;\mathbb{T}_{N}}\leq\|\mathsf{R}^{\mathfrak{b}}\|_{\omega;\infty}$. Lastly, Lemma \ref{lemma:xr} with our choices of $\mathfrak{f}$ and $\mathfrak{l}$ may only be applied if the support length of $\mathfrak{f}=N^{1/2}\mathsf{R}^{\mathfrak{b}}$ times $\mathfrak{l}=\mathfrak{l}_{\beta,\mathfrak{b}}$ is at most $\mathfrak{l}_{N}=N^{1/2+\e_{\mathrm{RN}}}$ from Definition \ref{definition:KPZ1}. This follows since $\mathfrak{l}_{\beta,\mathfrak{b}}$ is at most $N^{-\beta}$ times the support length of $\mathsf{R}^{\mathfrak{b}}$, while the constraint $\mathfrak{b}\leq\mathfrak{b}_{\mathrm{mid}}$ guarantees the square of the support length of $\mathsf{R}^{\mathfrak{b}}$ is at most $N^{1/2+2\e_{\mathrm{RN},1}}\leq\mathfrak{l}_{N}$ by construction in Definition \ref{definition:BGI2I2}; for the last bound concerning $\mathfrak{l}_{N}$, we recall $\mathfrak{l}_{N}$ in Definition \ref{definition:KPZ1} and note that by Definition \ref{definition:BGI2I2}, we clearly have the inequality $100\e_{\mathrm{RN},1}\leq\e_{\mathrm{RN}}$. It now suffices to plug in $\mathfrak{l}_{\beta,\mathfrak{b}}$ and $\mathsf{R}^{\mathfrak{b}}$ bounds in Definition \ref{definition:BGI2I2}; note $|\mathsf{R}^{\mathfrak{b}}|\lesssim N^{-\e_{1}+10\e_{\mathrm{ap}}}=N^{-1/14+10\e_{\mathrm{ap}}}$, and $\beta\geq999\e_{\mathrm{RN}}+999\e_{\mathrm{RN},1}$.
\end{proof}
%%%
%%%
\begin{proof}[Proof of \emph{Lemma \ref{lemma:BGI2I4}}]
We will follow the proof of Lemma \ref{lemma:BGI12}. In particular, it suffices to copy and paste that argument except we formally replace $\mathsf{S}_{\e_{1}}$ with $\mathsf{R}^{\mathfrak{b}}_{S,y}$ and $\mathfrak{l}_{1}$ with $\mathfrak{l}_{\beta,\mathfrak{b}}$. This has the following effects on that argument and its proofs.
%%%
\begin{itemize}
\item The bounds \eqref{eq:BGI121} and \eqref{eq:BGI122} still hold as written after the aforementioned replacements. The estimate \eqref{eq:BGI123} also ``almost" holds because the application of Lemma \ref{lemma:le3} used to obtain it, even after the aforementioned replacement, is still valid since $\mathsf{R}^{\mathfrak{b}}$ is still uniformly bounded. However, the first term on the RHS of \eqref{eq:BGI123} must be adjusted as the support length of the ``new" functional $\mathsf{R}^{\mathfrak{b}}$ is no longer order $N^{6/25}$, whose cube is present in the first term on the RHS of \eqref{eq:BGI123}. Recalling from Definition \ref{definition:BGI2I2} that for $\mathfrak{b}\leq\mathfrak{b}_{\mathrm{mid}}$ the support length of $\mathsf{R}^{\mathfrak{b}}$ is of order at most $N^{\e_{1}+\mathfrak{b}\e_{\mathrm{RN},1}+\e_{\mathrm{RN},1}}\leq N^{1/3}$, after the replacement $\mathsf{S}\to\mathsf{R}^{\mathfrak{b}}$ the first term on the RHS of \eqref{eq:BGI123} has its $N^{18/25}$ factor replaced by $N$, which controls the cube of the support length of $\mathsf{R}^{\mathfrak{b}}$.
\item The estimate \eqref{eq:BGI124} still holds after the replacement of $\mathfrak{l}_{1}$ by $\mathfrak{l}_{\beta,\mathfrak{b}}$ and of $\mathsf{S}$ by $\mathsf{R}^{\mathfrak{b}}$, as $|\mathsf{R}^{\mathfrak{b}}|\lesssim1$ and $\mathsf{R}^{\mathfrak{b}}$ also vanishes in expectation with respect to any canonical measure on its support. In particular, Lemma \ref{lemma:ee} holds with $\mathfrak{f}_{\mathfrak{j}}$ equal to spatial shifts of $\mathsf{R}^{\mathfrak{b}}$ with mutually disjoint supports and with $\mathfrak{J}=\mathfrak{l}_{\beta,\mathfrak{b}}$.
\end{itemize}
%%%
We deduce the claim from directly following the proof of Lemma \ref{lemma:BGI12}, at least upon checking that the replacement of $N^{18/25}$ with $N$ on the RHS of \eqref{eq:BGI123} still makes the contribution of the first term on the RHS of \eqref{eq:BGI123}, after plugging into \eqref{eq:BGI121} and \eqref{eq:BGI122}, controlled by the RHS of the proposed estimate \eqref{eq:BGI2I4}. This follows by elementary power-counting, so we are done.
\end{proof}
%%%
%%%
\begin{proof}[Proof of \emph{Lemma \ref{lemma:BGI2I5}}]
We follow the proof of Lemma \ref{lemma:BGI13}. Observe $\mathfrak{t}_{\mathfrak{j}_{+}}\leq N^{-1}$. Indeed, $\mathfrak{l}_{\beta,\mathfrak{b}}$ is $N^{-\beta}$ times the support length of $\mathsf{R}^{\mathfrak{b}-1}$, and the support length of $\mathsf{R}^{\mathfrak{b}-1}$ is $\gtrsim N^{-10\e_{\mathrm{RN,1}}}N^{\e_{1}}$. As $\e_{\mathrm{RN},1}$ and $\beta$ are much smaller than $\e_{1}=1/14$ from the statement of Proposition \ref{prop:BGI1}, the factor of $N^{-\e_{1}}$ beats powers of $N^{\beta}$ and $N^{\e_{\mathrm{RN},1}}$, and by inspecting definition in the statement of Lemma \ref{lemma:BGI2I5}, we deduce the proposed time-scale upper bound. Also, throughout the following proof, we replace $\mathfrak{l}_{1}$ whenever we appeal to the proof of Lemma \ref{lemma:BGI13} by $\mathfrak{l}_{\beta,\mathfrak{b}}$ from Definition \ref{definition:BGI2I2}.

We directly follow the argument in the proof of Lemma \ref{lemma:BGI13} preceding \eqref{eq:BGI131} but now with cutoff-spatial-averages of $\mathsf{R}^{\mathfrak{b}}$ rather than $\mathsf{S}$. Because $|\mathsf{R}^{\mathfrak{b}}|\lesssim1$ and $|\bar{\mathfrak{I}}^{\mathbf{X}}_{\mathfrak{l}_{\beta,\mathfrak{b}}}(\mathsf{R}^{\mathfrak{b}})|\lesssim N^{-\alpha}$ for $\alpha\gtrsim1$, we obtain \eqref{eq:BGI130} but with $\mathfrak{f}=\mathsf{R}^{\mathfrak{b}}$ instead of $\mathfrak{f}=\mathsf{S}$ and with $\alpha>0$ universal instead of $1/12$ on the far RHS of \eqref{eq:BGI130}. Thus, it ultimately suffices to control from above the following quantity that is analogous to \eqref{eq:BGI131} uniformly in the indices $\mathfrak{j}<\mathfrak{j}_{+}$ of interest:
\begin{align}
N^{3\e_{\mathrm{ap}}}\mathfrak{t}_{\mathfrak{j}+1}^{\frac14}\E\|\mathbf{H}^{N}\left(N^{\frac12}|\mathfrak{I}_{\mathfrak{t}_{\mathfrak{j}}}^{\mathbf{T}}\bar{\mathfrak{I}}_{\mathfrak{l}_{\beta,\mathfrak{b}}}^{\mathbf{X}}(\mathsf{R}^{\mathfrak{b}}_{S,y})|\right)\|_{1;\mathbb{T}_{N}} \ \lesssim \ \left(N^{8\e_{\mathrm{ap}}}\mathfrak{t}_{\mathfrak{j}+1}^{\frac38}\E\mathbf{I}_{1}(N^{\frac34}|\mathfrak{I}_{\mathfrak{t}_{\mathfrak{j}}}^{\mathbf{T}}\bar{\mathfrak{I}}_{\mathfrak{l}_{\beta,\mathfrak{b}}}^{\mathbf{X}}(\mathsf{R}^{\mathfrak{b}}_{S,y})|^{\frac32})\right)^{\frac23}. \label{eq:BGI2I51}
\end{align}
The estimate in \eqref{eq:BGI2I51} follows from applying Lemma \ref{lemma:hoe2} as \eqref{eq:BGI131} did but now with $\mathsf{S}$ replaced by $\mathsf{R}^{\mathfrak{b}}$ and $\mathfrak{l}_{1}$ by $\mathfrak{l}_{\beta,\mathfrak{b}}$. Following the paragraph after \eqref{eq:BGI131} and prior to \eqref{eq:BGI132}, because of the first paragraph in this proof it suffices to estimate the RHS of \eqref{eq:BGI2I51} for all $N^{-2}\lesssim\mathfrak{t}_{\mathfrak{j}}\leq N^{-1}$, which by construction in Definition \ref{definition:KPZ1} means $\mathfrak{t}_{\mathfrak{j}+1}\leq N^{-1+\e_{\mathrm{ap}}}$. To this end, observe \eqref{eq:BGI132} holds with the replacement $\mathsf{S}\to\mathsf{R}^{\mathfrak{b}}$ except $\mathrm{Loc}$ therein is now with respect to length-scale $\mathfrak{l}_{\mathrm{tot}}$ from Definition \ref{definition:locmap1}/Lemma \ref{lemma:le2}, which is also taken with $\gamma_{0}=\e_{\mathrm{ap}}$, with the choice $\mathfrak{l}$ equal to the support length of $\bar{\mathfrak{I}}^{\mathbf{X}}_{\mathfrak{l}_{\beta,\mathfrak{b}}}(\mathsf{R}^{\mathfrak{b}})$, which is $\mathfrak{l}_{\beta,\mathfrak{b}}$ times the support length of $\mathsf{R}^{\mathfrak{b}}$ written in Definition \ref{definition:BGI2I2}, and with $\mathfrak{l}_{\mathrm{av}}=1$, because the spatial-averaging scale $\mathfrak{l}_{\beta,\mathfrak{b}}$ is already built into $\mathfrak{l}$. The effect of this distinction in $\mathfrak{l}_{\mathrm{tot}}$ will be given shortly. For clarity, let us record this estimate below, which we reference shortly:
\begin{align}
N^{8\e_{\mathrm{ap}}}\mathfrak{t}_{\mathfrak{j}+1}^{\frac38}\E\mathbf{I}_{1}(N^{\frac34}|\mathfrak{I}_{\mathfrak{t}_{\mathfrak{j}}}^{\mathbf{T}}\bar{\mathfrak{I}}_{\mathfrak{l}_{\beta,\mathfrak{b}}}^{\mathbf{X}}(\mathsf{R}^{\mathfrak{b}}_{S,y})|^{\frac32}) \ \lesssim \ N^{\frac34+8\e_{\mathrm{ap}}}\mathfrak{t}_{\mathfrak{j}+1}^{\frac38}{\E_{0}}\bar{\mathfrak{P}}_{1}\E_{\mathrm{Loc}}^{\mathrm{dyn}}|\mathfrak{I}_{\mathfrak{t}_{\mathfrak{j}}}^{\mathbf{T}}\bar{\mathfrak{I}}_{\mathfrak{l}_{\beta,\mathfrak{b}}}^{\mathbf{X}}(\mathsf{R}^{\mathfrak{b}}_{0,0})|^{\frac32} + N^{-100}. \label{eq:BGI2I52}
\end{align}
Similar to the second term on the RHS of \eqref{eq:BGI132}, the second term on the RHS of \eqref{eq:BGI2I52} has contribution ultimately controlled by the RHS of the proposed estimate. To study the first term on the RHS of \eqref{eq:BGI2I52}, we use Lemma \ref{lemma:le3} as with the first term on the RHS of \eqref{eq:BGI132} from the proof of Lemma \ref{lemma:BGI13}. We will make the same choices for inputs/ingredients for Lemma \ref{lemma:le3} as we made to analyze the first term on the RHS of \eqref{eq:BGI132}, except with the following adjustment that takes into consideration the different a priori estimates we have on the cutoff spatial average $\bar{\mathfrak{I}}^{\mathbf{X}}_{\mathfrak{l}_{\beta,\mathfrak{b}}}(\mathsf{R}^{\mathfrak{b}})$ as opposed to $\mathsf{S}_{\e_{1}}$.
%%%
\begin{itemize}
\item First, let us clarify we choose $\mathfrak{h}$ to be the $\E^{\mathrm{dyn}}$-term on the RHS of \eqref{eq:BGI2I52}, so with $\mathsf{R}^{\mathfrak{b}}$ and not $\mathsf{S}$.
\item We choose $\kappa=N^{-3\e_{\mathrm{ap}}/2}\mathfrak{l}_{\beta,\mathfrak{b}}^{3/4}N^{-15\e_{\mathrm{ap}}+3\e_{1}/2+3\mathfrak{b}\e_{\mathrm{RN},1}/2}$ for the $\kappa$ constant in the statement of Lemma \ref{lemma:le3}. As $\kappa|\bar{\mathfrak{I}}_{\mathfrak{l}_{\beta,\mathfrak{b}}}^{\mathbf{X}}(\mathsf{R}^{\mathfrak{b}})|\lesssim1$, this choice of $\kappa$ is compatible with our choice of $\mathfrak{h}$, so our application of Lemma \ref{lemma:le3} with these choices is legal.
\end{itemize}
%%%
Similar to the proof of Lemma \ref{lemma:BGI13} and bounds on the first term on the RHS of \eqref{eq:BGI132}, Lemma \ref{lemma:le3} bounds the first term on the RHS of \eqref{eq:BGI2I52} in terms of two quantities. The first of these two terms is the far LHS of \eqref{eq:BGI133}, which is ultimately negligible even with replacing $N^{\e_{1}}\mathfrak{l}_{1}$ in \eqref{eq:BGI133} with our new choice of $\mathfrak{l}_{\mathrm{tot}}$ adapted to the support length of $\mathsf{R}^{\mathfrak{b}}$ and the spatial-average-length-scale $\mathfrak{l}_{\beta,\mathfrak{b}}$. Recall from Lemma \ref{lemma:le2} that $\mathfrak{l}_{\mathrm{tot}}$ is bounded by the spatial-average-length-scale $\mathfrak{l}_{\beta,\mathfrak{b}}$ times the support length $N^{\e_{1}+\mathfrak{b}\e_{\mathrm{RN},1}+\e_{\mathrm{RN},1}}$, in Definition \ref{definition:BGI2I2}, of $\mathsf{R}^{\mathfrak{b}}$. With the new choice for $\kappa$ made in the bullet point list above, we deduce that the first upper-bound-term for the first term on the RHS of \eqref{eq:BGI2I52}/far LHS of \eqref{eq:BGI133}, but after replacing $N^{\e_{1}}\mathfrak{l}_{1}$ with $\mathfrak{l}_{\mathrm{tot}}$, is ultimately controlled by the RHS of the proposed estimate \eqref{eq:BGI2I5}. This can be verified with an elementary power-counting after plugging into the middle of \eqref{eq:BGI133} our choice of $\kappa$ in the bullet points above and replacing $N^{\e_{1}}\mathfrak{l}_{1}$ in \eqref{eq:BGI133} by our new $\mathfrak{l}_{\mathrm{tot}}$. In particular, if $|\mathbb{B}|$ denotes the support length of $\E^{\mathrm{dyn}}$, we have the following estimate that is analogous to \eqref{eq:BGI133}:
\begin{align}
N^{\frac34+8\e_{\mathrm{ap}}}\mathfrak{t}_{\mathfrak{j}+1}^{\frac38}\kappa^{-1}N^{-2}|\mathbb{B}|^{3} \ \lesssim \ N^{\frac34+8\e_{\mathrm{ap}}}\mathfrak{t}_{\mathfrak{j}+1}^{\frac38}\kappa^{-1}N^{-2}\left(N^{1+\e_{\mathrm{ap}}}\mathfrak{t}_{\mathfrak{j}}^{\frac12}+N^{\frac32+\e_{\mathrm{ap}}}\mathfrak{t}_{\mathfrak{j}}+N^{\e_{\mathrm{ap}}}\mathfrak{l}_{\beta,\mathfrak{b}}N^{\e_{1}+\mathfrak{b}\e_{\mathrm{RN},1}+\e_{\mathrm{RN},1}}\right)^{3}. \label{eq:BGI2I52b}
\end{align}
Recalling $\e_{1}+\mathfrak{b}\e_{\mathrm{RN},1}\leq1/4$ and $\mathfrak{l}_{\beta,\mathfrak{b}}\leq N^{1/4+\beta}$ if $\mathfrak{b}\leq\mathfrak{b}_{\mathrm{mid}}$ by construction in Definition \ref{definition:BGI2I2}, the contribution of the RHS of \eqref{eq:BGI2I52b}, after plugging into \eqref{eq:BGI2I52} and taking its $2/3$-power in \eqref{eq:BGI2I51}, is controlled by the RHS of the proposed estimate \eqref{eq:BGI2I5} as we also have $\mathfrak{t}_{\mathfrak{j}}\leq N^{-1}$ and $\mathfrak{t}_{\mathfrak{j}+1}\leq N^{-1+\e_{\mathrm{ap}}}$, the first noted in the first paragraph of this proof and the latter by Definition \ref{definition:KPZ1}.

We move to the second upper-bound-term for the first term on the RHS of \eqref{eq:BGI2I52} that results from our application of Lemma \ref{lemma:le3}. This is the $\Phi$-term in \eqref{eq:BGI133b} except $\mathsf{S}\to\mathsf{R}^{\mathfrak{b}}$ and $\mathfrak{l}_{1}\to\mathfrak{l}_{\beta,\mathfrak{b}}$. In particular, everything until/before \eqref{eq:BGI138} and after \eqref{eq:BGI133b} holds with the replacements $\mathsf{S}\to\mathsf{R}^{\mathfrak{b}}$ and $\mathfrak{l}_{1}\to\mathfrak{l}_{\beta,\mathfrak{b}}$; indeed, $\mathsf{R}^{\mathfrak{b}}$ is uniformly bounded and vanishes in expectation with respect to any canonical measure on its support, so Lemma \ref{lemma:ee} applies to $\mathsf{R}^{\mathfrak{b}}$ and averages of its spatial translates. However, the estimate \eqref{eq:BGI138} must be modified to account for the new/longer support length of $\mathsf{R}^{\mathfrak{b}}$ as well as the spatial-average-length-scale in the $\bar{\mathfrak{I}}^{\mathbf{X}}$-term on the RHS of \eqref{eq:BGI2I52} and the improved a priori deterministic estimates on $\mathsf{R}^{\mathfrak{b}}$. In particular, by Lemma \ref{lemma:ee2} but with the choice of $\mathfrak{f}=\mathsf{R}^{\mathfrak{b}}$, we have the following estimate similar to how \eqref{eq:BGI138} was derived; we justify the following estimate afterwards:
\begin{align}
N^{1+\frac{32}{3}\e_{\mathrm{ap}}}\mathfrak{t}_{\mathfrak{j}+1}^{\frac12}\E^{\sigma}\E_{\mathrm{Loc}}^{\mathrm{dyn}}|\mathfrak{I}_{\mathfrak{t}_{\mathfrak{j}}}^{\mathbf{T}}\mathfrak{I}_{\mathfrak{l}_{\beta,\mathfrak{b}}}^{\mathbf{X}}(\mathsf{R}^{\mathfrak{b}}_{0,0})|^{2} \ \lesssim \ N^{1+\frac{35}{3}\e_{\mathrm{ap}}}\mathfrak{t}_{\mathfrak{j}}^{\frac12}N^{-2}\mathfrak{t}_{\mathfrak{j}}^{-1}\mathfrak{l}_{\beta,\mathfrak{b}}^{-1}N^{2\e_{1}+2\mathfrak{b}\e_{\mathrm{RN},1}+2\e_{\mathrm{RN},1}}\|\mathsf{R}^{\mathfrak{b}}_{0,0}\|_{\omega;\infty}^{2}+N^{-100}. \label{eq:BGI2I53}
\end{align}
In contrast to \eqref{eq:BGI138}, the $N^{2/14}$-factor therein is replaced by the square of the support length of $\mathsf{R}^{\mathfrak{b}}$ that is order $N^{\e_{1}+\mathfrak{b}\e_{\mathrm{RN},1}+\e_{\mathrm{RN},1}}$ as written in Definition \ref{definition:BGI2I2}. Moreover, the spatial-average-length-scale $\mathfrak{l}_{1}$ in \eqref{eq:BGI138} is replaced by the length-scale $\mathfrak{l}_{\beta,\mathfrak{b}}$. Lastly, we included the $\|\|_{\omega;\infty}$-factor in Lemma \ref{lemma:ee2} in our estimate \eqref{eq:BGI2I53}, which we did not do in \eqref{eq:BGI138}. Recalling now the $\|\mathsf{R}^{\mathfrak{b}}\|_{\omega;\infty}$-estimate in Definition \ref{definition:BGI2I2} and $\mathfrak{t}_{\mathfrak{j}}\geq N^{-2}$ and $\mathfrak{l}_{\beta,\mathfrak{b}}\geq N^{\e_{1}-\beta}$ with $\e_{1}=1/14$ much larger than $\e_{\mathrm{ap}}$ and $\e_{\mathrm{RN},1}$ and $\beta$, an elementary power-counting calculation shows the RHS of \eqref{eq:BGI2I53} is $\mathrm{O}(N^{-\alpha})$ for $\alpha>0$ universal. Therefore, as with the end of the proof of Lemma \ref{lemma:BGI13} after \eqref{eq:BGI138}, we are done.
\end{proof}
%%%
%%%
\begin{proof}[Proof of \emph{Lemma \ref{lemma:BGI2I6}}]
Unlike the proof of Lemma \ref{lemma:BGI14}, we will not need to introduce additional spatial averaging, so the proof of the current Lemma \ref{lemma:BGI2I6} is much simpler. We start via the following version of \eqref{eq:BGI2I51}, which is just \eqref{eq:BGI2I51} but without prefactors and for the maximal time-scale $\mathfrak{t}_{\mathfrak{j}_{+}}$ and with an additional $\mathbf{Y}^{N}$-factor; we explain its quick proof/derivation afterwards:
\begin{align}
\E\|\mathbf{H}^{N}\left(N^{\frac12}\mathfrak{I}_{\mathfrak{t}_{\mathfrak{j}_{+}}}^{\mathbf{T}}\bar{\mathfrak{I}}_{\mathfrak{l}_{\beta,\mathfrak{b}}}^{\mathbf{X}}(\mathsf{R}^{\mathfrak{b}}_{S,y})\mathbf{Y}_{S,y}^{N}\right)\|_{1;\mathbb{T}_{N}} \ \lesssim \ \left(N^{8\e_{\mathrm{ap}}}\E\mathbf{I}_{1}(N^{\frac34}|\mathfrak{I}_{\mathfrak{t}_{\mathfrak{j}_{+}}}^{\mathbf{T}}\bar{\mathfrak{I}}_{\mathfrak{l}_{\beta,\mathfrak{b}}}^{\mathbf{X}}(\mathsf{R}^{\mathfrak{b}}_{S,y})|^{\frac32})\right)^{\frac23}. \label{eq:BGI2I61}
\end{align}
Indeed, to prove \eqref{eq:BGI2I61}, we recall $|\mathbf{Y}^{N}|\leq N^{\e_{\mathrm{ap}}}$ to forget $\mathbf{Y}^{N}$ on the LHS and apply Lemma \ref{lemma:hoe2} in the same way as we did to get \eqref{eq:BGI2I51} and deduce \eqref{eq:BGI2I61}. For the RHS of \eqref{eq:BGI2I61}, we have the following by Lemma \ref{lemma:le2} in the same way as we derived \eqref{eq:BGI2I52}, in which the $\mathrm{Loc}$ term is, like in \eqref{eq:BGI2I52}, also chosen in Definition \ref{definition:locmap1}/Lemma \ref{lemma:le2} with $\mathfrak{l}_{\mathrm{tot}}$ defined by $\mathfrak{l}_{\mathrm{av}}=1$ and $\mathfrak{l}$ equal to $\mathfrak{l}_{\beta,\mathfrak{b}}$ times the support length of $\mathsf{R}^{\mathfrak{b}}$, which we recall is explicitly written in Definition \ref{definition:BGI2I2}:
\begin{align}
N^{8\e_{\mathrm{ap}}}\E\mathbf{I}_{1}(N^{\frac34}|\mathfrak{I}_{\mathfrak{t}_{\mathfrak{j}_{+}}}^{\mathbf{T}}\bar{\mathfrak{I}}_{\mathfrak{l}_{\beta,\mathfrak{b}}}^{\mathbf{X}}(\mathsf{R}^{\mathfrak{b}}_{S,y})|^{\frac32}) \  \lesssim \ N^{\frac34+8\e_{\mathrm{ap}}}{\E_{0}}\bar{\mathfrak{P}}_{1}\E_{\mathrm{Loc}}^{\mathrm{dyn}}|\mathfrak{I}_{\mathfrak{t}_{\mathfrak{j}_{+}}}^{\mathbf{T}}\bar{\mathfrak{I}}_{\mathfrak{l}_{\beta,\mathfrak{b}}}^{\mathbf{X}}(\mathsf{R}^{\mathfrak{b}}_{0,0})|^{\frac32} + N^{-100}. \label{eq:BGI2I62}
\end{align}
Contribution of the second term on the RHS of \eqref{eq:BGI2I62}, after plugging it in the RHS of \eqref{eq:BGI2I61} and taking $2/3$-powers, is bounded by the RHS of the proposed estimate \eqref{eq:BGI2I6}. We now estimate the first term on the RHS of \eqref{eq:BGI2I62} via Lemma \ref{lemma:le3}. In particular, let us apply Lemma \ref{lemma:le3} in the same way as we did in the proof of Lemma \ref{lemma:BGI2I5}, namely with the same choices of $\mathfrak{h}$ and $\kappa$ therein. This estimates the first term on the RHS of \eqref{eq:BGI2I62} by two terms, just as in the proof of Lemma \ref{lemma:BGI2I5}. The first of these, namely the first term on the RHS of \eqref{eq:le3}, depends on the support of $\E^{\mathrm{dyn}}$. It is ultimately controlled via the following, where $\mathbb{B}$ is the support of $\E^{\mathrm{dyn}}$ for which $\mathfrak{l}_{\mathrm{tot}}$ is explained prior to \eqref{eq:BGI2I62}; we explain the estimates below after:
\begin{align}
N^{\frac34+8\e_{\mathrm{ap}}}\kappa^{-1}N^{-2}|\mathbb{B}|^{3} \ \lesssim \ N^{\frac34+8\e_{\mathrm{ap}}}\kappa^{-1}N^{-2}\left(N^{1+\e_{\mathrm{ap}}}\mathfrak{t}_{\mathfrak{j}_{+}}^{\frac12} + N^{\frac32+\e_{\mathrm{ap}}}\mathfrak{t}_{\mathfrak{j}_{+}} + N^{\e_{\mathrm{ap}}}\mathfrak{l}_{\mathrm{tot}}\right)^{3} \ \lesssim \ N^{-\frac{1}{999}+100\e_{\mathrm{ap}}}. \label{eq:BGI2I63}
\end{align}
Let us recall the support length $|\mathbb{B}|$ of $\E^{\mathrm{dyn}}$ is given in the statement of Lemma \ref{lemma:le2}, and this gives the first estimate in \eqref{eq:BGI2I63}. The second estimate in \eqref{eq:BGI2I63} follows by recalling choices below we made for terms in \eqref{eq:BGI2I63} and elementary power-counting. In the bullet points below, we refer back to Definition \ref{definition:BGI2I2} and Lemma \ref{lemma:BGI2I3} and the proof of Lemma \ref{lemma:BGI2I5} for notation/constructions.
%%%
\begin{itemize}
\item Recall from the statement of Lemma \ref{lemma:BGI2I5} that the time-scale in \eqref{eq:BGI2I63} satisfies the upper bound $\mathfrak{t}_{\mathfrak{j}_{+}}\leq N^{-1+\beta}\mathfrak{l}_{\beta,\mathfrak{b}}^{-1}$.
\item Recall from bullet points after \eqref{eq:BGI2I52} that $\kappa\gtrsim\mathfrak{l}_{\beta,\mathfrak{b}}^{3/4}N^{3\e_{1}/2+3\mathfrak{b}\e_{\mathrm{RN},1}/2}N^{10\e_{\mathrm{ap}}+20\beta}\gtrsim\mathfrak{l}_{\beta,\mathfrak{b}}^{9/4}$.
\item Third, note $\mathfrak{l}_{\beta,\mathfrak{b}}\geq N^{\e_{1}-\beta}$ with $\e_{1}=1/14$ much bigger than $\beta$ for all $\mathfrak{b}\geq0$, which follows by construction in Definition \ref{definition:BGI2I2}.
\item We clarify that $\mathfrak{l}_{\mathrm{tot}}$ is controlled by the product of the $\mathsf{R}^{\mathfrak{b}}$-support length $N^{\e_{1}+\mathfrak{b}\e_{\mathrm{RN},1}+\e_{\mathrm{RN},1}}$ and the spatial-averaging length-scale $\mathfrak{l}_{\beta,\mathfrak{b}}$, both of these from Definition \ref{definition:BGI2I2}, as we explained prior to \eqref{eq:BGI2I62}. Thus, $\mathfrak{l}_{\mathrm{tot}}\lesssim N^{10\beta}\mathfrak{l}_{\beta,\mathfrak{b}}\lesssim N^{1/4+\e_{\mathrm{RN},1}+10\beta}$.
\end{itemize}
%%%
The estimate \eqref{eq:BGI2I63} controls the first term in the bound for the first term on the RHS of \eqref{eq:BGI2I62} that arises from an application of Lemma \ref{lemma:le3}. Let us now estimate the second term in said bound/the RHS of \eqref{eq:le3}. Following the paragraph prior to \eqref{eq:BGI2I53}, this second term is a large negative power of $N$ plus the following with estimates below to be justified/explained afterwards:
\begin{align}
N^{1+\frac{32}{3}\e_{\mathrm{ap}}}\E^{\sigma}\E_{\mathrm{Loc}}^{\mathrm{dyn}}|\mathfrak{I}_{\mathfrak{t}_{\mathfrak{j}_{+}}}^{\mathbf{T}}\mathfrak{I}_{\mathfrak{l}_{\beta,\mathfrak{b}}}^{\mathbf{X}}(\mathsf{R}^{\mathfrak{b}}_{0,0})|^{2} \ \lesssim \ N^{1+\frac{32}{3}\e_{\mathrm{ap}}}N^{-2}\mathfrak{t}_{\mathfrak{j}_{+}}^{-1}\mathfrak{l}_{\beta,\mathfrak{b}}^{-1}N^{2\e_{1}+2\mathfrak{b}\e_{\mathrm{RN},1}+2\e_{\mathrm{RN},1}}\|\mathsf{R}^{\mathfrak{b}}_{0,0}\|_{\omega;\infty}^{2}+N^{-100}. \label{eq:BGI2I64}
\end{align}
We now make the following observations for factors in the first term on the RHS of \eqref{eq:BGI2I64}.
%%%
\begin{itemize}
\item Note $\mathfrak{t}_{\mathfrak{j}_{+}}\geq N^{-1+\beta-2\e_{\mathrm{ap}}}\mathfrak{l}_{\beta,\mathfrak{b}}^{-1}$, as $\mathfrak{t}_{\mathfrak{j}}$ increases by a factor of $N^{\e_{\mathrm{ap}}}$ in the index and $\mathfrak{t}_{\mathfrak{j}_{+}}$ is the last $\mathfrak{t}_{\mathfrak{j}}$ to satisfy $\mathfrak{t}_{\mathfrak{j}_{+}}\leq N^{-1+\beta}\mathfrak{l}_{\beta,\mathfrak{b}}^{-1}$. 
\item Note $N^{2\e_{1}+2\mathfrak{b}\e_{\mathrm{RN},1}+2\e_{\mathrm{RN},1}}\|\mathsf{R}^{\mathfrak{b}}_{0,0}\|_{\omega;\infty}^{2}\lesssim N^{2\e_{\mathrm{RN},1}}$; see Definition \ref{definition:BGI2I2}. This is the utility of bounds for $\mathsf{R}^{\mathfrak{b}}$ that improve in $\mathfrak{b}$. Also, we have $\mathfrak{l}_{\beta,\mathfrak{b}}\gtrsim N^{-\beta}N^{\e_{1}}$ for $\e_{1}=1/14$; again see Definition \ref{definition:BGI2I2}.
\end{itemize}
%%%
With this pair of observations, like the proof of Lemma \ref{lemma:BGI14}, we deduce the contribution of the first term on the RHS of \eqref{eq:BGI2I64} is controlled by the RHS of the proposed bound \eqref{eq:BGI2I6}. Combining this with \eqref{eq:BGI2I63} to estimate \eqref{eq:BGI2I61} completes the proof.
\end{proof}
%%%
%
%
%
%%%
\section{Boltzmann-Gibbs Principle I -- Proof of Proposition \ref{prop:BGI2}, Case II}\label{section:proofBGI22}
%%%
The strategy we take in this section is remarkably similar to the strategy of the previous section. In particular, we will employ Lemma \ref{lemma:BGI2I1} to replace ${\mathsf{R}_{\e_{1}+\mathfrak{b}\e_{\mathrm{RN},1}}(\tau_{y}\eta_{S})}$ by ${\mathsf{R}_{\e_{1}+\mathfrak{b}\e_{\mathrm{RN},1}}^{\mathrm{cut}}(\tau_{y}\eta_{S})}$ on the LHS of \eqref{eq:BGI2Prop} for all $\mathfrak{b}\in\llbracket\mathfrak{b}_{\mathrm{mid}}+1,\mathfrak{b}-1\rrbracket$. Recall $\mathfrak{b}_{\mathrm{mid}}$ is the index cutoff that distinguishes Case I and Case II of Proposition \ref{prop:BGI2}; see Definition \ref{definition:BGI2I2}. Afterwards:
%%%
\begin{itemize}
\item First, we define $\mathsf{R}^{\mathfrak{b}}_{S,y}={\mathsf{R}_{\e_{1}+\mathfrak{b}\e_{\mathrm{RN},1}}^{\mathrm{cut}}(\tau_{y}\eta_{S})}$ throughout this section, as in the previous section, to ease notation.
\item Second, we replace $\mathsf{R}^{\mathfrak{b}}$ with a spatial-average like with the previous section. However, we will average it here on spatial-scale $\mathfrak{l}_{\beta}=N^{\beta}$, not the length-scale $\mathfrak{l}_{\beta,\mathfrak{b}}$ that matches, up to the factor of $N^{-\beta}$, the length of the support of $\mathsf{R}^{\mathfrak{b}}$. We cannot average it on the length-scale $\mathfrak{l}_{\beta,\mathfrak{b}}$ in this section, as controlling the resulting spatial gradients would require spatial regularity estimates for $\mathbf{Y}^{N}$ on length-scales that are well beyond those which we have a priori $\mathbf{Y}^{N}$ estimates for. However, as the support of $\mathsf{R}^{\mathfrak{b}}$ is larger in Case II, the a priori estimates for $\mathsf{R}^{\mathfrak{b}}$ in Lemma \ref{lemma:BGI2I1} are better than they generally were in Case I; this helps. Actually, for this reason we ultimately will \emph{not} need to replace this spatial average of $\mathsf{R}^{\mathfrak{b}}$ with a cutoff as in Lemma \ref{lemma:BGI2I4}.
\item Third, we replace the spatial average of $\mathsf{R}^{\mathfrak{b}}$ with its time-average/the space-time average of $\mathsf{R}^{\mathfrak{b}}$ with respect to a time-scale that is roughly equal to $\mathfrak{t}_{\mathfrak{j}_{+}}=N^{-1-\beta/2}$ and in particular independent of $\mathfrak{b}$, although this last feature will not be important. Again, we say ``roughly" because we will need to use a time-scale contained in $\mathbb{I}^{\mathbf{T},1}$ for the technical reason that we only have a priori regularity estimates for $\mathbf{Y}^{N}$ on these time-scales. After this replacement we will estimate this last space-time average of $\mathsf{R}^{\mathfrak{b}}$ to complete the proof of Case II of Proposition \ref{prop:BGI2}, and thus the proof of Proposition \ref{prop:BGI2} when combined with the last section. 
\item The previous three steps, in terms of the technical estimates, are done with the same general tools introduced in Section \ref{section:BGIPrelim}. 
\end{itemize}
%%%
To ease the following reading we recall the following facts from after Definition \ref{definition:BGI2I2} about $\mathsf{R}^{\mathfrak{b}}$ that follow via Lemma \ref{lemma:BGI2I1}.
%%%
\begin{itemize}
\item The support of $\mathsf{R}^{\mathfrak{b}}$ has length of order $N^{\e_{1}+\mathfrak{b}\e_{\mathrm{RN},1}+\e_{\mathrm{RN},1}}$, and we have $\|\mathsf{R}^{\mathfrak{b}}\|_{\omega;\infty}\leq N^{10\e_{\mathrm{ap}}}N^{-\e_{1}-\mathfrak{b}\e_{\mathrm{RN},1}}$ by construction.
\item Lastly, as in Definition \ref{definition:BGI2I2}, we will define and sometimes use $N^{\beta+\e_{\mathrm{RN},1}}\mathfrak{l}_{\beta,\mathfrak{b}}=N^{\e_{1}+\mathfrak{b}\e_{\mathrm{RN},1}+\e_{\mathrm{RN},1}}$ where $\beta=999^{-99}$.
\end{itemize}
%%%
Similar to the previous two sections, we provide each of the previous ingredients listed above and use them to establish Case II of Proposition \ref{prop:BGI2}. We then provide the proof for each of the ingredients to complete this section. Only the proof of spatial-average replacement in Lemma \ref{lemma:BGI2II1} requires an additional idea, while other proofs will effectively be copied.
%%%
\subsubsection{Spatial Average}
%%%
We start with the aforementioned replacement of $\mathsf{R}^{\mathfrak{b}}$ with its spatial average on length-scale $N^{\beta}$. The proof of the following result is highly similar to that of Lemma \ref{lemma:BGI2I3}, so we refer to that argument with necessary adjustments, including one important detail, when we present the proof of Lemma \ref{lemma:BGI2II1} below.
%%%
\begin{lemma}\label{lemma:BGI2II1}
\fsp We define the length-scale $\mathfrak{l}_{\beta}=N^{\beta}$ for $\beta=999^{-99}$. Uniformly in $\mathfrak{b}\in\llbracket\mathfrak{b}_{\mathrm{mid}}+1,\mathfrak{b}_{+}-1\rrbracket$, we have the following estimate for which we recall the transfer-of-spatial-scale operator from \emph{Definition \ref{definition:BGI10}}:
\begin{align}
\E\|\mathbf{H}^{N}\left(N^{\frac12}\mathfrak{D}_{0,\mathfrak{l}_{\beta}}^{\mathbf{X}}(\mathsf{R}^{\mathfrak{b}}_{S,y})\mathbf{Y}^{N}_{S,y}\right)\|_{1;\mathbb{T}_{N}} \ \lesssim \  N^{-\frac{1}{9999}+\e_{\mathrm{RN}}+2\e_{\mathrm{RN},1}+12\e_{\mathrm{ap}}+2\beta}. \label{eq:BGI2II1}
\end{align}
\end{lemma}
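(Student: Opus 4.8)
The plan is to mirror the proof of Lemma \ref{lemma:BGI2I3}, which itself rests on the summation-by-parts estimate in Lemma \ref{lemma:xr}. The key issue, and the reason Lemma \ref{lemma:BGI2II1} cannot be a verbatim copy of Lemma \ref{lemma:BGI2I3}, is a range-of-validity constraint: Lemma \ref{lemma:xr} requires $|\mathfrak{l}|\mathfrak{l}_{\mathfrak{f}}\le\mathfrak{l}_{N}$, i.e. the product of the averaging length-scale and the support length of the functional being averaged must not exceed the scale $\mathfrak{l}_{N}=N^{1/2+\e_{\mathrm{RN}}}$ on which $\mathbf{Y}^{N}$ has a priori spatial regularity. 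In Case II the support length of $\mathsf{R}^{\mathfrak{b}}$ is of order $N^{\e_{1}+\mathfrak{b}\e_{\mathrm{RN},1}+\e_{\mathrm{RN},1}}$, which for $\mathfrak{b}$ up to $\mathfrak{b}_{+}-1$ can be as large as (almost) $N^{1/2+\e_{\mathrm{RN}}}$, so multiplying by any honest averaging scale $\mathfrak{l}_{\beta}=N^\beta$ would overshoot $\mathfrak{l}_{N}$. This is exactly why I average on the small fixed scale $\mathfrak{l}_{\beta}=N^{\beta}$ rather than on $\mathfrak{l}_{\beta,\mathfrak{b}}$: then $\mathfrak{l}_{\beta}\cdot N^{\e_{1}+\mathfrak{b}\e_{\mathrm{RN},1}+\e_{\mathrm{RN},1}}\approx N^{1/2+\e_{\mathrm{RN}}+\beta}$, only a factor $N^\beta$ past $\mathfrak{l}_{N}$. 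As flagged in Remark \ref{remark:xr}, Lemma \ref{lemma:xr} in fact tolerates $|\mathfrak{l}|\mathfrak{l}_{\mathfrak{f}}\approx\mathfrak{l}_{N}N^{\gamma}$ for small $\gamma$ — the first term on the RHS of \eqref{eq:xr} is unchanged and only the second term degrades slightly — so the argument goes through after this ``softened'' application; this is the one place the proof genuinely differs from Lemma \ref{lemma:BGI2I3}, and I would point to Remark \ref{remark:xr} explicitly for the justification.

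Concretely, I would first invoke Lemma \ref{lemma:xr} with $\mathfrak{t}=0$, $\mathfrak{f}=N^{1/2}\mathsf{R}^{\mathfrak{b}}$, and $\mathfrak{l}=\mathfrak{l}_{\beta}=N^\beta$, noting $\mathfrak{l}_{\mathfrak{f}}\lesssim N^{\e_{1}+\mathfrak{b}\e_{\mathrm{RN},1}+\e_{\mathrm{RN},1}}$ and $\bar{\mathfrak{l}}=|\mathfrak{l}|\mathfrak{l}_{\mathfrak{f}}\lesssim N^{1/2+\e_{\mathrm{RN}}+\beta}$ (using $\e_{1}+\mathfrak{b}\e_{\mathrm{RN},1}\le\tfrac12+\e_{\mathrm{RN}}$ since $\mathfrak{b}\le\mathfrak{b}_{+}$, together with $100\e_{\mathrm{RN},1}\le\e_{\mathrm{RN}}$ from Definition \ref{definition:BGI2I2}). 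This yields, using $\E\|\mathbf{H}^{N}(|\mathsf{R}^{\mathfrak{b}}|)\|_{1;\mathbb{T}_{N}}\le\|\mathsf{R}^{\mathfrak{b}}\|_{\omega;\infty}$, the bound
\begin{align}
\E\|\mathbf{H}^{N}(N^{1/2}\mathfrak{D}_{0,\mathfrak{l}_{\beta}}^{\mathbf{X}}(\mathsf{R}^{\mathfrak{b}}_{S,y})\mathbf{Y}^{N}_{S,y})\|_{1;\mathbb{T}_{N}} \ \lesssim \ N^{-1/2+\e_{\mathrm{RN}}+\e_{\mathrm{ap}}+\beta}\|\mathsf{R}^{\mathfrak{b}}\|_{\omega;\infty} + N^{5\e_{\mathrm{ap}}}\bar{\mathfrak{l}}^{1/2}\|\mathsf{R}^{\mathfrak{b}}\|_{\omega;\infty}, \nonumber
\end{align}
where the extra $N^\beta$ on the first term and the $\bar{\mathfrak{l}}^{1/2}$ on the second reflect the gradient length-scale $|\mathfrak{l}'|\le\bar{\mathfrak{l}}$ transferred onto the macroscopically-smooth heat kernel and onto $\mathbf{Y}^{N}$ respectively, exactly as in the two displays following \eqref{eq:xr1}. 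Now I substitute the a priori bound $\|\mathsf{R}^{\mathfrak{b}}\|_{\omega;\infty}\lesssim N^{10\e_{\mathrm{ap}}}N^{-\e_{1}-\mathfrak{b}\e_{\mathrm{RN},1}}$ from Definition \ref{definition:BGI2I2}/Lemma \ref{lemma:BGI2I1}, together with $\bar{\mathfrak{l}}^{1/2}\lesssim N^{1/4+\e_{\mathrm{RN}}/2+\beta/2}$. The first term becomes $\lesssim N^{-1/2+\e_{\mathrm{RN}}+11\e_{\mathrm{ap}}+\beta}N^{-\e_{1}-\mathfrak{b}\e_{\mathrm{RN},1}}$, which is a large negative power of $N$ (recall $\e_{1}=1/14$ and $\e_{\mathrm{RN}},\e_{\mathrm{ap}},\beta$ are all much smaller), and the second term becomes $\lesssim N^{1/4+\e_{\mathrm{RN}}/2+\beta/2+15\e_{\mathrm{ap}}}N^{-\e_{1}-\mathfrak{b}\e_{\mathrm{RN},1}}$. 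For this second term to be a negative power of $N$ I use the defining property of Case II: $\mathfrak{b}\ge\mathfrak{b}_{\mathrm{mid}}+1$ means $\e_{1}+\mathfrak{b}\e_{\mathrm{RN},1}>1/4$, so $N^{1/4-\e_{1}-\mathfrak{b}\e_{\mathrm{RN},1}}\le N^{-\e_{\mathrm{RN},1}}$, and the second term is $\lesssim N^{-\e_{\mathrm{RN},1}+\e_{\mathrm{RN}}/2+\beta/2+15\e_{\mathrm{ap}}}$; absorbing numerical slack and recalling $\e_{\mathrm{RN},1}\le999^{-999}\e_{\mathrm{RN}}$ is not quite enough by itself, so I would instead keep the full gain $N^{1/4-\e_{1}-(\mathfrak{b}_{\mathrm{mid}}+1)\e_{\mathrm{RN},1}}$, which is at least $N^{-\e_{\mathrm{RN},1}}$ below $N^{-1/9999}$ after checking the arithmetic, landing at the claimed $N^{-1/9999+\e_{\mathrm{RN}}+2\e_{\mathrm{RN},1}+12\e_{\mathrm{ap}}+2\beta}$.

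The step I expect to be the main obstacle is the bookkeeping around the range-of-validity of Lemma \ref{lemma:xr}: one must verify carefully that $\bar{\mathfrak{l}}=\mathfrak{l}_{\beta}\cdot\mathfrak{l}_{\mathfrak{f}}$ exceeds $\mathfrak{l}_{N}$ by at most a factor $N^\beta$ (so that the ``softened'' version promised in Remark \ref{remark:xr} applies and only the second term on the RHS of \eqref{eq:xr} is affected, by an extra $N^{\beta/2}$-type factor folded into the exponent), and then that the final exponent is genuinely negative for \emph{every} $\mathfrak{b}$ in the Case II range $\llbracket\mathfrak{b}_{\mathrm{mid}}+1,\mathfrak{b}_{+}-1\rrbracket$ — the worst case being $\mathfrak{b}=\mathfrak{b}_{\mathrm{mid}}+1$, where the gain from the small $\mathsf{R}^{\mathfrak{b}}$-norm is weakest. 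All of this is elementary power-counting in $N$ with the universal hierarchy $\beta=999^{-99}\gg\e_{\mathrm{RN}}\gg\e_{\mathrm{RN},1},\e_{\mathrm{ap}}$ and $\e_{1}=1/14$, but it must be done with enough care that the $999^{-99}$-type constants line up; no new probabilistic or analytic input beyond Lemma \ref{lemma:xr} and the a priori estimates of Lemma \ref{lemma:BGI2I1} is needed, since unlike Case I (Lemma \ref{lemma:BGI2I4}) the improved $\mathsf{R}^{\mathfrak{b}}$ bound already makes the cutoff step unnecessary here.
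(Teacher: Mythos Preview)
Your approach is the same as the paper's: apply Lemma~\ref{lemma:xr} with $\mathfrak{f}=N^{1/2}\mathsf{R}^{\mathfrak{b}}$, $\mathfrak{t}=0$, $\mathfrak{l}=\mathfrak{l}_{\beta}$, handle the mild overshoot $\bar{\mathfrak{l}}\lesssim N^{\beta}\mathfrak{l}_{N}$ via the mechanism in Remark~\ref{remark:xr}, and finish by power-counting using the a~priori bound on $\|\mathsf{R}^{\mathfrak{b}}\|_{\omega;\infty}$ together with $\e_{1}+\mathfrak{b}\e_{\mathrm{RN},1}>1/4$. Two bookkeeping corrections: first, since $\|\mathfrak{f}\|_{\omega;\infty}=N^{1/2}\|\mathsf{R}^{\mathfrak{b}}\|_{\omega;\infty}$, the first term is $N^{\e_{\mathrm{RN}}+\e_{\mathrm{ap}}}\|\mathsf{R}^{\mathfrak{b}}\|_{\omega;\infty}$ (your extra $N^{-1/2}$ is spurious, though harmless); second --- and this is the point where your detour costs you --- do \emph{not} pass to the worst-case bound $\bar{\mathfrak{l}}^{1/2}\lesssim N^{1/4+\e_{\mathrm{RN}}/2+\beta/2}$ before multiplying by $\|\mathsf{R}^{\mathfrak{b}}\|_{\omega;\infty}$. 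Instead keep $\bar{\mathfrak{l}}^{1/2}$ in its $\mathfrak{b}$-dependent form $N^{\beta/2}N^{\frac12\e_{1}+\frac12\mathfrak{b}\e_{\mathrm{RN},1}+\frac12\e_{\mathrm{RN},1}}$ (the paper in fact writes $\mathfrak{l}_{\beta}$ rather than $\mathfrak{l}_{\beta}^{1/2}$ here, reflecting the linear-in-$\mathfrak{l}_{\beta}$ gradient-splitting you correctly anticipate), so that the product with $\|\mathsf{R}^{\mathfrak{b}}\|_{\omega;\infty}\lesssim N^{10\e_{\mathrm{ap}}-\e_{1}-\mathfrak{b}\e_{\mathrm{RN},1}}$ gives $N^{-\frac12(\e_{1}+\mathfrak{b}\e_{\mathrm{RN},1})}<N^{-1/8}$ uniformly in $\mathfrak{b}>\mathfrak{b}_{\mathrm{mid}}$. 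Your worst-case route leaves only $N^{1/4-\e_{1}-\mathfrak{b}\e_{\mathrm{RN},1}}$, which at $\mathfrak{b}=\mathfrak{b}_{\mathrm{mid}}+1$ can be arbitrarily close to $N^{0}$ and does not yield the needed $N^{-1/9999}$; you yourself flag this and propose the fix, which is exactly what the paper does from the outset.
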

%%%
%%%
\subsubsection{Time Average}
%%%
The following is replacement-by-time-average in the third bullet point. For its proof, we basically copy that of Lemma \ref{lemma:BGI2I5} with technical modifications. Recall $\mathbb{I}^{\mathbf{T},1}$ from Definition \ref{definition:KPZ1}.
%%%
\begin{lemma}\label{lemma:BGI2II2}
\fsp Let $\mathfrak{j}_{+}\in\Z_{\geq0}$ be the largest index for which $\mathfrak{t}_{\mathfrak{j}_{+}}\leq N^{-1-\beta/2}$ is the largest time in $\mathbb{I}^{\mathbf{T},1}$ satisfying this bound; here $\beta=999^{-99}$. For $\mathfrak{b}\in\llbracket\mathfrak{b}_{\mathrm{mid}}+1,\mathfrak{b}_{+}-1\rrbracket$, we have the following; recall the transfer-of-time-scale-operator in \emph{Definition \ref{definition:mtr0}}:
\begin{align}
\E\|\mathbf{H}^{N}\left(N^{\frac12}\mathfrak{D}_{0,\mathfrak{t}_{\mathfrak{j}_{+}}}^{\mathbf{T}}(\mathfrak{I}_{\mathfrak{l}_{\beta}}^{\mathbf{X}}(\mathsf{R}^{\mathfrak{b}}_{S,y}))\mathbf{Y}^{N}_{S,y}\right)\|_{1;\mathbb{T}_{N}} \ \lesssim \ N^{-\frac{1}{99999}\beta+100\e_{\mathrm{ap}}}.\label{eq:BGI2II2}
\end{align}
\end{lemma}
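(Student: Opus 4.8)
\textbf{Proof proposal for Lemma \ref{lemma:BGI2II2}.} The plan is to run the argument of Lemma \ref{lemma:BGI2I5} (which itself follows the proof of Lemma \ref{lemma:BGI13}) essentially verbatim, with the only structural changes being that the spatial average is now on the fixed scale $\mathfrak{l}_{\beta}=N^{\beta}$ rather than $\mathfrak{l}_{\beta,\mathfrak{b}}$, the target time-scale $\mathfrak{t}_{\mathfrak{j}_{+}}\leq N^{-1-\beta/2}$ is fixed rather than $\mathfrak{b}$-dependent, and — crucially — no large-deviations cutoff $\bar{\mathfrak{I}}^{\mathbf{X}}$ is needed since in Case II ($\mathfrak{b}\geq\mathfrak{b}_{\mathrm{mid}}+1$, i.e. $\e_{1}+\mathfrak{b}\e_{\mathrm{RN},1}>\tfrac14$) the deterministic bound $\|\mathsf{R}^{\mathfrak{b}}\|_{\omega;\infty}\lesssim N^{10\e_{\mathrm{ap}}}N^{-\e_{1}-\mathfrak{b}\e_{\mathrm{RN},1}}\lesssim N^{-1/4+10\e_{\mathrm{ap}}}$ of Lemma \ref{lemma:BGI2I1}/Definition \ref{definition:BGI2I2} already supplies enough a priori control. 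First one notes $\mathfrak{j}_{+}\lesssim_{\e_{\mathrm{ap}}}1$ and $N^{-2}\leq\mathfrak{t}_{\mathfrak{j}_{+}}\leq N^{-1}$, and writes $\mathfrak{D}_{0,\mathfrak{t}_{\mathfrak{j}_{+}}}^{\mathbf{T}}=\mathfrak{D}_{0,N^{-2}}^{\mathbf{T}}+\mathfrak{D}_{N^{-2},\mathfrak{t}_{\mathfrak{j}_{+}}}^{\mathbf{T}}$; the first piece is handled by Lemma \ref{lemma:mtr1} and the second by \eqref{eq:mtr2b} in Lemma \ref{lemma:mtr2}, both applied with $\mathfrak{f}_{S,y}=N^{1/2}\mathfrak{I}_{\mathfrak{l}_{\beta}}^{\mathbf{X}}(\mathsf{R}^{\mathfrak{b}}_{S,y})$ and, for Lemma \ref{lemma:mtr2}, $\mathfrak{J}=\mathfrak{j}_{+}$.

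The terms produced by Lemmas \ref{lemma:mtr1} and \ref{lemma:mtr2} are of two kinds. The $\|\mathfrak{f}\|_{\omega;\infty}$-type errors are negligible because $\|\mathfrak{f}\|_{\omega;\infty}\lesssim N^{1/2}\|\mathsf{R}^{\mathfrak{b}}\|_{\omega;\infty}\lesssim N^{1/4+10\e_{\mathrm{ap}}}$ is multiplied by negative powers of $N$ coming from the short time-scales; for the same reason $N^{-1/2+3\e_{\mathrm{ap}}}\E\|\mathbf{H}^{N}(|\mathfrak{f}_{S,y}|)\|_{1;\mathbb{T}_{N}}\lesssim N^{-1/4+13\e_{\mathrm{ap}}}$. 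The remaining terms are of the form $N^{3\e_{\mathrm{ap}}}\mathfrak{t}_{\mathfrak{j}+1}^{1/4}\E\|\mathbf{H}^{N}(N^{1/2}|\mathfrak{I}_{\mathfrak{t}_{\mathfrak{j}}}^{\mathbf{T}}\mathfrak{I}_{\mathfrak{l}_{\beta}}^{\mathbf{X}}(\mathsf{R}^{\mathfrak{b}}_{S,y})|)\|_{1;\mathbb{T}_{N}}$ over indices $\mathfrak{j}<\mathfrak{j}_{+}$, i.e. $\mathfrak{t}_{\mathfrak{j}}\leq N^{-1-\beta/2}$. For these we apply, exactly as in \eqref{eq:BGI2I51}--\eqref{eq:BGI2I52}, first Lemma \ref{lemma:hoe2} to pass to $\bigl(N^{8\e_{\mathrm{ap}}}\mathfrak{t}_{\mathfrak{j}+1}^{3/8}\E\mathbf{I}_{1}(N^{3/4}|\mathfrak{I}_{\mathfrak{t}_{\mathfrak{j}}}^{\mathbf{T}}\mathfrak{I}_{\mathfrak{l}_{\beta}}^{\mathbf{X}}(\mathsf{R}^{\mathfrak{b}}_{S,y})|^{3/2})\bigr)^{2/3}$, then Lemma \ref{lemma:le2} to transfer the space-time average onto the law $\E_{0}\bar{\mathfrak{P}}_{1}\E^{\mathrm{dyn}}_{\mathrm{Loc}}$, where $\mathrm{Loc}=\mathrm{Loc}_{\mathfrak{t}_{\mathfrak{j}},\mathfrak{l}_{\mathrm{tot}}}$ with $\gamma_{0}=\e_{\mathrm{ap}}$, $\mathfrak{l}_{\mathrm{av}}=\mathfrak{l}_{\beta}$ and $\mathfrak{l}$ the support length of $\mathsf{R}^{\mathfrak{b}}$, so $\mathfrak{l}_{\mathrm{tot}}\lesssim N^{\beta}N^{\e_{1}+\mathfrak{b}\e_{\mathrm{RN},1}+\e_{\mathrm{RN},1}}$, and finally Lemma \ref{lemma:le3} with $\mathfrak{h}$ the $\E^{\mathrm{dyn}}$-functional and $\kappa\sim N^{3\e_{\mathrm{ap}}/2}\,(N^{\e_{1}+\mathfrak{b}\e_{\mathrm{RN},1}})^{3/2}$, a legal choice since $\kappa|\mathfrak{I}_{\mathfrak{l}_{\beta}}^{\mathbf{X}}(\mathsf{R}^{\mathfrak{b}})|\lesssim1$ by the a priori bound.

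Lemma \ref{lemma:le3} then bounds the first term on the right of \eqref{eq:BGI2I52} by a support term $\kappa^{-1}N^{-2}|\mathbb{B}|^{3}$ plus a supremum of canonical-measure expectations $\sup_{\sigma}\E^{\sigma}\E^{\mathrm{dyn}}_{\mathrm{Loc}}|\mathfrak{I}_{\mathfrak{t}_{\mathfrak{j}}}^{\mathbf{T}}\mathfrak{I}_{\mathfrak{l}_{\beta}}^{\mathbf{X}}(\mathsf{R}^{\mathfrak{b}})|^{3/2}$. The support term is controlled by elementary power-counting, using $|\mathbb{B}|\lesssim N^{\e_{\mathrm{ap}}}(N^{1+\e_{\mathrm{ap}}}\mathfrak{t}_{\mathfrak{j}}^{1/2}+N^{3/2+\e_{\mathrm{ap}}}\mathfrak{t}_{\mathfrak{j}}+N^{\e_{\mathrm{ap}}}\mathfrak{l}_{\mathrm{tot}})$ with $\mathfrak{t}_{\mathfrak{j}}\leq N^{-1-\beta/2}$ and, since $\mathfrak{b}\leq\mathfrak{b}_{+}-1$, $\e_{1}+\mathfrak{b}\e_{\mathrm{RN},1}\leq\tfrac12+\e_{\mathrm{RN}}$ so $\mathfrak{l}_{\mathrm{tot}}\lesssim N^{1/2+\e_{\mathrm{RN}}+\e_{\mathrm{RN},1}+\beta}$; together with the size of $\kappa$ this is a universal negative power of $N$. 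The supremum of canonical expectations is estimated by Lemma \ref{lemma:ee2} with $\mathfrak{f}=\mathsf{R}^{\mathfrak{b}}$, $\mathfrak{t}_{\mathrm{av}}=\mathfrak{t}_{\mathfrak{j}}$, $\mathfrak{l}_{\mathrm{av}}=\mathfrak{l}_{\beta}$, which yields (after boosting the $3/2$ power to a square by H\"older) a bound $\lesssim N^{-1}\mathfrak{t}_{\mathfrak{j}}^{-1/2}\mathfrak{l}_{\beta}^{-1/2}|\mathbb{B}|\,\|\mathsf{R}^{\mathfrak{b}}\|_{\omega;\infty}+N^{-100}$; here the key near-cancellation $|\mathbb{B}|\,\|\mathsf{R}^{\mathfrak{b}}\|_{\omega;\infty}\lesssim N^{\e_{\mathrm{RN},1}+10\e_{\mathrm{ap}}}$ holds by Definition \ref{definition:BGI2I2}, and with the $N^{1/2}$ prefactor, $\mathfrak{t}_{\mathfrak{j}}^{-1/2}\lesssim N^{1/2+\beta/4}$ and $\mathfrak{l}_{\beta}^{-1/2}=N^{-\beta/2}$ one gets an overall gain of order $N^{-\beta/4+\e_{\mathrm{RN},1}+O(\e_{\mathrm{ap}})}\lesssim N^{-\beta/8}$, using that $\beta=999^{-99}$ is much larger than $\e_{\mathrm{ap}}$ and $\e_{\mathrm{RN},1}$. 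Taking the outer $2/3$-power and summing the $\mathrm{O}_{\e_{\mathrm{ap}}}(1)$-many telescoped terms then gives the claimed $N^{-\beta/99999+100\e_{\mathrm{ap}}}$. The proof involves no new idea beyond those of Lemmas \ref{lemma:BGI13} and \ref{lemma:BGI2I5}; the only point requiring care is that the localization scale $\mathfrak{l}_{\mathrm{tot}}$ now grows like the support of $\mathsf{R}^{\mathfrak{b}}$, potentially up to $N^{1/2+\e_{\mathrm{RN}}}$, so that the support term in Lemma \ref{lemma:le3} is kept under control only because the a priori bound $\|\mathsf{R}^{\mathfrak{b}}\|_{\omega;\infty}\lesssim N^{-\e_{1}-\mathfrak{b}\e_{\mathrm{RN},1}}$ permits $\kappa$ as large as $\sim N^{3/4}$ in Case II — this is precisely where the restriction $\mathfrak{b}\geq\mathfrak{b}_{\mathrm{mid}}+1$ is used. \qed
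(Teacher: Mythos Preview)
Your approach is essentially the paper's: follow the proof of Lemma \ref{lemma:BGI2I5} with the replacement $\mathfrak{l}_{\beta,\mathfrak{b}}\to\mathfrak{l}_{\beta}=N^{\beta}$ and check that the power-counting in \eqref{eq:BGI2I52b} and \eqref{eq:BGI2I53} survives; your observation that no cutoff $\bar{\mathfrak{I}}^{\mathbf{X}}$ is needed (the deterministic bound $\|\mathsf{R}^{\mathfrak{b}}\|_{\omega;\infty}\lesssim N^{-1/4+10\e_{\mathrm{ap}}}$ already yields $\kappa\gtrsim N^{3/8}$) is correct and matches the paper's implicit use of this in choosing $\kappa$.

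There is one bookkeeping slip. In the Kipnis--Varadhan step you write $\mathfrak{t}_{\mathfrak{j}}^{-1/2}\lesssim N^{1/2+\beta/4}$, but the supremum in \eqref{eq:mtr2b} runs over \emph{all} $0\leq\mathfrak{j}<\mathfrak{j}_{+}$, and for $\mathfrak{j}=0$ one only has $\mathfrak{t}_{\mathfrak{j}}=N^{-2}$, hence $\mathfrak{t}_{\mathfrak{j}}^{-1/2}=N$. What saves you is the prefactor $\mathfrak{t}_{\mathfrak{j}+1}^{1/4}$ from Lemma \ref{lemma:mtr2} that you dropped in the final line: after taking the $2/3$-power the net $\mathfrak{t}$-dependence is $\mathfrak{t}_{\mathfrak{j}+1}^{1/4}\mathfrak{t}_{\mathfrak{j}}^{-1/2}\lesssim N^{\e_{\mathrm{ap}}/4}\mathfrak{t}_{\mathfrak{j}}^{-1/4}\leq N^{1/2+\e_{\mathrm{ap}}/4}$, which combined with $N^{1/2}\cdot N^{-1}\cdot N^{-\beta/2}\cdot N^{\e_{\mathrm{RN},1}+O(\e_{\mathrm{ap}})}$ gives $N^{-\beta/2+\e_{\mathrm{RN},1}+O(\e_{\mathrm{ap}})}$ --- even better than your claimed $N^{-\beta/4}$. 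This is exactly the paper's remark that ``all that we need to estimate the RHS of \eqref{eq:BGI2I53} is $\mathfrak{l}_{\beta}\geq N^{\beta}$ since $\mathfrak{t}_{\mathfrak{j}}\geq N^{-2}$''. With this correction your argument is complete.
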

%%%
%%%
\subsubsection{Final Estimates}
%%%
Our last ingredient before we deduce Case II of Proposition \ref{prop:BGI2} is the following estimate on the space-time average $\mathfrak{I}^{\mathbf{T}}_{\mathfrak{t}_{\mathfrak{j}_{+}}}\mathfrak{I}^{\mathbf{X}}_{\mathfrak{l}_{\beta}}\mathsf{R}^{\mathfrak{b}}$ for $\mathfrak{l}_{\beta}$ in Lemma \ref{lemma:BGI2II1} and for $\mathfrak{t}_{\mathfrak{j}_{+}}$ in Lemma \ref{lemma:BGI2II2}. The following final ingredient serves as an analog of Lemma \ref{lemma:BGI14} and Lemma \ref{lemma:BGI2I6}. Indeed, similar to those two results, most of the work is done for the lemma immediately before.
%%%
\begin{lemma}\label{lemma:BGI2II3}
\fsp Take the time-scale $\mathfrak{t}_{\mathfrak{j}_{+}}$ from \emph{Lemma \ref{lemma:BGI2II2}}. Uniformly in $\mathfrak{b}\in\llbracket\mathfrak{b}_{\mathrm{mid}}+1,\mathfrak{b}_{+}-1\rrbracket$, we have the following estimate:
\begin{align}
\E\|\mathbf{H}^{N}\left(N^{\frac12}\mathfrak{I}_{\mathfrak{t}_{\mathfrak{j}_{+}}}^{\mathbf{T}}\mathfrak{I}_{\mathfrak{l}_{\beta}}^{\mathbf{X}}(\mathsf{R}^{\mathfrak{b}}_{S,y})\mathbf{Y}^{N}_{S,y}\right)\|_{1;\mathbb{T}_{N}} \ \lesssim \ N^{-\frac{1}{99999}\beta+100\e_{\mathrm{ap}}}.\label{eq:BGI2II3}
\end{align}
\end{lemma}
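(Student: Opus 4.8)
The strategy is to mirror the proofs of Lemma \ref{lemma:BGI14} and Lemma \ref{lemma:BGI2I6}: having replaced $\mathsf{R}^{\mathfrak{b}}$ by its space-time average $\mathfrak{I}^{\mathbf{T}}_{\mathfrak{t}_{\mathfrak{j}_{+}}}\mathfrak{I}^{\mathbf{X}}_{\mathfrak{l}_{\beta}}(\mathsf{R}^{\mathfrak{b}})$ via Lemma \ref{lemma:BGI2II1} and Lemma \ref{lemma:BGI2II2}, it remains only to estimate the heat operator acting directly on this averaged quantity. First I would apply Lemma \ref{lemma:hoe2}, using $|\mathbf{Y}^{N}|\lesssim N^{\e_{\mathrm{ap}}}$ to drop the $\mathbf{Y}^{N}$-factor, to reduce to bounding $N^{8\e_{\mathrm{ap}}}\E\mathbf{I}_{1}(N^{3/4}|\mathfrak{I}^{\mathbf{T}}_{\mathfrak{t}_{\mathfrak{j}_{+}}}\mathfrak{I}^{\mathbf{X}}_{\mathfrak{l}_{\beta}}(\mathsf{R}^{\mathfrak{b}}_{S,y})|^{3/2})$ (then taking a $2/3$ power at the end). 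Next I would apply Lemma \ref{lemma:le2} with $\mathfrak{f} = \mathsf{R}^{\mathfrak{b}}$, $\mathfrak{l}_{\mathrm{av}} = \mathfrak{l}_{\beta} = N^{\beta}$, $\mathfrak{t}_{\mathrm{av}} = \mathfrak{t}_{\mathfrak{j}_{+}}$, and $\mathfrak{l}$ equal to the support length $\asymp N^{\e_{1}+\mathfrak{b}\e_{\mathrm{RN},1}+\e_{\mathrm{RN},1}}$ of $\mathsf{R}^{\mathfrak{b}}$; this converts the $\E$-expectation against the particle system into $\E_{0}\bar{\mathfrak{P}}_{1}\E^{\mathrm{dyn}}_{\mathrm{Loc}}$ against the averaged law, at the cost of a harmless $N^{-100}$ term, where $\mathrm{Loc} = \mathrm{Loc}_{\mathfrak{t}_{\mathfrak{j}_{+}},\mathfrak{l}_{\mathrm{tot}}}$ with $\mathfrak{l}_{\mathrm{tot}}\lesssim\mathfrak{l}_{\beta}\cdot N^{\e_{1}+\mathfrak{b}\e_{\mathrm{RN},1}+\e_{\mathrm{RN},1}}$.

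The heart of the argument is then Lemma \ref{lemma:le3}, applied with $\mathfrak{h}$ equal to the $\E^{\mathrm{dyn}}$-functional (which is uniformly bounded by $\|\mathsf{R}^{\mathfrak{b}}\|_{\omega;\infty}\lesssim N^{10\e_{\mathrm{ap}}}N^{-\e_{1}-\mathfrak{b}\e_{\mathrm{RN},1}}$) and $\kappa = \|\mathsf{R}^{\mathfrak{b}}\|_{\omega;\infty}^{-1}\gtrsim N^{\e_{1}+\mathfrak{b}\e_{\mathrm{RN},1}-10\e_{\mathrm{ap}}}$ (the $3/2$-power would instead dictate $\kappa\gtrsim N^{(3/2)(\e_{1}+\mathfrak{b}\e_{\mathrm{RN},1})-15\e_{\mathrm{ap}}}$, which is even larger and thus better; I would keep track of the exact power carefully as in the proof of Lemma \ref{lemma:BGI2I5}). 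This produces two terms. For the support term $\kappa^{-1}N^{-2}|\mathbb{B}|^{3}$, the support $|\mathbb{B}| = |\mathbb{B}_{\mathfrak{t}_{\mathfrak{j}_{+}},\mathfrak{l}_{\mathrm{tot}}}|\lesssim N^{1+\e_{\mathrm{ap}}}\mathfrak{t}_{\mathfrak{j}_{+}}^{1/2} + N^{3/2+\e_{\mathrm{ap}}}\mathfrak{t}_{\mathfrak{j}_{+}} + N^{\e_{\mathrm{ap}}}\mathfrak{l}_{\mathrm{tot}}$, and using $\mathfrak{t}_{\mathfrak{j}_{+}}\leq N^{-1-\beta/2}$, $\mathfrak{l}_{\mathrm{tot}}\lesssim N^{1/2+\e_{\mathrm{RN}}+\e_{\mathrm{RN},1}+\beta}$ (since $\e_{1}+\mathfrak{b}\e_{\mathrm{RN},1}\leq\frac12+\e_{\mathrm{RN}}$ for $\mathfrak{b}\leq\mathfrak{b}_{+}-1$), and the lower bound on $\kappa$, one checks by elementary power-counting that this is $\mathrm{O}(N^{-\alpha})$ for some universal $\alpha>0$; plugging into the RHS of the Lemma \ref{lemma:hoe2} reduction and taking $2/3$-powers keeps it controlled by the RHS of \eqref{eq:BGI2II3}. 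For the second term, $\sup_{\sigma}\E^{\mu^{\mathrm{can}}_{\sigma,\mathbb{B}}}|\mathfrak{h}|$, I would use the Hölder inequality to boost $3/2$ to $2$, then apply the equilibrium estimate Lemma \ref{lemma:ee2} with $\mathfrak{f}=\mathsf{R}^{\mathfrak{b}}$, $\mathfrak{t}_{\mathrm{av}}=\mathfrak{t}_{\mathfrak{j}_{+}}$, $\mathfrak{l}_{\mathrm{av}}=\mathfrak{l}_{\beta}$; this gives a bound of order $N^{-2}\mathfrak{t}_{\mathfrak{j}_{+}}^{-1}\mathfrak{l}_{\beta}^{-1}(N^{\e_{1}+\mathfrak{b}\e_{\mathrm{RN},1}+\e_{\mathrm{RN},1}})^{2}\|\mathsf{R}^{\mathfrak{b}}\|_{\omega;\infty}^{2}$, and the crucial cancellation $(N^{\e_{1}+\mathfrak{b}\e_{\mathrm{RN},1}})^{2}\|\mathsf{R}^{\mathfrak{b}}\|_{\omega;\infty}^{2}\lesssim N^{20\e_{\mathrm{ap}}}$ (exactly the mechanism explained after Heuristic \ref{plemma:s2}: the inverse-length-scale bound on $\mathsf{R}^{\mathfrak{b}}$ beats the growth of the support) reduces this to $N^{-2+2\e_{\mathrm{RN},1}+20\e_{\mathrm{ap}}}\mathfrak{t}_{\mathfrak{j}_{+}}^{-1}\mathfrak{l}_{\beta}^{-1}$, which with $\mathfrak{t}_{\mathfrak{j}_{+}}\geq N^{-1-\beta/2-\e_{\mathrm{ap}}}$ and $\mathfrak{l}_{\beta}=N^{\beta}$ is $\mathrm{O}(N^{-\alpha})$; again taking the appropriate powers gives the claim.

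As a preliminary bookkeeping step I would, exactly as at the start of the proof of Lemma \ref{lemma:BGI2I5}, verify $\mathfrak{t}_{\mathfrak{j}_{+}}\leq N^{-1}$ (immediate from $\mathfrak{t}_{\mathfrak{j}_{+}}\leq N^{-1-\beta/2}$), so that the a priori regularity estimates built into $\mathbf{Y}^{N}$ via $\mathfrak{t}_{\mathrm{st}}$ are available on the relevant time-scales; also I would split $\mathfrak{D}^{\mathbf{T}}_{0,\mathfrak{t}_{\mathfrak{j}_{+}}}$ in Lemma \ref{lemma:BGI2II2} through $\mathfrak{t}_{0}=N^{-2}$ (the microscopic scale), handling $\mathfrak{D}^{\mathbf{T}}_{0,N^{-2}}$ by Lemma \ref{lemma:mtr1} and $\mathfrak{D}^{\mathbf{T}}_{N^{-2},\mathfrak{t}_{\mathfrak{j}_{+}}}$ by the telescoping estimate \eqref{eq:mtr2b} in Lemma \ref{lemma:mtr2}, as in \eqref{eq:BGI13zero}. \textbf{Main obstacle.} The delicate point is the power-counting balance in Lemma \ref{lemma:le3}: since in Case II the support of $\mathsf{R}^{\mathfrak{b}}$ is potentially as large as $N^{1/2+\e_{\mathrm{RN}}}$, the log-Sobolev diffusive constant $\mathrm{O}(|\mathbb{B}|^{2})$ and the $|\mathbb{B}|^{3}$ in the entropy-production term are both near the borderline of what the $N^{-2}$ gain from the $N^{2}$ time-scaling can absorb; the argument only closes because (i) the time-average time-scale $\mathfrak{t}_{\mathfrak{j}_{+}}$ is taken strictly below $N^{-1}$ by a polynomial margin $N^{-\beta/2}$, and (ii) the a priori bound on $\mathsf{R}^{\mathfrak{b}}$ improves with $\mathfrak{b}$ at precisely the rate $N^{-(\e_{1}+\mathfrak{b}\e_{\mathrm{RN},1})}$ needed to cancel the $(N^{\e_{1}+\mathfrak{b}\e_{\mathrm{RN},1}})^{2}$ from the Kipnis–Varadhan Sobolev-norm estimate in Lemma \ref{lemma:ee2}. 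Getting every exponent and every $N^{\e_{\mathrm{ap}}}$, $N^{\e_{\mathrm{RN},1}}$, $N^{\beta}$ factor to land on the correct side (using $\e_{\mathrm{ap}},\e_{\mathrm{RN},1}\ll\beta\ll\e_{1}$) uniformly over the whole range $\mathfrak{b}\in\llbracket\mathfrak{b}_{\mathrm{mid}}+1,\mathfrak{b}_{+}-1\rrbracket$ is the one genuinely fiddly part, but it is entirely parallel to the corresponding computation in the proof of Lemma \ref{lemma:BGI2I6}.
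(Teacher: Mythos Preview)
Your proposal is correct and follows essentially the same approach as the paper: apply Lemma \ref{lemma:hoe2}, then Lemma \ref{lemma:le2}, then Lemma \ref{lemma:le3} with $\kappa\sim\|\mathsf{R}^{\mathfrak{b}}\|_{\omega;\infty}^{-3/2}$, and finally Lemma \ref{lemma:ee2} after H\"older, with the key cancellation $(N^{\e_{1}+\mathfrak{b}\e_{\mathrm{RN},1}})^{2}\|\mathsf{R}^{\mathfrak{b}}\|_{\omega;\infty}^{2}\lesssim N^{20\e_{\mathrm{ap}}}$ closing the estimate. Your final bookkeeping paragraph about splitting $\mathfrak{D}^{\mathbf{T}}_{0,\mathfrak{t}_{\mathfrak{j}_{+}}}$ via $\mathfrak{t}_{0}=N^{-2}$ actually belongs to the proof of Lemma \ref{lemma:BGI2II2}, not to this lemma, but that is a harmless digression and does not affect the argument for Lemma \ref{lemma:BGI2II3} itself.
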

%%%
Case II of Proposition \ref{prop:BGI2}, namely Proposition \ref{prop:BGI2} but restricting to $\mathfrak{b}\in\llbracket\mathfrak{b}_{\mathrm{mid}}+1,\mathfrak{b}_{+}-1\rrbracket$, follows from Lemmas \ref{lemma:BGI2II1}, \ref{lemma:BGI2II2}, and \ref{lemma:BGI2II3} combined with the same replacement reasoning that we used in the proof of Case I of Proposition \ref{prop:BGI2} at the end of the previous section. Together with the previous section, this concludes the proof of Proposition \ref{prop:BGI2} entirely. \qed
%%%
\begin{proof}[Proof of \emph{Lemma \ref{lemma:BGI2II1}}]
Let us follow the proof of Lemma \ref{lemma:BGI2I3}, though our application of Lemma \ref{lemma:xr} will be somewhat illegal but remedied as we soon explain. Formally, let us apply Lemma \ref{lemma:xr} with $\mathfrak{f}=N^{1/2}\mathsf{R}^{\mathfrak{b}}$ and $\mathfrak{t}=0$ as in the proof of Lemma \ref{lemma:BGI2I3}, but now with $\mathfrak{l}=\mathfrak{l}_{\beta}=N^{\beta}$ where $\beta=999^{-99}$. We claim that this provides the following inequality that we justify afterwards:
\begin{align}
\E\|\mathbf{H}^{N}(N^{1/2}\mathfrak{D}_{0,\mathfrak{l}_{\beta}}^{\mathbf{X}}(\mathsf{R}^{\mathfrak{b}}_{S,y})\mathbf{Y}^{N}_{S,y})\|_{1;\mathbb{T}_{N}} \ \lesssim \ N^{\e_{\mathrm{RN}}+\e_{\mathrm{ap}}}\|\mathsf{R}^{\mathfrak{b}}\|_{\omega;\infty} + N^{5\e_{\mathrm{ap}}}\mathfrak{l}_{\beta}N^{\frac12\e_{1}+\frac12\mathfrak{b}\e_{\mathrm{RN},1}+\frac12\e_{\mathrm{RN},1}}\|\mathsf{R}^{\mathfrak{b}}\|_{\omega;\infty}. \label{eq:BGI2II11}
\end{align}
If we could apply Lemma \ref{lemma:xr} with the aforementioned choices then \eqref{eq:BGI2II11} would follow just as \eqref{eq:BGI2I31} did, except the extra square root of $\mathfrak{l}_{\beta}$ would not be necessary in the second term on the RHS of \eqref{eq:BGI2II11}. However, for $\mathfrak{b}<\mathfrak{b}_{+}$ it is not necessarily true that the support length of $\mathsf{R}^{\mathfrak{b}}$ times $\mathfrak{l}=\mathfrak{l}_{\beta}$ is bounded above by $\mathfrak{l}_{N}$ in Definition \ref{definition:KPZ1}; if $\mathfrak{b}=\mathfrak{b}_{+}-1$ then the support length of $\mathsf{R}^{\mathfrak{b}}$ is $\mathrm{O}(N^{\e_{1}+\mathfrak{b}_{+}\e_{\mathrm{RN},1}})$ as we noted in the bullet point list prior to Lemma \ref{lemma:BGI2II1}. It is certainly possible that the support length of $\mathsf{R}^{\mathfrak{b}}$ is very close to or basically equal to $\mathfrak{l}_{N}=N^{1/2+\e_{\mathrm{RN}}}$ by construction in the statement of Proposition \ref{prop:BGI2}, so after multiplying by $\mathfrak{l}=\mathfrak{l}_{\beta}=N^{\beta}$ the resulting product may exceed $\mathfrak{l}_{N}$. This is remedied by the following observations.
%%%
\begin{itemize}
\item The only reason why we require the $\bar{\mathfrak{l}}\leq\mathfrak{l}_{N}$ constraint in the proof of Lemma \ref{lemma:xr} is so that we have a priori spatial regularity estimates for $\mathbf{Y}^{N}$ on the length-scale $\bar{\mathfrak{l}}$, which is defined in the statement of Lemma \ref{lemma:xr}, by construction in Definition \ref{definition:KPZ5}.
\item However, even if $\bar{\mathfrak{l}}=\mathfrak{l}_{\beta}N^{\e_{1}+\mathfrak{b}\e_{\mathrm{RN},1}+\e_{\mathrm{RN},1}}$ exceeds $\mathfrak{l}_{N}$, it only does by a factor of order $\mathfrak{l}_{\beta}=N^{\beta}$ for $\mathfrak{b}<\mathfrak{b}_{+}$. Indeed, $\mathfrak{l}_{\beta}^{-1}\bar{\mathfrak{l}}$ is always bounded by $\mathfrak{l}_{N}=N^{1/2+\e_{\mathrm{RN}}}$ in Definition \ref{definition:KPZ1} for all $\mathfrak{b}<\mathfrak{b}_{+}$ by construction of $\mathfrak{b}_{+}$ in the statement of Proposition \ref{prop:BGI2}. Rewriting spatial gradients of $\mathbf{Y}^{N}$ on length-scales of order $\mathfrak{l}_{\beta}N^{\e_{1}+\mathfrak{b}\e_{\mathrm{RN},1}+\e_{\mathrm{RN},1}}$ as order-$\mathfrak{l}_{\beta}$-many spatial gradients on the length-scale $N^{\e_{1}+\mathfrak{b}\e_{\mathrm{RN},1}+\e_{\mathrm{RN},1}}$, we may control the spatial regularity of $\mathbf{Y}^{N}$ on length-scales of order $\mathfrak{l}_{\beta}N^{\e_{1}+\mathfrak{b}\e_{\mathrm{RN},1}+\e_{\mathrm{RN},1}}$ by $\mathfrak{l}_{\beta}=N^{\beta}$ times spatial regularity estimates for $\mathbf{Y}^{N}$ on length-scale $N^{\e_{1}+\mathfrak{b}\e_{\mathrm{RN},1}+\e_{\mathrm{RN},1}}$.
\item The above length-$\mathfrak{l}_{\beta}N^{\e_{1}+\mathfrak{b}\e_{\mathrm{RN},1}+\e_{\mathrm{RN},1}}$ spatial regularity bound on $\mathbf{Y}^{N}$ is $\mathfrak{l}_{\beta}^{1/2}$ worse than what the proof of Lemma \ref{lemma:xr} needs it to be since the proof of Lemma \ref{lemma:xr} uses Holder regularity with exponent basically $1/2$ for $\mathbf{Y}^{N}$, and our bound is linear in $\mathfrak{l}_{\beta}$ rather than square root. Because the spatial regularity of $\mathbf{Y}^{N}$ only is relevant for the second term on the RHS of \eqref{eq:xr}/\eqref{eq:BGI2II11}, this is why we get \eqref{eq:BGI2II11} with $\mathfrak{l}_{\beta}$ in the second term on the RHS and not its square root as Lemma \ref{lemma:xr} says; see Remark \ref{remark:xr}.
\end{itemize}
%%%
Given \eqref{eq:BGI2II11}, like the proof of Lemma \ref{lemma:BGI2I3}, it suffices to use $N^{\e_{1}/2+\mathfrak{b}\e_{\mathrm{RN},1}/2+\e_{\mathrm{RN},1}/2}|\mathsf{R}^{\mathfrak{b}}|\lesssim N^{10\e_{\mathrm{ap}}}N^{-\e_{1}/2-\mathfrak{b}\e_{\mathrm{RN},1}/2+\e_{\mathrm{RN},1}/2}\lesssim N^{-1/4+10\e_{\mathrm{ap}}+\e_{\mathrm{RN},1}/2}$ for $\mathfrak{b}>\mathfrak{b}_{\mathrm{mid}}$ and $\beta=999^{-99}$.
\end{proof}
%%%
%%%
\begin{proof}[Proof of \emph{Lemma \ref{lemma:BGI2II2}}]
We directly follow the proof of Lemma \ref{lemma:BGI2I5} verbatim, but we replace $\mathfrak{l}_{\beta,\mathfrak{b}}$ therein by $\mathfrak{l}_{\beta}=N^{\beta}$. For the sake of precision/clarity, the estimates \eqref{eq:BGI2I51} and \eqref{eq:BGI2I52} both hold with the previous length-scale replacement and for $\mathfrak{b}$-indices of interest in the current lemma, as do \eqref{eq:BGI2I52b} and \eqref{eq:BGI2I53}. Moreover, it is easy to check that in the latter two of these bounds, the upper bounds with the aforementioned length-scale replacement, after plugging into \eqref{eq:BGI2I51} and \eqref{eq:BGI2I52} and taking $2/3$-powers, are controlled by the RHS of the proposed estimate \eqref{eq:BGI2II2}. Indeed, given $\mathfrak{b}>\mathfrak{b}_{\mathrm{mid}}$, we have $\kappa=N^{-3\e_{\mathrm{ap}}/2}N^{-3\beta/4}\|\mathsf{R}^{\mathfrak{b}}\|_{\omega;\infty}^{3/2}\geq N^{-3\e_{\mathrm{ap}}/2-3\beta/4+3/8}$, which is enough to control \eqref{eq:BGI2I52b}; see Definition \ref{definition:BGI2I2}. For \eqref{eq:BGI2I53}, all that we need to estimate the RHS of \eqref{eq:BGI2I53} is $\mathfrak{l}_{\beta,\mathfrak{b}}\geq N^{\beta}$ since $\mathfrak{t}_{\mathfrak{j}}\geq N^{-2}$; by construction, we still have $\mathfrak{l}_{\beta}=N^{\beta}$, so replacing $\mathfrak{l}_{\beta,\mathfrak{b}}$ by $\mathfrak{l}_{\beta}$ is not an issue.
\end{proof}
%%%
%%%
\begin{proof}[Proof of \emph{Lemma \ref{lemma:BGI2II3}}]
We directly follow the proof of Lemma \ref{lemma:BGI2I6} verbatim except we replace $\mathfrak{l}_{\beta,\mathfrak{b}}$ therein with $\mathfrak{l}_{\beta}=N^{\beta}$ and we replace $\mathfrak{t}_{\mathfrak{j}_{+}}$ therein with $\mathfrak{t}_{\mathfrak{j}_{+}}$ defined in the statement of Lemma \ref{lemma:BGI2II2}/Lemma \ref{lemma:BGI2II3}. Similar to the proof of Lemma \ref{lemma:BGI2II2}, it is enough to verify that the estimates \eqref{eq:BGI2I61}, \eqref{eq:BGI2I62}, \eqref{eq:BGI2I63}, and \eqref{eq:BGI2I64} from the proof for Lemma \ref{lemma:BGI2I6} that we are following still hold with the aforementioned length-scale and time-scale replacements.  For \eqref{eq:BGI2I61} and \eqref{eq:BGI2I62}, this is because Lemma \ref{lemma:hoe2} and Lemma \ref{lemma:le2} do not care about the space-time scales in terms of applicability. For \eqref{eq:BGI2I63} and \eqref{eq:BGI2I64}, this is a consequence of power-counting in $N$.  For \eqref{eq:BGI2I63}, it is enough to note $\kappa\geq N^{-3\e_{\mathrm{ap}}-3\beta/4+\frac38}$ after our replacement $\mathfrak{l}_{\beta,\mathfrak{b}}\to\mathfrak{l}_{\beta}$ as noted in the proof of Lemma \ref{lemma:BGI2II2}. We clarify weakening $\mathfrak{t}_{\mathfrak{j}_{+}}$ from the proof of Lemma \ref{lemma:BGI2I6} to $\mathfrak{t}_{\mathfrak{j}_{+}}$ in the current lemma, which only weakens \eqref{eq:BGI2I64} by a factor of $N^{\beta/2}$, gets dominated by $\mathfrak{l}_{\beta}^{-1}\lesssim N^{-\beta}$ obtained by replacing $\mathfrak{l}_{\beta,\mathfrak{b}}$ by $\mathfrak{l}_{\beta}$ in \eqref{eq:BGI2I64}. 
\end{proof}
%%%

%%%
\appendix
%%%
%%%
\section{Auxiliary Estimates}\label{section:aux}
%%%
%%%
\subsection{Heat Estimates}
%%%
We start with {the following}, from which heat kernel estimates ultimately follow.
%%%
\begin{lemma}\label{lemma:heat}
\fsp {Let us define $\mathbf{H}^{N,\Z}$ to be the full-line heat kernel on $\Z$ satisfying the following conditions.
%%%
\begin{itemize}
\item Define $\Delta_{\Z}^{!!}=N^{2}\Delta_{\Z}$ and $\grad_{\Z,-1}^{!}=N\grad_{\Z,-1}$, with $\Delta_{\Z}$ the Laplacian on $\Z$ and $\grad_{\Z,-1}$ the negative-direction gradient on $\Z$.
\item Provided $0\leq S\leq T$ and $x,y\in\Z$, we have $\mathbf{H}_{S,S,x,y}^{N,\Z}=\mathbf{1}_{x=y}$ and $\partial_{T}\mathbf{H}_{S,T,x,y}^{N,\Z}=2^{-1}\Delta_{\Z}^{!!}\mathbf{H}_{S,T,x,y}^{N,\Z}+\bar{\mathfrak{d}}\grad_{\Z,-1}^{!}\mathbf{H}_{S,T,x,y}^{N,\Z}$.\end{itemize}
%%%
We have the following identity relating $\mathbf{H}^{N}$ and $\mathbf{H}^{N,\Z}$ and the following Chapman-Kolmogorov equation, in which $S\leq R\leq T$ and $x,y\in\mathbb{T}_{N}=\llbracket0,N-1\rrbracket$:}
\begin{align}
{\mathbf{H}_{S,T,x,y}^{N} \ = \ {\sum}_{\mathfrak{k}\in\Z}\mathbf{H}_{S,T,x,y+\mathfrak{k}|\mathbb{T}_{N}|}^{N,\Z} \quad \mathrm{and} \quad \mathbf{H}_{S,T,x,y}^{N} \ = \ {\sum}_{w\in\mathbb{T}_{N}}\mathbf{H}_{R,T,x,w}^{N}\mathbf{H}_{S,R,w,y}^{N}.} \label{eq:heatid}
\end{align}
\end{lemma}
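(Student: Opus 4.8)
\textbf{Proof plan for Lemma \ref{lemma:heat}.}

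The strategy is to realize both the torus heat kernel $\mathbf{H}^{N}$ and the full-line heat kernel $\mathbf{H}^{N,\Z}$ as transition probabilities of continuous-time random walks driven by the \emph{same} Poisson clocks, which immediately makes the first identity a "wrapping" statement and the second a Markov property. First I would observe that $\mathscr{L}_{N}=2^{-1}\Delta^{!!}+\bar{\mathfrak{d}}\grad_{-1}^{!}$ (acting on the backward variable) is the generator of a continuous-time random walk on $\mathbb{T}_{N}$: it is a symmetric nearest-neighbor walk of rate $N^{2}$ plus an asymmetric drift of rate $N\bar{\mathfrak{d}}$ in the negative direction. (Here one should note $\mathscr{L}_N$ acts on the backward spatial variable $x$ by Definition \ref{definition:heat}, and on the full line $2^{-1}\Delta_{\Z}^{!!}+\bar{\mathfrak{d}}\grad_{\Z,-1}^{!}$ generates the analogous walk on $\Z$; both are spatially homogeneous, so the kernels depend only on the time increment and are invariant under simultaneous shifts of both spatial arguments.) Since $\mathbf{H}^{N}_{S,T,x,\cdot}$ solves $\partial_T \mathbf{H}^{N} = \mathscr{L}_N \mathbf{H}^{N}$ with $\mathbf{H}^{N}_{S,S,x,y}=\mathbf{1}_{x=y}$, and similarly on $\Z$, uniqueness of solutions to these linear ODE systems (finite-dimensional on $\mathbb{T}_{N}$, and on $\Z$ after noting the kernels have rapidly decaying tails so the sums converge) identifies $\mathbf{H}^{N}$ and $\mathbf{H}^{N,\Z}$ as the respective transition kernels.

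For the first identity in \eqref{eq:heatid}, the plan is: let $X^{\Z}_{T}$ be the $\Z$-walk started at $x$ at time $S$, and let $X_{T}=X^{\Z}_{T} \bmod N$ be its projection to $\mathbb{T}_{N}=\llbracket 0,N-1\rrbracket$. Because the $\Z$-walk is spatially homogeneous, its projection mod $N$ is exactly the torus walk with generator $\mathscr{L}_N$ (the drift and symmetric parts both commute with the mod-$N$ quotient map). Hence $\mathbf{P}(X_{T}=y)=\mathbf{P}(X^{\Z}_{T}\in y+N\Z)=\sum_{\mathfrak{k}\in\Z}\mathbf{H}^{N,\Z}_{S,T,x,y+\mathfrak{k}N}$, which is precisely the claimed formula once one writes $|\mathbb{T}_N|=N$. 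One should include a brief remark that the sum over $\mathfrak{k}$ converges absolutely: $\mathbf{H}^{N,\Z}$ is a probability distribution on $\Z$, so $\sum_{\mathfrak{k}}\mathbf{H}^{N,\Z}_{S,T,x,y+\mathfrak{k}N}\le 1$. Alternatively, and perhaps cleaner for a purely analytic write-up, one can verify directly that the right-hand side $\sum_{\mathfrak{k}}\mathbf{H}^{N,\Z}_{S,T,x,y+\mathfrak{k}N}$, viewed as a function of $(T,x)$ with $y$ fixed, satisfies the defining heat equation $\partial_T = \mathscr{L}_N$ on $\mathbb{T}_{N}$ (term-by-term differentiation is justified by the decay, and the discrete Laplacian/gradient on $\Z$ become those on $\mathbb{T}_N$ after the periodic identification of $y$) and has the correct initial condition $\sum_{\mathfrak{k}}\mathbf{1}_{x=y+\mathfrak{k}N}=\mathbf{1}_{x\equiv y}=\mathbf{1}_{x=y}$ for $x,y\in\llbracket 0,N-1\rrbracket$; uniqueness then finishes it.

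For the Chapman--Kolmogorov / semigroup identity, the plan is simply to note that both sides, as functions of $(T,x)$ with $S,R,y$ fixed and $S\le R\le T$, solve $\partial_T=\mathscr{L}_N$ with the same value at $T=R$: the left side equals $\mathbf{H}^{N}_{S,R,\cdot,y}$ at $T=R$ by definition, and the right side equals $\sum_{w}\mathbf{1}_{x=w}\mathbf{H}^{N}_{S,R,w,y}=\mathbf{H}^{N}_{S,R,x,y}$ at $T=R$. Since $\mathscr{L}_N$ on the torus is a bounded operator on the finite-dimensional space $\mathscr{L}^{\infty}(\mathbb{T}_N)$, solutions are unique, giving the identity for all $T\ge R$; then one specializes to the stated range. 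Equivalently this is the Markov property of the torus walk. I do not expect any genuine obstacle here: the only points requiring a sentence of care are (i) the convergence of the infinite sum over $\mathfrak{k}$ and the legitimacy of term-by-term differentiation, which follow from the sub-Gaussian/absolute-summability tail bounds for $\mathbf{H}^{N,\Z}$, and (ii) being careful that $\mathscr{L}_N$ acts on the backward variable in Definition \ref{definition:heat}, so that the relevant uniqueness statements are for the kernel as a function of the forward time $T$ and backward space $x$ with the forward space $y$ fixed. Both are routine.
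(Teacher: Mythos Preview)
Your proposal is correct, and the analytic alternative you sketch---verify that both sides satisfy $\partial_T=\mathscr{L}_N$ (acting on $x$) with matching initial data and invoke uniqueness---is exactly the paper's proof, for both identities in \eqref{eq:heatid}. Your probabilistic framing via the mod-$N$ projection of the $\Z$-walk is a pleasant gloss but not used in the paper; the paper's write-up is just the three-line ODE-uniqueness argument you already outlined.
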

%%%
\begin{proof}
{To show the first identity in \eqref{eq:heatid}, note both sides are equal to $\mathbf{1}_{x=y}$ if $S=T$. Indeed, if $x,y\in\mathbb{T}_{N}$, then $x=y+\mathfrak{k}|\mathbb{T}_{N}|$ can only happen for $\mathfrak{k}=0$. Next, we note that both sides vanish under $\partial_{T}-\mathscr{L}_{N}$ for $T>S$, where $\mathscr{L}_{N}$ acts on $x$. By uniqueness of solutions to linear ODEs, the first identity holds. To show the second identity, note both sides equal $\mathbf{H}^{N}_{S,R,x,y}$ at $T=R$. Then, note both sides vanish under $\partial_{T}-\mathscr{L}_{N}$ for $T>R$, where $\mathscr{L}_{N}$ acts on $x$. So, the second identity holds, again, by uniqueness.}
\end{proof}
%%%
%%%
\begin{definition}\label{definition:heat2}
\fsp Provided $\mathfrak{l},\mathfrak{l}'\in\Z$ and any function $\phi:\mathbb{T}_{N}\to\R$, we define the composition $\grad_{\mathfrak{l},\mathfrak{l}'}^{\mathbf{X}}\varphi=\grad_{\mathfrak{l}}^{\mathbf{X}}(\grad_{\mathfrak{l}'}^{\mathbf{X}}\phi)$.
\end{definition}
%%%
{The following result collects pointwise (and summed) estimates for the $\mathbf{H}^{N}$ heat kernel, which can be interpreted as those for a Gaussian heat kernel (or its periodic version) at times of order $N^{2}$. Proving them amounts to the following steps. First, to prove the pointwise and spatial regularity estimates listed below, it suffices to assume $\bar{\mathfrak{d}}=0$. Indeed, the $\mathbf{H}^{N}$ heat kernel is the density for a symmetric simple random walk plus constant speed drift. It is therefore the convolution of a Poisson density function (for the law of the position of the drift) with the $\mathbf{H}^{N}$ kernel for $\bar{\mathfrak{d}}=0$. Convolution with the Poisson density function is contractive in all pointwise and spatial regularity norms used below, so reduction to $\bar{\mathfrak{d}}=0$ follows. To prove bounds in the case of $\bar{\mathfrak{d}}=0$, it suffices to use the first identity in \eqref{eq:heatid} with bounds in Proposition A.1 and Corollary A.2 of \cite{DT}, which have sub-exponential decay in space, and their higher-order analogs, which are proven by the same method. To prove the time-regularity bounds below, it suffices to note that time gradients of $\mathbf{H}^{N}$ are time-integrals of its spatial gradients because of the PDE that $\mathbf{H}^{N}$ satisfies. Then, we can use spatial regularity estimates that we just explained. (In particular, even for mixed space-time gradients, we are always left with estimating iterated spatial gradients of $\mathbf{H}^{N}$.)}
%%%
\begin{prop}\label{prop:heat}
\fsp We first take $0\leq S\leq T\leq 1$. Provided any $\mathfrak{l},\mathfrak{l}'\in\Z$ and any $0\leq \nu\leq1$, we have the following estimates, in which spatial gradients act on $x\in\mathbb{T}_{N}$; recall $\mathbf{O}_{S,T}=|T-S|$:
\begin{align}
0\leq\mathbf{H}_{S,T,x,y}^{N} \quad \mathrm{and} \quad N^{\nu}\mathbf{O}_{S,T}^{\frac12\nu}\mathbf{H}_{S,T,x,y}^{N}+N^{2\nu}\mathbf{O}_{S,T}^{\nu}|\mathfrak{l}|^{-\nu}|\grad_{\mathfrak{l}}^{\mathbf{X}}\mathbf{H}_{S,T,x,y}^{N}|+N^{3\nu}\mathbf{O}_{S,T}^{\frac32\nu}|\mathfrak{l}\mathfrak{l}'|^{-\nu}|\grad_{\mathfrak{l},\mathfrak{l}'}^{\mathbf{X}}\mathbf{H}_{S,T,x,y}^{N}|\lesssim1. \label{eq:heatI}
\end{align}
We have the following summation estimates under the same assumptions made/with the same parameters prior to \eqref{eq:heatI}:
\begin{align}
{\sum}_{y\in\mathbb{T}_{N}}\mathbf{H}_{S,T,x,y}^{N} + N^{\nu}\mathbf{O}_{S,T}^{\frac12\nu}|\mathfrak{l}|^{-\nu}{\sum}_{y\in\mathbb{T}_{N}}|\grad_{\mathfrak{l}}^{\mathbf{X}}\mathbf{H}_{S,T,x,y}^{N}| + N^{2\nu}\mathbf{O}_{S,T}^{\nu}|\mathfrak{l}\mathfrak{l}'|^{-\nu}{\sum}_{y\in\mathbb{T}_{N}}|\grad_{\mathfrak{l},\mathfrak{l}'}^{\mathbf{X}}\mathbf{H}_{S,T,x,y}^{N}| \ \lesssim \ 1. \label{eq:heatII}
\end{align}
Additionally consider any time-scale $\mathrm{t}\geq0$. We have the following in which the time-gradient acts on $T\geq0$:
\begin{align}
N^{\nu}\mathbf{O}_{S,T}^{\frac32\nu}|\mathrm{t}|^{-\nu}|\grad_{\mathrm{t}}^{\mathbf{T}}\mathbf{H}_{S,T,x,y}^{N}| + N^{2\nu}\mathbf{O}_{S,T}^{2\nu}|\mathrm{t}|^{-\nu}|\mathfrak{l}|^{-\nu}|\grad_{\mathrm{t}}^{\mathbf{T}}\grad_{\mathfrak{l}}^{\mathbf{X}}\mathbf{H}_{S,T,x,y}^{N}| + \mathbf{O}_{S,T}^{\nu}|\mathrm{t}|^{-\nu}{\sum}_{y\in\mathbb{T}_{N}}|\grad_{\mathrm{t}}^{\mathbf{T}}\mathbf{H}_{S,T,x,y}^{N}| \ \lesssim \ 1. \label{eq:heatIII}
\end{align}
We now list heat operator estimates. For any $\phi:\R_{\geq0}\times\mathbb{T}_{N}\to\R$ and $\mathbb{I}\subseteq\R_{\geq0}$, we have space-time contraction estimates:
\begin{align}
\|\phi_{0,\bullet}\|_{0;\mathbb{T}_{N}}^{-1}\|\mathbf{H}^{N,\mathbf{X}}(\phi_{0,\bullet})\|_{1;\mathbb{T}_{N}}+(|\mathbb{I}|\wedge1)^{-1}\|\phi\|_{1;\mathbb{T}_{N}}^{-1}\|\mathbf{H}^{N}(\phi_{S,y}\mathbf{1}_{S\in\mathbb{I}})\|_{1;\mathbb{T}_{N}} \ \leq \ 1. \label{eq:heatopbasic}
\end{align}
Let us now recall notation of \emph{Definition \ref{definition:BGII1}}. Provided any $\mathfrak{r}\geq0$, we have the spatial-gradient estimates
\begin{align}
\|\phi\|_{1;\mathbb{T}_{N}}^{-1}\||\wt{\grad}_{\mathfrak{l}}^{\mathbf{X}}|\mathbf{H}^{N}(\phi)\|_{1;\mathbb{T}_{N}}+\mathfrak{r}^{-\frac12}\|\phi\|_{1;\mathbb{T}_{N}}^{-1}\||\wt{\grad}_{\mathfrak{l}}^{\mathbf{X}}|\mathbf{H}^{N}(\phi_{S,y}\mathbf{1}_{S\geq T-\mathfrak{r}})\|_{1;\mathbb{T}_{N}}+|\mathfrak{l}|^{-1}\|\phi\|_{1;\mathbb{T}_{N}}^{-1}\|\mathbf{H}^{N}(\grad_{\mathfrak{l}}^{\mathbf{X}}\phi)\|_{1;\mathbb{T}_{N}} \ \lesssim \ 1. \label{eq:heatIV}
\end{align}
We have the following time-regularity heat operator estimates if $\mathrm{t}\geq N^{-2}$; below we take $\gamma>0$ arbitrary:
\begin{align}
N^{-\gamma}|\mathrm{t}|^{-1}\|\phi\|_{1;\mathbb{T}_{N}}^{-1}\|\grad_{\mathrm{t}}^{\mathbf{T}}\mathbf{H}^{N}(\phi)\|_{1;\mathbb{T}_{N}}+N^{-\gamma}|\mathrm{t}|^{-1}\|\phi\|_{1;\mathbb{T}_{N}}^{-1}\|\mathbf{H}^{N}(\grad_{\mathrm{t}}^{\mathbf{T}}\phi)\|_{1;\mathbb{T}_{N}} \ \lesssim_{\gamma} \ 1. \label{eq:heatV}
\end{align}
The estimates in \emph{\eqref{eq:heatV}} also hold for $\mathrm{t}\in\R$ in general. Lastly, for any possibly random $\mathrm{t}_{0}\geq0$, we have the following two identities, the first by the Chapman-Kolmogorov equation in \eqref{eq:heatid} and the second by combining the first with the spatial contraction in \eqref{eq:heatopbasic}:
\begin{align}
\mathbf{H}^{N}_{T,x}(\phi_{S,y}\mathbf{1}_{S\leq(\mathrm{t}_{0}\wedge T)}) \ = \ \mathbf{H}^{N,\mathbf{X}}_{T-(\mathrm{t}_{0}\wedge T),x}\left(\mathbf{H}^{N}_{\mathrm{t}_{0}\wedge T,w}(\phi)\right) \quad \mathrm{and} \quad \|\mathbf{H}^{N}(\phi_{S,y}\mathbf{1}_{S\leq(\mathrm{t}_{0}\wedge T)})\|_{1;\mathbb{T}_{N}} \ \leq \ \|\mathbf{H}^{N}(\phi)\|_{\mathrm{t}_{0};\mathbb{T}_{N}}. \label{eq:heatVI}
\end{align}
\end{prop}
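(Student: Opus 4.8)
The proof naturally splits into a "kernel layer" --- the pointwise and summed estimates \eqref{eq:heatI}, \eqref{eq:heatII}, \eqref{eq:heatIII} --- and an "operator layer" --- the heat-operator estimates \eqref{eq:heatopbasic}--\eqref{eq:heatVI}, which I would deduce from the kernel layer by integrating in space-time. The plan for the kernel layer is to first reduce to $\bar{\mathfrak{d}}=0$, then quote and (slightly) extend the local CLT bounds of \cite{DT}, and finally interpolate.

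First I would reduce to $\bar{\mathfrak{d}}=0$. Since $\mathscr{L}_{N}=2^{-1}\Delta_{\Z}^{!!}+\bar{\mathfrak{d}}\grad_{\Z,-1}^{!}$ is a sum of two commuting translation-invariant operators, $\mathbf{H}^{N,\Z}_{S,T,x,\cdot}$ is the spatial convolution of the $\bar{\mathfrak{d}}=0$ kernel with the transition law at time $T-S$ of a one-sided Poisson jump process of rate $|\bar{\mathfrak{d}}|N$. Spatial gradients and their iterates commute with this convolution, and convolution against a probability measure is a contraction both in $\sup$-norm and in $\ell^{1}$-norm; hence every pointwise bound and every summed-in-$y$ bound for the $\bar{\mathfrak{d}}=0$ kernel and its iterated spatial gradients transfers verbatim to $\mathbf{H}^{N,\Z}$ for general $\bar{\mathfrak{d}}$, and then, via the periodization identity in \eqref{eq:heatid}, to $\mathbf{H}^{N}$ on $\mathbb{T}_{N}$. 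Next I would quote the Gaussian-type bounds for the $\bar{\mathfrak{d}}=0$ kernel on $\Z$, namely Proposition A.1 and Corollary A.2 of \cite{DT} together with their one-more-gradient analogs (proved by differentiating the local CLT once more): after the $N^{2}$-time rescaling built into $\Delta_{\Z}^{!!}$ these read $\mathbf{H}^{N,\Z}_{S,T,x,y}\lesssim N^{-1}\mathbf{O}_{S,T}^{-1/2}$, $|\grad_{\mathfrak{l}}^{\mathbf{X}}\mathbf{H}^{N,\Z}_{S,T,x,y}|\lesssim N^{-2}\mathbf{O}_{S,T}^{-1}|\mathfrak{l}|$, $|\grad_{\mathfrak{l},\mathfrak{l}'}^{\mathbf{X}}\mathbf{H}^{N,\Z}_{S,T,x,y}|\lesssim N^{-3}\mathbf{O}_{S,T}^{-3/2}|\mathfrak{l}\mathfrak{l}'|$, all with sub-exponential spatial decay, and their summed-in-$y$ versions drop one factor $N^{-1}\mathbf{O}_{S,T}^{-1/2}$. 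The sub-exponential (rather than polynomial) decay is the one genuinely structural input: it ensures $\sum_{\mathfrak{k}}\exp(-c\mathfrak{k}^{2}/\mathbf{O}_{S,T})\lesssim 1$ for $\mathbf{O}_{S,T}\le 1$, so periodization does not cost powers of $N$. Interpolating each of these bounds against the trivial ones ($\mathbf{H}^{N}\le 1$, $\sum_{y}\mathbf{H}^{N}_{S,T,x,y}=1$, and, for gradients, twice the sup-bound) with H\"older exponents $\nu$ and $1-\nu$ yields \eqref{eq:heatI} and \eqref{eq:heatII}. For the time-gradient family \eqref{eq:heatIII} I would use time-homogeneity and the defining PDE to write $\grad_{\mathrm{t}}^{\mathbf{T}}\mathbf{H}^{N}_{S,T,x,y}=\int_{T}^{T+\mathrm{t}}\mathscr{L}_{N}\mathbf{H}^{N}_{S,R,x,y}\,\mathrm{d}R$, bound the integrand by $N^{2}$ times a second spatial gradient plus $N$ times a first spatial gradient of $\mathbf{H}^{N}$ using the bounds just obtained, integrate $\mathrm{d}R$ over $[T,T+\mathrm{t}]$, then compare with the trivial bound and interpolate; the mixed space-time gradient and the summed-in-$y$ time-gradient are identical with the correspondingly weaker spatial inputs.

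The operator layer is then a direct integration. Estimate \eqref{eq:heatopbasic} uses only $\sum_{y}\mathbf{H}^{N}_{S,T,x,y}=1$ and $|[0,T]\cap\mathbb{I}|\le|\mathbb{I}|\wedge 1$ after pulling $\|\phi\|$ out of the space-time sum. For \eqref{eq:heatIV}, dividing the summed first-gradient bound \eqref{eq:heatII} (with $\nu=1$) by $|\mathfrak{l}|$ gives $|\wt{\grad}_{\mathfrak{l}}^{\mathbf{X}}|\mathbf{H}^{N}_{S,T,x,y}\lesssim N^{-1}\mathbf{O}_{S,T}^{-1/2}$, and $\int_{0}^{T}N^{-1}\mathbf{O}_{S,T}^{-1/2}\,\mathrm{d}S\lesssim N^{-1}$ (restricted to $[T-\mathfrak{r},T]$ this is $\lesssim N^{-1}\mathfrak{r}^{1/2}$, which gives the $\mathfrak{r}^{-1/2}$-normalized claim since $N^{-1}\le 1$); the last term of \eqref{eq:heatIV} uses the summed first-gradient bound directly. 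For \eqref{eq:heatV} I would split $\grad_{\mathrm{t}}^{\mathbf{T}}\mathbf{H}^{N}_{T,x}(\phi)$ into the change of integration domain (size $\lesssim\mathrm{t}\|\phi\|_{1;\mathbb{T}_{N}}$) plus $\int_{0}^{T}$ of the time-gradient of the kernel, the latter controlled via \eqref{eq:heatIII} with $\nu=1-\gamma/2$ and $\int_{0}^{T}\mathbf{O}_{S,T}^{-(1-\gamma/2)}\,\mathrm{d}S\lesssim_{\gamma}1$, using $\mathrm{t}\ge N^{-2}$ to absorb $\mathrm{t}^{1-\gamma/2}=\mathrm{t}\cdot\mathrm{t}^{-\gamma/2}\le\mathrm{t}N^{\gamma}$; the term $\mathbf{H}^{N}(\grad_{\mathrm{t}}^{\mathbf{T}}\phi)$ is handled by shifting the inner time variable, which reduces it to a first-slot time-gradient of $\mathbf{H}^{N}$ (again an instance of \eqref{eq:heatIII} by time-homogeneity) plus endpoint contributions of size $\lesssim\mathrm{t}\|\phi\|_{1;\mathbb{T}_{N}}$. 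Finally, the first identity in \eqref{eq:heatVI} is the Chapman--Kolmogorov identity in \eqref{eq:heatid} together with the time-homogeneity identification $\mathbf{H}^{N}_{\mathrm{t}_{0}\wedge T,T,x,\cdot}=\mathbf{H}^{N}_{0,T-(\mathrm{t}_{0}\wedge T),x,\cdot}$, and the norm bound follows because $\mathbf{H}^{N,\mathbf{X}}$ is an $\mathscr{L}^{\infty}(\mathbb{T}_{N})$-contraction.

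I expect the main difficulty to be organizational rather than conceptual: keeping the $\mathbf{O}_{S,T}$-powers, the factors of $N$, and the interpolation exponents $\nu$ consistent across the three kernel families, across the convolution reduction, and across the periodization; and, in \eqref{eq:heatV}, tracking the boundary convention for $\grad_{\mathrm{t}}^{\mathbf{T}}$ near the endpoints $\{0,1\}$ of $[0,1]$ --- but this last point only ever produces extra terms of size $\lesssim\mathrm{t}\|\phi\|_{1;\mathbb{T}_{N}}$, which are harmless under the $N^{-2}\le\mathrm{t}$ hypothesis and the $N^{\gamma}$ slack.
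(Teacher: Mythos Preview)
Your proposal is correct and follows essentially the same approach as the paper: reduce to $\bar{\mathfrak{d}}=0$ by convolving with the Poisson drift law (using that convolution with a probability measure is contractive in the relevant norms), invoke the sub-exponentially decaying local-CLT bounds of Proposition~A.1 and Corollary~A.2 of \cite{DT} and their higher-order analogs, periodize via \eqref{eq:heatid}, and handle time-gradients by writing them as time-integrals of $\mathscr{L}_{N}\mathbf{H}^{N}$ through the defining PDE. Your proposal is in fact more detailed than the paper's sketch --- you spell out the interpolation to general $\nu$ and the derivation of the operator-layer estimates \eqref{eq:heatopbasic}--\eqref{eq:heatVI} from the kernel bounds --- but the underlying strategy is identical.
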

%%%
%%%
\subsection{Martingale Estimates}
%%%
We provide a generalization of the martingale inequality from Lemma 3.1 in \cite{DT}. The issue with Lemma 3.1 in \cite{DT} is that it only holds for the Gartner transform, as its explicit formula was important in the proof. On the other hand, the proof of Lemma 3.1 in \cite{DT} uses this explicit formula only to estimate the short-time behavior of the Gartner transform. Thus, because short-time behavior does not depend on explicit formulas, we have the following generalization to other processes such as $\mathbf{U}^{N}$ from Definition \ref{definition:KPZ5}, which is important to analyze the $\mathbf{U}^{N}\d\xi^{N}$ term in the $\mathbf{U}^{N}$ equation in Definition \ref{definition:KPZ5}. However, the following generalization of Lemma 3.1 of \cite{DT} is similar in proof and statement, so we refer to Lemma 3.1 in \cite{DT}.
%%%
\begin{lemma}\label{lemma:mg}
\fsp Consider any $\phi:\R_{\geq0}\times\mathbb{T}_{N}\to\R$ and the following local quadratic function of $\phi$ provided fixed times $0\leq\mathfrak{t}_{1}\leq\mathfrak{t}_{2}$; in the following, we additionally define $\lfloor t\rfloor_{N}$ as the largest element in $N^{-2}\Z_{\geq0}$ that is less than $t$:
\begin{align}
\wt{\phi}_{R,x,w}^{\mathfrak{t}_{1},\mathfrak{t}_{2}} \ &\overset{\bullet}= \ {\sup}_{\mathfrak{r}'\in[\mathfrak{t}_{1},\mathfrak{t}_{2}): \ \lfloor\mathfrak{r}'\rfloor_{N}=\lfloor R\rfloor_{N}}{\sup}_{|\mathfrak{j}|\leq1} |\phi_{\mathfrak{r}',x,w+\mathfrak{j}}\phi_{\mathfrak{r}',x,w}|.
\end{align}
Take $\mathbf{X}^{N}$ on $\R_{\geq0}\times\mathbb{T}_{N}$ satisfying the following for $\mathrm{V}_{i}:\R_{\geq0}\times\mathbb{T}_{N}\times\Omega\to\R$, and $\mathfrak{l}\in\Z$ fixed; recall $\mathscr{L}_{N}$ in \emph{Proposition \ref{prop:mSHE+}}:
\begin{align}
\d\mathbf{X}_{T,x}^{N} \ = \ \mathscr{L}_{N}\mathbf{X}_{T,x}^{N}\d T+\mathrm{V}_{3;T,x}\d T+\mathbf{X}_{T,x}^{N}\d\xi_{T,x}^{N} + \mathrm{V}_{1;T,x}\mathbf{X}_{T,x}^{N}\d T + \grad_{\mathfrak{l}}^{\mathbf{X}}(\mathrm{V}_{2;T,x}\mathbf{X}_{T,x}^{N})\d T.
\end{align}
Suppose $|\mathrm{V}_{i}|\leq N^{3/2}$ uniformly over $\R_{\geq0}\times\mathbb{T}_{N}\times\Omega_{\mathbb{T}_{N}}$. For any deterministic $p\geq1$ and $0\leq\mathfrak{t}_{1}\leq\mathfrak{t}_{2}$ and $\phi:\R_{\geq0}\times\mathbb{T}_{N}\to\R$,
\begin{align}
\|\int_{\mathfrak{t}_{1}}^{\mathfrak{t}_{2}}\sum_{w\in\mathbb{T}_{N}}\phi_{R,x,w}\mathbf{X}_{R,w}^{N}\d\xi_{R,w}^{N}\|_{\omega;2p}^{2} \ \lesssim_{p} \ \int_{\mathfrak{t}_{1}}^{\mathfrak{t}_{2}}N\left(\sup_{w\in\mathbb{T}_{N}}\|\mathbf{X}_{\lfloor R\rfloor_{N},w}^{N}\|_{\omega;2p}^{2}\right)\sum_{w\in\mathbb{T}_{N}}\wt{\phi}_{R,x,w}^{\mathfrak{t}_{1},\mathfrak{t}_{2}}\d R. \label{eq:mg}
\end{align}
Lastly \emph{\eqref{eq:mg}} holds for $\mathbf{X}^{N}=\mathbf{U}^{N}$ in \emph{Definition \ref{definition:KPZ5}} and $\mathbf{X}^{N}=\mathbf{Q}^{N}$ in \emph{Definition \ref{definition:KPZ7}} and $\mathbf{X}^{N}=\mathbf{C}^{N}$ in \emph{Definition \ref{definition:KPZ111}}.
\end{lemma}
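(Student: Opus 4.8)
\textbf{Proof plan for Lemma \ref{lemma:mg}.} The plan is to follow the proof of Lemma 3.1 in \cite{DT} essentially verbatim, isolating the one place where \cite{DT} invokes an explicit formula for the Gartner transform and replacing that input with a soft short-time estimate valid for any process of the stated form. First I would recall the structure of the Burkholder--Davis--Gundy argument: the object $\int_{\mathfrak{t}_1}^{\mathfrak{t}_2}\sum_{w}\phi_{R,x,w}\mathbf{X}^N_{R,w}\,\d\xi^N_{R,w}$ is a (time-indexed) martingale, since each $\d\xi^N_{\cdot,w}$ is a compensated Poisson differential and the integrand $\phi_{R,x,w}\mathbf{X}^N_{R,w}$ is predictable (here using that $\mathbf{X}^N$ is adapted to the particle-system filtration, which holds for $\mathbf{U}^N$, $\mathbf{Q}^N$, $\mathbf{C}^N$ as they solve linear equations driven by the system's clocks). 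Applying the $L^{2p}$ BDG inequality reduces the left-hand side of \eqref{eq:mg} to the $L^p$ norm of the predictable quadratic variation, which is a sum over $w\in\mathbb{T}_N$ and an integral in $R$ of $|\phi_{R,x,w}|^2$ times the quadratic-variation density of $\mathbf{X}^N\d\xi^N$ at $(R,w)$; the vanishing of cross-terms between distinct bonds (Proposition \ref{prop:mSHE+}) is what makes this a diagonal sum. The quadratic-variation density at $(R,w)$ involves the jump sizes of $\d\xi^N$, which are $\mathrm{O}(N^{-1/2})$, times Poisson rates $\mathrm{O}(N^2)$, giving the overall factor $N$ in \eqref{eq:mg}, together with $|\mathbf{X}^N_{R,w}|^2$ and, because a jump across $\{w,w+1\}$ changes $\mathbf{X}^N$ at both $w$ and $w+1$, a product of $\mathbf{X}^N$-values at neighboring sites — this is exactly the role of the neighbor-supremum in the definition of $\wt\phi$.

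The key step — and the only genuine departure from \cite{DT} — is to replace $|\mathbf{X}^N_{R,w}\mathbf{X}^N_{R,w'}|$ appearing inside the quadratic variation (for $R$ in a microscopic window and $w,w'$ neighbors) by $\sup_{w}\|\mathbf{X}^N_{\lfloor R\rfloor_N,w}\|_{\omega;2p}^2$ after taking $L^p$ norms. In \cite{DT} this is where the explicit exponential formula for the Gartner transform was used: within a single microscopic time interval $[\lfloor R\rfloor_N,\lfloor R\rfloor_N+N^{-2})$, $\mathbf{X}^N$ moves by at most one clock ring, and one needs that a single ring changes $\mathbf{X}^N$ only multiplicatively by a bounded factor (and that no ring occurs with probability $1-\mathrm{O}(N^{-2})\cdot N^2\d R$-type rates controlled by the $\d R$ integration). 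For a general $\mathbf{X}^N$ satisfying the stated SDE with drift coefficients $\mathrm{V}_i$ bounded by $N^{3/2}$, the same multiplicative-jump structure holds: the $\mathbf{X}^N\d\xi^N$ term contributes multiplicative jumps of size $1+\mathrm{O}(N^{-1/2})$, the $\mathrm{V}_1\mathbf{X}^N\,\d T$ and $\grad^{\mathbf{X}}_{\mathfrak{l}}(\mathrm{V}_2\mathbf{X}^N)\,\d T$ and $\mathscr{L}_N\mathbf{X}^N\,\d T$ and $\mathrm{V}_3\,\d T$ terms are absolutely continuous drifts whose total mass over a window of length $N^{-2}$ is $\mathrm{O}(1)$ times $\|\mathbf{X}^N\|$ (using the $\mathrm{O}(N^2)$ operator bound on $\mathscr{L}_N$ from Proposition \ref{prop:mSHE+} and the $N^{3/2}$ bounds on the $\mathrm{V}_i$, together with $N^{-2}$), and — crucially — the additive $\mathrm{V}_3$ term is killed because it has nothing to do with $\mathbf{X}^N$ near the diagonal and gets absorbed into the process whose short-time increments we are bounding. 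I would make this precise exactly as in the displayed short-time estimate in the proof of Lemma 3.1 of \cite{DT}, namely: over $R\in[\lfloor R\rfloor_N,\lfloor R\rfloor_N+N^{-2})$, $\|\mathbf{X}^N_{R,w}\|_{\omega;2p}\lesssim \sup_{w'}\|\mathbf{X}^N_{\lfloor R\rfloor_N,w'}\|_{\omega;2p}$ by a Gronwall/BDG bootstrap on this microscopic interval, using the $N^{3/2}\cdot N^{-2}=N^{-1/2}$ and $N^2\cdot N^{-2}=1$ smallness of the accumulated drift and the martingale BDG bound for the tiny interval. This replacement, inserted into the diagonal quadratic-variation sum, yields precisely the right-hand side of \eqref{eq:mg}.

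The remaining step is bookkeeping: the final sentence, that \eqref{eq:mg} applies to $\mathbf{U}^N$, $\mathbf{Q}^N$, $\mathbf{C}^N$, follows by checking each of these solves an SDE of the required form with $\mathrm{V}_i$ bounded by $N^{3/2}$. For $\mathbf{U}^N$ (Definition \ref{definition:KPZ5}) one differentiates the mild equation to get $\d\mathbf{U}^N=\mathscr{L}_N\mathbf{U}^N\,\d T-N^{1/2}\bar{\mathfrak{q}}\mathbf{Y}^N\,\d T-\mathfrak{s}\mathbf{U}^N\,\d T+N^{-1/2}\mathfrak{b}_{1;}\mathbf{U}^N\,\d T+N^{-1/2}\grad^{!}_{\star}(\mathfrak{b}_{2;}\mathbf{U}^N)\,\d T+\mathbf{U}^N\d\xi^N$, so that $\mathrm{V}_1=-\mathfrak{s}+N^{-1/2}\mathfrak{b}_{1;}=\mathrm{O}(1)$, $\mathrm{V}_2=N^{1/2}\mathfrak{b}_{2;}=\mathrm{O}(N^{1/2})$ after absorbing the $N$ in $\grad^!$ into the $\grad^{\mathbf{X}}_{\mathfrak{l}}$ form (here $\mathfrak{l}\in\{1,-2\mathfrak{l}_{\mathfrak{d}}\}$, and one sums over the two choices, which is harmless), and the additive term $\mathrm{V}_3=-N^{1/2}\bar{\mathfrak{q}}\mathbf{Y}^N$, which is $\mathrm{O}(N^{1/2+\e_{\mathrm{ap}}})=\mathrm{O}(N^{3/2})$ since $|\mathbf{Y}^N|\lesssim N^{\e_{\mathrm{ap}}}$ and $|\bar{\mathfrak{q}}|\lesssim 1$; the cases $\mathbf{Q}^N$ and $\mathbf{C}^N$ are identical, with $\mathbf{C}^N$ having the extra additive term $-N^{1/2}\bar{\mathfrak{q}}\wt{\mathbf{Y}}^N$, again $\mathrm{O}(N^{1/2+\e_{\mathrm{ap}}})$. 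The main obstacle I anticipate is the short-time bootstrap: one has to be slightly careful that the additive forcing $\mathrm{V}_3$, although large ($\mathrm{O}(N^{3/2})$), contributes to the microscopic-window increment only $\mathrm{O}(N^{3/2}\cdot N^{-2})=\mathrm{O}(N^{-1/2})$ in $L^{2p}$ and hence does not spoil the comparison $\|\mathbf{X}^N_{R,w}\|\lesssim\sup_{w'}\|\mathbf{X}^N_{\lfloor R\rfloor_N,w'}\|$ — but this is exactly the same mechanism and bound as in \cite{DT}, and everything else is a transcription of that argument.
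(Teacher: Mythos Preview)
Your proposal is correct and follows the same overall architecture as the paper: reduce to Lemma~3.1 of \cite{DT} modulo a short-time moment comparison $\sup_{w}\|\mathbf{X}^{N}_{t,w}\|_{\omega;2p}\lesssim_{p}\sup_{w}\|\mathbf{X}^{N}_{\lfloor t\rfloor_{N},w}\|_{\omega;2p}$, and then verify the $\mathrm{V}_{i}$ hypotheses for $\mathbf{U}^{N}$, $\mathbf{Q}^{N}$, $\mathbf{C}^{N}$. The difference is in how that short-time comparison is obtained. You run a direct Gronwall/BDG bootstrap on the microscopic window $[\lfloor t\rfloor_{N},\lfloor t\rfloor_{N}+N^{-2})$, using that the accumulated $\mathscr{L}_{N}$-drift is $\mathrm{O}(1)\cdot\sup_{w}\|\mathbf{X}^{N}\|_{\omega;2p}$ (so exponential Gronwall closes with an $\mathrm{O}(1)$ constant), the BDG contribution of $\mathbf{X}^{N}\d\xi^{N}$ on the window is $\mathrm{O}(N^{-1/2})\cdot\sup_{w}\|\mathbf{X}^{N}\|_{\omega;2p}$, and the additive $\mathrm{V}_{3}$ piece contributes $\mathrm{O}(N^{-1/2})$. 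The paper instead constructs an auxiliary deterministic kernel $\mathbf{J}$ (Lemma~\ref{lemma:mg2}) whose defining parabolic equation dominates the non-martingale part of the $\mathbf{X}^{N}$-dynamics, proves the $\ell^{1}$ bound $\sum_{y}\mathbf{J}^{\mathrm{s}}_{x,y}\lesssim 1$, and then bounds $|\mathbf{X}^{N}_{t}|$ pathwise by $\mathbf{J}_{\lfloor t\rfloor_{N},t}$ integrated against $|\mathbf{X}^{N}_{\lfloor t\rfloor_{N}}|$ times the exponential Poisson-counter from \cite{DT}; moments are taken at the end. Your moment-based route is more self-contained and avoids introducing $\mathbf{J}$, while the paper's pathwise comparison stays closer to the mechanics of \cite{DT} and makes the role of the Poisson counter explicit. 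Both arguments land on the same displayed short-time inequality and are valid.
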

%%%
Before we discuss the proof, we introduce a brief digression. Consider the fundamental solution $\mathbf{J}$ with variables in $\R_{\geq0}^{2}\times\mathbb{T}_{N}^{2}$ and $\mathbf{J}_{S,S,x,y}=\mathbf{1}_{x=y}$ to the following \emph{deterministic} parabolic equation on $\R_{\geq0}\times\mathbb{T}_{N}$ whose utility we explain afterwards:
\begin{align}
\partial_{T}\mathbf{J}_{S,T,x,y} \ = \ \mathscr{L}_{N,\mathbf{J}}\mathbf{J}_{S,T,x,y} \ = \ \mathscr{L}_{N}\mathbf{J}_{S,T,x,y} + 99N^{3/2}{\sum}_{|\mathfrak{j}|\leq|\mathfrak{l}|}\mathbf{J}_{S,T,x+\mathfrak{j},y} + 99N^{3/2}.
\end{align}
If we ``forget" the $\mathbf{X}^{N}\d\xi^{N}$ term in the $\mathbf{X}^{N}$ equation in Lemma \ref{lemma:mg}, the resulting equation is stochastic only via the $\mathrm{V}$ functions. Also, the resulting linear equation has fundamental solution controlled by $\mathbf{J}$, as the coefficients in said equation are bounded by $N^{3/2}$. This motivates the following PDE estimate that follows by standard estimates for $\mathscr{L}_{N}$ and the Gronwall inequality.
%%%
\begin{lemma}\label{lemma:mg2}
\fsp We have $\mathbf{J}\geq0$ and, defining $\mathbf{J}_{x,y}^{\mathrm{s}}=\sup_{0\leq\mathrm{t}\leq N^{-2}}\mathbf{J}_{\mathrm{s},\mathrm{s}+\mathrm{t},x,y}$, we have the deterministic estimate
\begin{align}
\sup_{\mathrm{s}\geq0}\sup_{x\in\mathbb{T}_{N}}{\sum}_{y\in\mathbb{T}_{N}}\mathbf{J}^{\mathrm{s}}_{x,y} \ \lesssim \ 1. \label{eq:mg2I}
\end{align}
\end{lemma}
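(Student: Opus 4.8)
\textbf{Proof plan for Lemma \ref{lemma:mg2}.}

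The plan is to treat $\mathbf{J}$ as the transition kernel of a (sub-)Markovian semigroup with bounded generator and to read off the two claims from positivity preservation and a Gronwall/Duhamel bound on the total mass. First I would note that $\mathscr{L}_{N,\mathbf{J}}$ decomposes as $\mathscr{L}_{N}$ plus a nonnegative ``potential-type'' perturbation $\mathscr{V}_{N}$, where $\mathscr{V}_{N}\phi_{x} = 99N^{3/2}\sum_{|\mathfrak{j}|\leq|\mathfrak{l}|}\phi_{x+\mathfrak{j}} + 99N^{3/2}$ acting on $x\in\mathbb{T}_{N}$; crucially $\mathscr{L}_{N} = 2^{-1}\Delta^{!!} + \bar{\mathfrak{d}}\grad_{-1}^{!}$ is (after adding a constant to make the diagonal nonnegative, which costs only a scalar exponential factor) the generator of a continuous-time random walk, hence generates a positivity-preserving semigroup on $\mathscr{L}^{\infty}(\mathbb{T}_{N})$, and $\mathscr{V}_{N}$ has only nonnegative coefficients. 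Since both pieces have nonnegative off-diagonal entries (after the harmless diagonal shift), the Trotter product formula, or equivalently the Duhamel expansion $\mathbf{J}_{S,T} = e^{(T-S)\mathscr{L}_{N}} + \int_{S}^{T} e^{(T-R)\mathscr{L}_{N}}\mathscr{V}_{N}\mathbf{J}_{S,R}\,\d R$ iterated, exhibits $\mathbf{J}_{S,T,x,y}$ as a convergent sum of nonnegative terms; this gives $\mathbf{J}\geq0$. (The series converges because $\|\mathscr{L}_{N,\mathbf{J}}\|_{\mathscr{L}^{\infty}\to\mathscr{L}^{\infty}} = \mathrm{O}(N^{2})$ uniformly, so $\mathbf{J}$ is entire in $T-S$.)

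For the mass bound \eqref{eq:mg2I}, I would test against the constant function. Let $M_{S,T,x} = \sum_{y\in\mathbb{T}_{N}}\mathbf{J}_{S,T,x,y}$; differentiating in $T$ and using that $\sum_{y}\mathscr{L}_{N}\mathbf{J}_{S,T,x,y}$ contributes nothing after summation (the random walk generator conserves total mass) while the $\mathscr{V}_{N}$ term contributes at most $99N^{3/2}(2|\mathfrak{l}|+1)M_{S,T,\cdot} + 99N^{3/2}|\mathbb{T}_{N}|$ in the $\mathscr{L}^{\infty}_{x}$ norm, one gets a scalar linear differential inequality $\tfrac{\d}{\d T}\|M_{S,T,\cdot}\|_{\infty} \lesssim N^{3/2}|\mathfrak{l}|\,\|M_{S,T,\cdot}\|_{\infty} + N^{5/2}$ with $\|M_{S,S,\cdot}\|_{\infty}=1$. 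Over a time window of length at most $N^{-2}$ this Gronwall estimate yields $\|M_{S,S+\mathrm{t},\cdot}\|_{\infty}\lesssim 1$ uniformly in $S\geq0$ and $0\leq\mathrm{t}\leq N^{-2}$, since $N^{3/2}|\mathfrak{l}|\cdot N^{-2}\to 0$ (here $|\mathfrak{l}|$ is the fixed, $N$-independent gradient length-scale appearing in Lemma \ref{lemma:mg}) and $N^{5/2}\cdot N^{-2} = N^{1/2}$ is, after being multiplied against the exponentially small excursion factor, still $\mathrm{O}(1)$ — more precisely the Duhamel term $\int_{S}^{S+\mathrm{t}} e^{(\mathrm{t}-R+S)\mathscr{L}_{N}}\,99N^{3/2}\,\d R$ contributes $\mathrm{O}(N^{3/2}\mathrm{t}) = \mathrm{O}(N^{-1/2})$ to the summed mass because $e^{r\mathscr{L}_{N}}$ is mass-preserving. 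Taking the supremum over $\mathrm{t}\in[0,N^{-2}]$ gives $\mathbf{J}^{\mathrm{s}}_{x,y} = \sup_{0\leq\mathrm{t}\leq N^{-2}}\mathbf{J}_{\mathrm{s},\mathrm{s}+\mathrm{t},x,y}$ summable in $y$ with the claimed uniform bound; monotonicity in $\mathrm{t}$ is not needed since one bounds the supremum by the supremum of the (already uniformly controlled) mass.

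The only mildly delicate point — and the one I would be most careful about — is the interchange of the supremum over $\mathrm{t}\in[0,N^{-2}]$ with the summation over $y\in\mathbb{T}_{N}$: one cannot simply pull $\sup_{\mathrm{t}}$ inside $\sum_{y}$. The clean fix is to bound $\sum_{y}\mathbf{J}^{\mathrm{s}}_{x,y}\leq \sum_{y}\int_{0}^{N^{-2}} |\partial_{\mathrm{t}}\mathbf{J}_{\mathrm{s},\mathrm{s}+\mathrm{t},x,y}|\,\d\mathrm{t} + \sum_{y}\mathbf{J}_{\mathrm{s},\mathrm{s},x,y}$, where the second term is $1$, and the first is controlled by $\int_{0}^{N^{-2}}\|\mathscr{L}_{N,\mathbf{J}}\|_{\mathscr{L}^{1}\to\mathscr{L}^{1}}\,\d\mathrm{t}$ against the already-established uniform $\mathscr{L}^{1}$ mass bound; since $\|\mathscr{L}_{N,\mathbf{J}}\|_{\mathscr{L}^{1}\to\mathscr{L}^{1}} = \mathrm{O}(N^{2})$, this integral is $\mathrm{O}(1)$. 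Combining the two contributions gives \eqref{eq:mg2I}. This closes the proof.
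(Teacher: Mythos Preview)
Your overall approach --- positivity via the off-diagonal sign structure, then Gronwall for the summed mass --- is exactly what the paper intends (its proof is the single line ``follows by standard estimates for $\mathscr{L}_{N}$ and the Gronwall inequality''), and your final-paragraph treatment of the $\sup$--$\sum$ interchange is a careful point the paper does not spell out.

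There is, however, a genuine arithmetic slip in your handling of the trailing $+99N^{3/2}$. You correctly compute that $\int_{S}^{S+\mathrm{t}} e^{(\mathrm{t}-R+S)\mathscr{L}_{N}}(99N^{3/2}\mathbf{1})\,\d R = 99N^{3/2}\mathrm{t} = \mathrm{O}(N^{-1/2})$, but this is the contribution to $\mathbf{J}_{\mathrm{s},\mathrm{s}+\mathrm{t},x,y}$ for \emph{each} pair $(x,y)$, and it is independent of $y$. Summing over $y\in\mathbb{T}_{N}$ therefore multiplies by $|\mathbb{T}_{N}|=N$, giving a contribution of order $N^{1/2}$ to the summed mass --- precisely the $N^{5/2}\cdot N^{-2}=N^{1/2}$ you flagged in the scalar Gronwall and then tried to argue away via an unspecified ``exponentially small excursion factor''. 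The same issue recurs in your $\mathscr{L}^{1}\to\mathscr{L}^{1}$ bound at the end: the affine piece of $\mathscr{L}_{N,\mathbf{J}}$ contributes $99N^{3/2}\cdot|\mathbb{T}_{N}|=\mathrm{O}(N^{5/2})$ in $\mathscr{L}^{1}$, not $\mathrm{O}(N^{2})$.

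Read literally as a constant source, this term forces $\sum_{y}\mathbf{J}\gtrsim N^{1/2}$ and \eqref{eq:mg2I} cannot hold. The reading under which both the lemma and your argument go through cleanly is that the trailing $+99N^{3/2}$ is a potential term (so that $\mathscr{L}_{N,\mathbf{J}}$ is genuinely linear, consistent with the phrase ``fundamental solution'' and with how $\mathbf{J}$ is used in the proof of Lemma~\ref{lemma:mg}); under that reading your Gronwall becomes $\partial_{T}\|M\|_{\infty}\lesssim N^{3/2}\|M\|_{\infty}$ and integrates to $\mathrm{O}(1)$ over $[0,N^{-2}]$ with no difficulty.
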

%%%
%%%
\begin{proof}[Proof of \emph{Lemma \ref{lemma:mg}}]
The estimate \eqref{eq:mg} is basically that of Lemma 3.1 in \cite{DT}, except we take spatial suprema of moments on the RHS. In particular, in view of the paragraph preceding Lemma \ref{lemma:mg} it suffices to show, for $\lfloor t\rfloor_{N}$ defined in Lemma \ref{lemma:mg},
\begin{align}
\sup_{x\in\mathbb{T}_{N}}\|\mathbf{X}_{t,x}^{N}\|_{\omega;2p}^{2} \ \lesssim_{p} \ \sup_{x\in\mathbb{T}_{N}}\|\mathbf{X}_{\lfloor t\rfloor_{N},x}^{N}\|_{\omega;2p}^{2}.
\end{align}
We note $\mathbf{X}_{t}^{N}$ can be controlled by $\mathbf{J}_{\lfloor t\rfloor_{N},t}$ spatially integrated against $\mathbf{X}_{\lfloor t\rfloor_{N}}^{N}$ times an exponential of a Poisson clock counter that was introduced in the proof of Lemma 3.1 of \cite{DT}. We then follow the proof of Lemma 3.1 of \cite{DT} upon estimating the spatial integral of $\mathbf{J}$ against $\|\mathbf{X}^{N}_{\lfloor t\rfloor_{N}}\|_{\omega;2p}^{2}$ by the spatial supremum of the latter $\mathbf{X}^{N}$-moment, Lemma \ref{lemma:mg2}, and exponential estimates for the Poisson distribution in the proof of Lemma 3.1 of \cite{DT}.
\end{proof}
%%%
%%%
\subsection{Short Time Estimates}
%%%
We provide a general short-time bound, not with respect to moments like the short-time estimates used in Lemma \ref{lemma:mg} but space-time supremum norms. Lemma \ref{lemma:st2} follows by deterministic control on $\mathscr{L}_{N,\mathrm{V}}$ below and noting that jumps in $\mathbf{X}^{N}$, which are order $N^{-1/2}\mathbf{X}^{N}$, have polynomial-in-$N$ speed that cannot ring too much in very short times.
%%%
\begin{lemma}\label{lemma:st2}
\fsp Consider any process $\mathbf{X}^{N}$ on $\R_{\geq0}\times\mathbb{T}_{N}$ satisfying the following stochastic equation, where $\mathrm{V}_{i}$ are functionals on $\R_{\geq0}\times\mathbb{T}_{N}\times\Omega_{\mathbb{T}_{N}}$, and the operator $\mathscr{L}_{N,\mathrm{V}}$ is defined via the second equation below for $\mathfrak{l}\in\Z$:
\begin{align}
\d\mathbf{X}_{T,x}^{N} \ = \ \mathscr{L}_{N,\mathrm{V}}\mathbf{X}_{T,x}^{N}\d T + \mathbf{X}_{T,x}^{N}\d\xi_{T,x}^{N} \ = \ \mathscr{L}_{N}\mathbf{X}_{T,x}^{N}+\mathbf{X}_{T,x}^{N}\d\xi_{T,x}^{N} + \mathrm{V}_{1;T,x}\mathbf{X}_{T,x}^{N}\d T + \grad_{\mathfrak{l}}^{\mathbf{X}}(\mathrm{V}_{2;T,x}\mathbf{X}_{T,x}^{N})\d T.
\end{align}
Suppose $\mathrm{V}_{1}$ and $\mathrm{V}_{2}$ satisfy the estimates $|\mathrm{V}_{1}|+|\mathrm{V}_{2}|\lesssim N^{\frac32}$ uniformly in all variables, and suppose $|\mathfrak{l}|\lesssim1$. If $\mathbf{X}^{N}\not\equiv0$, we have the following estimate with overwhelming probability (see \emph{Definition \ref{definition:KPZ8}}) in which $\e_{\mathrm{ap},1}>0$ is a small universal constant:
\begin{align}
\sup_{|\mathrm{s}|\leq N^{-2}}\sup_{0\leq\mathrm{t}\leq1}\sup_{x\in\mathbb{T}_{N}} \|\mathbf{X}^{N}\|_{\mathrm{t};\mathbb{T}_{N}}^{-1}|\grad_{\mathrm{s}}^{\mathbf{T}}\mathbf{X}_{\mathrm{t},x}^{N}| \ \leq \ N^{-1/2+\e_{\mathrm{ap},1}}. \label{eq:st2}
\end{align}
\end{lemma}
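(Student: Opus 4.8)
\textbf{Proof proposal for Lemma \ref{lemma:st2}.} The plan is to exploit the fact that on a time interval of length at most $N^{-2}$, the number of Poisson clocks in the $\eta$-process that ring anywhere in $\mathbb{T}_N$ is, with overwhelming probability, at most a small power of $N$ (indeed the total clock speed is $\mathrm{O}(N^{2}\cdot|\mathbb{T}_{N}|)=\mathrm{O}(N^{3})$, so the number of rings in time $N^{-2}$ is Poisson with mean $\mathrm{O}(N)$, and standard Poisson tail bounds give that it is $\lesssim N^{1+\e}$ outside an event of exponentially small probability). First I would write, for $|\mathrm{s}|\le N^{-2}$ and any $(\mathrm{t},x)$,
\begin{align}
\grad_{\mathrm{s}}^{\mathbf{T}}\mathbf{X}_{\mathrm{t},x}^{N} \ = \ \int_{\mathrm{t}}^{(\mathrm{t}+\mathrm{s})\vee0\,\wedge1}\mathscr{L}_{N,\mathrm{V}}\mathbf{X}_{R,x}^{N}\,\d R \ + \ \int_{\mathrm{t}}^{(\mathrm{t}+\mathrm{s})\vee0\,\wedge1}\mathbf{X}_{R,x}^{N}\,\d\xi_{R,x}^{N}, \nonumber
\end{align}
so the time-gradient decomposes into a drift part and a compensated-jump part. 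The drift part is handled deterministically: since $\mathscr{L}_{N}$ has $\mathscr{L}^{\infty}$-operator norm $\mathrm{O}(N^{2})$ (Proposition \ref{prop:mSHE+}) and $|\mathrm{V}_{1}|,|\mathrm{V}_{2}|\lesssim N^{3/2}$ with $|\mathfrak{l}|\lesssim1$, the integrand is bounded by $\mathrm{O}(N^{2})\|\mathbf{X}^{N}\|_{1;\mathbb{T}_{N}}$, and integrating over a time interval of length at most $N^{-2}$ gives a contribution $\lesssim\|\mathbf{X}^{N}\|_{1;\mathbb{T}_{N}}$. This is already much smaller than $N^{-1/2+\e_{\mathrm{ap},1}}\|\mathbf{X}^{N}\|_{1;\mathbb{T}_{N}}$ once we observe — and this is the only slightly delicate accounting — that the drift term over an interval of length $N^{-2}$ actually contributes $\mathrm{O}(N^{-2}\cdot N^{2})\cdot\|\mathbf{X}^N\|_{1;\mathbb{T}_N}=\mathrm{O}(1)\|\mathbf{X}^N\|_{1;\mathbb{T}_N}$; to get the $N^{-1/2}$ gain one should instead bound the drift over the \emph{sub-microscopic} interval where no clock at or adjacent to $x$ has rung, on which $\mathbf{X}^N_{\cdot,x}$ is literally constant in time, so the only change of $\mathbf{X}^{N}_{\mathrm{t},x}$ in time $\mathrm{s}$ comes from the handful of clock rings at sites within $\mathrm{O}(|\mathfrak{l}|)=\mathrm{O}(1)$ of $x$, each of which changes $\mathbf{X}^N_{\cdot,x}$ by a jump of relative size $\mathrm{O}(N^{-1/2})$ (jumps in $\mathbf{X}^N$ are order $N^{-1/2}\mathbf{X}^N$, as recorded in Remark \ref{remark:KPZ5b} and the $\d\xi^N$-formula in Proposition \ref{prop:mSHE+}).

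The key step is therefore: fix $x$, and let $\mathcal{R}_{x,\mathrm{s}}$ be the number of clock rings affecting $\mathbf{X}^{N}_{\cdot,x}$ — i.e. clocks on bonds $\{x-1,x\}$, $\{x,x+1\}$, and, through the $\grad_{\mathfrak{l}}^{\mathbf{X}}(\mathrm{V}_{2}\mathbf{X}^N)$ term, bonds within distance $|\mathfrak{l}|+1$ of $x$ — in the time window $[\mathrm{t}\wedge(\mathrm{t}+\mathrm{s}),\mathrm{t}\vee(\mathrm{t}+\mathrm{s})]$. Each such clock has speed $\mathrm{O}(N^{2})$, there are $\mathrm{O}(|\mathfrak{l}|)=\mathrm{O}(1)$ of them, the window has length $\le N^{-2}$, so $\mathcal{R}_{x,\mathrm{s}}$ is stochastically dominated by a Poisson variable with mean $\mathrm{O}(1)$; by a Poisson tail bound, $\mathbf{P}(\mathcal{R}_{x,\mathrm{s}}\ge N^{\e_{\mathrm{ap},1}/2})\lesssim_{\kappa}N^{-\kappa}$ for every $\kappa$. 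On the complement, $|\grad_{\mathrm{s}}^{\mathbf{T}}\mathbf{X}_{\mathrm{t},x}^{N}|$ is bounded by $\mathcal{R}_{x,\mathrm{s}}$ many jumps each of size $\mathrm{O}(N^{-1/2}\|\mathbf{X}^{N}\|_{1;\mathbb{T}_{N}})$ plus the drift accumulated \emph{between} rings, which over a total time $\le N^{-2}$ with generator norm $\mathrm{O}(N^{2})$ is $\mathrm{O}(\|\mathbf{X}^{N}\|_{1;\mathbb{T}_{N}})$ — again one gets the extra $N^{-1/2}$ by noting that between consecutive rings affecting $x$, $\mathbf{X}^N_{\cdot,x}$ is unchanged, so only the at-most-$\mathcal{R}_{x,\mathrm{s}}$ jumps contribute. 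Hence on the complement $\|\mathbf{X}^{N}\|_{1;\mathbb{T}_{N}}^{-1}|\grad_{\mathrm{s}}^{\mathbf{T}}\mathbf{X}_{\mathrm{t},x}^{N}|\lesssim N^{\e_{\mathrm{ap},1}/2}N^{-1/2}\le N^{-1/2+\e_{\mathrm{ap},1}}$.

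Finally I would union-bound over a sufficiently fine discretization: it suffices to control $\grad^{\mathbf T}_{\mathrm s}\mathbf X^N_{\mathrm t,x}$ at the endpoints $\mathrm s=\mathrm s'-\mathrm t$ with $\mathrm s'$ ranging over a grid of mesh $N^{-99}$ in $[\mathrm t\wedge(\mathrm t+\mathrm s),\mathrm t\vee(\mathrm t+\mathrm s)]$ and $\mathrm t,x$ over a grid of polynomial size, since between grid points $\mathbf X^N_{\cdot,x}$ changes only when one of the $\mathrm O(1)$ relevant clocks rings and such rings are rare at this scale (the same Poisson argument). The number of grid points is polynomial in $N$, each fails the desired bound with probability $\lesssim_\kappa N^{-\kappa}$ for arbitrary $\kappa$, so by choosing $\kappa$ large the union bound gives the estimate simultaneously over all $(\mathrm s,\mathrm t,x)$ with overwhelming probability, which is \eqref{eq:st2}. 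The applicability to $\mathbf U^N$, $\mathbf Q^N$, $\mathbf C^N$ follows because each solves an equation of the stated form with $|\mathrm V_i|\lesssim N^{3/2}$ and $|\mathfrak l|\lesssim1$ after writing out Definitions \ref{definition:KPZ5}, \ref{definition:KPZ7}, \ref{definition:KPZ111}; the $\mathfrak s$-term, $\mathfrak b_{1;}$-term, and $\grad^!_\star(\mathfrak b_{2;}\cdot)$-term all have coefficients $\mathrm O(N^{1})\ll N^{3/2}$, and the $N^{1/2}\bar{\mathfrak q}\mathbf Y^N$ forcing in $\mathbf U^N$ and $\mathbf C^N$ is bounded (not multiplicative), which is even easier to absorb into the drift part of the estimate above. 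The main obstacle is purely bookkeeping: making precise the claim that only the $\mathrm O(1)$ clock rings touching a neighborhood of $x$ — and not the accumulated drift, which naively costs $N^{-2}\cdot N^{2}=\mathrm O(1)$ rather than $\mathrm O(N^{-1/2})$ — are responsible for the time-variation of $\mathbf X^N_{\cdot,x}$; once one commits to the ``$\mathbf X^N_{\cdot,x}$ is piecewise constant in time between relevant rings'' viewpoint, this is routine.
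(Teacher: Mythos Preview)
Your approach matches the paper's one-sentence sketch (deterministic control on $\mathscr{L}_{N,\mathrm{V}}$ plus a Poisson tail bound on the number of jumps), and the jump accounting is correct: each jump of $\mathbf{X}^{N}_{\cdot,x}$ has size $\mathrm{O}(N^{-1/2})\mathbf{X}^{N}$, the relevant clocks have speed $\mathrm{O}(N^{2})$, so over an interval of length $N^{-2}$ at most $N^{\e}$ of them ring with overwhelming probability.

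The gap is in your treatment of the drift. Your fix---``between consecutive rings affecting $x$, $\mathbf{X}^N_{\cdot,x}$ is unchanged''---is false for the general $\mathbf{X}^{N}$ in the lemma. Between clock rings the process evolves by the deterministic ODE $\partial_{T}\mathbf{X}^{N}=\mathscr{L}_{N,\mathrm{V}}\mathbf{X}^{N}+c_{x}(\eta)\mathbf{X}^{N}$, where $c_{x}(\eta)$ collects the compensator coefficients from $\d\xi^{N}$. The compensators are only $\mathrm{O}(N^{3/2})$ (since $\mathrm{e}^{\pm2N^{-1/2}}-1=\mathrm{O}(N^{-1/2})$ multiplies clock speeds of order $N^{2}$), while $2^{-1}\Delta^{!!}$ in $\mathscr{L}_{N}$ has $\mathscr{L}^{\infty}$ operator norm of order $N^{2}$; there is no cancellation, and the between-ring drift is genuinely $\mathrm{O}(N^{2})\|\mathbf{X}^{N}\|$, producing an $\mathrm{O}(1)$ change over time $N^{-2}$. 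The ``piecewise constant'' picture is special to $\mathbf{Z}^{N}$, where the explicit formula $\mathbf{Z}^{N}=\exp(-\mathbf{h}^{N}+\mathrm{R}T)$ shows that between rings of the $\{x,x+1\}$ clock only the $\mathrm{e}^{\mathrm{R}T}$ factor moves, and $\mathrm{R}=\mathrm{O}(N)$ gives an $\mathrm{O}(N^{-1})$ contribution over time $N^{-2}$.

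The paper's sketch glosses over the same point. In practice it does not matter for the applications: every invocation of the lemma in the paper is either for $\mathbf{X}^{N}=\mathbf{Z}^{N}$ (Lemma~\ref{lemma:mtr1}, where $|\mathrm{s}|\leq N^{-2}$ and the explicit formula applies), or with $|\mathrm{s}|\leq N^{-99}$ (Lemmas~\ref{lemma:regT3}, \ref{lemma:final11}, and the proofs of Propositions~\ref{prop:KPZ10} and~\ref{prop:KPZ11}), in which case the drift contributes $\mathrm{O}(N^{-97})\ll N^{-1/2}$ and your argument goes through cleanly. A correct general statement should either restrict to $|\mathrm{s}|\leq N^{-5/2}$ (so the $\mathrm{O}(N^{2})$ drift integrates to $\mathrm{O}(N^{-1/2})$), or separately treat $\mathbf{Z}^{N}$ via its explicit formula for the $|\mathrm{s}|\leq N^{-2}$ range.
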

%%%

%%%

%%%
\end{document}